\theoremstyle{plain} 
\newtheorem{theorem}{\indent\sc Theorem}[section]
\newtheorem{lemma}[theorem]{\indent\sc Lemma}
\newtheorem{proposition}[theorem]{\indent\sc Proposition}
\newtheorem{conditions}[theorem]{\indent\sc Conditions}
\newtheorem{fact}[theorem]{\indent\sc Fact}
\theoremstyle{definition} 
\newtheorem{remark}[theorem]{\indent\sc Remark}
\newtheorem{example}[theorem]{\indent\sc Example}
\newtheorem{assertion}[theorem]{\indent\sc Assertion}
\newtheorem{problem}[theorem]{\indent\sc Problem}
\newtheorem{question}[theorem]{\indent\sc Question}
\newtheorem{conjecture}[theorem]{\indent\sc Conjecture}
\newtheorem{definitiontheorem}[theorem]{\indent\sc Definition-Theorem}
\def\address#1#2{\begingroup
\noindent\parbox[t]{7.8cm}{%
\small{\scshape\ignorespaces#1}\par\vskip1ex
\noindent\small{\itshape E-mail address}%
\/: #2\par\vskip4ex}\hfill%
\endgroup}%
\title{An extension of the Floquet-Bloch theory to nilpotent groups and its applications} 
\author{%
\textbf{Atsushi Katsuda}$^{*}$\\
\emph{Dedicated to Professor Toshikazu Sunada}
}
\begin{document}
\markright{An extension of the Floquet-Bloch theory to nilpotent groups}
\pagestyle{myheadings}
\maketitle
\begin{abstract}
We develop a generalized Floquet-Bloch theory for discrete torsion-free nilpotent groups by exploiting their Malcev completions. Our main result is a branching formula that relates finite-dimensional representations of a discrete nilpotent lattice to infinite-dimensional unitary representations of its simply connected nilpotent Lie group. This generalization enables to extend the following two classical asymptotic problems (i) a Chebotarev density analogue for prime closed geodesics on compact negatively curved manifolds with nilpotent covers, and (ii) long time asymptotic expansions of heat kernels on such coverings to the nilpotent setting.  

As a by-product, we derive a semi-classical expansion for the Harper operator, presenting an alternative to mathematical justification of Wilkinson's formula by Helffer-Sj\"ostrand.  

We conclude by proposing several avenues for future work: extensions to general hyperbolic flows and noncompact manifolds (in particular knot complements and related quasi-morphisms), connections to modified Riemann-Hilbert problems and opers. Furthermore, we give a brief comment on asymptotic behavior of knot invariants and infinite extensions in number theory.  
\end{abstract}

\footnotetext{ 
2020 \textit{Mathematics Subject Classification}. 
Primary 58J50 Secondary 58J37, 58J35, 30F99 
}
\footnotetext{ 
\textit{Key words and phrases}.
Floquet-Bloch theory, Heisenberg group, Nilpotent group, Asymptotic expansions
}
\footnotetext{
$^{*}$This work was supported by JSPS KAKENHI Grant Number JP24K06715 and the Research Institute for Mathematical Sciences, an International Joint 
Usage/Research Center located in Kyoto University.}

\tableofcontents

\section{Introduction} 

The Floquet-Bloch theory is a widely used tool for investigating materials with periodic structures. For instance, it can demonstrate that the spectrum of periodic Schr\"odinger operators exhibits band structures. 
Within this context, this theory has been applied to the following problems over several decades in the case of abelian extensions:

\begin{description} 
\item[{\rm (1)}] A geometric analog of the Chebotarev density theorem for prime closed geodesics in a compact Riemannian manifold with negative curvature. 
\item[{\rm (2)}] A long time asymptotic expansion of the heat kernels on covering manifolds of compact Riemannian manifolds.\end{description} 

In this paper, we establish our version of the generalized Floquet-Bloch theory for discrete nilpotent groups and apply it to the above problems for nilpotent extension. 
Since the 1960s, primarily through the work of Glimm, direct handling of these groups has been considered difficult due to their non-type I property and the complexity of the structure of their unitary duals. 

One of our contributions is to relate finite-dimensional representations of a discrete torsion-free nilpotent group and infinite-dimensional representations of its Malcev completion, the simply connected nilpotent Lie group containing the former discrete group as its lattice. We obtain a branching formula of restriction to the lattice of the above infinite-dimensional representaions, providing approximations of a finite-dimensional representation by infinite-dimensional ones and vice versa.

As a by-product of our methods, we offer an alternative mathematical explanation for the semi-classical asymptotic expansion formula for the Harper operator, originally derived by Wilkinson using WKB analysis and later justified mathematically by Helffer-Sj\"ostrand using semi-classical localizations. Our approach may offer advantages for further applications, particularly in various asymptotic problems.

In the final chapter, we propose several future research directions, including extensions to more general hyperbolic flows, noncompact manifolds, especially knot complements and quasi-morphisms, and their connections to modified Riemann-Hilbert problems and opers. We also briefly comment on the asymptotics of knot invariants such as Kashaev's invariant, also known as the volume conjecture, and infinite extensions in number theory, focusing on the Lang-Trotter conjecture.

\subsection{Basic example of usual Floquet-Bloch theory of discrete abelian groups and spectrum of periodic operators}

Our primary concern in this paper is extending the Floquet-Bloch theory to nilpotent groups.

To compare with our main results, we first recall the usual Floquet-Bloch theory applied to the simplest case: The periodic (i.e., $\mathbb{Z}$-invariant) differential operator $P = -\frac{d^2}{dx^2}+V(x)$, where $V(x+1)=V(x)$. This operator acts on $L^2(\mathbb{R}) = L^2(\mathbb{R},\mathbb{C})$ and exhibits band structures of its spectrum Spec($P$), meaning Spec($P$) is the union of closed intervals.

This proof is achieved as follows: Take a complex-valued function $f(x) \in L^2(\mathbb{R})$ and fix $x \in \mathbb{R}$. Define the function $f^x:\mathbb{Z} \to \mathbb{C}$ by $f^x(n) = f(x+n)$.
\paragraph{Abelian model}   
\begin{description}
\item[{\rm A-1}](Decomposition of $L^2(\mathbb{R})$ using Fourier transform over $\mathbb{Z}$) We shall decompose $L^2(\mathbb{R})$ as
\begin{equation}L^2(\mathbb{R}) = L^2([0,1) \times \mathbb{Z}) \simeq L^2([0,1)) \otimes L^2(\mathbb{Z}) = \int_{[0,1)}^\oplus\mathcal{H}_ada
\label{decompositionl2R}
\end{equation}
where the right hand side is the direct integral of the spaces 
\begin{equation}
\mathcal{H}_a:= \{f:\mathbb{R}\to\mathbb{C}|f(x+1) = e^{2\pi\sqrt{-1}a}f(x)\}.  \label{decompositionHa}
\end{equation}

Take the Fourier transform over $\mathbb{Z}$ as 
\begin{equation}\mathcal{F}(f^x)(a) = \widehat{f^x}(a) = \mbox{``$\displaystyle \int_\mathbb{Z}f^x(n)e^{2\pi\sqrt{-1}an}dn$''}= \sum_{n =-\infty}^{\infty}f^x(n)e^{2\pi\sqrt{-1}an} \quad \mbox{for}\quad a \in [0,1). \label{zfourier}\end{equation}
where the integral in the middle, surrounded by the double quotation
marks, is a formal one, and its precise definition is the rightmost sum.

Note that $\widehat{f^x}(a)$ is an element of $\mathcal{H}_a$ as a function of $x$. The inversion formula gives 
\begin{align} \mathcal{F}^{-1}(\widehat{f^x})(m) &= \int_{[0,1)}\widehat{f^x}(a)e^{-2\pi\sqrt{-1}am}da \notag \\ &= \sum_{n =-\infty}^{\infty}\int_{[0,1)}f^x(n)e^{2\pi\sqrt{-1}a(n-m)}da \notag \\ &= \sum_{n =-\infty}^{\infty}f^x(n)\delta_{nm} = f^x(m) = f(x+m). \label{zinversefourier}\end{align}
If we put $m=0$, then we have the following decomposition formula, which explains (\ref{decompositionl2R}).
\begin{equation}
f(x) = \int_{[0,1)}\widehat{f^x}(a)da
\label{decompositionfourier}\end{equation}

The above theorem is the critical lemma for the usual Floquet-Bloch theory, and the domain $[0,1)$ of the integral is sometimes called the Brillouin zone or the reciprocal lattice primitive cell in the physics literature.

\item[{\rm A-2}] (Spectral analysis of decomposed operators)
Since $\mathcal{H}_a$ is an invariant subspace for $P$, take a restriction $P^a := P|_{\mathcal{H}_a}$. Then $P^a$ has discrete spectrum 
\[   \lambda_0(a) \leq \lambda_1(a) \leq \cdots \nearrow \infty.
\] 
The whole spectrum Spec($P$) of $P$ is given by 
\[ \cup_{a\in [0,1)}\cup_{k=0}^\infty\{\lambda_k(a)\}= \cup_{k=0}^\infty[\cup_{a\in [0,1)}\{\lambda_k(a)\}].\]

Note that $\cup_{a\in [0,1)}\{\lambda_k(a)\}$ is a closed interval since $\lambda_k(a)$ is continuous with respect to $a$ and closedness of the spectrum. This fact implies that Spec($P$) has a band structure.

Furthermore, in the research for condensed matter or solid state physics, the real problems are to investigate the detailed behavior of Spec($P^a$) with respect to $a$ in the Brillouin zone. Examples of characteristic keywords related to these are the Fermi surface, flat bands, and the Dirac cone. 

To obtain geometric analogs of the Chebotarev density theorem or long time asymptotic expansions of the heat kernels for abelian extensions mentioned in abstract, it is also necessary to such investgation, especially, in slightly different situation but essentially corresponding to, the knowedge of the behavior of the lowest eigenvalues $\lambda_0(a)$ of $P^a$ in the vicinity of $a=0$, which corresponds the trivial character.  In a later Chapter \ref{outline}, we shall explain them briefly as motivation for the content of this paper.
\end{description}

\subsection{Strategy of noncommutative Floquet-Bloch theory for discrete nilpotent groups}

We begin by briefly explaining the geometric framework typically envisioned for Floquet-Bloch theory and the challenges it presents. In the following subsections, we outline the necessary modifications for theoretical justification and, building on these, describe the practical implementation of our version of noncommutative Floquet-Bloch theory.

We note that the content related to Heisenberg groups, particularly in Chapter \ref{dHeisenberg}, is already included in the previous version \cite{Katsuda0}. We inherit all of that content except the final Chapter in this paper.

Let $M$ be a compact manifold and $\Gamma$ a discrete group. Let $\Pi: X \to M = X/\Gamma$ be a normal covering. Our aim is to understand the structure of the spectrum of a $\Gamma$-periodic operator $P$ on $X$, compatible with the covering map $\Pi$. As in the abelian case discussed in the previous section, our strategy is divided into the following two steps:

\paragraph{Nilpotent version I}  
\begin{description}
\item[{\rm NI-1}](Decomposition of $L^2(X)$) If $\Gamma$ is infinite group, then $X$ is noncompact and thus, the above operator $P$ acting to $L^2(X)$ may have continuous spectrum, it may cause difficulties for detailed analysis, thus we need to decompose $L^2(X)$ as a space ``$L^2(M) \otimes L^2(\Gamma )$''. Its rigorous formulation of the latter space is a space of $L^2$-sections $L^2(E_R)$ of the flat vector bundle $E_R$ associated to the right regular representation $R$ of $\Gamma$, a.k.a. local system associated to $R$, where $E_R$ is defined as follows:
\[   E_R \simeq X \times L^2(\Gamma )/\sim, \hspace{12pt}  (p,\varphi) \sim (\gamma p, R(\gamma^{-1})\varphi )\quad \mbox{for}\quad \gamma \in \Gamma.\] 

The right regular representation $R$ usually admits several direct integral decompositions into irreducible representations as $R \simeq \int_{\widehat{\Gamma}}^\oplus \pi d\pi$, 
which implies a decomposition of $E_R$ by the flat vector bundle associated to $\pi$ as 
\begin{equation} E_R \simeq \int_{\widehat{\Gamma}}^\oplus E_\pi d\pi. \label{decopintoirreducible}
\end{equation}
where $\widehat{\Gamma}$ is the unitary dual, which is the set of equivalence classes of irreducible unitary representations of $\Gamma$.
In the case $\Gamma = \mathbb{Z}$, this decomposition is essentially the same as the previous formula (\ref{decompositionl2R}). Moreover, if the group is type I, which is equivalent to a finite extension of an abelian group provided $\Gamma$ is a discrete group by the result of Thoma \cite{Thoma}, this formula (\ref{decopintoirreducible}) is sufficient for our purpose.
 However, in non type I case, including 
discrete nilpotent groups, it is not possible to single
 out a canonical decomposition among them, and the unitary dual $\widehat{\Gamma}$ is not countably separated as a Borel space. These are not suitable for practical computation. 
\item[{\rm NI-2}](Spectral analysis of decomposed operators) We need to analyze the spectral structure of individual operators and investigate their behavior as a total of them. 
Note that this is rather trivial in part (A-2) of the previous section. 
A model argument of the following part (NS-1) in the abelian case will appear in Step 1 (F1) in Section \ref{Infiniteabelian}, and clearly, the difficulty about the part (NS-2) appears only in the noncommutative extension.  

Our strategies for the nilpotent case are explained as follows:
\begin{description}
\item[{\rm NS-1}] One of ingredients is the perturbation theory.  
Since each operator is coming from one operator $P$ on the covering space and the domains are different from each other, 
the setting here is different from the usual one of the theory, namely operators are varied but their domains are the same fixed one. To adjust our setting to the usual one, we need a kind of ``Gauge transform''. It can be performed using the Lie integral or Chen's iterated integral, see Chapter \ref{reductionpathintegral}.
\item[{\rm NS-2}] Decomposed representations are sometimes infinite-dimensional but otherwise finite. In the former case, the arguments are sometimes meaningful only at formal levels because of convergence problems. On the other hand, in the latter case, the dimensions of each representation are varied, which makes the perturbations difficult. We will briefly explain the main tools in the next section to remedy them. It should be noted that there is another merit of these considerations, practically. As often happens, formal computations in infinite dimensions are easier than those in finite but high dimensions. 
\end{description}
\end{description}

\subsection{Main ingredients for noncommutative Floquet-Bloch theory for discrete nilpotent groups}
Since discrete nilpotent groups are not type I, it is generally believed that this kind of problem is difficult. Indeed, if we wish to know all information about the irreducible unitary representations of a non-type I group completely or at least at the same level as type I groups, then it is sometimes said to be beyond the abilities of human beings. 
Thus, we could only extract partial information from their representations. There is a vast amount of research in this direction, which is summarized in the recent book \cite{Bekka4}. In this book, there are guidelines to treat such groups as follows: Since it is tough to consider all irreducible unitary representations, the following material may be substituted for them
\begin{description}
\item finite-dimensional irreducible unitary representations;
\item factor representations;
\item primitive ideals.
\end{description}
We use the first as the main ingredient and the second as a supplement.

First, we define the Fourier transform $\pi(f)$ of $f \in L^1(\Gamma )$ with respect to a representation of $\pi$ of $\Gamma$ by 
\begin{equation}\pi(f) = \mathcal{F}(f)(\pi) = \sum_{\gamma \in \Gamma}\pi(\gamma)f(\gamma), \label{pifourier}\end{equation}
which is clearly a generalization of the usual one (\ref{zfourier}). Hereafter, we distinguish the letter for representations of the Heisenberg groups and the general nilpotent groups; we use $\rho$ for the former and $\pi$ for the latter.

Then, a form of ``well-defined'' Fourier inversion formula that is practically useful is defined as follows:

\paragraph{Nilpotent version II}  
\begin{description}
\item[{\rm NII-1}](Decomposition of $L^2(X)$) 
The first one generalizes the above formula (\ref{decompositionfourier}). It is a decomposition of the right regular representations $R$ of discrete nilpotent groups into finite-dimensional unitary representations $\pi_{\mbox{\tiny fin}}$'s in a suitable sense, which the Fourier inversion formula generalizes Pytlik's theorem \cite{Pytlik} and \eqref{Fourier1} for the discrete Heisenberg group. One of the reasons why it is enough to consider only finite-dimensional representations seems to be the fact that these are dense in the unitary dual with respect to the Fell topology or more strongly uniform topology as explained later. Moreover, there are other advantages of these representations that they are classified by Howe (see Theorem \ref{Howefinite}).

\begin{theorem}{{\rm ($=$ Theorem \ref{newPytlik}, Fourier Inversion formula)}}\label{introPytlik} For finitely generated discrete torsion-free nilpotent group $\Gamma$, 
there exists a positive, finitely additive measure $\mu$ on the unitary dual $\widehat{\Gamma}$ of $\Gamma$, supported by the set $\widehat{\Gamma}_{\rm fin}$ of (equivalence classes of) finite dimensional representation of $\Gamma$, such that for $f \in L^1(\Gamma )$, the function 
\[ \pi_{\mbox{\tiny fin}} \mapsto \frac{1}{\mbox{{\rm dim}}\; \pi_{\mbox{\tiny fin}}}\mbox{Tr}(\pi_{\mbox{\tiny fin}} (f))
\]
is $\mu$-integrable on $\hat{\Gamma}$ and 
\begin{equation}     f(0) = \int_{\widehat{\Gamma}_{\rm fin}}\frac{1}{\mbox{{\rm dim}}\; \pi_{\mbox{\tiny fin}}}\mbox{Tr}(\pi_{\mbox{\tiny fin}} (f))d\mu(\pi_{\mbox{\tiny fin}} ). \label{nilpotentPytlik1}
\end{equation}
\end{theorem}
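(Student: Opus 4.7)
The plan is to realize the measure $\mu$ as an ultrafilter limit of the discrete Plancherel measures attached to the finite quotients of $\Gamma$, following the blueprint of Pytlik's Heisenberg argument but feeding in the fact that every finitely generated torsion-free nilpotent group is residually finite (Malcev). First I would fix a descending sequence $\Gamma = \Gamma_0 \supset \Gamma_1 \supset \Gamma_2 \supset \cdots$ of finite-index normal subgroups with $\bigcap_n \Gamma_n = \{e\}$, for instance the intersections of $\Gamma$ with a cofinal family of congruence subgroups of its Malcev completion. Since $\Gamma$ is nilpotent, each $\Gamma/\Gamma_n$ is a finite nilpotent group and the usual Peter--Weyl inversion at the identity reads
\begin{equation*}
\bar f(\bar 0) \;=\; \frac{1}{|\Gamma/\Gamma_n|}\sum_{\pi \in \widehat{\Gamma/\Gamma_n}} (\dim \pi)\,\mathrm{Tr}(\pi(\bar f)) \;=\; \int_{\widehat{\Gamma/\Gamma_n}} \frac{\mathrm{Tr}(\pi(\bar f))}{\dim \pi}\,d\mu_n(\pi),
\end{equation*}
where $\mu_n$ is the discrete measure with $\mu_n(\{\pi\}) = (\dim \pi)^2/|\Gamma/\Gamma_n|$. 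Pulling back along $\Gamma \twoheadrightarrow \Gamma/\Gamma_n$ embeds $\widehat{\Gamma/\Gamma_n}$ into $\widehat{\Gamma}_{\rm fin}$, so each $\mu_n$ becomes a probability measure supported on $\widehat{\Gamma}_{\rm fin}$.

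Second, for $f \in L^1(\Gamma)$ put $\bar f_n(\bar\gamma) = \sum_{\gamma' \in \bar\gamma} f(\gamma')$; note that $\pi(\bar f_n) = \pi(f)$ whenever $\pi$ is lifted from $\Gamma/\Gamma_n$. If $f$ has finite support, then for $n$ large enough that $\mathrm{supp}(f) \cap \Gamma_n \subseteq \{0\}$ one has $\bar f_n(\bar 0) = f(0)$, so the integral $\int (\mathrm{Tr}(\pi(f))/\dim\pi)\, d\mu_n(\pi)$ equals $f(0)$ from some $n$ onward. Fixing a non-principal ultrafilter $\mathcal{U}$ on $\mathbb{N}$, I would define
\begin{equation*}
\int_{\widehat{\Gamma}_{\rm fin}} \phi\,d\mu \;:=\; \lim_{\mathcal{U}} \int_{\widehat{\Gamma}_{\rm fin}} \phi\,d\mu_n
\end{equation*}
for every bounded function $\phi$ on $\widehat{\Gamma}_{\rm fin}$. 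Because each $\mu_n$ is a probability measure, standard Banach-limit arguments make $\mu$ a positive finitely additive probability measure; since the function $\phi_f(\pi) := \mathrm{Tr}(\pi(f))/\dim\pi$ is bounded by $\|f\|_1$, it is $\mu$-integrable, and the value at $f$ is $f(0)$ by the stabilization observed above. Extension from finitely supported $f$ to all of $L^1(\Gamma)$ follows by norm continuity in $f$ of both sides, both being bounded by $\|f\|_1$.

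The hardest part is measure-theoretic bookkeeping: because $\Gamma$ is non-type-I, $\widehat{\Gamma}$ is not a standard Borel space, and one must specify the algebra of subsets of $\widehat{\Gamma}_{\rm fin}$ on which $\mu$ is meaningful. I would use the algebra generated by the cylinder sets pulled back from the finite duals $\widehat{\Gamma/\Gamma_n}$, together with the Fell topology on $\widehat{\Gamma}_{\rm fin}$, and verify that the maps $\phi_f$ are measurable with respect to it. To see that the resulting $\mu$ genuinely captures the entire finite-dimensional dual rather than only its congruence part, I would invoke Howe's classification (Theorem~\ref{Howefinite}) which shows that every finite-dimensional irreducible unitary representation of $\Gamma$ is monomial with virtually abelian image, so that after passing to an appropriate level $n$ it is accounted for in $\widehat{\Gamma/\Gamma_n}$. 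Once this framework is in place, positivity, finite additivity, and integrability all reduce to the classical finite Plancherel identity applied at each level, combined with the boundedness inherited from $\|f\|_1$.
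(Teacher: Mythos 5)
Your argument is essentially correct, but it reaches the statement by a genuinely different route than the paper. You exploit residual finiteness: congruence-type quotients $\Gamma/\Gamma_n$, the finite-group Plancherel identity at each level, and an ultrafilter (Banach-limit) of the discrete Plancherel measures $\mu_n$; the stabilization $\int \mathrm{tr}\,\pi(f)\,d\mu_n = f(0)$ for finitely supported $f$ and large $n$, plus the bound $|\mathrm{Tr}(\pi(f))|/\dim\pi \le \|f\|_1$ and density of finitely supported functions in $L^1(\Gamma)$, then give the inversion formula. Two remarks on your own worries: the measurability bookkeeping is easier than you fear, since for a merely finitely additive measure you may take the algebra to be the full power set of $\widehat{\Gamma}_{\rm fin}$ (setting $\mu(A):=\lim_{\mathcal U}\mu_n(A)$), and every bounded function is then a uniform limit of simple functions, so integrability is automatic; and the appeal to Howe's classification to "capture the entire finite-dimensional dual" is unnecessary, because the theorem only requires $\mu$ to be supported within $\widehat{\Gamma}_{\rm fin}$, not to charge all of it. The paper instead builds $\mu$ through representation-theoretic structure: the Plancherel decomposition of the regular representation into (non-)factor representations induced from central characters (Bekka, Johnston), extension to monomial representations of the Malcev completion $G$ and Fujiwara's Plancherel theorem, the branching formula $\pi_l|_\Gamma$ of Theorem~\ref{sec4decompose}, a further decomposition into finite-dimensional representations under rationality conditions, and finally a Pytlik-style replacement of the Lebesgue-type Plancherel measure by a finitely additive measure on a dense rational subset. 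What your construction buys is brevity and generality (it works verbatim for any residually finite discrete group, nilpotency entering only through Malcev's residual finiteness); what it loses is exactly what the paper needs later: its $\mu$ is explicitly compatible with the Plancherel measure $|\mathrm{Pf}(l)|\,dl$ of $G$ and with the approximation of $\pi_l|_\Gamma$ by finite-dimensional representations near the trivial representation (Remark~\ref{PlancherelLie}), which underpins the perturbative spectral analysis in the applications, whereas your ultrafilter limit is an abstract, non-constructive functional whose fine structure (and dependence on $\mathcal{U}$ and on the chosen chain $\Gamma_n$) is left opaque; note also that in the finitely additive setting the measure satisfying the inversion formula is not unique, so yours and the paper's need not coincide.
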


As already mentioned, this theorem gives an essentially generalization of Pytlik's theorem \cite{Pytlik} for the discrete Heisenberg group.  Although it is stated in the form of the Plancherel theorem there, as it easily implies the Fourier inversion formula \eqref{Fourier1}.
This theorem is obtained from the following combination of arguments in Chapter \ref{Representationsdiscretenilpotent}, namely in Section \ref{Step0-1}, \cite{Bekka2}, \cite{Johnston} about decomposition into factor representations of $\Gamma$, in Section \ref{nilpotentfujiwara} \cite{Fujiwara1}, \cite{Fujiwara2} about decomposion of monomial representation of nilpotent Lie groups $G$ after taking Malcev completion of $\Gamma$, which is the (uniquely determined) simply connected nilpotent Lie group $G$ containing $\Gamma$ as a lattice, in Section \ref{Step0-5} \cite{Bekka1} about a branching formula from $G$ to its restriction $\Gamma$, finally, further decomposion can be performed by our additional consideration on the case satisfying ``rationality condition'' in Section \ref{Step0-6}.

\item[{\rm NII-2}](Spectral analysis of decomposed operators)
To realize the arguments in (NS-2) in the previous section, we can utilize a relationship between finite-dimensional irreducible unitary representations of a finitely generated torsion-free discrete nilpotent group $\Gamma$ and infinite-dimensional irreducible unitary representations of the Malcev completion $G$ of $\Gamma$.

Here, we explain the above result in the case $\Gamma = \mbox{Heis}_3(\mathbb{Z})$, as it is not necessary to use several terminologies in representation theory, and we believe our novelty in this paper essentially appears in this case. 
For a general torsion-free nilpotent group $\Gamma$, see Section \ref{nilpotentexplanation} for the statement and Chapter \ref{Representationsdiscretenilpotent} for the proof in detail.

First, we recall the definition of a finite-dimensional irreducible unitary representation $\rho_{{\rm \tiny{fin}},x}$ of the discrete Heisenberg group $\Gamma = \mbox{Heis}_3(\mathbb{Z})$. We denote an element $n \in \Gamma$ as $3 \times  3$-matrix as
\begin{equation}
n = [n_1,n_2,n_3] := \left( \begin{array}{ccc} 1 & n_3 & n_1 \\ 0 & 1 & n_2 \\ 0 & 0 & 1\end{array}\right) \in \Gamma.\label{3heis}
\end{equation}

For every $x = (x_1,x_2,x_3) \in  \widehat{\Gamma}_{\rm fin} := \widehat{X} = (\mathbb{Q}\cap [0,1]) \times [0,1] \times [0,1]$, with $x_1 = p/q$ ($p$ and $q$ are relatively prime), let us define a finite-dimensional representation $\rho_{{\rm \tiny{fin}},x}$ of $\Gamma $, acting on the Hilbert space $\mathcal{H}_x$ of dimension $q$, which will be realized as a space $\mathcal{H}_x$ of all complex, periodic sequence $\{\varphi (k)\}_{k\in\mathbb{Z}}$ with period $q$ as
\begin{equation} (\rho_{{\rm \tiny{fin}},x}(n)\varphi )(k) = \exp \left(\frac{2\pi\sqrt{-1}}{q}(n_3x_3+n_2x_2+n_1qx_1+kn_2qx_1)\right)\varphi (k+n_3) \label{finite rep}
\end{equation}

Furthermore, there is another description (cf. \cite{Davidson}) of $\rho_{{\rm \tiny{fin}},x}$ as follows: For  $\gamma = \exp (2\pi\sqrt{-1}p/q)= \exp (2\pi\sqrt{-1}x_1)$, let  
\begin{equation}
w = [1,0,0],\quad v= [0,1,0],\quad u=[0,0,1] \in \Gamma \label{uvw}
\end{equation}
and \begin{equation}\alpha=\exp\left(\frac{2\pi\sqrt{-1}}{q}x_3\right),\quad \beta = \exp\left(\frac{2\pi\sqrt{-1}}{q}x_2\right) \in U(1). \label{introfluctuation}\end{equation}

 Then we have $\rho_{{\rm \tiny{fin}},x}(w) = \gamma$ and
\begin{equation}  \rho_{{\rm \tiny{fin}},x}(u) = \alpha\left(\begin{array}{ccccc}0 & 0 & \cdots & 0 & 1 \\ 1 & 0 & 0 &\cdots & 0 \\ 0 & 1 & 0 & \cdots & 0 \\ \vdots&\ddots &\ddots & \ddots & \vdots \\ 0 &\cdots &0 & 1& 0\end{array}\right), \quad \rho_{{\rm \tiny{fin}},x}(v) = \beta\left(\begin{array}{cccc}1 & 0 & \cdots & 0 \\0 & \gamma & \cdots & 0 \\ \vdots & \ddots & \ddots & \vdots \\ 0 & \cdots & 0 & \gamma^{q-1}\end{array}\right), \label{intromatrixrep}
\end{equation}
Here, we identify $\mathcal{H}_x$ with $\mathbb{C}^q$ where the above matrices act.       
Moreover, the following holds:
\begin{align}
&{}\mbox{pair $(\alpha^q,\beta^q)$ is uniquely determined in $U(1) \times U(1)$ by $\rho_{{\rm \tiny{fin}},x}$} \notag \\  &{}\mbox{and this pair determines $\rho_{{\rm \tiny{fin}},x}$ up to unitary equivalence.} \label{intronewequivalence}
\end{align}

Next, we also recall the definition of the irreducible unitary representation $\rho_h$ of the real Heisenberg group $G = \mbox{Heis}_3(\mathbb{R})$ and $h \in \mathbb{R}\setminus\{0\}$, which is called the Schr\"odinder representation. 
 
It is defined as, for $f \in L^2(\mathbb{R})$ and 
\begin{equation}
 \gamma = [x,y,z] :=  \left(\begin{array}{ccc} 1 & z & x \\ 0 & 1 & y \\ 0 & 0 & 1\end{array}\right)\in G, \label{matrixelementrep}
\end{equation}
\begin{equation} (\rho_h(\gamma)f)(s) = (\rho_h((z,y,x))f)(s) = e^{2\pi \sqrt{-1}h(z+sy)}f(s+x) \label{oldschrep}.
\end{equation}
which is unitary equivalent to 
\begin{equation}
    (\rho_h(\gamma)f)(s) = e^{2\pi \sqrt{-1}(hz+\sqrt{h}sy)}f(s+\sqrt{h}x)=e^{2\pi \sqrt{-1}(hz+\sqrt{h}sy)}e^{\sqrt{h}x\frac{d}{ds}}f(s).   \label{newschrep}
\end{equation}

Then, our result shows that if $h = p/q \in \mathbb{Q}$ ($p,q$ are relatively prime), then the restriction of $\rho_h$ to $\Gamma ={\rm Heis}_3(\mathbb{Z})$  decomposed as follows:

\begin{theorem}{{\rm ($=$ Theorem \ref{discretetoLie})}}\label{introdiscretetoLie} If $h= x_1 =p/q \in \mathbb{Q} \cap [0,1]$, then 
\begin{equation} \rho_h\left|_{\Gamma}\right. \simeq {\int_0^1}^\oplus\!{\int_0^1}^\oplus\rho_{{\rm \tiny{fin}},(x_1, \{qx_1x_2\}, x_3)}dm(x_2)dm(x_3),\label{introdiscretetoHeisenbergLie}
\end{equation}
where $\{a\}$ is the fractional part $a-[a]$ of $a \in \mathbb{R}$ and $dm$ is the Lebesgue measure on $[0,1]$
\end{theorem}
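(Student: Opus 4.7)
The plan is to construct an explicit Zak-type unitary intertwiner $T$ and then identify the resulting fiberwise $q$-dimensional representations via the invariants $(\alpha^q,\beta^q)$ from (\ref{intronewequivalence}).

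First, I would parametrize $\mathbb{R}$ by $s = x_2 + k + qn$ with $x_2 \in [0,1)$, $k \in \{0, \ldots, q-1\}$, $n \in \mathbb{Z}$, and define $T: L^2(\mathbb{R}) \to \int_0^1\!\int_0^1 \mathbb{C}^q\, dm(x_2)\, dm(x_3)$ by
\[
(Tf)(x_2, x_3; k) := \sum_{n \in \mathbb{Z}} f(x_2 + k + qn)\, e^{-2\pi\sqrt{-1}\, x_3 n},
\]
where the fiber $\mathbb{C}^q$ is identified with the space of $q$-periodic sequences indexed by $k \in \mathbb{Z}/q\mathbb{Z}$. A direct Parseval computation---partition $\mathbb{R}$ via $m = k + qn$ into unit intervals and use $\int_0^1 e^{2\pi\sqrt{-1}\, x_3(n-n')}\,dx_3 = \delta_{nn'}$---shows that $T$ is a unitary isomorphism onto the target direct integral.

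Second, for each $\gamma = [n_1, n_2, n_3] \in \Gamma$, I would compute $T\rho_h(\gamma)T^{-1}$ directly from (\ref{oldschrep}), realized so that the central generator $w$ acts by the scalar $e^{2\pi\sqrt{-1}\, h}$. The translation $s \mapsto s + n_3$ cyclically shifts $k$ to $k'' := (k + n_3) \bmod q$ and reindexes $n \mapsto n - \ell$ with $\ell := \lfloor (k + n_3)/q \rfloor$, producing a compensating Fourier phase $e^{2\pi\sqrt{-1}\, x_3 \ell}$. The modulation factor $e^{2\pi\sqrt{-1}\, h(x_2 + k + qn) n_2}$ simplifies via the integrality $hq = p \in \mathbb{Z}$: the $qn$-dependence vanishes since $e^{2\pi\sqrt{-1}\, pn n_2} = 1$. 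The outcome is a measurable family $\rho^{(x_2,x_3)}$ of $q$-dimensional representations of $\Gamma$ on $\mathbb{C}^q$. Each fiber is irreducible because $\rho^{(x_2,x_3)}(v)$ is diagonal with $q$ distinct eigenvalues $e^{2\pi\sqrt{-1}\, h(x_2 + k)}$ (using $\gcd(p, q) = 1$), while $\rho^{(x_2,x_3)}(u)$ cyclically permutes their eigenspaces.

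Third, evaluating on the generators of the image center gives
\[
\rho^{(x_2,x_3)}(w) = e^{2\pi\sqrt{-1}\, x_1} I_q, \quad \rho^{(x_2,x_3)}(u^q) = e^{2\pi\sqrt{-1}\, x_3} I_q, \quad \rho^{(x_2,x_3)}(v^q) = e^{2\pi\sqrt{-1}\, p x_2} I_q,
\]
so the canonical invariants are $(\alpha^q, \beta^q) = (e^{2\pi\sqrt{-1}\, x_3}, e^{2\pi\sqrt{-1}\, p x_2})$. Since $px_2$ and $\{px_2\} = \{qx_1 x_2\}$ differ by an integer, these invariants coincide with those of $\rho_{\mathrm{fin},\,(x_1,\,\{qx_1 x_2\},\,x_3)}$. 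By (\ref{intronewequivalence}), the two irreducible $q$-dimensional representations are unitarily equivalent on each fiber. A standard measurable-selection argument provides fiberwise intertwining unitaries depending measurably on $(x_2, x_3)$, and composing with $T$ yields (\ref{introdiscretetoHeisenbergLie}). The principal obstacle is the phase bookkeeping in the second step: one must track how the cyclic wrap of $k$ produces the compensating Fourier phase while simultaneously exploiting $hq = p \in \mathbb{Z}$ to eliminate the higher-order modulation contributions. Once these cancellations are handled, the identification in the third step reduces to a direct calculation on the three generators $w, u^q, v^q$.
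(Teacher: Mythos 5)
Your proposal is correct, and it reaches the result by a more explicit implementation of the same underlying idea as the paper. The paper also decomposes $L^2(\mathbb{R})$ along the chain $q\mathbb{Z}\subset\mathbb{Z}\subset\mathbb{R}$, but it does so in two separate stages: Step 1 extends $\rho_{{\rm \tiny{fin}},x}$ to $\ell^2(\mathbb{Z})$ and disintegrates over $\widehat{q\mathbb{Z}}$, and Step 2 passes from $L^2(\mathbb{Z})$ to $L^2(\mathbb{R})$ by approximating with locally constant functions on dyadic subintervals and taking a norm limit $\rho_{{\rm \tiny{fin}},(x_1,x_2+(j/2^m)x_1q,x_3)}\to\rho_{{\rm \tiny{fin}},(x_1,qx_1x_2,x_3)}$, invoking (\ref{intronewequivalence}) at the end to replace $qx_1x_2$ by its fractional part. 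Your single Zak-type unitary $T$ collapses both stages into one closed-form computation and eliminates the dyadic approximation and limiting argument altogether; the fact that the modulation part of $\rho_h$ preserves the fibers precisely because $hq=p\in\mathbb{Z}$ is exactly the point where rationality enters in both arguments, and your identification of the fibers through $(\alpha^q,\beta^q)=(e^{2\pi\sqrt{-1}x_3},e^{2\pi\sqrt{-1}px_2})$ matches the paper's use of (\ref{intronewequivalence}). Two small refinements would tighten your write-up: strictly speaking, (\ref{intronewequivalence}) is stated for members of the family $\rho_{{\rm \tiny{fin}},x}$, so you should either note that any irreducible $q$-dimensional representation with central scalar $e^{2\pi\sqrt{-1}p/q}$ lies in that family, or simply conjugate your fiber representation by the diagonal unitary ${\rm diag}(e^{2\pi\sqrt{-1}x_3k/q})_{k}$, which spreads the wrap-around phase of the twisted shift evenly and puts the fiber into the standard form (\ref{intromatrixrep}) outright; since that intertwiner is explicit and continuous in $(x_2,x_3)$, it also renders the appeal to measurable selection unnecessary. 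Finally, depending on your sign convention in the phase $e^{-2\pi\sqrt{-1}x_3n}$, the third parameter may come out as $1-x_3$ rather than $x_3$, which is harmless after the measure-preserving change of variables on $[0,1]$.
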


Combining with (\ref{introfluctuation}), (\ref{intromatrixrep}), it gives an approximation of the left-hand side by integrands in the right-hand side in the above formula (\ref{introdiscretetoHeisenbergLie}). It should be noted that this approximation is stronger than that in the Fell topology of the unitary dual in \cite{Lubotzky} in general. The reason is that the former case gives a uniform approximation in the whole representation space $L^2(\mathbb{R})$, i.e. an approximation in the norm topology, by contrast, the latter gives only a collection of uniform approximations for each finite-dimensional subspaces of $L^2(\mathbb{R})$, i.e. an approximation in the strong topology.  In fact, the latter property is rather easy to prove in the case of discrete nilpotent groups or, more generally, residually finite or amenable groups, as explained in \cite{Lubotzky}.

To elucidate its availability, we mention the mathematical justification of the following (formal) expansion:
\begin{align}h_\theta &:= -2\cos \left(\sqrt{\theta}\frac{d}{\sqrt{-1}ds}\right)-2\cos (\sqrt{\theta}s)  \notag \\ &= -4 + \left(-\frac{d^2}{ds^2}+s^2\right)\theta + O(\theta^2) \label{introformal}.
\end{align}
This expansion gives the Wilkinson formula (\ref{introeigen1}) at the formal level.  Note that the left-hand side $h_\theta$ is a bounded operator; on the other hand, the second term $-\frac{d^2}{ds^2}+s^2$ of the right-hand side, the harmonic oscillator, is an unbounded operator. This fact is the reason why we need justification. It will be explained in a later section \ref{Hofstadterbutterfly} and Chapter \ref{Anotherproofwilkinson} as an application of the above theorem, contrasting with the former arguments by Helffer-Sj\"ostrand \cite{Helffer}, \cite{Helffer01}.
\end{description}

\subsection{Practical information for the usage of nilpotent Floquet-Bloch theory}\label{Introprac}
Here, we provide a step-by-step summary of our theory for practical use. 

\paragraph{Nilpotent version III}  
\begin{description}
\item[{\rm NIII-1}](Decomposition of $L^2(X)$) 
\begin{itemize}
\item By Theorem \ref{introPytlik}, we can decompose the regular representation into finite-dimensional unitary representations $\pi$ with the (finitely additive) measure $\mu$ on the set $\widehat{\Gamma}_{\rm fin}$ as a direct integral. Thus, we need to compute finite-dimensional representations $\pi$, the subset $\widehat{\Gamma}_{\rm fin}$ of the unitary dual, and the Plancherel measure $\mu$ concretely. As for representations $\pi$, Howe \cite{Howe} already classified irreducible finite-dimensional representations of $\Gamma$ in \cite{Howe} (cf.Theorem \ref{Howefinite} in later.)
\item However, instead of doing this, it seems better to approximate them by infinite-dimensional irreducible representations of the Malcev completion $G$ of $\Gamma$ than consider individual finite-dimensional representations that appeared in the above classification for the reason explained before. Especially for practical computation. The same principle also apply to  $\mu$ and $\widehat{\Gamma}_{\rm fin}$.

\item Then, the relatively standard unitary representation theory of the nilpotent Lie group is available. We summarize several beneficial results in Appendices A and B, in which the following theorems play such roles: concrete computations of each representation (Theorem \ref{nilpLieirredrep}), the character formulas (Theorems \ref{nilpotenttrace}, \ref{nilpLietrace}, \ref{nilpLiecharcter}), the Plancherel measure in the Fourier inversion formula (Theorem \ref{FourierinversionLie}). For more concrete procedure, see Section \ref{constracthypoelliptic}.

\item In other words, formal computations using infinite-dimensional representations are justified in principle, and we can feel free to use them.
In later sections, examples of the above arguments will appear in the proofs of Theorems \ref{theorem-geod} and \ref{conj-heat}.

\item More detailed explanation of the theoretical base of the above arguments will be given in Remark \ref{ExplanationofPractical}.
\end{itemize}

\item[{\rm NIII-2}](Spectral analysis of decomposed operators)
\begin{itemize}

\item We need to connect the representation theory to
Floquet-Bloch theory, namely, investigating $\Gamma$-invariant operators acting on $L^2(X)$ (or equivalently, operators acting on $\Gamma(E_R)$, we utilize the line integral, and the de-Rham Hodge theorem for the abelian case and the Lie integral or $\pi_1$ de Rham theorem, their harmonic theory.
Please refer to Section \ref{Infiniteabelian} for the former and Chapter \ref{reductionpathintegral} for the latter cases.
\item To investigate the spectral structure, we first make formal computation   
based on infinite-dimensional irreducible representations of $G$ or its restriction to $\Gamma$, then to make rigorous, replace them with the proofs based on their approximation by finite-dimensional unitary representations of $\Gamma$, which is explained in Chapter \ref{asymnilpotent} in some detail.
\end{itemize}
\end{description}

\subsection{Geometric analogue of the Chebotarev density theorem}\label{IntroChebotarev}
Here, we explain one of the applications that initially motivated us to start this project.  
However, we would like to note the following aspects:
\begin{description}
\item[{\rm (1)}] The main results in this and the following sections are not immediate consequences of the results in the previous subsections. Namely, we need several extra items explained later.
However, if we assume our nilpotent Floquet-Bloch theory, some knowledge of Chen's iterated integrals, and the arguments for the case of abelian extensions, then obtaining the results in this and the following subsections is not so difficult, except that some more tedious computations are required than in the abelian case.  Thus, these results themselves are likely conceivable for experts, which is also applicable to several future problems in the first half of the last chapter.
 
\item[{\rm (2)}] We hope these results themselves attract some interest to people working in related fields, such as geometry, dynamical systems, and probability. However, we believe the scope of our methods, noncommutative Floquet-Bloch theory for nilpotent groups, and our novel method for justification of (formal) asymptotic formulas, somewhat similar to the Wilkinson formula (\ref{introeigen1}), would have broader applications. We propose such candidates in the last chapter for future directions.   
\end{description}

Let $M$ be a compact Riemannian manifold of negative curvature, and let $\pi (x)$ denote the number of prime closed geodesics on $M$ whose length is at most $x$. Celebrated results of Selberg \cite{Selberg}, Huber \cite{Huber}, Margulis \cite{Margulis}, \cite{Margulis2}, Parry and Pollicott \cite{Parry1} asserts that

\begin{theorem} \label{primegeodesictheorem}\[ \pi (x) \sim \frac{e^{hx}}{hx},
\]
Where $h > 0$ is the topological entropy of the geodesic flow on $M$.
\end{theorem}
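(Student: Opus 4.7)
The plan is to follow the dynamical-systems route developed by Parry and Pollicott, using the Anosov property of the geodesic flow and the analytic theory of the dynamical zeta function. First I would set up symbolic dynamics: because $M$ is negatively curved, the geodesic flow $\phi_t$ on the unit tangent bundle $SM$ is a topologically mixing Anosov flow, so by Bowen's construction it admits a Markov section, and is thereby conjugate (up to a small set of orbits irrelevant to counting) to a suspension of a subshift of finite type $(\Sigma_A,\sigma)$ under a H\"older roof function $r:\Sigma_A\to\mathbb{R}_{>0}$. Prime closed geodesics of length $\leq x$ then correspond to prime $\sigma$-periodic orbits with Birkhoff sum $S_n r \leq x$, and $h$ is characterized as the unique $s$ with topological pressure $P(-sr)=0$.

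Next I would introduce the Ruelle dynamical zeta function
\[
\zeta(s)=\prod_{\gamma \text{ prime}}\bigl(1-e^{-s\ell(\gamma)}\bigr)^{-1}
=\exp\!\sum_{n\geq 1}\frac{1}{n}\sum_{\sigma^n x=x}e^{-sS_n r(x)},
\]
and study it through the Ruelle transfer operator $(\mathcal L_s f)(x)=\sum_{\sigma y=x}e^{-sr(y)}f(y)$ acting on a suitable Banach space of H\"older functions on $\Sigma_A$. Ruelle--Perron--Frobenius theory, together with quasi-compactness of $\mathcal L_s$ on this space, gives a simple leading eigenvalue $\lambda(s)=e^{P(-sr)}$ isolated from the rest of the spectrum, and hence a meromorphic continuation of $\zeta(s)$ to a strip $\Re s > h-\varepsilon$ with a unique simple pole at $s=h$. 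The topological mixing of $\phi_t$ forces $\lambda(s)\neq 1$ for all other $s$ on the vertical line $\Re s=h$, so $\zeta$ has no further singularities there and $-\zeta'(s)/\zeta(s)$ extends analytically to a neighborhood of $\Re s = h$ apart from the simple pole at $s=h$ with residue $1$.

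Finally I would apply a Wiener--Ikehara Tauberian theorem to the Dirichlet-like series $-\zeta'(s)/\zeta(s)=\sum_\gamma \ell(\gamma)\sum_{k\geq 1}e^{-sk\ell(\gamma)}$, which dominates the counting measure of closed geodesic lengths. This yields the asymptotic $\psi(x):=\sum_{\ell(\gamma)\leq x}\ell(\gamma)\sim e^{hx}/h$ for the ``geodesic von Mangoldt'' function, and partial summation plus the standard removal of powers of prime orbits converts this into $\pi(x)\sim e^{hx}/(hx)$. The main obstacle is the third step of the second paragraph: proving the absence of other poles of $\zeta$ on the critical line $\Re s = h$ and obtaining enough analytic continuation to apply a Tauberian theorem. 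This requires spectral gap information for $\mathcal L_h$ beyond mere quasi-compactness, and is where the mixing of the Anosov flow (as opposed to just its transitivity) enters essentially, as emphasized by Parry--Pollicott.
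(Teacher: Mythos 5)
The paper does not prove this statement at all: it is recalled as a classical result with references to Selberg, Huber, Margulis, and Parry--Pollicott, and your sketch is essentially a faithful reconstruction of the Parry--Pollicott proof (symbolic dynamics via Markov sections, Ruelle transfer operators and the dynamical zeta function, nonvanishing on $\Re s = h$ from mixing, then Wiener--Ikehara and partial summation), which is one of the cited routes. The only cosmetic inaccuracy is your claim of meromorphic continuation to a uniform strip $\Re s > h-\varepsilon$; what the argument actually yields (and all that Ikehara needs) is analyticity and nonvanishing on an open neighborhood of the closed half-plane $\{\Re s \geq h\}$ minus the simple pole at $s=h$, a uniform strip requiring Dolgopyat-type estimates that are only needed for error terms.
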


The above result is called the prime geodesic theorem or the prime orbit theorem,  a geometric analog of the prime number theorem.
 Furthermore, several results in geometry are analogs of the Dirichlet density theorem for arithmetic progressions or the Chebotarev density theorem for algebraic extensions of number fields. 

Let us formulate problems precisely. Taking into account that there is one-to-one correspondence among the following triplet,  closed geodesics in $M$, free homotopy classes of closed curves in $M$, and conjugacy classes of each element in the fundamental group $\pi_1(M)$ of $M$, we consider a surjective homomorphism 
\[\Phi : \pi_1(M) \to \Gamma\] 
for a finitely generated discrete group $\Gamma$.

For a conjugacy class $\alpha$ in $\Gamma$, let $\pi (x,\Phi,\alpha )$ denote the number of prime closed geodesics $\gamma$ on $M$ whose length are at most $x$ and satisfy $\Phi ([\gamma ]) \subset \alpha$, where $[\gamma ]$ denotes the corresponding conjugacy class of $\gamma$ in $\pi_1(M)$. When $\Phi$ is a canonical projection to some quotient group $\Gamma$ of $\pi_1(M)$, we simply denote $\pi(x, \alpha) = \pi(x,\Phi,\alpha)$. 
Then, our problems are stated as follows:

\begin{problem}
What is the asymptotic behavior of $\pi (x,\Phi, \alpha )$ as $x \to \infty$?
\end{problem}

When $\Gamma$ is the trivial group, $\pi(x,\Phi,\alpha ) = \pi (x)$, and the answer to the above problem is given by the prime geodesic theorem. First answer of the above problem is given when $\Gamma$ is a finite quotient group of $\pi_1(M)$ by Sunada \cite{Sunada1}, Parry and Pollicott \cite{Parry2} and Adachi and Sunada \cite{Adachi} independently as follows:

\begin{theorem}\label{finitegeodesic} \[ \pi (x,\alpha ) \sim \frac{{}^{\tiny{\#}} \alpha}{{}^{\tiny{\#}} \Gamma}\frac{e^{hx}}{hx},
\]
where ${}^{\tiny{\#}} \alpha$ and ${}^{\tiny{\#}} \Gamma$ denote the cardinals of $\alpha$ and $\Gamma$ respectively.
\end{theorem}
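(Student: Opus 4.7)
The plan is to execute the standard twisted dynamical zeta function argument of Sunada, Parry--Pollicott, and Adachi--Sunada, which is a dynamical mirror of Chebotarev's original proof. Let $\Pi:\widetilde{M}\to M$ be the finite normal Riemannian cover associated to $\ker\Phi$, so that $\Gamma$ acts freely as the deck transformation group. A prime closed geodesic $\gamma$ on $M$ determines a Frobenius-type element $\Phi([\gamma])$ which is well-defined up to conjugacy in $\Gamma$. The starting point is the orthogonality expansion of the indicator function of the conjugacy class $\alpha$:
\[
\mathbf{1}_\alpha(g) \;=\; \frac{\#\alpha}{\#\Gamma}\sum_{\chi\in\widehat{\Gamma}}\overline{\chi(\alpha)}\,\chi(g),
\]
where the sum runs over the irreducible characters of $\Gamma$. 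This reduces the counting of geodesics with $\Phi([\gamma])\in\alpha$ to estimating, for each $\chi$, the character sum $\sum_\gamma\chi(\Phi([\gamma]))$.

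For each irreducible unitary representation $\pi$ of $\Gamma$ on $V_\pi$, I would introduce the twisted Ruelle zeta function
\[
\zeta_\pi(s)\;=\;\prod_{\gamma\text{ prime}}\det\bigl(I_{V_\pi}-\pi(\Phi([\gamma]))\,e^{-s\ell(\gamma)}\bigr)^{-1},
\]
absolutely convergent for $\Re s>h$. Its logarithmic derivative $-\zeta_\pi'/\zeta_\pi$ is a Dirichlet-type series whose coefficients encode $\chi_\pi(\Phi([\gamma])^k)\,\ell(\gamma)$ over primes $\gamma$ and iterates $k\ge 1$. The substantive analytic input is the pair of facts: (i) for $\pi$ trivial, $\zeta_{\mathrm{triv}}$ is the usual Ruelle zeta function of $M$ and has a simple pole at $s=h$ with no other singularity on the line $\Re s=h$; (ii) for every nontrivial irreducible $\pi$, $\zeta_\pi$ is holomorphic and nonvanishing at $s=h$. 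Both statements follow from coding the geodesic flow by a Markov subshift of finite type, studying the twisted Ruelle--Perron--Frobenius operator $\mathcal L_{s,\pi}$ on a suitable H\"older function space, and combining the topological mixing of the flow with the irreducibility of $\pi$ (equivalently, with the connectivity of the cover $\widetilde M$) to conclude that the leading eigenvalue of $\mathcal L_{s,\pi}$ at $\Re s=h$ is strictly smaller in modulus than in the trivial sector.

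Feeding these facts into a Wiener--Ikehara Tauberian theorem applied to
\[
\sum_{\gamma,\,k}\mathbf{1}_\alpha\bigl(\Phi([\gamma])^k\bigr)\,\ell(\gamma)\,e^{-sk\ell(\gamma)}\;=\;\frac{\#\alpha}{\#\Gamma}\sum_{\chi\in\widehat{\Gamma}}\overline{\chi(\alpha)}\Bigl(-\frac{\zeta_\chi'(s)}{\zeta_\chi(s)}\Bigr),
\]
one sees that only the trivial character produces a first-order pole at $s=h$ with residue $1$, while the contribution of every nontrivial character is holomorphic there and absorbed into a lower-order remainder. Inverting the Tauberian estimate and using $\overline{\chi_{\mathrm{triv}}(\alpha)}=1$ yields
\[
\pi(x,\alpha)\;\sim\;\frac{\#\alpha}{\#\Gamma}\cdot\frac{e^{hx}}{hx},
\]
as required. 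The main obstacle is the spectral gap claim in (ii): establishing that for each nontrivial irreducible $\pi$ the twisted transfer operator has strictly smaller spectral radius than the untwisted one at $\Re s=h$. This is the only genuinely dynamical ingredient, and it is precisely the step whose infinite-dimensional, noncommutative analogue will force the Floquet--Bloch technology of the later chapters into play.
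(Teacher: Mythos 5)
The paper does not prove Theorem \ref{finitegeodesic} itself but quotes it from Sunada, Parry--Pollicott, and Adachi--Sunada, and your outline --- character orthogonality for the conjugacy class, twisted Ruelle zeta/L-functions, a spectral gap for the twisted transfer operators in the nontrivial sectors coming from weak mixing plus connectivity of the finite cover, and a Wiener--Ikehara Tauberian step --- is exactly the argument of those cited sources, so it is essentially the same approach. The only point to tighten is that the Tauberian theorem requires holomorphy and nonvanishing of $\zeta_\pi$ on the entire line $\Re s = h$ (not merely at $s=h$), which your remark about the leading eigenvalue of $\mathcal{L}_{s,\pi}$ along $\Re s = h$ does in fact supply.
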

This theorem can be considered a geometric analog of the (usual) Chebotarev density theorem for finite extensions of number fields.

As for infinite extensions, the situations are rather different in contrast with number theory, which will be explained in Section \ref{InfiniteextensionsNT}.
In geometry, the first answer is provided in the simplest case that $\Gamma$ is an infinite abelian quotient group of $\pi_1(M)$. In the case when $M$ is  a compact Riemann surface of genus $g$ with constant negative curvature $-1$ and $\Gamma = H_1(M,\mathbb{Z})$, the following asymptotic result is obtained by Phillips and Sarnak \cite{Phillips}:

\begin{theorem}\label{abel-geod}\begin{equation} \pi(x,\alpha ) \sim \frac{Ce^x}{x^{g+1}}\left(1 + \frac{c_1}{x} + \frac{c_2}{x^2} + \cdots \right) \label{Sarnak}
\end{equation}
where 
\begin{equation} C = \Big(\frac{{\rm vol}(M)}{2\pi}\Big)^{b/2}\frac{1}{{\rm vol}(\mbox{J}(M))} = (g-1)^g, \label{Sarnakconstant}
\end{equation}
where ${\rm vol}(M)$, ${\rm vol}(\mbox{J}(M))$ and $b$ are the volumes of $M$ and the Jacobi torus $\mbox{J}(M)$, and the first Betti number of $M$ respectively
\end{theorem}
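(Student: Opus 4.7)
The plan is to apply the abelian Floquet-Bloch decomposition from Subsection~1.1 to the Laplacian on the maximal abelian covering $X\to M$ associated with the canonical projection $\Phi:\pi_1(M)\to\Gamma=H_1(M,\mathbb{Z})\simeq\mathbb{Z}^{2g}$. The Brillouin zone is here the Jacobi torus $J(M)=H^1(M,\mathbb{R})/H^1(M,\mathbb{Z})$, and for each character $\chi_a$ ($a\in J(M)$) the restricted operator $P^a$ acts on sections of a flat line bundle $E_{\chi_a}$. The de~Rham/Hodge identification invoked in step NIII-2 lets us realize $P^a$ as a smooth family of elliptic operators on the fixed space $L^2(M)$ via the usual gauge (line-integral) transform.

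First I would introduce, for each $a\in J(M)$, the twisted Selberg zeta function
\begin{equation}
Z(s,\chi_a)=\prod_{[\gamma]\text{ prime}}\prod_{k=0}^{\infty}\bigl(1-\chi_a([\gamma])\,e^{-(s+k)\ell(\gamma)}\bigr),
\end{equation}
whose logarithmic derivative is the Dirichlet series attached to the counting function $\psi(x,a)=\sum_{\ell(\gamma)\le x}\chi_a([\gamma])$ over prime closed geodesics. Selberg's trace formula for the line bundle $E_{\chi_a}$ gives meromorphic continuation, and in constant curvature $-1$ its rightmost singularity sits at $s_0(a)=\tfrac12+\sqrt{\tfrac14-\lambda_0(a)}$, where $\lambda_0(a)$ is the lowest eigenvalue of $P^a$. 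Character orthogonality on $\Gamma$ then produces
\begin{equation}
\pi(x,\alpha)=\frac{\#\alpha}{\mathrm{vol}(J(M))}\int_{J(M)}\overline{\chi_a(\alpha)}\,\psi_{\mathrm{prime}}(x,a)\,dm(a),
\end{equation}
so the problem reduces to extracting the joint $(x,a)$-asymptotics of $\psi_{\mathrm{prime}}(x,a)$ and integrating.

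Next I would perform Rayleigh--Schr\"odinger perturbation of $P^a$ at $a=0$. The trivial eigenvalue $\lambda_0(0)=0$ is simple, and the gauge-transformed family gives $P^a=\Delta+2\sqrt{-1}\iota_{\omega_a}d+|\omega_a|^2+\cdots$ where $\omega_a$ is the harmonic representative of $a$; the first-order term vanishes on constants, so $\lambda_0(a)=\tfrac12 Q(a)+O(|a|^4)$ with $Q$ the Albanese (Jacobi) quadratic form on $H^1(M,\mathbb{R})$. A uniform Wiener--Ikehara Tauberian theorem applied to $Z(s,\chi_a)$, combined with a spectral-gap estimate $s_0(a)\le 1-c|a|^2$ and the ``non-concentration'' bound saying $s_0(a)$ stays bounded away from $1$ for $a$ outside any neighborhood of $0$, localizes the integral to a shrinking ball around the trivial character. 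Watson's lemma (Laplace's method) on the Gaussian model
\begin{equation}
\pi(x,\alpha)\sim\frac{\#\alpha}{\mathrm{vol}(J(M))}\cdot\frac{e^x}{x}\int_{H^1(M,\mathbb{R})}e^{-\tfrac12 Q(a)x}\bigl(1+\text{even Taylor corrections}\bigr)\,dm(a)
\end{equation}
yields the factor $(2\pi/x)^{g}/\sqrt{\det Q}$; recognizing $\sqrt{\det Q}$ as $\mathrm{vol}(J(M))$ relative to the integral lattice, and substituting the Gauss--Bonnet identity $\mathrm{vol}(M)=4\pi(g-1)$ into the formula (\ref{Sarnakconstant}), collapses the constant to $(g-1)^{g}$. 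The sub-leading terms $c_1/x+c_2/x^2+\cdots$ come from systematic expansion using the higher even Taylor coefficients of $\lambda_0(a)$, together with the next poles of $Z(s,\chi_a)$.

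The main obstacle I anticipate is obtaining the Tauberian step \emph{uniformly} in the Brillouin-zone parameter $a$: one must control the order and residue of the leading singularity of $Z(s,\chi_a)$, and exclude eigenvalue collisions of $P^a$ near $a=0$, jointly in $(s,a)$ on a strip to the left of $\mathrm{Re}\,s=1$. This is precisely where the continuity of $\lambda_0(a)$ from step A-2 and Kato's analytic perturbation theory are indispensable; a secondary technical point is bookkeeping the Jacobi-metric normalization that matches the constant in (\ref{Sarnakconstant}) to the Gaussian determinant, but this is routine once $Q$ is identified with the $L^2$-metric on harmonic $1$-forms.
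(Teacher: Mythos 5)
Your overall skeleton---Fourier inversion over the Jacobi torus, the gauge (line-integral) transform to a fixed $L^2(M)$, the perturbative expansion of $\lambda_0(a)$ as a quadratic form in $a$, localization near the trivial character via a spectral gap, and Laplace's method producing $e^x/x^{g+1}$ with $C=\bigl(\mathrm{vol}(M)/2\pi\bigr)^{b/2}/\mathrm{vol}(J(M))$---is exactly the architecture behind this statement (the paper itself quotes it from \cite{Phillips} and runs the same scheme in Chapter 10 for the nilpotent generalization). The genuine gap is in your analytic engine. A Wiener--Ikehara Tauberian theorem applied to $Z(s,\chi_a)$ yields only the leading term of $\psi(x,a)$, with no error term, so it cannot deliver the full expansion $1+c_1/x+c_2/x^2+\cdots$, nor the uniformity in $a$ needed to integrate the asymptotics term by term over the Brillouin zone; Kato perturbation controls \emph{where} the leading singularity $s_0(a)$ sits, not the size of the Tauberian remainder. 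What actually makes the expansion work, both in \cite{Phillips} and in the paper's Chapter 10, is bypassing Tauberian theorems altogether: insert a mollified indicator $h=\chi_{[-T,T]}\ast k_\varepsilon$ directly into the Selberg trace formula for the twisted Laplacian, multiply by $\overline{\chi_a(\alpha)}$ and integrate over the character torus, discard all eigenvalues bounded away from $0$ and all characters outside a neighborhood of $\mathbf{1}$ using the uniform two-sided comparison $c_1\delta_A(\chi,\mathbf{1})^2\le\lambda_0(\chi)\le c_2\delta_A(\chi,\mathbf{1})^2$ of Theorem \ref{sunada}, keep the explicit errors $O(e^{\nu T})$, $O(\varepsilon e^{T/2})$, $O(T/\varepsilon^2)$ with the choice $\varepsilon=e^{-\delta T}$, and only then perform the Morse-lemma/Laplace expansion in $a$. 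Replacing your Tauberian step by this trace-formula bookkeeping (or by an effective contour argument exploiting the spectral interpretation of the zeros of $Z(s,\chi_a)$) is the missing ingredient.

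Two smaller points. First, the ``next poles of $Z(s,\chi_a)$'' contribute only terms of size $e^{s_1 x}=e^{x}e^{-(1-s_1)x}$, which are smaller than every power $e^x/x^N$; the coefficients $c_i$ come instead from the higher Taylor coefficients of $s_0(a)=\tfrac12+\sqrt{\tfrac14-\lambda_0(a)}$ (not just of $\lambda_0$), from the factor $1/s_0(a)$, and from expanding $\overline{\chi_a(\alpha)}$ in $a$, which is where the dependence on the class $\alpha$ enters the lower-order terms. Second, your constant bookkeeping does not close as stated: with $\mathrm{vol}(M)=4\pi(g-1)$ you get $\bigl(\mathrm{vol}(M)/2\pi\bigr)^{b/2}=2^g(g-1)^g$, so the collapse to $(g-1)^g$ requires the normalization the paper actually uses ($\mathrm{vol}(J(M))=1$ together with its stated convention $\mathrm{vol}(M)=2\pi(g-1)$), i.e.\ the identification of $Q$ with the $L^2$-metric on harmonic forms relative to the integral lattice must be audited for exactly this factor of $2^g$ before the constant in (\ref{Sarnakconstant}) is recovered.
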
 
Note that ${\rm vol}(M) = 2\pi(g-1)$ by the Gauss-Bonnet formula and ${\rm vol}(\mbox{J}(M)) =1$ by the computation of the Riemannian matrix in the above case.
The leading term in the above theorem was also obtained by Katsuda and Sunada \cite{Katsuda1} independently. The genus $g$ is half of the first Betti number $b$, the rank of the first integral homology group $H_1(M,\mathbb{Z})$ of $M$.

This kind of asymptotic expansion is generalized to the case of prime closed geodesics in certain compact Riemannian manifolds with variable negative curvature by Anantharaman \cite{Anantharaman1}, Pollicott and Sharp \cite{Pollicott2}, and Kotani \cite{Kotani1}.

As for leading term asymptotics, more generally, prime closed orbits of weakly mixing Anosov flows on compact Riemannian manifolds by Lalley \cite{Lalley}, Pollicott \cite{Pollicott1}, Katsuda and Sunada \cite{Katsuda2}, and Sharp \cite{Sharp1}. Furthermore, there are generalizations to the central limit theorems and the large deviations \cite{Lalley}, \cite{Babillot}, \cite{Anantharaman2}. For additional information, see \cite{Parry3}, \cite{Sharp2}.

To compare conjectures below, we recall a result in \cite{Katsuda2}. 
Let $\varphi_t$ be a smooth, transitive, and weakly mixing Anosov flow on a compact manifold $N$, and $Z$ be the vector field generating the flow $\varphi_t$. 
We denote by $h$ the topological entropy of the flow and by $m$ a (unique) invariant probability measure on $N$ of maximal entropy. 

Given a surjective homomorphism $\Phi:\pi_1(N) \to H$ which factors as $\Phi = \Phi_2\circ\Phi_1$ through $\Phi_1:\pi_1(N) \to H_1(N,\mathbb{Z})$ and $\Phi_2: H_1(N,\mathbb{Z}) \to H$ onto an abelian group $H$, we set, similarly as before, for each $\alpha \in H$ and positive number $x$,
\[ \pi(x, \alpha) = {}^\sharp\{ \mathfrak{p}| \mbox{closed orbits with}\; \Phi([\mathfrak{p}]) \subset \alpha \; \mbox{and}\; \ell(\mathfrak{p}) < x \},
\]
where $[\mathfrak{p}]$ denotes the conjugacy class in $\pi_1(N)$ corresponding to the free homopy class of $\mathfrak{p}$ and $\ell(\mathfrak{p})$ the least period of $\mathfrak{p}$.

We define the winding cycle $\Psi$, which is a linear functional on the space of closed one forms on $N$, by
\[   \Psi(\omega ) = \int_N \langle \omega, Z \rangle dm.
\]
Since $\Phi(\mbox{exact forms}) = 0$, the linear functional $\Psi$ yields a homology class in
$H_1(N, \mathbb{R}) = \mbox{Hom}(H^1(N, \mathbb{R}), \mathbb{R})$. The ergodicity of the flow leads to the equality
\[ \Psi(\omega )  = \lim_{t\to \infty} \frac{1}{t}\int_0^t\langle \omega, Z \rangle(\varphi_\tau)d\tau \quad \mbox{a.e.}\;\tau.,
\]
When the orbits are traveling. The central limit theorem (cf. Denker and Philipp \cite{Denker})
guarantees the existence of the limit
\begin{equation}
\delta(\omega, \omega)= \lim_{t\to \infty}\frac{1}{t}\int_X dm(x) \left(\int_0^t\langle \omega, Z \rangle(\varphi_\tau)d\tau-t\Psi(\omega)\right)^2,  \label{denkerdelta}
\end{equation}
which yields a positive semi-definite quadratic form on $H^1(N, \mathbb{R})$. We call $\delta$ the covariance form. 
As shown in \cite{Katsuda2}, $\delta$ is positive definite on $\mbox{Ker}\;\Psi$, and hence gives rise to a Euclidean metric on $\mbox{Ker}\;\Psi$. 

Consider the character group $\widehat{H}$ of $H$. 
The tangent space $T_\textbf{1}\widehat{H}$ at the trivial character $\textbf{1}$ is identified with the dual $H^\dagger = \mbox{Hom}(H,\mathbb{R})$, which is also identified, naturally, with a subspace in $H^1(N, \mathbb{R})$. 
Therefore, if $\Psi$ vanishes on $H^\dagger$, the covariance form induces a flat metric on the group $\widehat{H}$.

We denote by $\mbox{vol}(\widehat{H})$ the volume with respect to the metric.
Then, we recall the following theorem in \cite{Katsuda2}. 

\begin{theorem}\label{KatsudaSunadaIHES} If $\Phi$ vanishes on the dual $H^\dagger$, then
\begin{equation}
\pi(x,\alpha) \sim C\frac{e^{hx}}{x^{(b/2)+1}}, \label{Anosovabel}
\end{equation}
where $b = {\rm rank}\; H$ and 
\[C = \frac{1}{(2\pi)^{b/2}h({\rm vol}(\widehat{H})}.\]
\end{theorem}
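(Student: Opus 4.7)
The plan is to follow the symbolic dynamics / twisted transfer operator strategy that generalises Phillips-Sarnak to the Anosov setting. First, I would code the flow $\varphi_t$ by a Markov partition as a suspension flow over a subshift of finite type $(\Sigma,\sigma)$ with Hölder roof function $r:\Sigma\to\mathbb{R}_{>0}$. Then prime closed orbits $\mathfrak{p}$ of $\varphi_t$ biject with prime periodic orbits $\mathcal{O}$ of $\sigma$ (modulo the usual boundary corrections), with $\ell(\mathfrak{p})=\sum_{x\in\mathcal{O}}r(x)$, while the class $\Phi([\mathfrak{p}])\in H$ is realised by a Birkhoff sum of an $H$-valued Hölder cocycle $\psi:\Sigma\to H$. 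This reduces counting to a problem about the transfer operator of $\sigma$ weighted by $\chi\circ\psi$.

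Next, use Pontryagin duality on $H$ to write
\begin{equation*}
\pi(x,\alpha)=\int_{\widehat{H}}\overline{\chi(\alpha)}\,\pi_\chi(x)\,d\chi,\qquad \pi_\chi(x):=\sum_{\ell(\mathfrak{p})<x}\chi(\Phi([\mathfrak{p}])),
\end{equation*}
with $d\chi$ the Haar measure on $\widehat{H}$ dual to the counting measure on $H$. For each $\chi$ near the trivial character $\mathbf{1}$ and each $s$ near $h$, consider the Ruelle transfer operator $L_{s,\chi}$ on Hölder functions with weight $\chi(\psi)e^{-sr}$. Standard perturbation theory (Ruelle, Parry-Pollicott) gives a real-analytic leading eigenvalue $\lambda(s,\chi)$ with $\lambda(h,\mathbf{1})=1$ and a spectral gap. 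Identifying $T_{\mathbf{1}}\widehat{H}=H^{\dagger}\subset H^{1}(N,\mathbb{R})$, a direct computation gives
\begin{equation*}
\partial_{\xi}\lambda(h,\mathbf{1})=\sqrt{-1}\,\Psi(\xi)\cdot\textstyle\int r\,dm,\qquad \partial_{\xi}^{2}\lambda(h,\mathbf{1})=-\tfrac{1}{2}\delta(\xi,\xi)\cdot\textstyle\int r\,dm,
\end{equation*}
where the first-order term vanishes by the hypothesis $\Psi|_{H^{\dagger}}=0$, and the second order is exactly the covariance form \eqref{denkerdelta}. Solving $\lambda(s(\chi),\chi)=1$ by the implicit function theorem yields
\begin{equation*}
s(\chi)=h-\tfrac{1}{2}\delta(\xi,\xi)+O(|\xi|^{3}).
\end{equation*}

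Then apply a Wiener-Ikehara Tauberian theorem to the twisted zeta function $Z_\chi(s)=\prod_{\mathfrak{p}}(1-\chi(\Phi([\mathfrak{p}]))e^{-s\ell(\mathfrak{p})})^{-1}$, whose leading singularity sits at $s(\chi)$, to obtain $\pi_\chi(x)\sim e^{s(\chi)x}/(s(\chi)x)$ uniformly for $\chi$ in a small neighbourhood $U$ of $\mathbf{1}$. Substituting into the Fourier inversion above and evaluating the resulting integral by Laplace's method near $\mathbf{1}$ gives
\begin{equation*}
\pi(x,\alpha)\sim\frac{e^{hx}}{hx}\int_{T_{\mathbf{1}}\widehat{H}}\overline{\chi_{\xi}(\alpha)}\,e^{-x\delta(\xi,\xi)/2}\,d\xi\sim\frac{1}{(2\pi)^{b/2}h\,\mathrm{vol}(\widehat{H})}\cdot\frac{e^{hx}}{x^{(b/2)+1}},
\end{equation*}
after interpreting the Gaussian determinant in terms of the $\delta$-induced flat metric on $\widehat{H}$.

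The hard part is the uniform Tauberian input: one needs a spectral gap for $L_{s,\chi}$ that holds uniformly in $\chi$ on all of $\widehat{H}\setminus U$ together with polynomial control of the resolvent as $|\mathrm{Im}\,s|\to\infty$, so that the contribution of characters far from $\mathbf{1}$ to the Fourier integral is swamped by the Gaussian peak at $\mathbf{1}$. In the constant-curvature case this follows from the analytic structure of the Selberg zeta function; for general weakly mixing Anosov flows one invokes the Dolgopyat-style oscillatory-integral estimates developed by Pollicott and Sharp. Everything else—the cocycle identification, the perturbation expansion, and the Laplace computation—is essentially a bookkeeping exercise once these spectral estimates are in hand.
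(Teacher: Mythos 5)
Your proposal follows essentially the same route as the actual proof of this theorem, which the paper does not reprove but quotes from \cite{Katsuda2}: symbolic dynamics via Markov partitions, Fourier inversion over $\widehat{H}$, perturbation of the leading eigenvalue of the twisted transfer operator (first derivative killed by $\Psi|_{H^\dagger}=0$, Hessian given by the covariance form $\delta$), a uniform Tauberian step for $\chi$ near $\mathbf{1}$, and a Gaussian/Laplace evaluation producing the factor $(2\pi)^{b/2}h\,{\rm vol}(\widehat{H})$.

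One calibration point: your "hard part" is overstated. For the leading-term asymptotic \eqref{Anosovabel} no Dolgopyat-type oscillatory estimates are required (the result predates \cite{Dolgopyat}); what is needed for characters outside a neighbourhood of $\mathbf{1}$ is only that the twisted transfer operator $L_{s,\chi}$ has no unimodular eigenvalue on ${\rm Re}\,s=h$, which is ruled out by a Livshits/weak-mixing argument for the cocycle $(r,\psi)$, and then compactness of $\widehat{H}$ together with the standard Parry--Pollicott analyticity and spectral-gap machinery gives the uniform domination by the Gaussian peak. Dolgopyat-style zero-free regions beyond the critical line become necessary only for the full asymptotic expansions (Conjecture \ref{conj-geod} and its variable-curvature analogues), as the paper itself notes in its discussion of future directions. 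Apart from that, and from some constant-tracking in your expansion of $s(\chi)$ (your stated $\partial_\xi^2\lambda$ is off by a factor of $2$ relative to the $s(\chi)=h-\tfrac12\delta(\xi,\xi)+O(|\xi|^3)$ you then use), the argument is sound and is the standard one.
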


The next case of our target is a discrete nilpotent group $\Gamma$. This generalization seems to be natural for the following reasons:
If $\Gamma$ is an abelian group, then it is a quotient of the integral first homology group $H_1(M, \mathbb{Z})$, since $H_1(M, \mathbb{Z})$ is a covering transformation group of the maximal abelian covering of $M$. 
By the Hurewicz theorem, it is isomorphic to the abelianization
\[\Gamma \simeq \pi_1(M)/[\pi_1(M),\pi_1(M)],\]
 of $\pi_1(M)$, where $[\pi_1(M),\pi_1(M)]$ denotes the commutator subgroup of $\pi_1(M)$. 
An example of nilpotent group naturally arises as $\Gamma \simeq \pi_1(M)/[\pi_1(M),[\pi_1(M),\pi_1(M)]]$ where $[\pi_1(M),[\pi_1(M),\pi_1(M)]]$ denotes the double commutator subgroup of $\pi_1(M)$. 

Here, we propose the following two conjectures. By taking a finite covering if necessary, we may assume $\Gamma$ is torsion-free.

\begin{conjecture}\label{conj-flow} Let $\varphi_t$ be a smooth, transitive and weakly mixing Anosov flow on a compact
manifold $N$, $\Gamma$ be a finitely generated torsion-free nilpotent group, $\Phi:\pi_1(N) \to \Gamma$ be a surjective homomorphism and $\alpha$ be a conjugacy class of a central element of $\Gamma$. Let $H = \Gamma/[\Gamma,\Gamma]$ and $b$ is the rank of $H$. If the winding cycle $\Psi$ vanishes on the dual $H^\dagger$, then, we have
\[    \pi (x,\Phi, \alpha ) \sim \frac{Ce^{hx}}{x^{1+d/2}},
\]
where 
\[  C = \frac{1}{(2\pi)^{
b/2}h({\rm vol}(\widehat{H}))}\zeta_H(d/2)
\]
where $\zeta_H (d/2)$ the special value at $s = d/2$ of the spectral zeta function $\zeta_H (s)$ of a hypo-elliptic operator $H$ related to some irreducible unitary representations of the Malcev completion $G$ of $\Gamma$, which is the simply connected nilpotent Lie group $G$ containing $\Gamma$ as a lattice subgroup, and $d$ is the polynomial growth order of $\Gamma$. 
\end{conjecture}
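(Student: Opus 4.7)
The plan is to mirror the proof of Theorem \ref{KatsudaSunadaIHES}, replacing scalar characters of $H$ by the finite-dimensional unitary representations $\pi \in \widehat{\Gamma}_{\rm fin}$ supplied by Theorem \ref{introPytlik}. Since $\alpha$ is the conjugacy class of a central element $g_0$, Schur's lemma gives $\pi(g_0) = \omega_\pi(\alpha)\,I_{\dim\pi}$ for every irreducible $\pi$, so the Fourier inversion formula yields
\begin{equation}
\pi(x,\Phi,\alpha) \;=\; \int_{\widehat{\Gamma}_{\rm fin}} \overline{\omega_\pi(\alpha)}\,N_\pi(x)\,d\mu(\pi), \qquad N_\pi(x) \;=\; \sum_{\ell(\mathfrak{p})<x}\frac{\mathrm{Tr}\,\pi(\Phi([\mathfrak{p}]))}{\dim\pi}.
\end{equation}
To each $\pi$ I attach the twisted Ruelle zeta function $L(s,\pi) = \prod_{\mathfrak{p}}\det(I-\pi(\Phi([\mathfrak{p}]))e^{-s\ell(\mathfrak{p})})^{-1}$. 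Via a Markov partition for the weakly mixing Anosov flow, $L(s,\pi)$ is converted, in the standard Bowen--Ruelle manner, into the spectral theory of a family of matrix-valued transfer operators $\mathcal{L}_{s,\pi}$ on Hölder sections, and the growth of $N_\pi(x)$ is controlled by the leading eigenvalue $e^{P(s,\pi)}$ of $\mathcal{L}_{s,\pi}$.

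Next, I parametrize a neighborhood of the trivial representation in $\widehat{\Gamma}_{\rm fin}$ by $\xi$ lying in a fundamental domain of the dual $\mathfrak{g}^*$ of the Lie algebra of the Malcev completion $G$, using the branching formula of Theorem \ref{introdiscretetoLie} together with its extension to general torsion-free nilpotent $\Gamma$ via Kirillov's orbit method. Applying the Chen iterated-integral / $\pi_1$-de Rham gauge transform of Chapter \ref{reductionpathintegral}, the twisted operator $\mathcal{L}_{s,\pi_\xi}$ is conjugated to a perturbation of $\mathcal{L}_{s,\mathbf{1}}$ by the derived representation $d\pi_\xi$ applied to a distinguished flat $\mathfrak{g}$-valued connection on $N$ encoding $\Phi$. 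Expanding $P(s,\pi_\xi)$ around $\xi=0$, the linear term equals the winding cycle $\langle\xi,\Psi\rangle$ restricted to $H^\dagger$ and vanishes by hypothesis; the quadratic contribution in the abelian directions $H^\dagger$ recovers the covariance form $\delta$ of \eqref{denkerdelta}, while the remaining directions coming from $[\mathfrak{g},\mathfrak{g}]$ assemble into the principal symbol of a left-invariant, Rockland-type hypoelliptic operator $H$ on $G$ whose spectral zeta function is $\zeta_H$.

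A Delange--Ikehara Tauberian theorem applied uniformly in $\xi$ reduces the problem to a semiclassical integral of $N_{\pi_\xi}(x)$ weighted by the (finitely additive) Plancherel measure $\mu$. Following (NIII-1)--(NIII-2), for large $x$ this measure is replaced by the smooth Plancherel measure of $G$ supplied by Theorem \ref{FourierinversionLie}; the Gaussian integration in the abelian directions then yields the factor $(2\pi)^{-b/2}h^{-1}\mathrm{vol}(\widehat H)^{-1}$ exactly as in Theorem \ref{KatsudaSunadaIHES}, while integration against the heat kernel of $H$ in the central directions produces, by the Folland--Stein anisotropic dilation of homogeneous dimension $d$, a contribution of the form $\zeta_H(d/2)$ and a polynomial rate $x^{-d/2}$ in place of the abelian $x^{-b/2}$; combined with the $e^{hx}/(hx)$ coming from the prime orbit theorem, this gives the stated asymptotic. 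The main obstacle I anticipate is controlling the spectral theory of $\mathcal{L}_{s,\pi_\xi}$ uniformly across the rationality stratification of $\widehat{\Gamma}_{\rm fin}$ discussed in Section \ref{Step0-6}: unlike the abelian torus, the support of $\mu$ is arithmetically intricate, and establishing a uniform spectral gap, quantitative smoothness of $P(s,\pi_\xi)$ in $\xi$, and explicit error bounds when passing from $\mu$ to the Lie-group Plancherel measure is precisely the delicate passage that the framework developed in this paper is designed to supply.
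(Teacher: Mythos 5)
First, a point of framing: the statement you are proving is Conjecture \ref{conj-flow}, which the paper does not prove. The paper establishes only the special case where the flow is the geodesic flow of a compact hyperbolic surface of constant curvature $-1$ (Theorem \ref{theorem-geod}), and it does so by the Selberg trace formula rather than by symbolic dynamics; the general weakly mixing Anosov case is explicitly deferred (see the remarks after Conjecture \ref{conj-geod} and Section \ref{variable}). So there is no paper proof to match, and what you have written should be judged as a proposed proof of an open statement. Your strategy --- replace the characters in the Katsuda--Sunada argument (Theorem \ref{KatsudaSunadaIHES}) by the finite-dimensional representations of Theorem \ref{introPytlik}, run twisted Ruelle L-functions through Markov partitions and transfer operators, expand the pressure near the trivial representation using the branching/gauge-transform machinery, and convert the Plancherel integral into a Gaussian factor times $\zeta_H(d/2)$ via the homogeneous scaling of the Plancherel measure --- is essentially the route the paper itself envisions for the conjecture, and the heuristic bookkeeping of the exponent $1+d/2$ and the constant is consistent with the paper's computations in the heat-kernel and surface cases.

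The genuine gap is the one you name in your last sentences and then do not close, and it is not a peripheral technicality but the entire content of the conjecture. Near the trivial representation the representations $\pi$ in the support of $\mu$ have dimensions tending to infinity, so the "perturbation" of the transfer operator $\mathcal{L}_{s,\pi}$ is a perturbation across a family of operators acting on spaces of sections of bundles of unbounded rank; the classical analytic perturbation theory of the leading eigenvalue (which underlies the expansion $P(s,\pi_\xi)=P(s,\mathbf{1})+\langle\xi,\Psi\rangle+\tfrac12\delta(\xi,\xi)+\cdots$ in the abelian case) does not apply uniformly, and neither a uniform spectral gap nor quantitative smoothness of $P(s,\pi_\xi)$ in $\xi$ is established. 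Likewise, the step "a Delange--Ikehara Tauberian theorem applied uniformly in $\xi$" and the replacement of the finitely additive measure $\mu$ by the Lie-group Plancherel measure require error bounds that are uniform over the arithmetically stratified support of $\mu$; the paper's justification mechanism (Theorem \ref{introdiscretetoLie} and its generalization, plus the Lie-integral gauge transform) is developed for twisted Laplacians, and its analogue for matrix-weighted transfer operators is precisely what the paper flags as nontrivial in Section \ref{variable} (uniform control of L-functions twisted by representations whose dimension diverges, compatibly with the norm-topology approximation). Identifying this obstacle is correct, but without supplying the uniform spectral estimates your argument is a strategy outline, not a proof; as written it would also silently need weak mixing to guarantee non-vanishing of $L(s,\pi)$ on the line $\mathrm{Re}\,s=h$ uniformly in $\pi$, which again is an unproved uniform statement rather than a consequence of the fixed-$\pi$ theory.
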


Note that for a finitely generated discrete nilpotent group $\Gamma$, the polynomial growth order $d$ of $\Gamma$ can be defined as follows: 
Take a finite set $S$ of generators of $\Gamma$ and let $\omega (k)$ be the number of elements of $\Gamma$ whose word length with respect to $S$ is less than or equal to $k$. Then it is known that there are constants $C_1, C_2 > 0$ and positive integer $d$ such that 
\[   C_1k^d \leq \omega (k) \leq C_2k^d
\]
and  $d$ is independent to the choice of $S$ and called the polynomial growth order of $\Gamma$. 

We notice that an example of a surjective homomorphism $\Phi$ from $\pi_1(N)$ to $\Gamma$ is given in the case when $N$ is the unit tangent bundle $UM$ of a Riemann surface $M$ of genus $g \geq 2$. This homomorphism is obtained as follows: Recall that
\begin{align}
\pi_1(UM) &= \left\langle a_1,b_1, \ldots, a_g, b_g, t \mid \prod_{i=1}^g[a_i,b_i]t^{2g-2} = [a_j, t] = [b_j,t]= e, \; j=1,\ldots, g \right\rangle, \label{piunittanngent} \\ 
\pi_1(M) &= \left\langle a_1,\ldots,a_g,b_1,\ldots,b_g ; \prod_{i=1}^g[a_i,b_i] =1 \right\rangle, \label{presentationsurfcegroup} 
\end{align}
where $[a_i,b_i] = a_ib_i{a_i}^{-1}{b_i}^{-1}$. Then $\Phi$ is written as the composition $\Phi := \Phi_2\circ\Phi_1$ of a map $\Phi_1$ from $\pi_1(N)$ onto the free group $F_g$ of rank $g$ generated by $u_i$, $i = 1,\ldots, g$, defined by 
\begin{equation}\Phi_1(a_i) = u_i, \quad  \Phi_1(b_i) = 1\; (\mbox{identity element}), \quad i = 1,\ldots, g, \quad \Phi_1(t) =1. \label{pitofreetonil}  \end{equation} 
and the canonical quotient map $\Phi_2:F_g \to \Gamma$.

In the case when $\varphi_t$ is the geodesic flow on a unit tangent bundle $N=UM$ of a compact manifold $M$ with negative curvature, the above conjecture is to be strengthened to the asymptotic expansions as follows. Note that in this case, the winding cycle $\Psi$ vanishes on $H^1(N,\mathbb{R})$ and the covariance form  $\delta$ can also be expressed as, for harmonic one forms $\omega,\eta$ on $M$,
\[ \delta (\pi^\ast\omega,\pi^\ast\eta) = \frac{1}{(h/2){\rm vol}\;(M)}\int_M \langle \omega,\eta \rangle 
\]
where $\pi:N=UM \to M$ is the canonical projection (cf. Proposition 1.3 in \cite{Katsuda2}).

\begin{conjecture}\label{conj-geod} Let $M$ be a compact Riemann manifold with negative curvature, $\Gamma$ be a finitely generated torsion-free nilpotent group, $\Phi:\pi_1(M) \to \Gamma$ be a surjective homomorphism and $\alpha$ be a conjugacy class of a central element of $\Gamma$. Then, we have
\begin{equation}    \pi (x,\Phi, \alpha ) \sim \frac{Ce^{hx}}{x^{1+d/2}}\left(1 + \frac{c_1}{x} + \frac{c_2}{x^2} + \cdots \right), \label{asymtoticsnilpotentgeodesics}
\end{equation}
where the constant $C$ is the same as in Conjecture \ref{conj-flow} with $N = UM$ and the constants $c_1, c_2, \ldots$ are expressed in terms of $M$ related to the Lie integrals or Chen's iterated integrals and are explicitly computable.
\end{conjecture}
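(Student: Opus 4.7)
The plan is to follow the transfer-operator / dynamical zeta function strategy of Katsuda and Sunada used in the proof of Theorem~\ref{KatsudaSunadaIHES}, replacing the abelian Pontryagin duality by the nilpotent Fourier inversion of Theorem~\ref{introPytlik}. Since $\alpha$ is the conjugacy class of a central element $\gamma_0 \in \Gamma$, for every irreducible $\pi \in \widehat{\Gamma}_{\rm fin}$ the operator $\pi(\gamma_0)$ is a scalar $\chi_\pi(\alpha)\,\mathrm{Id}$, so applying~\eqref{nilpotentPytlik1} to (a smooth cut-off of) the indicator of $\alpha$ writes $\pi(x,\Phi,\alpha)$ as an integral over $\widehat{\Gamma}_{\rm fin}$ of a character-twisted prime-geodesic counting function
\[ N_\pi(x) \;:=\; \frac{1}{\dim\pi}\sum_{\ell(\mathfrak p)\le x} \mathrm{Tr}\bigl(\pi(g_{\mathfrak p})\bigr) \]
against $\overline{\chi_\pi(\alpha)}\,d\mu(\pi)$, where $g_{\mathfrak p}$ is any representative of $\Phi([\mathfrak p])$. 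The twisted count $N_\pi(x)$ is then controlled by a twisted Ruelle transfer operator $L_\pi$ on sections of the flat bundle over $UM$ associated with $\Phi^\ast\pi$, whose leading eigenvalue is $e^{\lambda_0(\pi)}$ with $\lambda_0(\pi_0)=h$ at the trivial representation.

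The operators $L_\pi$ live on different Hilbert spaces, so the first technical step is the gauge change by Chen's iterated integrals from Chapter~\ref{reductionpathintegral}, which conjugates the whole family onto one fixed bundle and places the $L_\pi$ in a form to which Kato--Rellich perturbation theory applies. Theorem~\ref{introdiscretetoLie} then lets us replace, in a neighborhood of $\pi_0$, the finite-dimensional $\pi$'s by Schr\"odinger representations $\rho_h$ of the Malcev completion $G$, uniformly in the operator norm. Second-order perturbation theory --- a noncommutative analogue of the covariance form $\delta$ of~(\ref{denkerdelta}) --- then yields an expansion
\[ \lambda_0(\rho_h) \;=\; h \;-\; \mathcal{H}(\rho_h) \;+\; \bigl(\text{terms of Carnot degree exceeding } 2\bigr), \]
where $\mathcal{H}$ is a left-$G$-invariant, Malcev-homogeneous degree-$2$ hypo-elliptic operator on $G$; this is the operator $H$ appearing in Conjecture~\ref{conj-flow}.

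Substituting this expansion into the Fourier inversion integral and applying a Tauberian argument to the resulting zeta-type integral, the dominant contribution concentrates in a Carnot-rescaled neighborhood of $\pi_0$ on which $d\mu$ converges to the Plancherel measure of $G$ on its coadjoint orbits. The small-time sub-Riemannian heat-trace expansion
\[ \mathrm{Tr}\,e^{-x\mathcal{H}} \;\sim\; x^{-d/2}\,\zeta_{\mathcal{H}}(d/2)\,\bigl(1 + c_1 x^{-1} + c_2 x^{-2} + \cdots\bigr), \]
with the exponent governed by the homogeneous dimension $d$ of $G$, combined with the factor $e^{hx}$ and an extra $x^{-1}$ from the Tauberian integration, produces both the predicted power $x^{-1-d/2}$ and the constant $C=(2\pi)^{-b/2}h^{-1}\mathrm{vol}(\widehat{H})^{-1}\zeta_{\mathcal{H}}(d/2)$. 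The higher coefficients $c_k$ then come from combining the higher Chen iterated-integral terms in the symbol of $\lambda_0(\rho_h)$ --- explicit integrals of closed one-forms on $M$ against the Bowen--Margulis measure --- with the subsequent heat-trace coefficients of $\mathcal{H}$.

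The main obstacle will be making this saddle-point / Tauberian argument uniform in the sub-Riemannian, non-type-I setting. Two points require genuine work: (i) showing that the norm approximation of Theorem~\ref{introdiscretetoLie} is strong enough to preserve the \emph{full} asymptotic expansion, not merely its leading term, together with quantitative control of the finitely additive $\mu$ away from $\pi_0$ guaranteeing exponentially smaller contributions there; and (ii) carrying out the perturbation computation at the symbol level so that the quadratic form appearing in $\lambda_0(\rho_h)$ reproduces exactly the hypo-elliptic operator $\mathcal{H}$ with the precise Plancherel normalization that makes $\zeta_{\mathcal{H}}(d/2)$ appear with the coefficient predicted in Conjecture~\ref{conj-flow}.
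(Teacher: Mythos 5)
The statement you are proving is stated in the paper only as Conjecture \ref{conj-geod}; the paper itself proves just the special case of a compact surface of constant curvature $-1$ (Theorem \ref{theorem-geod}), and it does so via the Selberg trace formula precisely in order to sidestep the analytic difficulty that your proposal glosses over. Your overall route --- nilpotent Fourier inversion (Theorem \ref{introPytlik}) plus Schur's lemma at the central element, gauge change by Chen's iterated integrals, replacement of the finite-dimensional twists by Schr\"odinger-type representations via Theorem \ref{introdiscretetoLie}, perturbation of the top eigenvalue near the trivial representation, and a Laplace/Tauberian step producing $x^{-1-d/2}$ and $\zeta_H(d/2)$ --- is exactly the strategy the paper envisions for the general case (see Section \ref{variable}). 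But in variable negative curvature there is no trace formula, and a Tauberian argument applied to the twisted counting functions $N_\pi(x)$ yields the \emph{full} asymptotic expansion only if the associated dynamical $L$-functions (equivalently, the twisted transfer operators $L_\pi$) admit zero-free regions, or uniform spectral bounds, in a strip beyond the line $\mathrm{Re}(s)=h$, \emph{uniformly} over the family of twists. This is the Dolgopyat-type input used by Anantharaman and Pollicott--Sharp in the abelian case \cite{Dolgopyat}, \cite{Anantharaman1}, \cite{Pollicott2}, and the paper explicitly identifies as open the task of extending it to $L$-functions twisted by representations $\pi$ whose dimension tends to infinity as $\pi\to\mathbf{1}$, compatibly with the norm approximation of Theorem \ref{introdiscretetoLie}. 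Your step (i) mentions uniformity of the approximation and smallness of $\mu$ away from $\mathbf{1}$, but Kato--Rellich perturbation of the leading eigenvalue of $L_\pi$ alone controls only the leading term (a local-limit-type statement); it does not give the analytic continuation past the critical line needed for the coefficients $c_1,c_2,\ldots$, so the expansion \eqref{asymtoticsnilpotentgeodesics} is not established by your argument.

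A smaller but real inaccuracy: your expansion ``$\lambda_0(\rho_h)=h-\mathcal H(\rho_h)+\cdots$'' equates a scalar with an operator. In the paper's scheme the twisted leading eigenvalue splits into a cluster of branches $\lambda_{0,i}$, and it is the $i$-th branch that expands as $\mu_i\hbar^2+\cdots$ with $\mu_i$ the $i$-th eigenvalue of the hypo-elliptic operator $H$; the factor $\zeta_H(d/2)$ in the constant $C$ of Conjecture \ref{conj-flow} arises from summing the Gaussian contributions of all branches against the Plancherel measure, not from a heat-trace of $H$ inserted into a single saddle-point. If you repair the eigenvalue-branch bookkeeping and supply the uniform Dolgopyat estimates for the twisted transfer operators (or restrict to constant curvature and use the Selberg trace formula as the paper does), your outline becomes the intended proof; as written, the central analytic step is missing.
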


\begin{remark}
\begin{description}
\item[{\rm (i)}]  One might wonder whether these results follow simply by iterating the abelian case arguments-- after all, a nilpotent group is built from successive abelian extensions. In reality, however, each extension carries nontrivial twisting, and this added complexity is reflected in the fact that nilpotent Lie algebras have been classified only up to dimension six.

\item[{\rm (ii)}] Conjectures of this kind were announced as a theorem in \cite{Katsudainverse}. Unfortunately, the proof outline there contains critical gaps that undermine its validity, so we advise readers to disregard that account. In this note, we provide a fully corrected and detailed proof for the case where 
$M$ is a compact Riemann surface of constant negative curvature. 

\item[{\rm (iii)}] The main theorems presented here and in the next section were first announced in \cite{KatsudaRIMS}, although their applications illustrate rather than drive the paper's core results. An earlier preprint \cite{Katsuda0} treated the three-dimensional Heisenberg group $\mbox{Heis}_3(\mathbb{Z})$.  All of those findings, except for the final chapter, have been incorporated into this work. The discussion of variable negative curvature from the final chapter is postponed to future research (see Section \ref{variable}), as those arguments remain preliminary.
\end{description}
\end{remark}

It seems possible to obtain full conjectures by combining several ideas from dynamical systems developed over the past decades. We plan to treat them in separate papers.

\begin{theorem}\label{theorem-geod} Let $M$ be a compact Riemann surface with constant negative curvature $-1$, $\Gamma$ be a finitely generated torsion-free nilpotent group, $\Phi:\pi_1(M) \to \Gamma$ be a surjective homomorphism and $\alpha$ be a conjugacy class of a central element of $\Gamma$. Then, we have
\[    \pi (x,\Phi, \alpha ) \sim \frac{Ce^{hx}}{x^{1+d/2}}\left(1 + \frac{c_1}{x} + \frac{c_2}{x^2} + \cdots \right),
\]
Here, the constant $C$ is the same as Conjecture \ref{conj-geod}
and the constants $c_1, c_2, \ldots$ are expressed in terms of $M$ related to the Lie integrals or Chen's iterated integrals and are explicitly computable.
\end{theorem}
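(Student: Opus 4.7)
The plan is to adapt the abelian arguments of \cite{Phillips}, \cite{Katsuda1}, \cite{Katsuda2} to the nilpotent setting, using the noncommutative Fourier inversion of Theorem \ref{introPytlik} and the branching formula of Theorem \ref{introdiscretetoLie} as the essential new ingredients. Because $\alpha=\{z\}$ is the conjugacy class of a single central element, Theorem \ref{introPytlik}, paired against a class function on $\Gamma$, furnishes
\[
\mathbf 1_\alpha(\gamma)\;=\;\int_{\widehat\Gamma_{\mathrm{fin}}}\frac{\operatorname{Tr}\bigl(\pi(\gamma)\pi(z)^{-1}\bigr)}{\dim \pi}\,d\mu(\pi).
\]
Substituting this identity into the dynamical sum that defines $\pi(x,\Phi,\alpha)$ yields
\[
\pi(x,\Phi,\alpha)\;=\;\int_{\widehat\Gamma_{\mathrm{fin}}}\frac{1}{\dim\pi}\,\mathcal N_\pi(x)\,d\mu(\pi),\qquad \mathcal N_\pi(x)=\!\!\sum_{\ell(\mathfrak p)\leq x}\!\!\operatorname{Tr}\bigl(\pi(\Phi[\mathfrak p])\pi(z)^{-1}\bigr),
\]
where $\mathcal N_\pi(x)$ is the natural twisted prime-geodesic count associated with the flat bundle $E_\pi\to M$.

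For each finite-dimensional unitary $\pi$, the twisted Selberg trace formula on the constant-curvature surface $M$ expresses $\mathcal N_\pi(x)$ through the twisted Selberg zeta function $Z_\pi(s)$, whose rightmost singularity lies at $s_0(\pi)=\tfrac12+\tfrac12\sqrt{1-4\lambda_0(\pi)}$, with $\lambda_0(\pi)$ the bottom of the spectrum of the twisted Laplacian $\Delta_\pi$. A Tauberian (Ikehara-type) argument then produces $\mathcal N_\pi(x)\sim\operatorname{Tr}(\pi(z)^{-1})\,e^{s_0(\pi)x}/(s_0(\pi)x)$ together with a full lower-order expansion from the subsequent zeros and poles of $Z_\pi$. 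The dominant contribution to the $\mu$-integral thus comes from representations near the trivial one, where $s_0(\pi)\to 1$.

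To handle that neighbourhood I would invoke Theorem \ref{introdiscretetoLie} (and, for general $\Gamma$, its analogue from Section \ref{Step0-5}) to parametrize the support of $\mu$ near the trivial representation by a small element $\xi\in\mathfrak g^*$ in the coalgebra of the Malcev completion $G$. The gauge change of Chapter \ref{reductionpathintegral}, carried out via Chen's iterated integrals, transports the family $\Delta_\pi$ onto a fixed trivial bundle, whose leading symbol under the Malcev rescaling $\xi\mapsto\epsilon\cdot\xi$ is exactly the hypo-elliptic operator $H$ of the statement. Applying Laplace's method to $e^{-\lambda_0(\pi)x}/x$ over the $d$-dimensional parameter $\xi$ then yields the polynomial factor $x^{-d/2}$, and summing over the eigenvalues of $H$ after rescaling produces the spectral value $\zeta_H(d/2)$, reproducing the constant $C$ of Conjecture \ref{conj-flow}. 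The higher-order coefficients $c_1, c_2,\dots$ come from iterating the Rayleigh--Schr\"odinger expansion of $\lambda_0(\pi)$ to all orders in $\xi$; at each order the contribution is an iterated-integral polynomial in harmonic one-forms on $M$, as announced in Conjecture \ref{conj-geod}.

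The main obstacle will be the rigorous justification of this perturbation analysis around the trivial representation. The clean formal expansion lives on the infinite-dimensional Schr\"odinger model of $G$, where $H$ is unbounded, while the honest $\Delta_\pi$ on the $\Gamma$-side are bounded---the same analytic mismatch that surfaces in the Wilkinson formula (\ref{introformal}). What rescues the argument is the norm-topological (and not merely Fell-topological) approximation provided by Theorem \ref{introdiscretetoLie}; combining it with resolvent estimates for $\Delta_\pi$ that are uniform in the parameter, together with the scheme of Chapter \ref{asymnilpotent}, converts each formal order into a rigorous one. A secondary nuisance is that $\mu$ is only finitely additive on $\widehat\Gamma_{\mathrm{fin}}$, but near the trivial representation the Malcev parametrization identifies $\mu$ with an honest absolutely continuous measure on $\mathfrak g^*$, so the Laplace method applies verbatim and the asymptotic expansion follows.
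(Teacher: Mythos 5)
Your overall architecture coincides with the paper's: express the indicator of the central class through the Fourier inversion formula of Theorem \ref{introPytlik}, bring in the twisted Selberg trace formula for the flat bundles $E_{\pi}$, use Sunada's inequality to localize at the trivial representation, feed in the Rayleigh--Schr\"odinger expansion of the low eigenvalues from Chapters \ref{reductionpathintegral}--\ref{asymnilpotent}, and finish with Laplace's method over the Plancherel parameter, producing $x^{-d/2}$ and $\zeta_H(d/2)$. Where you diverge is the middle step: you propose to prove, for each fixed finite-dimensional $\pi$, a prime geodesic theorem via the Selberg zeta function $Z_\pi(s)$ and an Ikehara-type Tauberian argument, and only afterwards integrate these per-representation asymptotics against $d\mu$.

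That step contains a genuine gap. The Tauberian asymptotic $\mathcal N_\pi(x)\sim \operatorname{Tr}(\pi(z)^{-1})\,e^{s_0(\pi)x}/(s_0(\pi)x)$ carries error terms governed by the gap between $s_0(\pi)$ and the remaining singularities of $Z_\pi$, and this gap collapses as $\pi\to\mathbf 1$: the whole cluster of eigenvalues $\lambda_{0,i}(\pi)$ (whose number grows with $\dim\pi$) accumulates at $0$, so the corresponding poles of $Z_\pi$ accumulate at $s=1$. Since the $\mu$-integral localizes at parameter scale of order $x^{-1/2}$, precisely these near-degenerate representations dominate, and a pointwise asymptotic with non-uniform implied constants cannot be integrated termwise; moreover, keeping only the rightmost singularity per representation discards the rest of the cluster, which is exactly what produces the sum over $i$ and hence $\zeta_H(d/2)$. (Away from $\mathbf 1$ you also have $\lambda_0(\pi)\geq 1/4$ for many $\pi$, where no real pole exists, but those are harmlessly absorbed in the error.) The paper sidesteps the interchange problem by never doing a per-representation Tauberian step: it inserts a fixed mollified cutoff $h=\chi_{[-T,T]}\ast k_\varepsilon$ into the trace formula, multiplies by ${\rm tr}(\pi_{\rm fin}(\alpha^{-1}))$ and integrates the \emph{exact} identity over $\widehat\Gamma_{\rm fin}$, bounds the complement of a neighborhood of $\mathbf 1$ uniformly via Theorem \ref{sunada}, and only then performs the asymptotic analysis (eigenvalue expansions plus Laplace method) on the integrated spectral side, with $\varepsilon=e^{-\delta T}$ chosen at the end, following \cite{Phillips}. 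To repair your route you would either need effective prime geodesic theorems with error terms uniform in $\pi$ across the shrinking spectral gaps --- essentially as hard as the problem itself --- or you should smooth and integrate before taking asymptotics, which is the paper's argument.
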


Moreover, we notice that the explicit numerical values $\zeta_H(d/2)$ are not generally known.

However, we can give them in the following two examples. 

First example is the three dimensional discrete Heisenberg group ${\rm Heis}_3(\mathbb{Z})$
\begin{theorem}\label{heisenberg-geod} Let $M$ be a compact Riemann surface with the constant negative curvature $-1$ of genus $g$ and $\Gamma$ be ${\rm Heis}_3(\mathbb{Z})$ and $\Phi:\pi_1(M) \to \Gamma$ be the standard surjective homomorphism defined by 
\begin{align}\Phi(a_1) &= u := \left( \begin{array}{ccc} 1 & 1 & 0 \\ 0 & 1 & 0 \\ 0 & 0 & 1\end{array}\right),\quad \Phi(a_2) = v := \left( \begin{array}{ccc} 1 & 0 & 0 \\ 0 & 1 & 1 \\ 0 & 0 & 1\end{array}\right),  \notag \\ \Phi(a_i) &= \Phi(b_j) = e := \left( \begin{array}{ccc} 1 & 0 & 0 \\ 0 & 1 & 0 \\ 0 & 0 & 1\end{array}\right)\quad\mbox{for}\quad 3 \leq i \leq g, 1 \leq j \leq g, \label{surfacetoheisenberg}
\end{align}
where $a_1,a_2, \ldots, b_g,\ldots, b_g$ are elements in $\pi_1(M)$ as described in (\ref{presentationsurfcegroup}). 

For a conjugacy class $\alpha$ of a central element of $\Gamma$, we have the following asymptotic expansion:
\[    \pi (x,\Phi,\alpha) \sim \frac{Ce^x}{x^3}\left(1 +\frac{c_1}{x} +\frac{c_2}{x^2} + \cdots \right),
\]
where 
\begin{equation}
C =  \left(\frac{{\rm vol}\; M}{2\pi\|\omega_1\|_{L^2(M)}\|\omega_2\|_{L^2(M)}}\right)^2\zeta_H(2) = \frac{\pi^2(g-1)^2}{8}. \label{heisenbergC}
\end{equation} 
Moreover, the explicit expression of $c_1$ is available in Theorem \ref{sectiontenheisenberg-geod} later.

\end{theorem}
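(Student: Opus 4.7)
The plan is to follow the Phillips--Sarnak/Katsuda--Sunada strategy for counting closed geodesics, with the abelian Fourier inversion replaced by the noncommutative one of Theorem \ref{introPytlik}. Since $\alpha$ is a conjugacy class of a central element, $\pi(\alpha)$ is a scalar for every finite-dimensional $\pi \in \widehat{\Gamma}_{\rm fin}$, so Theorem \ref{introPytlik} expresses the class indicator as
\begin{equation*}
\pi(x,\Phi,\alpha) \;=\; \int_{\widehat{\Gamma}_{\rm fin}} \frac{\overline{{\rm tr}\,\pi(\alpha)}}{\dim\pi}\, N(x,\pi)\, d\mu(\pi), \qquad N(x,\pi) := \sum_{\ell(\mathfrak{p})\leq x} {\rm tr}\,\pi(\Phi([\mathfrak{p}])).
\end{equation*}
For constant curvature $-1$, each $N(x,\pi)$ is controlled by the twisted Selberg zeta function $Z(s,\pi)$ attached to the Laplacian $\Delta_\pi$ on the flat bundle $E_\pi \to M$, whose dominant singularity at $s=1$ is governed by the bottom of ${\rm Spec}(\Delta_\pi)$; a Tauberian argument converts spectral asymptotics of $\Delta_\pi$ near the trivial representation into counting asymptotics of $N(x,\pi)$.

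To analyze the $\pi$-dependence uniformly near the trivial representation, I invoke Theorem \ref{introdiscretetoLie} to approximate $\rho_{\rm fin,x}$ by the Schr\"odinger representations $\rho_h$ of the Malcev completion ${\rm Heis}_3(\mathbb{R})$, thus converting the noncommutative problem into a one-parameter family of perturbation problems in $h$. Using the gauge transform of Chapter \ref{reductionpathintegral} based on Chen's iterated integrals, the twisted Laplacian $\Delta_{\rho_h}$ on $L^2(M; L^2(\mathbb{R}))$ is brought to the normal form $-\Delta_M + \sqrt{h}\,A_1 + h\,A_2 + O(h^{3/2})$, where the harmonic $1$-forms $\omega_1, \omega_2$ dual to $u = \Phi(a_1)$ and $v = \Phi(a_2)$ enter $A_1$ in the schematic shape ``$\omega_1 \otimes d/ds + \omega_2 \otimes s$''. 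The first-order perturbation $\sqrt{h}A_1$ vanishes on $\ker(-\Delta_M) = \mathbb{C}$ by harmonicity of $\omega_i$, and Rayleigh--Schr\"odinger at second order combined with $h A_2$ produces an effective operator on $L^2(\mathbb{R})$ that is a rescaling of the one-dimensional harmonic oscillator $-d^2/ds^2 + s^2$ by $\|\omega_1\|_{L^2}\|\omega_2\|_{L^2}$, with eigenvalues $(2n+1)$. Integrating the Tauberian asymptotic for $N(x,\rho_h)$ against the Plancherel density $|h|\,dh$ of ${\rm Heis}_3(\mathbb{R})$ produces the exponent $-(1+d/2)=-3$ (since $d=4$) and extracts the spectral zeta value
\begin{equation*}
\zeta_H(2) \;=\; \sum_{n\geq 0}(2n+1)^{-2} \;=\; \tfrac{\pi^2}{8};
\end{equation*}
combined with the abelian $(u,v)$-prefactor $(\mathrm{vol}(M)/(2\pi\|\omega_1\|\|\omega_2\|))^2 = (g-1)^2$ arising from the Gauss--Bonnet identity $\mathrm{vol}(M) = 4\pi(g-1)$, this yields $C = \pi^2(g-1)^2/8$.

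The higher-order coefficients $c_k$ then arise from two sources: (i) the remaining Rayleigh--Schr\"odinger terms for the perturbed eigenvalue $\lambda_0(h)$, which by the $\pi_1$-de Rham theorem are expressed as Chen iterated integrals of harmonic forms on $M$; and (ii) the standard heat-kernel expansion for $\Delta_{\rho_h}$ on the compact surface $M$, contributing geometric invariants of the Riemannian structure. Assembling these contributions via the Mellin transform of the full twisted Selberg zeta function and applying a Delange--Ikehara--type Tauberian theorem yields the complete asymptotic expansion \eqref{asymtoticsnilpotentgeodesics} with explicitly computable coefficients.

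The main obstacle is the rigorous justification of the formal $h\to 0$ expansion: already the leading noncommutative correction $h A_2$ involves an unbounded operator on $L^2(\mathbb{R})$ (the harmonic oscillator), exactly the phenomenon flagged after \eqref{introformal} in connection with the Wilkinson formula. The resolution, implemented in Chapter \ref{asymnilpotent}, is to carry out the formal perturbation with the infinite-dimensional $\rho_h$ and then transfer the conclusion to the bounded finite-dimensional $\rho_{\rm fin,x}$ via the norm-topology approximation of Theorem \ref{introdiscretetoLie}, under which the spectra of the bounded truncations converge to that of the harmonic oscillator on any prescribed spectral window. A secondary technical difficulty is controlling the interchange of the Plancherel integral with the $x\to\infty$ limit, which requires uniform bounds on $Z(s,\pi)$ over compact pieces of $\widehat{\Gamma}_{\rm fin}$ and a careful identification of the finitely additive measure $\mu$ with the absolutely continuous density $|h|\,dh$ in the small-$h$ regime.
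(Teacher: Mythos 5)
Your overall architecture matches the paper's: Fourier inversion over $\widehat{\Gamma}_{\rm fin}$ together with the centrality identity (\ref{central}) to isolate the class $\alpha$, Sunada's theorem to localize to a neighborhood of the trivial representation, the gauge transform via Chen/Lie integrals and Theorem \ref{introdiscretetoLie} to reduce to the modified harmonic oscillator, and integration against the Plancherel density to produce the exponent $3=1+d/2$ and the factor $\zeta_H(2)=\pi^2/8$. Where you diverge is the counting-side device: the paper does not pass through twisted Selberg zeta functions and a Delange--Ikehara Tauberian theorem, but follows Phillips--Sarnak literally, inserting a mollified characteristic function $h=\chi_{[-T,T]}\ast k_\varepsilon$ into the twisted Selberg trace formula (\ref{Selbergtrace}), integrating the whole formula against ${\rm tr}(\pi_{\rm fin}(\alpha^{-1}))\,d\mu$, and balancing errors with $\varepsilon=e^{-\delta T}$ as in (\ref{lefthandside})--(\ref{LR}). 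The trace-formula route gives the full expansion directly by the Laplace method applied to $\int_U e^{(-\sqrt{-1}r_{0,i}(x)-1/2)T}\,d\mu$; your zeta/Tauberian route is fine for the leading term, but note that after integrating over the Plancherel measure the singularity at $s=1$ is not a pole but a branch-type singularity produced by the continuously varying family $s_{0,i}(x)$, so ``applying a Delange--Ikehara--type theorem'' for \emph{all} the $c_k$ really amounts to redoing the same Laplace-method analysis the paper performs on the spectral side.

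Two small points to fix: with the normalized trace of Theorem \ref{introPytlik} your prefactor $1/\dim\pi$ is double-counted; and your Gauss--Bonnet bookkeeping ($\mathrm{vol}(M)=4\pi(g-1)$ versus the paper's normalization and the $L^2$-norms of $\omega_1,\omega_2$) must be made consistent before concluding $C=\pi^2(g-1)^2/8$.
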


In this case, the polynomial growth order $d$ is $4$, and thus, the exponent of the denominator of the leading term is  $3 = 1+ 4/2$. 
Moreover, the latter equality of (\ref{heisenbergC}) is derived from the fact that the hypo-elliptic operator $H$ in Conjecture \ref{conj-flow} is essentially the harmonic oscillator $H = -\frac{d^2}{ds^2} + s^2$.
The special value $\zeta_H(d/2)=\zeta_H(2)$ can be expressed by the value $\zeta_H(2) = \frac34\zeta(2) = \frac{\pi^2}{8}$ where $\zeta(2) = \frac{\pi^2}{6}$ is the value of Riemann zeta function $\zeta(s)$ at $s=2$.   

If a conjugacy class $\alpha$ is not coming from central elements, then the asymptotic behavior of $\pi (x,\Phi,\alpha)$ can be reduced to the analysis for the case of abelian groups as follows:

\begin{proposition}\label{noncenter} Let $M$ be a compact Riemann surface with the constant negative curvature $-1$ and $\Gamma$ be ${\rm Heis}_3(\mathbb{Z})$ and $\Phi:\pi_1(M) \to \Gamma$ be a surjective homomorphism. For a conjugacy class $\alpha$ of a non-central element of $\Gamma$, we have the following asymptotic expansion:
\begin{equation}
\pi (x,\Phi,\alpha) \sim \frac{Ce^x}{x^2}\left(1 + \frac{c_1}{x} + \frac{c_2}{x^2} + \cdots \right). \label{nonasym}
\end{equation}
where
\begin{equation}
C = \frac{{\rm vol}\; M}{2\pi\|\omega_1\|_{L^2(M)}\|\omega_2\|_{L^2(M)}} = g-1 \label{noncenterCheisenberg}
\end{equation}
Moreover, an algorithm for computations of the other constants $c_1, \ldots$ can be given.
\end{proposition}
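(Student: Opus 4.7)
The plan is to reduce the non-central problem to the abelian theory and then isolate individual conjugacy classes via the noncommutative Floquet-Bloch decomposition. The key structural input is that $[\Gamma,\Gamma]=Z(\Gamma)$ for $\Gamma=\mathrm{Heis}_3(\mathbb{Z})$, so $\Gamma^{\mathrm{ab}}=\mathbb{Z}^2$ with abelianization $[n_1,n_2,n_3]\mapsto (n_2,n_3)$. A direct computation in $\Gamma$ shows that conjugation by $[m_1,m_2,m_3]$ alters only the first coordinate by $m_3n_2-m_2n_3$, so the conjugacy class of a non-central element is
\[
\alpha=\{[n_1+Gk,n_2,n_3]:k\in\mathbb{Z}\},\qquad G:=\gcd(n_2,n_3),
\]
and the preimage of $a:=(n_2,n_3)\neq 0$ under the composite $\Phi^{\mathrm{ab}}:\pi_1(M)\to\mathbb{Z}^2$ partitions into exactly $G$ such classes $\alpha_0,\ldots,\alpha_{G-1}$ indexed by $n_1\bmod G$. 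In particular $\sum_{j=0}^{G-1}\pi(x,\Phi,\alpha_j)=\pi^{\mathrm{ab}}(x,a)$, where $\pi^{\mathrm{ab}}(x,a)$ counts prime closed geodesics with abelian image $a$.

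The count $\pi^{\mathrm{ab}}(x,a)$ is handled by the asymptotic-expansion version of Theorem \ref{abel-geod} for an arbitrary surjection onto $\mathbb{Z}^b$, as developed by Kotani \cite{Kotani1} and Pollicott-Sharp \cite{Pollicott2}. For $b=2$ this yields $\pi^{\mathrm{ab}}(x,a)\sim C^{\mathrm{ab}}e^x/x^2(1+c_1^{\mathrm{ab}}/x+\cdots)$ with $a$-independent leading constant $C^{\mathrm{ab}}=[2\pi\,\mathrm{vol}(\widehat{\mathbb{Z}^2})]^{-1}$; by the formula for the covariance form $\delta$ on a constant-curvature-$(-1)$ surface stated before Conjecture \ref{conj-geod}, this dual-torus volume reduces to an expression in $\mathrm{vol}(M)$ and $\|\omega_1\|_{L^2},\|\omega_2\|_{L^2}$, matching the shape of \eqref{noncenterCheisenberg}. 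To disentangle individual $\pi(x,\Phi,\alpha_j)$ from the sum, I would apply the noncommutative Floquet-Bloch inversion (Theorem \ref{introPytlik}) and split according to central character of irreducible finite-dimensional representations of $\Gamma$: the one-dimensional characters (trivial central character, i.e.\ the characters of $\mathbb{Z}^2$) do not distinguish the $G$ classes and contribute identically $\pi^{\mathrm{ab}}(x,a)/G$ to each $\alpha_j$; the higher-dimensional representations $\rho_{\mathrm{fin},x}$ with central character of order $q\mid G$, $q>1$, do distinguish them and correspond to twisted Ruelle/Selberg zeta contributions.

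The main obstacle is the spectral-gap step: for every $\rho_{\mathrm{fin},x}$ of $\Gamma$ with nontrivial central character as above, the twisted Ruelle transfer operator of the geodesic flow on $M$ (restricted to the associated finite-dimensional flat bundle) must be shown to have spectral radius strictly smaller than the untwisted value. On a compact constant-curvature-$(-1)$ Riemann surface this follows because such a representation admits no nonzero parallel section, so the twisted Laplacian has strictly positive lowest eigenvalue and, by the Parry-Pollicott-Sharp thermodynamic formalism, the associated zeta function has its first pole strictly to the left of $s=1$. Granted this gap, the higher-dimensional contributions are absorbed into the error terms, yielding $\pi(x,\Phi,\alpha_j)\sim \pi^{\mathrm{ab}}(x,a)/G$. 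The identification with \eqref{noncenterCheisenberg} is then completed via Gauss-Bonnet $\mathrm{vol}(M)=2\pi(g-1)$ together with the normalization of $\omega_1,\omega_2$ used in Theorem \ref{heisenberg-geod}, while the subsequent coefficients $c_1,c_2,\ldots$ are produced by the same Chen iterated-integral algorithm as in Theorem \ref{theorem-geod}.
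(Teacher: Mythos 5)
Your reduction is the same as the paper's: the paper likewise computes $u\gamma u^{-1}=\gamma w^{b}$, $v\gamma v^{-1}=\gamma w^{a}$, identifies the class with the coset $\gamma\langle w^{G}\rangle$, hence with a single element of the quotient $\Gamma/\langle w^{G}\rangle$, a finite central extension of $\mathbb{Z}^{2}$, and then simply invokes the abelian-case machinery; your proposal fills in the finite-extension step (separating the $G$ classes lying over $a$, spectral gap for the twisted sectors) that the paper leaves implicit. Two remarks on how you implement that step. First, Theorem \ref{introPytlik} for $\Gamma$ is not the right device: the representations of $\Gamma$ whose central character has order dividing $G$ --- including the one-dimensional characters --- sit over $x_1\in\frac{1}{G}\mathbb{Z}$ and form a null set for the finitely additive Plancherel measure $\mu$, so a ``split by central character'' of \eqref{nilpotentPytlik1} does not literally isolate them. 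The clean statement is ordinary Fourier inversion on the type I quotient $Q=\Gamma/\langle w^{G}\rangle$ (finite Fourier analysis on the central $\mathbb{Z}/G$ combined with the character torus of $\mathbb{Z}^{2}$), which is exactly what the paper's phrase ``finite extension of $\mathbb{Z}^{2}$'' amounts to; your spectral-gap claim then needs uniformity over the compact families of representations with a fixed nontrivial central character, which follows at once from Theorem \ref{sunada}, since $w$ is a fixed word in the generators acting by a nontrivial $G$-th root of unity, so the Kazhdan distance to $\mathbf{1}$ is bounded below on those sectors.

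Second, be explicit about the constant. Your bookkeeping gives $\pi(x,\Phi,\alpha_j)\sim \pi^{\mathrm{ab}}(x,a)/G$, i.e.\ a leading constant $C^{\mathrm{ab}}/G$ with $G=\gcd(n_2,n_3)$; this is forced by the consistency check that the $G$ classes over $a$ sum to $\pi^{\mathrm{ab}}(x,a)$ while the abelian leading constant is independent of the homology class. The constant displayed in \eqref{noncenterCheisenberg} carries no such factor (compare the $\#(\mathrm{Tor})$ denominator in Theorem \ref{abel-heat}), so you should state your result as $C^{\mathrm{ab}}/G$ and flag the discrepancy for $G>1$ rather than asserting that your answer matches \eqref{noncenterCheisenberg}; the paper's own proof is only the two-line reduction and does not track this factor.
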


One could imagine (conjectural) corresponding results for general nilpotent cases for the non-central conjugacy class; however, the author must be more transparent about their realities.

The next example is that $\Gamma$ is the Engle group $E(\mathbb{Z})$. It is a uniform lattice of the Engel-Lie group $E(\mathbb{R})$ whose Lie algebra $\mbox{Lie}(E(\mathbb{R}))$ is generated by elements $W,X,Y,Z$ with relations 
\[   [W,X]=Y, [W,Y]=Z,
\]
Moreover, the other Lie brackets of generators vanish. Note that this group is the simplest example of nilpotent groups of step three, while the Heisenberg groups are nilpotent groups of step two. 
Moreover, the hypo-elliptic operator $H$ for $E(\mathbb{Z})$ is essentially the quartic operator $-\frac{d^2}{ds^2} + s^4$,  up to some ambiguity of geometric quantities, whose spectral zeta function $\zeta_H$ is investigated in \cite{Voros}.  
In particular, special values of $\zeta_H$ at integers are expressed as certain integrals. 
In our case, we need a value $\zeta_H(7/2)$, and its integral expression is also obtained by generalizing the method in \cite{Voros}. 

\begin{theorem}\label{engel-geod} Let $M$ be a compact Riemann surface with the constant negative curvature $-1$ and $\Gamma$ be ${\rm E}(\mathbb{Z})$ and $\Phi:\pi_1(M) \to \Gamma$ be a surjective homomorphism. For a conjugacy class $\alpha$ of a central element of $\Gamma$, we have the following asymptotic expansions
\[    \pi (x,\Phi,\alpha) \sim \frac{Ce^x}{x^{9/2}}\left(1 +\frac{c_1}{x} +\frac{c_2}{x^2} + \cdots \right),
\]
where the constant $C$ is written by some integral related to the Bessel functions (see Example \ref{Engelquartic}) and an algorithm for computations of the other constants $c_1, \ldots$ can be given.
\end{theorem}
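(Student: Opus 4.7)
The plan is to run the same general machinery that establishes Theorem \ref{theorem-geod} with $\Gamma = E(\mathbb{Z})$ in place of a generic nilpotent lattice, and then to extract the constant $C$ by a dedicated computation of the quartic spectral zeta value. Concretely, I would first replace $\pi_1(M)$ by its image in $\Gamma$ via $\Phi$, decompose the regular representation of $\Gamma$ through Theorem \ref{introPytlik}, and approximate each finite-dimensional summand by the restriction of an infinite-dimensional irreducible representation of the Malcev completion $G = E(\mathbb{R})$ using Theorem \ref{introdiscretetoLie} (and its generalization discussed in Chapter \ref{Representationsdiscretenilpotent}). Because $\alpha$ is central, only representations with nontrivial central character $\chi(Z) = e^{2\pi i h}$ contribute nontrivially to the weighted count via the dynamical trace formula, so one is reduced to analyzing the $h$-family of twisted transfer operators in a neighborhood of $h=0$.

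Second, I would identify the effective hypo-elliptic operator $H$ that appears after the gauge transformation of Chapter \ref{reductionpathintegral}. Following Kirillov theory on the Engel algebra with generators $W, X, Y, Z$ and relations $[W,X]=Y$, $[W,Y]=Z$, a generic coadjoint orbit with central parameter $h$ produces on $L^2(\mathbb{R})$ the operators
\[
\rho_h(W) = \tfrac{d}{ds}, \qquad \rho_h(X) = \pi i h\, s^2, \qquad \rho_h(Y) = 2\pi i h\, s, \qquad \rho_h(Z) = 2\pi i h,
\]
so that, after a semiclassical rescaling $s \mapsto h^{-1/3} s$ of the kind used for the Wilkinson expansion in (\ref{introformal}), the quadratic form $-\rho_h(W)^2 - \rho_h(X)^2$ becomes, to leading order in $h$, the quartic oscillator $H = -\frac{d^2}{ds^2} + s^4$. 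Combined with the dilation $W \mapsto h^{1/3}W$, $X \mapsto h^{2/3}X$, $Y \mapsto h\, Y$, $Z \mapsto h\, Z$, this reveals that the polynomial growth order enters as $d = 1+2+1+3 = 7$, giving the exponent $1 + d/2 = 9/2$ of Conjecture \ref{conj-flow} and the factor $h^{-7/2}$ inside the integral over the Brillouin-type parameter.

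Third, I would assemble the leading constant. As in the Heisenberg case of Theorem \ref{heisenberg-geod}, the geometric factors come from the harmonic $1$-forms dual to the images $\Phi(a_i)$ (providing the volume $\mathrm{vol}(\widehat{H})$ through the covariance form $\delta$ of (\ref{denkerdelta})), while the spectral factor is the special value $\zeta_H(7/2)$ of the quartic oscillator. For the latter I would adapt Voros's contour-integral representation in \cite{Voros}: writing the trace
\[
\zeta_H(s) = \frac{1}{\Gamma(s)}\int_0^\infty t^{s-1}\,\mathrm{Tr}\, e^{-tH}\,dt
\]
and replacing $\mathrm{Tr}\, e^{-tH}$ by its Mellin-Barnes expansion through modified Bessel functions (arising from $\int e^{-t(p^2+q^4)}\,dp\,dq$ at the classical level plus quantum corrections), one obtains the explicit Bessel-integral formula for $\zeta_H(7/2)$ advertised in Example \ref{Engelquartic}. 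The lower-order coefficients $c_1, c_2, \ldots$ are then produced by expanding the Lie/Chen iterated integrals for $\Phi$ along the geodesic flow as in Theorem \ref{theorem-geod}, a routine but lengthy procedure.

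The main obstacle will be the spectral computation rather than the dynamical one: the general asymptotic framework carries over from Theorem \ref{theorem-geod} almost verbatim once the Malcev-Kirillov picture for $E(\mathbb{R})$ is in place, but extracting a clean Bessel-integral formula for $\zeta_H(7/2)$ requires handling the non-analytic behavior of the quartic oscillator spectrum at the origin and a careful control of the Mellin regularization. A secondary difficulty, inherited from the step-three structure noted in Remark (i), is that the gauge transform in Chapter \ref{reductionpathintegral} now involves two successive iterated integrals, producing cross terms between $[W,X]$-type and $[W,[W,X]]$-type brackets; these must be tracked to justify that the subleading corrections in the Floquet parameter $h$ do not contaminate the leading $\zeta_H(7/2)$ contribution.
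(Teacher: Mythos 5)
Your representation-theoretic and dynamical framework is essentially the paper's own: Theorem \ref{theorem-geod} is invoked with $\Gamma = E(\mathbb{Z})$, the Selberg trace formula together with the Fourier inversion formula (Theorem \ref{introPytlik}), the central-element trace identity (\ref{central}) and Sunada's bound (Theorem \ref{sunada}) localize everything near the trivial representation, and the exponent $9/2 = 1+d/2$ comes from the homogeneous dimension of the Engel group. Two bookkeeping remarks: your dilation $W\mapsto h^{1/3}W$, $X\mapsto h^{2/3}X$, $Y\mapsto hY$, $Z\mapsto hZ$ is inconsistent with $[W,X]=Y$, $[W,Y]=Z$ (the correct weights are $1,1,2,3$, so $d=1+1+2+3=7$, which is what you end up using anyway, and matches the Plancherel count $3\cdot 2+1=7$ from $|\delta|\,d\delta\,d\beta$); and in this constant-curvature setting the paper works with the Selberg trace formula and twisted Laplacians, not twisted transfer operators, which are only needed for the variable-curvature conjecture and are explicitly deferred. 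Your identification of the sub-Laplacian over $\mathfrak{g}^{(1)}=\mathrm{span}\{W,X\}$ with the quartic oscillator agrees with Example \ref{Engelquartic}.

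The genuine gap is precisely in the part that distinguishes this theorem from the general one: the Bessel-integral expression of $C$, i.e.\ of $\zeta_H(7/2)$ for $H=-\frac{d^2}{ds^2}+s^4$. Since the eigenvalues grow like $n^{4/3}$, the point $s=7/2$ lies inside the region of absolute convergence, so the value is a genuinely global spectral quantity; it is not determined by the small-$t$ behaviour of $\mathrm{Tr}\,e^{-tH}$. Your plan of feeding ``the classical phase-space integral $\int e^{-t(p^2+q^4)}dp\,dq$ plus quantum corrections'' into the Mellin transform only produces the semiclassical asymptotic series governing the poles of $\zeta_H$, and cannot yield the exact constant. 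The paper instead follows Voros's exact zero-energy resolvent method: $R(q,q')=W^{-1}\psi_+(q_<)\psi_-(q_>)$ with $\psi_\pm$ written in terms of the modified Bessel functions $K_\mu$, $I_\mu$ (here $\mu=1/5$, $z=q^3/3$, via Lommel's transformation), so that $\zeta_H(n)$ is an $n$-fold integral of products of $R$; and -- the point your proposal does not address at all -- the half-integer exponent forces the insertion of the kernel $B$ of $\hat{H}^{-1/2}$, defined through the operator binomial series for $(R/4\|R\|)^{1/2}$, giving
\[
\zeta_H(7/2)=\int_{\mathbb{R}^4}R(q_1,q_2)R(q_2,q_3)R(q_3,q_4)B(q_4,q_1)\,dq_1\cdots dq_4 .
\]
Without this square-root-of-resolvent device (or some equivalent exact representation), your argument establishes the shape $Ce^x x^{-9/2}(1+c_1/x+\cdots)$ but not the advertised Bessel-integral formula for $C$, which is part of the statement.
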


For further examples, see Section \ref{Step0-9}.

\subsection{Long time asymptotics of the heat kernels on covering manifolds}\label{IntroHeat}
Next, we consider the long-time asymptotics of the heat kernels on $\Gamma$-covering manifolds.  We can also apply our extension of the Floquet-Bloch analysis.  In the case when $\Gamma$ is an abelian group, the similarity of the analysis of the prime closed geodesics and the heat kernels was pointed out by Sunada in the late 1990s.  When $\Gamma$ is nilpotent, Boulanger \cite{Boulanger2} reduces the asymptotics of the prime closed geodesics for manifolds with constant negative curvature to that of the heat kernels.   

 Let $M$ be a compact Riemannian manifold or a finite unoriented graph. For a discrete group $\Gamma$, take a normal covering $\pi:X \to M$ with the covering transformation group $\Gamma$. 
We denote by $k_X(t,p,q)$ the heat kernel (resp. the transition probability of simple random walks) of $X$ when $M$ is a compact Riemannian manifold (resp. a finite graph). 
For the brevity of the description of results, we assume that $M$ is not a bipartite graph (for bipartite graphs, see \cite{Kotani3} for the abelian case, and generalizations to a nilpotent case are also treated similarly).

\begin{problem}
What is the asymptotic behavior of $k_X(t,p,q)$ as $t \to \infty$?
\end{problem}

The following results are known when $\Gamma$ is an abelian group.

\begin{theorem}\label{abel-heat} If $\Gamma$ is an abelian group of rank $d$, then  we have
\begin{equation} k_X(t,p,q) \sim \frac{C}{t^{d/2}}\left(1 + \frac{c_1}{t} + \frac{c_2}{t^2} + \cdots \right),
\label{Kotaniheat}\end{equation}
where $C, c_1, c_2, \ldots$ are constants depending on the geometry of $M$.
\end{theorem}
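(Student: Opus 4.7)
The plan is to apply the (standard) Floquet--Bloch theory of part (A) of the introduction, specialized to the abelian cover $\pi\colon X \to M$ with deck group $\Gamma$. After replacing $\Gamma$ by a finite-index subgroup isomorphic to $\mathbb{Z}^d$ (which changes $C$ only by a rational multiplicative factor), the dual group $\widehat{\Gamma}$ is the $d$-torus $T^d$, and the decomposition \eqref{decopintoirreducible} specializes to a direct integral $L^2(X) \simeq \int^{\oplus}_{T^d} \mathcal{H}_\chi \, d\chi$, where each $\mathcal{H}_\chi$ is the space of $L^2$-sections of the flat line bundle $E_\chi \to M$. The Laplacian (or combinatorial Laplacian in the graph case) decomposes as $\int^\oplus \Delta^\chi \, d\chi$, and the heat kernel admits the representation
\begin{equation*}
k_X(t,p,q) \;=\; \int_{T^d} K_{\chi}(t,p,q)\, d\chi,
\end{equation*}
where $K_\chi$ is the heat kernel of the twisted Laplacian $\Delta^\chi$ on $M$, evaluated along a fixed lift of $(p,q)$ and paired against the monodromy $\chi$.

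The next step would be a perturbative analysis of $\Delta^\chi$ near the trivial character. Writing $\chi = \exp(2\pi\sqrt{-1}\,a)$ with $a$ in a neighborhood of $0 \in \mathbb{R}^d$, a gauge transform by the abelian line integral (the commutative model of the Chen/Lie integral to be developed in Chapter \ref{reductionpathintegral}) identifies the family $\Delta^\chi$ with a real-analytic family of elliptic operators on a \emph{fixed} Hilbert space, whose lowest eigenvalue $\lambda_0(a)$ vanishes quadratically at $a=0$ with Hessian equal to the Albanese/Kotani--Sunada quadratic form $Q$ on $H^1(M,\mathbb{R})$, while the remainder of the spectrum is bounded below by a positive constant uniformly in $a$. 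Cutting off the $a$-integration to a small ball around $0$ contributes all terms of polynomial decay in $t$, while the complement (together with contributions from $\lambda_k(a)$ for $k \geq 1$) decays exponentially and may be absorbed into the error.

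For the remaining local integral, apply Laplace's method to
\begin{equation*}
\int_{|a| \leq \varepsilon} e^{-t\lambda_0(a)}\, \varphi_0(a,p)\, \overline{\varphi_0(a,q)}\, da,
\end{equation*}
where $\varphi_0(a,\cdot)$ is the normalized lowest eigensection, with $\varphi_0(0,\cdot) \equiv \operatorname{vol}(M)^{-1/2}$. The leading Gaussian integral produces $C t^{-d/2}$ with
\begin{equation*}
C \;=\; \frac{1}{(4\pi)^{d/2}\sqrt{\det Q}\,\operatorname{vol}(M)},
\end{equation*}
and Taylor-expanding $\lambda_0(a)$ and the eigensection product to all orders gives the full series $1 + c_1 t^{-1} + c_2 t^{-2} + \cdots$; the coefficients $c_k$ are polynomials in the Rayleigh--Schr\"odinger perturbation data, hence expressible by integrals of harmonic one-forms and Green's operators on $M$.

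The main obstacle is not any single step but the justification that the formal Laplace expansion is a genuine asymptotic expansion uniformly in the base points $(p,q)$: one must control, on a uniform neighborhood of $a=0$, both the spectral gap above $\lambda_0(a)$ and all higher-order derivatives of $\lambda_0$ and $\varphi_0$, and show that the remainder after $N$ terms is $O(t^{-d/2 - N - 1})$. This is where the Kotani--Sunada-type arguments enter most delicately; once in place, the heat-kernel expansion and the closed-geodesic counting (via the Selberg trace / dynamical zeta route used in Theorem \ref{KatsudaSunadaIHES}) become formally parallel, and this parallelism is what we intend to carry over, with the necessary modifications described in Sections \ref{Introprac} and \ref{IntroChebotarev}, to the nilpotent setting in the sequel.
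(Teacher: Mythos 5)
Your proposal follows essentially the same route the paper takes (and attributes to Kotani--Sunada \cite{Kotani3}): the Floquet--Bloch direct integral over the character torus, the gauge transform by the abelian line integral of harmonic one-forms, the quadratic vanishing of $\lambda_0(\chi)$ at the trivial character with positive-definite Hessian and a uniform spectral gap, and the Laplace method with Rayleigh--Schr\"odinger data supplying the higher coefficients. This matches the paper's own outline in Section \ref{Infiniteabelian} for the leading term and the cited argument of \cite{Kotani3} for the full expansion, so no further comparison is needed.
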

  
This result is due to Kotani and Sunada \cite{Kotani3}. There, for example, the 
A constant $C$ is written as 
\[     C = \frac{{\rm vol}(M)^{d/2}({\rm vol}{\rm Alb}^\Gamma)^{d/2}}{(4\pi)^{d/2}\sharp ({\rm Tor})}
\]
where  ${\rm vol}({\rm Alb}^\Gamma)$ is the Albanese torus associated to $\Gamma$ defined in \cite{Kotani3} and $\sharp ({\rm Tor})$ is the order of torsion subgroup of $\Gamma$ and the explicit expressions of the other constants are also written in \cite{Kotani3}. Note that the Albanese map is a generalization of the Abel-Jacobi map in algebraic geometry; thus, this expression is essentially of the same nature as appeared in the constant expressed by the Jacobi torus for several asymptotic formulas.

If $M$ is compact Riemann surface of genus $g \geq 2$ with constant curvature $-1$ and $\Gamma = H_1(M,\mathbb{Z})$, then $\mbox{Alb}^\Gamma$ is the dual torus of the Jacobi torus $\mbox{J}(M)$.
Then we have ${\rm vol}(\mbox{Alb}^\Gamma) = ({\rm vol}(\mbox{J}(M))^{-1}$.

Several authors obtain the leading term of the above theorem \cite{Guivarch}, \cite{Jacod}, \cite{Kramli}, and \cite{Sinai} independently.
 In particular, Kotani, Shirai, and Sunada \cite{Kotani2} focus 
on the geometric nature of $C$ as above, which is inspired by the corresponding case for prime closed geodesics \cite{Phillips}, \cite{Katsuda1} as in Theorem \ref{abel-geod}. There are also several results for non-symmetric random walks (cf. \cite{Ishiwata2}, \cite{Ishiwata3}).  

In the case of nilpotent groups, we have the following theorem for heat kernels, which corresponds to Conjecture \ref{conj-geod},

\begin{theorem}\label{conj-heat} Let $\Gamma$ be a finitely generated torsion-free nilpotent group, $H = \Gamma/[\Gamma,\Gamma]$ and $b$ is the rank of $H$. Then,

\[    k_X(t,p,q) \sim \frac{C}{t^{d/2}}\left(1 + \frac{c_1}{t} + \frac{c_2}{t^2} + \cdots \right),
\]
where 
\begin{equation}  C = \frac{1}{(2\pi)^{b/2}h({\rm vol}(\widehat{H}))}\zeta_H(d/2) \label{Cheat}
\end{equation}
and the constants $c_1, c_2, \ldots$ are expressed in terms of $M$ related to the Lie integrals or Chen's iterated integrals and are explicitly computable.
\end{theorem}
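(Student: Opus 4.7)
The plan is to adapt the Kotani--Sunada strategy \cite{Kotani3} for the abelian case, replacing the classical Fourier inversion by the nilpotent Floquet--Bloch decomposition of Theorem \ref{introPytlik}, and then to extract the long-time behaviour by Laplace's method after rescaling the representation parameter in a way adapted to the Malcev grading of $\Gamma$. The argument runs in parallel with the proof envisaged for Theorem \ref{theorem-geod} (Conjecture \ref{conj-geod}), but the symbolic-dynamics side is replaced throughout by direct spectral analysis of $e^{-t\Delta}$, since no zeta regularisation from closed geodesics is needed on the heat-kernel side.

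First, lift $p,q$ to a fixed reference copy of $M\subset X$ and apply Theorem \ref{introPytlik} to the $\Gamma$-valued function $\gamma\mapsto k_X(t,p,\gamma q)$. This yields
\begin{equation*}
k_X(t,p,q) \;=\; \int_{\widehat{\Gamma}_{\rm fin}}\frac{1}{\dim \pi}\mathrm{Tr}\bigl(k_\pi(t,p,q)\bigr)\,d\mu(\pi),
\end{equation*}
where $k_\pi$ is the heat kernel of the twisted Laplacian $\Delta_\pi$ acting on sections of the flat bundle $E_\pi\to M$. By the approximation principle of NIII-1, together with Theorem \ref{introdiscretetoLie} and its general nilpotent analogue, the finite-dimensional representations $\pi$ of $\Gamma$ are in turn approximated by the restrictions to $\Gamma$ of the infinite-dimensional irreducible unitary representations of the Malcev completion $G$, parametrised by coadjoint orbits $\xi\in\mathfrak{g}^*$, with $\mu$ replaced by Kirillov's Plancherel measure.

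Next, to compare the operators $\Delta_\pi$ as $\pi$ varies I would gauge-transform via the Lie/Chen iterated integral of Chapter \ref{reductionpathintegral}, so that each $\Delta_\pi$ acts on one fixed Hilbert space as a perturbation of the scalar Laplacian by operators whose coefficients are polynomials in $\xi$, weighted by the Malcev grading of $\mathfrak{g}$. As $t\to\infty$ the mass of $e^{-t\Delta_\pi}$ concentrates near $\xi=0$; rescaling $\xi$ by $t^{-k/2}$ on the step-$k$ layer of the grading produces total Jacobian $t^{-d/2}$, where $d$ is the polynomial growth order of $\Gamma$. Under this rescaling the leading part of $t\Delta_\pi$ converges on smooth vectors to the hypoelliptic operator $H$ --- the sub-Laplacian-type operator determined by the quadratic part of the symbol in Malcev coordinates --- while the Plancherel density and the transverse Jacobian combine to produce the factor $(2\pi)^{-b/2}\mathrm{vol}(\widehat H)^{-1}$. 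The Mellin identity $\zeta_H(s)=\Gamma(s)^{-1}\int_0^\infty \tau^{s-1}\mathrm{Tr}(e^{-\tau H})\,d\tau$ evaluated at $s=d/2$ then assembles the constant $C$ of the statement, once the residual $\tau$-integration transverse to the orbit is carried out.

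The subleading coefficients $c_1,c_2,\ldots$ come from continuing the Taylor expansion in powers of $t^{-1/2}$, applying Duhamel's formula to $e^{-t\Delta_\pi}$, and integrating termwise against the Plancherel measure; the resulting integrals are Chen iterated integrals of the geometric data on $M$, exactly as in Theorem \ref{theorem-geod}. The main obstacle is to make this formal perturbation rigorous, because $H$ is unbounded while the perturbation lives inside unitary representations of $G$, so a direct infinite-dimensional perturbation argument fails. This is precisely the difficulty resolved by the Wilkinson-type device of Chapter \ref{Anotherproofwilkinson}, illustrated by \eqref{introformal}: carry out the perturbation at the level of the genuine finite-dimensional representations $\rho_{{\rm fin},x}$ of $\Gamma$, where every operator is bounded, and only pass to the Malcev limit at the end, invoking the norm approximation of Theorem \ref{introdiscretetoLie} to interchange the long-time and approximation limits. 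Uniform Gaussian upper bounds for heat kernels on nilpotent covers (available since $\Gamma$ has polynomial growth $d$) provide the dominated-convergence input needed to justify the termwise integration.
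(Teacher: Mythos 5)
Your proposal is correct and follows essentially the same route as the paper: Fourier inversion via Theorem \ref{introPytlik}, passage to representations $\pi_l$ of the Malcev completion via Theorem \ref{introdecompose}, gauge transformation by the Lie/Chen iterated integral, rescaling of the coadjoint parameter compatibly with the stratification to extract the $t^{-d/2}$ Jacobian and the hypoelliptic operator $H$, and rigorous justification by running the perturbation on the finite-dimensional representations $\rho_{{\rm fin},x}$ exactly as in the Wilkinson-type argument. The only cosmetic deviations are that you assemble $\zeta_H(d/2)$ through the Mellin/heat-trace identity rather than summing $\mu_i^{-d/2}$ over the lowest eigenvalue clusters, and you invoke Gaussian heat-kernel bounds for the localization near the trivial representation where the paper uses Sunada's Kazhdan-distance theorem (Theorem \ref{sunada}) together with explicit growth estimates on the expansion coefficients; both substitutions are equivalent in effect.
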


It should be noted that in the case when $M$ is a finite graph, the asymptotics for the leading term are already obtained by a combination of Alexopoulos \cite{Alexopoulos}, \cite{Alexopoulos2}, \cite{Alexopoulos3} and Ishiwata \cite{Ishiwata} for general nilpotent groups $\Gamma$.
However, their method does not give the geometric nature of the leading coefficient $C$. 
They compare $k_X(t,p,q)$ to the heat kernel of stratified nilpotent Lie groups and then derive the asymptotics, namely the existence of $C$, by simple scaling arguments of the latter.

In this regard, the explicit formulas of the heat kernel of the real Heisenberg group ${\rm Heis}_3(\mathbb{R})$ are known (cf. \cite{Calin}) and thus, as a consequence, we can compute $C$ in this case by combining with their results. 

On the other hand, one advantage of our methods is that we can compute $C$ without referring to the above explicit formulas. This coincidence will be explained in Remark \ref{compariontoexplicitformula}. 
Furthermore, to the author's knowledge, there are no examples of nilpotent Lie groups of steps greater than two whose heat kernels are described explicitly. For example, on page 205, line 8 from the below in \cite{Calin}, there is a comment ``Until now, there are no explicit formulas for Laplacians and sub-Laplacians on nilpotent Lie groups, step greater than three (including the Engel group).'' Note that the phrase ``greater than three '' would be corrected as ``greater than two'' since the Engel group is a three-step nilpotent group. 
There are several related results (cf. \cite{Breuillard}, \cite{Crepel}, \cite{Diaconis}, \cite{Hough}, \cite{Raugi}, \cite{Tanaka}). Moreover, a few extensions to non-symmetric versions of \cite{Ishiwata} are done in \cite{Ishiwata4}, \cite{Ishiwata5}, \cite{Benard}. Moreover, an asymptotic expansion formula is also obtained in \cite{Namba}, which is weaker than Theorem \ref{conj-heat} if it is restricted to the special case that the random walk is symmetric. 

In the case when $\Gamma = {\rm Heis}_3(\mathbb{Z})$ or $E(\mathbb{Z})$, we also obtain the following results corresponding to Theorem \ref{heisenberg-geod} and Theorem \ref{engel-geod}.

\begin{theorem}\label{heisenberg-heat} Let $\Gamma$ be ${\rm Heis}_3(\mathbb{Z})$. 
\[k_X(t,p,q) \sim \frac{C}{t^2}\left(1 +\frac{c_1}{t} + \frac{c_2}{t^2} + \cdots \right),
\]
where the constant $C$ is the same as Theorem \ref{heisenberg-geod} and the constants $c_1, c_2, \ldots$ are written in the quantities of $M$ related to the Lie integrals or Chen's iterated integrals and explicitly computable. 
Moreover, the explicit expression of the constant $c_1$ will be given in Theorem \ref{leadingsecond}.
\end{theorem}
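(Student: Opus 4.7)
The plan is to follow the nilpotent Floquet--Bloch recipe of Section \ref{Introprac} applied to the heat operator $e^{-t\Delta_X}$ on the $\Gamma$-cover $X\to M$ with $\Gamma={\rm Heis}_3(\mathbb{Z})$, and then carry out a Mellin--Laplace analysis on the Schr\"odinger side. This is the specialization of Theorem \ref{conj-heat} to the three-dimensional Heisenberg lattice; the identification of the leading constant with $C$ of Theorem \ref{heisenberg-geod} will follow because the geometric data entering both theorems (the harmonic one-forms $\omega_1,\omega_2$ dual to the generators of $\Gamma^{\rm ab}$) is the same.

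\textbf{Step 1 (Pytlik inversion of the heat kernel).} Fix lifts $\tilde{p},\tilde{q}\in X$ and set $f_t(\gamma):=k_X(t,\tilde{p},\gamma\tilde{q})$. Standard Gaussian upper bounds on $X$ show $f_t\in L^1(\Gamma)$ for every $t>0$, so Theorem \ref{introPytlik} yields
\[
k_X(t,\tilde{p},\tilde{q}) \;=\; f_t(e) \;=\; \int_{\widehat{\Gamma}_{\rm fin}} \frac{1}{\dim \pi_{\rm fin}}\, {\rm Tr}\bigl(\pi_{\rm fin}(f_t)\bigr)\, d\mu(\pi_{\rm fin}).
\]
Each $\pi_{\rm fin}(f_t)$ is the heat kernel of the twisted Laplacian on the flat bundle $E_{\pi_{\rm fin}}\to M$, so the problem reduces to a weighted integral of traces of bundle heat kernels.

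\textbf{Step 2 (Reduction to the Schr\"odinger model via Malcev).} Using Theorem \ref{introdiscretetoLie}, I would replace, up to controlled error, the finite-dimensional family $\{\pi_{\rm fin}\}$ by the Schr\"odinger representations $\rho_h$ of the Malcev completion $G={\rm Heis}_3(\mathbb{R})$, indexed by $h\in\mathbb{R}\setminus\{0\}$ and by the two $U(1)$ fluctuation parameters $\alpha,\beta$ of (\ref{introfluctuation}) playing the role of an ``abelian Brillouin zone''. Gauge-transforming the flat connection on $E_{\rho_h}$ by the Chen iterated integral of harmonic representatives of the generators of $\Gamma^{\rm ab}$ (Step NS-1 and Chapter \ref{reductionpathintegral}) converts $\Delta_{\rho_h}$ into an $h$-dependent family of operators $P^{\rho_h}$ on the fixed space $L^2(M)\otimes L^2(\mathbb{R})$. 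In view of the $\sqrt{h}$ scaling in (\ref{newschrep}), $P^{\rho_h}$ admits a formal power series in $\sqrt{h}$ whose leading term is a multiple of the harmonic oscillator $-\frac{d^2}{ds^2}+s^2$ (normalized by $\|\omega_1\|_{L^2(M)}\|\omega_2\|_{L^2(M)}$), and whose subleading coefficients are universal polynomials in Chen iterated integrals of harmonic forms on $M$.

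\textbf{Step 3 (Mellin extraction of the expansion).} The small-$h$ regime controls the $t\to\infty$ behaviour, and the trace ${\rm Tr}\,e^{-tP^{\rho_h}}$ is, in leading order, $\sum_{k\geq 0}e^{-th(2k+1)}$ up to the geometric normalization. Integrating over the central Plancherel direction against $|h|\,dh$ gives $2\zeta_H(2)/t^2$ with $\zeta_H(2)=\sum_{k\geq 0}(2k+1)^{-2}=\tfrac34\zeta(2)=\pi^2/8$, while integration over $\alpha,\beta\in U(1)^2$ supplies the factor $({\rm vol}(\widehat{H}))^{-1}$ with $H=\Gamma/[\Gamma,\Gamma]=\mathbb{Z}^2$. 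Combined with the volume normalization ${\rm vol}\,M = 2\pi(g-1)$ from Gauss--Bonnet, this recovers the constant $C$ of (\ref{heisenbergC}). Iterating the Mellin procedure over the subleading $\sqrt{h}$-terms of $P^{\rho_h}$ from Step 2 produces the full series $\sum c_k t^{-k}$, with the explicit $c_1$ of Theorem \ref{leadingsecond} emerging from the first nontrivial Chen integral contribution.

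\textbf{Main obstacle.} The critical difficulty is the uniform control as $h\to 0$ of the perturbation expansion of $P^{\rho_h}$: the limiting operator is unbounded and the scaling is singular, so commuting the Mellin integration with the perturbation series is not formal. This is exactly the situation addressed in Chapter \ref{asymnilpotent}, using the representation-theoretic approach of Chapter \ref{Anotherproofwilkinson} (the alternative to Helffer--Sj\"ostrand's justification of Wilkinson's formula) to handle the interplay between bounded finite-dimensional representations and the unbounded Schr\"odinger limit. Once this uniform control is in place, dominated convergence allows term-by-term integration and the stated asymptotic expansion follows.
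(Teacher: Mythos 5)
Your proposal follows essentially the same route as the paper's proof: Pytlik/Fourier inversion applied to $f_t(\gamma)=k_X(t,\tilde p,\gamma\tilde q)$, localization to small $h$ (made precise in the paper by Sunada's comparison with the Kazhdan distance, Theorem \ref{sunada}), replacement of the finite-dimensional representations by the Schr\"odinger family via Theorem \ref{discretetoLie}, gauge transformation by the Lie/Chen iterated integral, perturbation of the lowest eigenvalue cluster with the modified harmonic oscillator appearing at order $h$, and Laplace--Mellin integration against the Plancherel measure $|h|\,dh$, with the error terms controlled exactly as in Chapters \ref{sectionasym} and \ref{Longtimeheat}. One bookkeeping caveat: the dependence on ${\rm vol}(\widehat{H})=\|\omega_1\|_{L^2(M)}\|\omega_2\|_{L^2(M)}$ enters squared through the oscillator eigenvalues $\lambda_i^{(2)}=\frac{2\pi\,{\rm vol}(\widehat{H})}{{\rm vol}(M)}\bigl(i+\tfrac12\bigr)$ and not through the $(\alpha,\beta)\in U(1)^2$ integration (which contributes total measure one), so your attribution of a $({\rm vol}(\widehat{H}))^{-1}$ factor to those directions should be corrected, though this does not change the method or the final form of the expansion.
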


For the Engel group ${\rm E}(\mathbb{Z})$, we have the followings:
\begin{theorem}\label{engle-heat} Let $\Gamma$ be ${\rm E}(\mathbb{Z})$. 
\[k_X(t,p,q) \sim \frac{C}{t^{7/2}}\left(1 +\frac{c_1}{t} + \frac{c_2}{t^2} + \cdots \right),
\]
where $C$ can be written by some integral related to the Bessel functions (see Example \ref{Engelquartic}).
\end{theorem}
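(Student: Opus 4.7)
The plan is to specialize the general heat-kernel expansion of Theorem \ref{conj-heat} to $\Gamma = E(\mathbb{Z})$, so the work consists in identifying the growth exponent, the hypo-elliptic operator $H$, and the shape of the special value $\zeta_H(d/2)$ for the Engel group. First I verify that $d/2 = 7/2$: the lower central series of the Engel Lie algebra has graded dimensions $2,1,1$ (from $W, X$; from $Y=[W,X]$; from $Z=[W,Y]$), so by the classical polynomial growth formula for finitely generated nilpotent groups, the order of $E(\mathbb{Z})$ is $1\cdot 2 + 2\cdot 1 + 3\cdot 1 = 7$, matching the leading denominator $t^{7/2}$ in the statement.

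Next I identify the hypo-elliptic operator $H$ whose spectral zeta function governs $C$. Following the practical guideline (NIII-2) in Section \ref{Introprac}, I replace the finite-dimensional decomposition of the regular representation supplied by Theorem \ref{introPytlik} by its approximation through infinite-dimensional unitary representations of the Malcev completion $G = E(\mathbb{R})$. Applying the orbit method to a coadjoint orbit passing through a nontrivial multiple of $Z^{\ast}$ realizes the sub-Laplacian attached to the horizontal generators $W, X$ on $L^2(\mathbb{R})$; after the normalization that fixes the central character (in analogy with the passage from \eqref{oldschrep} to \eqref{newschrep} in the Heisenberg case) this operator becomes, up to a constant determined by the geometry of $M$, the quartic operator $H = -\frac{d^2}{ds^2} + s^4$. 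Combining this reduction with the gauge/Lie-integral argument of Chapter \ref{reductionpathintegral} and with the Fourier inversion formula, the constant in Theorem \ref{conj-heat} becomes an explicit multiple of $\zeta_H(7/2)$.

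The main obstacle is the last step: giving $\zeta_H(7/2)$ in closed form, since the spectral zeta function of the quartic oscillator is not elementary. Voros \cite{Voros} computed its special values as convergent integrals of modified Bessel functions $K_\mu$, exploiting the fact that the resolvent kernel of the relevant Bessel operator arising after a Liouville-type transformation is expressible through $K_\mu$. To obtain the half-integer value $s=7/2$, I will adapt Voros's contour manipulation to the Mellin transform of the heat trace $\mbox{Tr}(e^{-tH})$, carefully tracking the analytic continuation and the boundary contributions that appear at $s=7/2$. Inserting the resulting Bessel integral into the formula above yields the announced integral expression for $C$ (cf. Example \ref{Engelquartic}); the higher-order coefficients $c_1, c_2, \ldots$ then follow by the perturbative expansion of Chapter \ref{asymnilpotent} in terms of Chen's iterated integrals on $M$.
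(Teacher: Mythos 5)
Your route is essentially the paper's: you specialize Theorem \ref{conj-heat}, compute the growth order $d=7$ from the graded dimensions $2,1,1$ exactly as in Example \ref{Engelquartic}, identify the hypo-elliptic operator through the generic coadjoint orbits (those nontrivial on the center, representatives $\delta Z^{\ast}+\beta X^{\ast}$, $\delta\neq 0$), and arrive at the quartic oscillator $H=-\frac{d^2}{ds^2}+s^4$ built from the first-stratum generators $W,X$ — which is indeed the correct choice dictated by \eqref{generalhypo1} and yields the $s^4$ potential, consistent with \eqref{engelhypo}. The only genuine divergence is the final step, the expression of $\zeta_H(7/2)$. The paper does not pass through the Mellin transform of the heat trace: it uses Voros's zero-energy resolvent kernel $R(q,q')$ (expressed in the modified Bessel functions $K_\mu$, $I_\mu$ via Lommel's transformation) to write $\zeta_H(n)=\int R(q_1,q_2)\cdots R(q_n,q_1)$ for integer $n$, and then handles the half-integer by inserting one factor of the kernel $B$ of $H^{-1/2}$, defined through a norm-convergent binomial series in $I-R/4\|R\|$, so that
\[
\zeta_H(7/2)=\int_{\mathbb{R}^4}R(q_1,q_2)R(q_2,q_3)R(q_3,q_4)B(q_4,q_1)\,dq_1\cdots dq_4 .
\]
Your proposed adaptation via $\operatorname{Tr}(e^{-tH})$ is where your plan is vaguest: the heat trace of the quartic oscillator is not known in closed form, so Bessel functions do not enter until you reconnect to the resolvent (e.g.\ by a Laplace transform), which is essentially what the paper's trick with $R^{3}R^{1/2}$ accomplishes directly; moreover, since the eigenvalues grow like $n^{4/3}$, the series for $\zeta_H(s)$ already converges absolutely at $s=7/2$, so no analytic continuation or boundary contributions arise there — the issue is purely one of obtaining a closed integral formula, not of continuation. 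With that step replaced by (or reduced to) the paper's square-root-kernel device, your argument matches the paper's proof, including the treatment of the higher coefficients via the perturbative expansion of Chapter \ref{asymnilpotent}.
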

and the constants $c_1, c_2, \ldots$ are expressed in terms of $M$ related to the Lie integrals or Chen's iterated integrals and are explicitly computable.
Moreover, an algorithm for computing the other constants $c_1, \ldots$ can be provided.

Three further examples in Section \ref{Step0-9} are also computable by similar arguments, but we leave them for the reader's exercises.

\subsection{The spectrum of the Harper operator, Hofstadter butterfly}\label{Hofstadterbutterfly}

Finally, we shall explain a relation to our methods in the case of the Heisenberg group and the analysis of the discrete magnetic Laplacian or the Harper operator on the square lattice $\mathbb{Z}^2$. It should be noted that their relations are already pointed out by \cite{Beguin}. The celebrated  Hofstadter butterfly expresses the spectrum of the latter operators as shown in the following figure:

\begin{figure}[H]
\centering
\includegraphics[width=50mm]{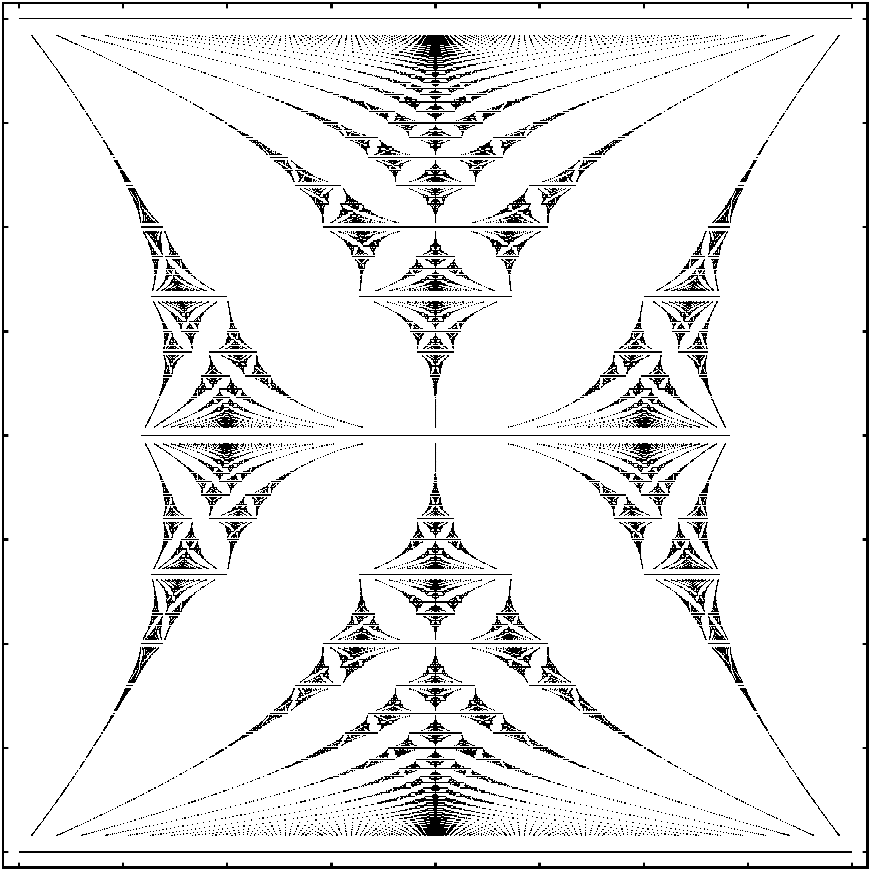}
\caption{{\footnotesize Hofstadter's butterfly (created by Hisashi Naito). The vertical axis represents the value of $\theta$ corresponding to the strength of a magnetic field or the magnetic flux density in the interval $[0,2\pi]$, and the horizontal axis represents the range of the spectrum of $H_\theta$, which is a subset of the closed interval $[-4,4]$.}}
\end{figure}
 
The Harper operator $H_\theta : \ell^2(\mathbb{Z}^2) \to \ell^2(\mathbb{Z}^2)$ is defined as follows:
\begin{equation}
(H_\theta u)(m,n) = u(m+1,n) + u(m-1,n) +e^{\sqrt{-1}m\theta}u(m,n+1) + e^{-\sqrt{-1}m\theta}u(m,n-1) \label{introharperfirst}
\end{equation}
This operator is a discrete analog of the Laplacian (with a shift by $4\mbox{Id}$, where $\mbox{Id}$ is the identity operator) on the plane under a constant magnetic field, which is the Hamiltonian of Landau quantization.

The relation to our extension of Floquet-Bloch theory will be explained in the following paragraphs, and our method for justification of the Wilkinson formula (\ref{introeigen1}) could be considered as the simplest model of the arguments in sections \ref{reductionpathintegral}, \ref{sectionasym}, \ref{asymnilpotent}, but including some essential points there. 

 
First, we note that the analysis of the Harper operator corresponds to the case where the base space $M$ is the graph $\mathcal{G} = (V, E)$, such that the set $V$ of vertices has only one element, $p$. The set $E$ of edges has two loops $u, v$ at the vertex $p$, namely if we realize $\mathcal{G}$ as a one-dimensional complex, $\mathcal{G}$ is the bouquet of two ${S^1}\text{'}s$, i.e., one point sum of two circles. If $\Gamma = {\rm Heis}_3(\mathbb{Z})$, then the $\Gamma$-covering space $X$ is the Cayley graph of $\Gamma$ with generators $\{u,v\}$,
which is described by the following figures:
\begin{figure}[H]
\centering
\includegraphics[width=80mm]{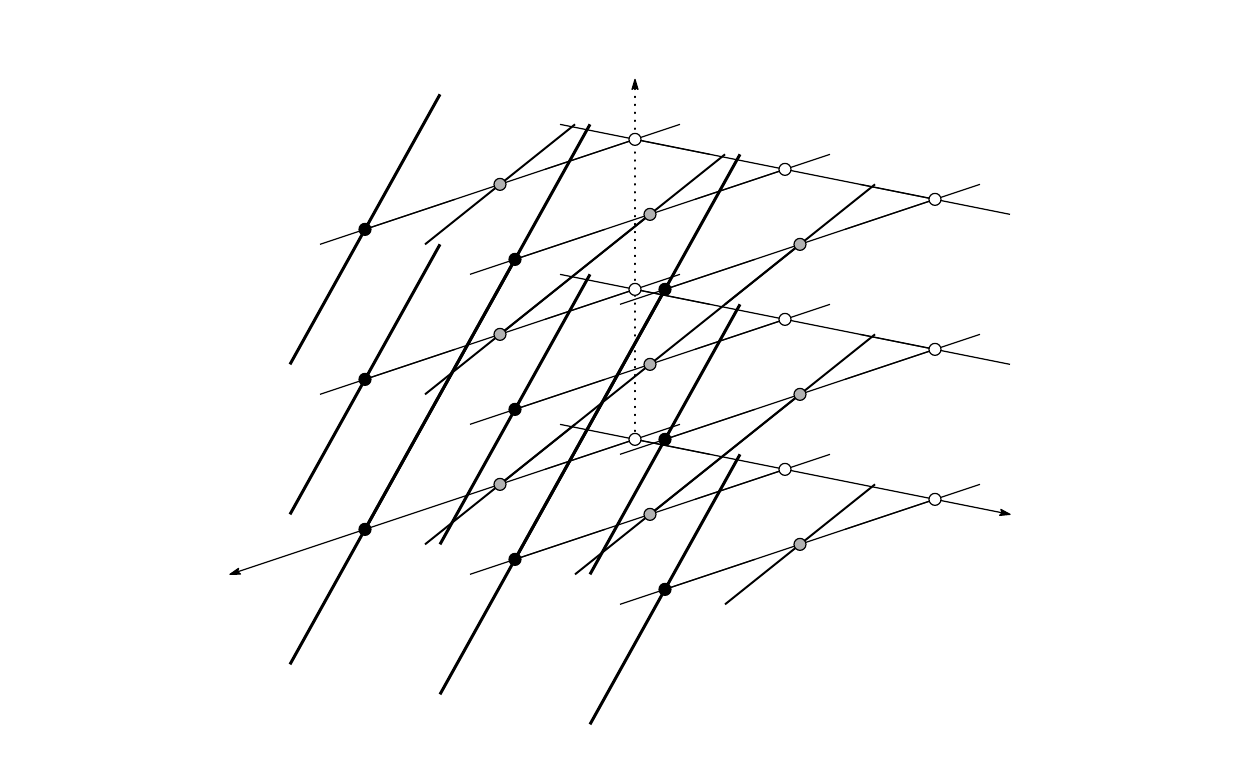}
\caption{{\footnotesize The Cayley graph $X$ of $({\rm Heis}_3(\mathbb{Z}), \{u,v\})$ created by Satoshi Ishiwata: This is a regular graph of degree four embedded in $\mathbb{R}^3$. Solid lines express the edges of the graph, but the dotted line represents the $z$-axis of the orthogonal coordinates of $\mathbb{R}^3$, which does not mean the edges.}}
\end{figure}
Moreover, the square lattice $\mathbb{Z}^2$ appears as an intermediate space of
coverings $\varpi_1:X \to \mathbb{Z}^2$ and $\varpi_2:\mathbb{Z}^2 \to M$ of $\pi = \varpi_2\circ \varpi_1:X \to M$.  For $\theta \in \mathbb{Q}$, the Harper operator $H_\theta$ is a lift (with a shift by $4\mbox{Id}$ with the identity operator $\mbox{Id}$) of the twist discrete Laplacian $\Delta_{\rho_{{\rm \tiny{fin}},x}}$ on $M$ with $x = (0,0,\theta/2\pi) \in \widehat{X} = \widehat{\Gamma}_{\rm fin}$, where $\rho_{{\rm \tiny{fin}},x}$ are finite dimensional irreducible unitary representation appeared in Theorem \ref{Pytlik} and $H_\theta$ (or $H_\theta- 4\mbox{Id}$ acting on $\ell^2(\mathbb{Z}^2)$) can be decomposed as  a direct integral of $\Delta_{\rho_{{\rm \tiny{fin}},x}}$, $x \in \widehat{X}$ in accordance with Theorem \ref{discretetoLie}. These are described by,
\hspace{-20pt}
\begin{figure}[H]
\centering
\includegraphics[width=120mm]{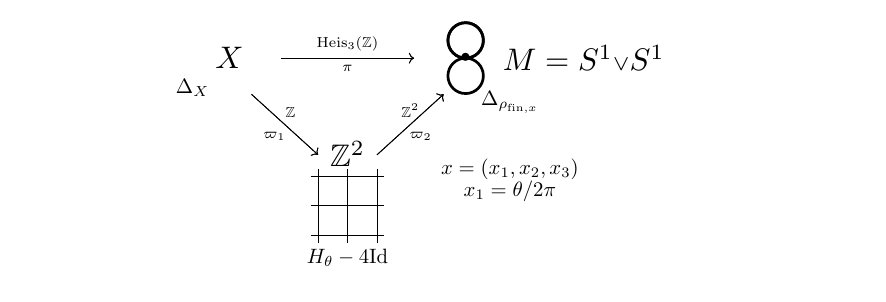}
\caption{{\footnotesize Relation between twisted Laplacian $\Delta_{\rho_{\tiny{{\rm fin}}, x}}$ and Harper operator $H_{\theta}$ in terms of covering of graphs.}}
\end{figure}
Although the structure of the spectrum of $H_\theta$ is far from a complete understanding, it has already been extensively studied (cf.\cite{Satija}). For example, if the parameter $\theta$ is a rational number, then the spectrum of $H_\theta$ is a union of finite intervals, namely it has band structure, and if $\theta$ is an irrational number; it was conjectured that the spectrum is a Cantor set, which was known as M. Kac's ``Ten Martini Problem'' and was finally settled by Avila and Jitomirskaya \cite{Avila}. 
  Our interest here is the following semi-classical asymptotic expansion formula of the spectrum of $H_\theta$ as $\theta \to 0$, where we regard $\theta$ as a semi-classical parameters (3.9 in \cite{Rammal}, similar formulas (6.9 in \cite{Wilkinson} and 6.3.1 in \cite{Helffer}), 
\begin{equation} E_n = -4 + (2n+1)\theta + O(\theta^2) \qquad n= 0,1,2,\ldots \label{introeigen1} \end{equation}
This formula can be used to approximate the Landau level by making the mesh progressively finer (see Remark \ref{meshapproximation} for details).

This is first obtained by Wilkinson \cite{Wilkinson} by WKB arguments. Rammal and Bellissard \cite{Rammal} also derived this formula by a similar method.  Formal derivation of the above expansion is not difficult as follows: It is known that $H_\theta$ is unitary equivalent to the following operator $h_\theta$ acting on $L^2(\mathbb{R})$,
\begin{equation}h_\theta = -2\cos \left(\sqrt{\theta}\frac{d}{\sqrt{-1}ds}\right)-2\cos (\sqrt{\theta}s).  \label{introhtheta}\end{equation}
Note that this expression $h_\theta$ is closely related to the Schr\"odinger representation of the real Heisenberg group ${\rm Heis}_3(\mathbb{R})$.

By the (formal) Taylor expansion formula, this operator can be expressed as 
\begin{equation}
      h_\theta = -4 + \left(-\frac{d^2}{ds^2}+s^2\right)\theta + O(\theta^2) \label{introformal}.
\end{equation}
The coefficient of the linear part in $\theta$ is the harmonic oscillator $\widetilde{\mathcal{H}} := -\frac{d^2}{ds^2}+s^2$ whose eigenvalues are $2n + 1, n=0, 1, 2 ,\ldots$, which implies the above expansion (\ref{introeigen1}). However, (\ref{introformal}) is a form in which a bounded operator $h_\theta$ is approximated by an unbounded operator $\widetilde{\mathcal{H}}$. We need to justify the above arguments mathematically. Note that there are a few other reasons for the necessity of mathematical justifications (see Chapter \ref{Anotherproofwilkinson}).

It is first given by Helffer and Sj\"{o}strand \cite{Helffer} using semi-classical analysis. Their method is described very roughly as follows: If the domain $L^2(\mathbb{R})$ of both operators $h_\theta$ and $\mathcal{H}$ are exhausted by a sequence of common invariant finite-dimensional spaces $V_k$, $k = 1, 2, \ldots$, then the expansion (\ref{introformal}) can be considered as a limit of the Taylor expansion of the restriction $h_\theta\left|_{V_k}\right.$ of $h_\theta$. However, there is no such sequence. Instead of $V_k$, Helffer and Sj\"{o}strand used the space $\widetilde{V}_k$ spanned by eigenfunctions of $\widetilde{\mathcal{H}}$ associated with eigenvalues less than or equal to $k$. This space is not invariant by $h_\theta$, but errors are $O(h^\infty)$ with $h = \theta/2\pi$ and thus do not affect the asymptotic expansion (\ref{introeigen1}). Their methods are based on the semi-classical localizations related to the harmonic oscillators (see also Remark \ref{HSproof}).
     
We shall give another proof based on a comparison between unitary representations of $\mbox{Heis}_3(\mathbb{Z})$ and the real Heisenberg group ${\rm Heis}_3(\mathbb{R})$, which is described in Theorem \ref{introdiscretetoLie}($=$ Theorem \ref{discretetoLie}), a particular case of Theorem \ref{introdecompose}. Note that this relation is somewhat related to the well-known fact that the spectrum of the Harper operator $H_\theta$ for $\theta \in \mathbb{Q}$ has the band structure. 
However, as far as we know, there are no arguments for using this fact to justify mathematically the asymptotic expansion formula (\ref {introeigen1}). 
We believe our method has some advantage over the original proof in \cite{Helffer} since the arguments there need to consider the differential operator $d/ds$ and the multiplication operators conjointly to make the harmonic oscillator. However, our method can handle them separately, which gives more flexible applicability. 

\subsection{A guide for readers who are not familiar with representation theory}
Our results could be applied to various situations appearing in geometry, dynamical systems, and analysis. The central body of arguments is based on a representation theory, especially in orbit philosophy \`{a} la Kirillov in a broad sense.

For the sake of accessibility of some readers who are not familiar with these kinds of subjects, we separate the arguments first in the special case of the Heisenberg group, where we can avoid several terminologies of the orbit method, but describe the essential novelties of this paper. 
For general nilpotent groups, our arguments are primarily based on several results from the literature, supplemented by a few additional arguments from our own work. Thus, there are several quotations from various papers with different notations and terminologies for the same notions. 
We do not try to unify them, but instead, to give a translation to each of them. 
We fear that this may lead to confusion. However, on the other hand, there is a positive aspect to preventing misunderstandings by unifying and making it convenient to refer to each original literature.  

In addition, the arguments are several variants of the fundamental case of nilpotent Lie groups. Thus, we add a summary of this case in Appendices B to compare Chapter \ref{Representationsdiscretenilpotent} and explain examples in Section \ref{Step0-9}. These parts are essentially quoted from \cite{Corwin}, and nothings are included in our originality.

\subsection*{Use of AI and LLM} 
Artificial Intelligence (Microsoft Copilot) was employed primarily for copy-editing purposes.
Additionally, in Subsections 11.12.6, 11.12.7, and 11.12.8, it was utilized to gather supplementary information concerning previously established results. Any inaccuracies generated by the AI during this process were subsequently corrected.
(For reference, see the differences in Subsection 11.12.7 between version 2 and version 3 of arXiv:2509.16848.)

\subsection*{Acknowledgements}
\addcontentsline{toc}{subsection}{Acknowledgements}

First and foremost, we are deeply grateful to Hidenori Fujiwara for his expert guidance on the representation theory of nilpotent Lie groups, which lies at the heart of this project.  In particular, his insightful note \cite{Fujiwara2} and his comprehensive article \cite{Fujiwara1} --- as discussed in Section \ref{nilpotentfujiwara} --- have been indispensable.

We also wish to thank Ali Baklouti, Fumio Hiroshima, Takashi Iwamoto, Satoshi Ishiwata, Tsuyoshi Kajiwara, Hiroshi Kawabi, Motoko Kotani, Hisashi Naito, Ryuya Namba, Hiroyuki Ochiai, Toshikazu Sunada, and Tatsuya Tate for many stimulating discussions and valuable suggestions.  Our thanks extend to Francis Brown, Beno\^it Collins, Kazuhiro Hikami, Katsushi Ito, Masanobu Kaneko, Akishi Kato, Morimichi Kawasaki, Minsung Kim, Nobushige Kurokawa, Toshiki Matsusaka, Masanori Morishita, Satoshi Naito, Peter Sarnak, Yuji Terashima, and Yoshimichi Ueda for generously sharing information on the topics covered in the final chapter.

Last but not least, we offer our profound thanks to Toshikazu Sunada for introducing these themes to us and for his steadfast advice and encouragement over the past forty years.



\section{Outline of the proofs: Floquet-Bloch theory and applications to several asymptotic formulas}\label{outline}

In this chapter, we first outline our version of the Floquet-Bloch theory, along with its applications to the asymptotics of heat kernels. To clarify the distinction between parts of the Floquet-Bloch theory and other parts, mainly perturbation arguments for the twisted Laplacians, we denote (F's) for the former and (P's) for the latter in the starting points of the arguments.

\subsection{Finite extensions}
\begin{description}
\item[(P0)] Our concern is long-time asymptotics of the heat kernel $k_X(t,p,q)$ on $X$ for a $\Gamma$-covering $\pi:X \to M$. We begin with the easier case where $\Gamma$ is a finite group. In this case, $X$ is compact and thus, $k_X(t,p,q)$ can be expressed as follows:
\begin{equation}       k_X(t,p,q) = \sum_{i=0}^\infty e^{-\lambda_i t}\varphi_i(p)\varphi_i(q), \label{heatcompact}  
\end{equation}
where 
\[ 0 = \lambda_0 < \lambda_1 \leq \lambda_2 \leq \cdots  
\]
are eigenvalues of the Laplacian $\Delta_X$ on $X$ and $\{\varphi_i\}_{i=0}^\infty$ is a complete orthonormal system of eigenfunctions $\varphi_i$ associated to eigenvalues $\lambda_i$.  Note that the $0$-th eigenfunction $\varphi_0$ is a constant function $1/\sqrt{{\rm vol}(X)}$ with the volume ${\rm vol}(X)$ of $X$.  If $i \geq 1$, then $\lambda_i > 0$  and thus $e^{-\lambda_i t}$ decays to $0$ if $t \to \infty$. Then, we have 

\[   \lim_{t\to\infty}k_X(t,p,q) = \lim_{t\to\infty}e^{-\lambda_0t}\varphi_0(p)\varphi_0(q) =  \varphi_0(p)\varphi_0(q) = \frac{1}{{\rm vol}(X)}.  
\]
\end{description}

\subsection{Infinite abelian extensions}\label{Infiniteabelian}
Here, we recall the strategy when $\Gamma$ is an infinite abelian group. Let us recall the proof of the leading term of the asymptotic expansion (\ref{Kotaniheat}). Namely, we shall show that if $\Gamma$ is an infinite abelian group of rank $d$, then there exists a constant $C > 0$ such that
\begin{equation} k_X(t,p,q) \sim \frac{C}{t^{d/2}},
\label{leadingKotaniheat}\end{equation}
   
For simplicity, we assume $\Gamma = \mathbb{Z}^d= H_1(M,\mathbb{Z})$. Essential points in the arguments already appear in this case, and it is relatively easy to extend to general finitely generated abelian groups (cf. \cite {Kotani3} for detailed arguments). Here $X$ is noncompact. Although a similar formula to (\ref{heatcompact}) exists in this case, using the spectral decomposition of $\Delta_X$, it appears to be useless for our problem since it contains parts of a continuous spectrum.
\begin{description}
\item[Step 1 (F1)] Instead of the above spectral decomposition formula, we regard the domain $L^2(X)$ of the Laplacian $\Delta_X$ on $X$ as a space ``$L^2(M) \otimes L^2(\Gamma )$'',  rigorously formulated as a space of $L^2$-sections $L^2(E_R)$ of the flat vector bundle associated to the right regular representation of $\Gamma$, a.k.a. local system associated to $R$, where $E_R$ is defined as follows:
\[   E_R \simeq X \times L^2(\Gamma )/\sim, \hspace{12pt}  (p,\varphi) \sim (\gamma p, R(\gamma^{-1})\varphi )\quad \mbox{for}\quad \gamma \in \Gamma.\]

Note that if we regard the covering $X \to M$ as a ``$\Gamma$-principal bundle over $M$'', then $E_R$ is its associated vector bundle with respect to the representation $R$. Moreover, this construction is similar to the representation space of an induced representation (cf. (\ref{inducedrepresentationbundle}) in Section \ref{Inducedrep}).

When $\Gamma$ is abelian,  we can apply the usual Floquet-Bloch theory. To adjust later generalizations, we formulate the Floquet-Bloch theory geometrically as follows: Since $\Gamma = \mathbb{Z}^d$, the right regular representation $R$ is decomposed to a direct integral of one-dimensional irreducible unitary representations, i.e., characters, over the unitary dual $\widehat{\Gamma}$ of $\Gamma$ which is the space of equivalence class of characters $\chi: \Gamma \to U(1)$ with one-dimensional unitary group $U(1)$. Note that $\widehat{\Gamma}$ is isomorphic to a $d$-dimensional torus $U(1)^d$, which can be identified with the Brillouin zone in the usual terminology of the Floquet-Bloch theory in condensed matter physics. Associated with this formula, we have the following direct integral decomposition. 
\[ L^2(E_R) \simeq \int_{\widehat{\Gamma}}^\oplus L^2(E_\chi )d\chi,
\] 
where $L^2(E_\chi )$ is the space of sections of the flat line bundle associated with $\chi$, similarly defined as $L^2(E_R)$. The space $L^2(E_\chi)$ can be identified with the space 
\[ H_\chi = \{ f: X \to \mathbb{C} \mid f(\gamma^{-1}p) =\chi (\gamma ) f(p) \; \mbox{for all}\; \gamma \in \Gamma\}. 
\]
Using this identification, the heat kernel $k_X(t,p,q)$ can be decomposed as 
\begin{equation}  k_X(t,p,q) = \int_{\widehat{\Gamma}}k_\chi(t,p,q)d\chi  \label{Fourierinversionabel}
\end{equation}
where the integrand $k_\chi(t,p,q)$, which is sometimes called the twisted heat kernel, is the Fourier transform of $k_X(t,p,q)$ with respect to $\Gamma = \mathbb{Z}^d$, which is defined as
\begin{equation} 
k_\chi(t,p,q) = \sum_{\gamma \in \Gamma}\chi(\gamma^{-1} )k_X(t,p,\gamma q).  
\end{equation} 
Moreover, (\ref{Fourierinversionabel}) is the Fourier inversion formula.

Moreover, $k_\chi(t,p,q)$ can be expressed by a twisted average as follows:
\begin{equation} 
k_\chi(t,p,q) = \sum_{i=0}^\infty e^{-\lambda _i(\chi)t}\varphi_{i,\chi}(p)\varphi_{i,\chi}(q), \label{twisted eigen}
\end{equation}
where $\lambda_i(\chi )$ and $\varphi_{i,\chi}$ are the $i$-th eigenvalue and eigenfunction of the twisted Laplacian $\Delta_\chi := \Delta_X\left|_{H_\chi}\right.$. 
Note that $\lambda_i(\chi )$ are discrete as eigenvalues $\lambda_i$ of the usual Laplacian $\Delta_M$ on $M$ and $\lambda_i = \lambda_i(\textbf{1})$ for the trivial character $\textbf{1}$.

\item[Step 2 (P1)] The second step analyzes eigenvalues $\lambda_i(\chi )$.  
It is not difficult to show that there is a positive constant $c$ such that $\lambda_i(\chi ) \geq c$ for $i \geq 1$. We also have 
\begin{equation}\label{first}
\lambda_0(\chi ) \geq 0,\qquad \lambda_0(\chi ) = 0 \Longleftrightarrow  \chi = \textbf{1}
\end{equation}
and $\lambda_0(\chi )$ depends
 on $\chi$ smoothly. From this fact, we easily find the first derivative with respect to $\chi$ at $\textbf{1}$ is zero. The essential point is to show that the Hessian of $\lambda_0(\chi)$ at $\chi = \textbf{1}$ is positive definite.

To prove this fact, we use perturbation arguments as follows:
Although the usual setting of the perturbation theory is that operators defined on a fixed domain are varied, our situation here is the converse in that the defining domains $H_\chi$ are varied. However, the operator $\Delta_\chi$ is fixed since it is the restriction of the Laplacian $\Delta_X$ on $X$ to the space $H_\chi$.
 
To reduce our situation to the usual setting of the perturbation theory, we construct a canonical section $s_\chi$ of the line bundle $E_\chi$ (or better, consider it as a section of $U(1)$-principal bundle) and identify $L^2(M)$ with $L^2(E_\chi)$ by the following correspondence:
\[     L^2(M) \ni f \longleftrightarrow fs_\chi \in L^2(E_\chi) \simeq H_\chi.\]
This procedure is a kind of ``Gauge trasformations''.

Associated with this correspondence, we have a unitary equivalence between the twisted operator $L_\chi:= s_\chi^{-1}\circ\Delta_\chi\circ s_\chi$ acting on $L^2(M)$ and the twisted Laplacian $\Delta_\chi$ acting on $L^2(E_\chi ) \simeq H_\chi$.

To construct $s_\chi$, by the de Rham-Hodge Theorem, we can take a harmonic $1$-form $\omega$ satisfying
 \begin{equation} \chi ([\gamma ]) := \chi_\omega([\gamma ]) := \exp \left(2\pi \sqrt{-1}\int_\gamma \omega\right) \notag
\end{equation}
for a closed curve $\gamma$ and its homology class $[\gamma ] \in H_1(M, \mathbb{Z})$. 

Since $\chi$ is a homomorphism from $\Gamma \simeq H_1(M,\mathbb{Z}) \simeq \pi_1/[\pi_1,\pi_1]$ to an abelian group $U(1)$, we can lift $\chi$ to a character $\widetilde{\chi}:\pi_1(M) \to U(1)$ canonically. 
Then, taking a lift $\widetilde{\omega}$ of $\omega$ to the universal covering $\widetilde{M}$ of $M$, we define a function $\widetilde{s}_{\widetilde{\omega}}$ on $\widetilde{M}$ by
\begin{equation} \widetilde{s}_{\widetilde{\omega}} (p) = \exp \left(2\pi\sqrt{-1}\int_{p_0}^p\widetilde{\omega}\right) \label{lineintegral}
\end{equation} 
for some reference point $p_0 \in \widetilde{M}$. 
Note that this is well defined since the line integral $\int_{p_0}^p\widetilde{\omega}$ does not depend on the choice of a curve from $p_0$ to $p$ by the homotopy invariance property because $\widetilde{\omega}$ is a closed form. 
Since $\widetilde{s}_{\widetilde{\omega}}(\gamma p) = \chi_{\omega}([\gamma ])\widetilde{s}_{\widetilde{\omega}}(p)$, this function can be identified with a section $s_\omega = s_{\chi_\omega}$ of $L^2(E_\chi)$. 

Then $L_\chi= s_{\chi_\omega}^{-1}\circ\Delta_{\chi_\omega}\circ s_{\chi_\omega}$ can be computed as
\[  L_\chi f = \Delta_Mf-4\pi\sqrt{-1} \langle \omega ,df \rangle + 4\pi^2|\omega |^2f
\]
where $\langle \cdot, \cdot \rangle$, and $|\cdot |$ are the inner product and the norm of the cotangent space induced from the Riemannian metric of $M$, respectively. 

From this expression, we can compute the hessian ${\rm Hess}_0 \lambda_0$ of $\lambda_0(\chi )$ at $\chi =\chi_\omega = \textbf{1}$ (i.e. $\omega = 0$) which is written as
\[     {\rm Hess}_0\lambda_0 (\omega,\omega) = \frac{8\pi^2}{{\rm vol}(M)}\int_M |\omega|^2 dv_g
\]   
where ${\rm vol}(M)$ is the Riemannian volume and $dv_g$ is
 the Riemannian measure of $M$.

By the Morse lemma, we can take local coordinates $(U, (x^1, x^2, \ldots, x^d))$ around the trivial character $\textbf{1}$ on $\widehat{\Gamma} \simeq (U(1))^d$ such that 
\[   \lambda_0(\chi) = \lambda_0(x^1, x^2, \ldots, x^d) = \sum_{k=1}^d (x^i)^2. 
\] 
Then, we get the conclusion by the following computation:
\begin{align*} k_X(t,p,q) &= \int_{\widehat{\Gamma }}k_\chi(t,p,q) d\chi \\
&= \int_{\widehat{\Gamma }}\sum_{n=0}^\infty e^{-\lambda _n(\chi)t}\varphi_{n,\chi}(p)\varphi_{n,\chi}(q)  d\chi \\
&= \int_Ue^{-\lambda _0(\chi)t}\varphi_{0,\chi}(p)\varphi_{0,\chi}(q)  d\chi + \mbox{error term} \\
&= C\int_U e^{-\sum_{i=1}^d(x^i)^2t}dx^1dx^2\ldots dx^d + \mbox{error term} \\ &\sim C\prod_{i=1}^d \left\{\frac{1}{\sqrt{t}}\int_{-\infty}^{\infty}e^{-{(y^i)}^2}dy^i\right\} \\
&= \frac{C}{t^{d/2}},
\end{align*}
where $U$ is a neighborhood of $\textbf{1}$ in $\widehat{\Gamma}$. Note that this argument is sometimes called the Laplace method.
\end{description}

\subsection{Heisenberg extensions}\label{Heisenbergexplanation}
For the generalization of the above arguments to nilpotent groups, we point out arising difficulties and give a strategy to overcome them in the case when $\Gamma = {\rm Heis}_3(\mathbb{Z})$. The reasons why we explain this case separately are the following:
\begin{description}
\item[{\rm (1)}] Representation theory of ${\rm Heis}_3(\mathbb{R})$ and some part of the theory of ${\rm Heis}_3(\mathbb{Z})$ are explicit and can be explained as a concrete form without knowledge of orbit methods.

\item[{\rm (2)}] Main innovation in this paper has already appeared in this case as the simplest form.
\end{description}

In the following, we use the notations $\rho$ or $\rho_{\rm \tiny{fin}}, \ldots$ for representations of the Heisenberg group ${\rm Heis}_3(\mathbb{Z})$ and $\pi$ or $\pi_{\rm \tiny{fin}}, \ldots$ for those of general discrete torsion-free  nilpotent groups $\Gamma$.
\begin{description}
\item[Step 1 (F2)] In the first step of the previous arguments, we have used the irreducible decomposition formula of the right regular representation $R$ of abelian groups. However, if $\Gamma$ is nonabelian nilpotent, its representation theory is not of type I. In this case, although there exist abstract decomposition formulas of the regular representation,  it is not unique, and the unitary dual $\widehat{\Gamma}$ is a wild space. In conclusion, the (whole) unitary dual has no practical, computable formula.

In the case that $\Gamma = {\rm Heis}_3(\mathbb{Z})$, if we restrict to finite-dimensional unitary representations, we can use the Plancherel formula (Theorem \ref{Pytlik}) due to Pytlik \cite{Pytlik} or its corollary, the Fourier inversion formula (\ref{Fourier1}) (special case of Theorem \ref{introPytlik} $=$ Theorem \ref{newPytlik}). 
The Plancherel measure $\mu$ is finitely additive and supported on finite-dimensional irreducible unitary representations in this formula. This formula is indeed useful. However, the dimensions of the representations appearing in the formula vary, making it difficult to apply perturbation arguments similar to those in Step 2 (P1) of the previous section. To overcome this point, we relate these finite dimensional representations to infinite dimensional irreducible unitary representations, which are called the Schr\"{o}dinger representations of ${\rm Heis}_3(\mathbb{R})$. These representations $\rho_h$ are parametrized by $h \in \mathbb{R} \setminus \{0\}$ and their representation spaces are common $L^2(\mathbb{R})$ for all $h \in \mathbb{R} \setminus \{0\}$.
   
Let us explain briefly the above relation, which is the content of Theorem \ref{introdiscretetoLie} ($=$ Theorem \ref{discretetoLie}).  Finite dimensional irreducible unitary representations $\rho_{{\rm \tiny{fin}},x}$ of $\Gamma$ are parametrized by $x = (x_1,x_2,x_3) \in \widehat{X}:=  (\mathbb{Q} \cap [0,1])\times  [0,1] \times [0,1]$ and the above Plancherel measure $\mu$ is the product measure of the Lebesgues measure $m$ on the interval $[0,1]$ for the second and third factors and the finitely additive measure $\widetilde{m}$ on the first factor $\mathbb{Q} \cap [0,1]$ determined by
\[         \widetilde{m}(\mathbb{Q} \cap [a,b]) = b-a.
\]  
The representations $\rho_{{\rm \tiny{fin}},x}$ are essentially determined from $x_1$, which is, in some sense, a discrete version of the Schr\"{o}dinger representation. If we express $x_1$ as the irreducible fraction $p/q$, then the dimension of $\rho_{{\rm \tiny{fin}},x}$ is $q$ and the role of $x_2, x_3$ for $\rho_{{\rm \tiny{fin}},x}, x = (x_1,x_2,x_3)$ are abelian perturbations of $\rho_{{\rm \tiny{fin}},(x_1,0,0)}$.
The point of our arguments is the following fact: 

\begin{fact}\label{factfluctuation}
The fluctuation by $x_2, x_3$ is like $O(1/q)$ which is independent to the numerator $p$ of $x_1 = p/q$.
\end{fact}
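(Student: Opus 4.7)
The plan is to read the $x_2, x_3$ dependence off the explicit formula \eqref{finite rep} directly. In the exponent appearing there, only the pieces $n_3 x_3$ and $n_2 x_2$ involve the fluctuation parameters $x_2, x_3$, while the remaining pieces $n_1 q x_1 + k n_2 q x_1$ depend only on $x_1 = p/q$ and the index $k$. Factoring out the $(x_2,x_3)$-contribution therefore gives, pointwise in $k$, the identity
\[
(\rho_{{\rm \tiny{fin}},(x_1,x_2,x_3)}(n)\varphi)(k) = \exp\!\Bigl(\tfrac{2\pi\sqrt{-1}}{q}(n_2 x_2 + n_3 x_3)\Bigr)\,(\rho_{{\rm \tiny{fin}},(x_1,0,0)}(n)\varphi)(k),
\]
so that the operator $\rho_{{\rm \tiny{fin}},(x_1,x_2,x_3)}(n)$ is obtained from $\rho_{{\rm \tiny{fin}},(x_1,0,0)}(n)$ simply by multiplication by a scalar of argument $2\pi(n_2 x_2 + n_3 x_3)/q$ that is independent of both $k$ and the numerator $p$. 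At the level of the generator description \eqref{intromatrixrep} this is just the appearance of the prefactors $\alpha = e^{2\pi\sqrt{-1}x_3/q}$ and $\beta = e^{2\pi\sqrt{-1}x_2/q}$ from \eqref{introfluctuation} in front of the $p$-dependent matrices.

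Next, I would combine this identity with unitarity of $\rho_{{\rm \tiny{fin}},(x_1,0,0)}(n)$, the elementary bound $|e^{i\theta}-1|\leq|\theta|$, and the constraint $x_2, x_3 \in [0,1]$ to obtain the operator-norm estimate
\[
\|\rho_{{\rm \tiny{fin}},(x_1,x_2,x_3)}(n) - \rho_{{\rm \tiny{fin}},(x_1,0,0)}(n)\|_{\rm op} = \bigl|e^{2\pi\sqrt{-1}(n_2 x_2 + n_3 x_3)/q} - 1\bigr| \leq \frac{2\pi(|n_2| + |n_3|)}{q},
\]
valid on the common representation space $\mathcal{H}_x \simeq \mathbb{C}^q$. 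For each fixed $n \in \Gamma$ (or for $n$ drawn from a finite generating set of $\Gamma$, or from the support of any compactly supported $f \in L^1(\Gamma)$ that one wishes to analyze through the Fourier inversion formula \eqref{nilpotentPytlik1}) the right-hand side is $O(1/q)$ with implicit constant depending only on the word components $n_2, n_3$ and not on the numerator $p$ of $x_1 = p/q$. That is the content of Fact \ref{factfluctuation}.

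No substantive obstacle is expected, since the statement reduces to a direct inspection of \eqref{finite rep} together with the inequality $|e^{i\theta}-1|\leq|\theta|$. The conceptual point worth emphasizing -- and the reason the fact is singled out -- is that $x_2, x_3$ enter the representation through the factor $1/q$ rather than $p/q$, so they produce a genuinely small perturbation of $\rho_{{\rm \tiny{fin}},(x_1,0,0)}$ in the semiclassical regime $q \to \infty$, regardless of the behavior of the numerator $p$. This is precisely the uniformity in $p$ that is needed so that the Laplace-method step of Section \ref{Infiniteabelian}, when generalized to the nilpotent setting, can be carried out fiber by fiber over $x_1 \in \mathbb{Q} \cap [0,1]$ and survives averaging against the Lebesgue measure in $x_2, x_3$.
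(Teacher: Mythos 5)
Your proposal is correct and follows essentially the paper's own justification: the paper points to the factors $\alpha = e^{2\pi\sqrt{-1}x_3/q}$, $\beta = e^{2\pi\sqrt{-1}x_2/q}$ in \eqref{introfluctuation}/\eqref{intromatrixrep}, i.e.\ the $(x_2,x_3)$-dependence enters only through $U(1)$ scalars of the form $e^{2\pi\sqrt{-1}(\cdot)/q}$ multiplying the $p$-dependent operators, which is exactly the scalar factorization you extract from \eqref{finite rep}. Your operator-norm bound $\bigl|e^{2\pi\sqrt{-1}(n_2x_2+n_3x_3)/q}-1\bigr| \leq 2\pi(|n_2|+|n_3|)/q$, uniform in $p$ and valid for $n$ in any fixed finite set, is the precise content the paper intends by Fact \ref{factfluctuation}.
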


  Moreover, the subset of $\widehat{X}$ consisting of $x=(x_1,x_2,x_3)$ whose first component $x_1 =p/q$ has the denominator smaller than a fixed constant is a null set with respect to the above Plancherel measure $\mu$.

  To relate the Schr\"{o}dinger representation $\rho_h$, we note that if we restrict $\rho_h$ to the discrete subgroup $\Gamma$ when the parameter $h = x_1$ is a rational number $p/q$, the restriction $\rho_h\left|_\Gamma\right.$ to $\Gamma$ of $\rho_h$ can be decomposed as a direct integral 
\begin{equation}\rho_h\left|_\Gamma\right. = {\int_0^1\!\int_0^1}^\oplus\rho_{{\rm \tiny{fin}},(x_1, \{qx_3x_2\}, x_3)}dm(x_2)dm(x_3),\label{decompositiontofinite}
\end{equation} 
where $\{a\}$ denotes the fractional part $a - [a]$. 
We regard this decomposition formula as the approximation formula of the left-hand side $\rho_h|_\Gamma$ by $\rho_{{\rm \tiny{fin}},x}$. 
It means that the left-hand side is the ``average'' of the integrand on the right-hand side, and the fluctuation of the integrand is $O(1/q)$. 
Moreover, for any $ h$ near $0$, it can be approximated by rational numbers with arbitrarily large denominators, which implies that an infinite-dimensional representation $\rho_h$ can be approximated by finite-dimensional representations $\rho_{{\rm \tiny{fin}},x}$ to arbitrary orders.  
This fact means that for a mathematical justification of problems of convergence, e.g., the Fourier transform $\rho_h(f)$ of $L^1$-function $f$ does not in the trace class in general, we use the above Plancherel formula which includes only finite-dimensional representations but for some formal computation of the eigenvalues, we can use the Schr\"{o}dinger representation since it varies smoothly in $h$.
We can freely exchange finite-dimensional representations and infinite-dimensional representations. This procedure is what we call the extension of the Floquet-Bloch theory to the Heisenberg group.
  
\item[Step 2 (P2)] As in Step 2 (P1) in the previous section, we need to investigate the behavior of the eigenvalues of Laplacian $\Delta_{\rho_{{\rm \tiny{fin}},x}}$ twisted by $\rho_{{\rm \tiny{fin}},x}$ near the trivial representation ${\bf 1}$. By Step 1 (F2), it can be reduced to a formal computation of the twisted Laplacian $\Delta_{\rho_h}$ associated to $\rho_h$, replacing $\Delta_{\rho_{{\rm \tiny{fin}},x}}$. These formal computations are, in spirit, the same as the abelian case. However, technically speaking, it is necessary to modify several points in the arguments to proceed with actual computations. One of them is to replace the line integral $\int_{p_0}^p\widetilde{\omega}$ in (\ref{lineintegral}) by the Lie integral of the Lie algebra valued $1$-form (cf. \cite{Benardete}), which is equivalent to Chen's iterated integrals \cite{Chen}. Moreover, our computation is somewhat complicated since we need to carry out infinite rank vector bundles. For example, in this procedure,  the hessian ${\rm Hess}_0 \lambda_0$ should be replaced by the quadratic forms associated with a modified harmonic oscillator $\mathcal{H}$ acting on the fiber $L^2(\mathbb{R})$ of $E_{\rho_h}(L^2(\mathbb{R}))$. However, the arguments are essentially a blending of the so-called Schr\"{o}dinger method in physics literature (cf. \cite{Rammal},\cite{Sakurai}) and a computation carried out in \cite{Kotani3} in the abelian case.   
\end{description}

\subsection{Nilpotent extensions}\label{nilpotentexplanation}

Our practical strategies to deal with these cases mentioned in Section \ref{Introprac} are summarized as follows:
\begin{assertion}\label{guidingassertion}
A practical device for the asymptotics can be obtained from a computation of unitary representations of the nilpotent Lie group $G$, which is the Malcev completion of a discrete group $\Gamma$ near the trivial representation.   
\end{assertion}

We shall justify this assertion in the sequel by applying Theorem \ref{introPytlik} and the generalization to nilpotent groups of Theorem \ref{discretetoLie}. According to the following arguments, it can be proved almost together.
Before proceeding further, we first discuss the detailed statement of the latter.

It is known such a simply connected nilpotent Lie group $G$ admits a lattice $\Gamma$, then it has rational structure ($\mathbb{Q}$-structure, Theorem 5.1.8  in \cite{Corwin}), namely there is a basis $X_1, \ldots, X_n$ of the Lie algebra $\mathfrak{g}$ of $G$ such that the structure constants $c_{ij}^k \in \mathbb{Q}$ which is defined by:
\[  [X_i,X_j] = \sum_{k=1}^nc_{ij}^kX_k,\quad (i,j = 1,2,\ldots,n)  
\]
Let $\mathfrak{g}_\mathbb{Q}$ be the $\mathbb{Q}$-span of the above basis and
$\mathfrak{g}^\ast$ (resp. $\mathfrak{g}^\ast_\mathbb{Q}$) the dual vector space (resp. dual rational vector space) of $\mathfrak{g}$ (resp. $\mathfrak{g}_\mathbb{Q}$ ). 
For an element $l \in \mathfrak{g}^\ast$, its radical $\mathfrak{r}_l$ is defined as the set of elements $X \in \mathfrak{g}$ satisfying $l([X,Y]) = 0$ for any $Y \in \mathfrak{g}$ and a maximal subordinate algebra (a.k.a. polarizing algebra or polarization) $\mathfrak{m}$ is a maximal one among subalgebras $\mathfrak{n}$ of $\mathfrak{g}$ satisfying $l([X,Y]) = 0$ for any $X, Y  \in \mathfrak{n}$. Note that $\mathfrak{m}$ is not uniquely defined in general for $l$. These are said to be rational if $l \in \mathfrak{g}^\ast_\mathbb{Q}$ and $\mathfrak{r}_l, \mathfrak{m} \subset \mathfrak{g}^\ast_\mathbb{Q}$ respectively. Let $\mathbb{Q}^{\rm odd}$ be the set of rational numbers that have odd denominators as their irreducible fraction expressions.

\begin{theorem}{{\rm (} $=$ Theorem \ref{sec4decompose} {\rm ) }}\label{introdecompose}
Assume that $l$ is rational. Then there exists a polarization $\mathfrak{m}$ of dimension $m$, a rational subalgebra of $\mathfrak{g}$ satisfying the following. Let $\{X_1,\ldots, X_m, \dots X_n\}$ be a weak
Malcev basis of $\mathfrak{g}_\mathbb{Q}$, based on $\Gamma$ and passing through $\mathfrak{m}$.

Then, we have the decomposition
\begin{equation} \pi_l|_\Gamma = \int_{[0,1)^{n-m}}^\oplus {\rm Ind}_{M\cap\Gamma}^\Gamma (\chi_{l_t |_{M\cap\Gamma})}dt_{m+1}\cdots dt_n, \label{intropildecomposition}
\end{equation}
where 
\begin{equation*}
l_t := l_{t_{m+1},\ldots,t_n} = {\rm Ad}^\ast (\exp(t_{m+1}X_{m+1}))\cdots{\rm Ad}^\ast(\exp(t_nX_n))l \end{equation*}

for all $t_{m+1}, \ldots t_n \in [0, 1)$.

Moreover if $t = (t_{m+1}, \ldots t_n ) \in ([0,1) \cap \mathbb{Q}^{\rm odd})^{n-m}$, then integrand of the right hand side of (\ref{intropildecomposition}) is further decomposed into finite dimensional unitary representations $\mbox{Ind}_{\mathcal{P}}^\Gamma (\chi_{l_t}\otimes\chi_{\psi_s})$ as:
\begin{equation}
\mbox{Ind}_{M\cap\Gamma}^\Gamma(\chi_{l_t}) = \mbox{Ind}_{\mathcal{P}}^\Gamma( \mbox{Ind}_{M\cap\Gamma}^{\mathcal{P}}(\chi_{l_t})) 
= \int_{[0,1)^{n-m}}^\oplus\mbox{Ind}_{\mathcal{P}}^\Gamma(\chi_{l_t}\otimes \chi_{\psi_s})ds \label{finaldecomposition}
\end{equation}
where $M = \exp \;\mathfrak{m}$ 
and $\mathcal{P}$ is the polarization of $\log \;\Gamma$. (These notions will be explained in detail in Subsections \ref{Step0-5}, \ref{Step0-8}.) 
\end{theorem}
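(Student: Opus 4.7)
The plan is to realize $\pi_l$ via the Kirillov orbit method as $\mathrm{Ind}_M^G(\chi_l)$, where $\chi_l(\exp X)=e^{2\pi\sqrt{-1}\,l(X)}$, and then obtain both decompositions by successive applications of Mackey's subgroup theorem, using the weak Malcev basis to render the emerging parameter spaces completely explicit.

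For \eqref{intropildecomposition}, Mackey's subgroup theorem gives
\[
\pi_l|_\Gamma \cong \int^\oplus_{M\backslash G/\Gamma} \mathrm{Ind}_{\Gamma\cap g^{-1}Mg}^\Gamma\bigl(\chi_l^{g}|_{\Gamma\cap g^{-1}Mg}\bigr)\, d\mu(g),
\]
with $\chi_l^g(h)=\chi_l(ghg^{-1})$. The weak Malcev basis $\{X_1,\ldots,X_n\}$ passing through $\mathfrak{m}$ yields a global real-analytic diffeomorphism $(s_1,\ldots,s_n)\mapsto\exp(s_1X_1)\cdots\exp(s_nX_n)$ from $\mathbb{R}^n$ onto $G$, and the ``based on $\Gamma$'' hypothesis identifies $\Gamma$ with the integer lattice in these coordinates. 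The first $m$ coordinates sweep out $M$, so $M\backslash G\cong\mathbb{R}^{n-m}$ via the complementary basis vectors, and the right $\Gamma$-action modulo $M$ admits $[0,1)^{n-m}$ as a fundamental domain. Setting $g_t=\exp(t_{m+1}X_{m+1})\cdots\exp(t_nX_n)$, the rationality of $\mathfrak{m}$ combined with the Malcev property of the basis forces $\Gamma\cap g_t^{-1}Mg_t=M\cap\Gamma$, and the conjugation identity $\chi_l^{g_t}=\chi_{\mathrm{Ad}^*(g_t)l}=\chi_{l_t}$ then yields \eqref{intropildecomposition}.

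For \eqref{finaldecomposition}, fix $t\in([0,1)\cap\mathbb{Q}^{\mathrm{odd}})^{n-m}$, so $l_t\in\mathfrak{g}^*_\mathbb{Q}$. One can choose a rational polarization $\mathfrak{p}\supset\mathfrak{m}$ for $l_t$ whose associated discrete polarization $\mathcal{P}\subset\Gamma$ satisfies $M\cap\Gamma\subset\mathcal{P}$ with $[\Gamma:\mathcal{P}]<\infty$. Induction in stages gives
\[
\mathrm{Ind}_{M\cap\Gamma}^\Gamma(\chi_{l_t})=\mathrm{Ind}_\mathcal{P}^\Gamma\bigl(\mathrm{Ind}_{M\cap\Gamma}^\mathcal{P}(\chi_{l_t})\bigr),
\]
and a second application of Mackey on the nilpotent group $\mathcal{P}$ decomposes the inner induction as a direct integral of characters $\chi_{l_t}\otimes\chi_{\psi_s}$, parametrized by a slice $s$ through $(M\cap\Gamma)\backslash\mathcal{P}$. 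The odd-denominator hypothesis is precisely what guarantees a consistent extension of $\chi_{l_t}$ across the flag in $\mathcal{P}$, as it kills the $\mathbb{Z}/2$ cocycle obstruction arising from half-integral values of $l_t$ on central commutators---the discrete analogue of the Maslov/Weil index obstruction. Each outer induction $\mathrm{Ind}_\mathcal{P}^\Gamma$ of a character is then finite-dimensional of dimension $[\Gamma:\mathcal{P}]$, establishing \eqref{finaldecomposition}.

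The main obstacle I anticipate is the second step: constructing $\mathcal{P}$ and pinning down precisely why the odd-denominator condition is necessary and sufficient. This requires climbing through the ascending central series of $\log\Gamma$ (equivalently, through the flag generated by $X_{m+1},\ldots,X_n$), tracking at each stage how the commutator two-cocycle interacts with integrality, and ruling out the even-denominator case where the obstructing sign is nontrivial. For these arithmetic points I would rely on Howe's classification (Theorem \ref{Howefinite}) of finite-dimensional irreducibles of discrete nilpotent groups, the Corwin--Greenleaf framework for rational monomial representations, and Fujiwara's analysis of induced representations recalled in Section \ref{nilpotentfujiwara}; the Heisenberg computation of Section \ref{Heisenbergexplanation} serves as a concrete benchmark throughout.
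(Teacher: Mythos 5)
Your treatment of the second decomposition \eqref{finaldecomposition} is essentially the paper's own argument: induction in stages through the finite-index subgroup $\mathcal{P}$, followed by abelian Fourier analysis on $\mathcal{P}/(M\cap\Gamma)$ (the inner induction of a character is, up to twist, the regular representation of this finitely generated abelian quotient, hence a direct integral of the $\chi_{\psi_s}$), with Howe's Theorem \ref{Howefinite}(a) supplying the arithmetic input that makes the odd-denominator case clean. So that half is fine, granted the cited inputs.

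The genuine gap is in your proof of \eqref{intropildecomposition}. Your one-shot application of Mackey's subgroup theorem hinges on the assertion that $\Gamma\cap g_t^{-1}Mg_t=M\cap\Gamma$ for the transversal elements $g_t=\exp(t_{m+1}X_{m+1})\cdots\exp(t_nX_n)$, and you claim this is ``forced'' by rationality of $\mathfrak{m}$ and the Malcev property. That identity is automatic when $\mathfrak{m}$ is an \emph{ideal} (then $g_t^{-1}Mg_t=M$), which is exactly the setting of Bekka--Driutti's Theorem 1.3; but the whole point of the theorem as stated is that $\mathfrak{m}$ is only assumed to be a rational \emph{subalgebra}. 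For a non-normal $M$ and generic (irrational) $t$, $\mathrm{Ad}(g_t)$ does not preserve $\mathfrak{m}$, so neither inclusion between $\Gamma\cap g_t^{-1}Mg_t$ and $M\cap\Gamma$ is clear, and with it the identification of your Mackey integrand with $\mathrm{Ind}_{M\cap\Gamma}^\Gamma(\chi_{l_t}|_{M\cap\Gamma})$ collapses (even the multiplicativity of $\chi_{l_t}$ on $M\cap\Gamma$ is not free, since $\mathfrak{m}$ need not be subordinate to $\mathrm{Ad}^*(g_t)l$). You also leave unaddressed the regular-relatedness of $(M,\Gamma)$ and the identification of the double-coset measure with Lebesgue measure on the unit cube, which is where the ``based on $\Gamma$, passing through $\mathfrak{m}$'' hypothesis actually does its work. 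The paper avoids all of this by arguing by induction on $\dim\mathfrak{g}$ via Kirillov's lemma: at each step one restricts through a codimension-one \emph{ideal} $\mathfrak{g}_0$ (so the conjugation is absorbed into a character of a fixed normal subgroup and only a single circle parameter appears), and in the problematic case $\mathfrak{m}\not\subset\mathfrak{g}_0$ one first replaces $\mathfrak{m}$ by another polarization $\mathfrak{m}_1\subset\mathfrak{g}_0$ with $\pi_{l,M}\cong\pi_{l,M_1}$, the equivalence being realized by a Fourier-transform intertwiner on a Heisenberg quotient (the analogue of Corwin--Greenleaf 2.2.2). Your proposal contains no counterpart to this polarization-change step, and without it (or a genuine proof of your stabilizer identity for non-ideal $\mathfrak{m}$) the first decomposition is not established in the stated generality.
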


\begin{remark}
\begin{description}
\item[{\rm (1)}]These results are obtained in the procedure of proving the previous theorem. They will provide helpful computational tools and the main conceptual novelty in this paper. We think the most essential novelty of this paper is Theorem \ref{discretetoLie}, although the idea of the proof is not any more difficult. This theorem is a prototype of Theorem \ref{sec4decompose} and almost all arguments here can be reduced to rather simple arguments on abelian groups in virtue of several previous works. 
\item[{\rm (2)}] 
The proofs of the above theorems are heavily dependent on works of several experts, Bekka, Driutti, Fujiwara, Greenleaf, Howe, Johnston, $\ldots$ as in the arguments in Chapter \ref{Representationsdiscretenilpotent}. However, according to the inductive nature of the orbit method, there are some possibilities to be done more naively by repeated use of Theorem \ref{discretetoLie}, which is the simplest version of the above Theorem \ref{introdecompose} and would be proved directly without using the orbit method, in each induction step. 
\end{description}  
\end{remark}

The crucial point for the asymptotics can be obtained from a computation of unitary representations of the nilpotent Lie group $G$, which is the Malcev completion of a discrete group $\Gamma$ near the trivial representation.   

\begin{description}
\item[Step 1 (F3)] 
 Since there seems to be no known generalization of the Pytlik theorem for ${\rm Heis}_3(\mathbb{Z})$ to general nilpotent groups, we recover them as follows: We first decompose the right regular representation $R$ of a discrete nilpotent group $\Gamma$ more coarsely, into factor representations $\tau_\lambda$. Then, we extend $\tau_\lambda$ to a monomial representation $\tau_\lambda^G$ of $G$ and decompose it into irreducible representations $\pi_l$. Then, if its restriction $\pi_l|_\Gamma$ to $\Gamma$ satisfies some ``rationality conditions'', we further decompose as a direct integral of finite-dimensional representations of $\Gamma$ through two steps.  

The following is a little more explanation:

\begin{description}
\item[Substep 0-1] (Section \ref{Step0-1}) First we use a direct integral decomposition formula of the right regular representation $R$ of $\Gamma$ to their ``factor'' representations $\tau_\lambda$ due to \cite{Johnston} or \cite{Bekka2}, see \cite{Thoma}, \cite{Baggett} for more general contexts. The reason why we wrote ``factor'' is that generic representations of integrands are indeed factor representations. However, there are non-factor representations that consist of dense subsets in the defining domain of the Plancherel measure. 

In addition, we note that all of the above representations $\tau_\lambda$ are obtained as induced representations from characters $\lambda$ of the center of $\Gamma$, and their representation spaces are isomorphic subspaces of $L^2(\Gamma)$. 

\item[Substep 0-2] (Section \ref{nilpotentfujiwara}) Then, since the group multiplication law of $\Gamma$ can be expressed as forms of polynomials of certain coordinate functions, similarly as construction of the Malcev completion $G$ from $\Gamma$, we can extend $\tau_\lambda$ to representations $\tau_\lambda^G$ of the nilpotent Lie group $G$ whose representation spaces are isomorphic subspaces of $L^2(G)$.

In this case, we remark that if $\Gamma = {\rm Heis}_3(\mathbb{Z})$, then $\tau_\lambda^G$ is the infinite direct sum of a copy of fixed Sch\"odinger representation $\rho_h$ with $l(Z+Y) = hZ^\ast(Z) = h = \lambda(Z)$ where $Z$ is an element in the center. We take a polarized algebra $\mathfrak{m} = \{Z,Y\}$ of the Lie algebra $\mathfrak{g}$ of $G = {\rm Heis}_3(\mathbb{R})$. Moreover, $Z^\ast$ is the dual element of $Z$ in the vector space dual $\mathfrak{g}^\ast$ of $\mathfrak{g}$. This formula is a particular case of  (\ref{monomialmulti}).       

\item[Substep 0-3] (Section \ref{nilpotentfujiwara}) Then, $\tau_\lambda^G$ is a monomial representation. By Fujiwara's Plancherel theorem (Theor\'{e}m\`{e} 2 in \cite{Fujiwara1}), $\tau_\lambda^G$ can be decomposed as a direct integral of irreducible unitary representations $\pi_l$, which are parametrized by certain parameters $l$, which is the formula (\ref{monomialmulti}). 

\item[Substep 0-4] (Section \ref{stratification}) In the arguments of Section \ref{stratification}, by later arguments for asymptotic expansions, we may adssume $G$ is stratified in later sections. Namely, the Lie algebra $\mathfrak{g}$ of $G$ is generated by $\mathfrak{g}^{(1)}$ in the following decomposition:
\[\mathfrak{g} = \mathfrak{g}^{(1)}\oplus\mathfrak{g}^{(2)}\oplus\cdots \oplus\mathfrak{g}^{(m)}, \]
where $\mathfrak{g}^{(i)} \simeq \mathfrak{g}_i/\mathfrak{g}_{i+1}$, $\mathfrak{g}_1 = \mathfrak{g}$, $\mathfrak{g}_{i+1} = [\mathfrak{g}_1, \mathfrak{g}_i ],\quad i > 1$. and $\mathfrak{g}^{(i)}$ satisfies the relation $[\mathfrak{g}^{(i)},\mathfrak{g}^{(j)}] \subset \mathfrak{g}^{(i+j)}$ and,  in additional, $\mathfrak{g}^{(1)}$ generates $\mathfrak{g}$.

\item[Substep 0-5] (Section \ref{Step0-5}) Next, we choose parameters $l$ suitably to be dense in the domain of the above direct integral and restrict $\pi_l$ again to $\Gamma$.
Then we obtain the first branching formula as a direct integral decomposition of $\pi_l\left|_\Gamma\right.$, which is a modification of Theorem 1.3 in \cite{Bekka1} and the proof of Theorem \ref{nilpLieirredrep} in \cite{Corwin}.

In the case of $\mbox{Heis}_3(\mathbb{Z})$, the arguments here are corresponding to part of the relation (\ref{step2discretetolie}) in Section \ref{relationdiscLie}.

\item[Substep 0-6] (Section \ref{Step0-6}) In the above decomposition, some representations appearing in the integrand are irreducible, but others are not. In the latter case, the extreme one, which satisfies ``rationality conditions'', can be further decomposed into finite-dimensional irreducible representations $\rho_{\rm \tiny{fin}}$ which is equal to $\mbox{Ind}_P^\Gamma (\chi_{l_t}\otimes\chi_{\psi_s})$ in the formula (\ref{finaldecomposition}) in Theorem \ref{introdecompose}. This formula is similar to (\ref{decompositiontofinite}) in spirit. Moreover, we can choose them so that these cases are dense in the domain of the above direct integral. In the case of $\mbox{Heis}_3(\mathbb{Z})$, the arguments here correspond to the relation (\ref{step1discretetolie}) in Section \ref{relationdiscLie}.

\item[Substep 0-7] (Section \ref{Step0-6}) Combining with decompositions in Substeps 0-1, 0-3, 0-5, 0-6, we obtain a substitute of Pytlik's theorem for general nilpotent groups. Here, we need to replace the Plancherel measure that appeared in the previous subsections with the finitely additive measure supported on the above dense subset. This principle is similar in spirit to the original Pytlik Theorem (Theorem \ref{Pytlik}). 
\item[Substep 0-8] (Section \ref{Step0-8})  Finite dimensional irreducible unitary representations $\pi_{{\rm \tiny{fin}}}$ of finitely generated torsion-free discrete nilpotent groups $\Gamma$ are classified in \cite{Howe}. Each representations have similar properties of fluctuation to Fact \ref{factfluctuation} generalizing to $\Gamma$ from ${\rm Heis}_3(\mathbb{Z})$.

\item[Substep 0-9] (Section \ref{Step0-9}) Therefore, as in the Heisenberg case, formal computation can be reduced to spectral analysis associated with irreducible representation $\pi_l$ of $G$. 
The hypo-elliptic operator $H$ in the Conjecture \ref{conj-geod} is written in the following form:
\[  H := -\left(d\pi_l(X_1)^2+\cdots +d\pi_l(X_k)^2\right),
\]
where $X_1, \ldots, X_k$ is a $\mathbb{Q}$-basis of $\mathfrak{g}^{(1)}$. If $\Gamma = \mbox{Heis}_3(\mathbb{Z})$, this operator $H$ is essentially the harmonic oscillator $-\frac{d^2}{du^2}+ u^2$ which is already discussed in the previous section. For a general nilpotent group $\Gamma$, computation reduced to its Malcev completion $G$ as explained above can be performed by the method described in Appendix B.

It should be noted that in a more general framework, the corresponding operator has the form $-(E_1^2 cc+\cdots + E_k^2)+B$ (cf. \cite{Hunt}. However, here we used different sign convention and normalizations.), where $E_i$ is coming from $\mathfrak{g}^{(1)}$ and $B$ from $\mathfrak{g}^{(2)}$. In our case, the property $B=0$ is a consequence of the symmetry of operators. 

\item[Substep 0-10] (Section \ref{Step0-9}) Finally, we notice that analysis of the above irreducible unitary representations of discrete nilpotent groups reduces to those of nilpotent Lie groups. This phenomenon also holds even in corresponding Plancherel measures. Thus, we can utilize the Plancherel theorem and the Fourier inversion formula of the latter as a formal model of the former case.
\end{description}
The above arguments are based on the orbit method, which is a variant of the original case of the nilpotent Lie group. Comparing it to the original version helps the reader understand. Thus, we add a summary of them and three examples in Appendix B.

\begin{remark}\label{directrelationtoG}
One may question why we do not use representations of the Lie group directly without detouring (non-)factor representations of $\Gamma$, extension to the representation of $G$, and finally restriction to $\Gamma$.
 The reason is summarized as follows: For the spectral analysis, similarly to the abelian case, it is crucial to consider the twisted Laplacian associated with representations around the trivial one. If we choose this shortcut, we need to deal with representations that are as trivial as those of $G$ but not its restriction to $\Gamma$, which causes several complexities in the arguments.
\end{remark}
\begin{remark}\label{ExplanationofPractical}
To summarize the foregoing discussion:

In Substep 0-1, the factorization produces a representation of $\Gamma$ which, being induced from the center up to $G$, is monomial. In Substep 0-3, we invoke Fujiwara's theorem to decompose these monomial representations into irreducible representations of $G$, then restrict them back to $\Gamma$ and further decompose the resulting dense subset elements into finite-dimensional representations. This construction yields mutual approximations of $G$-representations by $\Gamma$-representations (and vice versa) while endowing the entire procedure with a rigorous foundation via finite-dimensional representations.

In actual calculations using the nilpotent Floquet-Bloch formalism, it is often more efficient to work directly with the approximate $G$-representations, since they can be handled by the standard orbit method (see Appendix B), rather than juggling the finite-dimensional models employed for justification. Ultimately, it ensures that both approaches agree on the final result.
\end{remark}
\item[Step 2 (P3)]
Concerning the second step of the proof modeled after the abelian case, although essentially the same as ${\rm Heis}_3(\mathbb{Z})$, there are some additional difficulties for general nilpotent groups as explained below.

When $\Gamma = {\rm Heis}_3(\mathbb{Z})$, the hypo-elliptic operator $H$ operator in Theorems \ref{theorem-geod} and \ref{conj-heat}
is a modified harmonic oscillator $\mathcal{H}$ acting on the fiber $L^2(\mathbb{R})$ of $E_{\rho_h}(L^2(\mathbb{R}))$ and thus, we know their eigenvalues and eigenfunctions explicitly.
However, for general nilpotent groups, it seems hopeless to obtain this level of explicitness.

Here, we consider the following two alternatives. One method is to compute the asymptotic expansion in terms of eigenvalues and eigenfunctions of the hypoelliptic operator $H$. In this method, the hypo-elliptic (or sub-elliptic) estimates guarantee the well-definedness of the coefficients of higher-order terms in the asymptotic expansions. Since we do not know the explicit value of eigenvalues and eigenfunctions of $H$ in general, the arguments should stop at this level.

The other method is to express $H$ as an infinite representation matrix $\mathcal{M}_H$ of the basis of eigenfunctions of multi-dimensional harmonic oscillators. Then $\mathcal{M}_H$ is an extended Jacobi matrix, i.e., a near-diagonal matrix. Then, their eigenvalues seem approximately computable by approximating finite-width matrices called finite propagation or finite range operators (see Section \ref{multiharmonic} for details). In addition, higher-order terms are explicitly expressed by using creation and annihilation operators associated with the above harmonic oscillators.   
\end{description}

\subsection{Closed geodesics on hyperbolic Riemann surfaces}

For the asymptotic formula for counting prime geodesics in Theorems \ref{theorem-geod} and \ref{heisenberg-geod}, we use the Selberg trace formula as in the proof of Theorem \ref{abel-geod} for abelian groups. The spectral side of the trace formula can be treated similarly to the above heat kernel asymptotics. On the geometric side, the assumption that $\alpha$ is a conjugacy class of a central element of $\Gamma$ in Theorem \ref{heisenberg-geod} is used in the fact that the following formula holds for such $\alpha$ and finite-dimensional irreducible unitary representations $\rho_{{\rm \tiny{fin}}}$,
\begin{equation}   {\rm tr}(\rho_{{\rm \tiny{fin}}}(f)\rho_{{\rm \tiny{fin}}}(\alpha))
 = {\rm tr}(\rho_{{\rm \tiny{fin}}}(f)){\rm tr}(\rho_{{\rm \tiny{fin}}}(\alpha)) \quad \mbox{for} \quad f \in L^1(\Gamma) \label{central}
\end{equation}
where ${\rm tr}(A) = \frac{1}{{\rm \tiny{dim}}A}{\rm Tr}(A)$ is the normalized trace of the matrix $A$ which is normalization of the usual matrix trace ${\rm Tr}(A)$ satisfying ${\rm tr}(I) = 1$ for the identity matrix $I$ and $\rho_{{\rm \tiny{fin}}}(f)$ is the Fourier transform of $f$ defined by
\[    \rho_{{\rm \tiny{fin}}}(f) = \int_\Gamma f(\sigma)\rho_{{\rm \tiny{fin}}}(\sigma)d\sigma = \sum_{\sigma \in \Gamma}f(\sigma)\rho_{{\rm \tiny{fin}}}(\sigma).
\]
Then, similar arguments to abelian cases in \cite{Phillips}, essentially due to Dirichlet, imply Theorem \ref{heisenberg-geod}. The general case of Theorem \ref{theorem-geod} is similar.

 In the case when $\alpha$ is coming from a non-central element $\sigma$ in Proposition \ref{noncenter}, then we can identify the conjugacy class $\alpha = [\sigma]$ with the element $\pi(\sigma)$ in a finite extension of $\mathbb{Z}^2$ by the canonical projection $\pi:\Gamma \to \Gamma/[\Gamma, \Gamma] \simeq \mathbb{Z}^2$. Therefore, we can apply the same arguments as in abelian cases. 

As already mentioned in the Introduction, when $\alpha$ comes from a non-central element $\sigma$ for general nilpotent groups, the situation is not yet understandable to the author.

\subsection{Contents of later sections}
In the rest of this section, we explain the contents briefly in later sections.

In Chapter 3, we recall the representation theories of ${\rm Heis}_3(\mathbb{Z})$ and ${\rm Heis}_3(\mathbb{R})$ and give a formula which explains a relationship between both theories, which is an essential part of our extension of the Floquet-Bloch theory for ${\rm Heis}_3(\mathbb{Z})$. We explain this case separately since it is explicit and can avoid using the orbit method, at least superficially.

In Chapter 4, we present a representation theory for general discrete nilpotent based on the orbit method, a L \`{a} Killirov especially developed in \cite{Johnston}, \cite{Bekka2}, \cite{Fujiwara2}, \cite{Fujiwara1}, \cite{Bekka1}, \cite{Howe}. Almost all arguments are already prepared in these works; the remaining piece is obtained essentially from a similar idea inherited from the decomposition (\ref{decompositiontofinite}).

Chapter 5 recalls the Lie integrals, Chen's iterated integrals, and their harmonic theory. Using these integrals, we construct a canonical section analogous to the construction of $s_\chi$ in abelian cases. 

In Chapter 6, we give another proof of  ({\ref{introeigen1}) and some related results, which also play a role as the simplest model for later arguments.

In Chapter 7, we compute spectral asymptotics of $\Delta_{\rho_h}$ around $h=0$, corresponding to the trivial representation $\textbf{1}$ in the case of the Heisenberg groups. 

In Chapter 8, we extend computations in Chapter 7 to general nilpotent groups.

In Chapter 9, we provide a proof of the asymptotic expansion of the heat kernel for nilpotent coverings, Theorem \ref{conj-heat}, and several related results, including the local central limit theorem.
  
 In Chapter 10, we give a proof of the geometric analogue of the Chebotarev density theorem \ref{theorem-geod} for nilpotent extensions.  The method is a combination of the preceding arguments and the use of the Selberg trace formula.

In the final Chapter 11, we shall discuss several possibilities of future directions.

In Appendices A and B, we recall the orbit method for simply connected nilpotent Lie groups according to the book authored by Corwin and Greenleaf \cite{Corwin}.
In Appendix C, we recall the harmonic theory of Chen's iterated integral for general torsion-free nilpotent groups mainly for future reference.
The main contents are quoted from Kohno's book \cite{Kohno} with an English translation.

\section{Unitary representations of the discrete Heisenberg group and the real Heisenberg group (Heisenberg Lie group) and their relation}\label{dHeisenberg} 
\subsection{Unitary representations of the discrete Heisenberg group} \label{discreteHeisenberg}

By the reason explained in the first paragraph of Section \ref{Heisenbergexplanation}, we start from the three-dimensional discrete Heisenberg group $\Gamma = {\rm Heis}_3(\mathbb{Z})$. 

To motivate discussions of this and the following sections, let us recall the geometric formulation of the Floquet-Bloch theory for a general discrete group $\Gamma$.
Consider $\pi: X\to M$ a normal covering of a compact Riemannian manifold $M$ with the covering transformation group $\Gamma$. 
The space $L^2(X)$ is naturally identified with the space $L^2(E_R)$ of sections of the flat vector bundle $E_R$ associated with the right regular representation $R$ of $\Gamma$. 
We want to decompose $L^2(E_R)$ as a direct integral of the space of sections of a flat vector bundle associated with irreducible unitary representations of $\Gamma$. However, since $\Gamma$ is not type I, there is no natural decomposition, and it has several difficulties in directly handling the unitary dual $\widehat{\Gamma}$.

To overcome this situation, we can use the following Plancherel theorem due to Pytlik \cite{Pytlik}. 

For $f \in L^1(\Gamma )$ the space of integrable functions on $\Gamma$ and a representation $\pi$ of $\Gamma$, we put the Fourier transform $\pi(f)$ of $f$ by
\[ \pi (f) = \int_\Gamma f(\sigma )\pi (\sigma) d\sigma
\] 
with the left invariant Haar measure $d\sigma$ on $\Gamma$. Let $\widehat{\Gamma}$ denote the unitary dual of $\Gamma$, the set of equivalence classes of irreducible unitary representations of $\Gamma$. Here we have exchanged indexing $x_i, i=1,2,3$ in \cite{Pytlik} to $x_{4-i}$ here in order to adjust the notations used in the next chapter, which are the same as \cite{Corwin}.

\begin{theorem}[\cite{Pytlik}] \label{Pytlik} 
There exists a positive, finitely additive measure $\mu$ on $\widehat{\Gamma}$, supported by the set $\widehat{\Gamma}_{{\rm \tiny{fin}}}$ of finite dimensional representations of $\Gamma$, such that for $f \in L^1(\Gamma )$, the function 
\[ \rho_{{\rm \tiny{fin}},x} \mapsto \frac{1}{{\rm \tiny{dim}}\; \rho_{{\rm \tiny{fin}},x}}{\rm Tr}(\rho_{{\rm \tiny{fin}},x}(f)^\ast\rho_{{\rm \tiny{fin}},x}(f))
\]
is $\mu$-integrable on $\widehat{\Gamma}$ and 
\[      \sum_{n \in \Gamma}|f(n)|^2 = \int_{\widehat{\Gamma}_{\rm fin}}\frac{1}{{\rm \tiny{dim}}\; \rho_{{\rm \tiny{fin}},x}}{\rm Tr}(\rho_{{\rm \tiny{fin}},x}(f)^\ast
\rho_{{\rm  \tiny{fin}},x} (f))d\mu(\rho_{{\rm \tiny{fin}},x}).
\]
\end{theorem}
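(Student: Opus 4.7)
The plan is to combine Fourier analysis on the infinite cyclic center of $\Gamma = {\rm Heis}_3(\mathbb{Z})$ with Mackey's theory of induced representations, producing a Plancherel identity indexed by a finitely additive measure on the rational parameters of $\widehat{\Gamma}_{\rm fin}$.

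First I would decompose $L^2(\Gamma)$ via the Pontryagin dual of the center $Z = \{[n_1,0,0] : n_1 \in \mathbb{Z}\}$. Identifying $\widehat{Z} \simeq [0,1)$ with characters $\chi_\theta([n_1,0,0]) = e^{2\pi i n_1\theta}$, Parseval's identity on $\mathbb{Z}$ yields
\begin{equation*}
\sum_{n \in \Gamma} |f(n)|^2 = \int_0^1 \|\pi_\theta(f)\|_{\rm HS}^2\, d\theta, \qquad \pi_\theta := {\rm Ind}_Z^\Gamma \chi_\theta,
\end{equation*}
valid initially for $f$ of finite support; here $\pi_\theta$ acts on the Hilbert space of $\chi_\theta$-covariant sections over $\Gamma/Z \simeq \mathbb{Z}^2$, and ${\rm HS}$ denotes the Hilbert-Schmidt norm.

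Next I would analyze each fiber. For irrational $\theta$, $\pi_\theta$ is irreducible and infinite-dimensional. For $\theta = p/q$ in lowest terms, let $M = \{[n_1, q n_2, n_3] : n_i \in \mathbb{Z}\} \subset \Gamma$, a subgroup of index $q$. A direct matrix computation gives the commutator identity $[[0,qn_2,0],[0,0,n_3]] = [qn_2 n_3,0,0]$, so $\chi_{p/q}$ is trivial on $[M,M]$ and hence extends to a family of characters $\chi'_{x_2,x_3}$ of $M$ parametrized by $(x_2,x_3) \in [0,1)^2$. Induction in stages yields
\begin{equation*}
\pi_{p/q} \simeq \int_{[0,1)^2}^\oplus \rho_{{\rm fin},(p/q,x_2,x_3)}\, dx_2\, dx_3,
\end{equation*}
with integrand the $q$-dimensional irreducible $\rho_{{\rm fin},(p/q,x_2,x_3)} = {\rm Ind}_M^\Gamma \chi'_{x_2,x_3}$ of \eqref{finite rep}. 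A Hilbert-Schmidt computation then gives
\begin{equation*}
\|\pi_{p/q}(f)\|_{\rm HS}^2 = \frac{1}{q}\int_{[0,1)^2} {\rm Tr}\bigl(\rho_{{\rm fin},(p/q,x_2,x_3)}(f)^* \rho_{{\rm fin},(p/q,x_2,x_3)}(f)\bigr)\, dx_2\, dx_3,
\end{equation*}
the factor $1/q = 1/\dim \rho_{{\rm fin},x}$ arising from the index $[\Gamma:M]$.

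To finish, I would replace Lebesgue integration in $\theta$ by integration against a finitely additive measure $\widetilde{m}$ concentrated on $\mathbb{Q} \cap [0,1)$ satisfying $\widetilde{m}([a,b) \cap \mathbb{Q}) = b-a$, and set $\mu := \widetilde{m} \otimes dx_2 \otimes dx_3$ on $\widehat{\Gamma}_{\rm fin}$. For compactly supported $f$, the map $\theta \mapsto \|\pi_\theta(f)\|_{\rm HS}^2$ is continuous on $[0,1)$, so the Lebesgue and $\widetilde{m}$-integrals coincide and the claim follows. The main obstacle is the passage from compactly supported to general $L^1$-functions: one must bound $\frac{1}{q}{\rm Tr}(\rho_{{\rm fin},x}(f)^*\rho_{{\rm fin},x}(f))$ uniformly in $x$ to justify a density argument, and one must verify that the finitely additive measure $\widetilde{m}$ correctly records the contribution of all $\theta$ and not just the rational ones. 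Weyl equidistribution of Farey fractions makes this rigorous---integration of any continuous function against $\widetilde{m}$ recovers its Lebesgue integral, so the irrational (infinite-dimensional) fibers safely disappear from the final formula and all of the mass is carried by the rational, finite-dimensional part.
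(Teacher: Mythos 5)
Your overall architecture --- Fourier analysis along the center $Z$, decomposition of the rational fibers into the $q$-dimensional irreducibles, and replacement of Lebesgue measure by the finitely additive measure $\widetilde m$ via continuity --- is the right one (the paper itself does not reprove Theorem \ref{Pytlik} but cites \cite{Pytlik}; its closest argument is Theorem \ref{newPytlik} via Theorem \ref{Bekkafactor}, which follows a corrected version of exactly this route). However, your two key displayed identities are false, and this is a genuine gap rather than a technicality. The operator $\pi_\theta(f)$, with $\pi_\theta={\rm Ind}_Z^\Gamma\chi_\theta$ acting on $\ell^2(Z\backslash\Gamma)\simeq\ell^2(\mathbb{Z}^2)$, is a ($\theta$-twisted) convolution operator: its kernel is $K_\theta(\gamma,\sigma)=\sum_{z\in Z}f(\gamma^{-1}z\sigma)\chi_\theta(z)$, and $\sum_{\gamma,\sigma}|K_\theta(\gamma,\sigma)|^2=+\infty$ whenever $f\neq 0$ (already for $\Gamma=\mathbb{Z}^2$, $Z=\mathbb{Z}\times\{0\}$, $\pi_0(f)$ is an ordinary convolution operator, never Hilbert--Schmidt; more structurally, $\pi_\theta(\Gamma)''$ has diffuse center, $\simeq M_q\otimes L^\infty(\mathbb{T}^2)$ for $\theta=p/q$, hence contains no nonzero Hilbert--Schmidt operator). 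So your first display, $\sum_{n}|f(n)|^2=\int_0^1\|\pi_\theta(f)\|_{\rm HS}^2\,d\theta$, has an integrand that is identically $+\infty$. The correct fiber functional is not the Hilbert--Schmidt norm but the canonical trace attached to the central character, concretely the diagonal matrix coefficient $\langle\pi_\theta(f^*\ast f)\delta_Z,\delta_Z\rangle=\sum_{z\in Z}(f^*\ast f)(z)\chi_\theta(z)$; with this functional the Parseval identity over $\theta\in[0,1)$ is an elementary computation (it is the countable-group Plancherel theorem of Theorem \ref{BekkaPlancherel}/\ref{Bekkafactor}, and the starting point of Pytlik's proof).

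The same error propagates to your third display: $\|\pi_{p/q}(f)\|_{\rm HS}^2$ is infinite, and moreover ${\rm Ind}_Z^\Gamma\chi_{p/q}$ does \emph{not} decompose with multiplicity one over $[0,1)^2$ --- the representations ${\rm Ind}_M^\Gamma\chi'_{x_2,x_3}$ repeat along $\Gamma/M$-orbits of length $q$, so each class $\rho_{{\rm fin},x}$ occurs with multiplicity $q$, and no honest Hilbert--Schmidt bookkeeping produces a factor $1/q$ ``from the index $[\Gamma:M]$''; inserting it is what forces your final formula to come out right. What is true, and what you need, is the trace identity
\begin{equation*}
\sum_{z\in Z}(f^*\ast f)(z)\chi_{p/q}(z)
=\int_{[0,1)^2}\frac{1}{q}\,{\rm Tr}\bigl(\rho_{{\rm fin},(p/q,x_2,x_3)}(f)^*\rho_{{\rm fin},(p/q,x_2,x_3)}(f)\bigr)\,dx_2\,dx_3,
\end{equation*}
which is checked directly on group elements from (\ref{finite rep}): the normalized characters $\frac{1}{q}{\rm Tr}\,\rho_{{\rm fin},x}(\gamma)$ average over $(x_2,x_3)$ to $\chi_{p/q}(\gamma)$ for $\gamma\in Z$ and to $0$ for $\gamma\notin Z$. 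Here the $1/\dim$ is intrinsic to the normalized trace, not an index factor. Once the fiber functional is corrected, your last step does work: for $f\in L^1(\Gamma)$ the map $\theta\mapsto\sum_{z\in Z}(f^*\ast f)(z)\chi_\theta(z)$ is a continuous (absolutely convergent) Fourier series, so integrating against $\widetilde m$ reproduces the Lebesgue integral, and no density argument from finitely supported $f$ is needed.
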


Here, $\widehat{\Gamma}_{\rm \tiny{fin}}$ can be identified with the set $\widehat{X} :=(\mathbb{Q}\cap [0,1]) \times [0,1] \times [0,1]$ and the Plancherel measure $\mu$ on this set can be expressed as 
\begin{equation}\label{measure}     d\mu(x_1,x_2,x_3) =  d\widetilde{m}(x_1)dm(x_2)dm(x_3)
\end{equation}
for $x = (x_1,x_2,x_3) \in \widehat{X} = \widehat{\Gamma}_{\rm fin}$, where $m$ is the standard Lebesgue measure on $[0,1]$ and $\widetilde{m}$ is finitely additive measure determined by 
\[\widetilde{m}(\mathbb{Q} \cap [a,b]) = b-a. \]

From this theorem, we can easily yield the following Fourier inversion formula for $f \in L^2(\Gamma ), \sigma \in \Gamma$,
\begin{equation}
f(\sigma) = \int_{\widehat{X}}\frac{1}{{\rm \tiny{dim}}\; \rho_{{\rm \tiny{fin}},x}}{\rm Tr}(\rho_{{\rm \tiny{fin}},x} (\sigma^{-1})\rho_{{\rm \tiny{fin}},x} (f))d\mu(\rho_{{\rm \tiny{fin}},x}).\label{Fourier1}
\end{equation}

It is known that finite dimensional irreducible unitary representations of $\Gamma$ (cf. \cite{Pytlik}) are described as follows: 
For every $x = (x_1,x_2,x_3) \in  \widehat{X}$, with $x_1 = p/q$ ($p$ and $q$ are relatively prime), let us define a finite-dimensional representation $\rho_{{\rm \tiny{fin}},x}$ of $\Gamma $, acting on the Hilbert space $\mathcal{H}_x$ of dimension $q$, which will be realized as a space $\mathcal{H}_x$ of all complex, periodic sequence $\{\varphi (k)\}_{k\in\mathbb{Z}}$ with period $q$ as
\begin{equation} (\rho_{{\rm \tiny{fin}},x}(n)\varphi )(k) = \exp \left(\frac{2\pi\sqrt{-1}}{q}(n_3x_3+n_2x_2+n_1qx_1+kn_2qx_1)\right)\varphi (k+n_3) \label{finite rep}
\end{equation}
where
\begin{equation} n = [n_1,n_2,n_3] := \left( \begin{array}{ccc} 1 & n_3 & n_1 \\ 0 & 1 & n_2 \\ 0 & 0 & 1\end{array}\right) \in \Gamma.\label{3heis}
\end{equation}
There is another description (cf. \cite{Davidson}) of $\rho_{{\rm \tiny{fin}},x}$ as follows: For  $\gamma = \exp (2\pi\sqrt{-1}p/q)= \exp (2\pi\sqrt{-1}x_1)$, let  
\begin{equation}
w = [1,0,0],\quad v= [0,1,0],\quad u=[0,0,1] \in \Gamma \label{uvw}
\end{equation}
and \begin{equation}\alpha=\exp\left(\frac{2\pi\sqrt{-1}}{q}x_3\right),\quad \beta = \exp\left(\frac{2\pi\sqrt{-1}}{q}x_2\right) \in U(1). \label{fluctuation}\end{equation}

 Then we have $\rho_{{\rm \tiny{fin}},x}(w) = \gamma$ and
\begin{equation}\label{matrixrep}  \rho_{{\rm \tiny{fin}},x}(u) = \alpha\left(\begin{array}{ccccc}0 & 0 & \cdots & 0 & 1 \\ 1 & 0 & 0 &\cdots & 0 \\ 0 & 1 & 0 & \cdots & 0 \\ \vdots&\ddots &\ddots & \ddots & \vdots \\ 0 &\cdots &0 & 1& 0\end{array}\right), \quad \rho_{{\rm \tiny{fin}},x}(v) = \beta\left(\begin{array}{cccc}1 & 0 & \cdots & 0 \\0 & \gamma & \cdots & 0 \\ \vdots & \ddots & \ddots & \vdots \\ 0 & \cdots & 0 & \gamma^{q-1}\end{array}\right).
\end{equation}
Here, we identify $\mathcal{H}_x$ with $\mathbb{C}^q$ where the above matrices act.
Moreover, the following holds:
\begin{align}
&{}\mbox{pair $(\alpha^q,\beta^q)$ is uniquely determined in $U(1) \times U(1)$ by $\rho_{{\rm \tiny{fin}},x}$} \notag \\  &{}\mbox{and this pair determines $\rho_{{\rm \tiny{fin}},x}$ up to unitary equivalence.} \label{equivalence}
\end{align}

Using the above theorem, we can decompose the flat vector bundle $L^2(E_R)$ associated to the right regular representation $R$ as a direct integral of the space $L^2(E_{\rho_{{\rm \tiny{fin}},x}})$ of sections of the flat vector bundle associated with the representation $\rho_{{\rm \tiny{fin}},x}$ on the fiber $\mathcal{H}_x$. We also have associated decomposition of Laplacian $\Delta_{E_R}$ acting on $L^2(E_R)$ into the Laplacians  $\Delta_{\rho_{{\rm \tiny{fin}},x}}$ acting on $L^2(E_{\rho_{{\rm \tiny{fin}},x}})$. To  perturb eigenvalues of $\Delta_{\rho_{\mbox{\tiny{fin}},x}}$, there is a difficulty that the representation spaces $\mathcal{H}_x$ are varied with respect to $x \in \widehat{X}$.

To overcome this point, we compare $\rho_{{\rm \tiny{fin}},x}$ on $\mathcal{H}_x$ and representations of the real Heisenberg group $G$: 
\begin{equation}  G = {\rm Heis}_3(\mathbb{R}) = \left\{(z,y,x) :=  \left. \left(\begin{array}{ccc} 1 & x & z \\ 0 & 1 & y \\ 0 & 0 & 1\end{array}\right) \right| x,y,z \in \mathbb{R}\right\},\label{Mat}
\end{equation}
as already explained in Chapter $2$.

As a final comment, our version of the extended Floquet-Bloch analysis is applied only to the vicinity of the trivial representation $\text{1}$ in this context. 
However, our asymptotic arguments can also be applied to a vicinity of a particular kind (namely ``rational'') representations which are dense in the unitary dual with respect to the Fell topology. 
This argument is a generalization of \cite{Helffer}, which can be viewed as some considerations in the particular case of the Heisenberg group by different arguments.

\subsection{A relation of unitary representations of the discrete Heisenberg group and the real Heisenberg group}\label{relationdiscLie}
Let us recall irreducible unitary representations of $G = {\rm Heis}_3(\mathbb{R})$ (cf. \cite{Corwin}), which are given by
\begin{description}
\item[(1)] one-dimensional characters which are trivial on the center of $G$,

\item[(2)] infinite dimensional representations $\rho_h$, which are called the Schr\"odinger representations, are parametrized by $h \in \mathbb{R} \setminus \{0\}$: For $f \in L^2(\mathbb{R})$ and $\gamma = (z,y,x) \in G$, 
\begin{equation} (\rho_h(\gamma)f)(s) = (\rho_h((z,y,x))f)(s) = e^{2\pi \sqrt{-1}h(z+sy)}f(s+x) \label{schrep}.
\end{equation}
which is unitary equivalent to 
\begin{equation}
    e^{2\pi \sqrt{-1}(hz+\sqrt{h}sy)}f(s+\sqrt{h}x)=e^{2\pi \sqrt{-1}(hz+\sqrt{h}sy)}e^{\sqrt{h}x\frac{d}{ds}}f(s)   \label{srep}
\end{equation}
\end{description}

Note that the expression (\ref{schrep}) is different from the expression (4) in 2.2.6 Example in  \cite{Corwin}. This difference is caused by the difference of representations of the real Heisenberg algebra ${\rm Lie}({\rm Heis}_3(\mathbb{R}))$. Here, the former expression (\ref{schrep}) is that of the matrix representation (\ref{Mat}) while the latter one is that of exponential coordinates or canonical coordinates of the first kind, which is appeared in the comparison between (\ref{exponetialmultiplication}) and (\ref{matrixmultiplication} in Subsubsection \ref{2n+1dimensionalHeisenberg}
).

Moreover, the following Fourier inversion formula holds. Let $\mathcal{S}(G)$ be the space of rapidly decreasing functions on $G$.

\begin{theorem}\label{Fourierinversionheisenberglie} For  $f \in \mathcal{S}(G)$, we have
\begin{equation} f(\sigma) = \int_{\mathbb{R}\setminus\{0\}}{\rm Tr}(\rho_h(\sigma^{-1})\rho_h(f))|h|dh, \label{Liedecomp}
\end{equation}
\end{theorem}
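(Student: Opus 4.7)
The plan is to realize $\rho_h(f)$ as an integral operator on $L^2(\mathbb{R})$, read off its Schwartz kernel, and reduce the identity to two successive applications of Fourier inversion on $\mathbb{R}$ --- first in the variable of the representation space, then in the Plancherel parameter $h$.

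Concretely, I would start from (\ref{schrep}) and the definition of $\rho_h(f)$ to compute, after the substitution $t=s+x$,
\[ (\rho_h(f)\phi)(s) = \int_{\mathbb{R}} K_h(s,t)\phi(t)\,dt, \qquad K_h(s,t) = \int_{\mathbb{R}^2} f(z,y,t-s)\,e^{2\pi\sqrt{-1}h(z+sy)}\,dz\,dy. \]
Since $f\in\mathcal{S}(G)$, the kernel $K_h$ is Schwartz on $\mathbb{R}^2$, so $\rho_h(f)$ is trace class with ${\rm Tr}(\rho_h(f)) = \int_{\mathbb{R}}K_h(s,s)\,ds$. Setting $g(y):=\int_{\mathbb{R}} f(z,y,0)\,e^{2\pi\sqrt{-1}hz}\,dz$, one has $K_h(s,s) = \int_{\mathbb{R}} g(y)\,e^{2\pi\sqrt{-1}hsy}\,dy$; the change of variable $\eta=hs$ followed by one-dimensional Fourier inversion then gives the fundamental trace identity
\[ {\rm Tr}(\rho_h(f)) = \frac{1}{|h|}\,g(0) = \frac{1}{|h|}\int_{\mathbb{R}} f(z,0,0)\,e^{2\pi\sqrt{-1}hz}\,dz. \]

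To recover $f(\sigma)$, I would use the composition rule $\rho_h(\sigma^{-1})\rho_h(f) = \rho_h(f_\sigma)$ with $f_\sigma(\tau):=f(\sigma\tau)$, which follows from the unimodularity of $G$. For $\sigma = (z_0,y_0,x_0)$, a direct matrix calculation in (\ref{Mat}) gives $\sigma\cdot(z,0,0) = (z_0+z,y_0,x_0)$, whence applying the trace formula above to $f_\sigma$ and changing variables yields
\[ {\rm Tr}(\rho_h(\sigma^{-1})\rho_h(f)) = \frac{e^{-2\pi\sqrt{-1}hz_0}}{|h|}\int_{\mathbb{R}} f(z,y_0,x_0)\,e^{2\pi\sqrt{-1}hz}\,dz. \]
Multiplying by $|h|$ and integrating over $h\in\mathbb{R}\setminus\{0\}$, Fubini followed by Fourier inversion in the central direction give
\[ \int_{\mathbb{R}\setminus\{0\}}{\rm Tr}(\rho_h(\sigma^{-1})\rho_h(f))\,|h|\,dh = \int_{\mathbb{R}}\!\int_{\mathbb{R}} f(z,y_0,x_0)\,e^{2\pi\sqrt{-1}h(z-z_0)}\,dz\,dh = f(z_0,y_0,x_0) = f(\sigma), \]
which is (\ref{Liedecomp}).

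The only delicate point is justifying the identity ${\rm Tr}(\rho_h(f)) = |h|^{-1}g(0)$. For arbitrary $f\in L^1(G)$ the nested integrals need not be absolutely convergent, in keeping with the fact that the Plancherel density $|h|\,dh$ is a tempered distribution on $\mathbb{R}$ rather than a finite measure and that $\rho_h(f)$ is in general only bounded, not trace class. The hypothesis $f\in\mathcal{S}(G)$ is precisely what is needed: every partial Fourier transform of a Schwartz function on $G$ is again Schwartz, so Fubini applies throughout, the diagonal integral converges absolutely, and the final $h$-integration is standard Fourier inversion on $\mathbb{R}$. Extension to larger classes of $f$ --- for instance an $L^2$ Plancherel identity $\|f\|_{L^2(G)}^2 = \int\|\rho_h(f)\|_{\mathrm{HS}}^2|h|\,dh$ --- then follows by density, but is not required for the $\mathcal{S}(G)$ statement asserted here.
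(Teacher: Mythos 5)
Your argument is correct, but it follows a different route from the paper. The paper does not prove Theorem \ref{Fourierinversionheisenberglie} by a direct computation at all: it treats the formula as the $n=1$ specialization of the general orbit-method Fourier inversion theorem for nilpotent Lie groups (Theorem \ref{FourierinversionLie}, i.e.\ 4.3.9 in \cite{Corwin}), where the Plancherel density on the cross-section of generic coadjoint orbits is the Pfaffian, and the Pfaffian computation for $\mathrm{Heis}_{2n+1}(\mathbb{R})$ gives $|\mathrm{Pf}(l)| = |l(Z)|^{n}$, hence $|h|\,dh$ when $n=1$ (this is exactly formula (\ref{2n+1heisenbergplancherel}) in Appendix B, together with the trace-class and diagonal-kernel statements of Theorems \ref{nilpotenttrace} and \ref{nilpLietrace}). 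Your proof instead exploits the explicit Schr\"odinger model: you exhibit the Schwartz kernel $K_h(s,t)=\tilde f(h,hs,t-s)$ (a partial Fourier transform of $f$ composed with an invertible linear map, hence Schwartz for $h\neq 0$), obtain the character identity $\mathrm{Tr}\,\rho_h(f)=|h|^{-1}\int f(z,0,0)e^{2\pi\sqrt{-1}hz}\,dz$ from the Jacobian of $\eta=hs$, translate by $\sigma$ via $\rho_h(\sigma^{-1})\rho_h(f)=\rho_h(f_\sigma)$ and the product rule $\sigma\cdot(z,0,0)=(z_0+z,y_0,x_0)$, and finish with one-dimensional Fourier inversion in $h$. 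What your approach buys is a short, self-contained derivation in which the origin of the density $|h|$ is completely transparent; what the paper's approach buys is uniformity, since the same Pfaffian machinery is what it must invoke anyway for the Engel group and the other examples in Section \ref{Step0-9}. One small stylistic caution: the final double integral in $(z,h)$ is not absolutely convergent, so "Fubini" is not literally the justification; what you actually use, and what suffices, is that the inner $z$-integral produces a Schwartz function of $h$, after which the $h$-integration is classical Fourier inversion --- it would be cleaner to say exactly that.
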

\noindent where $\rho_h(f)$ denotes the Fourier transform of $f$:
\[   \rho_h(f) = \int_G f(\gamma )\rho_h(\gamma)d\nu(\gamma) \label{FLie}
\]
and a left invariant measure $\nu$ on $G$.

If $f \in L^1(G)$, then $\rho_h(f)$ is a compact operator (cf. Corollary 6.2.16. in \cite{Fujiwara}) but not necessarily in the trace class (see \ref{Step0-9} for more detail explanation),
which is the main reason why the computation of the heat kernel $k_{\rho_h}(t,p,q)$ is a formal one already mentioned in Section \ref{Heisenbergexplanation}.

The latter representations are crucial in the decomposition of the regular representation, specifically in the Fourier inversion formula.

In the case when $h = p/q \in \mathbb{Q}$ ($p,q$ are relatively prime), the restriction of $\rho_h$ to $\Gamma ={\rm Heis}_3(\mathbb{Z})$ can be decomposed as follows:

\begin{theorem}\label{discretetoLie} If $h= x_1 =p/q \in \mathbb{Q} \cap [0,1]$, then 
\begin{equation} \rho_h\left|_{\Gamma}\right. \simeq {\int_0^1}^\oplus\!{\int_0^1}^\oplus\rho_{{\rm \tiny{fin}},(x_1, \{qx_1x_2\}, x_3)}dm(x_2)dm(x_3),\label{discretetoHeisenbergLie}
\end{equation}

where $\{a\}$ is the fractional part $a-[a]$ of $a \in \mathbb{R}$ and $dm$ is the Lebesgue measure on $[0,1]$
\end{theorem}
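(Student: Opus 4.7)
The plan is to build a concrete Zak-type unitary $W:L^2(\mathbb{R})\to L^2([0,1)^2,\mathbb{C}^q)$ that simultaneously diagonalizes the abelian algebra generated by $\rho_h(u^q)$ and $\rho_h(v^q)$, and then to identify each fiber representation with a member of the Pytlik family via the invariant $(\alpha^q,\beta^q)$ of \eqref{equivalence}.

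First, I would record the algebraic setup that underlies the decomposition: since $h=p/q$ with $\gcd(p,q)=1$, the Heisenberg commutation relation on $L^2(\mathbb{R})$ reads $\rho_h(u)\rho_h(v)=e^{2\pi i p/q}\rho_h(v)\rho_h(u)$, and $\rho_h(w)^q=e^{2\pi i p}I=I$. Consequently $A:=\rho_h(u^q)$ and $B:=\rho_h(v^q)$ commute with each other and with both $\rho_h(u)$ and $\rho_h(v)$, so their joint spectral projections split $L^2(\mathbb{R})$ into subspaces invariant under the whole $\Gamma$-action; this is the structural reason the restricted representation decomposes into $q$-dimensional fibers indexed by the joint spectrum of $(A,B)$.

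Next, I would make this concrete. Write $s\in\mathbb{R}$ uniquely as $s=t+r+qm$ with $t\in[0,1)$, $r\in\{0,\ldots,q-1\}$, $m\in\mathbb{Z}$, and define
\[
(Wf)(t,\xi)(r):=e^{2\pi i r\xi/q}\sum_{m\in\mathbb{Z}}f(t+r+qm)\,e^{2\pi i m\xi},\qquad (t,\xi)\in[0,1)^2.
\]
A Parseval calculation shows that $W$ is unitary onto $L^2([0,1)^2,\mathbb{C}^q)$. Substituting into \eqref{schrep} and using $e^{2\pi i pm}=1$ together with the defining twist, one finds that the conjugated generators act pointwise in $(t,\xi)$ by
\begin{align*}
(W\rho_h(u)W^{-1})(t,\xi)\varphi(r)&=e^{-2\pi i\xi/q}\varphi(r+1\bmod q),\\
(W\rho_h(v)W^{-1})(t,\xi)\varphi(r)&=e^{2\pi i p(t+r)/q}\varphi(r),\\
(W\rho_h(w)W^{-1})(t,\xi)\varphi(r)&=e^{2\pi i p/q}\varphi(r).
\end{align*}
Each fiber at $(t,\xi)$ therefore carries a $q$-dimensional unitary representation $\rho'_{(t,\xi)}$ of $\Gamma$ with $\rho'(u)^q=e^{-2\pi i\xi}I$ and $\rho'(v)^q=e^{2\pi i pt}I$. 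Comparing with \eqref{finite rep}--\eqref{equivalence}, the target $\rho_{{\rm \tiny{fin}},(p/q,\{px_2\},x_3)}$ satisfies $(\alpha^q,\beta^q)=(e^{2\pi i x_3},e^{2\pi i\{px_2\}})=(e^{2\pi i x_3},e^{2\pi i px_2})$, so the central invariants match under the measure-preserving change of variables $x_2=t$, $x_3=-\xi\bmod 1$.

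The step I expect to be the main obstacle is promoting this fiberwise match of $(\alpha^q,\beta^q)$ to an honest measurable unitary equivalence of direct integrals, because the diagonal entries of $W\rho_h(v)W^{-1}$ involve $pt$ itself while $\rho_{{\rm \tiny{fin}}}(v)$ uses the fractional part $\{pt\}$; the two differ by the scalar $e^{2\pi i[pt]/q}$, which depends on $t$ but is uniform in $r$. The fix I have in mind is a cyclic relabeling of the fiber basis, $\varphi(r)\mapsto\varphi(r+s(t)\bmod q)$ with $s(t)\equiv -p^{-1}[pt]\pmod q$: this absorbs the discrepancy by permuting the diagonal entries of $\rho'(v)$, leaves the shift-type operator $\rho'(u)$ unchanged, and is locally constant (hence Borel) in $t$, so the intertwiners assemble into a unitary on the direct integral. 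After this bookkeeping the integrand becomes $\rho_{{\rm \tiny{fin}},(p/q,\{pt\},-\xi\bmod 1)}$ exactly, yielding \eqref{discretetoHeisenbergLie}; the same Zak-type scheme is the prototype for the general rational branching developed in Theorem \ref{sec4decompose}.
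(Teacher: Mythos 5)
Your argument is correct, and its skeleton is the same as the paper's: for rational $h=p/q$ one decomposes $L^2(\mathbb{R})$ with respect to the commuting operators $\rho_h(u^q)$, $\rho_h(v^q)$ (which lie in the commutant because $\rho_h(w^q)=I$), identifies the resulting $q$-dimensional fibers with the representations \eqref{finite rep}--\eqref{matrixrep}, and invokes the equivalence criterion \eqref{equivalence} to convert $pt$ into its fractional part. The execution, however, is genuinely different. The paper never writes a single transform: it first decomposes $L^2(\mathbb{R})$ over $\widehat{\mathbb{Z}}$ into the spaces of $q$-quasi-periodic functions (the analogue of \eqref{step2discretetolie}), then reduces each of these to lattice fibers $\mathbb{Z}+j/2^m$ by a dyadic step-function approximation and a norm-limit, and only at the end applies \eqref{equivalence}; this two-stage structure is chosen because it mirrors the inductive scheme used for general nilpotent $\Gamma$ in Theorem \ref{sec4decompose}. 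You instead package both stages into one explicit Zak-type unitary $W$, compute the fiber action of $u,v,w$ in closed form, and match fibers through the complete invariant $(\alpha^q,\beta^q)$ together with the locally constant relabeling $s(t)\equiv -p^{-1}[pt]\pmod q$. What your route buys is the elimination of the limiting argument and explicit, measurable intertwiners (which would also sharpen the quantitative approximation discussed in Remark \ref{remark34}); what the paper's route buys is direct transferability to the induction of Chapter \ref{Representationsdiscretenilpotent}.

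Two small points you should make explicit. First, \eqref{equivalence} is stated for the parametrized family $\rho_{{\rm \tiny{fin}},x}$ with $\alpha=e^{2\pi\sqrt{-1}x_3/q}$, $\beta=e^{2\pi\sqrt{-1}x_2/q}$, whereas your fiber operators have scalars $e^{-2\pi\sqrt{-1}\xi/q}$ and $e^{2\pi\sqrt{-1}pt/q}$ that need not lie in that range; you therefore need the (one-line) extension of \eqref{equivalence} to arbitrary $\alpha,\beta\in U(1)$, obtained by conjugating with powers of the shift matrix (which multiplies $\beta$ by powers of $\gamma$) and of the diagonal matrix (which multiplies $\alpha$ by powers of $\gamma$), using that $\gamma$ is a primitive $q$-th root of unity since $\gcd(p,q)=1$ --- this also absorbs the fixed root-of-unity discrepancy in the $x_3$-variable. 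Second, your generator formulas are those of the extension $\rho_{L^2(\mathbb{R}),(h,0,0)}$ rather than the literal coordinates of \eqref{schrep}; since the paper itself identifies $\rho_h|_\Gamma$ with $\rho_{L^2(\mathbb{R}),(h,0,0)}$, this is only a matter of stating which normalization you use, but it should be said to avoid a clash with the matrix coordinates in \eqref{matrixelementrep}.
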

This theorem is a particular case of Theorem \ref{introdecompose}, which will be proved in Section \ref{Step0-6}. Here, we give proof without explicitly mentioning the orbit methods, which can be viewed as a model case in the arguments of the next chapter.  

The basic strategy is simple and of an abelian nature, based on the following correspondence, which is presented in reverse order of the actual proof:
\begin{description} 
\item[Step 2 of the proof] 
\begin{equation}
L^2(\mathbb{R}) \simeq L^2((\mathbb{R}/\mathbb{Z})\times \mathbb{Z}) \simeq  L^2([0,1))\otimes L^2(\mathbb{Z}) \simeq L^2(E_R(\mathbb{R}/\mathbb{Z})) \label{step2discretetolie}
\end{equation}
where $R$ is the right regular representation of $\mathbb{Z}$ and $L^2(E_R(\mathbb{R}/\mathbb{Z}))$ is the space of $L^2$-sections of the flat vector bundle $E_R(\mathbb{R}/\mathbb{Z})$ associated to $R$. After completing the previous paper \cite{Katsuda0}, we find that the method is essentially the same as in the proof of Theorem 1.3 in \cite{Bekka1}, which will be reproduced in Subsubsection \ref{inductionstep}. However, we leave our version here since it uses fewer terminologies of the representation theory.
\item[Step 1 of the proof] 
\begin{align} L^2(\mathbb{Z}) &\simeq L^2((\mathbb{Z}/q\mathbb{Z})\times q\mathbb{Z}) \simeq  L^2(\mathbb{Z}/q\mathbb{Z})\otimes L^2(q\mathbb{Z}) \notag \\
 &\simeq  L^2(\mathbb{Z}/q\mathbb{Z})\otimes L^2(S^1) \simeq  L^2(\mathbb{Z}/q\mathbb{Z})\otimes L^2([0,1)).\label{step1discretetolie}
\end{align}
\end{description}

\begin{proof} Step 1: First, we extend the action of $\rho_{{\rm \tiny{fin}},x}$ on $H_x$ to that on  $L^2(\mathbb{Z})$ as the same formula as (\ref{finite rep}). We write this action as $\rho_{L^2(\mathbb{Z}),x}$. Then, we have a decomposition 
\begin{equation}   L^2(\mathbb{Z}) \simeq \int_{\widehat{q\mathbb{Z}}}^\oplus\mathcal{H}_{\chi,q}^\mathbb{Z} d\chi   \label{Zdecomp},
\end{equation}
where $\widehat{q\mathbb{Z}}$ is the unitary dual of $q\mathbb{Z}$, which is viewed as the covering transformation group of the canonical covering $\mathbb{Z} \to \mathbb{Z}/q\mathbb{Z}$ and for $\chi:= \exp(2\pi\sqrt{-1}a) \in U(1)$ with $a \in [0,1]$, 
\[ \mathcal{H}_{\chi,q}^\mathbb{Z} = \{\varphi : \mathbb{Z} \to \mathbb{C} \mid \varphi (k+q) = \chi \varphi(k)\}.
\]
Note that $\mathcal{H}_{\chi,q}^\mathbb{Z}$ is an invariant subspace of $\rho_{L^2(\mathbb{Z}),x}$ and its restriction to this space is unitarily equivalent to $\rho_{{\rm \tiny{fin}},(x_1, x_2,x_3+a)}$ on $\mathcal{H}_{(x_1,x_2, x_3+a)}$ which is defined before the formula (\ref{finite rep}). 
In fact, taking $s(k) := \chi^{k/q}= \exp(2\pi\sqrt{-1}ka/q)$, we identify $\mathcal{H}_{\chi,q}^\mathbb{Z}$ with $\mathcal{H}_{(x_1,x_2,x_3+a)}$ by the correspondence
\[  \mathcal{H}_{\chi,q}^\mathbb{Z} \ni s\varphi \longleftrightarrow  \varphi \in \mathcal{H}_{(x_1,x_2, x_3+a)}.
\]
This is well defined by the relations $\rho_{{\rm \tiny{fin}},x}(u) s = (\exp (2\pi\sqrt{-1}a/q)s\rho_{{\rm \tiny{fin}},x}(u)$ and $\rho_{{\rm \tiny{fin}},x}(v)s = s\rho_{{\rm \tiny{fin}},x}(v)$ where $s$ is viewed as a multiplication operator. Thus, we have,
\[ \rho_{L^2(\mathbb{Z}),x} \cong {\int_0^1}^\oplus\rho_{{\rm\tiny{fin}},(x_1,x_2, x_3+a)}dm(a).
\]

Step 2: Next, we extend actions of $\rho_{L^2(\mathbb{Z}),x}$ on $L^2(\mathbb{Z})$ to $L^2(\mathbb{R})$, the same formula as (\ref{finite rep}) again but here the range of the variables $k \in \mathbb{Z}$ changes to $ s \in \mathbb{R}$. We denote this action as $\rho_{L^2(\mathbb{R}),x}$. 

In the following, we relate $L^2(\mathbb{Z})$ and $L^2(\mathbb{R})$ through the  
decompositions (\ref{Zdecomp}) and the following (\ref{Rdecomp}). Namely we compare each integrand $\mathcal{H}_{\chi,q}^\mathbb{Z}$ in the former and $\mathcal{H}_{\chi ,q}^\mathbb{R}$ in the latter.
\begin{equation} L^2(\mathbb{R}) \simeq \int_{\widehat{\mathbb{Z}}}^\oplus \mathcal{H}_{\chi ,q}^\mathbb{R} d\chi,  \label{Rdecomp}
\end{equation}
where
\[\mathcal{H}_{\chi ,q}^\mathbb{R} := \{f:\mathbb{R} \to \mathbb{C} \mid f(s+q) = \chi f(s)\}.
\]
For a positive integer $m$, let us divide the interval $[0,1)$ into $2^m$ subintervals $I_i = [i/2^m, (i+1)/2^m), \; (i = 0, 1,\ldots , 2^m-1)$ of equal length and define
\[ \mathcal{H}_{m,\chi ,q}^\mathbb{R} = \left\{ f \in \mathcal{H}_{\chi ,q}^\mathbb{R} \left| \begin{array}{l}\mbox{$f$ is (locally) constant on the intervals}\\
\mbox{ $[(i/2^m) +r, ((i+1)/2^m) +r)$} \\ \mbox{for $i = 0,1 \ldots ,2^m-1$ , $r = 0,1,\ldots, q-1$} \end{array}\right. \right\}.
\]
Let $f_m \in \mathcal{H}_{m,\chi ,q}^\mathbb{R}$ be a function satisfying 
\[  f_m(s) = \frac{i}{2^m} + r \quad \mbox{if}\; s \in [(i/2^m) +r, ((i+1)/2^m) +r)
\]
for $i = 0,1, \ldots ,2^m-1$ , $r \in \mathbb{Z}$. Define an action $\rho_x^m(n)$ of $\Gamma$ on $\mathcal{H}_{m,\chi ,q}^\mathbb{R}$ by 
\[(\rho_x^m(n)\varphi )(s) = \exp\left(\frac{2\pi\sqrt{-1}}{q}(n_3x_3+n_2x_2+n_1qx_1+f_m(s)n_2qx_1)\right)\varphi (s+n_3).
\]
Note that for $j=0,1,\ldots ,2^m-1$,
\[\mathcal{H}_{m,\chi ,q}^{I_j} = \left\{ f \in \mathcal{H}_{m,\chi ,q}^\mathbb{R} \left| \begin{array}{l}\mbox{$f \equiv 0$ on the complement of the set}\\
\mbox{ $\cup_{r=0}^{q-1}[(j/2^m) +r, ((j+1)/2^m)+r)$}\end{array}\right. \right\}\]
is an invariant subspace of the action of $\rho_x^m(n)$ and its action is unitary equivalent to the actions $\rho_{{\rm \tiny{fin}},x}$ or $\rho_{L^2(\mathbb{Z}),x}$ on the space
\[\mathcal{H}_{\chi,q}^{\mathbb{Z}+j/2^m} = \{f:\mathbb{Z}+j/2^m \to \mathbb{C} \mid f(s+q) = \chi f(s)\}
\]
where $\mathbb{Z}+j/2^m = \{n + j/2^m \mid n \in \mathbb{Z}\}$, which is unitarily equivalent to the action of $\rho_{{\rm \tiny{fin}},(x_1,x_2+(j/2^m)x_3q,x_3)}$ on $\mathcal{H}_{\chi ,q}^\mathbb{Z}$ as in the first step,

Since the decomposition of $[0,1)$ into $2^m$-subintervals corresponds to binary expansions of real number in $[0,1)$ and we have $\rho_{{\rm \tiny{fin}},(x_1,x_2+(j/2^m)x_1q,x_3)}$  converges to $\rho_{{\rm \tiny{fin}},(x_1, x_2+ qx_1x_2,x_3)}$ in the norm topology if $j/2^m$ tends to $x_2$. From an unitary equivalence between $\rho_{{\rm \tiny{fin}},(x_1,qx_1x_2,0)}$ and $\rho_{{\rm \tiny{fin}},(x_1, \{qx_1x_2\},0)}$ by the condition (\ref{equivalence}), we get the conclusion by noting that $\rho_h\left|_\Gamma\right. = \rho_{L^2(\mathbb{R}),(h,0,0)}$. 
\end{proof}

By the definition of finitely additive measure $d\mu$ in Theorem \ref{Pytlik}, for $K > 0$, we put
\[  A_K := \{ x_1 = \frac{p}{q} \in \mathbb{Q} \cap [0,1] \mid q > K\}, \qquad B_K = A_K \times [0,1] \times [0,1].\] 
Then, $B_K$ is a full measure set in $\widehat{X}$ with respect to $\mu$ for any $K > 0$.

 On the other hand the fluctuation of $x_2,x_3$ in $x = (x_1,x_2,x_3) \in \widehat{X})$ with $x_1 = p/q$ for $\rho_{{\rm \tiny{fin}},x}$ is $O(1/q)$ by (\ref{fluctuation}), we have 
\[ \lim_{x_1 \to 0}\rho_{{\rm \tiny{fin}},(x_1,\{qx_1x_2\},x_3)} = \lim_{x_1 \to 0}\rho_{{\rm \tiny{fin}},(x_1,0,0)}
= \lim_{x_1 \to 0}\rho_{{\rm \tiny{fin}},(x_1,x_2,x_3)}.
\]

Therefore, we get the following:
\begin{align*}
&{} \lim_{K \to \infty}\int_{B_K}\frac{1}{{\rm \tiny{dim}}\; \rho_{{\rm \tiny{fin}},x}}{\rm Tr}(\rho_{{\rm \tiny{fin}},x}(\sigma^{-1})\rho_{{\rm \tiny{fin}},(x_1, \{qx_1x_2\}, x_3)}(f))d\mu(x) \\
&=\lim_{K \to \infty}\int_{B_K}\frac{1}{{\rm \tiny{dim}}\; \rho_{{\rm \tiny{fin}},x}}{\rm Tr}(\rho_{{\rm \tiny{fin}},x}(\sigma^{-1})\rho_{{\rm \tiny{fin}},(x_1, x_2, x_3)}(f))d\mu(x) \\
&= \int_{\mathbb{Q}\cap[0,1]}\!\int_0^1\!\int_0^1\frac{1}{{\rm \tiny{dim}}\; \rho_{{\rm \tiny{fin}},x}}{\rm Tr}(\rho_{{\rm \tiny{fin}},x}(\sigma^{-1})\rho_{{\rm \tiny{fin}},(x_1, x_2, x_3)}(f))d\mu(x)  \\
&= \int_{\mathbb{Q}\cap[0,1]}\!\int_0^1\!\int_0^1{\rm tr}(\rho_{{\rm \tiny{fin}},x}(\sigma^{-1})\rho_{{\rm \tiny{fin}},(x_1, x_2, x_3)}(f))d\mu(x). 
\end{align*}

\begin{remark}\label{remark34} By the above formula, we can view the above theorem \ref{discretetoLie} as an approximation formula of the left-hand side by $\rho_{{\rm \tiny{fin}},x}$. Thus, we can freely exchange finite and infinite-dimensional representations. More explicitly, since $e^{\sqrt{p/q}(d/ds)}$ and $e^{2\pi\sqrt{-1}\sqrt{p/q}s}$ are approximated by $\rho_{{\rm \tiny{fin}},(1/q,0,0)}(u)$ and 
$\rho_{{\rm \tiny{fin}},(p/q,0,0)}(v)$ respectively, we replace semigroups $e^{2\pi\sqrt{-1}\sqrt{h}s}$, $e^{\sqrt{h}(d/ds)}$ 
with respect to $\sqrt{h}$ in the expression of $\rho_h$ by semigroups  $e^{\sqrt{h}u_{p/q}}$, $e^{\sqrt{h}v_{p/q}}$ where
\begin{align*}   \tilde{u}_{p/q} := \log(\rho_{{\rm \tiny{fin}},(p/q,0,0)}(u)), &{}\quad \tilde{v}_{p/q} := \log(\rho_{{\rm \tiny{fin}},(p/q,0,0)}(v)) \\
  u_{p/q}:=\sqrt{q/p}\tilde{u}_{p/q} &{} \quad v_{p/q}:=\sqrt{q/p}\tilde{v}_{p/q}
\end{align*}
respectively, whose approximation error can be decreased arbitrarily by taking a sufficiently large denominator $q$.

 Therefore, several formal computations in the later Section \ref{Step0-6} on the Schr\"odinger representations $\rho_h$ near $h=0$ can be justified mathematically. This procedure is what we consider our extension of the Floquet-Bloch theory in the case of the Heisenberg group ${\rm Heis}_3(\mathbb{Z})$. 
\end{remark}

\section{Unitary representations of discrete nilpotent groups, nilpotent Lie groups and their relations}\label{Representationsdiscretenilpotent}

\subsection{Decomposition of regular representations of discrete nilpotent groups into the direct integrals of factor representations: Substep 0-1} \label{nilpotentfactor}
\label{Step0-1}


To prepare several terminologies for the study of representations of $\Gamma$, we quote several paragraphs from \cite{Bekka2} with suitable modifications to adjust to the present context.

A fundamental object is its unitary dual space $\widehat{\Gamma}$, the set of equivalence classes of irreducible unitary representations of $\Gamma$. 
The space $\widehat{\Gamma}$ carries a natural Borel structure, known as the Mackey Borel structure (see \cite{Mackey}). 
A classification of $\widehat{\Gamma}$ is considered as being possible only if $\widehat{\Gamma}$ is a standard Borel space; according to Glimm's celebrated theorem \cite{Glimm}, this is the case if and only if $\Gamma$ is of type I in the following sense.
Recall that a von Neumann algebra is a self-adjoint subalgebra of $\mathcal{L}(H)$ which is closed for the weak operator topology of $\mathcal{L}(H)$, where $H$ is a Hilbert space. A von Neumann algebra $\mathcal{M}$ is a factor if the center of $\mathcal{M}$ consists of the scalar operators.
Let $\pi$ be a unitary representation of $\Gamma$ in a Hilbert space $H$. The von Neumann subalgebra generated by $\pi(\Gamma)$ coincides with the bicommutant $\pi(\Gamma)''$ of $\pi(\Gamma)$ in $\mathcal{L}(H)$ which is closed for the weak operator topology of $\mathcal{L}(H)$, we say that $\pi$ is a factor representation if $\pi(\Gamma)''$ of $\mathcal{L}(H)$ is a factor.

The group $\Gamma$ is of type I if, for every factor representation $\pi$ of $\Gamma$, the factor $\pi(\Gamma)''$ is of type I, that is, $\pi(\Gamma)''$ is isomorphic to the von Neumann algebra $\mathcal{L}(K)$ for some Hilbert space $K$; equivalently, the Hilbert space $H$ of $\pi$ can write as tensor product $K\otimes K'$ of Hilbert spaces in such a way that $\pi$ is equivalent to $\sigma\otimes I_{K'}$ for an irreducible representation $\sigma$ of $\Gamma$ on $K$. Here $I_{K'}$ is the identity operator on $K'$.

By the celebrated theorem of Thoma \cite{Thoma1}, an infinite discrete group $\Gamma$ is of type I if and only if $\Gamma$ contains an abelian subgroup of finite index. 
Thus, a nonabelian discrete infinite nilpotent group $\Gamma$ is not type I, and treating $\widehat{\Gamma}$ directly is impossible.

In the case when $\Gamma$ is the three dimensional discrete Heisenberg group $\mbox{Heis}_3(\mathbb{Z})$, 
We have used finite-dimensional irreducible unitary representations and the Plancherel theorem due to Pytlik \cite{Pytlik} in \cite{Katsuda0} to overcome this situation. 
Even for general discrete nilpotent groups $\Gamma$, we know all their finite-dimensional irreducible unitary representations by \cite{Howe}; however, the Plancherel theorems generalizing the Pytlik theorem in the above to $\Gamma$ are not available at present. Thus, we take another method.

We first recall results in \cite{Bekka2}, which decompose the right regular representation of $\Gamma$ into its factor representations. For related results, please refer \cite{Thoma}, \cite{Johnston}, \cite{Baggett}.
In the following explanations, we quote from several sentences in \cite{Bekka2} with slight modifications. We consider mainly the right regular representation $\rho_\Gamma$ instead of the left regular representation in \cite{Bekka2}.

When $\Gamma$ is not type I, it usually admits several integral decompositions of $\rho_\Gamma$ into irreducible representations, and it is impossible to single
out a canonical one among them. However, when $\Gamma$ is unimodular, $\rho_\Gamma$
does admit a canonical integral decomposition into factor representations; this is the content of a Plancherel theorem, which
we will discuss in the following Theorem \ref{Bekkafactor}.

In order to state the Plancherel theorem for $\Gamma$,
we need to replace the dual space $\widehat{\Gamma}$
by the consideration of Thoma's
dual space ${\rm Ch}(\Gamma )$ which we now introduce.

Recall that a function $t:\Gamma \to \mathbb{C}$ is of positive type if the complex-valued matrix $(t(\gamma^{-1}_j\gamma_i))_{1 \leq i,j \leq n}$
is positive semi-definite for any $\gamma_1, \ldots, \gamma_n$ in $\Gamma$.
A function of positive type $t$ on $\Gamma$ which is constant on conjugacy
classes and normalized (that is, $t(e) = 1$), will be called a trace on $\Gamma$.
The traces on $\Gamma$ will be denoted by ${\rm Tr}(\Gamma)$.

Let $t \in {\rm Tr}(\Gamma)$ and $(\pi_t,H_t,\xi_t)$ be the associated GNS triple (see \cite{Bekka3}, C.4). Then $\tau_t:\pi(\Gamma)''\to \mathbb{C}$, defined by $\tau_t(T) = \langle T\xi_t, \xi_t\rangle$
is a trace on the von Neumann algebra $\pi_t(\Gamma)''$, that is, $\tau_t(T^\ast T) \geq 0$
and $\tau_t(TS) = \tau_t(ST)$ for all $T, S \in \pi_t(\Gamma)''$, moreover, $\tau_t$ is faithful in
the sense that $\tau_t(T^\ast T) > 0$ for every $T \in \pi_t(\Gamma)'', T \neq 0$. 
Observe that $\tau_t(\pi(f)) = t(f)$ for $f \in C[\Gamma]$, where $t$ denotes the linear extension of $t$
to the group algebra $C[\Gamma]$.
The set ${\rm Tr}(\Gamma)$ is a convex subset of the unit ball of $\ell^\infty(\Gamma)$, which is
compact in the topology of pointwise convergence. An extreme point
of ${\rm Tr}(\Gamma)$ is called a character of $\Gamma$; we will refer to ${\rm Ch}(\Gamma)$ as Thoma's dual space.
Since $\Gamma$ is countable, ${\rm Tr}(\Gamma)$ is a compact metrizable space and ${\rm Ch}(\Gamma)$
is easily seen to be a $G_\delta$ subset of ${\rm Tr}(\Gamma)$. So, in contrast to $\widehat{\Gamma}$, Thoma's
dual space ${\rm Ch}(\Gamma)$ is always a standard Borel space.
An important fact is that ${\rm Tr}(\Gamma)$ is a simplex (see \cite{Thoma1}, Satz 1 or \cite{Sakai1}, 3.1.18.); by Choquet theory, this implies that every $\tau \in {\rm Tr}(\Gamma)$ can be represented as integral
\[ \tau = \int_{{\rm Ch}(\Gamma)}td\mu(t) \] 
for a unique probaiblity measure $\mu$ on ${\rm Ch}(\Gamma)$.

As we now explain, the set of characters of $\Gamma$ parametrizes the factor
representations of finite type of $\Gamma$, up to quasi-equivalence; (for more
details, see \cite{Bekka3}, section 11.C).
Recall first that two representations $\pi_1$ and $\pi_2$ of $\Gamma$ are quasi-equivalent
if there exists an isomorphism $\Phi:\pi_1(\Gamma)'' \to \pi_2(\Gamma)''$ of von Neumann
algebras such that $\Phi(\pi_1(\gamma)) = \pi_2(\gamma)$ for every $\gamma \in \Gamma$.
Let $t \in {\rm Ch}(\Gamma)$ and $\pi_t$ the associated GNS representation. Then the map $t \to [\pi_t]$ is a bijection between ${\rm Ch}(\Gamma)$ and the set of quasi-equivalence classes of factor representations of the finite type of $\Gamma$.

The following theorem is a version for countable groups of the Plancherel
theorem due to \cite{Mautner1} and \cite{Segal} which holds more generally, for any unimodular second countable locally compact group, we have that
the right regular representation $\rho_\Gamma$ can be decomposed into a direct integral of factor representation $\pi_t$ over ${\rm Ch}(\Gamma)$ as follows:

\begin{theorem}{\rm (Theorem A in \cite{Bekka2} (Plancherel Theorem for Countable Groups)}. \label{BekkaPlancherel} Let $\Gamma$ be a countable group. There
 exists a probability measure $\mu$ on ${\rm Ch}(\Gamma )$ such that a measurable field of representations $t \mapsto (\pi_t,\mathcal{H}_t)$ of $\Gamma$ on the standard
Borel space ${\rm Ch}(\Gamma )$, and an isomorphism of Hilbert spaces between $\ell^2(\Gamma )$
and $\int_{{\rm Ch}(\Gamma )}^\oplus \mathcal{H}_td\mu(t)$
which transforms the right regular representation $\rho_R$ of $\Gamma$ decomposed as 
\begin{equation} \rho_R := \lambda_\Gamma = \int_{{\rm Ch}(\Gamma )}^\oplus \pi_t d\mu(t). \label{rhoRdecomposition} \end{equation} 
and has the following properties:
\begin{description}
\item[{\rm (i)}] $\pi_t$ is quasi-equivalent to the GNS representation associated to $t$; in particular, the $\pi_t$'s
 are mutually disjoint factor representations of finite type, for $\mu$-almost every $t \in {\rm Ch}(\Gamma)$ ;
\item[{\rm (ii)}]  the von Neumann algebra $L(\Gamma) := \lambda_{\Gamma}(\Gamma)''$ is mapped 

onto the direct integral $\int_{{\rm Ch}(\Gamma )}^\oplus \pi_t(\Gamma)'' d\mu(t)$ of factors;
\item[{\rm (iii)}] for every $f \in C[\Gamma]$, the following Plancherel formula holds:
\begin{equation}
\|f\|_2^2
= \int_{{\rm Ch}(\Gamma )}\tau_t(\pi_t(f)^\ast\pi_t(f))d\mu (t) = \int_{{\rm Ch}(\Gamma )}
t(f^\ast\ast f)d\mu (t).
\label{decompositionfactor}
\end{equation}
\end{description}
 The measure $\mu$ is the unique probability measure on ${\rm Ch}(\Gamma)$  such that the Plancherel formula above holds, which is called the Plancherel measure of $\Gamma$
\end{theorem}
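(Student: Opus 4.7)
The plan is to exploit the fact that the group von Neumann algebra $L(\Gamma) := \lambda_\Gamma(\Gamma)''$ is a finite von Neumann algebra with a canonical faithful normal tracial state, and then to apply the standard reduction theory of von Neumann algebras to the inclusion of its center. Concretely, since $\Gamma$ is countable, the vector $\delta_e \in \ell^2(\Gamma)$ is cyclic and separating for $L(\Gamma)$, and the state $\tau(T) := \langle T\delta_e, \delta_e\rangle$ is a faithful normal tracial state; on group elements it satisfies $\tau(\lambda(\gamma)) = \delta_{\gamma,e}$, and for $f\in C[\Gamma]$ one has $\|f\|_2^2 = \tau(\lambda(f)^\ast\lambda(f))$. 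Thus the Plancherel identity (\ref{decompositionfactor}) is nothing but the disintegration of $\tau$ under the central decomposition of $L(\Gamma)$.

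First I would apply von Neumann's direct integral theorem to the abelian von Neumann subalgebra $Z := Z(L(\Gamma))$ on the separable Hilbert space $\ell^2(\Gamma)$. Identifying $Z \cong L^\infty(X,\nu)$ for some standard Borel probability space $(X,\nu)$, this yields a measurable field of Hilbert spaces $(\mathcal{H}_x)_{x\in X}$, an isometry $\ell^2(\Gamma) \cong \int_X^\oplus \mathcal{H}_x\,d\nu(x)$, a decomposition $L(\Gamma) \cong \int_X^\oplus M_x\,d\nu(x)$ with $M_x$ a factor $\nu$-a.e., and an associated measurable field of representations $\pi_x$ of $\Gamma$ with $\pi_x(\Gamma)'' = M_x$. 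The trace $\tau$ disintegrates as $\tau = \int_X \tau_x\,d\nu(x)$ with each $\tau_x$ a faithful normal tracial state on $M_x$; in particular every $M_x$ is a finite factor.

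Second I would identify the parameter space with Thoma's dual. For $\nu$-a.e.\ $x$, define $t_x \in \ell^\infty(\Gamma)$ by $t_x(\gamma) := \tau_x(\pi_x(\gamma))$. This is a conjugation-invariant normalised function of positive type, hence belongs to $\mathrm{Tr}(\Gamma)$; because $M_x$ is a factor with a unique normalised normal trace, $t_x$ is an extreme point of the simplex $\mathrm{Tr}(\Gamma)$, i.e.\ $t_x \in \mathrm{Ch}(\Gamma)$. Measurability of $x\mapsto t_x$ is immediate from that of the field $(\pi_x,\tau_x)$, so we obtain a Borel map $\Phi:X\to\mathrm{Ch}(\Gamma)$ and a probability measure $\mu := \Phi_\ast\nu$. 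A measurable-selection argument then lets one re-parametrise the measurable field $(\pi_x,\mathcal{H}_x)$ by $\mathrm{Ch}(\Gamma)$ itself, producing the field $t\mapsto(\pi_t,\mathcal{H}_t)$ in the statement, and the quasi-equivalence $\pi_t\sim\pi_t^{GNS}$ follows because both factor representations of finite type produce the same character $t$. Mutual disjointness a.e.\ is then a general property of the central decomposition.

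Finally the uniqueness of $\mu$ reduces to two independent rigidities: the uniqueness of the central decomposition of $L(\Gamma)$, and the fact that $\mathrm{Tr}(\Gamma)$ is a Choquet simplex, so that the canonical trace $\tau$ admits a unique barycentric representation by a probability measure on $\mathrm{Ch}(\Gamma)$. The main obstacle is the third step: one must check carefully that the map $\Phi:X\to\mathrm{Ch}(\Gamma)$ is Borel and essentially injective, so that the pushed-forward measure $\mu$ actually captures the central decomposition up to null sets. This requires verifying that the characters $t_x$ separate the fibres of the central decomposition $\nu$-a.e., which in turn uses that quasi-equivalent factor representations of finite type have the same character and, conversely, that the GNS factor representation associated with a character reconstructs the factor $M_x$ up to quasi-equivalence; once this rigidity is in hand, the rest is a routine application of reduction theory together with the simplex structure of $\mathrm{Tr}(\Gamma)$.
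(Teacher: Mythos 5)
Your proposal is correct and follows essentially the standard route to this result: the canonical trace on $L(\Gamma)=\lambda_\Gamma(\Gamma)''$, the central disintegration over $Z(L(\Gamma))\cong L^\infty(X,\nu)$ into finite factors with traces, the identification $x\mapsto t_x\in{\rm Ch}(\Gamma)$ (extreme since each $M_x$ is a factor with unique normalized trace), pushforward of $\nu$ to get $\mu$, and uniqueness from the Choquet simplex structure of ${\rm Tr}(\Gamma)$ together with the rigidity that finite-type factor representations are quasi-equivalent exactly when they share the same character. This is the same approach as the proof in the cited source \cite{Bekka2} (the present paper only quotes the theorem and gives no proof of its own), and the subtleties you flag --- Borel measurability and essential injectivity of $\Phi$, via a.e.\ mutual disjointness of the central components --- are precisely the points handled there by standard reduction theory.
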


In \cite{Bekka2} Theorem B, Bekka showed that the above Plancherel theorem can be reduced to that on the Choquet space ${\rm Ch}(\Gamma_{\rm fc})$ of FC-centre $\Gamma_{\rm fc}$ or its quotient space ${\rm Ch}(\Gamma_{\rm fc})/K_\Gamma$. Here the FC-centre $\Gamma_{\rm fc}$  of $\Gamma$ is defined as the normal subgroup of
elements in $\Gamma$ with a finite conjugacy class and $K_\Gamma$ is defined as the closure in ${\rm Aut}(\Gamma_{\rm fc})$ of the subgroup ${\rm Ad}(\Gamma)|_{\Gamma_{\rm fc}}$ given
by conjugation with elements from $\Gamma$. Moreover, he showed 
when $\Gamma$ is a discrete subgroup of the connected linear algebraic group $G$ such that the Zariski closure of $\Gamma$ is connected, the
Plancherel formula has a particularly simple form.

\begin{theorem}{{\rm (Corollary D in \cite{Bekka2})}}\label{Bekkafactor}
Let $G$ be a connected linear algebraic group over a field $k$ and let $\Gamma$ be
a countable Zariski dense subgroup of $G$. The Plancherel measure of
$\Gamma$ is the image of the normalized Haar measure $d\chi$ on the character group $\widehat{Z(\Gamma)}$ of the center $Z(\Gamma)$ of $\Gamma$ under the
map
\[\widehat{Z(\Gamma)} \to {\rm Tr}(\Gamma), \qquad \chi \to \tilde{\chi} \]
and the Plancherel formula is given for every element $f$ in the group algebra $\mathbb{C}[\Gamma]$ by
\begin{equation} \|f\|^2_2 = \int_{\widehat{Z(\Gamma)}}\mathcal{F}((f^\ast \ast f)|_{Z(\Gamma)})(\chi)d\chi, \label{Bekkadecopequation}\end{equation} 
where $\mathcal{F}$ is the Fourier transform on the abelian group $Z(\Gamma)$.

The previous conclusion holds in the following two cases:
\begin{description}
\item[{\rm (i)}] $k$ is a countable field of characteristic $0$ and $\Gamma = G(k)$ is the
group of $k$-rational points in $G$;
\item[{\rm (ii)}]
 $k$ is a local field {\rm (} that is, a non discrete locally compact field {\rm )},
$G$ has no proper $k$-subgroup $H$ such that $(G/H)(k)$ is compact,
and $\Gamma$ is a lattice in $G(k)$.
\end{description}
\end{theorem}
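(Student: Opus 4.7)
The plan is to derive this corollary directly from Theorem B of \cite{Bekka2}, which already reduces the Plancherel decomposition of $\Gamma$ to the Choquet space ${\rm Ch}(\Gamma_{\rm fc})/K_\Gamma$ of the FC-centre, modulo the conjugation action. Thus the whole task is to identify $\Gamma_{\rm fc}$ and $K_\Gamma$ explicitly under the Zariski density hypothesis. The target identification is $\Gamma_{\rm fc}=Z(\Gamma)$ and $K_\Gamma$ trivial on $Z(\Gamma)$, so that ${\rm Ch}(\Gamma_{\rm fc})/K_\Gamma$ collapses to ${\rm Ch}(Z(\Gamma))=\widehat{Z(\Gamma)}$ with its Haar measure, and the extension map $\chi\mapsto\widetilde{\chi}$ appearing in Theorem B becomes exactly the map stated in the corollary.

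The central lemma to prove is: if $\Gamma$ is Zariski dense in a connected linear algebraic group $G$, then every element of $\Gamma$ with finite conjugacy class lies in $Z(\Gamma)$. First I would fix $\gamma\in\Gamma_{\rm fc}$ and consider its centralizer $C_\Gamma(\gamma)\subset\Gamma$, which by hypothesis has finite index. Passing to Zariski closures inside $G$, the closure of $C_\Gamma(\gamma)$ is a Zariski closed subgroup of $G$ of finite index in the closure of $\Gamma$, which by Zariski density is $G$ itself. Since $G$ is connected, any Zariski closed finite-index subgroup must equal $G$, so $C_G(\gamma)=G$ and $\gamma\in Z(G)\cap\Gamma=Z(\Gamma)$. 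The reverse inclusion $Z(\Gamma)\subset\Gamma_{\rm fc}$ is immediate since central elements have singleton conjugacy classes. For the $K_\Gamma$-action, any conjugation by an element of $\Gamma$ acts trivially on $Z(\Gamma)$, and taking closures in ${\rm Aut}(\Gamma_{\rm fc})={\rm Aut}(Z(\Gamma))$ preserves this triviality, so $K_\Gamma$ acts trivially on $\widehat{Z(\Gamma)}$. The main technical worry here will be to verify these Zariski closure manipulations cleanly in positive as well as characteristic zero, but only the statement over characteristic zero fields and over local fields is needed, where the standard commuting of closures with finite-index subgroups is well behaved.

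Once the identification $\Gamma_{\rm fc}=Z(\Gamma)$ is in hand, the remaining step is to rewrite the Plancherel formula \eqref{decompositionfactor} of Theorem \ref{BekkaPlancherel}, applied through Theorem B, as the stated integral. For $t=\widetilde{\chi}$ arising from a character $\chi\in\widehat{Z(\Gamma)}$ of the abelian FC-centre, the associated trace evaluated on $f^{\ast}\ast f$ reduces to the value of the Fourier transform of the restriction $(f^{\ast}\ast f)|_{Z(\Gamma)}$ at $\chi$, giving precisely \eqref{Bekkadecopequation}. Finally, to confirm the applicability in cases (i) and (ii), I would cite the Rosenlicht--Borel density theorem in characteristic zero to get Zariski density of $G(k)$ in $G$, and the Borel density theorem for lattices in $k$-points of algebraic groups with no proper cocompact $k$-subgroups to handle case (ii); these are the inputs guaranteeing the hypothesis of the central lemma. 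The main obstacle throughout is exclusively the FC-centre lemma, since once it is available the rest is a cosmetic rewriting of the general Plancherel formula.
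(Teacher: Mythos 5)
Your proposal is correct and follows essentially the route the paper itself indicates: the paper does not prove this statement but quotes it as Corollary D of \cite{Bekka2}, derived from Theorem B there, and your reconstruction (Zariski density forces $\Gamma_{\rm fc}=Z(\Gamma)=Z(G)\cap\Gamma$ with $K_\Gamma$ acting trivially, so the Choquet-space Plancherel formula collapses to the Haar measure on $\widehat{Z(\Gamma)}$ and the trace $\widetilde{\chi}(f^\ast\ast f)$ becomes the Fourier transform of $(f^\ast\ast f)|_{Z(\Gamma)}$) is exactly that argument, with the Rosenlicht/Borel density theorems supplying Zariski density in cases (i) and (ii). The key lemma is sound: a finite-index centralizer has Zariski closure of finite index, hence equal to the connected group $G$, forcing $\gamma\in Z(G)$.
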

This theorem is a generalization of Theorem 3.6 in Johnston \cite{Johnston}. 
Note that if $G$ is the Malcev completion of a discrete, finitely generated, torsion-free, nilpotent group $\Gamma$, then $G$ is a linear algebraic group over $\mathbb{R}$ and has a rational structure (Theorem 5.1.8 in \cite{Corwin}) such that  $\Gamma$ can be expressed as a uniform lattice $G(\mathbb{Z})$ of $G$ (cf. \cite{Corwin}). In this case, a factor representation $\pi_t$ is an induced representation from the character $t$ of the center of $\Gamma$, as explained below.

We recall the more detailed description of $\pi_t$ according to \cite{Johnston}. First of all, we note that the notations are slightly changed below, such as $\tau_\lambda$ (used in \cite{Johnston}) is the same as $\pi_t$ in Theorem \ref{BekkaPlancherel}.

Let $\mathcal{L}$ be a subset of the Lie algebra of the Malcev completion $G$ of $\Gamma$ with $\exp(\mathcal{L}) = \Gamma$ and for $t:\Gamma \to \mathbb{C}, t \in {\rm Ch}(\Gamma)$, we have an element $\lambda $ in the Pontryagin dual $\widehat{\mathcal{L}}$ of $\mathcal{L}$ with $t(\exp a) = \lambda (a)$ for $a \in \mathcal{L}$.
 We define $\tau_\lambda$ to be the representation
of $\Gamma$ induced from the restriction of $\lambda$ to the center $z(\mathcal{L})$ of $\mathcal{L}$ regarded as a character on $z(\Gamma )$ as follows:
The Hilbert space defines it
\[ H_\lambda = \left\{f:\Gamma \to \mathbb{C}\left| \begin{array}{l}\int_{z(\Gamma )\backslash\Gamma}|f(x)|^2dx < \infty, f(z^{-1}\gamma) = \lambda(z)f(\gamma ) \\
\mbox{for}\; z \in z(\Gamma ), \gamma \in \Gamma \end{array}\right. \right\}
\]
with inner product $\langle f,g \rangle = \int_{z(\Gamma )\backslash\Gamma}f(x)\overline{g}(x)dx$. 
Namely, 
\[\tau_\lambda(g):H_\lambda \to H_\lambda; \quad (\tau_\lambda(g)f)(\gamma) = f(\gamma g) \; \mbox{for}\; f \in H_\lambda.
\]
Since elements in the coadjoint orbit closure $\mathcal{O}_\lambda$
take same values on the center of $\mathcal{L}$, $\tau_\lambda$ depends only upon $\mathcal{O}_\lambda$. 
Note that, by Lemma 5.1.5 in \cite{Corwin}, $z(\mathcal{L}) = z(\mathcal{L}_\mathbb{R}) \cap \mathcal{L}$ and $\exp(z(\mathcal{L})) = z(\Gamma )$, it is well-defined and $\tau_\lambda = \pi _t$ appeared in (\ref{decompositionfactor}) for generic $t$. 
It is compatible with the following fact shown in in \cite{Johnston}: If $\mathcal{O}_\lambda$ is a generic
coadjoint orbit closure in $\mathcal{L}$, then $\tau_\lambda$ is a factor representation. 

\subsection{Extensions of (non-)factor representations $\Gamma$ to monomial representations of $G$: Substep 0-2 and their decompositions: Substep 0-3} \label{nilpotentfujiwara}

By the Baker-Campbell-Hausdorff formula, we have 
\[ \exp(p_1X_1+\cdots+p_nX_n) = \exp(Q_1(p)X_1)\cdots \exp(Q_n(p)X_n)
\]
where $Q_i(p) = Q_i(p_1,\ldots, p_n)$ is a polynomial of $p = (p_1,\ldots,p_n)$.
From this formula, we can extend $\tau_\lambda$ to the representation $\widetilde{\tau}_\lambda$ of $G$ defined
on the Hilbert space
\[\widetilde{H}_\lambda = \left\{f:G \to \mathbb{C}\left| \begin{array}{l}\int_{z(G)\backslash G}|f(x)|^2dx < \infty, f(z\gamma) = \lambda(z)f(\gamma ) \\
\mbox{for}\; z \in z(G), \gamma \in G\end{array}\right. \right\}
\]
by 
\[(\widetilde{\tau}_\lambda(g)f)(\gamma) = f(\gamma g) \; \mbox{for} \; f \in \widetilde{H}_\lambda, \; \mbox{for} \; g \in G.
\]

It was pointed out to the author by T. Iwamoto before starting this project that representations of a discrete nilpotent group $\Gamma$ can be extended to its Malcev completion $G$ by the Baker-Campbell-Hausdorff formula. 

Concerning a decomposition of $\widetilde{\tau}_\lambda$, H. Fujiwara \cite{Fujiwara2} kindly informed to the author the following facts;
Since $\widetilde{\tau}_\lambda$ is induced from the character on the center of $G$, it is a monomial representation. Then, we can apply the following Plancherel formula in \cite{Corwin2}. The following few paragraphs are quoted from his note \cite{Fujiwara2}:
\begin{quotation}
For a connected and simply connected nilpotent Lie group $G = \exp \mathfrak{g}$ with Lie algebra $\mathfrak{g}$ and its center $Z = \exp  \mathfrak{z}$, we take $\chi$ a unitary character of $Z$. Take a $f \in \mathfrak{g}^\ast$ such that $\chi(\exp X) = e^{if (X)}$ for any $X \in \mathfrak{z}$ and put $\Lambda = f + \mathfrak{z}^\bot \subset \mathfrak{g}^\ast$. 
The simplest form of the irreducible decomposition of the induced representation $\tau = \mbox{Ind}_Z^G\chi$ of $G$ is
\begin{equation} \tau \simeq \int_\Lambda \pi_\gamma d\gamma,  \label{monomial}
\end{equation}
where $\pi_\gamma$ denotes the irreducible unitary representation of $G$ corresponding to $\gamma \in \Lambda$. In this fashion, there are infinitely many $\pi_\gamma$ which are mutually equivalent.

There is another way to describe this decomposition. Let
\begin{equation} \mathfrak{g}_0 = \mathfrak{z} \subset \mathfrak{g}_1 \subset \cdots \subset \mathfrak{g}_{n-1} \subset \mathfrak{g}_n = \mathfrak{g} \label{fujiwarachain}
\end{equation}
be a sequence of ideals of $\mathfrak{g}$ such that
$\mbox{dim}(\mathfrak{g}_j /\mathfrak{g}_{j-1}) = 1,\; 1 \leq j \leq n$.
Let $\mathfrak{g}_j = \mathbb{R}X_j + \mathfrak{g}_{j-1}$ for $1 \leq j \leq n$. For $l \in \Lambda$, we denote by $l_j \in (\mathfrak{g}_j)^\ast$
the restriction of $l$ to $\mathfrak{g}_j$ . Let $e_j (l)$ be the dimension of the $G$-orbit of
$l_j$ in $(\mathfrak{g}_j )^\ast$. Put $e(l) = (e_1(l), \ldots, e_n(l))$ and $\mathcal{E} = \{e(l)| l \in \Lambda \}$. Then,
there is a unique $e = (e_1, . . . , e_n) \in \mathcal{E}$ such that the layer $U_e = \{l \in \Lambda | e(l) = e\}$ is a Zariski open set of $\Lambda$. Let $T$ be the set of the non-jump indices of $e$, namely $T = \{1 \leq j \leq n| e_j = e_{j-1}\}$ and put
\[V = f + \sum_{j\in T}\mathbb{R}X_j^\ast \subset \Lambda,
\]
here $X_j^\ast$ denotes the element of $\mathfrak{z}^\bot \subset \mathfrak{g}^\ast$ verifying $X_j^\ast(X_k) = \delta_{jk}$.
Then,
\begin{equation}\tau \simeq \int_V^\oplus \infty\pi_v dv. \label{monomialmulti}
\end{equation}
The irreducible unitary representations $\pi_v$ are mutually
inequivalent in this form (\cite{Corwin2}).
\end{quotation}


Moreover, several literatures consider the Plancherel formulae of monomial representations with infinite multiplicities.
As for the present contexts, the following is useful due to Fujiwara \cite{Fujiwara1}. In our case that $\pi _t$ ($\pi _t$ is same as $\tau_\lambda$ in \cite{Johnston}) is
induced from a character of the center $z(\Gamma )$ of $\Gamma$ and thus, the above representation $\widetilde{\tau}_\lambda$ of $G$ which is the extension of $\tau_\lambda$ is the induced from the center of $G$. 
Then, our situation is the more straightforward case that $H$ is a normal subgroup in \cite{Fujiwara1} where $H$ is the notation. The monomial representation is induced by the character of the subgroup $H$.  

The Plancherel formula, in this case, is described as follows, which is our translation of the French text of \cite{Fujiwara1}:

First, we recall an abstract Plancherel formula due to \cite{Bonnet}. For a simply connected, connected, nilpotent Lie group $G$ and a unitary representation $\pi$ of $G$, we denote $\mathcal{H}_\pi^\infty$ the space of $C^\infty$ vectors of $\pi$ and $\mathcal{H}_\pi^{-\infty}$ its antidual. Let $H$ be a closed subgroup of $G$, $dh$ a Haar measure on $H$, and $\chi$ a unitary character of $H$. 
By \cite{Bonnet}, $\chi$ admit, like a distribution of positive type on $G$, the Fourier transform is a couple $(\mu,U)$ where $\mu$ is a positive measure on the unitary dual $\widehat{G}$ of $G$ and $U$ a family of nuclear operators 
$U_\pi :\mathcal{H}_\pi^\infty \to \mathcal{H}_\pi^{-\infty},\; \pi \in \widehat{G}$. Then we have for any function $\phi \in C_0^\infty(G)$, the following abstract Plancherel formula:

\begin{theorem}{{\rm \cite{Bonnet}}}
\begin{equation}
\int_H\phi(h)\chi(h)dh = \int_{\widehat{G}}\mbox{tr}(\pi(\phi)U_\pi)d\mu(\pi) \label{Bonnet1}.
\end{equation}
\end{theorem}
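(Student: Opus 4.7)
The plan is to realize the distribution $T_\chi$ as a diagonal matrix coefficient of the monomial representation $\tau := \mbox{Ind}_H^G\chi$ paired with a canonical distributional cyclic vector, and then to diagonalize $\tau$ using the direct-integral decomposition available because the simply connected nilpotent group $G$ is of type I.

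First I would verify that the linear functional $T_\chi(\phi)=\int_H\phi(h)\chi(h)dh$ is a distribution of positive type. Writing $\phi^*(g):=\overline{\phi(g^{-1})}$ and applying Fubini together with the change of variables $g\mapsto hg$, one rewrites
\begin{equation*}
T_\chi(\phi^**\phi)=\int_{H\backslash G}|\phi_\chi(\dot g)|^2\,d\dot g,\qquad \phi_\chi(g):=\int_H\phi(hg)\overline{\chi(h)}\,dh,
\end{equation*}
which makes manifest $T_\chi(\phi^**\phi)\geq 0$. The map $\phi\mapsto\phi_\chi$ is the standard surjection from $C_0^\infty(G)$ onto a dense subspace of the Hilbert space $\mathcal{H}_\tau$ carrying $\tau$; under this identification $T_\chi$ becomes the distributional diagonal matrix coefficient $\phi\mapsto\langle\tau(\phi)\xi_\chi,\xi_\chi\rangle$ of a canonical (non-normalizable) cyclic vector $\xi_\chi\in\mathcal{H}_\tau^{-\infty}$, informally the ``delta function on $H$ twisted by $\chi$''.

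Second, since $G$ is simply connected nilpotent, Kirillov's theorem ensures $G$ is type I, and $\tau$ admits a unique central decomposition $\tau\simeq\int_{\widehat G}^\oplus n(\pi)\,\pi\,d\mu(\pi)$ for a Borel measure $\mu$ on $\widehat G$ and multiplicity $n(\pi)\in\{1,2,\ldots,\infty\}$. Disintegrating $\xi_\chi$ fiberwise produces, for $\mu$-almost every $\pi$, a semi-invariant distribution vector in $\mathcal{H}_\pi^{-\infty}$; packaging the $n(\pi)$ independent copies yields a single operator $U_\pi:\mathcal{H}_\pi^\infty\to\mathcal{H}_\pi^{-\infty}$. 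The Plancherel identity \eqref{Bonnet1} then follows by tracking $T_\chi$ through this disintegration:
\begin{equation*}
T_\chi(\phi)=\langle\tau(\phi)\xi_\chi,\xi_\chi\rangle=\int_{\widehat G}\mbox{tr}\bigl(\pi(\phi)U_\pi\bigr)\,d\mu(\pi),
\end{equation*}
the trace expressing the pairing of $\xi_\pi$ with $\pi(\phi)\xi_\pi$ summed over the multiplicity space.

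The main obstacle is to establish the nuclearity of each $U_\pi$ as a map $\mathcal{H}_\pi^\infty\to\mathcal{H}_\pi^{-\infty}$ together with the $\mu$-integrability of $\pi\mapsto\mbox{tr}(\pi(\phi)U_\pi)$; these do not follow from soft Hilbert-space arguments. Two ingredients should resolve this in the nilpotent setting: a Poulsen-type regularity theorem guaranteeing that smooth vectors of $\tau$ restrict fiberwise to objects that pair continuously with $\xi_\pi$, and sharp trace-class estimates on $\pi(\phi)$ for $\phi\in C_0^\infty(G)$. Both are available through the orbit method: each $\pi$ can be modeled on $L^2$ of a polarizing subspace, in which $\pi(\phi)$ becomes an integral operator with Schwartz kernel, so trace-class norms are controlled polynomially in the Kirillov parameter. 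Combined with Fujiwara's explicit description \eqref{monomialmulti} of $\mu$ on $\Lambda$, this yields the claimed Plancherel formula.
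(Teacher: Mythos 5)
You should first note that the paper does not actually prove this statement: equation \eqref{Bonnet1} is quoted verbatim from Bonnet as a black box, and what the paper then establishes (following Fujiwara and Gr\'{e}laud, for the case it needs, namely $H$ normal) is the \emph{concrete} form of the formula. There the operator $U_\pi$ is defined directly by $\langle U_\pi\alpha,\beta\rangle=\int_{P/B}\alpha(g)\overline{\beta(g)}\,d\dot{g}$, the trace $\mbox{tr}(\pi(\phi)U_\pi)$ is computed explicitly through the Ra\"{i}s kernel $K_\phi(x,y)=\int_B\phi(xby^{-1})\chi_\zeta(b)db$ and Euclidean Fourier inversion on $\mathfrak{g}$, giving the orbital character formula \eqref{Fujiwaraformula3}, and \eqref{Bonnet1} then follows with $\mu$ identified as the push-forward of Lebesgue measure on $\Xi=f+\mathfrak{h}^\perp$ under the Kirillov map. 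Your route is the abstract one (essentially Bonnet's original): realize $T_\chi$ as a distributional diagonal matrix coefficient of $\mbox{Ind}_H^G\chi$, invoke type I-ness to get the central disintegration, and disintegrate the cyclic distribution vector fiberwise. That is a legitimate and genuinely different strategy, and it has the advantage of not requiring $H$ normal; the paper's concrete route has the advantage that $U_\pi$, $\mu$, and the traces are produced in closed form, which is what the later sections actually use.

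The gap in your sketch is that the two hard analytic points --- precisely the content of Bonnet's theorem --- are asserted rather than proved. First, disintegrating the non-normalizable vector $\xi_\chi$ requires a theorem guaranteeing that smooth vectors of $\tau$ disintegrate compatibly and that the fiberwise functionals are continuous on $\mathcal{H}_\pi^\infty$ (Penney's theory); naming a ``Poulsen-type regularity theorem'' is the right instinct but not an argument. Second, your proposed resolution of nuclearity and $\mu$-integrability is aimed at the wrong operator: polynomial control of trace-class norms of $\pi(\phi)$ for $\phi\in C_0^\infty(G)$ says nothing by itself about nuclearity of $U_\pi:\mathcal{H}_\pi^\infty\to\mathcal{H}_\pi^{-\infty}$, and the multiplicities $n(\pi)$ in the decomposition of $\tau$ are typically \emph{infinite} (as the paper notes, they are uniformly $1$ or $\infty$), so ``packaging the $n(\pi)$ independent copies'' into a single nuclear $U_\pi$ with a convergent, integrable trace is exactly the point that needs uniform estimates --- or, as in the paper's normal-subgroup case, an explicit formula for $U_\pi$ that makes positivity, self-adjointness, and the trace computation manifest. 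Finally, your identification of $\mu$ with Fujiwara's measure via \eqref{monomialmulti} is stated, not derived; in the abstract approach the uniqueness of the pair $(\mu,U)$ must be established before it can be matched with the orbital description.
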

When $H$ is the normal subgroup (our case), a concrete form is explained as a more straightforward case in \cite{Fujiwara1}. 
Let $\mathfrak{g}^{\ast}$ be the dual of the Lie algebra $\mathfrak{g}$ of $G$.  
By the orbit method due to Kirillov \cite{Kirillov1}, (see also \cite{Kirillov2}, \cite{Corwin}), each irreducible unitary representation of $G$ corresponds to a coadjoint orbit of $G$ in $\mathfrak{g}^\ast$. 
Let $H$ be a connected normal subgroup of $G$, and let $\mathfrak{h}$ be its Lie algebra.
A character $\chi$ of $H$ can be written as a form
\[\chi_f(\exp X) = e^{\sqrt{-1}f(X)}\quad \mbox{for}\quad X \in \mathfrak{h}
\]
where $f \in \mathfrak{g}^\ast$ and $\mathfrak{h}$ is its subordinate algebra, namely $f([X,Y]) = 0$ for $X,Y \in \mathfrak{h}$. 
The Plancherel measure $\mu$ in (\ref{Bonnet1}) is the canonical central disintegration of the monomial representation $\tau = \mbox{Ind}_H^G(\chi)$ which is obtained as the image of the Lebesgue measure on the subspace $\Xi = f + \mathfrak{h}^\perp$ which is regarded as a measure $\nu$ on $\mathfrak{g}^\ast$ by the Kirillov map $\kappa: \mathfrak{g}^\ast \to G$, 
where $\mathfrak{h}^\perp$ donotes the orthogonal complement of $\mathfrak{h}$ in $\mathfrak{g}$. 

Since $H$ is a normal subgroup of $G$, $G$ acts on $\mathfrak{h}^\ast$. We denote $G(\xi)$ the stabilizer of $\xi \in \mathfrak{h}^\ast$ of $G$.
For any $\zeta \in \Xi$, we have $G\cdot \zeta \cap \Xi = P\cdot \zeta$, where $P=G(f|\mathfrak{h})$, $f|\mathfrak{h}$ denotes the restriction of $f$ to $\mathfrak{h}$. 
On the other hand, a monomial representation $\tau$ can be written as $\tau = \int_{\widehat{G}}^\oplus m(\pi)\pi d\mu$ with the multiplicites are uniform, equal to the number of $H$-orbits in $\kappa^{-1}(\pi)\cap\Xi$ and one unless $\infty$ (cf. \cite{Grelaud}, \cite{Quint}). 

In this situation, we realize $\pi$ in the support of $\mu$, using a porlarization $\mathfrak{b}$ of $\zeta \in \kappa^{-1}(\pi)\cap\Xi$.
For simplicity, we can choose $\mathfrak{b}$ as $\mathfrak{h} \subset \mathfrak{b}$.
Then we have $H \subset B \subset P$, where $B = \exp(\mathfrak{b})$, and $\pi = \mbox{Ind}_B^G\chi_\zeta$. 
 For $\alpha,\beta \in \mathcal{H}_\pi^\infty$, the function $\alpha\-{\beta}$ is a right $B$-invariant and integrable for the $P$-invariant measure $d\dot{g}$ on $P/B$.   

By the formula
\[ \langle U_\pi\alpha,\beta\rangle = \int_{P/B}\alpha(g)\-{\beta(g)}d\dot{g},
\]
we can define an operator $U_\pi :\mathcal{H}_\pi^\infty \to \mathcal{H}_\pi^{-\infty}$. This operator is positive and self-adjoint.
On the other hand, the integral kernel $K_\phi(x,y)$ for $\phi \in C_0^\infty(G)$ is written as the formula \cite{Rais}
\[    K_\phi(x,y) = \int_B\phi(xby^{-1})\chi_\zeta(b)db,
\]
for $(x,y) \in G \times G$.

It is precisely as Gr\`{e}laud \cite{Grelaud2} that we deduce that
\begin{align}
\mbox{Tr}(\pi(\phi)U_\pi) &= \int_{P/B}K_\phi(g,g)d\dot{g} \notag \\
&= \int_{P/B}d\dot{g}\int_B\phi(gbg^{-1})\chi_\zeta(b)db \notag \\
&= \int_{P/B}d\dot{g}\int_\mathfrak{b}\phi^g(\exp(X))e^{i\zeta(X)}dX
 \label{Fujiwaraformula1} 
\end{align}
with $\phi^g(x) = \phi(gxg^{-1})$. Since $\zeta|\mathfrak{h} \neq 0$, we take  a complement space $\mathfrak{t}$ of $\mathfrak{b}$ in $\mathfrak{g}$ such that $\zeta|\mathfrak{t} = 0$.
In view of $B\cdot \zeta = \zeta + \mathfrak{b}^\perp$, if we denote $\mathcal{F}(\eta)$ the Fourier transformation of $\eta \in C_0^\infty(G)$, we have the Fourier inversion formula as follows;
\begin{align}
\mbox{Tr}(\pi(\phi)U_\pi) &= \int_{P/B}d\dot{g}\int_{\mathfrak{b}^\perp}d\xi\int_\mathfrak{t}dY\int_\mathfrak{b}\phi^g(\exp(X+Y))e^{i\zeta(X)+i\xi(Y)}dX \notag \\
&= \int_{P/B}d\dot{g}\int_{\mathfrak{b}^\perp}d\xi\int_\mathfrak{g}\phi^g(\exp(X))e^{i(\zeta+\xi)(X)}dX \notag \\
&= \int_{P/B}d\dot{g}\int_{B/G(\zeta)}d\dot{b}\int_\mathfrak{g}\phi(\exp(X))e^{ib\cdot \zeta(g^{-1}\cdot X)}dX \notag \\
&= \int_{P/G(\zeta)}\mathcal{F}(\phi\circ \exp)(g\cdot \zeta)d\dot{g} \notag \\
&= \int_{G\cdot \zeta\cap\Xi}\mathcal{F}(\phi\circ \exp)(\lambda)d\lambda \label{Fujiwaraformula2} 
\end{align}
Then we arrive at the following formula,

\begin{theorem}{{\rm (the character formula)}}\label{characterfujiwara}
\begin{equation}
{\rm Tr}(\pi(\phi)U_\pi) = \int_{\Omega(\pi)\cap\Xi}\mathcal{F}(\phi\circ \exp)(\lambda)d\lambda \label{Fujiwaraformula3} 
\end{equation}
where we denote the orbit $\kappa^{-1}(\pi)$ by $\Omega(\pi)$.
\end{theorem}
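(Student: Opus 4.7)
The plan is to unwind $\mbox{Tr}(\pi(\phi)U_\pi)$ through the concrete realization $\pi=\mbox{Ind}_B^G\chi_\zeta$ together with the pairing formula $\langle U_\pi\alpha,\beta\rangle=\int_{P/B}\alpha(g)\overline{\beta(g)}d\dot g$ recalled just above the statement. First, I would write the trace as an integral of the diagonal of the integral kernel of $\pi(\phi)$, using the Rais-type formula $K_\phi(x,y)=\int_B\phi(xby^{-1})\chi_\zeta(b)db$. Pulling the $B$-integration back to the Lie algebra via $\exp$ immediately yields
\begin{equation*}
\mbox{Tr}(\pi(\phi)U_\pi)=\int_{P/B}d\dot g\int_{\mathfrak{b}}\phi^g(\exp X)e^{i\zeta(X)}dX,
\end{equation*}
where $\phi^g(x)=\phi(gxg^{-1})$; this is the starting identity already displayed as \eqref{Fujiwaraformula1}, and its rigor is guaranteed by $\phi\in C_0^\infty(G)$ via absolute convergence and Fubini.

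Second, I would choose a subspace $\mathfrak{t}\subset\mathfrak{g}$ complementary to $\mathfrak{b}$ on which $\zeta$ vanishes (available because $\zeta|_{\mathfrak{h}}\neq 0$ cuts out a hyperplane transverse to $\mathfrak{b}$) and insert a Fourier inversion in the $\mathfrak{t}$-direction. After swapping the order of integration against $\xi\in\mathfrak{b}^\perp$, the integrals in $X\in\mathfrak{b}$ and $Y\in\mathfrak{t}$ recombine to the full Euclidean Fourier transform $\mathcal{F}(\phi^g\circ\exp)(\zeta+\xi)$ on $\mathfrak{g}$. Invoking the polarization identity $B\cdot\zeta=\zeta+\mathfrak{b}^\perp$, the $\xi$-integration is reparametrized as an integral over $B/G(\zeta)$, and the outer $P/B$-integration fuses with it into a single integration over $P/G(\zeta)$, producing the expression recorded in \eqref{Fujiwaraformula2}.

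Third, I would identify $P/G(\zeta)$ with $G\cdot\zeta\cap\Xi$; this is precisely the place where normality of $H$ in $G$ is used, since only then does each $G$-orbit meet $\Xi$ in a single $P$-orbit, giving $P/G(\zeta)\simeq G\cdot\zeta\cap\Xi$ with matching invariant measures. Combining with the Kirillov-orbit identification $\Omega(\pi)=\kappa^{-1}(\pi)=G\cdot\zeta$ yields the claimed formula. I expect this final identification to be the main technical obstacle, as it requires a careful comparison of the canonical measures on the homogeneous space $P/G(\zeta)$ and on the slice-orbit intersection $G\cdot\zeta\cap\Xi$, relying on the geometry of coadjoint orbits of nilpotent Lie groups together with the specific structure of polarizations $\mathfrak{b}\supset\mathfrak{h}$ at points of $\Xi$; by contrast, the preceding Fourier-theoretic manipulations are formal and routinely justified.
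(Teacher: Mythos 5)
Your proposal follows the paper's own derivation essentially step for step: the diagonal kernel formula over $P/B$ via the Rais kernel and the $U_\pi$-pairing (the paper's \eqref{Fujiwaraformula1}), Fourier inversion in a complement $\mathfrak{t}$ with $\zeta|_{\mathfrak{t}}=0$ combined with $B\cdot\zeta=\zeta+\mathfrak{b}^\perp$ to reach an integral over $P/G(\zeta)$ of $\mathcal{F}(\phi\circ\exp)$ (the paper's \eqref{Fujiwaraformula2}), and finally the identification $P/G(\zeta)\simeq G\cdot\zeta\cap\Xi$ using normality of $H$, i.e.\ the fact $G\cdot\zeta\cap\Xi=P\cdot\zeta$ stated earlier in the section. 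This is the same argument as in the paper, so the proposal is correct and essentially identical in approach.
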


This theorem is a special case of Theorem 1 in \cite{Fujiwara1}.  
Moreover, this is a form of the Kirillov character formula (cf. \cite{Rais}) when $H$ is trivial.

Then the abstract Plancherel formula (\ref{Bonnet1}) is described as:

\begin{theorem}{{\rm (Plancherel formula)}}\label{plancherelfujiwara}
Let $H$ be a normal subgroup of $G$.
For any function $\phi \in C_0^\infty(G)$ and $f \in \mathfrak{g}^\ast$, 
\begin{align}
\int_H\phi(h)\chi_f(h)dh &= \int_\mathfrak{h}\phi(\exp(X))e^{if(X)}dX \notag \\ &= \int_{\Xi}\mathcal{F}(\phi\circ \exp)(\xi)d\nu(\xi) \notag \\
&= \int_{\widehat{G}}d\mu \int_{\Omega(\pi)\cap\Xi}\mathcal{F}(\phi\circ \exp)(\lambda)d\lambda \\
&= \int_{\widehat{G}}\mbox{tr}(\pi(\phi)U_\pi)d\mu(\pi) \label{FujiwaraPlancherel}.
\end{align}
\end{theorem}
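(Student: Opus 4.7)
The plan is to chain four identities: (i) a change of variables via $\exp$; (ii) a partial Fourier inversion; (iii) the definition of the Plancherel measure $\mu$ as a push-forward; (iv) the character formula already established in Theorem \ref{characterfujiwara}.

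For (i), since $G$ is simply connected nilpotent, $\exp:\mathfrak{h}\to H$ is a diffeomorphism with unit Jacobian in exponential coordinates; normalising the Haar measure on $H$ as the push-forward of Lebesgue measure on $\mathfrak{h}$ and using $\chi_f(\exp X)=e^{if(X)}$ gives $\int_H\phi(h)\chi_f(h)dh=\int_{\mathfrak{h}}\phi(\exp X)e^{if(X)}dX$ at once.

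For (ii), fix a vector-space complement $\mathfrak{t}$ of $\mathfrak{h}$ in $\mathfrak{g}$ and identify $\mathfrak{h}^\perp\subset\mathfrak{g}^\ast$ with $\mathfrak{t}^\ast$ via evaluation. Since $\chi_f$ depends only on $f|_{\mathfrak{h}}$, we may assume $f$ vanishes on $\mathfrak{t}$. Writing $X=X_1+X_2$ with $X_1\in\mathfrak{h}$, $X_2\in\mathfrak{t}$, and $\xi=f+\eta$ with $\eta\in\mathfrak{h}^\perp$, one has $\xi(X)=f(X_1)+\eta(X_2)$, so
\[
\int_{\Xi}\mathcal{F}(\phi\circ\exp)(\xi)d\nu(\xi)=\int_{\mathfrak{h}^\perp}\!\!\int_{\mathfrak{h}\oplus\mathfrak{t}}\phi(\exp(X_1+X_2))e^{if(X_1)}e^{i\eta(X_2)}dX_1dX_2d\eta,
\]
and the inner Fourier inversion in the $(X_2,\eta)$ pair produces a Dirac mass at $X_2=0$, collapsing the right-hand side to $\int_{\mathfrak{h}}\phi(\exp X_1)e^{if(X_1)}dX_1$, which matches (i).

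For (iii), by the very definition of the canonical central disintegration of $\tau=\mbox{Ind}_H^G\chi_f$ recalled just before the theorem, the Plancherel measure $\mu$ on $\widehat{G}$ is the push-forward under the Kirillov map $\kappa$ of the Lebesgue measure $\nu$ on $\Xi$, and the generic fibres of $\kappa$ are precisely the sets $\Omega(\pi)\cap\Xi$ (the $P$-orbits of a single $\zeta$); Fubini--disintegration then yields the third line. Finally, (iv) is obtained by substituting the character formula (\ref{Fujiwaraformula3}) pointwise in $\pi$ inside the integral over $\widehat{G}$. The one point that needs care is the coherent normalisation of Lebesgue measure on $\mathfrak{h}$, $\mathfrak{t}$, $\mathfrak{h}^\perp$, and $\Xi$ so that the Fourier inversion in (ii) and the push-forward in (iii) produce no stray powers of $2\pi$; alternatively all such constants can be absorbed once and for all into the definitions of $\mu$ and of the nuclear operators $U_\pi$.
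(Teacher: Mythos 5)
Your proposal is correct and follows essentially the same route as the paper, which (quoting Fujiwara) obtains the formula by exactly this chain: exponential change of variables, Euclidean Fourier inversion over a complement $\mathfrak{t}$ of $\mathfrak{h}$ with $f|_{\mathfrak{t}}=0$, disintegration of the Lebesgue measure $\nu$ on $\Xi$ along the fibres $\Omega(\pi)\cap\Xi$ of the Kirillov map defining $\mu$, and finally substitution of the character formula of Theorem \ref{characterfujiwara}. Your step (ii) merely spells out the inversion argument (and the normalisation caveat) that the paper leaves implicit, so no further comment is needed.
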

 This theorem is also a special case of Theorem 2 in \cite{Fujiwara1}.

At the end of this section, we note that $\pi$ here is the same as $\pi_l$ in Section \ref{nilpotentexplanation}.

\subsection{Restrictions of irreducible unitary representations of
nilpotent Lie Groups to their lattices: Substep 0-5}\label{Step0-5}
In this section and Section \ref{Step0-6}, we shall give a decomposition formula of the restriction of the lattice $\Gamma$ of the irreducible representation $\pi_l$ of the simply connected nilpotent Lie group $G$. 

Our task is to decompose a restriction of $\pi$ ($= \pi_l$ in Section \ref{nilpotentexplanation}) in the previous section to a lattice $\Gamma$ by a direct integral of smaller (not necessarily irreducible) unitary representations of $\Gamma$. 

This kind of problem has already been studied in  \cite{Bekka1}, and the contents of this section are heavily influenced by \cite{Bekka1}, and also by \cite{Corwin} naturally. We would like to thank Ali Baklouti for informing us in the paper \cite{Bekka1}, answering our questions. 
Our substantial contributions in this chapter (also the main one in the whole paper) are essentially limited to this section and the following Section \ref{Step0-6}. 

Note that, as we already mentioned in Substep 0-5 in Section \ref{nilpotentexplanation},  if the argument here is the case of $\mbox{Heis}_3(\mathbb{Z})$, the arguments here are corresponding to the relation (\ref{step2discretetolie}) in Section \ref{relationdiscLie}.

\subsubsection{Reviewing the case when $\Gamma = \mbox{Heis}_3(\mathbb{Z})$ from the orbit method of general discrete nilpoent groups} 

We start in this section with an example of Theorem \ref{introdecompose} ($=$ Theorem \ref{sec4decompose} )  for the case of the Heisenberg group $\Gamma = \mbox{Heis}_3(\mathbb{Z})$, which is a rewriting of Theorem \ref{introdiscretetoLie} ($=$ Theorem \ref{discretetoLie}).

\begin{example}[Example 1 in \cite{Bekka1}]\label{BekkaHeisenberg1}
Let $\mathfrak{g} = \mathbb{R}X_1+\mathbb{R}X_2+\mathbb{R}X_3$ be the Heisenberg algebra,
defined by the commutator bracket $[X_3, X_2]=X_1$. Let $\{X_1^\ast, X_2^\ast, X_3^\ast\}$ be the associated dual basis of $\mathfrak{g}^\ast$. 
The corresponding Lie group $G = \mathbb{R}^3 = \mbox{Heis}_3(\mathbb{R})$ is
endowed with the product
\[(x_1, x_2, x_3)\cdot (y_1, y_2, y_3)=(x_1+y_1+x_3y_2, x_2+y_2, x_3+y_3),\]
for all $(x_1, x_2, x_3), (y_1, y_2, y_3) \in G$. Let $\Gamma$ be the subgroup
$(\mathbb{Z}, \mathbb{Z}, \mathbb{Z})= \{(p_1, p_2, p_3)| p_1, p_2, p_3 \in \mathbb{Z}\}$ of $G$. Then $\Gamma$ is nothing but $\mbox{Heis}_3(\mathbb{Z})$, obviously a latticevin $G$. Moreover, $\{X_1, X_2, X_3\}$ is clearly a strong Malcev basis of $\mathfrak{g}$, strongly based
on $\Gamma$. 

Let $\pi$ be an irreducible unitary representation of $G$. If $\pi$ is not one-dimensional, 
there exists $\alpha \in \mathbb{R} \setminus \{0\}$ such that 
$\pi \cong \mbox{Ind}_M^G\chi_l$ where
$l =: \alpha X_1^\ast \in \mathfrak{g}^\ast$, and $\mathfrak{m}=\mathbb{R}X_1+\mathbb{R}X_2$ is a polarization in $l$ and $M=\exp(\mathfrak{m})$.
In this case, $\mathfrak{m}$ is also an
ideal of $\mathfrak{g}$ and $\{X_1, X_2, X_3\}$ passes through $\mathfrak{m}$. 
Then, we have
\begin{equation}
\pi|_\Gamma \cong \pi_{l,M}|_\Gamma \cong \int_{[0,1)}^\oplus\mbox{Ind}_{M\cap\Gamma}^\Gamma(\chi_{\alpha X_1^\ast + t\alpha X_2^\ast}|_{M\cap\Gamma})dt. \label{BekkaHeisenberg}
\end{equation}

Let $\chi$ be a character of a normal subgroup
$H$ of $\Gamma$, then the stabilizer $\mbox{St}_H^\Gamma(\chi)$ of $\chi$ in $\Gamma$ is defined by
\[\mbox{St}_H^\Gamma(\chi) = \{\gamma \in \Gamma | \chi(\gamma h\gamma^{-1}) = \chi(h) \; \mbox{for all}\; h \in H\}.
\]
This set is a subgroup of $\Gamma$. 

For $t \in [0, 1)$, let $l_t := \alpha X_1^\ast+t\alpha X_2^\ast$.  Then, 
\[\mbox{St}_{M\cap\Gamma}^\Gamma(\chi_{l_t}) = \left\{\begin{array}{lll}(\mathbb{Z}, \mathbb{Z}, 0) & \mbox{if} & \alpha \not\in \mathbb{Q}\\
(\mathbb{Z}, \mathbb{Z}, q\mathbb{Z}) & \mbox{if} & \alpha = \frac{p}{q} \in \mathbb{Q}\setminus\{0\}\end{array}\right.\]

It is known that Mackey's criterion says $\mbox{Ind}_H^\Gamma(\chi)$
is irreducible if and only if $\mbox{St}_H^\Gamma(\chi)= H$. Hence, if $\alpha \not\in \mathbb{Q}$, then the formula (\ref{BekkaHeisenberg}) is a decomposition into irreducible components.

On the other hand, if $\alpha = \frac{p}{q} \in \mathbb{Q}\setminus\{0\}$, then the following further decomposition formula holds:
\begin{align}   \mbox{Ind}_{M\cap\Gamma}^\Gamma(\chi_{l_t}) &= \mbox{Ind}_{\mathcal{P}}^\Gamma( \mbox{Ind}_{M\cap\Gamma}^{\mathcal{P}}(\chi_{l_t}))  \notag \\
&\cong \mbox{Ind}_{\mathcal{P}}^\Gamma\left( \int_{[0,1)}^\oplus \chi_{l_t}\otimes \chi_{\psi_s}ds  \right)\notag \\ 
&= \int_{[0,1)}^\oplus\mbox{Ind}_{\mathcal{P}}^\Gamma(\chi_{l_t}\otimes \chi_{\psi_s})ds, \label{interHeisenbergdecomposion}
\end{align}
where $\psi_s \in (q\mathbb{Z})^{\ast}$  defined by $\psi_s(qn) = s \in [0,1)$ for $n \in \mathbb{Z}$ and $\mathcal{P} := \exp P$  with the polarizng subalgebra $P$ of $l_t$. Note that $\mathcal{P} =  \mbox{St}_H^\Gamma(\chi_{l_t})$. 

Combining with (\ref{BekkaHeisenberg}), the above decomposition brings back the formula (\ref{introdiscretetoHeisenbergLie}) in Theorem \ref{introdiscretetoLie} ($=$ Theorem \ref{discretetoLie}). It is a particular case of the general formula (\ref{intermidietedecomposion}).
\end{example}

\subsubsection{Branching theorem of nilpotent Lie groups to their lattices I: the structure of proof}\label{Branchingtheorem1}

As already mentioned in the first paragraph of this section, we shall give a decomposition formula of the restriction of the lattice $\Gamma$ of the irreducible representation $\pi_l$ of the simply connected nilpotent Lie group $G$. 

Our aim here is to gerenalize the formula (\ref{BekkaHeisenberg}) for $\mbox{Heis}_3(\mathbb{Z})$ to generalize for any torsion-free nilpotent group $\Gamma$ in the case that $\pi_l$ for a rational $l \in \mathfrak{g}^\ast$, i.e. $l \in \mathfrak{g}_{\mathbb{Q}}^\ast$.

Further decompositions into finite-dimensional representations corresponding to the formula \eqref{interHeisenbergdecomposion} will be performed in Section \ref{Step0-6}.

First, we recall basic terminologies related to the representations $\Gamma$ and $G$ respectively. For a discrete, torsion-free, nilpotent group $\Gamma$, let $G$ be its Malcev completion, i.e., a simply connected nilpotent Lie group that contains $\Gamma$ as a lattice. 
Then $G$ has $\mathbb{Q}$-structure such that $\mathfrak{g}_{\mathbb{Q}}$ is $\mathbb{Q}$-span $\log \Gamma$. Note that the usage of the notion $\log \Gamma$ needs to be careful in general (see Section \ref{Step0-8} in detail).  
It means that there is a linear basis $X_1, \ldots, X_n$ of $\mathfrak{g}$ in $\log \Gamma$ such that the structure constants $c_{ij}^k \in \mathbb{Q}$ which are defined by:
\[  [X_i,X_j] = \sum_{k=1}^nc_{ij}^kX_k,\quad (i,j = 1,2,\ldots,n)  
\]
Conversely, if $G$ admits such a structure, then it contains a uniform lattice $\Gamma$ (cf. 5.1.8 Theorem in \cite{Corwin}). 

 For $l\in\mathfrak{g}_\mathbb{Q}^\ast$, let $\mathfrak{r}_l$ be the radical of $\mathfrak{g}$ and denote $r$ of its dimension. It is known that $\mathfrak{r}_l$ is rational.
Moreover, it is also known that we can choose a polarizing subalgebra $\mathfrak{m}$ as a rational subalgebra of $\mathfrak{g}$. We denote by $m$ its dimension.
Namely, we may assume $\mathfrak{r}_l, \mathfrak{m} \subset \mathfrak{g}_\mathbb{Q}$. Put $R_l = \exp \mathfrak{r}_l$ and $M = \exp \mathfrak{m}$. We also call them the radical and polarization of $l$ by the abuse of naming.

Then, the following theorem holds.

\begin{theorem}\label{firstdecomposition}
Assume that $l \in \mathfrak{g}_{\mathbb{Q}}^\ast$. Then there exists a polarization $\mathfrak{m}$ of dimension $m$, a rational subalgebra of $\mathfrak{g}$ satisfying the following. Let $\{X_1,\ldots, X_m, \dots, X_n\}$ be a weak
Malcev basis of $\mathfrak{g}_\mathbb{Q}$, based on $\Gamma$ and passing through $\mathfrak{m}$.
Then, we have the decomposition
\begin{equation} \pi_l|_\Gamma = \int_{[0,1)^{n-m}}^\oplus {\rm Ind}_{M\cap\Gamma}^\Gamma (\chi_{l_{t_{m+1},\ldots,t_n}}|_{M\cap\Gamma})dt_{m+1}\cdots dt_n \label{pildecomposition}
\end{equation}
where
\[
l_{t_{m+1},\ldots,t_n} = {\rm Ad}^\ast (\exp(t_{m+1}X_{m+1}))\cdots{\rm Ad}^\ast(\exp(t_nX_n))l\]
for all $t_{m+1}, \ldots t_n \in [0, 1)$.
\end{theorem}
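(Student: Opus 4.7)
The strategy is to follow the approach of \cite{Bekka1} and generalize Example \ref{BekkaHeisenberg1}: exploit the Kirillov realization $\pi_l \cong \mathrm{Ind}_M^G(\chi_l)$ together with the weak Malcev basis passing through $\mathfrak{m}$ to make the restriction to $\Gamma$ fully explicit, then perform a Bloch--Floquet (equivalently, Zak) transform in the directions transverse to $\mathfrak{m}$.

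First I would realize $\pi_l$ on $L^2(M\backslash G, \chi_l)$ and use the cross-section
\[
(s_{m+1},\ldots,s_n) \mapsto \exp(s_{m+1}X_{m+1})\cdots \exp(s_n X_n)
\]
to identify this space with $L^2(\mathbb{R}^{n-m})$. Because $\{X_1,\ldots,X_n\}$ is based on $\Gamma$ and passes through $\mathfrak{m}$, the products $\exp(k_{m+1}X_{m+1})\cdots\exp(k_nX_n)$ with $k_j \in \mathbb{Z}$ form a complete set of representatives for $(\Gamma \cap M)\backslash \Gamma$. I would then compute the action of $\Gamma$ on $L^2(\mathbb{R}^{n-m})$: right translation by each $\exp(X_j)$ with $j>m$ acts as an integer shift in the variable $s_j$, modified by a polynomial-phase cocycle produced by the Baker--Campbell--Hausdorff reordering against the remaining coordinates, while right translation by $M \cap \Gamma$ acts by multiplication by a $\chi_l$-twisted character depending on $s$.

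Next, I would apply a Bloch--Floquet decomposition in $s_{m+1},\ldots,s_n$ with respect to the $\mathbb{Z}^{n-m}$-action generated by these integer shifts, obtaining
\[
L^2(\mathbb{R}^{n-m}) \cong \int_{[0,1)^{n-m}}^\oplus \mathcal{H}_{t} \, dt_{m+1}\cdots dt_n,
\]
where $\mathcal{H}_t$ consists of functions quasi-periodic of type $t = (t_{m+1},\ldots,t_n)$. Each fiber $\mathcal{H}_t$ is naturally a model for the quasi-regular representation of $\Gamma$ on $(\Gamma\cap M)\backslash \Gamma$ twisted by a character of $M \cap \Gamma$; identifying this character as $\chi_{l_t}|_{M\cap\Gamma}$ yields the claimed decomposition.

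The main obstacle is to verify that the character appearing on each fiber is exactly $\chi_{l_t}$ with $l_t = \mathrm{Ad}^*(\exp(t_{m+1}X_{m+1}))\cdots \mathrm{Ad}^*(\exp(t_n X_n))l$, with no spurious correction terms. This amounts to an inductive BCH computation: reordering $\exp(s_jX_j)$ past elements of $M = \exp \mathfrak{m}$ transforms $\chi_l|_{M}$ by precisely $\mathrm{Ad}^*(\exp(s_jX_j))$ restricted to $\mathfrak{m}$, because $l$ annihilates $[\mathfrak{m},\mathfrak{m}]$ so higher BCH terms landing inside $\mathfrak{m}$ drop out. Iterating this calculation and specializing $s_j$ to the Bloch parameter $t_j$ recovers the coadjoint formula for $l_t$. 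Measurability and unitarity of the fiber identification then follow from the standard theory of direct integrals, and the single-generator case is the Heisenberg prototype of Theorem \ref{discretetoLie} (Example \ref{BekkaHeisenberg1}), which may be replayed inductively along the Malcev flag $\mathfrak{m} \subset \mathfrak{m} + \mathbb{R}X_{m+1} \subset \cdots \subset \mathfrak{g}$.
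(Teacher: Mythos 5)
Your overall framework (realize $\pi_l$ on $L^2(\mathbb{R}^{n-m})$ via the weak Malcev cross-section and decompose the restriction over $[0,1)^{n-m}$) is in the right family, but two steps break down as written. First, the fibers of your Bloch--Floquet/Zak transform are \emph{not} invariant under $\pi_l(\Gamma)$: the operators coming from $M\cap\Gamma$ act by $s$-dependent modulations, and multiplication by $e^{2\pi\sqrt{-1}\,l(\cdot)s}$-type phases shifts the quasi-momentum $t$ rather than preserving it (already in the Heisenberg prototype, multiplication by $e^{2\pi\sqrt{-1}\alpha s}$ maps quasi-periodic functions of type $t$ to type $t+\alpha$). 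The abelian algebra one must diagonalize is the one in the commutant of $\pi_l(\Gamma)$, namely multiplication by $\mathbb{Z}$-periodic functions, not the integer translations; accordingly the correct fibering is by restriction to the cosets/orbits $\mathbb{Z}+s$ (this is exactly what the paper does in Step 2 of Theorem \ref{discretetoLie} and, following \cite{Bekka1}, in the induction step of Subsubsection \ref{inductionstep}), while the quasi-periodic picture only becomes available at the \emph{second} stage, under the rationality conditions of (\ref{finaldecomposition}). Second, your description of the transverse action as ``integer shift in $s_j$ plus a polynomial phase'' is only valid when the brackets among $X_{m+1},\ldots,X_n$ fall back into $\mathfrak{m}$; in general right translation by $\exp(X_j)$ also changes the other transverse coordinates by non-constant polynomial amounts, so there is no evident commuting $\mathbb{Z}^{n-m}$-structure to decompose against in one shot, and the $\Gamma$-orbits in $M\backslash G$ need not be the translates $t+\mathbb{Z}^{n-m}$ with constant stabilizer $M\cap\Gamma$.

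The deeper missing ingredient is the one that distinguishes Theorem \ref{firstdecomposition} from Theorem 1.3 of \cite{Bekka1}: here $\mathfrak{m}$ is only a subalgebra, not an ideal. Your character-transport argument implicitly assumes that conjugation by $\exp(t_jX_j)$ carries $M$ (and $\chi_l|_M$) into itself up to harmless corrections, which is precisely where non-normality bites. The paper handles this by inducting on $\dim\mathfrak{g}$ via Kirillov's Lemma \ref{Kirillovlemma}: when $\mathfrak{m}\subset\mathfrak{g}_0$ one runs the one-variable coset decomposition along the codimension-one \emph{ideal} $\mathfrak{g}_0$ and invokes the inductive hypothesis (Case 1, Subsubsection \ref{inductionstep}); when $\mathfrak{m}\not\subset\mathfrak{g}_0$ one must first replace $\mathfrak{m}$ by another polarization $\mathfrak{m}_1\subset\mathfrak{g}_0$ and prove $\pi_{l,M}\cong\pi_{l,M_1}$ via the Fourier-transform intertwiner on the Heisenberg quotient (Case 2, Subsubsection \ref{choicepolarization}) --- this is why the theorem is an existence statement for $\mathfrak{m}$ and the basis. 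Your closing suggestion to ``replay the Heisenberg prototype inductively along the flag $\mathfrak{m}\subset\mathfrak{m}+\mathbb{R}X_{m+1}\subset\cdots$'' is much closer to a workable route (each step is codimension one, hence an ideal in the next), but as stated it neither establishes the fiber invariance and the ordered coadjoint parametrization $l_{t_{m+1},\ldots,t_n}$ at each stage, nor addresses that the intermediate induced representations are no longer irreducible, so the theorem cannot simply be cited inductively; some substitute for the paper's Case 1/Case 2 analysis is still required.
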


\begin{remark} This theorem has some similarity with Theorem 1.3 in \cite{Bekka1} although there exists the following differences:
 In Theorem 1.3 in \cite{Bekka1}, the above polarization $\mathfrak{m}$ is assume to be ideal more strongly and take $\{X_1,\ldots, X_m, \dots, X_n\}$ as a strong Malcev basis of $\mathfrak{g}_\mathbb{Q}$. The choice of a strong Malcev basis does not affect its proof. However, in the above Theorem, we need to choose wisely a weak Malcev basis since we assume that $\mathfrak{m}$ is only a subalgebra. In fact, the proof is based on induction. In each induction step, there are possibilities to change the polarization but keeping the isomorphism class of the unitary representation (see Subsubsection \ref{choicepolarization}).

Note that the above stronger conditions in Theorem 1.3 in \cite{Bekka1} are used in the fact that only the case 1 in the proof of Theorem \ref{firstdecomposition} occurs and the proceeding proof in this case is almost same as the proof of Theorem 1.3 in \cite{Bekka1} and close in spirit to the Step 2 in the proof of Theorem \ref{discretetoLie}. On the other hand, additional arguments necessary in the proof of the above theorem are almost the same as the proof of Theorem \ref{nilpLieirredrep} (2) ($=$ 2.2.2 Theorem in \cite{{Corwin}}).
\end{remark}

The following is a key lemma in the argument below, which is called Kirillov's lemma.

\begin{lemma}{{\rm ($=$ 1.1.12 Lemma (Kirillov's Lemma) in \cite{Corwin})}}\label{Kirillovlemma} Let $\mathfrak{g}$ be a noncommutative nilpotent Lie algebra whose center $\mathfrak{z}(\mathfrak{g})$ is one-dimensional. 
Then $\mathfrak{g}$ can be written as
\begin{equation*}
\mathfrak{g}= \mathbb{R}Z \oplus\mathbb{R}Y
\oplus\mathbb{R}X\oplus\mathfrak{w}=\mathbb{R}X\oplus\mathfrak{g}_0,
\end{equation*}
a vector space direct sum, where
$\mathbb{R}Z= \mathfrak{z}(\mathfrak{g})$, $[X, Y]=Z$, and
$\mathfrak{g}_0 = \mathbb{R}Y\oplus\mathbb{R}Z\oplus\mathfrak{w}$ is the centralizer of $Y$ which is an ideal of codimension $1$.
\end{lemma}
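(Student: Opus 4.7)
The plan is to use the upper central series to locate a suitable $Y$ just above the center, and then build everything else from the single adjoint map $\operatorname{ad} Y$. Write $\mathfrak{z}_1 = \mathfrak{z}(\mathfrak{g}) = \mathbb{R}Z$, and let $\mathfrak{z}_2$ be the second term of the upper central series, i.e.\ the preimage in $\mathfrak{g}$ of $\mathfrak{z}(\mathfrak{g}/\mathfrak{z}_1)$. Since $\mathfrak{g}$ is nilpotent and noncommutative, the quotient $\mathfrak{g}/\mathfrak{z}_1$ is nonzero and nilpotent, hence has nontrivial center, so $\mathfrak{z}_2 \supsetneq \mathfrak{z}_1$. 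Choose any $Y \in \mathfrak{z}_2 \setminus \mathfrak{z}_1$.

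By definition of $\mathfrak{z}_2$ we have $[Y,\mathfrak{g}] \subseteq \mathfrak{z}_1 = \mathbb{R}Z$, and because $Y \notin \mathfrak{z}(\mathfrak{g})$ the map $\operatorname{ad} Y : \mathfrak{g} \to \mathbb{R}Z$ is nonzero, hence surjective. Pick $X \in \mathfrak{g}$ with $[X,Y] = Z$; this $X$ is automatically linearly independent from $Y$ and $Z$ since $[Y,Y]=[Z,Y]=0$. Set $\mathfrak{g}_0 := \ker(\operatorname{ad} Y)$, the centralizer of $Y$ in $\mathfrak{g}$; it has codimension one by rank-nullity, and clearly contains both $Y$ and $Z$, while $X \notin \mathfrak{g}_0$. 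Thus we already have the vector-space splitting $\mathfrak{g} = \mathbb{R}X \oplus \mathfrak{g}_0$.

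The key step, and the main thing that needs checking, is that $\mathfrak{g}_0$ is an ideal. I would verify this by a one-line Jacobi computation: for $W \in \mathfrak{g}_0$ and $V \in \mathfrak{g}$,
\[
[[V,W],Y] = [V,[W,Y]] + [[V,Y],W] = [V,0] + [[V,Y],W] = 0,
\]
where the last equality holds because $[V,Y] \in \mathbb{R}Z \subseteq \mathfrak{z}(\mathfrak{g})$. Hence $[V,W] \in \mathfrak{g}_0$, so $\mathfrak{g}_0$ is an ideal of codimension one. (Alternatively, this is immediate from $\mathfrak{g}/\mathfrak{g}_0$ being one-dimensional, but the Jacobi argument is the cleaner way to see why the hypothesis $Y \in \mathfrak{z}_2$ is what makes it work.)

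Finally, inside $\mathfrak{g}_0$ the vectors $Y$ and $Z$ are linearly independent (as $Y \notin \mathbb{R}Z$), so choose any vector-space complement $\mathfrak{w}$ to $\mathbb{R}Z \oplus \mathbb{R}Y$ in $\mathfrak{g}_0$, giving
\[
\mathfrak{g}_0 = \mathbb{R}Z \oplus \mathbb{R}Y \oplus \mathfrak{w}, \qquad \mathfrak{g} = \mathbb{R}X \oplus \mathfrak{g}_0,
\]
as required. The only genuine obstacle is the ideal property of $\mathfrak{g}_0$; everything else is either definitional or a dimension count, and the whole argument hinges on extracting $Y$ from $\mathfrak{z}_2 \setminus \mathfrak{z}_1$ so that $\operatorname{ad} Y$ lands inside the center.
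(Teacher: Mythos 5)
Your proof is correct and follows essentially the same route as the standard argument for Corwin--Greenleaf's Lemma 1.1.12, which the paper quotes without reproducing a proof: pick $Y$ in the second term of the upper central series so that $\operatorname{ad}Y$ maps $\mathfrak{g}$ onto $\mathbb{R}Z$, obtain $X$ with $[X,Y]=Z$, take $\mathfrak{g}_0$ to be the centralizer of $Y$, and verify the ideal property by the Jacobi identity. One minor caution: your parenthetical remark that the ideal property is ``immediate'' from $\mathfrak{g}_0$ having codimension one is only valid because $\mathfrak{g}$ is nilpotent (a codimension-one subalgebra of a nilpotent Lie algebra is an ideal via the normalizer argument, whereas a Borel subalgebra of $\mathfrak{sl}_2$ shows it fails in general), but since your main Jacobi computation is complete this does not affect the proof.
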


\begin{remark}\label{Kirillovremark}
If $G$ has rational structure, we may assume that $\mathfrak{z}(\mathfrak{g})$, $X, Y, Z, \mathfrak{g}_0, \mathfrak{w}$ are all in $\mathfrak{g}_\mathbb{Q}$. 
\end{remark}

The proof proceeds by induction on the dimension of $\mathfrak{g}$. 
Assume the result is valid for all nilpotent groups of dimension $n$ with rational structure and let $\mathfrak{g}_{\mathbb{Q}}$ be of dimension $n+1$.

If the center $\mathfrak{z}(\mathfrak{g})$ has proper one dimensional rational subspace $\mathfrak{h}_{\mathbb{Q}}$, then we may assume $l(\mathfrak{h}_{\mathbb{Q}})=0$ and consider the quotients $\bar{\mathfrak{g}_{\mathbb{Q}}} = \mathfrak{g}_{\mathbb{Q}}/\mathfrak{h}_{\mathbb{Q}}$ and $\bar{\mathfrak{m}}_{\mathbb{Q}} = \mathfrak{m}_{\mathbb{Q}}/\mathfrak{h}_{\mathbb{Q}}$ and $\bar{l} \in \bar{\mathfrak{g}}^\ast$ induced from $l$. Then, from induction hypothesis $\bar{\mathfrak{g}}_{\mathbb{Q}}$, we easily obtain the result. Thus, we may assume  $\mathfrak{z}(\mathfrak{g}) = \mathbb{R}Z$ for some $Z \in \mathfrak{g}_{\mathbb{Q}}$ satisfying $l(Z) =1$.

Applying Kirillov lemma (Lemma \ref{Kirillovlemma} and remark \ref{Kirillovremark}), we have several items: $\mathfrak{z}(\mathfrak{g})$, $X, Y, Z, \mathfrak{g}_0, \mathfrak{w}$ in $\mathfrak{g}_\mathbb{Q}$. Replacing $Y$ with $Y-l(Y)Z$ if necessary, we may also assume $l(Y) =0$.

There are now two cases to consider.
\begin{description}
\item[Case $1$] $\mathfrak{m}\subset\mathfrak{g}_0$. Let $l_0 = l_{\mathfrak{g}_0}$. Put $G_0 =\exp \mathfrak{g}_0$. 
The inductive hypothesis on $G_0$ implies that it suffice to show $\pi_{l,M} = \mbox{lnd}_{G_0}^G(\pi_{l_0,M})$.  To prove this fact in the Subsubsection \ref{inductionstep}, we can proceed as in the proof of Theorem 1.3 in \cite{Bekka1} with suitable modifications. 
\item[Case $2$]  Assume that $\mathfrak{g}_0$ does not include $\mathfrak{m}$. We will show that there exists
another polarization (maximal subordinate subalgebra) $\mathfrak{m}_1$ with $\mathfrak{m}_1\subset\mathfrak{g}_0$ and $\pi_{l,M} \simeq \pi_{l,M_1}$. Then, we are back to Case $1$ in a compatible way with further decompositions in Section \ref{Step0-6}. We shall give a proof of this fact in Section \ref{choicepolarization}. 
 \end{description}

\subsubsection{Branching theorem of nilpotent Lie groups to their lattices II: Case 1 (the induction step)}\label{inductionstep}

As mentioned above, here we discuss Case 1 of the proof of Theorem \ref{firstdecomposition}. 
Although it is almost the same as the part of the proof of Theorem 1.3 in \cite{Bekka1}, we reproduce the arguments there with suitable modifications for the sake of completeness.

\begin{proof}(Case 1.) Recall the starting setup for Case 1.
$\mathfrak{m}\subset\mathfrak{g}_0$. Let $l_0 = l_{\mathfrak{g}_0}$. Put $G_0 =\exp \mathfrak{g}_0$
The inductive hypothesis on $G_0$ implies that the $\pi_{l,M} = \mbox{lnd}_{G_0}^G(\pi_{l_0,M})$.

By the definition of induction, we can realize the representation $\pi_{l,M}= \mbox{lnd}_{G_0}^G(\pi_{l_0,M})$ in the following way:
Hereafter in the proof, we rewrite $G_0$ to $G_n$ to adjust induction arguments, which is also the same notation as in the proof of Theorem 1.3 in \cite{Bekka1}.

Let $\mathcal{H}$ be the Hilbert space
\begin{equation*}
\mathcal{H} = \left\{ f:\mathbb{R} \to \mathcal{H}(\pi_{l_0,M})\; \mbox{measurable}; \int_\mathbb{R}\|f(t)\|_{\mathcal{H}(\pi_{l_0,M})}^2dt < \infty\right\}.
\end{equation*}
Then, for all $g = g_n\cdot\exp(a X_{n+1})$ (with $g_n \in G_n$ and $a \in \mathbb{R}$), $f \in \mathcal{H}$ and
$t \in \mathbb{R}$,
\begin{equation*}
\pi_{l,M}(g)f(t) = 
\pi_{l_0,M}(\exp(tX_{n+1})\cdot g_n\cdot\exp(-tX_{n+1}))(f(t+a))
\end{equation*}
Now fix $s \in [0, 1)$ and define the following Hilbert space $\mathcal{H}_s$ of functions on
$\mathbb{Z}+s = \{p+s: p \in \mathbb{Z}\}$ by
\begin{equation*}
\mathcal{H}_s = \left\{ f:\mathbb{Z}+s \to \mathcal{H}(\pi_{l_0,M})\; \mbox{measurable}; \sum_{p\in\mathbb{Z}}\|f(p+s)\|_{\mathcal{H}(\pi_{l_0,M})}^2dt < \infty\right\}.
\end{equation*}

Let $\rho_s$ be the representation of $\Gamma$ on $\mathcal{H}_s$ defined by
\begin{equation*}
\rho_s(\gamma)f(p+s) = \pi_{l,M}(g)f(t) = \pi_{l_0,M}(\exp((p+s)X_{n+1})\cdot \gamma_n\cdot\exp(-(p+s)X_{n+1}))( f ( p+q+s)),
\end{equation*}
for all $\gamma = \gamma_n\exp(qX_{n+1}) \in \Gamma$, (with $\gamma_n \in G_n\cap\Gamma$ and $q \in \mathbb{Z}$), $f \in \mathcal{H}_s$ and
$p \in \mathbb{Z}$. 
Now, set
\begin{equation*}
T :\mathcal{H}\to \int_{[0,1)}^\oplus\mathcal{H}_sds,\quad f\mapsto  \int_{[0,1)}^\oplus f_{\mathbb{Z}+ s}ds,
\end{equation*}
where $ds$ is the Lebesgue measure on $[0, 1)$. Then $T$ is a unitary operator,
which clearly intertwines $\pi|_\Gamma$ and $\int_{[0,1)}^\oplus\rho_s ds$.

Define a unitary operator $\Phi_s :\mathcal{H}_s \to \mathcal{H} = l^2(\mathbb{Z}, \mathcal{H}(\pi_{l_0,M}))$ by $\Phi_s(f)( p)=
f(p+s)$, for all $f \in \mathcal{H}_s$ and $p \in \mathbb{Z}$, and set $\pi_s(\gamma) = \Phi_s\circ \rho_s(\gamma)\circ\Phi_s^{-1}$, for $\gamma \in \Gamma$.
Then, for all $f \in \mathcal{H}_0, \gamma = \gamma_n\cdot\exp(qX_{n+1}) \in \Gamma$ (with $\gamma_n \in G_n\cap\Gamma$ and $q \in \mathbb{Z})$,
we have
\begin{equation*}
(\pi_s(\gamma)f)(p) = \pi_n^s(\exp(pX_{n+1})\cdot \gamma_n\cdot\exp(-pX_{n+1}))( f ( p+q)),
\end{equation*}
where $\pi_n^s$ is the representation of $G_n$ defined by
\begin{equation*}
(\pi_n^s(g'_n) = \pi_{l_0,M}(\exp(sX_{n+1})\cdot g'_n\cdot\exp(-sX_{n+1})),
\end{equation*}
for all $g_n \in G_n$. It follows that the representation $\pi_s$ of $\Gamma$ on $\mathcal{H}_0$ is induced by the restriction to $G_n\cap\Gamma$ of the representation $\pi_n^s$. Hence
\begin{align*}
\pi_l|_\Gamma &\cong \int_{[0,1)}^\oplus \rho_s ds \cong \int_{[0,1)}^\oplus \pi_s ds \cong \int_{[0,1)}^\oplus \mbox{Ind}_{G_n\cap\Gamma}^\Gamma(\pi_n^s|_{G_n\cap\Gamma}) ds \\
& \cong \mbox{Ind}_{G_n\cap\Gamma}^\Gamma\left(\int_{[0,1)}^\oplus \pi_n^s|_{G_n\cap\Gamma} ds\right).
\end{align*}

Now, the representation $\pi_n^s \cong \mbox{Ind}_{G_n}^M(\mbox{Ad}^\ast(\chi_{\exp(-sX_{n+1})}l_n)$ is irreducible and
$G_n\cap\Gamma$ is a lattice in $G_n$. By induction, $\pi_n^s|_{G_n\cap\Gamma}$ is then unitarily equivalent
to
\begin{align*}
\int_{[0,1)^{n-m}}^\oplus &\mbox{Ind}_{M\cap\Gamma}^{G_n\cap\Gamma}(\mbox{Ad}^\ast(\chi_{\exp(-t_{m+1}X_{m+1})\cdot\exp(-t_nX_n)})\mbox{Ad}^\ast(\chi_{\exp(-sX_{n+1})}l_n)|_{M\cap\Gamma})\\
 &dt_{m+1}\cdots dt_n
\end{align*}
where $dt_{m+1}\cdots dt_n$ is the Lebesgue measure on $[0,1)^{n-m}$. So,
\begin{equation}
\pi_l|_\Gamma \cong \int_{[0,1)^{n+1-m}}^\oplus \mbox{Ind}_{M\cap\Gamma}^\Gamma(\chi_{l_{t_{m+1},\cdots, t_n,s}|_{M\cap\Gamma}})dt_{m+1}\cdots dt_nds, \label{case1last}
\end{equation}
where
\begin{equation*}
l_{t_{m+1},\cdots, t_n,s} =\mbox{Ad}^\ast(\chi_{\exp(-t_{m+1}X_{m+1})\cdot\exp(-t_nX_n)\cdot\exp(-sX_{n+1})})l
\end{equation*}
Hence, the proof of Case 1. 
\end{proof}

\subsubsection{Branching theorem of nilpotent Lie groups to their lattices III: Case 2 (the choice of the polarization)}\label{choicepolarization}
As mentioned above, here we discuss Case 2 of the proof of Theorem \ref{firstdecomposition}. 
Although this time it is almost same as the part of the proof of to the proof of Theorem \ref{nilpLieirredrep}(2)(2.2.2 Theorem in \cite{Corwin}), we reproduce here the arguments there with suitable modification for the sake of completeness.

\begin{proof}(Case 2.) In this case, $\mathfrak{m} \not\subset \mathfrak{g}_0$. Here we shall show that there is another polarization $\mathfrak{m}_1$ satisfying $\mathfrak{m}_1 \subset \mathfrak{g}_0$ with a unitary equivalence
\[\mbox{Ind}_{M}^G(\chi_l|_{\mathfrak{m}}) \cong \mbox{Ind}_{M_1}^G(\chi_l|_{{\mathfrak{m}_1}})\]
for $M = \exp\mathfrak{m}$ and $M_1 = \exp\mathfrak{m}_1$.

Let $\mathfrak{m}_0 = \mathfrak{m}\cap\mathfrak{g}_0$.
Since $\mathfrak{m} \not\subset \mathfrak{g}_0$, $\mathfrak{m}_0$ has codimension one in  $\mathfrak{m}$ 
and we may assume that $\mathbb{R}X$ is a complement to $\mathfrak{m}_0$ in $\mathfrak{m}$, namely $\mathfrak{m}_0\oplus\mathbb{R}X = \mathfrak{m}$. 
In fact, there is some $X_0 \in \mathfrak{g}_0$ such that $X-X_0 \in \mathfrak{m}$. Then, we can replace $X$ by $X -X_0$ without affecting anything in Kirillov's lemma.
Replacing $X$ with $X-l(X)Z$ if necessary, we may also assume that $l(X)=0$. 
Now let $\mathfrak{m}_1 = \mathfrak{m}_0\oplus\mathbb{R}Y$.
Here, note that $Y \not\in \mathfrak{m}_0$, for otherwise we would have $Y \in \mathfrak{m}$, 
$X \in \mathfrak{m}$ and $l([X, Y])=l(Z)= 1$, in conflict with the subordination property of $\mathfrak{m}$. 
Then $\mathfrak{m}_1 \subset \mathfrak{g}_0$, $\mbox{dim}\; \mathfrak{m}_1 = \mbox{dim}\; \mathfrak{m}$, and
$\mathfrak{m}_1$ is a subalgebra since $Y$ is central in $\mathfrak{g}_0$. 
Since $l([\mathfrak{m}_0,\mathfrak{m}_0])=0$ and
$l([\mathfrak{m}_0,Y])=0$, 
we have $l([\mathfrak{m}_1, \mathfrak{m}_1])=0$, and $\mathfrak{m}_1$ is a maximal subordinate
subalgebra i.e, polarization, inside $\mathfrak{g}_0$.

Let $\mathfrak{k}_0 = \mathfrak{m}_0\cap\mbox{ker}\;l$, $\mathfrak{k}=\mathfrak{m}_1\oplus\mathfrak{m} = \mathfrak{m}_0\oplus\mathbb{R}X\oplus\mathbb{R}Y$, $K_0 = \exp\mathfrak{k}_0$, $K = \exp\mathfrak{k}$. 
We shall show that $\mathfrak{k}$ is a subalgebra of $\mathfrak{g}$ and that
\begin{equation} \mbox{Ind}_M^K(\chi_{l,M}) \cong \mbox{Ind}_{M_1}^K(\chi_{l,{M_1}}). \label{MM1}
\end{equation}
By induction in stages, inducing from $K$ to $G$ gives $\pi_{l,M} = \pi_{l,{M_1}}$, completing the proof of the theorem.

$K$ is a group i.e. $\mathfrak{k}$ is a Lie algebra. Moreover, $\mathfrak{k}_0$ is an ideal. 
To prove this fact, we need to prove that, if $W_1 \in \mathfrak{k}_0$ and $W_2 \in \mathfrak{k}$, 
then $l([W_1,W_2])=0$ and $[W_1,W_2] \in \mathfrak{m}_0$. 
If $W_2 \in \mathfrak{m}_0$, these follow because $\mathfrak{m}_0$ is subordinate. 
If $W_2 = Y$, they are also easy, since $[\mathfrak{m}_0,Y] = 0$. Finally, if $W_2 = X$, then $W_1, X \in \mathfrak{m}$, hence $l([W_1,X]) = 0$; 
furthermore, $[W_1,X] = W'_1+\alpha X$ for some $W'_1 \in \mathfrak{m}_0, [W_1,[W_1, X]] = W''_1+\alpha^2X$, etc. 
We must have $\alpha = 0$ because $\mathfrak{m}$ is nilpotent.

Next, notice that $\mathfrak{m}, \mathfrak{m}_1$ are both maximal subordinate subalgebras for $\mathfrak{k}$. Let $B_l(X,Y) :=l([X, Y])$ the skew-symmetric bilinear form on $\mathfrak{g}$. Then, the above $\mathfrak{m}, \mathfrak{m}_1$ also both $B_l$-isotropic subalgebras, both have codimension $1$ in $\mathfrak{k}$, and $B_l$ is nontrivial on $\mathfrak{k}$ since $l([X, Y]) \neq 0$.

Let $\tau_1 = \mbox{lnd}_{M_1}^K(\chi_{l,{M_1}})$ and $\tau = \mbox{lnd}_M^K(\chi_{l,M})$.
Then both $\tau_1$ and $\tau$ contain $K_0$ in their kernels. 
We will show this for $\tau$, the proof for $\tau_1$  
being nearly the same. 
We may realize $\tau$ on $L^2(\mathbb{R})$, using $\exp(\mathbb{R}Y)$ as a transversal for $M\backslash K$. 
If $k_0 \in K_0$, we have
\begin{align*}
\tau(k_0)f(t) &= f(\exp tY\cdot k_0\cdot\exp (-t)Y\cdot\exp tY) \\
&= \chi_{l,M}(\exp tY\cdot k_0\cdot\exp (-t)Y)f(t)= f(t),
\end{align*}
since $\exp tY\cdot k_0\cdot\exp (-t)Y \in K_0$ and $l(\mathfrak{k}_0)=0$.

Define $\bar{\tau}_1, \bar{\tau}$ on $\bar{K} = K_0\backslash K$ in the obvious way. To prove $\tau_1\cong\tau$, it suffices to prove that $\bar{\tau}_1\cong\bar{\tau}$. But $\bar{K}\cong \mbox{Heis}_3(\mathbb{R})$, and $\bar{\tau}_1, \bar{\tau}$ are two explicit representations on it.

Here are their descriptions: since $\mathbb{R}Z\subset\mathfrak{m}_0$ and $Z \not\subset \mbox{ker} l$, we have $\mathfrak{m}_0 = \mathfrak{k}_0\oplus\mathbb{R}Z$, so that
\begin{align*}\mathfrak{k} &= \mathfrak{k}_0\oplus\mathbb{R}Z \oplus\mathbb{R}Y\oplus\mathbb{R}X, \\
\mathfrak{m} &= \mathfrak{k}_0\oplus\mathbb{R}Z\oplus\mathbb{R}X, \\
\mathfrak{m}_1 &= \mathfrak{k}_0\oplus\mathbb{R}Z\oplus\mathbb{R}Y, \\
\mathfrak{k}\cap\mbox{ker} l &= \mathfrak{k}_0\oplus\mathbb{R}Y\oplus\mathbb{R}X.
\end{align*}

When we identify $\mathfrak{k}_0\backslash\mathfrak{k} = \mathfrak{h}_1 := \mbox{Lie}(\mbox{Heis}_3(\mathbb{R}))$, $\{Z ,Y, X\}$ becomes the standard basis for
 $\mathfrak{h}_1$, $\bar{\mathfrak{m}} = \mathbb{R}Z\oplus\mathbb{R}X, \bar{\mathfrak{m}}_1 = \mathbb{R}Z\oplus\mathbb{R}Y$, and the representations are

\begin{align*}
\mathfrak{\tau}_1&: \chi_1(\exp(zZ+ yY)) = e^{2\pi iz}, \quad \mbox{induced from $\bar{M}_1$ to $\mbox{Heis}_3(\mathbb{R})$},  \\
\mathfrak{\tau}_1&: \chi_1(\exp(zZ+ xX)) = e^{2\pi iz}, \quad \mbox{induced from $\bar{M}$ to $\mbox{Heis}_3(\mathbb{R})$}. 
\end{align*}

These representations can both be modeled in $L^2(\mathbb{R})$ if we identify
$M_1\backslash\mbox{Heis}_3(\mathbb{R}) \cong \exp \mathbb{R}X \cong \mathbb{R}$, $M\backslash\mbox{Heis}_3(\mathbb{R}) \cong \exp \mathbb{R}Y \cong \mathbb{R}$ respectively.

A simple calculation based on the standard models of these representations
shows that the basic one-parameter subgroups $\exp \mathbb{R}Z, \exp \mathbb{R}Y, \exp \mathbb{R}X$ in $\mbox{Heis}_3(\mathbb{R})$ have the following explicit actions on $L^2(\mathbb{R})$:
\begin{align*}
\bar{\mathfrak{\tau}}_1(\exp zZ)f(t) &:= e^{2\pi iz}f(t), \quad \bar{\mathfrak{\tau}}(\exp zZ)f(t) := e^{2\pi iz}f(t), \\
\bar{\mathfrak{\tau}}_1(\exp yY)f(t) &:= e^{2\pi ity}f(t), \quad \bar{\mathfrak{\tau}}(\exp yY)f(t) := f(t+y), \\
\bar{\mathfrak{\tau}}_1(\exp xX)f(t) &:= f(t+x), \quad \bar{\mathfrak{\tau}}(\exp xX)f(t) := e^{-2\pi itx}f(t), 
\end{align*}

The similarity of these actions is evident and strongly suggests that the
Euclidean Fourier transform $\mathcal{F}:L^2(\mathbb{R})\to L^2(\mathbb{R})$ defined by
\begin{equation}
\mathcal{F}f(t) = \int_\mathbb{R} e^{-2\pi itx}f(x) dx \quad (dx\; \mbox{normalized on}\; [0, 1]) \label{EuclidFourier}
\end{equation}
is an $L^2$-isometry intertwining $\bar{\mathfrak{\tau}}_1$ and $\bar{\mathfrak{\tau}}$. To verify this, it suffices to show
that it intertwines the one-parameter actions for $X$ and $Y$:
\begin{align*}
\bar{\mathfrak{\tau}}_1(\exp yY)\mathcal{F}f &= \mathcal{F}\bar{\mathfrak{\tau}}(\exp yY)f, \\
\bar{\mathfrak{\tau}}_1(\exp xX)\mathcal{F}f &= \mathcal{F}\bar{\mathfrak{\tau}}(\exp xX)f,
\end{align*}
for all $f \in L^2(\mathbb{R})$
 but these are well-known properties of $\mathcal{F}$, and easily verified. Thus $\bar{\mathfrak{\tau}}_1 \cong \bar{\mathfrak{\tau}}$ and the proof is complete.
\end{proof}

\begin{remark} The above intertwining between $\bar{\mathfrak{\tau}}_1$ and $\bar{\mathfrak{\tau}}$ is compatible with the decomposition into the direct integral of finite-dimensional representations (\ref{discretetoHeisenbergLie}) in Theorem \ref{discretetoLie}. In fact, comparing with two matrices $\rho_{{\rm \tiny{fin}},x}(u)$, $\rho_{{\rm \tiny{fin}},x}(v)$ in (\ref{matrixrep} ) with $x_2=x_3$ for $x =(x_1,x_2,x_3)$ in the finite dimensional representation $\rho_{{\rm \tiny{fin}},x}$ of $\mbox{Heis}_3(\mathbb{Z})$, we see that the finite-dimensional analogue of the above Fourier transform $\mathcal{F}$  is nothing but the diagonalizing matrix $P$ from $\rho_{{\rm \tiny{fin}},x}(u)$ to $\rho_{{\rm \tiny{fin}},x}(v)$  which is obiously satisfied the above mentioned compatibilities.
\end{remark}

\subsection{Finite dimensional irreducible unitary representations: Substep 0-8}
\label{Step0-8}
Before proceeding to the next step for further decompositions, we recall the result of the complete classification of finite-dimensional irreducible unitary representations of discrete, finitely generated, torsion-free, nilpotent groups, following \cite{Howe}.

Let $\Gamma$ will always denote a discrete, finitely generated,
torsion-free, nilpotent group, $e$ will be its identity element, $\Gamma^{(1)} =\Gamma, \Gamma^{(2)} = [\Gamma,\Gamma], \ldots, \Gamma^{(n)} = [\Gamma,\Gamma^{(n-1)}], \ldots,$  its descending central series. 
If $\Gamma^{(k+1)}= \{e\}$ with $\Gamma^{(k)}\neq \{e\}$, we say $\Gamma$ is $k$-step nilpotent.
Let $G$ be a Malcev completion of $\Gamma$, namely $\Gamma$ is a uniform lattice of $G$. The exponential map $\exp: \mathfrak{g} \to G$ is a diffeomorphism, and $\log: G \to \mathfrak{g}$ is its inverse. We would like to say
$\log \Gamma \subset \mathfrak{g}$ is the lie algebra of $\Gamma$, which is alredy appeared in Subsubsection \ref{Branchingtheorem1}. However, this generally makes no sense because $\log\Gamma$ must not be closed under addition or undertaking commutators (Lie bracket). 
But suppose $G$ is $k$-step nilpotent, and
$L \subset G$ is a uniform lattice, such that $[L, L] \subset k!L$. 
Then, a straightforward calculation using the Baker-Campbell-Hausdorff formula shows that $\Gamma = \exp L$ is a subgroup of $G$. We will call such a lattice $L$, and $\exp L$, elementarily exponentiable, or e.e., for short. It is noticed that the general $\Gamma$ can be reduced in this case in Proposition 0 in \cite{Howe}.
If $H \subset G$ is a subgroup, we say $H$ is saturated if $g^n \in H$
implies $g \in H$, for any $g \in G$. Given $H$, $H_s$ will denote the smallest
saturated subgroup containing $H$. If $H$ is normal, $H$ is
saturated if and only if $G/H$ is torsion-free.

There is a parallel to Kirillov's theory, i.e., based on the orbit philosophy in these discrete settings. 
We also need a notion like ``polarizing subalgebra (a.k.a. maximal subordinate subalgebra)''.
We now provide this. Let $L =\log \Gamma$ for $\Gamma$ e.e.. 
If $\psi \in \hat{L}$ is rational on $\Gamma^{(2)}$, then the radical $R_\psi := \{X \in L |B_\psi(X,Y) := \psi([X,Y]) = 0\; \mbox{for all}\; Y \in L \}$ is finite index in $L$. Moreover, by changing $\psi$ by a character of $\Gamma/\Gamma^{(2)}$, we may assume this $\psi$ is rational.
Then, $B_\psi$ factors to a non-degenerate antisymmetric form $\widetilde{B}_\psi$ on $(\log(\Gamma)/\log(\Gamma^{(2)}))/R_\psi$.  Then we define a polarizing subalgebra $P$ of $\widetilde{B}_\psi$ such that $P \supset R_\psi$ and $P/R_\psi$ forms a half-polarization of $\widetilde{B}_\psi$, i.e., its maximal isotropic subalgebra. Note that ${}^\#(P/R_\psi) = {}^\#(L/P)$.  

In the case when $\Gamma = {\rm Heis}_3(\mathbb{Z})$, if $l = \alpha_1X_1^\ast +\alpha_2X_2^\ast$ with $\alpha_1 = p/q$, then $R_\psi = (\mathbb{Z}, q\mathbb{Z}, q\mathbb{Z})$ and $P =  (\mathbb{Z}, \mathbb{Z}, q\mathbb{Z})$ for the notation of Example \ref{BekkaHeisenberg1}

Then, the finite-dimensional irreducible unitary representation of $\Gamma$ is classified as follows;  

\begin{theorem}[{\rm Theorem 1 in \cite{Howe}}]\label{Howefinite}
Let $\Gamma$ be an e.e., discrete, finitely generated, torsion
free-nilpotent group. Let $L = \log \Gamma, L' = 2L, \Gamma' = \exp L', L_s^{(2)} = \log\Gamma^{(2)}$, and $n\Gamma = \exp (nL)$ for $n > 2$.
\begin{description}
\item[{\rm (a)}] Let $O$ be a finite ${\rm Ad}^\ast\Gamma$ orbit in $\hat{L}$, and $\psi \in O$. 
Let $n$ be the period of $\psi|_{L_s^{(2)}}$. Call $n$ the period of $O$. 
If $n$ is odd, then a finite-dimensional irreducible unitary representation $U_0$ of $\Gamma$ may be associated with $O$ in the following manner. 
If $P$ is an e.e. polarizing subalgebra for $\psi$, and $\Pi = \exp P$, put $\psi|_\Pi = \tilde{\psi}$.
 Then $U_0 = U^{\tilde{\psi}} = {\rm Ind}_P^\Gamma(\chi_{\tilde{\psi}})$ where the chacter $\chi_{\tilde{\psi}} = e^{i\tilde{\psi}}$.
The dimension of $U_0$ is ${}^\#(O)^{1/2}$, and the character $\zeta_0$ of $U_0$ is given by
$\zeta_0 = {}^\#(O)^{-1/2}\sum_{\varphi\in O}\varphi$.
 All representations of the form $\chi\otimes V$, with $\chi \in \widehat{\Gamma/\Gamma_s^{(2)}}$, 
and $V$ defined modulo $\Gamma, n \mbox{odd}$, are realized in this manner.

\item[{\rm (b)}] In general, there is a surjective map from finite-dimensional
irreducible unitary representations of $\Gamma'$ to finite ${\rm Ad}^\ast\Gamma$ orbits in $\widehat{L}'$. 
This map is at most ${}^\#(\Gamma/\Gamma')$-to-one. 
If $\{U_i\}_{i=1}^l$ map onto the orbit $O$, then the $U_i$ are permuted transitively among themselves by the
action ${\rm Ad}^\ast \Gamma/\Gamma'$, so that they define a point in $M(\Gamma', \Gamma )$. $l$ is a
power of $2$, and if $m$ is the common dimension of the $U_i$, $lm^2 ={}^\#(O)$.
The sum of the characters $\xi_i$ of the $U_i$ is given by the formula $\sum \xi_i =m^{-1}\sum_{\varphi \in O}\varphi$.
\end{description}
\end{theorem}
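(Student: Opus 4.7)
The plan is to follow the orbit-theoretic strategy already implicit in Kirillov's method, adapted to the discrete setting, with the doubling trick $\Gamma \leadsto \Gamma' = \exp(2L)$ used to reduce part (b) to part (a). Throughout, the polarizing subalgebra $P \supset R_\psi$ supplied before the statement will play the role of Kirillov's maximal isotropic subspace; the hypothesis that $\Gamma$ is elementarily exponentiable guarantees that $\Pi = \exp P$ is an actual subgroup, so that $\mathrm{Ind}_\Pi^\Gamma(\chi_{\tilde\psi})$ makes genuine sense.

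For part (a), I would proceed in four steps. First, verify that $\chi_{\tilde\psi}$ is a well-defined character of $\Pi$: this uses that $P$ is isotropic for $B_\psi$ together with the Baker--Campbell--Hausdorff formula modulo $\log\Gamma^{(2)}$, and here the assumption that the period $n$ is odd is crucial because the half-integer correction $\tfrac12 B_\psi(X,Y)$ arising from BCH lies in $\tfrac{1}{2}\mathbb{Z}/\mathbb{Z}$, which is trivial on $L_s^{(2)}$ only when $n$ is odd. Second, show that $U^{\tilde\psi} := \mathrm{Ind}_\Pi^\Gamma(\chi_{\tilde\psi})$ is finite-dimensional of dimension $[\Gamma:\Pi] = {}^\#(P/R_\psi) = {}^\#(L/P) = {}^\#(O)^{1/2}$, using the non-degeneracy of $\widetilde{B}_\psi$ on $(L/\log\Gamma^{(2)})/R_\psi$ and the identification of $L/R_\psi$ with the orbit $O$. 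Third, invoke Mackey's irreducibility criterion: the stabilizer of $\chi_{\tilde\psi}$ in $\Gamma$ equals $\Pi$ precisely because $P$ is a maximal isotropic, which forces $\mathrm{Ind}_\Pi^\Gamma(\chi_{\tilde\psi})$ to be irreducible. Fourth, compute the character by the standard induced-character formula, restricting the sum to the conjugates of $\tilde\psi$; these conjugates parametrise $O$ by construction, yielding $\zeta_0 = {}^\#(O)^{-1/2}\sum_{\varphi \in O}\varphi$. The independence of the choice of polarization follows from an intertwining argument of the type appearing in Case $2$ of the proof of Theorem \ref{firstdecomposition}, using the finite-dimensional analogue of the Fourier transform that exchanges the two halves of the symplectic decomposition.

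For part (b), I would exploit that on $L' = 2L$ every relevant period automatically becomes odd, so part (a) applies directly to $\Gamma'$: each finite $\mathrm{Ad}^\ast\Gamma'$ orbit $O' \subset \widehat{L}'$ produces a finite-dimensional irreducible representation $V^{\tilde\psi}$ of $\Gamma'$. The finite group $\Gamma/\Gamma'$ is an elementary abelian $2$-group of order $2^{\operatorname{rank}\Gamma}$, and it acts on the set of such orbits via $\mathrm{Ad}^\ast$; the Clifford--Mackey machine for the extension $\Gamma' \triangleleft \Gamma$ then classifies the irreducibles of $\Gamma$ lying above each $\Gamma/\Gamma'$-orbit of $V^{\tilde\psi}$'s. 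The surjectivity of the map to $\mathrm{Ad}^\ast\Gamma$-orbits in $\widehat{L}'$, the bound by ${}^\#(\Gamma/\Gamma')$ on the fibres, and the power-of-two count of the $U_i$ all follow from the standard Mackey obstruction analysis, together with the fact that the stabiliser in $\Gamma/\Gamma'$ of $V^{\tilde\psi}$ is always a subgroup of a $2$-group. The character identity $\sum \xi_i = m^{-1}\sum_{\varphi\in O}\varphi$ is then obtained by summing the induced-from-$\Gamma'$ characters across the orbit and using the counting relation $lm^2 = {}^\#(O)$.

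The main obstacle, and the place where the argument really needs the discrete orbit philosophy rather than a transplant of Kirillov's theory, is the tension between the half-integer shift in the BCH formula and the requirement that $\chi_{\tilde\psi}$ descend to a genuine character on $\Pi$. This is exactly what forces the odd-period hypothesis in (a) and the passage to $\Gamma' = \exp(2L)$ in (b); I expect the bulk of the careful bookkeeping to go into verifying that the polarizing subalgebra $P$ can be chosen compatibly with the lattice structure (using the rationality of $\psi|_{\Gamma^{(2)}}$) and that the resulting induced representation is genuinely independent, up to unitary equivalence, of this choice. Once these points are in hand, the dimension count, the Mackey irreducibility argument, and the character formula are essentially formal consequences of the orbit structure.
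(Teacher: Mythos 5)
A point of record first: the paper does not prove this statement at all — it is quoted verbatim as Theorem 1 of Howe's 1977 paper — so the comparison can only be with Howe's original argument, and your overall strategy (discrete orbit method: isotropy of $P$ for $B_\psi$, induction from the character $\chi_{\tilde\psi}$ of $\Pi=\exp P$, a Mackey-type irreducibility argument, the induced-character computation, and the doubling $\Gamma'=\exp(2L)$) is indeed the same general route. For part (a) your outline is essentially sound, with one caveat: the irreducibility step cannot be settled by the stabilizer form of Mackey's criterion you invoke, since that version requires the inducing subgroup to be normal, and a polarizing subgroup $\Pi$ need not be normal in $\Gamma$; you need either the double-coset/intertwining version of the criterion or induction in stages through a chain of normal subgroups of the kind produced by Kirillov's lemma, which is also where the independence of the choice of $P$ is actually established.

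The genuine gap is in part (b). Your reduction rests on the claim that on $L'=2L$ ``every relevant period automatically becomes odd, so part (a) applies directly to $\Gamma'$.'' This is false, and it would prove too much: part (b) concerns arbitrary finite $\mathrm{Ad}^\ast\Gamma$ orbits in $\widehat{L}'$, whose periods can have any $2$-part, and if (a) applied directly to $\Gamma'$ then each orbit $O$ would carry exactly one irreducible of $\Gamma'$, of dimension ${}^\#(O)^{1/2}$ — contradicting the very statement being proved, which allows $l$ representations with $l$ a nontrivial power of $2$ and common dimension $m$ satisfying $lm^2={}^\#(O)$. The role of the doubling in Howe's argument is only to make the half-integer Baker--Campbell--Hausdorff corrections manageable so the construction can be started; the residual $2$-torsion obstruction survives, and it is exactly the analysis of that residual obstruction \emph{inside} $\Gamma'$ that produces the $2$-power fibre count, the dimension drop, and the character identity $\sum_i\xi_i=m^{-1}\sum_{\varphi\in O}\varphi$. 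Note also that the $U_i$ in (b) are representations of $\Gamma'$ permuted by $\mathrm{Ad}^\ast\Gamma/\Gamma'$, not representations of $\Gamma$ obtained by Clifford theory for $\Gamma'\triangleleft\Gamma$; your Clifford--Mackey step is aimed at the wrong target. So the even-period bookkeeping — the heart of part (b) — is missing and needs to be redone along Howe's lines rather than deduced from (a).
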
 

\subsection{Further decomposition to finite-dimensional irreducible unitary representations: Substep 0-6}\label{Step0-6}
\subsubsection{Decomposition of $\mbox{Ind}_{M\cap\Gamma}^\Gamma (\chi_{l_{t_{m+1},\ldots,t_n}}|_{M\cap\Gamma})$ }
Here, we shall give a further decomposition of $\mbox{Ind}_{M\cap\Gamma}^\Gamma (\chi_{l_{t_{m+1},\ldots,t_n}}|_{M\cap\Gamma})$ appeared as an integrand of the righthand side of the formula (\ref{pildecomposition}) under some technical condition on $l \in \mathfrak{g}_{\mathbb{Q}}^\ast$ as a direct integral of finite-dimensional unitary irreducible representations. 

In succession to Section \ref{Step0-5} and also, as we already mentioned in Substep 0-5 in Section \ref{nilpotentexplanation}, if we apply the argument here to the case of $\mbox{Heis}_3(\mathbb{Z})$ in Section \ref{relationdiscLie}, it is appeared as the relation (\ref{step1discretetolie}) there.

We recall the setting again. Write $\Gamma = \mathbb{Z}$-span$\{X_1,\ldots, X_n\} \cong \mathbb{Z}^n$ for the weak Malcev basis $\{X_1,\ldots, X_n\} \cong \mathbb{Z}^n$ in Theorem \ref{firstdecomposition}. Moreover $l\in\mathfrak{g}_\mathbb{Q}^\ast$, the radical $\mathfrak{r}_l$ and a polarizing subalgebra $\mathfrak{m}$ contained in $\mathfrak{g}_\mathbb{Q}$ appeared in be in Theorem \ref{firstdecomposition}, whose dimensions are $r$ and $m$ respectively.  Recall $R_l = \exp \mathfrak{r}_l$ and $M = \exp \mathfrak{m}$. 

Furthermore, recall several notions related to representations of $\Gamma$.
Take $\psi \in L^\ast$ with $L = \log \Gamma$ satisfying $\psi = l|_L$ for some $l \in \mathfrak{g}_\mathbb{Q}^\ast$. Let $R_\psi$ and $P$ be the radical and a polarization of $\psi$ in $\Gamma$.
Then, by definition, 
$P = \log(\mbox{St}_H^\Gamma(\chi_\psi))$ is the polaring subalgebra of the restriction $\psi$ in $\log \Gamma$. 

Take $l \in \mathfrak{g}_{\mathbb{Q}}^\ast$ satifying $l(X_i) = t_i = p_i/q_i, \; i=m+1,\ldots,n$ such that 
\begin{itemize}
\item $q_i$ is positive odd integer and $p_i$ is an integer satisfying $ 0\leq p_i \leq q_i$,
\item for the least common multiple of $q_i'$s $q$, it satisfies
\begin{equation} R_\psi \supset \mathbb{Z}^p \times (q\mathbb{Z})^{m-p} \times (q\mathbb{Z})^{n-m} = \mathbb{Z}^p \times (q\mathbb{Z})^{2(n-m)}.  \label{rpsi}
\end{equation} 
\item furthermore, 
\begin{align} 
 &{}  \mathfrak{m}\cap \log(\Gamma) \cong \mathbb{Z}^m \notag \\
 &{}  P \cong \mathbb{Z}^m \times q_{m+1}\mathbb{Z} \times \cdots \times q_n\mathbb{Z}  \label{mloggamma}
\end{align} 
\end{itemize}
Here, the reason why we consider the condition that $q_i$ is an odd integer is technical, whose irreducible representation is simpler than the other cases as in Theorem \ref{Howefinite} (a). 

Then, we get a decomposition
\begin{equation*}
\pi_{l,M}|_\Gamma \cong \int_{[0, 1)^{n-m}}^\oplus
\mbox{Ind}_{M\cap\Gamma}^\Gamma(\chi_{l_{t_{m+1},\ldots., t_n}}|_{M\cap\Gamma})dt_{m+1}\cdots dt_n,
\end{equation*}
where
\begin{equation*}
l_{t_{m+1},\ldots,t_n} := \mbox{Ad}^\ast(\exp(-t_{m+1}X_{m+1})\cdots\mbox{Ad}^\ast(\exp (-t_nX_n)) l
\end{equation*}
for all $t_{m+1}, \ldots, t_n \in [0, 1)$.

Let $\psi = l|_{\log \Gamma}$ for $l$ satisfying the above conditions. Then, we see the induced representation Ind$_{M\cap\Gamma}^{\mathcal{P}}(\textbf{1})$ of the trivial representation $\textbf{1}$ is isomorphic to the right regular representation $R_{\mathcal{P}/\mathcal{M}}$ of $\mathcal{P}/\mathcal{M}$ with $\mathcal{P} = \exp(P)$, 
$M = \exp(\mathfrak{m})$ and $\mathcal{M} = M \cap \Gamma$ 

Since $P/(\mathfrak{m} \cap \log(\Gamma)) \cong  q_{m+1}\mathbb{Z} \times \cdots \times q_n\mathbb{Z}$, we have 
\[  L^2(\mathcal{P}/\mathcal{M}) \cong L^2(q_{m+1}\mathbb{Z} \times \cdots \times q_n\mathbb{Z}) \cong L^2(\mathbb{Z}^{(n-m)}) \cong L^2(U(1)^{(n-m)}) \cong L^2(([0,1)^{(n-m)})).
\] 
and
\[  R_{\mathcal{P}/\mathcal{M}} \cong \int_{[0,1)^{n-m}}^\oplus \chi_{\psi_s}ds
\]
where 
\[ \chi_{\psi_s}(k) = \chi_{\psi_{(s_{m+1},\ldots ,s_n)}}(k_{m+1},\ldots, k_n) = e^{2\pi\sqrt{-1}((s_{m+1}/q_{m+1})k_{m+1}+\cdots +(s_n/q_n)k_n)}\]
for \[k =(k_{m+1},\ldots,k_n) \in \mathbb{Z}^{n-m} \cong \mathcal{P}/\mathcal{M} \]
and $ds := ds_{m+1}\ldots ds_n$ is the Lebesgue measure on $[0,1)^{n-m}$.
  
Then we have
\begin{align}   \mbox{Ind}_{M\cap\Gamma}^\Gamma(\chi_{l_t}) &= \mbox{Ind}_{\mathcal{P}}^\Gamma( \mbox{Ind}_{M\cap\Gamma}^{\mathcal{P}}(\chi_{l_t}))  \notag \\
&\cong \mbox{Ind}_{\mathcal{P}}^\Gamma\left( \int_{[0,1)^{n-m}}^\oplus \chi_{l_t}\otimes \chi_{\psi_s}ds  \right)\notag \\ 
&= \int_{[0,1)^{n-m}}^\oplus\mbox{Ind}_{\mathcal{P}}^\Gamma(\chi_{l_t}\otimes \chi_{\psi_s})ds \label{intermidietedecomposion}
\end{align}
Note that each integrand of the right-hand side is a finite-dimensional representation; these decompositions are what we wanted. Thus, combining with Theorems \ref{characterfujiwara} and \ref{plancherelfujiwara}, we have the following theorem: Let $\mathbb{Q}^{\rm odd}$ be the set of rational numbers which have odd denominators as their irreducible fraction expressions.  

\begin{theorem}{\rm ($=$ Theorem \ref{introdecompose} )}\label{sec4decompose}

Assume that $l \in \mathfrak{g}_{\mathbb{Q}}^\ast$. Then there exists a polarization $\mathfrak{m}$ of dimension $m$, a rational subalgebra of $\mathfrak{g}$ satisfying the following. Let $\{X_1,\ldots, X_m, \ldots, X_n\}$ be a weak
Malcev basis of $\mathfrak{g}_\mathbb{Q}$, based on $\Gamma$ and passing through $\mathfrak{m}$.
Then, we have the decomposition
\begin{equation} \pi_l|_\Gamma = \int_{[0,1)^{n-m}}^\oplus {\rm Ind}_{M\cap\Gamma}^\Gamma (\chi_{l_t |_{M\cap\Gamma})}dt_{m+1}\cdots dt_n
 \label{intropildecomposition}
\end{equation}
where 
\begin{equation*}
l_t := l_{t_{m+1},\ldots,t_n} = {\rm Ad}^\ast (\exp(t_{m+1}X_{m+1}))\cdots{\rm Ad}^\ast(\exp(t_nX_n))l \end{equation*}
for all $t_{m+1}, \ldots, t_n \in [0, 1)$.

Moreover if $t = (t_{m+1}, \ldots, t_n ) \in ([0,1) \cap \mathbb{Q}^{\rm odd})^{n-m}$, then integrand of the right hand side of (\ref{intropildecomposition}) is further decomposed into finite dimensional unitary representations ${\rm Ind}_{\mathcal{P}}^\Gamma (\chi_{l_t}\otimes\chi_{\psi_s})$ as:
\begin{equation}
{\rm Ind}_{M\cap\Gamma}^\Gamma(\chi_{l_t}) = {\rm Ind}_{\mathcal{P}}^\Gamma({\rm Ind}_{M\cap\Gamma}^{\mathcal{P}}(\chi_{l_t})) 
= \int_{[0,1)^{n-m}}^\oplus{\rm Ind}_{\mathcal{P}}^\Gamma(\chi_{l_t}\otimes \chi_{\psi_s})ds \label{finaldecomposition}
\end{equation}
where $M = \exp \;\mathfrak{m}$, $\Gamma^{(2)} = [\Gamma,\Gamma]$, $\Gamma_s^{(2)}$ is the smallest saturated subgroup of $\Gamma^{(2)}$ and $\mathcal{P}$ is the polarization of $\log \;\Gamma$.
\end{theorem}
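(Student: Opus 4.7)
The plan is to prove the two displayed decompositions sequentially, the first being \eqref{intropildecomposition} and the second \eqref{finaldecomposition} under the rationality hypothesis on $t$. For \eqref{intropildecomposition}, I would invoke Theorem \ref{firstdecomposition}, whose proof I would carry out by induction on $\dim \mathfrak{g}$ following Subsubsections \ref{Branchingtheorem1}--\ref{choicepolarization}. If $\mathfrak{z}(\mathfrak{g})$ admits a proper rational one-dimensional subspace on which $l$ vanishes, I would pass to the quotient and apply the inductive hypothesis. Otherwise I would apply Kirillov's lemma in the rational form (Remark \ref{Kirillovremark}) to split $\mathfrak{g} = \mathbb{R}X \oplus \mathfrak{g}_0$ with $\mathfrak{g}_0$ a rational ideal of codimension one. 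In Case 1 ($\mathfrak{m} \subset \mathfrak{g}_0$), I would realize $\pi_{l,M} \cong \mathrm{Ind}_{G_0}^G(\pi_{l_0,M})$, restrict to $\Gamma$ through the transversal $\exp(\mathbb{R}X_{n+1})$, and use the lattice structure together with the inductive branching on $G_0$ to obtain \eqref{case1last}. In Case 2 ($\mathfrak{m} \not\subset \mathfrak{g}_0$), I would swap $\mathfrak{m}$ for a rational polarization $\mathfrak{m}_1 \subset \mathfrak{g}_0$ and establish the unitary equivalence $\mathrm{Ind}_M^K(\chi_{l,M}) \cong \mathrm{Ind}_{M_1}^K(\chi_{l,M_1})$ via the Euclidean Fourier transform \eqref{EuclidFourier} acting on $L^2(\mathbb{R})$, thereby reducing to Case 1.

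For the further decomposition \eqref{finaldecomposition}, I would use induction in stages to write
\[
\mathrm{Ind}_{M\cap\Gamma}^\Gamma(\chi_{l_t}) \cong \mathrm{Ind}_{\mathcal{P}}^\Gamma\bigl(\mathrm{Ind}_{M\cap\Gamma}^{\mathcal{P}}(\chi_{l_t})\bigr),
\]
where $\mathcal{P} = \exp P$ is the polarization of the restriction $\psi_t := l_t|_{\log \Gamma}$. The hypothesis that each $t_i = p_i/q_i$ has odd denominator, combined with the containments \eqref{rpsi} and \eqref{mloggamma}, ensures that $\chi_{l_t}$ extends to a character of $\mathcal{P}$ and that the quotient $\mathcal{P}/\mathcal{M}$ is free abelian of rank $n-m$. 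Pontryagin duality on this quotient then yields
\[
\mathrm{Ind}_{M\cap\Gamma}^{\mathcal{P}}(\chi_{l_t}) \cong \int_{[0,1)^{n-m}}^\oplus (\chi_{l_t} \otimes \chi_{\psi_s})\, ds,
\]
and commuting the outer induction with the direct integral produces the displayed formula. Each integrand is a finite-dimensional representation of $\Gamma$, indeed a representation of Howe type $U^{\tilde{\psi}} = \mathrm{Ind}_{\mathcal{P}}^\Gamma(\chi_{\tilde{\psi}})$ of Theorem \ref{Howefinite}(a); the oddness of each $q_i$ places us in case (a) rather than case (b) of that classification, and furnishes the equality of dimensions with the square root of the coadjoint orbit cardinality.

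The main obstacle, in my view, will be the bookkeeping in Case 2 of the first decomposition: one must verify that the replacement polarization $\mathfrak{m}_1$ inherits a rational structure compatible with $\Gamma$, and that the weak Malcev basis can be chosen to pass through $\mathfrak{m}_1$ consistently with the inductive choice of basis elements $X_{m+1}, \ldots, X_n$ used to parametrize the direct integral. A secondary but conceptually delicate point is verifying, uniformly on the dense rational subset $([0,1) \cap \mathbb{Q}^{\mathrm{odd}})^{n-m}$, that the polarization $P$ of $\psi_t$ varies measurably in $t$, so that the second decomposition is compatible with Lebesgue integration on $[0,1)^{n-m}$ and, in the global picture assembled in Substep 0-7, with the finitely additive Plancherel measure of Theorem \ref{introPytlik}.
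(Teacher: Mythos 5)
Your proposal follows essentially the same route as the paper: the first decomposition is obtained by induction on $\dim\mathfrak{g}$ via the rational Kirillov lemma, with Case 1 handled as in Bekka--Driutti through the transversal $\exp(\mathbb{R}X_{n+1})$ and Case 2 by switching to a polarization $\mathfrak{m}_1\subset\mathfrak{g}_0$ intertwined by the Euclidean Fourier transform, exactly as in Subsubsections \ref{inductionstep} and \ref{choicepolarization}. Your treatment of the second decomposition — induction in stages through $\mathcal{P}$, identifying $\mathrm{Ind}_{M\cap\Gamma}^{\mathcal{P}}$ via Pontryagin duality on the free abelian quotient $\mathcal{P}/\mathcal{M}$ and invoking Howe's case (a) through the odd-denominator hypothesis — likewise coincides with the argument of Section \ref{Step0-6}.
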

Note that $\chi_{\psi_s}$ is the same as $\chi$ in \ref{Howefinite}.


\subsubsection{The Fourier inversion formula for general discrete nilpotent groups: a generalization of Pytlik's theorem}

Finally if we combine Theorems \ref{Bekkafactor} with (\ref{FujiwaraPlancherel}), (\ref{pildecomposition}) but modifying as follows:
\begin{description}
\item Replace integral domain $\widehat{Z(\Gamma)}$ in (\ref{Bekkadecopequation}) and of $G\cdot \zeta\cap\Xi$ in (\ref{Fujiwaraformula2})  
to the dense subsets whose element satisfies rationality conditions and the assumption of Theorem \ref{Howefinite} (a) if we restrict the domain of definition to $\Gamma$
\item Replace the Plancherel measures $d\chi$ in (\ref{Bekkadecopequation}) and $d\lambda$ in (\ref{Fujiwaraformula2}) to finitely additive measures $d\widetilde{\chi}$ and $d\widetilde{\lambda}$ which are defined on rational subsets of $\widehat{Z(\Gamma)}$ and $G\cdot \zeta\cap\Xi$ respectively and whose values are same as those of $d\chi$ and $d\lambda$ on their closures.
\end{description}

The above modifications follow the same spirit as Pytlik's theorem (Theorem \ref{Pytlik}) and the Fourier inversion formula (\ref{Fourier1}) for the three-dimensional discrete Heisenberg group $\mbox{Heis}_3(\mathbb{Z})$. Then, we have the following generalization of them to discrete, torsion-free, nilpotent groups.

\begin{theorem}{\rm (}$=$ Theorem \ref{introPytlik}{\rm )}\label{newPytlik} 
 For a finitely generated discrete torsion-free nilpotent group $\Gamma$, 
there exists a positive, finitely additive measure $\mu$ on the unitary dual $\widehat{\Gamma}$ of $\Gamma$, supported by the set $\widehat{\Gamma}_{\rm fin}$ of (equivalence classes of) finite-dimensional representation of $\Gamma$, such that for $f \in L^1(\Gamma )$, the function 
\[ \pi \mapsto {\rm tr}(\pi f) := \frac{1}{{\rm dim}\; \pi}{\rm Tr}(\pi (f))
\]
is $\mu$-integrable on $\hat{\Gamma}$ and 
\begin{equation}     f(0) = \int_{\widehat{\Gamma}_{\rm fin}}\frac{1}{{\rm dim}\; \pi}{\rm Tr}(\pi (f))d\mu(\pi ). \label{nilpotentPytlik2}
\end{equation}
\end{theorem}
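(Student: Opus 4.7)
\medskip

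The plan is to assemble the formula by composing the four decomposition results developed in Sections~\ref{Step0-1}--\ref{Step0-6}, and then at the very end to replace the (countably additive) Plancherel measures that appear along the way by finitely additive measures supported on a dense subset of ``rational'' parameters, in direct analogy with the passage from the Lebesgue measure on $[0,1]$ to Pytlik's measure $\widetilde m$ on $\mathbb{Q}\cap[0,1]$ in the Heisenberg case of Theorem~\ref{Pytlik}. Concretely, starting from $f \in L^1(\Gamma)$, I would first invoke the Bekka--Mautner--Segal decomposition (Theorem~\ref{BekkaPlancherel}, specialized to the Zariski dense situation of Theorem~\ref{Bekkafactor}) to write
\begin{equation*}
f(0)\;=\;\|f\|_2^2\text{-type identity applied to }\delta_0\ast f
\;=\;\int_{{\rm Ch}(\Gamma)} t(f)\,d\mu_\Gamma(t)
\;=\;\int_{\widehat{Z(\Gamma)}} \mathcal{F}\bigl(f|_{Z(\Gamma)}\bigr)(\chi)\,d\chi,
\end{equation*}
so that the problem is reduced to disintegrating each factor representation $\tau_\lambda = \pi_t$ induced from a central character.

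Next I would extend each $\tau_\lambda$ of $\Gamma$ to the monomial representation $\widetilde\tau_\lambda = \mathrm{Ind}_{Z(G)}^{G}(\chi_\lambda)$ of the Malcev completion $G$ via the Baker--Campbell--Hausdorff formula (Substep~0-2) and apply Fujiwara's Plancherel theorem for monomial representations (Theorem~\ref{plancherelfujiwara}) to decompose $\widetilde\tau_\lambda$ as a direct integral of irreducible $\pi_l$ of $G$ with explicit multiplicities. Restricting each $\pi_l$ back to $\Gamma$, the branching formula of Theorem~\ref{firstdecomposition} ($=$ Theorem~\ref{sec4decompose}) produces
\[
\pi_l|_\Gamma \;\cong\; \int_{[0,1)^{n-m}}^\oplus \mathrm{Ind}_{M\cap\Gamma}^\Gamma\bigl(\chi_{l_t}|_{M\cap\Gamma}\bigr)\,dt_{m+1}\cdots dt_n,
\]
and for parameters $t \in ([0,1)\cap\mathbb{Q}^{\rm odd})^{n-m}$ satisfying the rationality condition~\eqref{rpsi}--\eqref{mloggamma}, the further decomposition~\eqref{finaldecomposition} refines this integrand into the finite-dimensional irreducibles $\mathrm{Ind}_{\mathcal{P}}^\Gamma(\chi_{l_t}\otimes\chi_{\psi_s})$ classified by Howe (Theorem~\ref{Howefinite}). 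Composing all four decompositions and tracing the image of $f$ gives an identity expressing $f(0)$ as an iterated integral over central characters, over Fujiwara's $V$-parameter space, and over two $[0,1)^{n-m}$ cubes, whose integrand is exactly $\frac{1}{\dim\pi}\mathrm{Tr}(\pi(f))$ for finite-dimensional $\pi=\mathrm{Ind}_{\mathcal{P}}^\Gamma(\chi_{l_t}\otimes\chi_{\psi_s})$.

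At this stage the formula is valid with a $\sigma$-additive Plancherel measure, but the integrand depends on $\pi$ only through its parameters, and the set of parameters for which the integrand $\pi$ is genuinely finite-dimensional is a dense subset (in the Fell topology) of full Plancherel mass but not of full Lebesgue mass componentwise in $V$ and in $\widehat{Z(\Gamma)}$. Following exactly the prescription already illustrated for $\mathrm{Heis}_3(\mathbb{Z})$ (where the Lebesgue measure on $[0,1]$ is replaced by $\widetilde m$ on $\mathbb{Q}\cap[0,1]$ via $\widetilde m(\mathbb{Q}\cap[a,b])=b-a$), I would define $\mu$ as the product of the natural finitely additive measures on the rational loci in $\widehat{Z(\Gamma)}$, in Fujiwara's $V$, and in $[0,1)^{n-m}\cap(\mathbb{Q}^{\rm odd})^{n-m}$, each calibrated so that its value on a rational rectangle equals the Lebesgue measure of the corresponding genuine rectangle. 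For $f\in L^1(\Gamma)$ (equivalently $\mathbb{C}[\Gamma]$ after the standard density argument) the integrand is bounded and continuous in the parameters, so the identity obtained with Plancherel measures passes to the finitely additive $\mu$ without change.

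The main obstacle I anticipate is the bookkeeping on the polarization and multiplicity data: showing that the composite measure one builds from the central Plancherel measure of Theorem~\ref{Bekkafactor}, the Fujiwara measure $d\nu$ on $\Xi = f+\mathfrak{h}^\perp$, and the two integrations $dt$ and $ds$ of Theorem~\ref{sec4decompose} really collapses, via Fubini and the substitutions defining $l_t$, into a single measure on $\widehat\Gamma_{\rm fin}$ whose integrand is the normalized trace. The delicate point is precisely the one highlighted in Remark~\ref{directrelationtoG}: one must ensure that the two nested inductions (center $\to G$ then $G\to\Gamma$, followed by $\mathcal{M}\to\mathcal{P}\to\Gamma$) fit together without double counting, which is why the monomial refinement via the layer $U_e$ and Fujiwara's multiplicity-free form~\eqref{monomialmulti} is indispensable. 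Once this combinatorial matching is done, replacement of Lebesgue by $\widetilde m$-type measures on the dense rational loci is straightforward, as in the Heisenberg prototype, and yields the stated Fourier inversion formula~\eqref{nilpotentPytlik2}.
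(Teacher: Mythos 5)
Your proposal follows essentially the same route as the paper: the paper's own proof is exactly the combination of Bekka's central-character Plancherel formula (Theorem \ref{Bekkafactor}), Fujiwara's Plancherel theorem for the monomial extensions to $G$ (Theorem \ref{plancherelfujiwara}), the branching formula (\ref{pildecomposition}) and its rational refinement (\ref{finaldecomposition}) into Howe's finite-dimensional representations, followed by replacing the Lebesgue/Plancherel measures with finitely additive measures on the rational loci calibrated by the measure of their closures, in the spirit of Pytlik's theorem. The ``bookkeeping'' issue you flag is real but is treated at the same level of detail (i.e.\ only implicitly) in the paper itself, so your argument is aligned with the intended proof.
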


\begin{remark}\label{PlancherelLie} 
\begin{description}
\item[{\rm (1)}] As in the case of the Heisenberg group in Remark \ref{remark34}, it should be noted that once establishing the above formula, 
it also gives an approximation of the left-hand side by each integrand of the right-hand side by regarding their fluctuations as an error bound. Moreover, the fluctuation can be easily estimated using Theorem \ref{Howefinite}, which is essentially the same as (\ref{fluctuation}) in the case of the Heisenberg group.

\item[{\rm (2)}] Once we establish these approximation results, concerning a vicinity of the trivial representation at least, we can replace the Plancherel measure $d\mu$ in the formula (\ref{nilpotentPytlik1}) ($=$ (\ref{nilpotentPytlik2})) to the one $\mbox{Tr}\;\pi_l(\phi) dl$ of (\ref{B2}) in Theorem \ref{FourierinversionLie} in the actual computation. In fact, this is based on the fact that the approximation errors are improved as the denominators of the irreducible fractions of the rational numbers in approximations become larger and larger.

\end{description}

\end{remark}

\subsection{Filtrations of Lie algebras and stratified Lie groups: Substep 0-4}\label{stratification}

In this section, we recall from \cite{Alexopoulos3} notions of a stratified Lie algebra and stratification of a general nilpotent Lie algebra. 

First, we recall the filtration of the Lie algebra. 
Let $\mathfrak{g}$ be the Lie algebra of nilpotent Lie group $G$, which we
identify with the left-invariant vector fields on $G$.
We set $\mathfrak{g}_1 = \mathfrak{g}$ and $\mathfrak{g}_{i+1} = [\mathfrak{g}_1, \mathfrak{g}_i ],\quad i > 1$. Since $\mathfrak{g}$ is nilpotent, we have a filtration
\[\mathfrak{g} = \mathfrak{g}_1 \supset \mathfrak{g}_2 \supset \cdots \supset \mathfrak{g}_n \supset \mathfrak{g}_{n+1} = \{0\},\quad \mathfrak{g}_n \neq \{0\}.
\]
We consider linear subspaces $\mathfrak{g}^{(1)}, \ldots, \mathfrak{g}^{(n)}$ of $\mathfrak{g}$ such that
\begin{equation}\mathfrak{g}_i = \mathfrak{g}^{(i)} \oplus \cdots \oplus \mathfrak{g}^{(n)}, \quad 1 \leq i \leq n.  \label{liealgdecomposition}
\end{equation}

We set
\begin{align}
&{} g_0 = 0,\; g_i = \mbox{dim}(\mathfrak{g}/\mathfrak{g}_{i+1}) = \mbox{dim}(\mathfrak{g}^{(1)}\oplus \cdots \oplus\mathfrak{g}^{(i)}),\quad 1 \leq i \leq n, \\
&{} \sigma(j) = i,\quad \mbox{for}\quad g_{i-1} < j \leq g_i,\\
&{} q = g_n = \mbox{dim}(\mathfrak{g}).\end{align}\

Notice that the homogeneous dimension $D$ of $G$ is given by
\[D = \sigma(1)+ \cdots + \cdots +\sigma(q).\]

We say that $\mathfrak{g}$ is stratified if the following conditions hold:
\begin{align}
&{} \mathfrak{g} = \mathfrak{g}_1 = \mathfrak{g}^{(1)} \oplus \cdots \oplus \mathfrak{g}^{(n)},  \label{liestratified1} \\
&{} [\mathfrak{g}^{(i)},\mathfrak{g}^{(j)}] \subset \mathfrak{g}^{(i+j)}, \label{liestratified2} \\
&{}  \mathfrak{g}^{(1)}\; \mbox{generates}\; \mathfrak{g}\; \mbox{as a Lie algebra},  \label{liestratified3}
\end{align}

We consider a basis $\{X_1, \ldots,X_q \}$ of $\mathfrak{g}$ such that $\{X_{g_{i-1}+1}, \ldots ,X_{g_i}\}$
is a basis of $\mathfrak{g}^{(i)}, \quad 1 \leq i \leq n$.

On the linear space $\mathfrak{g}$, we define the Lie bracket $[\cdot,\cdot]_0$ by setting
\begin{equation} [X_i,X_j ]_0 = \mbox{pr}_{\mathfrak{g}^{\sigma (i)+\sigma (j)}}[X_i,X_j] \label{zerobracket}
\end{equation}
where $\mbox{pr}_{\mathfrak{g}^{\sigma (i)+\sigma (j)}}$ is the projection onto $\mathfrak{g}^{\sigma (i)+\sigma (j)}$.
We denote by $\mathfrak{g}_0$ the Lie algebra $\mathfrak{g}_0 = (\mathfrak{g}, [\cdot,\cdot ]_0)$. Note that $\mathfrak{g}_0$ is nilpotent.

Take the exponential coordinates of the second kind (or Malcev coordinates)
\[\phi : \mathbb{R}^q \to G, \qquad \phi : x = (x_q,\ldots, x_1) \mapsto  \exp(x_qX_q)\cdots \exp(x_1X_1).
\]

Let $\delta_\varepsilon, \; \varepsilon > 0$
, be the family of dilations of $G$ defined by
\[ \delta_\varepsilon : (x_q,\ldots, x_1) \mapsto
(\varepsilon^{\sigma (q)}x_q,\ldots, \varepsilon^{\sigma (1)}x_1)
.\]
Also, let $\ast_\varepsilon, \; \varepsilon > 0$, be the family of group products defined by
\[x\ast_\varepsilon y = \delta_\varepsilon[(\delta_{\varepsilon^{-1}}x)(\delta_{\varepsilon^{-1}}y)] \]
and let
\[ x \ast_0 y = \lim_{\varepsilon \to 0}x \ast_\varepsilon y. \]
Then $G_0 = (G, \ast_0)$ is a stratified nilpotent Lie group whose Lie algebra is isomorphic to $\mathfrak{g}_0$. 

Next, we take canonical $\ast_0$-coordinates on $G_0$. Here we denote $\ast = \ast_0$. 
\begin{equation} \phi^\ast :\mathbb{R}^q \to G_0, \qquad \phi^\ast : x = (x_{q,\ast},\ldots, x_{1,\ast}) \mapsto  \exp(x_{q,\ast}X_q)\ast \cdots \ast\exp(x_{1,\ast}X_1). \label{stratificationlie}
\end{equation}
Note that the above $\delta_\varepsilon$ naturally defines dilatation on  $\mathfrak{g}_0$ and, thus, on $G_0$.

We identify $G$ and $G_0$ as a differentiable manifold. Then, the relations of the above two coordinates are described as follows:
\begin{align}
x_{i,\ast} &= x_i &(1 \leq i \leq \sigma(1)+\sigma(2)) \\
x_{i,\ast} &= x_i + \sum_{0<|K|\leq k}C^KP_K(x)  \quad &(\sum_{j=1}^k\sigma(j)+1 \leq i \leq \sum_{j=1}^{k+1}\sigma(j), 2 \leq k \leq n-1) \label{stratifiedcanonical}
\end{align}
where $K$ stands for a multi-index $(i_1,\ldots,i_\ell)$, $|K| = \sigma(i_1)+\cdots +\sigma(i_\ell)$ and $P_K(x) := x_{i_1}\cdots x_{i_\ell}$. This formula can be derived from (\ref{zerobracket}) inductively (cf \cite{Ishiwata},\cite{Ishiwata4}).

\subsection{Computations of hypo-elliptic operators: Substep 0-9, with several examples}\label{Step0-9}

In the above arguments, especially the decomposition formula (\ref{finaldecomposition}) and the structure theorem of finite dimensional irreducible representations of $\Gamma$ in Theorem \ref{Howefinite} (i), for any element $\pi$ in a dense subset of irreducible unitary dual of the Malcev completion $G$ of $\Gamma$, the restriction $\pi|_\Gamma$ can be 
approximated by finite dimensional representations of $\Gamma$ in arbitrality order.
Thus, the Plancherel measure $\mu$ in Theorem  \ref{newPytlik} also approximates $dl$ in Theorem \ref{FourierinversionLie}. 

There is a problem of trace properties of test functions mentioned in Section \ref{Heisenbergexplanation} and after Theorem \ref{Fourierinversionheisenberglie} for the Heisenberg group $\mbox{Heis}_3(\mathbb{Z})$. We explain here in more detail. When we regard a function $f \in L^1(\Gamma)$ as a function $\tilde{f} \in L^1(G)$, it seems natural to define the latter as a locally constant function which takes a value $f(\sigma)$ on $\sigma \mathcal{D}$ where $\mathcal{D}$ ia a fundamental domain of the canonical covering $G \to G/\Gamma$. 
In this case $\tilde{f}$ in $L^1(G)$ but not in $\mathcal{S}(G)$. Then, the operator $\pi_l(\tilde{f})$ is a compact operator but not a trace class operator; thus, we could not apply the Fourier inversion formula (\ref{FourierinversionLie}), and thus, the computation above is a formal one.
The formal manipulations of irreducible representations of $G$ can be a substitute for actual computation for those of $\Gamma$ in virtue of Theorem \ref{introdecompose} ($=$ Theorem \ref{sec4decompose}).
 
In addition, we may reduce to the case where $G$ is stratified by the previous section, our hypoelliptic operator $\mathcal{H} = \mathcal{H}^G$ is expressed as 
\begin{equation} \mathcal{H} = \mathcal{H}^G = c_1d\pi_l(X_1)^2+\cdots c_kd\pi_l(X_k)^2, \label{generalhypo1}\end{equation}     
where $c_j, \; 1 \leq j \leq k$ is a positive constant determined in later $d\pi_l$ is the differential of the irreducible representation $\pi_l$ with $l$ as an element of the dense subset of the principal orbits appeared in Theorem \ref{FourierinversionLie} and $X_1,\ldots, X_k$ is an orthonormal basis of $\mathfrak{g}^{(1)}$ of the stratified Lie algebra $\mathfrak{g}$ of $G_0 = (G, \ast_0)$ defined by (\ref{stratificationlie}). For a more detailed explanation, see (\ref{detailedgeneralhypo1}).
 
We notice that $\mathcal{H}$ can be expressed as a polynomial of the multiplication operator $x_i$ and the differential operators $\partial /\partial x_i$ for some local coordinates $\{x_i\}$ of $G$.

Here, we give several computations for three examples in Appendix B.
 
\begin{example}[$(2n+1)$ dimensional Heisenberg group $H_n = {\rm Heis}_{2n+1}(\mathbb{Z})$]

From the Fourier inversion formula (\ref{2n+1heisenbergplancherel}), the principal orbit can be expressed as $\{\alpha l_1|\alpha \neq 0\}$ and the Plancherel measure is $|\alpha|^nd\alpha$ and thus, the polynomial growth order $d$ satisfies $d/2 = n+1$.  
By (\ref{2n+1heisenbergirred}), the corresponding unitary irreducible representations are 
\begin{equation}
\pi_{\alpha l_1}(z, y, x)\tilde{f}(t)= e^{2\pi\sqrt{-1}\alpha (z+t\cdot y+(l/2)x\cdot y)}\tilde{f}(t+x) \quad (\lambda \neq 0).
\end{equation}
which are unitary equivalent to 
\begin{equation}
\widetilde{\pi}_{\alpha l_1}(z, y, x)\tilde{f}(t)= e^{2\pi\sqrt{-1}\sqrt{\alpha} (z+t\cdot y+(l/2)x\cdot y)}\tilde{f}(t+\sqrt{\alpha}x) \quad (\lambda \neq 0).
\end{equation}
If $n=1$, this is the same representation as (\ref{srep}) but using different coordinates as explained there and replacing $\alpha$ here with $h$ there.

Therefore, the hypo-elliptic operator $\mathcal{H} = \mathcal{H}_{H_n}$ is expressed as
\begin{align*}
\mathcal{H} &= \sum_{i=1}^n \left( d\pi_{l_1}(X_i)^2+d\pi_{l_1}(Y_i)^2\right)\\
&= \sum_{i=1}^n \left( \left(\frac{\partial}{\partial t_i}\right)^2-4\pi^2t_i^2\right)
\end{align*}
acting  on $L^2(\mathbb{R}^n)$, which is the $n$-dimensional Harmonic oscillator.
Its eigenvalues are $4\pi^2\sum_{i=1}^n(2k_i+1)$ with $k_i = 0,1,2,\ldots,$ and the special value $\zeta_{\mathcal{H}}(d/2)$ of the spectral zeta function of $\mathcal{H}$ at $d/2 = n+1$ is
\[ \frac{1}{16\pi^4}\zeta_{\mathcal{H}}(d/2) =\sum_{i=1}^n\sum_{k_i =0}^\infty \frac{1}{k_1^{n+1}+\cdots +k_n^{n+1}}.\] 
In particular, when $n=1$, 
\[\zeta_{\mathcal{H}}(d/2) = \zeta_{\mathcal{H}}(2) = \sum_{i=1}^\infty \frac{1}{(2k+1)^2} = \sum_{k=1}^\infty \frac{1}{k^2}-\frac{1}{4}\sum_{k=1}^\infty \frac{1}{k^2} = \frac34\zeta(2) = \frac{\pi^2}{8}
\]
where $\zeta(s)$ is the Riemann zeta function.

\end{example}

\begin{example}[Engel group $E_4$]\label{Engelquartic}
From the Fourier inversion formula (\ref{engelplancherel}), the principal orbit can be expressed as $\{\delta l_1 + \beta l_3|\delta, \beta \neq 0\}$ and the Plancherel measure is $|\delta|d\delta d\beta$. Since the Lie algebra $\mathfrak{g}= \mathfrak{e}_4$ of $E_4$ has the following stratified structure of $\mathfrak{g}= \mathfrak{g}^{(1)}\oplus \mathfrak{g}^{(2)}\oplus \mathfrak{g}^{(3)}$ with
\[ \mathfrak{g}^{(1)}=\{W,X\}, \mathfrak{g}^{(2)}=\{Y\}, \mathfrak{g}^{(2)}=\{Z\},
\] 
the polynomial growth order $d=7$ is coming from the fact that the weight of elements in $\mathfrak{g}^{(i)}$ is equal to $i$ and $7 = 1+1+2+3$.
 This coincides with the computation for the Planchrel measure $|\delta|d\delta d\beta$ where the weights of $\delta$ and $\beta$ are $3$ and $1$ respectively and $7 = 3 \times 2 +1$.
 
By (\ref{engelirred}), the corresponding unitary irreducible representations are
\begin{align*}
&{}[\pi_{\delta,\beta}(z,y,x,w)\tilde{f}](t) \\
&{}= e^{2\pi\sqrt{-1}\beta x}e^{2\pi\sqrt{-1}\delta(z+ty+(l/2)t^2x +(1/2)twx +(l/2)wy + (l/6)w^2x)}\tilde{f}(t+ w). 
\end{align*} 

Therefore, the hypo-elliptic operator $\mathcal{H} = \mathcal{H}_{E_4}$ is expressed as
\begin{align}
\mathcal{H} &= \left( d\pi_{\delta,\beta}(W)^2+d\pi_{\delta,\beta}(Y)^2\right) \notag \\
&= \left(\frac{\partial}{\partial t}\right)^2+t^4 \label{engelhypo}
\end{align}
for $\tilde{f} \in L^2(\mathbb{R})$. This operator is called a (modified) quartic operator.

Its eigenvalues seem not to be known explicitly; however, the special value $\zeta_{\mathcal{H}}(d/2)$ of the spectral zeta function of $\mathcal{H}$ at $d/2 = 7/2$ can be expressed as a certain integral including the Bessel functions. This fact is obtained by the method explained in Appendix C in \cite{Voros} or \cite{Voros2} as follows: We shall express the special values of the spectral zeta function
\[   \zeta_{\mathcal{H}}(s) = \sum_{k=1}^\infty \frac{1}{\lambda_k^s}
\]
where $\lambda_k$ is the $k$-th eigenvalue of $\mathcal{H}$. Note that the spectrum of $\mathcal{H}$ consists of discrete eigenvalues. 

Although it is enough to consider the case $M=2$ for the quartic oscillator here, we quote the general integer $M$ cases as in Appendix C in \cite{Voros}.

Let the homogeneous Schr\"odinger operator $\widehat{H} = \frac{d^2}{dq^2}  + q^{2M}$ be the Hamiltonian here. 

Our computations rely on several hypergeometric classical functions involving the Bessel functions $K_\mu$ and $I_\mu$ in \cite{Erdelyi} and the generalized hypergeometric function ${}_pF_q\left(\begin{array}{ccc} a_1 &  \ldots &  a_p \\  b_1 &  \ldots &  b_q\end{array};z\right)$ in \cite{Slater} are used.

Bessel's differential equation is defined as follows: for $\mu \not\in \mathbb{Z}$, 
\[ z^2\frac{dw^2}{dz^2}+z\frac{dw}{dz}+(z^2-\mu^2) = 0. \]
The functions $J_\mu(z)$ and $J_{-\mu}(z)$ are independent solutions, where 
\[ J_\mu(z) = \sum_{m=1}^\infty \frac{(-1)^m(z/2)^{2m+\mu}}{m!\Gamma(m+\mu+1)}\]is called Bessel function of the first kind and $z, \mu$ are called its variable and order respectively.

If $z$ is replaced by $iz$, then Bessel's differential equation becomes
\[ z^2\frac{dw^2}{dz^2}+z\frac{dw}{dz}-(z^2+\mu^2) = 0. \]
The functions $J_\mu(iz)$ and $J_{-\mu}(iz)$ are independent solutions, but the following functions are more often used.
\begin{align*}
I_\mu(z)&= e^{-(1/2)i\mu\pi}J_\mu(e^{(1/2)i\pi}z) \; (\mbox{Modified Bessel function of the first kind})\\
K_\mu(z)&= \frac{\pi}{2\sin \mu\pi}\left(I_{-\mu}(z) - I_\mu(z)\right) \; (\mbox{Modified Bessel function of the third kind})
\end{align*}
Next, we define the generalized hypergeometric function ${}_pF_q\left(\begin{array}{ccc} a_1 &  \ldots &  a_p \\  b_1 &  \ldots &  b_q\end{array};z\right)$ in (C.9), (C.10) in \cite{Voros} by
\[ {}_pF_q\left(\begin{array}{ccc} a_1 &  \ldots &  a_p \\  b_1 &  \ldots &  b_q\end{array};z\right) = 
\Gamma\left(\begin{array}{ccc} b_1 &  \ldots &  b_q \\ a_1 &  \ldots &  a_p \end{array}\right)\prod_{n=0}^\infty\Gamma\left(\begin{array}{ccc} a_1+n &  \ldots &  a_p+n \\  b_1+n &  \ldots &  b_q+n\end{array}\right)\frac{z^n}{n!}
\]
with the notation
\[ \Gamma\left(\begin{array}{ccc} b_1 & \ldots & b_q \\ a_1 & \ldots & a_p \end{array}\right) = \frac{\prod_{k=1}^q\Gamma(b_k)}{\prod_{j=1}^p\Gamma(a_j)},
\]
where $\Gamma(s)$ is usual gamma function.
Connection to Bessel functions at (C.11), (C.12) in \cite{Voros} are 
\begin{align*} 
I_\mu(z)^2 &= \frac{(z/2)^{2\mu}}{\Gamma(\mu+1)^2}{}_1F_2\left(\begin{array}{ccc} {} & \mu+1/2  & {} \\  1+\mu &  {}  & 1+2\mu\end{array};z^2\right) \\
I_\mu(z)I_{-\mu}(z) &= \frac{\sin \mu\pi}{ \mu\pi}{}_1F_2\left(\begin{array}{ccc} {} & 1/2  & {} \\  1+\mu &  {}  & 1-\mu\end{array};z^2\right)
\end{align*}

Our basic formula is the resolvent kernel $R(E;q,q')$ of the operator $\hat{H}$ in (C.14) in \cite{Voros} 
\[  (\hat{H}-E)^{-1}(q,q') = R(E;q,q') =W(E)^{-1}\psi_+(q_<)\psi_-(q_>),\]
where $W(E)=\psi_-\psi'_+-\psi_+\psi'_-$ is the Wronskian, $\psi_\pm$ are two solutions recessive at $q = \mp \infty$, namely $\psi_\mp$  grows exponentially fast (is dominant) relative to $\psi_\pm$ (recessive) at $q = \mp \infty$ (p.230 \cite{Voros}) of the equation $(\hat{H}-E)\psi_\pm = 0$ and $q_< = \min(q, q')$, and $ q_> = \max(q, q')$. 

If $\psi_\pm$ for $E=0$, these are expressible in terms of Bessel functions using Lommel's transformation (\S 7.2.8 \cite{Erdelyi}). If we set
\[   \mu = (2M+1)^{-1}, z=q^{M+1}/(M+1) \quad (q>0).\] 
Then, the solutions were normalized according to 
\begin{align*}
\psi_{+}(q_1) &:= \psi_{+,q_0}(q_1,E=0) \sim p(q_1)^{-1/2}\exp\left(i\int_{q_0}^{q_1}p(q)dq\right),\quad q_1 \to -\infty \\
\psi_{-}(q_1)&:= \psi_{-,q_0}(q_1,E=0) \sim p(q_1)^{-1/2}\exp\left(-i\int_{q_0}^{q_1}p(q)dq\right),\quad q_1 \to \infty 
\end{align*}
where $p(q) = -iq^M$ (cf. (4.13) \cite{Voros}) with $q_0 = 0$ read:
\begin{align*}
\psi_-(q)&=  \psi_+(-q) = (2i/\pi)^{1/2}(2\mu)^{(1/2)-\mu}z^\mu K_\mu(z)\quad (q > 0) \\
\psi_+(-q) &= \psi_-(-q) = (2i/\pi)^{1/2}(2\mu)^{(1/2)-\mu}z^\mu(K_\mu(z)+(\sin \mu\pi)^{-1}I_\mu(z)\quad (q > 0)    
\end{align*}
and $W = W(E=0) = 2i(\sin \mu\pi)^{-1}$ as (C.16),(C.17) in \cite{Voros}.
Then, by using the kernel $R(q,q'):=R(0;q,q')$ of the integral expression of $(\hat{H})^{-n}$, we have, as (C.19) in \cite{Voros}, the special value $\zeta_{\hat{H}}(n)$ of the spectral zeta function of $\hat{H}$ at integers $n = 1,2,\ldots$ can be expressed as:
\[ \zeta_{\hat{H}}(n) = \int_{\mathbb{R}^n}R(q_1,q_2)R(q_2,q_3)\ldots R(q_n,q_1)dq_1\ldots dq_n. \] 
 
Moreover, our required value $\zeta_{\hat{H}}(7/2)$ of the quartic oscillator $\hat{H} = -d^2/dq^2 + q^4$ is expressed as
\[\zeta_{\hat{H}}(7/2) = \int_{\mathbb{R}^4}R(q_1,q_2)R(q_2,q_3)R(q_3,q_4)B(q_4,q_1)dq_1\ldots dq_4,
\] 
where $B(q,q')$ is the integral kernel of the square root $\hat{H}^{-1/2}$ of $R :=  \hat{H}^{-1}$ which can be defined as 
\[ R^{1/2} = 2\|R\|^{1/2}(R/4\|R\|)^{1/2} = 2\|R\|^{1/2}\sum_{n=1}^\infty(-1)^n\left(\begin{array}{c}1/2 \\ n\end{array}\right)\left(I-\frac{R}{4\|R\|}\right)^n.
\]
It is well defined since the operator norm $\|R\|$ of $R$ is bounded.

\end{example}

\begin{example}[Nilpotent group of $4 \times 4$ matrices $N_4$]

From the Fourier inversion formula (\ref{4upperplancherel}), the principal orbit can be expressed as $\{\alpha l_1 +\gamma_2 l_5 |\alpha_1,\gamma_2 \neq 0\}$ and the Plancherel measure is $|\alpha|^2 d\alpha d\gamma_2$.
 Since the Lie algebra $\mathfrak{g}= \mathfrak{n}_4$ of $N_4$ has the following stratified structure of $\mathfrak{g}= \mathfrak{g}^{(1)}\oplus \mathfrak{g}^{(2)}\oplus \mathfrak{g}^{(3)}$ with
\[ \mathfrak{g}^{(1)}=\{X_1,X_2,X_3\}, \mathfrak{g}^{(2)}=\{Y_1,Y_2\}, \mathfrak{g}^{(2)}=\{Z\}.
\] 
The polynomial growth order $d=10$ is coming from the fact that the weight of elements in $\mathfrak{g}^{(i)}$ is equal to $i$ and $10 = 1+1+1+2+2+3$. This coincides with the computation for the Planchrel measure $|\alpha|^2 d\alpha d\gamma_2$ where the weights of $\alpha$ and $\gamma_2$ are $3$ and $1$ respectively and $10 = 3 \times 3 +1$. 

By (\ref{4upperirred}), the corresponding unitary irreducible representations are 
\begin{align}
&{}[\pi_{\alpha',\gamma'_2}\tilde{f}](t_l,t_3) \notag \\
&= e^{2\pi\sqrt{-1}\gamma'_2x_2}e^{2\pi\sqrt{-1}\alpha'[z + (1/2)x_1y_2 -x_3y_1 -t_1x_2t_3-(1/2)x_1x_2t_3 -(1/2)t_1x_2x_3+(1/3)x_1x_2x_3]}\notag \\
&{} \cdot\tilde{f}(t_1+x_1,t_3 +x_3), \notag 
\end{align}
for $\tilde{f} \in L^2(\mathbb{R}^2)$.

Therefore, the hypo-elliptic operator $\mathcal{H}= \mathcal{H}_{N_4}$ is expressed as
\begin{align}
\mathcal{H} &= \left( d\pi_{l}(X_1)^2+d\pi_{l_1}(X_3)^2+d\pi_{l}(X_2)^2\right) \notag \\
&= \left( \left(\frac{\partial}{\partial t_1}\right)^2+\left(\frac{\partial}{\partial t_3}\right)^2-4\pi^2(t_1t_3)^2\right) \label{4upperhypo}
\end{align}
acting  on $L^2(\mathbb{R}^2)$.
The explicit values of its eigenvalues and eigenfunctions are unknown. 
Although there seem to be some possibilities to express the special value $\zeta_{\mathcal{H}}$ of the spectral zeta function of $\mathcal{H}$ at $d/2 = 5$ as some integrals, we have not yet obtained these formulas at present. 
\end{example}

\section{Another proof of the semiclassical expansion for the Harper operator}\label{Anotherproofwilkinson}   
Before going into further steps for proofs of results of Subsections \ref{IntroChebotarev} and \ref{IntroHeat}, we explain our mathematical justification of the semiclassical expansion ({\ref{introeigen1}) and (\ref{introformal}) since it can be considered as simple models containing some essential idea in the arguments in Sections \ref{sectionasym} and \ref{asymnilpotent}

We first recall several reasons why mathematical justifications are necessary for the above expansion formula.
\begin{description}
\item[\rm{(1)}] As we already explained in the introduction, formal manipulation like (\ref{introformal}) is the form that the bounded operator $h_\theta$ would be approximated by the harmonic oscillator $\widetilde{\mathcal{H}}$, which is an unbounded operator, and thus, this causes the ``error'' to be something complicated.

\item[\rm{(2)}] As one may be seen from the shape of the Hofstadter butterfly, the behavior of the edges of the spectrum $\sigma(H_\theta)$ is not smooth with respect to $\theta$ in general: it is not $C^1$ at rational $\theta$'s (cf. \cite{Bellissard}).  

\item[\rm{(3)}]When $\theta$ is a rational number, the spectrum $\sigma(H_\theta)$ consists of the unions of finite closed intervals (i.e., finite bands). Then we have the question: Which point in each band should be chosen for each rational $\theta$ in the expansion (\ref{introeigen1})?
\end{description}

In some literature of physics, the above points (2) and (3) are explained as follows: In the point (3), the width of each band in the spectrum $\sigma(H_\theta) =\sigma(h_\theta)$ has an estimate $O(e^{-c/\theta})$ and thus the choice of a point in each band does not affect the (polynomial) expansion (\ref{introeigen1}), which is sometimes called ``non-perturbative effects''. Moreover, this freedom of choice naturally eliminates the problem of (2) since we can choose a point in each band varying smoothly with respect to $\theta$.

In the arguments in \cite{Helffer}, the above non-perturbative estimates
are obtained simultaneously with (\ref{introeigen1}). On the other hand, we can prove this estimate after establishing (\ref{introeigen1}).

The expression $h_\theta$ of $H_\theta$ in (\ref{introhtheta}) is coming from the Schr\"odinger representation $\rho_h$ with $\theta =2\pi h = 2\pi x_3 = 2\pi p/q$. Then, the semiclassical expansion (\ref{introeigen1}) and (\ref{introformal}) ($=$ (\ref{formal})) is essentially the same as the above investigation of the behavior of the eigenvalue of the twisted Laplacian $\Delta_{\rho_{{\rm \tiny{fin}},x}}$ and $\Delta_{\rho_h}$ based on Theorem \ref{discretetoLie} and Remark \ref{remark34}. Namely, our mathematical justification comes from a simple application of the above freeness of the interchange between $\rho_h$ and $\rho_{\mbox{\tiny{fin}}, x}$.
 
We think our method has some advantage to the original proof in \cite{Helffer}  since we can justify approximations of the operators $e^{\sqrt{h}\frac{1}{\sqrt{-1}}\frac{d}{ds}}$ and $e^{2\pi\sqrt{-1}\sqrt{h}s}$ individually while the original case in \cite{Helffer} is an approximation conjointly, since the harmonic oscillator $\widetilde{\mathcal{H}} =-\frac{d^2}{ds^2}+s^2$ is coming from the sum of the above operators and their hermitian conjugates.

In the rest of this chapter, we prove the above non-perturbative estimate that the width of each band is $O(e^{-c/\theta})$ when $\theta \in 2\pi\mathbb{Q}$. The method for computing the spectrum $\sigma(H_\theta)$ of $H_\theta$ for rational $\theta$ appears to be well known (cf. \cite{Duan}). Let us recall that the following expression: For $\theta = 2\pi x_1 = 2\pi/q$, 
\begin{equation}
\sigma(H_\theta) = \bigcup_{x_2, x_3 \in [0,1]} \sigma(H_x),
\end{equation}
where $x = (x_1,x_2,x_3) \in \widehat{X}$ as before and 
\begin{align*}
H_x &= \rho_{\mbox{\tiny{fin}},x}(u) + (\rho_{\mbox{\tiny{fin}},x}(u))^\ast +\rho_{\mbox{\tiny{fin}},x}(v) + (\rho_{\mbox{\tiny{fin}},x}(v))^\ast  \\
&= \left(\begin{array}{ccccccc}c_0& e^{-2\pi\sqrt{-1}x_3/q} & 0 &\cdots & 0 & 0 & e^{2\pi\sqrt{-1}x_3/q} \\ 
e^{2\pi\sqrt{-1}x_3/q} & c_1 & e^{-2\pi\sqrt{-1}x_3/q} & \cdots & 0 & 0 & 0 \\ \vdots & \vdots & \ddots & \ddots & \ddots & \vdots & \vdots \\  \vdots & \vdots & \ddots & \ddots & \ddots & \vdots & \vdots \\   \vdots & \vdots & \ddots & \ddots & \ddots & \vdots & \vdots \\0 & 0 & 0 & \cdots & e^{2\pi\sqrt{-1}x_3/q} & c_{q-2} & e^{-2\pi\sqrt{-1}x_3/q} \\
e^{-2\pi\sqrt{-1}x_3/q} & 0 & 0 & \cdots & 0 & e^{2\pi\sqrt{-1}x_3/q} & c_{q-1} \end{array}\right).
\end{align*}
where $c_k = 2\cos (2\pi(x_2+k)/q), \; k= 0,1,\ldots, q-1$.

Next, we compute the characteristic equation 
\begin{equation}
F_x(E) = \mbox{det} (H_x - E\textbf{1}_q) = 0. \label{characteristic}
\end{equation}
where $\textbf{1}_q$ is the identity  matrix of degree $q$.
As in \cite{Hasegawa}, it is straightforward to check that
\[  F_x(E) = F_{(x_1,x_2,0)}(E) -2\cos(2\pi x_3) +2.
\]
Using the symmetry under the mapping $(x_2, x_3) \to (-x_3, x_2)$, one finds that the equation (\ref{characteristic}) can be simplified to
\begin{equation} F_{(x_1,0,0)}(E) + 4 = 2(\cos(2\pi x_3) + \cos(2\pi x_2)) \label{level}
 \end{equation}

To estimate fluctuations of the solutions $E$ of the characteristic equation (\ref{characteristic}) with respect to $x_2,x_3$, which implies estimates of the width of bands at $\theta = 2\pi x_1 = 2\pi/q$, it suffices to estimate of the first derivative $\frac{d}{dE}\left|_{E = E_i}F_{(x_1,0,0)}(E)\right. = F'_{(x_1,0,0)}(E_i)$ at a solution $E_i$ of the equation $F_{(x_1,0,0)}(E) = 0$.

Let us denote
\[  F_{(x_1,0,0)}(E) = (E_1-E)\cdots(E_q-E).
\]
Combining with (\ref{eigen}), we  have
\begin{align*}
  \left|F'_{(x_1,0,0)}(E_i)\right| &= \Bigl|\prod_{\substack{j =1 \\ j \neq i}}^q(E_j-E_i)\Bigr| \\ &=  \Bigl|\prod_{\substack{j =1 \\ j \neq i}}^q 2(j-i)\theta\Bigr|
\end{align*}
By Stirling's formula, the right-hand side is estimated from below by
\[   2^{q-1}\left((q/2)!\right)^2\left(\frac{2\pi}{q}\right)^q \sim 
2^{q-1}\left(\sqrt{\pi q}\left(\frac{q/2}{e}\right)^{q/2}\right)^2\left(\frac{2\pi}{q}\right)^q =\frac{\pi q}{2}\left(\frac{2\pi}{e}\right)^q > 2^{q-1}.
\]
which implies that the width of bands are $O(e^{-cq}) = O(e^{-c/\theta})$.

\begin{remark}[Laplacian under the constant magnetic field on the plane $\mathbb{R}^2$ and the Harper operator defined on the square lattice $\mathbb{Z}^2$]\label{meshapproximation}
In classical mechanics, a charged particle $A$ moves in a circle called a cyclotron orbit under a constant magnetic field on a plane.
Correspondingly, in quantum mechanics,  the size of the possible energy of charged particles appears as the spectrum of the Laplacian $\Delta_B$ under the constant magnetic field with flux $B$.

If $B=0$, then $\Delta_B$ is the usual Laplacian $\Delta$ and its spectrum are non-negative half line $[0,\infty )$. However, if $B \neq 0$, the spectrum degenerates and only takes discrete values. This value is known as the Landau level, which coincides with the eigenvalue of the harmonic oscillator up to a universal constant.

Here, we explain that the Wilkinson formula (\ref{introeigen1}) approximates the Landau level. Although this fact is known by some experts (cf. \cite{Analytis}), we put it since it is not common knowledge despite its exciting aspects. Moreover, the exponential decay estimate of the width of the bands also implies the degeneracies of the spectrum, i.e., the Landau levels, which consist of a point spectrum with infinite multiplicities.    

Let us consider a square lattice $\delta\mathbb{Z}^2$ with the side-length $\delta$, which is the same as the distance between the nearest neighboring two points, as a discrete approximation of the plane $\mathbb{R}^2$. Accordingly, the Laplacian $\Delta_B$ is also approximated by the magnetic discrete Laplacian $\Delta_{B,\delta}$ on this lattice $\delta\mathbb{Z}^2$.
Next if we rescall the lattice $\delta\mathbb{Z}^2$ by multiplying $1/\delta$, then the operator $\Delta_{B,\delta}$ acting on $L^2(\delta\mathbb{Z}^2)$ is transform to the operator $\Delta_{\delta B,1}$ acting on $L^2(\mathbb{Z}^2)$. Put $\theta = \delta B$, then $\Delta_{\delta B,1}$ is written as $4I-H_\theta$, where $I$ is the idenity operator and $H_\theta$ is the Harper operator defined in (\ref{introharperfirst}).
Then, by the Wilkinson formula (\ref{introeigen1}), the spectrum $\widetilde{E}_n$ of $\Delta_{\delta B,1}$ is written as 
\[\widetilde{E}_n = E_n + 4 = (2n+1)\theta +o(\theta), \quad \theta = \delta B
\]
Moreover, if we translate back to the spectrum $\lambda_n$ of $\Delta_{B,\delta}$ as
\[ \lambda_n = \widetilde{E}_n/\delta = (2n+1)B + o(1) \]
which gives our aimed approximation of the Laplacian $\Delta_B$ under the magnetic field of the flux $B$.
\end{remark}

\begin{remark}[Mathematical justification of the Wilkinson formula by Helffer and Sj\"ostrand]\label{HSproof}
For comparison with our method based on Theorem \ref{discretetoLie}, we provide a brief outline of the arguments in \cite{Helffer01} and \cite{Helffer} very roughly.

By the (formal) Taylor expansion formula, this operator can be expressed as 
\begin{equation}
      h_\theta = -4 + \left(-\frac{d^2}{ds^2}+s^2\right)\theta + O(\theta^2) \label{formal}.
\end{equation}
The coefficient of the linear part in $\theta$ is the harmonic oscillator $\widetilde{\mathcal{H}} := -\frac{d^2}{ds^2}+s^2$ whose eigenvalues are $n + \frac12, n=0, 1, 2 ,\ldots$, which implies the Wilkinson formula (\ref{introeigen1}). However, (\ref{formal}) is a form in which a bounded operator $h_\theta$ is approximated by an unbounded operator $\widetilde{\mathcal{H}}$. 


We explain an idea in their method in \cite{Helffer}, \cite{Helffer01} briefly as follows: If the domain $L^2(\mathbb{R})$ of both operators $h_\theta$ and $\mathcal{H}$ are exhausted by a sequence of common invariant finite-dimensional spaces $V_k$, $k = 1, 2, \ldots$, then the expansion (\ref{formal}) can be considered as a limit of the Taylor expansion of the restriction $h_\theta\left|_{V_k}\right.$ of $h_\theta$. However, there is no such sequence. Instead of $V_k$, Helffer and Sj\"{o}strand used the space $\widetilde{V}_k$ spanned by eigenfunctions of $\widetilde{\mathcal{H}}$ associated with eigenvalues less than or equal to $k$. This space is not invariant by $h_\theta$, but errors are $O(h^\infty)$ with $h = \theta/2\pi$ and thus do not affect the asymptotic expansion (\ref{introeigen1}). 

Their methods are based on the semiclassical localizations related to the harmonic oscillators. Here, we will provide a brief explanation.

To begin with a simpler situation, we start with the case of the Schr\"odinger operator $H_{W,h}:= -h^2\partial{d^2}{dx^2}+W(x)$ with a double-well type potential $W(x)$. Namely,the graph of $W(x)$ is the following figure: 
\begin{figure}[H]
\centering
\includegraphics[width=140mm]{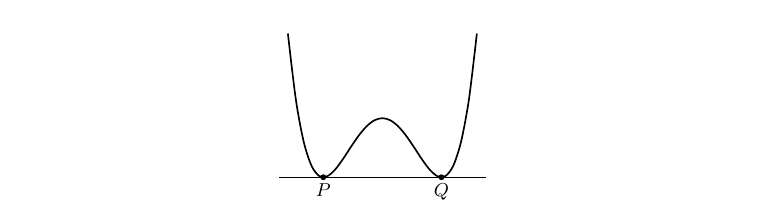}
\caption{Double well potential}
\end{figure}

Consider the eigenfunctions for the lowest eigenvalue $\lambda_h$ of $H_{W,h}$ in the semiclassical limit $h\to 0$. Since $\lambda_h \to 0$ in the limit, the corresponding eigenfunctions  $\varphi_{P,h}(x)$ and $\varphi_{Q,h}(x)$ are localized at the bottom points $P, Q$ of $W(x)$ respectively. 
More strongly, if the supports of $\varphi_{P,h}(x)$ and $\varphi_{Q,h}(x)$ were disjoint, then they would not interfere with each other and thus, a similar argument to the above using virtual common invariant finite-dimensional spaces $V_k$ could work. 
However, this is different. The support of each eigenfunction is the whole real line $\mathbb{R}$. In fact, if it vanishes on an open interval, then it is identically $0$ on $\mathbb{R}$ by the unique continuation property of elliptic differential operators.

Nonetheless, the following facts are available to compensate for the above. The eigenfunctions $\varphi_{P,h}(x)$ and $\varphi_{Q,h}(x)$ decays as $O(e^{-1/h})$ or $O(h^\infty)$ outside the vicinity of $P$ (resp. $Q$). This error is non-perturbative and, thus, does not affect the power series expansion. This error is sometimes called the tunnel effect.
,
In the case of the Harper operator, in particular, for their expression of the form (\ref{introhtheta}) as
\begin{equation*}h_\theta = -2\cos \left(\sqrt{\theta}\frac{d}{\sqrt{-1}ds}\right)-2\cos (\sqrt{\theta}s),  \end{equation*}
their symbol is written as $-2\cos (\sqrt{\theta}\xi)-2\cos (\sqrt{\theta}s)$
and their bottom points are located on the set 
\[A:= \{((2n+1)\sqrt{\theta}\pi\xi, (2n+1)\sqrt{\theta}\pi x)\mid n \in \mathbb{Z}\}.\] 
Similarly to the above double well case, the arguments are based on the localizations on each point on $A$, which are approximated by the harmonic oscillators locally at each point.
In other words, the Harper operator is the operator with infinite wells, and Helffer and Sj\"{o}strand \cite{Helffer}, \cite{Helffer01} studied these arguments very carefully and made them rigorous.

These arguments are helpful in other situations; for example, in a Japanese book by Mikio Furuta \cite{Furuta} on the index theorem, this idea was explained as an argument based on the supersymmetric harmonic oscillator. 
Furthermore, this type of argument was discussed in \cite{Helffer02} for a mathematical explanation of  Witten's Morse theory \cite{Witten} and another proof and extension \cite{BismutZhang} of the Ray-Singer conjecture \cite{Ray}, which is originally proved by Cheeger \cite{Cheeger} and M\"uller \cite{Muller} independently, and several works by B. Simon such as \cite{Simon1},  \cite{Simon2},  \cite{Simon3}.
\end{remark}

\section{A relation of the twisted Laplacian on flat vector bundles associated to unitary representations of $\Gamma$ over $M$ and twisted operators acting on $L^2(\mathbb{R}^m)$-valued functions on $M$}\label{reductionpathintegral}

\subsection{The Lie integrals and Chen's iterated integrals}\label{Lieintegral}
We need to analyze the eigenvalues of the twisted Laplacian $\Delta_{\rho_{\rm \tiny{fin}}}$ acting  on the space $L^2(E_{\rho_{\rm \tiny{fin}}})$ of sections of a flat vector bundle $E_{\rho_{\rm \tiny{fin}}}$ associated to finite dimensional unitary representation $\rho_{\rm \tiny{fin}}$ near the trivial representation $\textbf{1}$. 
We note that $\rho_{\rm \tiny{fin}} = \mbox{Ind}_P^\Gamma (\chi_{l_t}\otimes\chi_{\psi_s})$ where the notation of the left-hand side is appeared in Substep 0-6 in Section \ref{nilpotentexplanation} while that of right-hand side in the formula (\ref{finaldecomposition}) in Theorem \ref{introdecompose} ($=$ Theorem \ref{sec4decompose}).
Moreover, from this theorem, we can replace them with a formal computation of the eigenvalues of $\Delta_{\pi_l}$ acting on the space of sections $L^2(E_{\pi_l})$ of a flat vector bundle $E_{\pi_l}$ associated to infinite dimensional representation appeared in the formula (\ref{intropildecomposition}) in the above theorem near $\textbf{1}$.

We follow the same strategy in case $\Gamma$ is abelian. As explained in Section \ref{Infiniteabelian}, we have to adjust the situation to the usual perturbation arguments. For this purpose, we need to construct a canonical section $s_{\pi_l}$ of the flat $U(L^2(\mathbb{R}^m))$-principal bundle $U(E_{\pi_l})$ associated to $E_{\rho_{\pi_l}}$, whose fibers are the group $U(L^2(\mathbb{R}^m))$ of unitary transformations of the fiber $L^2(\mathbb{R}^m)$ of $E_{\pi_l}$, where $L^2(\mathbb{R}^m)$ is the model of representation space of $\pi_l$. To construct $s_{\pi_l}$, we replace the line integrals in (\ref{lineintegral}) with the Lie integrals or Chen's iterated integrals.

Let us briefly recall the definition of the Lie integral, as given in (\cite{Benardete}), but we use the right Lie integral instead of the left one. 
For a Lie group $G$, its Lie algebra $\mathfrak{g}$ and a continuous path $f:[a,b] \to \mathfrak{g}$ and the partition 
\[\Delta: a = u_0 < u_1 < \ldots < u_n =b \]
of the interval $[a,b]$, we define its definite right Lie integral $\displaystyle{{}_R\!\!\int_a^b\!f}$ to be the element of $G$ equal to 
\begin{align*} &{}\lim_{n \to \infty} \prod_{i=n}^1\exp(f(u^*_i)(u_i-u_{i-1}))  \\ &= \lim_{n \to \infty}\exp(f(u^*_n)(u_n-u_{n-1}))\cdots\exp(f(u^*_1)(u_1-u_0)) 
\end{align*}
where $\exp \colon \mathfrak{g} \to G$ is the exponential map, and the limit is taken over partitions and choices of $u^*_i \in (u_{i-1},u_i]$. 
The definite left Lie integral $\displaystyle{{}_L\!\!\int_a^b\!f}$ is similarly defined as
\[ \lim_{n \to \infty} \prod_{i=1}^n\exp(f(u^*_i)(u_i-u_{i-1}))
\]  These are well-defined similarly to the standard Riemann integral of continuous functions.

Next, for a compact Riemannian manifold $M$, let $\alpha : [a,b] \to M$ be a smooth path in $M$ and $\omega \colon TM \to \mathfrak{g}$ be a $\mathfrak{g}$ valued one form on $M$. 
Then, $\omega(\alpha'(u)):[a,b] \to \mathfrak{g}$ is a continuous path as above, and the right Lie integral ${}_R\!\int_\alpha \omega$ of $\omega$ is defined as 
${}_R\!\int_a^b \omega(\alpha'(u))du. 
$ 
  The left Lie integral ${}_L\!\int_\alpha \omega$ of $\omega$ is similarly defined as ${}_L\!\int_a^b \omega(\alpha'(u))du. 
$ 

Observe that ${}_R\!\int_\alpha \omega$ and ${}_L\!\int_\alpha \omega$ are independent of the parametrization of $\alpha$.
Also note that if $\alpha$ and $\beta$ are smooth paths in $M$ and their concatenation $\alpha\beta$ is defined, then we have \[{}_R\!\!\int_{\alpha\beta}\omega = {}_R\!\!\int_\beta \omega\; {}_R\!\!\int_\alpha \omega.\] 
Similarly we have ${}_L\!\int_{\alpha\beta}\omega = {}_L\!\int_\alpha \omega\;{}_L\!\int_\beta \omega$.

Furthermore, we notice that a $\mathfrak{g}$-valued one form $\omega$ can be identified with a connection one form $\widetilde{\omega}$ on the trivial $G$-principal bundle $M \times G$ by $\widetilde{\omega} = \pi^\ast \omega$,  where
$\pi: M \times G \to M$ is the canonical projection. Then the parallel transport $P\left|_{\alpha(a)}^{\alpha(t)}\right.$ with respect to $\widetilde{\omega}$ along a curve $\alpha : [a,b] \to M$ from $\alpha(a)$ to $\alpha(t)$ on $M \times G$ can be written as
\[  P\left|_{\alpha(a)}^{\alpha(t)}\right.(\alpha(a),v) = \left(\alpha(t), \left({}_R\!\!\int_a^t \omega(\alpha'(u))du\right)v \right) \in M \times G. 
\]
We say $\omega$ is flat if it satisfies 
\begin{equation}
d\omega + [\omega,\omega] = 0 \label{flatform}
\end{equation}
(\cite{Spivak} p. 376) which corresponds that $\widetilde{\omega}$ defines flat connection. (Some authors use different conventions in defining wedge products and the exterior derivative; for them, a flat one-form satisfies $d\omega +\frac12[\omega, \omega]$ (\cite{KobayashiN})).
If $\omega$ is flat and paths $\alpha$ and $\beta$ are homotopic relative to their endpoints, then we have ${}_R\!\int_\alpha \omega = {}_R\!\int_\beta \omega$. Note that if $G$ is abelian, then the flat condition is $d\omega = 0$, which means $\omega$ is closed; thus, the above result is derived from the Stokes formula.

If $G$ is a Lie subgroup of $\mbox{GL}(n, \mathbb{R})$, then the right Lie integral of $\mathfrak{g}$-valued one forms can be computed using Chen's iterated integrals. 
Let $\{\eta_i\}_{i=1}^n$ be a collection of real-valued one forms on a manifold $M$. If $\alpha \colon [a,b] \to M$ is a smooth path, then the iterated integral\begin{align}\int_S\eta_1\eta_2\cdots\eta_n &:= \int_Sf_n(u_n)f_{n-1}(u_{n-1})\cdots f_1(u_1)du_1du_2\ldots du_n \notag \\
&= \int_a^b\left(\int_a^{u_n}\left(\cdots\left(\int_a^{u_2}f_1(u_1)du_1\right)\cdots\right)du_{n-1}\right)du_n \label{section6lteratedintegral}
\end{align}
where $S$ is the simplex $\{(u_1,\ldots ,u_n) \mid a \leq u_1 \leq \ldots \leq u_n \leq b\}$ and $f_i(u) = \eta_i(\alpha '(u))$.

\begin{theorem} Let $G$ be a Lie subgroup of $\mbox{GL}(n, \mathbb{R})$, $\omega \colon TM \to \mathfrak{g}$, a $\mathfrak{g}$ valued one form and $\alpha \colon [a,b] \to M$ a smooth path in $M$. Then the right Lie integral $\displaystyle{{}_R\!\!\int_\alpha \omega}$ is equal to the sum of an infinite series of iterated integrals
\begin{equation} I + \int_\alpha\omega + \int_\alpha\omega\omega + \int_\alpha\omega\omega\omega + \cdots. \label{power} \end{equation}
where $I$ is the identity matrix. 

Furthermore, if $G$ is a nilpotent and a Lie subgroup of the group of upper triangular matrices (with one down the diagonal), then the terms of the series become zero after finitely many steps.
\end{theorem}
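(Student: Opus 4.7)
The plan is to identify the right Lie integral with the time-ordered exponential (Dyson series) arising from the matrix ODE for parallel transport, and then recognize the terms of that series as Chen's iterated integrals. Let $P(t) := {}_R\!\int_a^t \omega(\alpha'(u))\,du$, viewed as a curve in $G \subseteq \mathrm{GL}(n,\mathbb{R})$. First I would verify that $P(t)$ satisfies the matrix-valued initial value problem $P'(t) = \omega(\alpha'(t))\,P(t)$ with $P(a) = I$; this follows directly from the product definition, since incrementing the upper endpoint by $\Delta t$ multiplies the approximating product on the \emph{left} by $\exp(\omega(\alpha'(t))\Delta t) = I + \omega(\alpha'(t))\Delta t + O((\Delta t)^2)$.

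Next I would expand the approximating product directly, bypassing the ODE characterization. Fix a partition $\Delta: a = u_0 < u_1 < \cdots < u_N = b$, write $f_i := \omega(\alpha'(u_i^*))$, $\Delta u_i := u_i - u_{i-1}$, and expand each exponential as $\exp(f_i\Delta u_i) = I + f_i\Delta u_i + O((\Delta u_i)^2)$. Multiplying out gives
$$\prod_{i=N}^{1} \exp(f_i \Delta u_i) = \sum_{k=0}^{N}\ \sum_{1 \leq i_1 < \cdots < i_k \leq N} f_{i_k} f_{i_{k-1}} \cdots f_{i_1}\, \Delta u_{i_1}\cdots \Delta u_{i_k} \; + \; R_\Delta,$$
where $R_\Delta$ collects the contributions from the second- and higher-order terms of the individual exponentials. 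For each fixed $k$, the inner sum is a Riemann sum for
$$\int_{a \leq u_1 \leq \cdots \leq u_k \leq b} \omega(\alpha'(u_k)) \cdots \omega(\alpha'(u_1))\, du_1 \cdots du_k = \int_\alpha \underbrace{\omega \omega \cdots \omega}_{k\ \text{times}},$$
which is precisely Chen's iterated integral in the notation of \eqref{section6lteratedintegral}. A uniform bound $\|f(u)\| \leq M$ on $[a,b]$ yields $\|R_\Delta\| \leq \|\Delta\|\cdot(b-a)M^2 e^{M(b-a)}$, showing the remainder vanishes as the mesh tends to zero, and provides the dominating series $\sum_k M^k(b-a)^k/k!$ needed to interchange the mesh limit with the sum over $k$. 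This yields the identity \eqref{power}.

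For the nilpotent case, assume $G$ is a Lie subgroup of the unipotent group of upper triangular $n \times n$ matrices with $1$'s on the diagonal, so that $\mathfrak{g}$ lies in the strictly upper triangular matrices. Any product of $n$ or more strictly upper triangular $n \times n$ matrices vanishes, hence for every $k \geq n$ the integrand $\omega(\alpha'(u_k)) \cdots \omega(\alpha'(u_1))$ is identically zero. Consequently $\int_\alpha \omega^{k} = 0$ for $k \geq n$, and the series \eqref{power} truncates after at most $n$ terms.

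The main obstacle is the uniform remainder estimate in the second step: one must control both the error produced by replacing each $\exp$-factor by its first two terms and the cumulative effect of compounding $N$ such approximations, then justify the exchange of the mesh limit with the infinite sum over $k$. Once those estimates are established (they are standard for Picard/Dyson iteration, but require care in the Lie-integral formulation because the approximants are products of exponentials rather than solutions of a discretized ODE), everything else is bookkeeping with Riemann sums and the combinatorics of choosing ordered subsets of partition points.
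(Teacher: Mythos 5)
Your proposal is correct and follows essentially the same route as the paper, which simply observes that the series \eqref{power} is the Picard-iteration (Dyson) solution of the parallel-transport equation $df=\tilde\omega f$; your first step verifying ${}_R\!\int$ solves $P'=\omega(\alpha')P$, $P(a)=I$ is exactly that identification. The direct Riemann-sum expansion of the product of exponentials, the remainder bound, and the nilpotency argument for truncation at $k\geq n$ are all sound and merely supply the convergence details the paper leaves implicit.
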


In this theorem, the above sum (\ref{power}) is nothing but a solution of the equation of the parallel transport with respect to the connection one form $\widetilde{\omega}$ 
\[ df = \tilde{\omega}f
\]
by the Picard's iteration method.

In the case of $G$ is ${\rm Heis}_3(\mathbb{R})$ and $\Gamma$ is ${\rm Heis}_3(\mathbb{Z})$, this $\omega$ can be taken as a form of upper triangular matrices
\begin{equation}  \omega = \left( \begin{array}{ccc} 
0 & \omega_1 & \omega_{12} \\ 0 & 0 & \omega_2 \\ 0 & 0 & 0 \end{array}\right) \label{oneform}
\end{equation}
where $\omega_1, \omega_2, \omega_{12}$ are real valued one forms.
Then, the right  Lie integral ${}_R\!\int_{\alpha}\omega$ can be expressed as follows:
\begin{align*}
 {}_R\!\int_{\alpha}\omega &= I + \int _{\alpha}\omega + \int _\alpha \omega \omega \\
&=  \left(\begin{array}{ccc}
1 & 0 & 0 \\
0 & 1 & 0 \\
0 & 0 & 1
\end{array}
\right) +  \left(\begin{array}{ccc}
0 & \int _{\alpha} \omega _1 & \int _{\alpha} \omega_{12} \\
0 & 0 &  \int _{\alpha} \omega _2 \\
0 & 0 & 0
\end{array}
\right) +  \left(\begin{array}{ccc}
0 & 0 & \int _{\alpha}\omega _2\omega _1 \\
0 & 0 & 0 \\
0 & 0 & 0
\end{array}
\right) \\
&= \left(\begin{array}{ccc}
1 & \int _{\alpha} \omega _1 & \int _{\alpha} \omega_{12} + \int _{\alpha}\omega _2\omega _1 \\
0 & 1 &  \int _{\alpha} \omega _2 \\
0 & 0 & 1
\end{array}
\right)
\end{align*}
where $\int_\alpha \omega_2\omega_1$ is an iterated integral and their flat condition (\ref{flatform}) is equivalent to 
\begin{equation}d\omega_{12} = -\omega_1 \wedge \omega_2.\label{flatomega}\end{equation} 

\subsection{$\pi_1$-de Rham theorem}

In the construction of a particular section $s_\chi$ in Step 2 (P1) of the proof of (\ref{leadingKotaniheat}), we have utilized the de Rham-Hodge Theorem to establish the correspondence between characters $\chi = \chi_\omega$ and harmonic one-forms $\omega$. In the present situation, a substitute for the de Rham Theorem is the following theorem:
\begin{theorem}[$\pi_1$-de Rham Theorem {\rm \cite{Benardete}}]\label{pi1derham}
Let $M$ be a closed manifold, and $\Gamma$ be a lattice of a simply connected nilpotent Lie group $G$. Take a surjective homomorphism $\Phi \colon \pi_1(M,p) \to \Gamma$. Then, there exists a Lie algebra $\mathfrak{g}$ valued flat one form $\omega$ such that for $\gamma \in \pi_1(M,p)$,
\begin{equation}    \Phi (\gamma ) = {}_R\!\!\int_\gamma \omega.
\label{RH} \end{equation}
\end{theorem}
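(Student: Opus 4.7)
The plan is to realize $\omega$ as the pullback, by a global section, of a flat connection on a principal $G$-bundle over $M$ whose monodromy representation coincides with $\Phi$. Because $G$ is simply connected and nilpotent, the exponential map $\exp \colon \mathfrak{g} \to G$ is a global diffeomorphism, so $G$ is smoothly contractible and every smooth principal $G$-bundle over $M$ admits a global trivialization.

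First, I form the flat principal bundle
\[ P = (\widetilde{M} \times G)/\pi_1(M,p), \]
where $\widetilde{M}$ is the universal cover of $M$ and $\pi_1(M,p)$ acts diagonally by deck transformations on the first factor and by left translation through $\Phi$ on the second. The canonical product connection on $\widetilde{M} \times G$ is $\pi_1$-equivariant and flat, hence descends to a flat connection $\widetilde{\omega}$ on $P$ whose monodromy around a loop $\gamma \in \pi_1(M,p)$ is, by construction, exactly $\Phi(\gamma)$. Using the triviality of $P$, I then choose a global smooth section $s \colon M \to P$ and set $\omega := s^\ast \widetilde{\omega}$; flatness of $\widetilde{\omega}$ forces $\omega$ to satisfy the Maurer--Cartan equation (\ref{flatform}).

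It remains to identify the holonomy of $\widetilde{\omega}$ along $\gamma$ with the right Lie integral of $\omega$. Writing the parallel-transport ODE in the trivialization furnished by $s$ and solving it by Picard iteration yields precisely the iterated-integral series (\ref{power}), which by definition is $\,{}_R\!\!\int_\gamma \omega$. Combining this with the equality of holonomy and monodromy gives the identity (\ref{RH}).

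The main obstacle is bookkeeping conventions: the direction in which $\pi_1(M,p)$ acts on the $G$-factor of $P$ (left versus right translation through $\Phi(\gamma)$) must be matched with the right-to-left order of the exponentials in the definition of $\,{}_R\!\!\int$, so that the two sides of (\ref{RH}) agree on the nose rather than up to inversion. In the nilpotent setting this bookkeeping is tractable: by Malcev's theorem we may embed $G$ into upper-triangular unipotent matrices, so the series (\ref{power}) terminates after finitely many steps, and each term can be compared termwise with a Baker--Campbell--Hausdorff expansion of the product of infinitesimal parallel transports, avoiding any transcendental convergence issue.
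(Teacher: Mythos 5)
Your argument is correct in substance and reaches the statement by a genuinely different route from the paper. You build the flat principal $G$-bundle $P=(\widetilde{M}\times G)/\pi_1(M,p)$ directly from $\Phi$, use contractibility of $G$ (via $\exp$ being a diffeomorphism) to trivialize $P$, pull back the flat connection by a global section, and identify holonomy with $\,{}_R\!\!\int_\gamma\omega$ through Picard iteration --- which is exactly how the paper interprets the series (\ref{power}). The paper instead exploits the lattice: since $G$ is contractible, the nilmanifold $N=G/\Gamma$ is a $K(\Gamma,1)$, so $\Phi$ is induced by a smooth map $F\colon (M,p)\to (N,\Gamma e)$, and one takes $\omega=F^{\ast}\widehat{\omega}$ where $\widehat{\omega}=(dg)g^{-1}$ is the canonical right-invariant form, whose right Lie integral along a path in $N$ is $\gamma(1)\gamma(0)^{-1}$. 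Your construction is more general and more economical in hypotheses --- it never uses that $\Phi$ lands in a lattice, only that the target is a simply connected (hence contractible) nilpotent group --- whereas the paper's construction produces a canonical and geometrically meaningful representative $F^{\ast}\widehat{\omega}$, which is what the later sections exploit when choosing $F$ energy-minimizing and imposing the harmonic/coexact normalizations (Conditions \ref{harmoniccoexact}); your $\omega$ depends on the choice of trivializing section, so that refinement is less immediate. Two small bookkeeping points you should make explicit: besides the left/right and inversion conventions you mention, the holonomy read in an arbitrary trivialization agrees with $\Phi$ only up to conjugation by the constant $g_0$ relating $s(p)$ to the reference point $[(\tilde p,e)]$, so you must normalize the section at the base point (replace $s$ by $s\cdot g_0^{-1}$) to get (\ref{RH}) on the nose; and triviality of $P$ should be justified by contractibility of the fibers (obstruction theory or $BG$ weakly contractible), which is exactly where simple connectedness and nilpotency of $G$ enter.
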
  

This theorem means that $\omega$ (or $\widetilde{\omega}$) is a solution of the Riemann-Hilbert problem for the monodromy representation $\Phi$ of $\pi_1(M,p)$. Note that the choice of $\omega$ is not unique in general. Namely, the Riemann-Hilbert problem here is in the smooth category but not holomorphic in the usual setting.

Thus, we need to choose a suitable $\omega$, which makes later computation simple and, in some sense, a canonical expression of the coefficients of the asymptotic expansion, which is a harmonic representation.
In \cite{Chen} or \cite{Sullivan}, certain canonical expressions are explained when $M$ has the Riemannian metric. We follow the former method.

To explain harmonic theory in the following section, we recall an outline of the proof presented in \cite{Benardete}. For another proof, see the comments in the last few paragraphs in subsection 3.3 in \cite{Benardete}. 

For a fixed $p \in M$, let $\Pi = \pi_1(M, p)$ and $G$ be a Malcev completion of $\Gamma$.
Since $G$ is a connected, simply connected nilpotent Lie group, $\mbox{exp}:\mathfrak{g}\to G$ is a diffeomorphism. Therefore, $G$ is contractible, which implies that the higher homotopy groups of $N:= G/\Gamma$ vanish. That is, $N$ is a $K(\Gamma, 1)$ space. Then the homomorphisms $\Phi:\Pi\to\Gamma$ determine continuous maps $F:(M, p) \to (N, \Gamma e)$ unique up to homotopy, which induces $\Phi$ as in \cite{Benardete}. Since any continuous map between compact manifolds is homotopic to a smooth map, the $F$ can be chosen to be smooth.
 
Let $\widehat{\omega}$ be the canonical $\mathfrak{g}$ valued (right invariant) one-form on $N$ obtained by left translating tangent vectors to the identity. 
For example, in the case when $\Gamma = \mbox{Heis}_3(\mathbb{Z})$, it can be written as 
\begin{equation}  \widehat{\omega} = (dg)g^{-1} = \left(\begin{array}{ccc}
0 & dx & -xdy+dz \\
0 & 0 & dy \\
0 & 0 & 0
\end{array}
\right)\quad \mbox{for} \quad g = \left(\begin{array}{ccc}
0 & x & z \\
0 & 0 & y \\
0 & 0 & 0
\end{array}
\right) \label{formonHeisenberg}
\end{equation}
 Since, by definition, a curve $\gamma$ in $N$ is a ``right'' antiderivative of the curve $\omega (\gamma')$ in $\mathfrak{g}$, it follows that the definite integral 
\[ {}_R\int_\gamma \widehat{\omega} = \gamma (1)\gamma^{-1}.
\]
Since $\widehat{\omega}$ is right-invariant, it induces the desired one-form $\omega := F^\ast\widehat{\omega}$ on $M$. Although it is not difficult to see that wi has the desired properties, we omit it (see \cite{Benardete}).

\subsection{Relations to harmonic theory}\label{harmoniclietheory}
In the case when $\Gamma = {\rm Heis}_3(\mathbb{Z})$,
we can choose $\omega_1, \omega_2, \omega_{12}$ so that
\begin{conditions} \label{harmoniccoexact}
\begin{description}
\item[\rm{(1)}] $\omega_1, \omega_2$ are harmonic forms satisfying 
\begin{equation}( \omega_1, \omega_2 )_{L^2(M)} :=\int_M\langle \omega_1, \omega_2 \rangle = 0\label{ortho1} \end{equation}

\item[\rm{(2)}] $\omega_{12}$ is a coexact form, i.e. there exists two form $\omega^{(2)}$ such that $\delta\omega^{(2)} = \omega_{12}$  with the formal adjoint $\delta$ of the exterior differential $d$.
\end{description}
\end{conditions}
Concerning the orthogonality condition (\ref{ortho1}), it is achieved by changing expression of the matrix $\omega$ in (\ref{oneform}) with suitable choice of a basis of $\mathbb{R}^3$ such that $\omega_1, \omega_2$ they forms a base of $\mathfrak{g}^{(1)}$ where $\mathfrak{g}^{(1)}$ is a component of the decomposition.
Moreover, it should be remarked that by the flat condition (\ref{flatomega}), the co-exact components of $\omega_{12}$ are determined. Then the energy density $|d\Phi|$ of the map 
\[ \Phi(p) = {}_R\!\!\int_{p_0}^p \omega \quad \mbox{for some}\quad p_0 \in M \]satisfies
\[\|d\Phi\|_{L^2(M)}^2 := \int_M|d\Phi|^2 = \|\omega\|_{L^2(M)}^2 = \|\omega_1\|_{L^2(M)}^2 + \|\omega_2\|_{L^2(M)}^2 + \|\omega_{12}\|_{L^2(M)}^2.\]    

In Theorem \ref{pi1derham}, it is the energy minimizing in its homotopy class if and only if 
$\omega_1, \omega_2$ are harmonic, and $\omega_{12}$ is co-exact as in the above conditions for the following reason. The ambiguity of the choice of the solution $\omega_{12}$ of the equation (\ref{flatomega}) arises from $d$-exact part $\omega_{12}^d$ and harmonic part $\omega_{12}^h$ in the Hodge decomposion $\omega_{12} = \omega_{12}^d+\omega_{12}^h+ \omega_{12}^\delta$ of $\omega_{12}$. Since this decomposition is orthogonal, we have 
\[\|\omega_{12}^\delta\|_{L^2(M)} \leq \|\omega_{12}^d\|_{L^2(M)}+\|\omega_{12}^h\|_{L^2(M)}+ \|\omega_{12}^\delta\|_{L^2(M)} = \|\omega_{12}\|_{L^2(M)}
\]
which implies the above choice that  $\omega_{12}$ is co-exact gives energy minimizing. In this case, we call $\Phi$ a generalized Abel-Jacobi map since it is a generalization of the usual Abel-Jacobi maps for the abelian case.

\begin{remark}\label{harmonicrepresentative}
 \begin{equation}{\rm Lie}({\rm Heis}_3(\mathbb{R})) = \mathfrak{g} = \mathfrak{g}^{(1)}\oplus\mathfrak{g}^{(2)}\label{gradedLie} \end{equation}
as a natural graded Lie algebra structure on $\mathfrak{g}$ with the center $\mathfrak{g}^{(2)}$. 
It should be remarked that if we choose $\omega_1, \omega_2, \omega_{12}$ satisfying the above conditions (1), (2), then the map determined by the right Lie integral of $\omega$ is energy minimizing and thus, a harmonic map from $M$ to the Heisenberg manifold ${\rm Heis}_3(\mathbb{R})/\Gamma$. 

It would be interesting to compare this map and minimal surfaces in ${\rm Heis}_3(\mathbb{R})$ studied in \cite{Dorfmeister1}, \cite{Dorfmeister2}. Concerning this topic, we will provide further comments in Section \ref{Beyondharmonicity}.  
\end{remark}

In Appendix \ref{Homologyconnection}, we shall discuss harmonic theory for general discrete nilpotent groups.

\section{Asymptotic expansion of the lowest eigenvalue of the twisted Laplacian: the Heisenberg group $\mbox{Heis}_3(\mathbb{Z})$}\label{sectionasym}
\subsection{Reduction of the twisted Laplacians}\label{Reductiontwisted}

Here we shall construct a canonical section $s_{\pi_l}$ in  of $E_{\pi_l}$ which is a section of $U(L^2(\mathbb{R}))$-principal bundle $U(E_{\pi_l})$ over $M$ where $U(L^2(\mathbb{R}^m))$ is the group of unitary operators on $L^2(\mathbb{R})$.  We also compute an explicit form of an associated twisted operator $L_h$ acting on the space $L^2(M, L^2(\mathbb{R}))$ of $L^2(\mathbb{R})$-valued functions on $M$.

Here, we describe detailed computation in the case of $\mbox{Heis}_3(\mathbb{Z})$. In this case we used notations, $\rho_h$, $s_h$, $U(E_{\rho_h})$ instead of $\pi_l$, $s_{\pi_l}$, $U(E_{\pi_l})$ used for general nilpotent groups $\Gamma$ and $m=1$ in $L^2(M, L^2(\mathbb{R})^m)$ 
For a surjective homomorphism $\Phi:\pi_1(M) \to \Gamma$ in the assumption of Theorem \ref{heisenberg-geod}, by $\pi_1$-de Rham Theorem \ref{pi1derham}, we take an associate flat $\mathfrak{g}$-valued one form \[\omega = \left( \begin{array}{ccc} 
0 & \omega_1 & \omega_{12} \\ 0 & 0 & \omega_2 \\ 0 & 0 & 0 \end{array}\right) \]
such that 
\[\Phi (\gamma ) = {}_R\!\!\int_\gamma \omega\]
where $\omega_1, \omega_2, \omega_{12}$ satisfy Conditions \ref{harmoniccoexact}.
Then we define a canonical section $s_h \in U(E_{\rho_h})$ by 
\begin{equation} s_h(p) = \rho_h\left( {}_R\!\!\int_{p_0}^p \omega \right).
 \label{sectionsh}
 \end{equation}

Note that if $f$ is in $L^2(M,L^2(\mathbb{R}))$, then $\varphi := s_hf$ is in $L^2(E_{\rho_h})$.

We shall compute  $\Delta_{\rho_h}\varphi$ as follows:
\begin{align}\label{Laplacian}
\Delta_{\rho_h}\varphi &= \delta d \left(\rho_h\left({}_R\!\!\int_{p_0}^p \omega \right)f\right) \notag \\
&= \delta d \left(e^{2\pi\sqrt{-1}\left( h\int_{p_0}^p(\omega_{12} + \omega_2\omega_1) + \sqrt{h}\left(\int_{p_0}^p\omega_2\right)s\right)}e^{\sqrt{h}\left(\int_{p_0}^p\omega_1\right)\frac{d}{ds}}f\right) \notag \\
&= \delta \left\{ e^{2\pi\sqrt{-1}\left( h\int_{p_0}^p(\omega_{12} + \omega_2\omega_1) + \sqrt{h}\left(\int_{p_0}^p\omega_2\right)s\right)}\right.\\
&{} \qquad \times \left.2\pi\sqrt{-1}\left( h(\omega_{12} + \omega_2\int_{p_0}^p\omega_1) +\sqrt{h}\omega_2 s \right)e^{\sqrt{h}\left(\int_{p_0}^p\omega_1\right)\frac{d}{ds}}f\right\}\notag \\ 
&{} +\, \delta\left\{\left(e^{2\pi\sqrt{-1}\left( h\int_{p_0}^p(\omega_{12} + \omega_2\omega_1)+ \sqrt{h}\left(\int_{p_0}^p\omega_2\right)s\right)}\right)e^{\sqrt{h}\left(\int_{p_0}^p\omega_1\right)\frac{d}{ds}}\left(\sqrt{h}\omega_1\frac{d}{ds}\right)f\right\} \notag  \\
&{} +\, \delta \left\{e^{2\pi\sqrt{-1}\left( h\int_{p_0}^p(\omega_{12} + \omega_2\omega_1) + \sqrt{h}\left(\int_{p_0}^p\omega_2\right)s\right)}e^{\sqrt{h}\left(\int_{p_0}^p\omega_1\right)\frac{d}{ds}}df\right\} \notag
 \end{align}

Note that 
\begin{equation}
[s, e^{\frac{d}{ds}}] = -e^{\frac{d}{ds}} \label{sdscommutation}
\end{equation}
by
\[([s, e^{\frac{d}{ds}}]f)(s) = (se^{\frac{d}{ds}})f(s)-e^{\frac{d}{ds}}(sf(s)) = sf(s+1)-(s+1)f(s+1) = -f(s+1) = (-e^{\frac{d}{ds}}f)(s),\]
the first term of the right-most side of the above equality (\ref{Laplacian}) is equal to 
\begin{align*}
&{} \delta\left\{e^{2\pi\sqrt{-1}\left( h\int_{p_0}^p(\omega_{12} + \omega_2\omega_1) )+ \sqrt{h}\left(\int_{p_0}^p\omega_2\right)s\right)}e^{\sqrt{h}\left(\int_{p_0}^p\omega_1\right)\frac{d}{ds}} \right.\\ 
&{} \qquad \left. \times 2\pi\sqrt{-1}\left( h\left(\omega_{12} + \omega_2\int_{p_0}^p\omega_1\right) +\sqrt{h}\omega_2 s -h\omega_2\int_{p_0}^p\omega_1 \right)f\right\} \\
&= 2\pi\sqrt{-1}\delta\left\{e^{2\pi\sqrt{-1}\left( h\int_{p_0}^p(\omega_{12} + \omega_2\omega_1) )+ \sqrt{h}\left(\int_{p_0}^p\omega_2\right)s\right)}
e^{\sqrt{h}\left(\int_{p_0}^p\omega_1\right)\frac{d}{ds}}( h\omega_{12} + \sqrt{h}\omega_2 s)f\right\} 
\end{align*}

Since $\delta(f\omega) = f\delta\omega - \langle df,\omega \rangle$, and
 $\omega_1, \omega_2$ and $\omega_{12}$ are co-closed forms, combining with (\ref{sdscommutation}), the right-most side of (\ref{Laplacian}) is equal to
\begin{align}
&{} e^{2\pi\sqrt{-1}\left( h\int_{p_0}^p(\omega_{12} + \omega_2\omega_1) 
+ \sqrt{h}\left(\int_{p_0}^p\omega_2\right)s\right)}e^{\sqrt{h}\left(\int_{p_0}^p\omega_1\right)\frac{d}{ds}} \notag \\
&\times \left\{\left( 4\pi^2\left| h\omega_{12} +\sqrt{h}\omega_2 s \right|^2\right. \right. \notag \\
&{} \quad \qquad -4\pi\sqrt{-1}\left\langle h\omega_{12} +\sqrt{h}\omega_2 s, \sqrt{h}\omega_1\frac{d}{ds}\right\rangle \label{right611}\\
&{} \quad \qquad  - h|\omega_1|^2\left(\frac{d}{ds}\right)^2 \Big)f  \notag \\
&{} -\, \left.2\left\langle 2\pi\sqrt{-1}h(\omega_{12} +\sqrt{h}\omega_2 s) + \sqrt{h}\omega_1\frac{d}{ds}, df\right\rangle  +\Delta_M f \right\}\notag
\end{align}
where  $\Delta_M$ is the Laplacian on $M$.

Thus the eigen-equation $\Delta_{\rho_h}\varphi = \lambda(h)\varphi$ is equivalent to $L_hf = \lambda(h)f$, where
\begin{equation}           L_h = L^{(0)} +L^{(1)}h^{1/2} + L^{(2)}h + L^{(3)}h^{3/2}+L^{(4)}h^2, \label{Lexpansion} 
\end{equation}
and

\begin{align}
L^{(0)}f &= \Delta_Mf,  \label{H0}\\
L^{(1)}f &= -\left\langle 4\pi\sqrt{-1}\omega_2 s +2\omega_1\frac{d}{ds}, df\right\rangle, \label{H1}\\
L^{(2)}f &= \Big(4\pi^2|\omega_2|^2s^2-4\pi\sqrt{-1}\langle \omega_2,\omega_1 \rangle s\frac{d}{ds}-|\omega_1|^2\left(\frac{d}{ds}\right)^2\Big)f \notag \\
&{}-4\pi\sqrt{-1}
\left\langle \omega_{12}, df\right\rangle, \label{H2}\\
L^{(3)}f &= \Big(\left\langle 8\pi^2\omega_{12},\omega_2 s\right\rangle -4\pi\sqrt{-1}\left\langle\omega_{12}, \omega_1\frac{d}{ds}\right\rangle\Big) f, \label{H3}\\
L^{(4)}f &= 4\pi^2|\omega_{12}|^2f. \label{H4}
\end{align}

\subsection{The spectrum of twisted Laplacians and the Kazhdan distance: Sunada's theorem}\label{sunadatheorem} 
In the case where $\Gamma$ is abelian, by (\ref{first}), we need only consider the case where characters $\chi$ are near $\textbf{1}$. The following Theorem \ref{sunada} indicates that the same holds for general discrete groups.


For a $\Gamma$-covering $X \to M$  and a unitary representation $\rho \colon \Gamma \to U(V)$ of $\Gamma$ on a separable Hilbert space $V$, we associate a flat vector bundle $E_\rho$. Let $\lambda_0(\rho)$ be the infimum of the spectrum of the associated twisted Laplacian $\Delta_\rho$ acting on the sections of $E_\rho$. Sunada \cite{Sunada} compares $\lambda_0(\rho)$ with the Kazhdan distance $\delta_A(\rho, \mathbf{1})$ between $\rho$ and the trivial representation $\mathbf{1}$. Here $\delta_A(\rho, \mathbf{1})$ is defined by 
\[ \delta_A(\rho,\mathbf{1}) = \inf_{\substack{v \in V \\ \|v\| = 1}}\sup_{\sigma \in A}\|\rho(\sigma)v-v\|,
\]  
where 
$A$ is a finite set of generators of $\Gamma$. Then, the following holds:
\begin{theorem}[\rm{\cite{Sunada}}]\label{sunada} There exist positive constants $c_1$ and $c_2$ depending only on $M$ and $A$ such that for all $\rho$,
\begin{equation}   c_1\delta_A(\rho,\mathbf{1})^2 \leq \lambda_0(\rho) \leq c_2\delta_A(\rho,\mathbf{1})^2. \label{sunadainequality}
\end{equation}
\end{theorem}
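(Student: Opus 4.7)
The plan is to pass to the equivariant picture: identify $L^2(E_\rho)$ with the Hilbert space of measurable $\tilde\varphi: X \to V$ satisfying $\tilde\varphi(\gamma p) = \rho(\gamma^{-1})\tilde\varphi(p)$, with norm $\|\tilde\varphi\|^2 = \int_D |\tilde\varphi|^2 \, dv_g$ over a relatively compact fundamental domain $D$. Under this identification the twisted Laplacian $\Delta_\rho$ becomes $d^\ast d$ applied componentwise, and $|d\tilde\varphi|^2$ is a $\Gamma$-invariant density on $X$. Consequently the bottom of the spectrum has the Rayleigh-quotient form
\[ \lambda_0(\rho) \;=\; \inf_{\tilde\varphi}\frac{\int_D |d\tilde\varphi|^2 \, dv_g}{\int_D |\tilde\varphi|^2 \, dv_g}, \]
and both inequalities in (\ref{sunadainequality}) reduce to comparing this quotient with the purely algebraic quantity $\sup_{\sigma \in A}\|\rho(\sigma)v - v\|^2$ for appropriate vectors $v \in V$.

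For the upper bound I would, given $\varepsilon>0$, pick a unit vector $v$ with $\sup_{\sigma \in A}\|\rho(\sigma)v - v\| \leq \delta_A(\rho,\mathbf{1}) + \varepsilon$ and manufacture a test map
\[ \tilde\varphi(p) \;=\; \sum_{\gamma \in \Gamma}\chi(\gamma^{-1}p)\,\rho(\gamma^{-1})v, \]
where $\chi \in C_c^\infty(X)$ is a fixed cutoff satisfying $\sum_{\gamma \in \Gamma}\chi(\gamma^{-1}p) \equiv 1$, with support in a slightly enlarged fundamental domain so that at any $p$ only the translates by $\gamma \in \{1\}\cup A \cup A^{-1}$ contribute. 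Using $\sum_\gamma d\chi(\gamma^{-1}\cdot) \equiv 0$ one obtains
\[ d\tilde\varphi(p) \;=\; \sum_{\gamma} d\chi(\gamma^{-1}p)\bigl(\rho(\gamma^{-1})v - v\bigr), \]
whence $|d\tilde\varphi|^2 \leq C_M\sup_{\sigma \in A}\|\rho(\sigma)v - v\|^2$ pointwise, while $\int_D |\tilde\varphi|^2 \geq c_M>0$ because $\tilde\varphi$ is close to $v$ on the interior of $D$. Inserting into the Rayleigh quotient and letting $\varepsilon \to 0$ yields $\lambda_0(\rho) \leq c_2\,\delta_A(\rho,\mathbf{1})^2$.

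For the lower bound I would exploit the definition of $\delta_A$ pointwise: for every equivariant $\tilde\varphi$ and every $p$ with $\tilde\varphi(p)\neq 0$, applying the infimum in the definition of $\delta_A$ to the unit vector $\tilde\varphi(p)/\|\tilde\varphi(p)\|$ gives
\[ \sup_{\sigma \in A}\|\tilde\varphi(\sigma p) - \tilde\varphi(p)\| \;=\; \sup_{\sigma \in A}\|\rho(\sigma^{-1})\tilde\varphi(p) - \tilde\varphi(p)\| \;\geq\; \delta_A(\rho,\mathbf{1})\,\|\tilde\varphi(p)\|. \]
Squaring, summing over $\sigma \in A$, and integrating over $D$, it suffices to bound each term on the right by the Dirichlet energy. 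Fixing a family of smooth paths $\gamma_{p,\sigma}$ from $p$ to $\sigma p$ of uniformly bounded length, Cauchy--Schwarz gives a Poincaré-type inequality
\[ \int_D \|\tilde\varphi(\sigma p) - \tilde\varphi(p)\|^2 \, dv_g(p) \;\leq\; C_\sigma \int_{U_\sigma}|d\tilde\varphi|^2 \;\leq\; C_\sigma' \int_D |d\tilde\varphi|^2, \]
where the last step uses the $\Gamma$-invariance of $|d\tilde\varphi|^2$ together with the fact that $U_\sigma$ is covered by finitely many $\Gamma$-translates of $D$. Taking the infimum over $\tilde\varphi$ delivers $\delta_A(\rho,\mathbf{1})^2 \leq c_1^{-1}\lambda_0(\rho)$.

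The main obstacle — and the real content of the theorem — is to confirm that every geometric ingredient (the partition $\chi$, the path family $\{\gamma_{p,\sigma}\}$, the Poincaré constant $C_\sigma$, and the combinatorial count of how many translates of $D$ meet $U_\sigma$) depends only on the Riemannian geometry of $M$ and on the generating set $A$, but not on $\rho$ or on the dimension of $V$. This separation works because each datum is assembled on a compact piece of the cover $X$ that is manufactured without reference to the representation; the entire $\rho$-dependence is confined to the scalar quantity $\|\rho(\sigma)v - v\|$. Verifying that the vector-valued Poincaré inequality retains a constant independent of $\dim V$ (in particular when $V$ is infinite-dimensional as in the direct integrals of (NI-1) and Chapter \ref{Representationsdiscretenilpotent}) is the one place where some care is required, but this is immediate from the fact that the scalar Poincaré inequality applied componentwise to $\tilde\varphi$ against the norm $\|\cdot\|_V$ bounds $\int \|\tilde\varphi(\sigma p) - \tilde\varphi(p)\|_V^2$ by the same constant times $\int |d\tilde\varphi|^2$.
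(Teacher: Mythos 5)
The paper offers no proof of this statement to compare against: Theorem \ref{sunada} is quoted directly from Sunada's paper \cite{Sunada}. Judged on its own, your argument is correct in substance and follows the standard circle of ideas behind Sunada's comparison: an upper bound from a partition-of-unity test section whose differential only sees the differences $\rho(\sigma)v-v$, and a lower bound from a vector-valued Poincar\'e/path inequality over a relatively compact fundamental domain, with every geometric constant (cutoff, paths, covering multiplicity) manufactured on the cover independently of $\rho$. Your closing point — that the whole $\rho$-dependence is confined to the scalar $\|\rho(\sigma)v-v\|$, so the constants are uniform in $\dim V$, including the infinite-dimensional fibers arising in (NI-1) — is exactly the right thing to check, and your componentwise-norm justification of it is sound.

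Two details need patching, both routine. First, the test section as written is not equivariant when $\Gamma$ is nonabelian: with the convention $\tilde\varphi(\gamma_0 p)=\rho(\gamma_0^{-1})\tilde\varphi(p)$, your formula yields $\sum_\delta \chi(\delta^{-1}p)\,\rho(\delta^{-1})\rho(\gamma_0^{-1})v$, whereas equivariance requires the factor $\rho(\gamma_0^{-1})$ on the left; replacing $\rho(\gamma^{-1})v$ by $\rho(\gamma)v$ (equivalently, flipping the equivariance convention) fixes the bookkeeping, and nothing downstream changes because unitarity makes $\|\rho(\gamma)v-v\|=\|\rho(\gamma^{-1})v-v\|$. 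Second, you cannot in general arrange that only the translates by $\{1\}\cup A\cup A^{-1}$ meet the support of $\chi$: the finite set $S=\{\gamma\in\Gamma \mid \gamma\,\mathrm{supp}\,\chi\cap\mathrm{supp}\,\chi\neq\emptyset\}$ is dictated by the fundamental domain, not by the chosen generating set $A$. The repair is the telescoping estimate $\|\rho(\gamma)v-v\|\leq \ell_A(\gamma)\,\sup_{\sigma\in A}\|\rho(\sigma)v-v\|$ for $\gamma\in S$, where $\ell_A$ is word length with respect to $A$; since $S$ is finite and independent of $\rho$, the resulting constant still depends only on $M$ and $A$. With these adjustments (and keeping your observation that $\tilde\varphi\equiv v$ on the core of $D$ where $\chi\equiv 1$, so $\int_D\|\tilde\varphi\|^2$ is bounded below uniformly in $\rho$), the proof is complete.
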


\subsection{Computations for the second derivative of the lowest eigenvalues $\lambda_{0,i}(h)$}\label{subsection72}

In later arguments in this chapter, we continue to use the notations $\rho_h$, $s_h$, $U(E_{\rho_h})$, etc., as in Section \ref{Reductiontwisted} since $\Gamma = \mbox{Heis}_3(\mathbb{Z})$.

Here we consider eigenvalue problem for function $f(x,s) \in L^2(M,L^2(\mathbb{R}))$ where $x \in M$ and $s \in \mathbb{R}$,
\[      L_hf = \lambda(h)f.
\]

Our computation here is formal; thus, we assume the spectrum of our operator consists of discrete eigenvalues only. A mathematical justification of these formal computations can be done through Theorem  \ref{discretetoLie}. Namely, we replace this procedure using finite-dimensional representations $\rho_{\mbox{\tiny{fin}},x}$ in the place of $\rho_h$ here.

We place its eigenvalues $\lambda_{k,i}(h)$ in order satisfying 
\begin{equation}       \lambda_{k,i}(h) \leq \lambda_{k+1,i}(h),  \quad \lambda_{k,i}(h) \leq \lambda_{k,i+1}(h) \label{eigen3}
\end{equation}
where $\{\lambda_{k,i}(h)\}_{i=1}^\infty$ is the cluster of eigenvalues of $\Delta_{\rho_h}$ such that each eigenvalue $\lambda_{k,i}(h)$ converges to the $k$-th eigenvalue $\lambda_k$ of the Laplacian $\Delta_M$ of $M$ when $h \to 0$.
We consider a complete orthonormal system of eigenfunctions $f_{k,i,h}(x,s)$ associated with $\lambda_{k,i}(h)$.

By Theorem \ref{sunada}, $\lambda_{0,i}(h)$ are nonnegative and equal to $0$ if and only if $h  = 0$ in a vicinity of $0$. 
Thus, we only need to care about the behavior of $\lambda_{0,i}(h)$ near $h=0$.

We expand $\lambda_{0,i}(h)$ and $f_{0,i,h}$ in terms of the power of $\sqrt{h}$. In the following, we use abbreviated notations $\lambda_i(h) := \lambda_{0,i}(h)$ and $f_{i,h} := f_{0,i,h}$. Then we have
\begin{align*}
\lambda_i(h) &= \sum_{j=0}^\infty \lambda_i^{(j)}h^{j/2} \\
 f_{i,h} &= \sum_{j=0}^\infty f_i^{(j)}h^{j/2}
\end{align*}

By the minimality of $\lambda_i(0)$, we know that $\lambda_i^{(0)} = \lambda _i^{(1)}= 0$.  

We also know that 
$f_i^{(0)}(x,s) =f_i^{(0)}(s)$, namely  $f_i^{(0)}$ is independent to the variable $x \in M$. In what follows, we shall choose $f_i^{(0)}$ as the $i$-th eigenfunction of the modified harmonic oscillator $\mathcal{H}$ acting on $L^2(\mathbb{R})$ which will be defined in later (\ref{HC}).

To compute $\lambda_i^{(2)}$, we take derivatives up to second order of the eigen-equation  $L_hf_{i,h} = \lambda_i(h)f_{i,h}$ with respect to $\sqrt{h}$ at $\sqrt{h} =0$, then we have
\begin{equation}\label{second}  L^{(2)}f_i^{(0)} + 2L^{(1)}f_i^{(1)} + L^{(0)}f_i^{(2)} = \lambda_i^{(2)}f_i^{(0)} +2\lambda_i^{(1)}f_i^{(1)}+\lambda_i^{(0)}f_i^{(2)} \notag
\end{equation}
where $L^{(k)}$ is the $k$-th derivative of $L_h$ with respect to $\sqrt{h}$ at $\sqrt{h}= 0$ which is consistent with our previous notation (\ref{Lexpansion}).

Integrating this equation over $M$, since $f_i^{(0)}$ is constant in the variable $x$, we obtain from (\ref{H2}),
\begin{align}\int_ML^{(2)}f_i^{(0)} &= \Big(4\pi^2\|\omega_2\|_{L^2(M)}^2s^2-4\pi\sqrt{-1}( \omega_2,\omega_1 )_{L^2(M)} s\frac{d}{ds}-\|\omega_1\|_{L^2(M)}^2\left(\frac{d}{ds}\right)^2\Big)f_i^{(0)} \notag\\ &{} \quad -4\pi\sqrt{-1}(\omega_{12}, df_i^{(0)})_{L^2(M)} 
\notag\\
&= \left(-\|\omega_1\|_{L^2(M)}^2\left(\frac{d}{ds}\right)^2 + 4\pi^2\|\omega_2\|_{L^2(M)}^2s^2\right)f_i^{(0)} =:\mathcal{H}f_i^{(0)}, \label{Heisenharmonic}
\end{align}
where $\mathcal{H}$ is the modified harmonic oscillator defined in later (\ref{HC}).
Here we have used the following formulas: By $\delta\omega_1=\delta\omega_2 =\delta\omega_{12} = 0$ and Condition \ref{harmoniccoexact}, 
\begin{align*}
(\omega,df)_{L^2(M)}&=(\delta\omega,f)_{L^2(M)}, \\  ( \omega_1,\omega_2)_{L^2(M)} &= 0. 
\end{align*}

Similarly, from (\ref{H1}), (\ref{H0}), we have, by $\int_M\Delta f = 0$,
\begin{align*}
\int_ML^{(1)}f_i^{(1)} &=-\int_M \left( 4\pi\sqrt{-1}\delta\omega_2 s +2\delta\omega_1\frac{d}{ds}\right)f_i^{(1)} = 0 \\
\int_M L^{(0)}f_i^{(2)} &= \int_M \Delta_Mf_i^{(2)} = 0 
\end{align*}
Thus, if we choose as $f_i^{(0)}(s) = \psi_i(s)$ the normalized eigenfunctions of $\mathcal{H}$, then we obtain 
\begin{equation} {\rm vol}(M)\lambda_i^{(2)} = \mu_i = 2\pi\|\omega_1\|_{L^2(M)}\|\omega_2\|_{L^2(M)}\left(i+\frac{1}{2}\right) = 2\pi{\rm vol}(\widehat{H})\left(i+\frac{1}{2}\right)  \; (i= 0,1,2,\ldots) \label{harmeigen}
\end{equation}
where $\mu_i$ is the $i$-th eigenvalue and $\psi_i(s)$ is its associated normalized eigenfunction of a modified harmonic oscillator, 
\begin{equation}\mathcal{H} :=-\|\omega_1\|_{L^2(M)}^2\left(\frac{d}{ds}\right)^2 +4\pi^2\|\omega_2\|_{L^2(M)}^2s^2, \label{HC}
\end{equation}
and ${\rm vol}(\widehat{H})$ is the volume of the character group $\widehat{H}$  of $\Gamma/[\Gamma,\Gamma]$ with $\Gamma = \mbox{Heis}_3(\mathbb{Z})$.

\subsection{Higher order terms: An algorithm for computation} \label{Higher}
In this section, although we shall not give concrete forms of higher-order terms, we shall introduce an algorithm to compute each term explicitly.

To proceed to the higher order expansion, as explained in Chapter 2, we use arguments in \cite{Kotani3} combined with a variant of Rayleigh-Schr\"{o}dinger perturbation theory in quantum mechanics (see \cite{Sakurai}, \cite{Rammal}, \cite{Helffer}). We emphasize again that our computation of the asymptotic series is formal in this context. However, it can be justified by replacing $\rho_h$ to finite dimensinal representations $\rho_{{\rm \tiny{fin}},(x_1, \{qx_1x_2\}, x_3)}$ through (\ref{discretetoHeisenbergLie}) in Theorem \ref{discretetoLie}.

 Consider the eigenvalue problem on $L^2(M,L^2(\mathbb{R})) \cong L^2(M \times \mathbb{R}, \mathbb{C})$,
\begin{align}
L_hf_{i,h}(x,s) &= \lambda_i(h)f_{i,h}(x,s) \label{eigen}\\
(f_{i,h},f_{j,h})_{L^2(M \times \mathbb{R})} &:= \int_M\int_{\mathbb{R}}f_{i,h}(x,s)\overline{f_{j,h}(x,s)} dxds = \delta_{ij} \label{normal}
\end{align}  

Take a complete orthonormal basis $\{\psi_k(s)\}_{k=0}^\infty$ of eigenfunctions of $\mathcal{H}$ in $L^2(\mathbb{R})$.

From the computations of terms of orders $h^0, h^{1/2}, h$, we have $\lambda_i^{(0)} = \lambda_i^{(1)} = 0$, thus put
\begin{align}
f_{i,h}(x,s) &= \sum_{j=0}^\infty f_i^{(j)}(x,s)h^{j/2}  \notag \\ &= \sum_{j=0}^\infty \sum_{k=0}^\infty a_{ik}^{(j)}(x)\psi_k(s)h^{j/2} \notag \\ &= \frac{1}{\sqrt{{\rm vol}(M)}}\psi_i(s) + \sum_{j=1}^\infty \sum_{k=0}^\infty a_{ik}^{(j)}(x)\psi_k(s)h^{j/2}  \label{asym2}\\
\lambda_i(h) &= \lambda_i^{(0)} +\lambda_i^{(1)}h^{1/2} +\lambda_i^{(2)}h + \lambda_i^{(3)}h^{3/2} + \ldots \notag \\ &= \frac{\mu_i}{{\rm vol}(M)}h + \lambda_i^{(3)}h^{3/2} + \ldots. \label{asym3}
\end{align}

Next we shall give an algorithm to compute $\lambda_i^{(j)}\; (j = 3,4,\ldots)$ and $a_{ik}^{(j)} \; (j=1,2,\ldots)$ inductively with respect to $j$. As an initial step of the induction, we note that  $a_{i,k}^{(0)}(x) = \frac{\delta_{ik}}{\sqrt{{\rm vol}(M)}}$.

\subsubsection{Step 1: Differential equations for coefficients  $a_{ik}^{(n)}(x)$ of asymptotic expansions}
Assume that we have information of $\lambda_i^{(j)}$ for $j \leq n$ and  $a_{i\ell}^{(j)}(x)$ for $j \leq n-1$ and $k-2 \leq \ell \leq k+2$.

Differentiating both sides of (\ref{eigen}) $n$ times with respect to $\sqrt{h}$, we have  
\[ L^{(0)}f_i^{(n)} - \lambda_i^{(n)}f_i^{(0)} = \sum_{j=1}^n L^{(j)}f_i^{(n-j)} + \sum_{j=0}^{n-1}\lambda_i^{(j)}f_i^{(n-j)}. 
\]
Note that $L^{(j)} = 0 $ provided $j \geq 5$.
Taking an inner product with $\psi_k(s)$ in $L^2(\mathbb{R})$,  by $L^{(0)} = \Delta_M$ and $f_i^{(0)} = \frac{1}{\sqrt{{\rm vol}(M)}}\psi_i$, we have
\begin{align} 
\Delta_M a_{ik}^{(n)}(x) -\lambda_i^{(n)}\delta_{ik} &= -\sum_{j=1}^n \left( L^{(j)}f_i^{(n-j)}, \psi_k(s)\right)_{L^2(\mathbb{R})} + \sum_{j=0}^{n-1}\left(\lambda_i^{(j)}f_i^{(n-j)}, \psi_k(s)\right)_{L^2(\mathbb{R})} \notag  \\
&= -\sum_{j=1}^n \sum_{\ell=1}^\infty \left( L^{(j)}a_{i\ell}^{(n-j)}(x)\psi_\ell(s), \psi_k(s)\right)_{L^2(\mathbb{R})} \notag \\
&\quad + \sum_{j=0}^{n-1}\sum_{\ell=1}^\infty\left(\lambda_i^{(j)}a_{i\ell}^{(n-j)}(x)\psi_\ell(s), \psi_k(s)\right)_{L^2(\mathbb{R})} \notag \\
&= -\sum_{j=1}^n \sum_{\ell=1}^\infty \left( L^{(j)}a_{i\ell}^{(n-j)}(x)\psi_\ell(s), \psi_k(s)\right)_{L^2(\mathbb{R})} + \sum_{j=0}^{n-1}\lambda_i^{(j)}a_{ik}^{(n-j)}(x).\label{aij}
\end{align} 
This is our desired differential equation for $a_{ik}^{(n)}(x)$.

To compute the right-hand side, we use the creation and annihilation operators
$a^\dagger$ and $a$, respectively, associated with the modified harmonic oscillator $\mathcal{H}$ defined as follows: Put $p = \frac{1}{\sqrt{-1}}\frac{d}{ds}$ which is called the momentum operator. Then we define
\begin{align} a &= \sqrt{\frac{\pi\|\omega_2\|_{L^2(M)}}{\|\omega_1\|_{L^2(M)}}}s + \sqrt{-1}\sqrt{\frac{\|\omega_1\|_{L^2(M)}}{4\pi\|\omega_2\|_{L^2(M)}}}p, \label{creationop} \\
a^\dagger &= \sqrt{\frac{\pi\|\omega_2\|_{L^2(M)}}{\|\omega_1\|_{L^2(M)}}}s - \sqrt{-1}\sqrt{\frac{\|\omega_1\|_{L^2(M)}}{4\pi\|\omega_2\|_{L^2(M)}}}p. \label{annihilationop}
\end{align}

Then, we have 
\[ \mathcal{H} = 4\pi\|\omega_1\|_{L^2(M)}\|\omega_2\|_{L^2(M)}\left(a^\dagger a + \frac{1}{2}\right).
\] 
We can compute eigenvalues of $\mathcal{H}$ from this formula.

The normalized eigenfunction $\psi_m(s)$ can be explicitly written as 
\[  \psi_m(s) = \sqrt{\frac{4}{\pi2^mm!}}H_m(\alpha s)\exp\left(- \frac{\alpha^2}{2}s^2\right),
\]
where $\alpha = 2\pi\|\omega_2\|_{L^2(M)}/\|\omega_1\|_{L^2(M)}$ and $H_m(z)$ is the $m$-th Hermite polynomial normalized as $H_0(z) = 1$. The eigenfunction $\psi_m(s)$ satisfies
\begin{equation}\label{aadagger} a\psi_m(s) = \sqrt{m}\psi_{m-1}(s), \quad a^\dagger \psi_m(s) = \sqrt{m+1}\psi_{m+1}(s) \quad (m = 0,1,2,\ldots ).
\end{equation}
Note that
\begin{align} s\psi_m(s)&= \left(\frac{1}{2}\sqrt{\frac{\|\omega_1\|_{L^2(M)}}{\pi\|\omega_2\|_{L^2(M)}}}(a+a^\dag)\right)\psi_m(s) \notag \\
&= \left(\sqrt{\frac{\|\omega_1\|_{L^2(M)}}{4\pi\|\omega_2\|_{L^2(M)}}}\right)(\sqrt{m}\psi_{m-1}(s)+\sqrt{m+1}\psi_{m+1}(s)), \label{saction} \\ 
p\psi_m(s)&= \frac{1}{\sqrt{-1}}\frac{d}{ds}\psi_m(s) = \left(\frac{1}{2\sqrt{-1}}\sqrt{\frac{4\pi\|\omega_2\|_{L^2(M)}}{\|\omega_1\|_{L^2(M)}}}(a-a^\dag)\right)\psi_m(s) \notag \\
&=\left(\frac{1}{\sqrt{-1}}\sqrt{\frac{\pi\|\omega_2\|_{L^2(M)}}{\|\omega_1\|_{L^2(M)}}}\right)(\sqrt{m}\psi_{m-1}(s)-\sqrt{m+1}\psi_{m+1}(s)). \label{daction}
\end{align}
and 
\begin{align} 
s^2\psi_m(s) &= \frac{\|\omega_1\|_{L^2(M)}}{4\pi\|\omega_2\|_{L^2(M)}}\left(\sqrt{m(m-1)}\psi_{m-2}(s)+(2m+1)\psi_m(s)+ \sqrt{(m+2)(m+1)}\psi_{m+2}(s)\right) \label{ssquare} \\ 
s\frac{d}{ds}\psi_m(s) &= \frac12(a+a^\dag)(a-a^\dag)\psi_m(s) = \frac12\left(a^2-[a,a^\dag]+(a^\dag)^2\right)\psi_m(s) \notag \\
&= \frac12\left(\sqrt{m(m-1)}\psi_{m-2}(s)-\psi_m(s)+ \sqrt{(m+2)(m+1)}\psi_{m+2}(s)\right).  \label{sds} \\
\left(\frac{d}{ds}\right)^2\psi_m(s) &= \frac{\pi\|\omega_2\|_{L^2(M)}}{\|\omega_1\|_{L^2(M)}}\left(\sqrt{m(m-1)}\psi_{m-2}(s)-(2m+1)\psi_m(s)+ \sqrt{(m+2)(m+1)}\psi_{m+2}(s)\right) \label{ddssquare}
\end{align}

Then, we consider each terms $\left( L^{(j)}f_i^{(n-j)}, \psi_k\right)_{L^2(\mathbb{R})}$.
Since the action of $L^{(j)}$, $j = 0,1,2,3,4$ on $L^2(\mathbb{R}$) is expressed as a quadratic polynomials of $s, p$, and hence of $a, a^\dag$, thus, by the above formula (\ref{aadagger}), then only coefficints of $\psi_{k-2}$, $\psi_{k-1}$, $\psi_{k}$, $\psi_{k+1}$,  $\psi_{k+2}$ of the orthogonal decompositon of $f_i^{(n-j)}$ with respect to $\{\psi_\ell\}_{\ell = 0}^\infty$ are relevant to the above terms which are computable explicitly. 
For the other terms $\left(\lambda_i^{(j)}f_i^{(n-j)}, \psi_k\right)_{L^2(\mathbb{R})}$ can be computed more directly.
Thus we can compute the differential equation (\ref{aij}) of $a_{ik}^{(n)}(x)$ explicitly. To determine $a_{ik}^{(n)}(x)$, we only need to know $\int_Ma_{ik}^{(n)}(x)$ which will be done in Step 3 below.

\subsubsection{Step 2: Computation of $\lambda_i^{(n)}$}
Assume that we have information of $\lambda_i^{(j)}$ for $1 \leq j \leq n-1$ and  $a_{ik}^{(j)}(x)$ for $1 \leq j \leq n-1$ and $k-2 \leq \ell \leq k+2$.

Integrating (\ref{aij}) over $M$, we have
\begin{equation}   -\mbox{vol}(M)\lambda_i^{(n)} = -\int_M\sum_{j=1}^n \left( L^{(j)}f_i^{(n-j)}, \psi_i\right)_{L^2(\mathbb{R})} + \sum_{j=0}^{n-1}\left(\lambda_i^{(j)}f_i^{(n-j)}, \psi_i\right)_{L^2(\mathbb{R})} \label{lambda}
\end{equation}
As explained in Step 1, we can compute the right-hand side of (\ref{lambda}) and obtain $\lambda_i^{(n)}$.

\subsubsection{Step 3: Computation of $a_{ik}^{(n)}(x)$}
Assume that we have information of $\lambda_i^{(j)}$ for $1 \leq j \leq n$ and  $a_{ik}^{(j)}(x)$ for $1 \leq j \leq n-1$ and $k-2 \leq \ell \leq k+2$.

To compute $a_{ik}^{(n)}(x)$, by Step 1, we only need to know the following matrix: 
\[  B^{(n)} := \left([a]_{ik}^{(n)}\right), \quad   [a]_{ik}^{(n)} = \int_Ma_{ik}^{(n)}(x)dx
\]
If we know this information, then $a_{ik}^{(n)}(x)$ can be computed using the Green operator of $\Delta_M$ similarly to \cite{Kotani3}.

Decompose this matrix $B^{(n)}$ as
\[     B^{(n)} = B^{(n)}_{\mbox{\tiny{herm}}} +B^{(n)}_{\mbox{\tiny{skew}}}
\]
where 
\[B^{(n)}_{\mbox{\tiny{herm}}} := \frac12\left(B^{(n)} +{}^t\overline{B^{(n)}}\right) = \left([a]_{ik}^{(n),h}\right), \quad [a]_{ik}^{(n),h} := \frac12\left([a]_{ik}^{(n)}+\overline{[a]_{ik}^{(n)}}\right) \]
 is the Hermitian part and 
\[B^{(n)}_{\mbox{\tiny{skew}}}:=\frac12\left( B^{(n)} - {}^t\overline{B^{(n)}}\right)= \left([a]_{ik}^{(n),s}\right), \quad [a]_{ik}^{(n),s}:= \frac12\left([a]_{ik}^{(n)}-\overline{[a]_{ik}^{(n)}}\right)
\]
is the skew-Hermitian part of $B^{(n)}$ respectively.

\textbf{Substep 1} (We can rearrange a basis $\{f_{i,h}(x,s)\}$ consisted by eigenfunctions of $L_h$ so that $B^{(n)}_{{\rm \tiny{skew}}} = O$ with zero matrix $O$):

We shall show this fact inductively. Since  $B^{(0)}_{{\rm \tiny{skew}}} = (\delta_{ik}/\sqrt{{\rm vol}(M)})_{{\rm \tiny{skew}}} = O$, we may assume that $B^{(j)}_{{\rm \tiny{skew}}} = O$ for $j < n$ as the inductive hypothesis. Put $U^{(n)} = e^{-B^{(n)}_{{\rm \tiny{skew}}}h^{n/2}}$.  
If $B^{(n)}_{{\rm \tiny{skew}}} \neq O$, we replace $f_{i,h}(p,s)$ with
\[ \widetilde{f}_{i,h}(x,s) := U^{(n)}f_{i,h}(x,s) =: \left(\sum_{j=0}^\infty \sum_{k=0}^\infty \widetilde{a}_{i,k}^{(j)}(x)h^{j/2}\psi_k(s) \right).
\]
Since $U^{(n)}$ is a unitary matrix and \[
U^{(n)} = I - B^{(n)}_{{\rm \tiny{skew}}}h^{n/2} + o(h^{n/2}) 
\] with the identity matrix $I$, we have 
\[ (\widetilde{f}_{i,h}, \widetilde{f}_{j,h})_{L^2(M\times \mathbb{R})} = \delta_{ij},\quad \widetilde{a}_{ik}^{(j)}(x) = a_{ik}^{(j)}(x) \qquad \mbox{for}\quad 1 \leq j \leq n-1.  
\]

We can show that the skew-hermitian part $\widetilde{B}^{(n)}_{{\rm \tiny{skew}}}$ of new matrix $\widetilde{B}^{(n)} = \left(\int_M \widetilde{a}_{ij}^{(n)}\right)$ is $O$ as follows:

Since $U^{(n)}$ acts on the fiber $L^2(\mathbb{R})$. At the same time, $L_h$ is an operator acting on a base manifold $M$, i.e., the action on the fiber $L^2(\mathbb{R})$ is a scalar differential operator with constant coefficients. Therefore the actions of $U^{(n)}$  and $L_h$ are commutative. Then if $\varphi_h$ is an eigenfunction of $L_h$, then $U^{(n)}\varphi_h$ is so and thus several computations for $f_{i,h}$ in  Steps $1, 2$ also hold for $U^{(n)}f_{i,h}$. 

On the other hand, we see
\[  \int_M\widetilde{a}_{ik}^{(n)} = \int_M a_{ik}^{(n)} - [a]_{ik}^{(n),s}  = [a]_{ik}^{(n),h} =\frac12\int_M\left(a_{ik}^{(n)} + \overline{a_{ki}^{(n)}}\right).  
\]
Namely, we have that the skew-hermitian part of the new matrix $\widetilde{B}^{(n)}:= U^{(n)}B^{(n)}$ is the zero matrix $O$.

\textbf{Substep 2} (Computation of $B^{(n)}$):

From Substep 1, we may assume $B^{(n)}_{{\rm \tiny{skew}}} = O$ and thus $B^{(n)} = B^{(n)}_{{\rm \tiny{herm}}}$. We proceed with the computation by differentiating the orthogonal relation (\ref{normal}) $n$ times with respect to $h$. Then,from $f_k^{(0)} = \psi_k/\sqrt{{\rm vol}(M)}$,  
\begin{align}   \int_M a_{ik}^{(n)}(x)dx &= \frac12\int_M\left(a_{ik}^{(n)}(x)dx + \overline{a_{ki}^{(n)}(x)}\right) \notag \\
&= \frac12\int_M\int_{\mathbb{R}}\left(\sum_{\ell=0}^\infty a_{i\ell}^{(n)}(x)\psi_\ell(s)\overline{\psi_k(s)} +  \psi_k(s)\overline{\sum_{\ell=0}^\infty a_{\ell i}^{(n)}(x)\psi_\ell(s)}\right) \notag \\
&= \frac{1}{2}\sqrt{{\rm vol}(M)}\int_M\int_{\mathbb{R}}\left(f_i^{(n)}(x)\overline{f_k^{(0)}(x,s)} + f_i^{(0)}(x) \overline{f_k^{(n)}(x,s)}\right)dx \notag \\
&= -\frac{1}{2}\sqrt{{\rm vol}(M)}\int_M\int_{\mathbb{R}}\sum_{j=1}^{n-1}f_i^{(j)}(x,s)\overline{f_k^{(n-j)}(x,s)} \notag \\
&= -\frac{1}{2}\sqrt{{\rm vol}(M)}\int_M\int_{\mathbb{R}}\sum_{j=1}^{n-1}\sum_{p,q = 0}^\infty a_{ip}^{(j)}(x)\psi_p(s)\overline{a_{kq}^{(n-j)}(x)\psi_q(s)} \notag \\
&= -\frac{1}{2}\sqrt{{\rm vol}(M)}\int_M\sum_{j=1}^{n-1}\sum_{p= 0}^\infty  a_{ip}^{(j)}(x)\overline{a_{kp}^{(n-j)}(x)}. \label{computationofaikn}
\end{align}
Namely, each component $\int_M a_{ik}^{(n)}(x)$ of $B^{(n)}$ can be written as integrals of functions of $a_{i\ell}^{(j)}(x)$ for $j \leq n-1$ over $M$ and thus, we know that $B^{(n)}$.
 
Combining the equation (\ref{aij}), we can get a solution  $a_{ik}^{(n)}(x)$,which is written in terms of  the Green operator of $\Delta_M$ and the information of $\lambda_i^{(j)}$ for $j \leq n$ and  $a_{ik}^{(j)}(x)$ for $1 \leq j \leq n-1$.

\subsection{Concrete computation of the subleading term $\lambda_i^{(4)}$}\label{concrete}
We have already computed from (\ref{sunadainequality}) and (\ref{harmeigen}) as
\begin{align} \lambda_i^{(0)} &= \lambda_i^{(1)} = 0 \notag \\  \lambda_i^{(2)} &= \frac{\mu_i}{{\rm vol}(M)} = \frac{2\pi}{{\rm vol}(M)}\|\omega_1\|_{L^2(M)}\|\omega_2\|_{L^2(M)}\left(i+\frac{1}{2}\right) = \frac{2\pi{\rm vol}(\widehat{H})}{{\rm vol}(M)}\left(i+\frac{1}{2}\right)  \label{concreteprepare} \\ 
f_i^{(0)} &\equiv \psi_i/\sqrt{{\rm vol}(M)} \quad \mbox{i.e.} \quad a_{ij}^{(0)}= \delta_{ij}/\sqrt{{\rm vol}(M)}.
\end{align}
Moreover, by the symmetry of the eigenvalues of the twisted operator $L_h$ with respect to the parameter $h$ at $h=0$,

we have 
\begin{equation} \lambda_i^{(2k+1)} = 0  \label{vanishodd}
\end{equation}
 
To obtain $\lambda_i^{(4)}$, we compute $a_{ij}^{(k)}$ and $\lambda_i^{(k)}$ for $k \leq 3$ in order. The following computations are based on (a) (\ref{aij}), (b) (\ref{computationofaikn}), 
\begin{description}
\item[{\rm (1)}]$a_{ij}^{(1)}$: 
\begin{description}
\item[{\rm (a)}] From $\lambda_i^{(0)} = \lambda_i^{(1)} =0$ and $f_i^{(0)} \equiv \psi_i$, we have 
\begin{equation} \Delta_M a_{ik}^{(1)} = - \left( L^{(1)}\psi_i, \psi_k\right)_{L^2(\mathbb{R})} = 0.\label{aij1}
\end{equation} 
Thus, we obtain $a_{ik}^{(1)} \equiv \mbox{constant}$.
\item[{\rm (b)}] The $(i,k)$-component of $B^{(1)}_{{\rm \tiny{herm}}}$,
\[ \int_M a_{ik}^{(1)}(x) = -\frac{1}{2}\int_M\sum_{j=0}^{0}\sum_{p= 0}^\infty\left(\begin{array}{c}1 \\ j\end{array}\right)a_{ip}^{(j)}(x)\overline{a_{kp}^{(1-j)}(x)} = 0.
\] 
Thus, combining with $a_{ik}^{(1)} \equiv \mbox{constant}$, we have 
\begin{equation}
a_{ik}^{(1)} \equiv 0\quad \mbox{and}\quad f_{i}^{(1)}=\sum_{k=0}^\infty a_{ik}^{(1)}\psi_k = 0. \label{aik1}
\end{equation}
\end{description}

\item[{\rm (2)}]$a_{ij}^{(2)}$: 
\begin{description}
\item[{\rm (a)}] By $d\psi_\ell =0$, we compute 
\begin{align}
L^{(2)}\psi_\ell &= -\left(4\pi^2|\omega_2|^2s^2-4\pi\sqrt{-1}\langle \omega_2,\omega_1 \rangle s\frac{d}{ds} -|\omega_1|^2\left(\frac{d}{ds}\right)^2\right)\psi_\ell  \notag \\
&{} -4\pi\sqrt{-1}\langle \omega_{12}, d\psi_\ell \rangle \notag \\
&= -2\pi|\omega_1||\omega_2|\left(\ell+\frac{1}{2}\right)\psi_\ell \notag\\
&{} -4\pi\sqrt{-1}\langle \omega_2,\omega_1 \rangle\left((\sqrt{\ell(\ell-1)}\psi_{\ell-2} -\psi_\ell + \sqrt{(\ell+2)(\ell+1)}\psi_{\ell+2}\right) \label{L2psi}
\end{align}

Here we have used the relations (\ref{saction}) and (\ref{daction}) and
\begin{equation*}
\left(4\pi^2|\omega_2|^2s^2 -|\omega_1|^2\left(\frac{d}{ds}\right)^2\right)\psi_\ell = 2\pi|\omega_1||\omega_2|\left(a^\dagger a + \frac{1}{2}\right)\psi_\ell = 2\pi|\omega_1||\omega_2|\left(\ell+\frac{1}{2}\right)\psi_\ell.
\end{equation*}
By $(a^\dagger a - a a^\dagger)\psi_\ell = \ell\psi_\ell - (\ell+1)\psi_\ell = -\psi_\ell$,
\begin{align*}
&{} -4\pi\sqrt{-1}h\langle \omega_2,\omega_1 \rangle s\frac{d}{ds}\psi_\ell
= -4\pi\sqrt{-1}\langle \omega_2,\omega_1 \rangle(a+a^\dag)(a-a^\dag)\psi_\ell  \\
&{} = -4\pi\sqrt{-1}\langle \omega_2,\omega_1 \rangle(\sqrt{\ell(\ell-1)}\psi_{\ell-2} - \psi_\ell + \sqrt{(\ell+2)(\ell+1)}\psi_{\ell+2})
\end{align*}

Then, from (\ref{concreteprepare}), we have 
\begin{align}
 \Delta_M a_{ik}^{(2)} &= \lambda_i^{(2)}\delta_{ik} 
 -\sum_{j=1}^{2}\sum_{\ell=1}^\infty \left( L^{(j)}a_{i\ell}^{(2-j)}\psi_\ell, \psi_k\right)_{L^2(\mathbb{R})} - \sum_{j=0}^{1}\lambda_i^{(j)}a_{ik}^{(2-j)} \notag \\ &=  \lambda_i^{(2)}\delta_{ik} - \sum_{\ell=1}^\infty \left( L^{(2)}a_{i\ell}^{(0)}\psi_\ell, \psi_k\right)_{L^2(\mathbb{R})}\notag \\ 
&= \frac{2\pi{\rm vol}(\widehat{H})}{{\rm vol}(M)}\left(i+\frac{1}{2}\right)\delta_{ik}-4\pi|\omega_1||\omega_2|\left(i+\frac{1}{2}\right)\left(\psi_i,\psi_k\right)_{L^2(\mathbb{R})} \notag \\ 
&{} -\frac{4\pi\sqrt{-1}}{{\rm vol}(M)}\langle \omega_2,\omega_1 \rangle\left(\sqrt{i(i-1)}\psi_{i-2}-\psi_i +\sqrt{(i+2)(i+1)}\psi_{i+2}, \psi_k\right)_{L^2(\mathbb{R})} \notag \\ 
&= \Big\{\Big(\frac{2\pi{\rm vol}(\widehat{H})}{{\rm  vol}(M)}-2\pi|\omega_1||\omega_2|\Big)\left(i+\frac{1}{2}\right)\Big\}\delta_{ik} \notag \\ 
&{} -\frac{4\pi\sqrt{-1}}{{\rm vol}(M)}\langle \omega_2,\omega_1 \rangle(\sqrt{(i+2)(i+1)}\delta_{{i-2},k}-\delta_{ik}+ \sqrt{i(i-1)}\delta_{{i+2},k}). \label{aij2} 
\end{align}

\item[{\rm (b)}] The $(i,k)$-component of $B^{(2)}$
\[
 \int_M a_{ik}^{(2)}(x) = -\frac{1}{2}\int_M\sum_{j=1}^{1}\sum_{p= 0}^\infty a_{ip}^{(j)}(x)\overline{a_{kp}^{(2-j)}(x)} = -\frac{1}{2}\sum_{p= 0}^\infty a_{ip}^{(1)}(x)\overline{a_{kp}^{(1)}(x)}= 0 
\]
 
We denote by $G$ the Green operator of $\Delta_M$ on $M$, that is, the operator acting on $C^\infty(M)$ satisfying
\[ I = H +G\Delta_M, \quad G\Delta_M = \Delta_MG, \quad GH = HG = O,
\] 
where $I$ is the identity operator and $H$ is the harmonic projection, namely $Hf = \frac{1}{{\rm vol}(M)}\int_Mf$. Then by (\ref{aij2}) and $G(\delta_{ik}) =0$, we have
\begin{align}
a_{ik}^{(2)}(x) &= (H +G\Delta_M)a_{ik}^{(2)}(x) \notag \\ 
&= \frac{2\pi{\rm vol}(\widehat{H})}{{\rm  vol}(M)}\left(i+\frac{1}{2}\right)G(\delta_{ik})-4\pi G(|\omega_1||\omega_2|)(x)\left(i+\frac{1}{2}\right)\delta_{ik}\notag \\
&{} -\frac{4\pi\sqrt{-1}}{{\rm vol}(M)}G(\langle \omega_2,\omega_1 \rangle)(x)(\sqrt{(i+2)(i+1)}\delta_{{i-2},k}-\delta_{ik}+ \sqrt{i(i-1)}\delta_{{i+2},k}) \notag \\
&=-4\pi G(|\omega_1||\omega_2|)(x)\left(i+\frac{1}{2}\right)\psi_i \notag \\ 
&{} -\frac{4\pi\sqrt{-1}}{{\rm vol}(M)}G(\langle \omega_2,\omega_1 \rangle)(x)(\sqrt{(i+2)(i+1)}\delta_{{i-2},k}-\delta_{ik}+ \sqrt{i(i-1)}\delta_{{i+2},k})  
\end{align}
where we denote 
\begin{align*}
G(|\omega_1||\omega_2|)(x) &:= G(|(\omega_1)_x|,|(\omega_2)_x|) \\
G(\langle \omega_2,\omega_1 \rangle)(x) &:= G(\langle (\omega_2)_x,(\omega_1)_x \rangle)
\end{align*}
\end{description}
\end{description}
From the above computations, we have 
\begin{align}
f_i^{(0)}(x,s) &=  \frac{1}{\sqrt{{\rm vol}(M)}}\psi_i(s)  \label{fi0}\\
f_i^{(1)}(x,s) &=  \sum_{k=0}^\infty a_{ik}^{(1)}(x)\psi_k(s) = 0 \label{fi1}\\
f_i^{(2)}(x,s) 
&=  \sum_{k=0}^\infty a_{ik}^{(2)}(x)\psi_k(s) \notag \\
&= -\frac{4\pi}{\sqrt{{\rm vol}(M)}}G(|\omega_1||\omega_2|)(x)\left(i+\frac{1}{2}\right)\psi_i\notag \\
&{}-\frac{4\pi\sqrt{-1}}{{\rm vol}(M)}G(\langle \omega_2,\omega_1 \rangle)(x)(\sqrt{(i+2)(i+1)}\psi_{i-2}-\psi_i+ \sqrt{i(i-1)}\psi_{i+2})  \label{fi2}
\end{align}

Combinig with (\ref{lambda}), (\ref{vanishodd}) and (\ref{L2psi}), we have 
\begin{align*} \lambda_i^{(4)} &= -\int_M\sum_{j=1}^4 \left( L^{(j)}f_i^{(4-j)}, \psi_i\right)_{L^2(\mathbb{R})} + \sum_{j=0}^{3}\left(\lambda_i^{(j)}f_i^{(4-j)}, \psi_i\right)_{L^2(\mathbb{R})} dx \\
&=  -\int_M\left( (L^{(2)}-\lambda_i^{(2)})f_i^{(2)}+L^{(4)}f_i^{(0)}, \psi_i\right)_{L^2(\mathbb{R})} dx. 
\end{align*}
By (\ref{L2psi}), we have
\begin{align} 
\int_M\left( L^{(2)}f_i^{(2)}, \psi_i\right)_{L^2(\mathbb{R})} dx
&= \int_M\Big(-\frac{4\pi}{\sqrt{{\rm vol}(M)}}G(|\omega_1||\omega_2|)(x)\left(i+\frac{1}{2}\right)L^{(2)}\psi_i\notag \\
&{}-\frac{4\pi\sqrt{-1}}{{\rm vol}(M)}G(\langle \omega_2,\omega_1 \rangle)(x)(\sqrt{(i+2)(i+1)}L^{(2)}\psi_{i-2}-L^{(2)}\psi_i+ \sqrt{i(i-1)}L^{(2)}\psi_{i+2}, \Big), \psi_i\Big)_{L^2(\mathbb{R})} dx \notag \\
&= \int_M\frac{8\pi^2}{\sqrt{{\rm vol}(M)}}|\omega_1||\omega_2|G(|\omega_1||\omega_2|)(x)\left(i+\frac{1}{2}\right)^2dx\notag \\
&{} -\int_M\Big(\frac{16\pi^2\sqrt{-1}}{\sqrt{{\rm vol}(M)}}\langle \omega_2,\omega_1 \rangle G(|\omega_1||\omega_2|)(x)\left(i+\frac{1}{2}\right)dx \notag \\
&{} -8\pi\sqrt{-1}\sqrt{(i+2)(i+1)i(i-1)}\int_M\langle \omega_2,\omega_1 \rangle dx
\end{align}
Since $\lambda_i^{(4)} \in \mathbb{R}$, the right-hand side is, in fact,  equal to
\[  \int_M\frac{8\pi^2}{\sqrt{{\rm vol}(M)}}|\omega_1||\omega_2|G(|\omega_1||\omega_2|)(x)\left(i+\frac{1}{2}\right)^2dx.
\]
Combinig with computaions on the other parts based on (\ref{concreteprepare}), (\ref{H4}), we have 
\begin{align}
\lambda_i^{(4)} &=  -\int_M\left( (L^{(2)}-\lambda_i^{(2)})f_i^{(2)}+L^{(4)}f_i^{(0)}, \psi_i\right)_{L^2(\mathbb{R})} dx \notag \\
&= \int_M\frac{8\pi^2}{\sqrt{{\rm vol}(M)}}|\omega_1||\omega_2|G(|\omega_1||\omega_2|)(x)\left(i+\frac{1}{2}\right)^2 
\notag \\
&{} + \frac{8\pi^2{\rm vol}(\widehat{H})}{({\rm vol}(M))^2}\left(i+\frac{1}{2}\right)^2 G(|\omega_1||\omega_2|)(x)+ \frac{4\pi^2}{{\rm vol}(M)}|\omega_{12}|^2dx. \label{4thderivativelambda}
\end{align}

\section{Asymptotic expansion of the lowest eigenvalue of the twisted Laplacian: general nilpotent groups}\label{asymnilpotent}

The formal computation of the previous chapter heavily depends on the explicit nature of the Sch\"odinger representation and the one-dimensional Harmonic oscillator. Although irreducible unitary representations of general nilpotent Lie groups can be written rather explicitly (cf. Appendices \ref{B.1} -\ref{B.3}), we could
compute the hypo-elliptic operator $\mathcal{H}$. However, exact values of eigenvalues and exact forms of eigenfunctions of this $\mathcal{H}$ are unknown in general. Thus, we could not expect to obtain the same level of explicitness as the one-dimensional Harmonic oscillator for general hypo-elliptic operators $\mathcal{H}$ associated with these representations.

However, regardless of that, there are two ways to proceed with generalizations to general nilpotent groups.

The first one is directly based on the spectral analysis of the hypo-elliptic operator $\mathcal{H}$ mentioned above. Namely, we consider an expansion using eigenvalues and eigenfunctions of this operator, combined with estimates of higher-order terms, which are achieved through subelliptic estimates (e.g., \cite{HelfferNou}, \cite{Nourrigat}).

The second one is based on an expression using the basis of multi-dimensional harmonic oscillators. In this case, we can not diagonalize the matrix representation of general $\mathcal{H}$, but it can be expressed as a finite-width band matrix near the diagonal. 
This matrix is also known as a finite propagation or finite range operator. The latter naming is similar to short or long-range potential in quantum mechanics. 
On the other hand, for higher-order terms, more explicit expressions than the first one are available. The coefficients for each term can be written in terms of the basis of multi-dimensional harmonic oscillators. 

These two methods have advantages and disadvantages, as explained in the following subsections. 

\subsection{Spectral analysis of a hypo-elliptic operator}\label{spectralhypoelliptic}
By the Birkhoff embedding theorem (1.1.11 Theorem in \cite{Corwin}), the nilpotent Lie algebra $\mathfrak{g}$ over $\mathbb{R}$ is a finite-dimensional Lie subalgebra of the linear Lie algebra $\mathfrak{gl}_n(V)$ of a vector space $V$. Thus, we may assume that its Lie bracket can be expressed as 
\[   [X,Y] = XY-YX \quad \mbox{in} \quad \mathfrak{gl}_n(V).
\]
In addition, by Section \ref{stratification}, we may assume $\mathfrak{g}$ is stratified, namely it satifies conditions, (\ref{liestratified1}), (\ref{liestratified2}), (\ref{liestratified3}).

Since we have the surjective homomorphism $\Phi : \pi_1(M) \to \Gamma$ and $G$ is the Malcev completion of $\Gamma$, we may identify $\mathfrak{g}^{(1)}$ with the image of $H_1(M,\mathbb{R}) \simeq (\pi_1(M)/[\pi_1(M),\pi_1(M)])\otimes \mathbb{R}$ under the induced map from the natullary extended map $\widetilde{\Phi}: \pi_1(M)\otimes\mathbb{R} \to G$ of $\Phi$. Thus, the basis $X_1,\ldots, X_k$ of $\mathfrak{g}^{(1)}$ can be viewed as a subset of the basis $X_1,\ldots, X_m$ of $H_1(M,\mathbb{R})$ in Section \ref{harmoniclietheory} around the definition (\ref{powerseriesconnection}).  

Next, we view the flat connection $\omega$ in (\ref{RH}) in Theorem \ref{pi1derham} as a formal power series connection. Then, we can use several formulas after (\ref{powerseriesconnection}) by replacing index $m$ there with $k$ here.

For each irreducible unitary representations $\pi_l$ of nilpoten Lie group $G$ in Theorem \ref{FourierinversionLie}, we can construct the canonical section 
$S_{\pi_l}$ in $U(E_{\pi_l})$ by generalizing $\rho_h$, $U(E_{\rho_h})$ and $S_{\rho_h}$ in \ref{sectionsh} for $\mbox{Heis}_3(\mathbb{Z})$ to general discrete nilpotent group $\Gamma$ whose Malcev completion is $G$, namely, 
\begin{equation} S_{\pi_l} = \pi_l\left({}_R\!\!\int_{p_0}^p \omega \right). \label{sectionpil}
\end{equation}

We need to choose the representation $\pi_l$ corresponding to the generic orbits, which are orbits of maximal dimension. In the case of $\mbox{Heis}_3(\mathbb{R})$, this $\pi_l$ is nothing but the Schr\"odinger representaion $\rho_h$.
The Concrete calculation of $\pi_l$ is presented in Appendix B. For the descriptions of the generic orbits, we refer to Theorems \ref{corwin316}, \ref{corwin318},  \ref{corwin319}. Then, we choose an element $l$ in the generic orbits arbitrarily and construct $\pi_l$ as the induced representation from the character on the maximal subordinate algebra (also known as the polarizing subalgebra) $\mathfrak{m}$ of $l$. There may be many choices of $\mathfrak{m}$ for each $l$, but these induced representations are unitary equivalent to each other. Although it is not generally possible to choose canonically, there are several hints for choosing $\mathfrak{m}$ conveniently in section 3.2 in \cite{Corwin}.

Next we put a weight $\hbar$ compatible with strafication, namely $\hbar^i$ for elemnts in $\mathfrak{g}^{(i)}$ and replacing $l$ to $\hbar l$, expand the twited Laplacian $\Delta_{\pi_{\hbar l}}$ as a formal power series $\sum_{i=0}^\infty L^{(i)}\hbar^i$. Then, similar computations as (\ref{Laplacian})- (\ref{H4}) and Section \ref{subsection72} imply that $L^{(0)}=\Delta_M$ and $L^{(1)}$ is a skew-hermitian operator. Moreover, the next coefficient $L^{(2)}$ can be expresed as a following form:
\begin{equation}
L^{(2)} = \mathcal{H}^G + \mbox{skew-hermitian operator} \label{generalnilpotentsecond}
\end{equation}
where $\mathcal{H}^G$ is the hypo-elliptic operator appearing in the following form  
\begin{equation} \mathcal{H}^G = \|\omega_1\|_{L^2(M)}^2d\pi_l(X_1)^2+\cdots +\|\omega_k\|_{L^2(M)}^2d\pi_l(X_k)^2, \label{generalhypo2}
\end{equation}
and 
$\omega_1, \ldots ,\omega_k$ are harmonic forms on $M$ which consist $\mathfrak{g}^{(1)}$-part of the $\mathfrak{g}$-valued form determined by $\pi_1$-de Rham Theorem \ref{pi1derham}. We may assume $L^2$-inner product $(\omega_i,\omega_j)_{L^2(M)}=0$ for $i\neq j$ by the same reason explained after Condition \ref{harmoniccoexact}.
In the case when $\Gamma = \mbox{Heis}_3(\mathbb{Z})$, $\mathcal{H}^G$ is the modified harmonic oscillator $\mathcal{H}$ in (\ref{Heisenharmonic}) in (\ref{HC}).

We notice that the twisted Laplacian $\Delta_{\pi_{\hbar l}}$ acting on the space of sections on the flat vector bundle $E_{\pi_{\hbar l}}$ associated to $\pi_{\hbar l}$ is self-adjoint. Here $\hbar = \sqrt{h}$ when $\Gamma = \mbox{Heis}_3(\mathbb{Z})$. Then, we can ignore the skew-hermitian parts of each term in the computation of the expansions of its eigenvalues $\lambda_{k,i}(\pi_{\hbar l})$, where the indexings are the same as those given in Section \ref{subsection72}. Therefore, we have the following expansion of the eigenvalue $\lambda_{0,i}(\pi_{\hbar l})$ of $\Delta_{\pi_{\hbar l}}$:
\begin{equation}
\lambda_{0,i}(\pi_{\hbar l}) = \sum_{j=0}^\infty \lambda_{0,i}^{(j)}\hbar ^j = \mu_i \hbar ^2 + \cdots   \label{nilpotentlambdaexpansion}  
\end{equation}
where $\mu_i$ is the $i$-th eigenvalue of $\mathcal{H}^G$. In the second equlity, we have used the fact that $\lambda_{0,i}^{(0)} = \lambda_{0,i}^{(1)} = 0$ which are derived from the fact $\lambda_{0,i}(\pi_{\hbar l}) \geq 0$ and takes minimum value $0$ at $\hbar = 0$ as same reason as in the previous chapter.

The operator $\mathcal{H}^G$ is hypo-elliptic and maximally polynomial in the sense of \cite{HelfferNou} and, thus, has a discrete spectrum and the special value $\zeta_{\mathcal{H}}(d/2)$ is convergent. 

For the convenience of our readers, we explain the reason for the positive definiteness of the Dirichlet form associated with $\mathcal{H}^G$, which is equivalent to all eigenvalues $\mu_i$ of $\mathcal{H}^G$ being positive. However, this fact is widely known.

 First, we recall the Kirillov-Kostant symplectic form $\omega$ on the tangent space $T_l\mathcal{O}_l$ at $l$ of the coadjoint orbit $\mathcal{O}_l$ through $l \in \mathfrak{g}^\ast$. 
This is defined as $\omega(X,Y):= l([X,Y])$  by identifying $T_l\mathcal{O}_l$ with $g/\mathfrak{r}_l$, where $\mathfrak{r}_l := \{X | l([X,Y]) = 0 \; \mbox{for any}\; Y \in \mathfrak{g}\}$ is the radical of $l$. 
Then, for any maximal subordinate subalgebra $\mathfrak{m}$ which is a maximal one among subalgebras $\mathfrak{n}$ of $\mathfrak{g}$ satisfying $l([X,Y]) = 0$ for any $X, Y \in \mathfrak{n}$, the quotient space $\mathfrak{m}/\mathfrak{r}_l$ is a Lagrangian subspace of $\mathfrak{g}/\mathfrak{r}_l$ by definition of $\mathfrak{m}$. 
Moreover, the representation space of the induced representation $\pi_l$ has a model $L^2(\mathbb{R}^k)$ with $k = \mbox{dim}(\mathfrak{m}/\mathfrak{r}_l) = \frac12\mbox{dim}(\mathfrak{g}/\mathfrak{r}_l)$. 
In the concrete expression of $\pi_l$, actions coming from the chacter $\chi_l$ of elements in $\mathfrak{m}$ are multiplication of elements of $U(1)$ on a function $f \in L^2(\mathbb{R}^k)$ and actions coming from the complement $\mathfrak{g}/\mathfrak{m}$  of $\mathfrak{m}/\mathfrak{r}_l$ in $\mathfrak{g}/\mathfrak{r}_l$ are traslation of varibles of a function $f$. Namely, if we consider the derivative $d\pi_l(X)$, then it acts as a multiplication of a polynomial in the former and a derivation for the latter cases. Furthermore, since we may assume $\mathfrak{g}$ is stratified by Section \ref{stratification}, $\mathfrak{g}^{(1)}$ generates $\mathfrak{g}$ and thus, there is a system of polynomial $p_i(d\pi_l(X_1), \ldots, d\pi_l(X_m))$  for $i=1, \ldots, k$

Moreover, concerning higher-order terms of the asymptotic expansion, by sub-elliptic estimates \cite{HelfferNou}, \cite{Nourrigat}, we can estimate the norms of higher derivatives of eigenfunctions in conjunction with the estimates of eigenvalues. Therefore, even in general nilpotent groups, the coefficients of higher-order terms of expansions can be estimated, although their detailed information is less explicit than that of the harmonic oscillator. 

\subsection{Concrete construction of the hypo-elliptic operator $\mathcal{H}^G$}\label{constracthypoelliptic}

We shall explain how to construct the hypo-elliptic operator $\mathcal{H}^G$
\begin{description}
\item[{\rm (1)}] Given a torsion-free discrete nilpotent group $\Gamma$, consider its Malcev completion $G$, which can be done naturally since the multiplication rule of $\Gamma$ is expressed as polynomials with respect to their suitable coordinates  (cf. Section \ref{nilpotentfujiwara})

\item[{\rm (2)}] By the procedure explained in Section \ref{stratification}, regard $G$ as stratified Lie group $G = (G,\ast)$. Hereafter, we assume $G$ is a stratified nilpotent Lie group.

\item[{\rm (3)}] Using the stratification, choose the rational strong Malcev basis $\{X_i| 1 \leq i \leq n:= \mbox{dim}\;\mathfrak{g}\}$ of $\mathfrak{g}$. It can be done by such as taking iterated Lie brackets of elements in the basis $\{X_i| 1\le i \le n:= \mbox{dim}\;\mathfrak{g}^{(1)}\}$ of $\mathfrak{g}^{(1)}$ where we used the same notations in Subsubsection \ref{stratification}. 

Moreover, we can choose this basis as a rational one, namely we may assume $X_i \in \mathfrak{g}_\mathbb{Q}$ for all $i$, since the existence of the lattice $\Gamma$ in $G$ is equivalent to $G$ equips $\mathbb{Q}$-structure (cf. 5.1.8 Theorem in \cite{Corwin}).

\item[{\rm (4)}] For each element $X_i$, compute its coadjoint action on $\mathfrak{g}^\ast_\mathbb{Q}$. 
From the collection of their data, choose a generic element $l \in \mathfrak{g}^\ast_\mathbb{Q}$ such that the isotropy algebra $\mathfrak{g}_l$ of $\mbox{ad}^\ast(\mathfrak{g})$ at $l$ has minimal dimension. 
Since the dimension of the coadjoint orbit $\mathcal{O}_l$ through $l$ is equal to the codimension of $\mathfrak{g}_l$, this orbit $\mathcal{O}_l$ is a generic orbit.

\item[{\rm (5)}] Take any $l$ in the generic orbit $\mathcal{O}$ and compute the radical $\mathfrak{r}_l$ which is uniquely determined by $l$. We can also assume $\mathfrak{r}_l$ is a rational ideal.

\item[{\rm (6)}] We want to choose a polarization $\mathfrak{m}$ in $\mathfrak{g}_\mathbb{Q}$, but there can be many polarizing subalgebras $\mathfrak{m}$. 
Although they are subject to certain restrictions, for example, $\mathfrak{m} \supset \mathfrak{r}_l$, there is no systematic way to construct them all, given $l$, which is one of the complications of the theory. 

The following construction, essentially due to Vergne \cite{Vergne}, selects a polarizing subalgebra in a canonical way for any  $l \in \mathfrak{g}^\ast_\mathbb{Q}$, given a strong Malcev basis in $\mathfrak{g}_\mathbb{Q}$.

\begin{theorem}[{\rm 1.3.6 Theorem in \cite{Corwin}}]\label{methodVergne}
Let $(O) \subset \mathfrak{g}_1  \subset \mathfrak{g}_2 \subset \cdots \subset \mathfrak{g}_n$ be a chain of ideals in $\mathfrak{g}_\mathbb{Q}$ such that $\mbox{dim}\;\mathfrak{g}_j = j$. 
Given $l \in \mathfrak{g}^\ast_\mathbb{Q}$, let $l_j = l|_{\mathfrak{g}_j}$. 
Then $\mathfrak{m}_l := \sum_{j=1}^n \mathfrak{r}_{l_j} \subset \mathfrak{g}_\mathbb{Q}$ is a polarizing subalgebra for $l$.
\end{theorem}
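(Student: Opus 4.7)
The plan is to verify three assertions in order: (i) $\mathfrak{m}_l$ is subordinate to $l$, i.e. $l([X,Y])=0$ for all $X,Y\in\mathfrak{m}_l$; (ii) $\mathfrak{m}_l$ is a Lie subalgebra of $\mathfrak{g}_{\mathbb{Q}}$; and (iii) $\dim\mathfrak{m}_l=\tfrac12(\dim\mathfrak{g}+\dim\mathfrak{r}_l)$. Granted (i)--(iii), $\mathfrak{m}_l$ is a polarization because any subordinate subspace is a $B_l$-isotropic subspace of $\mathfrak{g}/\mathfrak{r}_l$ and the symplectic form induced by $B_l$ on that quotient forces isotropic subspaces to have dimension at most $\tfrac12\dim(\mathfrak{g}/\mathfrak{r}_l)+\dim\mathfrak{r}_l=\tfrac12(\dim\mathfrak{g}+\dim\mathfrak{r}_l)$; equality in (iii) together with (i) and (ii) then forces maximality. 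Rationality of $\mathfrak{m}_l$ is automatic once (ii) is established, since each $\mathfrak{g}_j$ is a rational ideal and $l$ is rational, so each radical $\mathfrak{r}_{l_j}=\{X\in\mathfrak{g}_j\mid l([X,\mathfrak{g}_j])=0\}$ is cut out by linear equations with rational coefficients.

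For (i) the computation is essentially free. Given $X\in\mathfrak{r}_{l_i}$ and $Y\in\mathfrak{r}_{l_j}$ with $i\le j$, one has $X\in\mathfrak{g}_i\subset\mathfrak{g}_j$, so by the definition of $\mathfrak{r}_{l_j}$ we get $l([Y,X])=0$, hence $l([X,Y])=0$. Bilinearity of $B_l$ extends this to all of $\mathfrak{m}_l$. For (ii), take the same $X,Y$ (WLOG $i\le j$). Since $\mathfrak{g}_j$ is an ideal, $[X,Y]\in\mathfrak{g}_j$, and I will show $[X,Y]\in\mathfrak{r}_{l_j}\subset\mathfrak{m}_l$. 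For $Z\in\mathfrak{g}_j$, Jacobi gives $l([[X,Y],Z])=l([X,[Y,Z]])-l([Y,[X,Z]])$. The second term vanishes because $[X,Z]\in\mathfrak{g}_j$ (ideal property) and $Y\in\mathfrak{r}_{l_j}$. The first term is the delicate one, since it requires $l([X,W])=0$ for $W=[Y,Z]\in\mathfrak{g}_j$; this is true when $W\in\mathfrak{g}_i$, which does not hold in general. The main obstacle of the proof lives here: one has to refine the argument by an induction on $n-i$, or equivalently descend along the chain $\mathfrak{g}_n\supset\mathfrak{g}_{n-1}\supset\cdots$, using at each step the standard dichotomy that, for a codimension-one ideal $\mathfrak{g}_{k-1}\subset\mathfrak{g}_k$ and $\ell_k,\ell_{k-1}$ the associated restrictions of $l$, either $\mathfrak{r}_{\ell_{k-1}}\subset\mathfrak{r}_{\ell_k}$ (in which case $\dim\mathfrak{r}_{\ell_k}=\dim\mathfrak{r}_{\ell_{k-1}}+1$) or $\mathfrak{r}_{\ell_k}\subset\mathfrak{r}_{\ell_{k-1}}$ with $\dim\mathfrak{r}_{\ell_{k-1}}=\dim\mathfrak{r}_{\ell_k}+1$. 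Tracking this dichotomy, together with the identity $\mathfrak{r}_{\ell_k}=\mathfrak{r}_{\ell_{k-1}}^{B_l}\cap\mathfrak{g}_k$ when one intersects with $\mathfrak{g}_{k-1}$, lets one promote the computation above to all indices at once and establishes $[\mathfrak{r}_{l_i},\mathfrak{r}_{l_j}]\subset\mathfrak{r}_{l_j}$.

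The same codimension-one analysis settles (iii). Let $a_j:=\dim\mathfrak{r}_{l_j}$ and $m_j:=\dim(\mathfrak{r}_{l_1}+\cdots+\mathfrak{r}_{l_j})$. The dichotomy from the previous paragraph shows that $a_j-a_{j-1}\in\{0,1\}$, and moreover it implies $m_j-m_{j-1}=\max(a_j-a_{j-1},\,1-(a_{j-1}-a_j)_+)$; summed from $j=1$ to $n$, and using $a_n=\dim\mathfrak{r}_l$ and the fact that in exactly $n-\dim\mathfrak{r}_l$ steps the dimension of $\mathfrak{r}_{l_j}$ drops by $1$ while in $\dim\mathfrak{r}_l$ steps it jumps by $1$, a routine bookkeeping yields $m_n=\tfrac12(n+\dim\mathfrak{r}_l)$. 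This is (iii), and combined with (i) and (ii) completes the proof. The expected chief obstacle, as noted, is organizing the codimension-one step cleanly so that it simultaneously feeds the subalgebra property and the dimension count; I expect to lift this by treating the entire chain $\mathfrak{g}_1\subset\cdots\subset\mathfrak{g}_n$ as a descending induction anchored at $\mathfrak{g}_n=\mathfrak{g}$, reducing everything to repeated application of a single lemma comparing $\mathfrak{r}_{l_k}$ with $\mathfrak{r}_{l_{k-1}}$ inside $\mathfrak{g}_k$.
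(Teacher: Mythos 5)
The paper itself gives no proof of this statement --- it is quoted verbatim as Theorem 1.3.6 of \cite{Corwin} (Vergne's construction) --- so I can only judge your argument on its own merits. Your step (i) (isotropy) is correct, and your step (iii) (the dimension count via the codimension-one dichotomy) is essentially sound, apart from a slip: the dichotomy you yourself state gives $\dim\mathfrak{r}_{l_j}-\dim\mathfrak{r}_{l_{j-1}}=\pm 1$, not ``$\in\{0,1\}$'' (the ranks of the restricted skew forms are even, so the radical dimension changes by exactly one at each step); with that corrected, the walk argument does give $\dim\mathfrak{m}_l=\tfrac12(\dim\mathfrak{g}+\dim\mathfrak{r}_l)$, since at an up-step $\mathfrak{r}_{l_j}=\mathfrak{r}_{l_{j-1}}+\mathbb{R}v$ with $v\notin\mathfrak{g}_{j-1}$ (so the partial sum grows by exactly one) and at a down-step $\mathfrak{r}_{l_j}\subset\mathfrak{r}_{l_{j-1}}$ (so it does not grow).

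The genuine gap is in step (ii), exactly where you flagged it, and your proposed repair does not close it. You aim to prove $[X,Y]\in\mathfrak{r}_{l_j}$ for $X\in\mathfrak{r}_{l_i}$, $Y\in\mathfrak{r}_{l_j}$, $i\le j$, i.e.\ the bracket lands in the radical with the \emph{larger} index; this forces you to test against all $Z\in\mathfrak{g}_j$, and then the term $l([X,[Y,Z]])$ is genuinely out of control because $[Y,Z]$ need not lie in $\mathfrak{g}_i$. The descending induction with the dichotomy lemma that you sketch is never actually carried out, and it is aimed at a containment that is not needed and (as stated) not the one the computation supports. The fix is simply to aim at the radical with the \emph{smaller} index: show $[X,Y]\in\mathfrak{r}_{l_i}$. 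Indeed $[X,Y]\in\mathfrak{g}_i$ because $\mathfrak{g}_i$ is an ideal of $\mathfrak{g}$, and for $Z\in\mathfrak{g}_i$ the same Jacobi identity $l([[X,Y],Z])=l([X,[Y,Z]])-l([Y,[X,Z]])$ now has \emph{both} terms vanishing: $[Y,Z]\in\mathfrak{g}_i$ (again the ideal property) and $X\in\mathfrak{r}_{l_i}$ kill the first, while $[X,Z]\in\mathfrak{g}_i\subseteq\mathfrak{g}_j$ and $Y\in\mathfrak{r}_{l_j}$ kill the second. Hence $[\mathfrak{r}_{l_i},\mathfrak{r}_{l_j}]\subseteq\mathfrak{r}_{l_{\min(i,j)}}\subseteq\mathfrak{m}_l$, so $\mathfrak{m}_l$ is a subalgebra with no induction at all; combined with your (i), the corrected (iii), and the maximal-isotropy bound, this completes the proof, and rationality follows as you say since each $\mathfrak{r}_{l_j}$ is cut out by rational linear equations.
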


The above chain of ideals in $\mathfrak{g}_\mathbb{Q}$ can be taken by $\mathfrak{g}_j = \mathbb{R}\mbox{-span}\{X_1,\ldots,X_j\}$ for the strong Malcev basis $\{X_i| 1\le i \le n := \mbox{dim}\;\mathfrak{g}_\mathbb{Q}\}$ chosen in the item ($3$).

\item[{\rm (7)}] To describe the structure of generic orbit $\mathcal{O} = \mathcal{O}_l$, we apply Theorem \ref{corwin316} for $V = \mathfrak{g}^\ast_\mathbb{Q}$ there (see quotation \cite{Fujiwara2} in  Section \ref{nilpotentfujiwara} for similar description). 
Then,let $\mathfrak{g}_j^\ast$ be the rational dual Lie algebra of $\mathfrak{g}_j$, $l_j = l|_{\mathfrak{g}_j^\ast}$ and $d_j (l)$ be the dimension of the $G$-orbit of $l_j$ in $(\mathfrak{g}_j )^\ast$. 
Put $d(l) = (d_1(l), \ldots, d_n(l))$ and $\mathcal{D} = \{d(l)| l \in \mathcal{O} \}$. Then, there is a unique $d = (d_1, . . . , d_n) \in \mathcal{D}$ such that the layer $U = \{l \in V | d(l) = d\}$ is a Zariski open set of $V$.
 Let $T$ be the set of the non-jump indices of $d$, namely $T = \{j| d_j = d_{j-1}\}$ and $S = \{1,\ldots,n\} \setminus T$. Then $\mbox{dim}\;\mathcal{O} = {}^\sharp S$ 
Let $V_S =\mathbb{R}\mbox{-span}\{X_i: i \in S\}$, $V_T = \mathbb{R}\mbox{-span}\{X_i: i \in T\}$. Then, for each element $t(X) = \sum_{j\in T}t_jX_j \in V_T$, the component $t = \{t_j|j\in T\}$ parametrize generic orbits $\mathcal{O}_t := \mbox{$G$-orbit of}\;t(X)$.

\item[{\rm (8)}] For each $l \in \mathcal{O}_t\cap\mathfrak{g}^\ast_\mathbb{Q}$, we choose a polarization subalgebra $\mathfrak{m}$, for example, as in Theorem \ref{methodVergne} and construct irreducible unitary representation $\pi_l$ as an induced representation $\mbox{Ind}_M^G(\chi_l)$ with $M = \exp\;\mathfrak{m}$ from the character $\chi_l = e^{2\pi\sqrt{-1}l(X)}$ of $\mathfrak{m}$ to $G$.
Then, combining with the explanation in the previous section, the  hypoelliptic operator $\mathcal{H} = \mathcal{H}^G$ is constructed as 
\begin{equation} 
\mathcal{H}^G = \|\omega_1\|_{L^2(M)}^2d\pi_l(X_1)^2+\cdots +\|\omega_k\|_{L^2(M)}^2d\pi_l(X_k)^2, \label{detailedgeneralhypo1}
\end{equation}
\end{description}

\subsection{Some computations based on the multi-dimensional harmonic oscillator expressions}\label{multiharmonic}

Since the representation space $\mathcal{H}_{\pi_l}$ of the above $\pi_l$ is modeled on $L^2(\mathbb{R}^k)$ for some $k$, namely the former is isomorphic to 
the latter space. For a suitable coordinates $s^1, \ldots, s^k$ of $\mathbb{R}^k$, all the coefficients in asymptotic expansions are polynomials of $s^i$'s and $\frac{\partial}{\partial s^i}$'s for $i=1,\ldots,k$. Then, similarly to (\ref{creationop}) and (\ref{annihilationop}), we take the creation and annihilation operators $a_i^\dagger$ and $a_i$ associated to $k$-dimentional harmonic oscillator $\mathcal{H}_k$ defined, up to some constant $c_{M,\Gamma}$ depending on the geometry of the base space $M$ and the covering transformation group $\Gamma$, as  
\[\mathcal{H}_k = -\sum_{i=1}^k\left(\frac{\partial}{\partial s^i}\right)^2 + \sum_{i=1}^k(s^i)^2 = \sum_{i=1}^k (a_i^\dagger a_i + \frac12),
\]
where
\[ a_i = \frac{1}{\sqrt{2}}\left(s^i + \frac{\partial}{\partial s^i}\right),\quad  a_i^\dagger = \frac{1}{\sqrt{2}}\left(s^i - \frac{\partial}{\partial s^i}\right). \] 
Hereafter in this section, we ignore the above constant $c_{M,\Gamma}$ for the sake of brevity.

Since 
\[ \frac{\partial}{\partial s^i} = \frac{1}{\sqrt{2}}(a_i - a_i^\dagger ),\quad s^i = \frac{1}{\sqrt{2}}(a_i + a_i^\dagger ),
\]
we know that all the coefficients in asymptotic expansions are polynomials of $a_i^\dagger$'s and $a_i$'s which map $\psi_{i,\ell}$ to $\sqrt{\ell}\psi_{i,\ell-1}$ and $\sqrt{\ell+1}\psi_{i,\ell+1}$ respectively for a complete orthonormal system $\{\psi_{i,\ell}\}_{(i,\ell) = (1,0)}^{(k,\infty )}$ of eigenfunctions of $\mathcal{H}_k$ which can be written as 
\[\psi_{(i_1,\ldots,i_k),(\ell_1,\dots, \ell_k)}(s_1,\ldots,s_k) =\psi_{i_1,\ell_1}(s_1)\cdot\psi_{i_2,\ell_2}(s_2)\cdots \psi_{i_,\ell_k}(s_k),\] 
where $\psi_{i,\ell}(s_i)$ are $\ell$-th eigenfunctions of the one dimensional harmonic oscillator $\mathcal{H}_i = -\left(\frac{\partial}{\partial s^i}\right)^2 + (s^i)^2$.

Then, the representation matrix $\mathcal{H}^G = \left(h_{ij}\right)$ of the hypo-elliptic operator $\mathcal{H}^G$ is a finite-width band matrix near the diagonal, namely satisfying the following;
\[   h_{ij} = 0 \quad \mbox{if $|i-j| > K$}
\] 
for some $K >0$ depending only on $k$. 

The explicit expression of the eigenvalues of the above $\mathcal{H}^G$ is still obscure. The other method for computing the special value $\zeta_{\mathcal{H}}(d/2)$, similar to the computations done in the case of the quartic oscillator in Example \ref{Engelquartic}, is not known. 
However, its integral expression or several properties of the number-theoretic nature seem to have some interest, see Section \ref{leadingterm} for further discussion.
On the other hand, the higher-order terms can be explicitly computed. Thus, there are some merits in using the above basis.

\section{Long time asymptotics of heat kernels for nilpotent extensions}\label{Longtimeheat}

In this chapter, we first prove the special case Theorem  \ref{heisenberg-heat} of $\mbox{Heis}_3(\mathbb{Z})$ and also the local central limit theorem in this case, and next, prove the general case Theorem \ref{conj-heat}.

\subsection{Expression of the leading and second terms of long time asymptotic expansion for the Heisenberg group $\mbox{Heis}_3(\mathbb{Z})$}\label{leadingHeisenberg}
In this chapter, we shall prove Theorem \ref{heisenberg-heat}.  By the Fourier inversion formula (\ref{Fourier1}), we compute the heat kernel $k_X(t,p,q)$ on the covering manifold $X$ as follows: 
\begin{equation} 
k_X(t,p,q) = \int_{\widehat{X}}{\rm tr}(k_{\rho_{{\rm \tiny{fin}},x}}(t,p,q))d\mu \label{Heatasymptoticsmain1}
\end{equation}
where $\widehat{X} = \widehat{\Gamma}_{\rm fin} = (\mathbb{Q}\cap [0,1]) \times [0,1] \times [0,1]$  with $\Gamma = \mbox{Heis}_3(\mathbb{Z})$ and 
\begin{equation}k_{\rho_{{\rm \tiny{fin}},x}}(t,p,q) =\sum_{\gamma \in \Gamma}\rho_{{\rm \tiny{fin}},x}(\gamma^{-1})k_X(t, p, \gamma q).\label{Heatasymptoticsmain2}
\end{equation}

By Theorem \ref{sunada}, the right-hand side of the above equation can be expressed as
\begin{equation}
\int_{[0,1]\times[0,1]\times(\mathbb{Q}\cap([0,\varepsilon)\cup(1-\varepsilon,1]))}{\rm tr}(k_{\rho_{{\rm \tiny{fin}},x}}(t,p,q))d\mu + O(e^{-c_\varepsilon t}) \label{reductionbysunada} 
\end{equation}

Then, by Theorem \ref{discretetoLie}, we can replace finite-dimensional representations $\rho_{{\rm \tiny{fin}},x}$ by (infinite-dimensional) Schr\"odinger representations $\rho_h$, and then, the first term of the righthand side of (\ref{reductionbysunada}) can be transformed into the following formal expression: for $p,q \in X$ and $s,u \in \mathbb{R}$,
\begin{align}
\int_{-\varepsilon}^{\varepsilon}k_{\rho_h}((p,s),(q,u)) &= \int_{-\varepsilon}^{\varepsilon}\sum_{k=0}^\infty\sum_{i=0}^\infty {\rm Tr}\Big(e^{-\lambda_{k,i}(h)t}\varphi_{k,i,h}(p)\varphi_{k,i,h}^\ast(q))\Big) \notag \\
&=  \int_{-\varepsilon}^{\varepsilon}\sum_{i=0}^\infty {\rm Tr}\Big(e^{-\lambda_{0,i}(h)t}\varphi_{0,i,h}(p)\varphi_{0,i,h}^\ast(q))\Big) + O(e^{-ct}).\label{HeatSch1}
\end{align}
Since 
\[ \varphi_{0,i,h}(p) = s_h(p)f_{0,i,h}(p) = \rho_h\left({}_R\!\!\int_{p_0}^p\omega \right)f_{0,i,h}(p)
\]
and the trace property  $\mbox{Tr}(AB) ={\rm Tr}(BA)$, we have the formal expression 
\begin{equation} \left(\sum_{i=0}^\infty e^{-\lambda_{0,i}(h)t}\varphi_{0,i,h}(p)\varphi_{0,i,h}^\ast(q)\right)  = E_h(t)S_h(p,q) = S_h(p,q)E_h(t)   \label{Sh}  
\end{equation}
where
\[ E_h(t) = \sum_{i=0}^\infty e^{-\lambda_{0,i}(h)t}f_{0,i,h}(p)f_{0,i,h}^\ast(q), \quad S_h(p,q) = \rho_h\left({}_R\!\!\int_q^p\omega\right) 
\]

Combining with the computations of each term of the asymptotic expansion (\ref{asym2}), (\ref{asym3}) in the previous chapter, we apply Lemma 4.4 in \cite{Kotani3}. Note that the variable $u$ in \cite{Kotani3} there corresponds to $\sqrt{h}$ here. Then, we integrate the leading term of the right-hand side of (\ref{Sh}) with respect to the Plancherel measure in (\ref{Liedecomp}). 
From (\ref{asym3}) and the Morse lemma, there exist a coordinate $u_i$ around $h=0$ such that 
\begin{align}
\lambda_{0,i}(h) &= \lambda_i(h) = \sum_{j=0}^\infty\lambda_i^{(j)}h^{j/2} = \mu_ih + \lambda_i^{(4)}h^2 + \cdots = \mu_iu_i^2, \notag \\
h^{1/2} :&= h_i(u_i) = u_i + k_i(u_i) \quad \mbox{for}\quad  i=0,1,\cdots  \label{eigenmorse}
\end{align} 
Here we note that $|k_i(u_i)| = O(u_i^3)$ since 
\begin{align*}
\lambda_i^{(2)}h + \lambda_i^{(4)}h^2 + \cdots &= \lambda_i^{(2)}(u_i + k_i(u_i))^2 + \lambda_i^{(4)}(u_i + k_i(u_i))^4 + \cdots \\
&= \lambda_i^{(2)}(u_i^2 + 2u_ik_i(u_i) + k_i^2(u_i)) + \lambda_i^{(4)}u_i^4 + O(u_i^5), 
\end{align*}
which implies the fourth order of terms satisfy the relation $2\lambda_i^{(2)}u_ik_i(u_i) + \lambda_i^{(4)}u_i^4 =0$ and then, $k_i(u_i) = \frac{\lambda_i^{(4)}}{2\lambda_i^{(2)}}u_i^3 + o(u_i^3)$ and 
\begin{equation}
h^{1/2} = u_i + k_i(u_i) =u_i\left(1+\frac{\lambda_i^{(4)}}{2\lambda_i^{(2)}}u_i^2 + o(u_i^2)\right) \label{fromhtoui}.
\end{equation}

Then, the first term of the right-hand side of (\ref{HeatSch1}) is equal to 
\begin{align}&{}\int_{-\varepsilon}^{\varepsilon}\int_{\mathbb{R}}\sum_{i=0}^\infty e^{-\lambda_{0,i}(h)t}\varphi_{0,i,h}(p)\varphi_{0,i,h}^\ast(q)ds|h|dh \notag \\ 
&= \int_{-\varepsilon}^{\varepsilon}\sum_{i=0}^\infty e^{-\lambda_{0,i}(h)t}f_{0,i,h}(p)f_{0,i,h}^\ast(q)\rho_h\left({}_R\!\!\int_q^p\omega\right)|h|dh 
\notag \\ 
&= \int_{-\varepsilon}^{\varepsilon}\int_{\mathbb{R}}\rho_h\left({}_R\!\!\int_q^p\omega\right)\sum_{i=0}^\infty e^{-\lambda_i^{(2)} u_i^2t}\left(\sum_{j=0}^\infty f_i^{(j)}(p,s)h^{j/2}\right)\left(\overline{\sum_{j=0}^\infty f_i^{(j)}(q,s)h^{j/2}}\right)ds|h|dh \label{asymheatheisenberg1} \\
 &= \sum_{i=0}^\infty\int_{-\varepsilon}^{\varepsilon}\int_{\mathbb{R}}e^{-\lambda_i^{(2)} u_i^2t}\overbrace{\rho_h\left({}_R\!\!\int_q^p\omega\right)}^{\textcircled{1}}\overbrace{\left(\psi_i(s) + f_i^{(2)}(p,s)h+\cdots\right)\left(\overline{\psi_i(s) + f_i^{(2)}(q,s)h+\cdots}\right)}^{\textcircled{2}}ds\overbrace{|h|dh}^{\textcircled{3}}. \notag
\end{align}

We further compute the rightmost of (\ref{asymheatheisenberg1}).
\begin{description}
\item[{\rm (a)}](Computation of part \textcircled{1})

Part \textcircled{1} is equal to
\begin{align}
\rho_h\left({}_R\!\!\int_q^p\omega\right) 
&= e^{2\pi\sqrt{-1}\left( h\int_q^p(\omega_{12} + \omega_2\omega_1) + \sqrt{h}\left(\int_q^p\omega_2\right)s\right)}e^{\sqrt{h}\left(\int_q^p\omega_1\right)\frac{d}{ds}}\notag \\
&= e^{2\pi\sqrt{-1}\left( (u_i+o(u_i^2))^2\int_q^p(\omega_{12} + \omega_2\omega_1) + (u_i+o(u_i^2))\left(\int_q^p\omega_2\right)s\right)}e^{(u_i+o(u_i^2))\left(\int_q^p\omega_1\right)\frac{d}{ds}}\notag \\
&= \bigg[1+2\pi\sqrt{-1}\left(\int_q^p\omega_2\right)su_i + \bigg\{-2\pi^2\left(\int_q^p\omega_2\right)^2s^2 \notag \\
&{}+2\pi\sqrt{-1}\Big(\int_q^p(\omega_{12} + \omega_2\omega_1)\big\}u_i^2+ o(u_i^2)\bigg]\notag \\
&{} \times \bigg\{1+\left(\int_q^p\omega_1\frac{d}{ds}\right)u_i + \frac12\Big(\int_q^p\omega_1\Big)^2\Big(\frac{d}{ds}\Big)^2 + o(u_i^2)\bigg\}\notag \\
&= 1 + \bigg\{2\pi\sqrt{-1}\left(\int_q^p\omega_2\right)s+\left(\int_q^p\omega_1\frac{d}{ds}\right)\bigg\}u_i \notag \\
&+ \bigg\{2\pi\sqrt{-1}\left(\int_q^p\omega_2\right)\left(\int_q^p\omega_1\right)s\frac{d}{ds}-2\pi^2\left(\int_q^p\omega_2\right)^2s^2 + \frac12\Big(\int_q^p\omega_1\Big)^2\Big(\frac{d}{ds}\Big)^2\notag \\
& +2\pi\sqrt{-1}\int_q^p(\omega_{12} + \omega_2\omega_1)\bigg\}u_i^2 + o(u_i^2).
\end{align}
This computation implies the following:
\begin{align}
\rho_h\left({}_R\!\!\int_q^p\omega\right)\psi_i 
&= \bigg[1 + \bigg\{2\pi\sqrt{-1}\left(\int_q^p\omega_2\right)s+\left(\int_q^p\omega_1\right)\frac{d}{ds}\bigg\}u_i \notag \\
&+ \bigg\{2\pi\sqrt{-1}\left(\int_q^p\omega_2\right)\left(\int_q^p\omega_1\right)s\frac{d}{ds}-\widetilde{\mathcal{H}}+2\pi\sqrt{-1}\int_q^p(\omega_{12} + \omega_2\omega_1)\bigg\}u_i^2 + o(u_i^2)\bigg]\psi_i. \label{rhoh}
\end{align}
where $\widetilde{\mathcal{H}}$ is another modified harmonic oscillator 
\begin{equation}\widetilde{\mathcal{H}} :=-\frac12\Big(\int_q^p\omega_1\Big)^2\Big(\frac{d}{ds}\Big)^2 +2\pi^2\left(\int_q^p\omega_2\right)^2s^2, \label{1HC}
\end{equation}
Similarly to previously defined modified harmonic oscillator $\mathcal{H}$ in (\ref{HC}), we can compute eigenvalues $\nu_i$ of $\widetilde{\mathcal{H}}$ as
\[ \nu_i = \pi\Big(\int_q^p\omega_1\Big)\Big(\int_q^p\omega_2\Big)\Big(i + \frac12\Big).
\]
\item[{\rm (b)}](Action of part \textcircled{1} to the main term of \textcircled{2})

We proceed further computation of $\displaystyle{\int_{\mathbb{R}}\rho_h\left({}_R\!\!\int_q^p\omega\right)\psi_i(s)\overline{\psi_j(s)}ds}$ up to the second order term of $u_i$.
Combinig with (\ref{saction}),(\ref{daction}) and (\ref{sds}), the right-hand side of (\ref{rhoh}) is equal to
\begin{align}
\rho_h\left({}_R\!\!\int_q^p\omega\right)\psi_i 
&= \psi_i + \bigg\{2\pi\sqrt{-1}\Big(\int_q^p\omega_2\sqrt{\frac{\|\omega_1\|_{L^2(M)}}{4\pi\|\omega_2\|_{L^2(M)}}}\Big)(\sqrt{i}\psi_{i-1}(s)+\sqrt{i+1}\psi_{i+1}(s))\notag \\ &{} +\int_q^p\omega_1\sqrt{\frac{\pi\|\omega_2\|_{L^2(M)}}{\|\omega_1\|_{L^2(M)}}}(\sqrt{i}\psi_{i-1}(s)-\sqrt{i+1}\psi_{i+1}(s))\bigg\}u_i \notag \\
&{} + \bigg\{2\pi\sqrt{-1}\left(\int_q^p\omega_2\right)\left(\int_q^p\omega_1\right)\frac12\left(\sqrt{i(i-1)}\psi_{i-2}(s)-\psi_i(s)+ \sqrt{(i+2)(i+1)}\psi_{i+2}(s)\right)\notag \\ 
&{} - \pi\Big(\int_q^p\omega_1\Big)\Big(\int_q^p\omega_2\Big)\Big(i + \frac12\Big)\psi_i +2\pi\sqrt{-1}\Big(\int_q^p(\omega_{12} + \omega_2\omega_1)\Big)\psi_i\bigg\}u_i^2  \label{rhopsi}
\end{align}

From $\int_{\mathbb{R}}\psi_i(s)\overline{\psi_j(s)}ds = \delta_{ij}$, we have
\begin{align}
&{}\int_{\mathbb{R}}\rho_h\left({}_R\!\!\int_q^p\omega\right)\psi_i(s)\overline{\psi_j(s)}ds \notag \\ &=  \delta_{ij}+\bigg\{\sqrt{i}\bigg(2\pi\sqrt{-1}\int_q^p\omega_2\sqrt{\frac{\|\omega_1\|_{L^2(M)}}{4\pi\|\omega_2\|_{L^2(M)}}}+\int_q^p\omega_1\sqrt{\frac{\pi\|\omega_2\|_{L^2(M)}}{\|\omega_1\|_{L^2(M)}}}\bigg)\delta_{{i-1},j}\notag \\ 
&{}+\sqrt{i+1}\bigg(2\pi\sqrt{-1}\int_q^p\omega_2\sqrt{\frac{\|\omega_1\|_{L^2(M)}}{4\pi\|\omega_2\|_{L^2(M)}}}-\int_q^p\omega_1\sqrt{\frac{\pi\|\omega_2\|_{L^2(M)}}{\|\omega_1\|_{L^2(M)}}}\bigg)\delta_{{i+1},j}\bigg\}u_i \notag \\ 
&{}+\bigg[-\pi\bigg(\int_q^p\omega_2\bigg)\bigg(\int_q^p\omega_1\bigg)\bigg(i + \frac12-\sqrt{-1}\bigg)+2\pi\sqrt{-1}\int_q^p(\omega_{12} + \omega_2\omega_1)\delta_{ij}\notag \\ 
&{}+  \pi\sqrt{-1}\bigg(\int_q^p\omega_2\bigg)\bigg(\int_q^p\omega_1\bigg)\bigg(\sqrt{i(i-1)}\delta_{{i-2},j}+ \sqrt{(i+2)(i+1)}\delta_{{i+2},j}\bigg)\bigg]u_i^2 \label{Sh1}
\end{align}

Thus, we have
\begin{align}
&{}\int_{\mathbb{R}}\rho_h\left({}_R\!\!\int_q^p\omega\right)\psi_i(s)\overline{\psi_i(s)}ds \notag \\ 
&= 1+\bigg[-\pi\Big(\int_q^p\omega_2\Big)\Big(\int_q^p\omega_1\Big)\Big(i + \frac12-\sqrt{-1}\Big)+2\pi\sqrt{-1}\int_q^p(\omega_{12} + \omega_2\omega_1) \Big)\bigg]u_i^2 \label{Sh11}
\end{align}
and 
\begin{align}
&{}\int_{\mathbb{R}}\rho_h\left({}_R\!\!\int_q^p\omega\right)\psi_i(s)\overline{\psi_{i-2}(s)}ds \notag \\ &= \pi\sqrt{-1}\bigg[\left(\int_q^p\omega_2\right)\left(\int_q^p\omega_1\right)\left(\sqrt{i(i-1)}\right)\bigg]u_i^2 
&{}\label{Sh12}
\end{align}
and 
\begin{align}
&{}\int_{\mathbb{R}}\rho_h\left({}_R\!\!\int_q^p\omega\right)\psi_i(s)\overline{\psi_{i+2}(s)}ds \notag \\ &= \pi\sqrt{-1}\bigg[\left(\int_q^p\omega_2\right)\left(\int_q^p\omega_1\right)\left(\sqrt{(i+2)(i+1)}\right)\bigg]u_i^2 
&{}\label{Sh13}
\end{align}

\item[{\rm (c)}](Action of part \textcircled{1} to the second term of \textcircled{2})

Combinig (\ref{fi2}) with (\ref{Sh11}), (\ref{Sh12})and (\ref{Sh13}), we have
\begin{align}
&{} \int_{\mathbb{R}}\rho_h\left({}_R\!\!\int_q^p\omega\right)f_i^{(2)}(p,s)\overline{\psi_i(s)}ds  \notag \\
&=\int_{\mathbb{R}}\bigg[ -\frac{4\pi}{\sqrt{{\rm vol}(M)}}G(|\omega_1||\omega_2|)(p)\left(i+\frac{1}{2}\right)\rho_h\left({}_R\!\!\int_q^p\omega\right)\psi_i\notag \\
&{}-\frac{4\pi\sqrt{-1}}{{\rm vol}(M)}G(\langle \omega_2,\omega_1 \rangle)(p)(\sqrt{(i+2)(i+1)}\rho_h\left({}_R\!\!\int_q^p\omega\right)\psi_{i-2}-\rho_h\left({}_R\!\!\int_q^p\omega\right)\psi_i+ \sqrt{i(i-1)}\rho_h\left({}_R\!\!\int_q^p\omega\right)\psi_{i+2})\bigg] \overline{\psi_i(s)}ds   \notag \\
& = \int_{\mathbb{R}}\bigg(-\frac{4\pi}{\sqrt{{\rm vol}(M)}}G(|\omega_1||\omega_2|)(p)\left(i+\frac{1}{2}\right)-\frac{4\pi\sqrt{-1}}{{\rm vol}(M)}G(\langle \omega_2,\omega_1 \rangle)(p)\bigg)\rho_h\left({}_R\!\!\int_q^p\omega\right)\psi_i\overline{\psi_i(s)}ds   \notag \\
&{}-\frac{4\pi\sqrt{-1}}{{\rm vol}(M)}G(\langle \omega_2,\omega_1 \rangle)(p)\int_{\mathbb{R}}\Big(\sqrt{(i+2)(i+1)}\rho_h\left({}_R\!\!\int_q^p\omega\right)\psi_{i-2}\overline{\psi_i(s)}ds + \int_{\mathbb{R}}\sqrt{i(i-1)}\rho_h\left({}_R\!\!\int_q^p\omega\right)\psi_{i+2}\Big)\overline{\psi_i(s)}ds    \notag \\ 
&= \bigg(-\frac{4\pi}{\sqrt{{\rm vol}(M)}}G(|\omega_1||\omega_2|)(p)\left(i+\frac{1}{2}\right)-\frac{4\pi\sqrt{-1}}{{\rm vol}(M)}G(\langle \omega_2,\omega_1 \rangle)(p)\bigg)\Big(1+\bigg[-\pi\Big(\int_q^p\omega_2\Big)\notag \\
&{} \quad \quad \Big(\int_q^p\omega_1\Big)\Big(i + \frac12-2\sqrt{-1}\Big)+\sqrt{-1}\int_q^p(\omega_{12} + \omega_2\omega_1) \Big)\bigg]u_i^2\bigg)   \notag \\
&{}-\frac{8\pi^2}{{\rm vol}(M)}G(\langle \omega_2,\omega_1 \rangle)(p)\Big(\sqrt{(i+2)(i+1)i(i-1)}\left(\int_q^p\omega_2\right)\left(\int_q^p\omega_1\right)u_i^2. \label{Sh2}  
\end{align}

Next, from (\ref{fi0}), (\ref{fi1}) and (\ref{fi2}), the part \textcircled{2} is equal to
\begin{align}
&{}{\rm Re}\bigg[\int_{\mathbb{R}}\bigg(\rho_h\left({}_R\!\!\int_q^p\omega\right)\left(\psi_i(s) + \frac12f_i^{(2)}(p,s)h_i(u_i) + \cdots \right)\bigg)\overline{\left(\psi_i(s)  + \frac12f_i^{(2)}(q,s)h_i(u_i)+ \cdots 
  \right)} ds\bigg] \notag \\
&={\rm Re}\bigg[ \int_{\mathbb{R}}\rho_h\left({}_R\!\!\int_q^p\omega\right)\psi_i(s)\overline{\psi_i(s)}  + \rho_h\left({}_R\!\!\int_q^p\omega\right)\Big(f_i^{(2)}(p,s)\overline{\psi_i(s)} + \psi_i(s)\overline{f_i^{(2)}(q,s)} \Big) u_i^2ds\bigg] \notag \\
&={\rm Re}\bigg[ \int_{\mathbb{R}}\rho_h\left({}_R\!\!\int_q^p\omega\right)\psi_i(s)\overline{\psi_i(s)}  + \Big(\rho_h\left({}_R\!\!\int_q^p\omega\right)f_i^{(2)}(p,s)\overline{\psi_i(s)} + \psi_i(s)\overline{\rho_h\left({}_R\!\!\int_p^q\omega\right)f_i^{(2)}(q,s)} \Big) u_i^2ds\bigg] \notag \\
&= 1-\pi\Big(i + \frac12\Big)\bigg[\Big(\int_q^p\omega_2\Big)\Big(\int_q^p\omega_1\Big) + \frac{4\pi}{\sqrt{{\rm vol}(M)}}\big(G(|\omega_1||\omega_2|)(p)-G(|\omega_1||\omega_2|)(q)\big)\bigg]u_i^2
  \label{ficompute}
\end{align}
Here we have used the fact that 
\[
\]

\item[{\rm (d)}](Computation of part \textcircled{3})

From (\ref{fromhtoui}), recall that
\[k_i(u_i) = \frac{\lambda_i^{(4)}}{2\lambda_i^{(2)}}u_i^3\] up to the third order of $u_i$.
Then, we compute up to the third order of $u_i$ as follows: 
\begin{align}
|h|dh &=  (u_i + k_i(u_i))^2d(u_i + k_i(u_i))^2 = (u_i^2 + 2u_ik_i(u_i)+o(u_i^5))(2u_i +2(k_i(u_i)+u_ik_i'(u_i) + o(u_i^3))du_i \notag \\
&= \big(2u_i^3 + (4u_i^2)k_i(u_i) +4u_i^2(k_i(u_i)+u_ik_i'(u_i))+o(u_i^5)\big)du_i\notag \\
&= \bigg(2+ \frac{10\lambda_i^{(4)}}{\lambda_i^{(2)}}u_i^2+o(u_i^2)\bigg)u_i^3du_i 
\label{Sfinal}
\end{align}

\item[{\rm (e)}](Final computation)

Combining (\ref{Sh1}), (\ref{ficompute}) and (\ref{Sfinal}), we compute the left-hand side of (\ref{asymheatheisenberg1}) as follows:

\begin{align}
&{}\int_{-\varepsilon}^{\varepsilon}\Big(\int_{\mathbb{R}}\sum_{i=0}^\infty e^{-\lambda_{0,i}(h)t}\varphi_{0,i,h}(p)\varphi_{0,i,h}^\ast(q)ds\Big)|h|dh = \sum_{i=0}^\infty \int_{-\varepsilon}^{\varepsilon}\Big(\int_{\mathbb{R}}e^{-\lambda_i^{(2)} u^2t}(1+K_iu_i^2+o(u_i^2))ds\Big)u_i^3du_i \notag \\
&{} = \sum_{i=0}^\infty \int_{-\varepsilon\sqrt{\lambda_i^{(2)}t}}^{\varepsilon\sqrt{\lambda_i^{(2)}t}}\bigg[\int_{\mathbb{R}}e^{-w^2}
\Big(1+K_i\Big(\frac{w}{\sqrt{\lambda_i^{(2)}t}}\Big)^2+o\Big(\Big(\frac{w}{\sqrt{\lambda_i^{(2)}t}}\Big)^2\Big)\Big)\Big(\frac{w}{\sqrt{\lambda_i^{(2)}t}}\Big)^3ds\bigg]d\Big(\frac{w}{\sqrt{\lambda_i^{(2)}t}}\Big) \label{asymptoheisen1} 
\end{align}
where we take change of variable as $w =\sqrt{\mu_it}u_i$ and $K$ in the above,  is expressed as follows: 

\begin{equation}
K_i = -\pi\Big(i + \frac12\Big)\bigg\{\Big(\int_q^p\omega_2\Big)\Big(\int_q^p\omega_1\Big) + \frac{4\pi}{\sqrt{{\rm vol}(M)}}\big(G(|\omega_1||\omega_2|)(p)-G(|\omega_1||\omega_2|)(q)\big)\bigg\}
+ \frac{10\lambda_i^{(4)}}{\lambda_i^{(2)}}
 \label{heatasymp2nd}
\end{equation}

Next, recalling (\ref{harmeigen}
\[  \lambda_i^{(2)} = \frac{2\pi{\rm vol}(\widehat{H})}{{\rm vol}(M)}\Big(i+\frac{1}{2}\Big) = \frac{\pi{\rm vol}(\widehat{H})}{{\rm vol}(M)}(2i+1),
\]
we compute the following:
\begin{align}
\sum_{i=0}^\infty \frac{1}{(\lambda_i^{(2)})^s} &= \Big(\frac{{\rm vol}(M)}{\pi{\rm vol}(\widehat{H})}\Big)^s \sum_{i=0}^\infty \frac{1}{(2i+1)^s} = \Big(\frac{{\rm vol}(M)}{\pi{\rm vol}(\widehat{H})}\Big)^s \Big(\sum_{i=1}^\infty \frac{1}{i^s}-\sum_{i=1}^\infty \frac{1}{(2i)^s}\Big) \notag \\ &= \Big(\frac{{\rm vol}(M)}{\pi{\rm vol}(\widehat{H})}\Big)^s\Big(\frac{2^s-1}{2^s}\zeta(s)\Big)\label{heisenspectralzeta} 
\end{align}
where $\zeta(s)$ is the Riemann zeta function. From the solution $\zeta(2) = \frac{\pi^2}{6}$ of the Basel problem by Euler, we have
\begin{equation}
\sum_{i=0}^\infty \frac{1}{(\lambda_i^{(2)})^2} =  \Big(\frac{{\rm vol}(M)}{\pi{\rm vol}(\widehat{H})}\Big)^2\Big(\frac{3}{4}\zeta(2)\Big) = \frac{\pi^2}{8}\Big(\frac{{\rm vol}(M)}{\pi{\rm vol}(\widehat{H})}\Big)^2 = \frac{({\rm vol}(M))^2}{8({\rm vol}(\widehat{H}))^2}
\label{leadingheatcoefficient}
\end{equation}

Finally, combinig altogether such as (\ref{Heatasymptoticsmain1}), (\ref{Heatasymptoticsmain2}), (\ref{reductionbysunada}) with the equality $\int_{0}^\infty x^3e^{-x^2}dx = \frac12\int_{0}^\infty te^{-t}dx = \Gamma(1)/2 = 1$, the above computations (\ref{heatasymp2nd}) and (\ref{heisenspectralzeta}) imply the following asymptotic expansion: 
\begin{theorem}\label{leadingsecond}
\begin{align}
k_X(t,p,q) &\sim \sum_{i=0}^\infty \int_0^{\varepsilon\sqrt{\lambda_i^{(2)}t}}e^{-w^2}
\Big(1+K_i\Big(\frac{w}{\sqrt{\lambda_i^{(2)}t}}\Big)^2+o\Big(\Big(\frac{w}{\sqrt{\lambda_i^{(2)}t}}\Big)^2\Big)\Big)\Big(\frac{w}{\sqrt{\lambda_i^{(2)}t}}\Big)^3d\Big(\frac{w}{\sqrt{\lambda_i^{(2)}t}}\Big) \notag \\
&{} = \frac{({\rm vol}(M))^2}{8({\rm vol}(\widehat{H}))^2}\Big(1+\frac{c_1}{t}+\cdots \Big)  \label{heatheisenberg}
\end{align}
where   
\begin{align}
c_1 &= \frac{8({\rm vol}(\widehat{H}))^2}{({\rm vol}(M))^2}\sum_{i=0}^\infty\frac{K_i}{(\lambda_i^{(2)})^3} \notag \\ 
&= \frac{8({\rm vol}(\widehat{H}))^2}{({\rm vol}(M))^2}\sum_{i=0}^\infty\frac{1}{(\lambda_i^{(2)})^3}\bigg[(-\pi\Big(i + \frac12\Big)\notag \\
& \times \bigg\{\Big(\int_q^p\omega_2\Big)\Big(\int_q^p\omega_1\Big) + \frac{4\pi}{\sqrt{{\rm vol}(M)}}\big(G(|\omega_1||\omega_2|)(p)-G(|\omega_1||\omega_2|)(q)\big)\bigg\}
+ \frac{10\lambda_i^{(4)}}{\lambda_i^{(2)}}\bigg] \notag \\ 
&= \frac{8({\rm vol}(\widehat{H}))^2}{({\rm vol}(M))^2}\sum_{i=0}^\infty\bigg[(-\frac{\pi}{2}(2i + 1)^{-2}\frac{({\rm vol}(M))^3}{\pi^3({\rm vol}(\widehat{H}))^3} \notag \\ 
& \times \bigg\{\Big(\int_q^p\omega_2\Big)\Big(\int_q^p\omega_1\Big)+ \frac{4\pi}{\sqrt{{\rm vol}(M)}}\big(G(|\omega_1||\omega_2|)(p)-G(|\omega_1||\omega_2|)(q)\big)\bigg\}\notag \\
&+ \int_M\frac{({\rm vol}(M))^{7/2}}{\pi^2({\rm vol}(\widehat{H}))^4}|\omega_1||\omega_2|G(|\omega_1||\omega_2|)(x)(2i+1)^{-2} 
\notag \\
&{} + \frac{(2{\rm vol}(M))^2}{\pi^2({\rm vol}(\widehat{H}))^3}G(|\omega_1||\omega_2|)(x)(2i+1)^{-2}+ \frac{4({\rm vol}(M))^3}{\pi^2({\rm vol}(\widehat{H}))^4}|\omega_{12}(x)|^2(2i+1)^{-4}dx \bigg]\notag \\ 
&= -\frac{1}{2}\frac{{\rm vol}(M)}{{\rm vol}(\widehat{H})}\bigg\{\Big(\int_q^p\omega_2\Big)\Big(\int_q^p\omega_1\Big) + \frac{4\pi}{\sqrt{{\rm vol}(M)}}\big(G(|\omega_1||\omega_2|)(p)-G(|\omega_1||\omega_2|)(q)\big)\bigg\}\notag \\
&+ \int_M\frac{({\rm vol}(M))^{3/2}}{({\rm vol}(\widehat{H}))^2}|\omega_1||\omega_2|G(|\omega_1||\omega_2|)(x) + \frac{2}{{\rm vol}(\widehat{H})}G(|\omega_1||\omega_2|)(x)
+ \frac{\pi^2{\rm vol}(M)}{3({\rm vol}(\widehat{H}))^2}|\omega_{12}(x)|^2dx \label{heatasymp2nd}
\end{align}
\end{theorem}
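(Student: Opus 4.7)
The plan is to establish Theorem~\ref{leadingsecond} by carrying out systematically the five-part calculation sketched in items (a)--(e) of the preceding discussion, organized as an asymptotic expansion in the semiclassical parameter $h$ and justified at every stage through the finite-dimensional approximation provided by Theorem~\ref{discretetoLie}. The starting point is the Fourier inversion formula~(\ref{Heatasymptoticsmain1})--(\ref{Heatasymptoticsmain2}) combined with Sunada's bound~(\ref{sunadainequality}), which confines the relevant integration over $\widehat{X}$ to an $\varepsilon$-neighborhood of the trivial representation modulo $O(e^{-c_\varepsilon t})$. Remark~\ref{remark34} then allows replacing $\rho_{{\rm fin},x}$ by the Schr\"odinger representation $\rho_h$ with Plancherel weight $|h|\,dh$, yielding the factorization $k_{\rho_h}((p,s),(q,u)) = S_h(p,q) E_h(t)$ in which the parallel transport $S_h = \rho_h({}_R\!\int_q^p\omega)$ is cleanly separated from the spectral decay.

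The next step is the Morse-lemma reparametrization $h^{1/2} = u_i + k_i(u_i)$ of the $i$-th lowest eigenvalue $\lambda_{0,i}(h) = \lambda_i^{(2)} u_i^2$, where one must track the third-order correction $k_i(u_i) = (\lambda_i^{(4)}/2\lambda_i^{(2)}) u_i^3 + o(u_i^3)$ because it survives after multiplication by the measure factor. Three independent expansions are then required up to order $u_i^2$: the parallel transport $S_h$ via the iterated-integral formula of Section~\ref{Lieintegral} and the commutation $[s, e^{d/ds}] = -e^{d/ds}$; the eigenfunctions $f_{0,i,h} = \psi_i/\sqrt{\mathrm{vol}(M)} + f_i^{(2)} h + \cdots$ whose coefficients were already computed in~(\ref{fi0})--(\ref{fi2}) using the creation/annihilation operators~(\ref{creationop})--(\ref{annihilationop}); and the Plancherel measure $|h|\,dh = (2 + (10\lambda_i^{(4)}/\lambda_i^{(2)}) u_i^2 + \cdots)\, u_i^3\, du_i$. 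Taking the inner product $\int_{\mathbb{R}} S_h \psi_i\cdot\overline{\psi_j}\,ds$ via the raising/lowering rules~(\ref{saction})--(\ref{ddssquare}) produces nonzero contributions only at $j \in \{i-2, i, i+2\}$, giving the matrix entries~(\ref{Sh11})--(\ref{Sh13}).

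Assembling these three expansions produces the integrand $(1 + K_i u_i^2 + o(u_i^2))$ with $K_i$ as in~(\ref{heatasymp2nd}). The substitution $w = u_i\sqrt{\lambda_i^{(2)} t}$ converts each summand into a Gaussian integral whose leading term is $\tfrac{1}{2}(\lambda_i^{(2)})^{-2}\int_0^\infty w^3 e^{-w^2}\, dw$ and whose subleading term involves $(\lambda_i^{(2)})^{-3}$. Summation over $i$ reduces to the identity $\sum_{i=0}^\infty (2i+1)^{-s} = (1 - 2^{-s})\zeta(s)$, which at $s=2$ yields the Euler value $\tfrac{3}{4}\zeta(2) = \pi^2/8$ responsible for the constant $C$ in~(\ref{leadingheatcoefficient}), and at $s \in \{3,4\}$ yields the corresponding rational multiples of $\zeta(3), \zeta(4)$ that enter $c_1$.

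The principal obstacle will be the \emph{bookkeeping} of the three second-order corrections: the apparently complex-valued cross-terms produced by $S_h$ acting on $\psi_i$ (whose off-diagonal matrix entries in~(\ref{Sh12})--(\ref{Sh13}) carry factors of $\sqrt{-1}$) must combine with the contributions from $f_i^{(2)}$ computed in~(\ref{Sh2}) to yield a \emph{real} coefficient, as required by the reality of the heat kernel. This cancellation proceeds through the symmetrization $\mathrm{Re}[\cdots]$ in~(\ref{ficompute}) and intertwines three geometric quantities of different tensorial types --- the harmonic periods $\int_q^p \omega_i$, their pointwise norms $|\omega_i|$, and the Green-operator expressions $G(|\omega_1||\omega_2|)$, $G(\langle\omega_2,\omega_1\rangle)$. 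A secondary issue is the rigorous justification of the formal manipulation of $\rho_h$: this is handled by observing, as in Remark~\ref{remark34}, that the set $B_K$ has full $\mu$-measure in $\widehat{X}$ for every $K>0$, so the $O(1/q)$ fluctuation between $\rho_{{\rm fin},(x_1,\{q x_1 x_2\},x_3)}$ and $\rho_{{\rm fin},(x_1,x_2,x_3)}$ can be absorbed into error terms of arbitrarily high order in $h$, whence the formal expansion in $\rho_h$ and the rigorous expansion in $\rho_{{\rm fin},x}$ agree to every polynomial order in $1/t$.
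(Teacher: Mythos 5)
Your proposal reproduces essentially the same argument as the paper's proof in Section \ref{leadingHeisenberg}: Fourier inversion plus Sunada's bound to localize near the trivial representation, replacement of $\rho_{{\rm fin},x}$ by the Schr\"odinger representation with weight $|h|\,dh$, the factorization $E_h(t)S_h(p,q)$, the Morse-lemma substitution $h^{1/2}=u_i+k_i(u_i)$, the three second-order expansions of parallel transport, eigenfunctions and measure assembled into $1+K_iu_i^2+o(u_i^2)$, Gaussian integration in $w=\sqrt{\lambda_i^{(2)}t}\,u_i$, and the zeta-value summation over $i$. The only small inaccuracy is in the last step: since $K_i/(\lambda_i^{(2)})^3$ contains only terms proportional to $(2i+1)^{-2}$ and $(2i+1)^{-4}$, the constant $c_1$ involves $\zeta(2)$ and $\zeta(4)$ but no $\zeta(3)$ contribution.
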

\end{description}

\subsection{An estimate of error term} To show the asymptotic expansion (\ref{heisenberg-heat}), we need to estimate the error term. To do that we give here rough estimates of $\lambda_i^{(n)}$ and $a_{ik}^{(n)}(x)$ when $n,k \to \infty$. 
The dependence of the variable $x \in M$ is coming from the $x$-derivatives of iterated integrals of $\omega_1, \omega_2, \omega_{12}$ which are uniform on $n, k$,  thus we can ignore this dependence concerning $n,k \to \infty$. 
As explained in the last paragraph of Step 1 in Section \ref{Higher}, the action of $L^{(j)}$, $j = 0,1,2,3,4$ on $L^2(\mathbb{R}$) is expressed as a quadratic polynomial of $s, p$ and hence of $a a^\dag$, thus, by the formula (\ref{aadagger}), the ratio of $a_{ik}^{(n)}(x)$ and $a_{ik}^{(n+1)}(x)$ can be estimated roughly $O(k^2)$ and thus  $a_{ik}^{(n)}(x)$ is comparable to $O(k^{2n})$. A similar estimate also holds for $\lambda_i^{(n)}$.

To compute the error terms of (\ref{heisenberg-heat}), we need to care about the effect of $S_h(p,q)$ in (\ref{Sh}), which is coming from the Taylor expansion with respect to $\sqrt{h}$ of the following: 
\begin{equation} \pi_l\left({}_R\!\!\int_{p_0}^p \omega \right)f = e^{2\pi\sqrt{-1}\left( h\int_{p_0}^p(\omega_{12} + \omega_2\omega_1) + \sqrt{h}\left(\int_{p_0}^p\omega_2\right)s\right)}e^{\sqrt{h}\left(\int_{p_0}^p\omega_1\right)\frac{d}{ds}}f 
\end{equation}
which is also comparable to $O(k^{2n})$ for the coefficients of $\psi_k$.
Therefore the contribution to the coefficient of $\psi_k$ in the $n$-order terms with respect to $\sqrt{h}$ of the asymptotic expansion of $k_{\pi_l}(t,x,y)$ is estimated as follows: By $\mu_k =  O(k)$, we have
\[ e^{-\mu_kht}O((\sqrt{h}k^2t)^n) =  O(e^{-kht}(\sqrt{h}k^2t)^n),
\]  
which is sufficiently tiny uniformly in $k$ for fixed $n$, provided $t$ is sufficiently large, which gives an estimate of the asymptotic expansion error terms (\ref{heisenberg-heat}). Combining with the formal computation of the previous section \ref{Higher}, Theorem \ref{discretetoLie} and the Fourier inversion formula (\ref{Fourier1}), we have proved Theorem \ref{heisenberg-heat}.

\subsection{Comparison with explicit formula of the heat kernel on $\mbox{Heis}_3(\mathbb{R})$}\label{compariontoexplicitformula}

The heat kernel $k_H(t, p, q)$ of tha Laplacian of $\mbox{Heis}_3(\mathbb{R})$
can be expressed as the following integral form: For $p = (x,y,z)$ and $q = (\tilde{x},\tilde{y},\tilde{z})$ (cf. (10.2.40) in \cite{Calin}),
\[k_H(t, p, q) = \frac{1}{(2\pi t)^2}\int_{-\infty}^\infty e^{\frac{\sqrt{-1}\tau\left(z-\tilde{z}+\frac{\tilde{y}x-\tilde{x}y}{2}\right)+\frac{\tau \cosh \tau}{2\sinh \tau}\left((x-\tilde{x})^2+(y-\tilde{y})^2\right)-\frac{\tau^2}{2}}{t}}\frac{\tau}{\sinh \tau}d\tau
\]

In this formula, we put $p=q$ and let $t\to\infty$, then the integral of the right-hand side becomes
\[\int_{-\infty}^\infty \frac{\tau}{\sinh \tau}d\tau = \frac{\pi^2}{2}
\]
Thus, $k_H(t, p, q)$ satisfies
\begin{equation} k_H(t, p, q) \sim \frac{1}{(2\pi t)^2}\frac{\pi^2}{2} = \frac{1}{8t^2} \label{explicitcomputation} 
\end{equation}

On the other hand, in the case when $X = \mbox{Heis}_3(\mathbb{R})$ and $M = \mbox{Heis}_3(\mathbb{R})/\mbox{Heis}_3(\mathbb{Z})$, by (\ref{formonHeisenberg}),
we have 
\begin{equation}
\|\omega_1\|_{L^2(M)} = \Big\|\int_0^1 dx\Big\| = 1, \quad \|\omega_2\|_{L^2(M)} = \Big\|\int_0^1 dy\Big\| = 1. \label{omegaequalone}
\end{equation}

Then, the leading coefficient of the asymptotic expansion is
\[  \zeta_\mathcal{H}(2)\int_{-\infty}^\infty e^{-\alpha^2}|\alpha|d\alpha = \sum_{n=0}^\infty \frac{1}{\left(2\pi(n+1/2)\right)^2}\int_0^\infty e^{-r}dr =\frac{1}{\pi^2}\sum_{n=0}^\infty\frac{1}{(2n+1)^2} = \frac18 
\] 
where \[
\mathcal{H} =-\|\omega_1\|_{L^2(M)}^2\left(\frac{d}{ds}\right)^2 +4\pi^2\|\omega_2\|_{L^2(M)}^2s^2 \]
in (\ref{HC}) with $\|\omega_1\|_{L^2(M)} = \|\omega_2\|_{L^2(M)} =1$ as (\ref{omegaequalone}) and ${\rm vol}(M)=1$, ${\rm vol}(\widehat{H})=1$: This computation coincides with the above (\ref{explicitcomputation}).

\subsection{A local central limit theorem for heat kernels}\label{heatclt}
The results in this section are obtained by combining the previous arguments with the method used in \cite{Kotani3}, \cite{Kotani4}. 

Here, we return to the right-hand side of the equality (\ref{asymheatheisenberg1}).
We continue to compute the part \textcircled{1} there, simplifying to the second-order approximations as follows: 
\begin{align}
\rho_h\left({}_R\!\!\int_q^p\omega\right) 
&= e^{2\pi\sqrt{-1}\left( h\int_q^p(\omega_{12} + \omega_2\omega_1) + \sqrt{h}\left(\int_q^p\omega_2\right)s\right)}e^{\sqrt{h}\left(\int_q^p\omega_1\right)\frac{d}{ds}}\notag \\
&=e^{2\pi\sqrt{-1}\left(\int_q^p\omega_2 s\right)(u_i+o(u_i^2))+\int_q^p(\omega_{12} + \omega_2\omega_1)(u_i+o(u_i^2))^2}e^{\left(\int_q^p\omega_1\frac{d}{ds}\right)(u_i+o(u_i^2))}\notag \\
&= e^{2\pi\sqrt{-1}\left(\int_q^p\omega_2 s + \int_q^p\omega_1\frac{d}{ds}\right)u_i+\Big(\int_q^p(\omega_{12} + \omega_2\omega_1 + \frac12\int_q^p\omega_2 \int_q^p\omega_1\Big)u_i^2+o(u_i^2)}\label{simplerhoh}
\end{align}
where we have used the Campbell-Hausdorff formula in the third equality.

Since we only compute the leading term of the asymptotics expansion related to the local central limit theorem, the higher order terms (i.e., greater than 0-th order terms) of the parts \textcircled{2} and \textcircled{3} are irrelevant in computation, the equality (\ref{asymheatheisenberg1}) simplifies to the following one:

\begin{align}
&{}\int_{-\varepsilon}^{\varepsilon}\Big(\int_{\mathbb{R}}\sum_{i=0}^\infty e^{-\lambda_{0,i}(h)t}\varphi_{0,i,h}(p)\varphi_{0,i,h}^\ast(q)ds\Big)|h|dh 
\\ 
&= \int_{-\varepsilon}^{\varepsilon}\Big(\sum_{i=0}^\infty e^{-\lambda_{0,i}(h)t}\rho_h\Big({}_R\!\!\int_q^p\omega\Big)f_{0,i,h}(p)f_{0,i,h}^\ast(q)ds\Big)|h|dh \label{lctrefgeodesic} \\ 
&= \sum_{i=0}^\infty\int_{-\varepsilon}^{\varepsilon}\Big[\int_{\mathbb{R}}e^{-\lambda_i^{(2)} u_i^2t}e^{2\pi\sqrt{-1}\left(\int_q^p\omega_2 s + \int_q^p\omega_1\frac{d}{ds}\right)u_i+\Big(\int_q^p(\omega_{12} + \omega_2\omega_1 + \frac12\int_q^p\omega_2 \int_q^p\omega_1\Big)u_i^2+o(u_i^2)}\psi_i^2(s)ds\Big]u_i^3du_i \notag
\end{align}

By changing variable $u_i$ to $w = \sqrt{t}u_i$, the leading term of the right-hand side is equal to 
\begin{align}
&{}{\scriptsize \sum_{i=0}^\infty\int_{-\varepsilon\sqrt{t}}^{\varepsilon\sqrt{t}}\Big[\int_{\mathbb{R}}e^{-\lambda_i^{(2)}w^2}e^{2\pi\sqrt{-1}(\int_q^p\omega_2 s + \int_q^p\omega_1\frac{d}{ds})\frac{w}{\sqrt{t}}+(\int_q^p(\omega_{12} + \omega_2\omega_1) + \frac12\int_q^p\omega_2 \int_q^p\omega_1)(\frac{w}{\sqrt{t}})^2}\psi_i^2(s)ds\Big] \left(\frac{w}{\sqrt{t}}\right)^3 d\left(\frac{w}{\sqrt{t}}\right)} \notag \\
&= \sum_{i=0}^\infty\int_{-\varepsilon\sqrt{t}}^{\varepsilon\sqrt{t}}e^{-\lambda_i^{(2)}w^2}e^{2\pi\sqrt{-1}\Big(\int_q^p\omega_2 s + \int_q^p\omega_1\frac{d}{ds}\Big)\frac{w}{\sqrt{t}}+\Big(\int_q^p(\omega_{12} + \omega_2\omega_1) + \frac12\int_q^p\omega_2 \int_q^p\omega_1\Big)\Big(\frac{w}{\sqrt{t}}\Big)^2}\Big(\frac{w}{\sqrt{t}}\Big)^3 d\Big(\frac{w}{\sqrt{t}}\Big) \label{locallimit1}
\end{align}
Recall that $\lambda_i^{(2)} = \mu_i/{\rm vol}(M)$ in (\ref{harmeigen}) and $\mu_i$ is $i$-th eigenvalue of the following operator 
\begin{equation}
\mathcal{H} :=-\|\omega_1\|_{L^2(M)}^2\left(\frac{d}{ds}\right)^2 +4\pi^2\|\omega_2\|_{L^2(M)}^2s^2. \label{2HC}
\end{equation}

Then, we have
\begin{align}
&{}-\frac{\mathcal{H}}{{\rm vol}(M)}\left(\frac{w}{\sqrt{t}}\right)^2 + 2\pi\sqrt{-1}\left(\left(\int_q^p\omega_2\right) s + \left(\int_q^p\omega_1\right)\frac{d}{ds}\right)\frac{w}{\sqrt{t}} \notag \\
& ={\scriptsize \frac{\|\omega_1\|_{L^2(M)}^2}{{\rm vol}(M)}\left(\frac{d}{ds}\right)^2\left(\frac{w}{\sqrt{t}}\right)^2  + 2\pi\sqrt{-1}\left(\int_q^p\omega_1\right)\left(\frac{d}{ds}\right)\frac{w}{\sqrt{t}}-\frac{4\pi^2\|\omega_2\|_{L^2(M)}^2}{{\rm vol}(M)}s^2w^2 + 2\pi\sqrt{-1}\left(\int_q^p\omega_2\right) s\frac{w}{\sqrt{t}}}\notag \\
& = \left(\left(\|\omega_1\|_{L^2(M)}/V \right)\frac{d}{ds}w + \frac{\pi\sqrt{-1}V}{\|\omega_1\|_{L^2(M)}}\left(\int_q^p\omega_1\right)\frac{1}{\sqrt{t}}\right)^2 + \frac{\pi^2V^2\left(\int_q^p\omega_1\right)^2}{t} \notag \\
& -\left(\left(2\pi\|\omega_2\|_{L^2(M)}/V\right)sw + \frac{V\sqrt{-1}}{2\|\omega_2\|_{L^2(M)}}\left(\int_q^p\omega_2\right)\frac{1}{\sqrt{t}}\right)^2- \frac{\left(\int_q^p\omega_2\right)^2V^2}{t}
\end{align}
where we denote $V = \sqrt{{\rm vol}(M)}$.
This implies that the right-hand side of (\ref{locallimit1}) is equal to

\begin{align}
&{} \sum_{i=0}^\infty\int_{-\varepsilon\sqrt{t}}^{\varepsilon\sqrt{t}}e^{\bigg[\left(\left(\|\omega_1\|_{L^2(M)}/V \right)\frac{d}{ds}w + \frac{\pi\sqrt{-1}V}{\|\omega_1\|_{L^2(M)}}\left(\int_q^p\omega_1\right)\frac{1}{\sqrt{t}}\right)^2-\left(\left(2\pi\|\omega_2\|_{L^2(M)}/V\right)sw + \frac{\sqrt{-1}V}{2\|\omega_2\|_{L^2(M)}}\left(\int_q^p\omega_2\right)\frac{1}{\sqrt{t}}\right)^2\bigg]} \notag \\ 
&\times e^{2\pi\sqrt{-1}\left(\int_q^p(\omega_{12} + \omega_2\omega_1) + \frac12\int_q^p\omega_2 \int_q^p\omega_1\right)\left(\frac{w}{\sqrt{t}}\right)^2}e^{- \frac{\left(V\int_q^p\omega_2\right)^2+\left(\pi V\int_q^p\omega_1\right)^2}{t}}\left(\frac{w}{\sqrt{t}}\right)^3 d\left(\frac{w}{\sqrt{t}}\right)ds \notag \\
&\sim \sum_{i=0}^\infty\frac{1}{\left(\lambda_i^{(2)}+\frac{2\pi\sqrt{-1}}{t}\Big(\int_q^p(\omega_{12} + \omega_2\omega_1) + \frac12\int_q^p\omega_2 \int_q^p\omega_1\Big)\right)^2}e^{- \frac{\left(V\int_q^p\omega_2\right)^2+\left(\pi V\int_q^p\omega_1\right)^2}{t}}
\end{align}

In conclusion, we have the following local central limit theorem:
\begin{theorem}\label{HeatCLT}
\begin{equation}
\lim_{t\to\infty}\left|e^{\frac{\left(\int_q^p\omega_2\right)^2+\left(\pi^2\int_q^p\omega_1\right)^2}{t}}k_X(t,p,q) -\sum_{i=0}^\infty\frac{{\rm vol}(M)}{\left(2\pi{\rm vol}(\widehat{H})\left(2i+1\right)\right)^2}\right| = 0 \label{heatclt}
\end{equation}
\end{theorem}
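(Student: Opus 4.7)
\medskip

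The plan is to extract the local central limit theorem from the same machinery already assembled for Theorem \ref{heisenberg-heat}, but to refine the asymptotic analysis by keeping the sub-leading exponential factor in $1/t$ rather than expanding it polynomially. Starting from the Fourier inversion formula \eqref{Heatasymptoticsmain1}--\eqref{Heatasymptoticsmain2}, I would first localize to a neighborhood $\{|h|<\varepsilon\}$ of the trivial representation via Sunada's spectral gap (Theorem \ref{sunada}), absorbing the complement into an error $O(e^{-c_\varepsilon t})$. Then Theorem \ref{discretetoLie} and Remark \ref{remark34} allow the replacement of the finitely additive integral over $\widehat{\Gamma}_{\rm fin}$ by the smooth integral $\int_{-\varepsilon}^{\varepsilon}(\cdots)\,|h|\,dh$ against the Plancherel measure of $\mbox{Heis}_3(\mathbb{R})$, giving exactly the representation \eqref{lctrefgeodesic}.

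Next I would introduce the Morse coordinate $u_i$ from \eqref{eigenmorse} so that $\lambda_{0,i}(h) = \lambda_i^{(2)} u_i^2 + O(u_i^4)$ and rewrite the propagator $\rho_h\!\left({}_R\!\int_q^p\omega\right)$ in the compact exponential form \eqref{simplerhoh} obtained from the Campbell--Hausdorff formula. At this stage the integrand in \eqref{lctrefgeodesic} becomes
\[
 e^{-\lambda_i^{(2)} u_i^2 t}\exp\!\left(2\pi\sqrt{-1}\bigl(\bigl(\textstyle\int_q^p\omega_2\bigr)s + \bigl(\textstyle\int_q^p\omega_1\bigr)\tfrac{d}{ds}\bigr)u_i+O(u_i^2)\right)\psi_i(s)^2\, u_i^3\,du_i\,ds.
\]
After rescaling $w=\sqrt{t}\,u_i$ the exponent collects into a quadratic polynomial in $w$ with values in operators on $L^2(\mathbb{R})$, to which the crucial ``completing the square'' trick (already written out in the paragraph preceding \eqref{locallimit1}) applies: the $s^2$ and $(d/ds)^2$ parts of $\mathcal{H}$ combine with the linear terms $sw$ and $(d/ds)w$ to yield a unitarily shifted harmonic oscillator plus the scalar correction
\[
-\,\frac{(V\!\int_q^p\omega_2)^2+(\pi V\!\int_q^p\omega_1)^2}{t},\qquad V=\sqrt{{\rm vol}(M)}.
\]
This scalar, which is independent of $i$, pulls out of the sum over $i$ and the Gaussian integral in $w$, producing the desired exponential prefactor $e^{-{\rm vol}(M)\bigl((\int_q^p\omega_2)^2+\pi^2(\int_q^p\omega_1)^2\bigr)/t}$.

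The shifted harmonic oscillator has the same spectrum as $\mathcal{H}$, so $\int_{\mathbb{R}}\psi_i(s)^2\,ds = 1$ is preserved up to a factor which tends to $1$ as $t\to\infty$. Combined with $\int_0^\infty e^{-\lambda_i^{(2)} w^2}\,w^3\,dw = \tfrac{1}{2(\lambda_i^{(2)})^2}$ and the identity $\lambda_i^{(2)} = \frac{2\pi{\rm vol}(\widehat{H})}{{\rm vol}(M)}(i+\tfrac12)$ from \eqref{harmeigen}, the $t\to\infty$ limit of the rescaled kernel multiplied by the inverse of the Gaussian factor converges term-by-term to $\sum_{i=0}^\infty 1/(2\pi{\rm vol}(\widehat{H})(i+\tfrac12))^2$, which is the right-hand side of \eqref{heatclt}.

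The main obstacle is upgrading this formal manipulation to a genuine limit: one must (i) justify exchanging the sum over $i$ with the $t\to\infty$ limit, which requires a uniform-in-$i$ decay estimate for the integrated factors—this is handled exactly as in the error analysis for Theorem \ref{heisenberg-heat}, where the ratio of successive coefficients is $O(i^{2n})$ and is dominated by $e^{-\lambda_i^{(2)} u_i^2 t}$ once $t$ is large; (ii) control the $O(u_i^2)$ remainder in the Campbell--Hausdorff expansion and the $O(u_i^5)$ remainder in the Morse measure $|h|dh$, which contribute only to lower-order terms that vanish after the $w$-rescaling; and (iii) legitimize the use of the infinite-dimensional representation $\rho_h$, which is accomplished through the approximation by finite-dimensional representations $\rho_{{\rm fin},(x_1,\{qx_1x_2\},x_3)}$ guaranteed by Theorem \ref{discretetoLie} together with the fluctuation bound \eqref{fluctuation}, exactly as in Sections \ref{leadingHeisenberg} and \ref{Higher}.
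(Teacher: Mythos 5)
Your proposal follows essentially the same route as the paper's own proof: localization near the trivial representation via Sunada's theorem, passage to the Schrödinger representations through Theorem \ref{discretetoLie}, the Campbell--Hausdorff compactification of the propagator as in \eqref{simplerhoh}, the rescaling $w=\sqrt{t}\,u_i$, and the completing-the-square step that extracts the $i$-independent Gaussian factor $e^{-\mathrm{vol}(M)\bigl((\int_q^p\omega_2)^2+\pi^2(\int_q^p\omega_1)^2\bigr)/t}$ before summing over $i$ with $\lambda_i^{(2)}=\frac{2\pi\,\mathrm{vol}(\widehat{H})}{\mathrm{vol}(M)}\bigl(i+\tfrac12\bigr)$. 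The rigor points you flag (interchange of sum and limit, remainder control, finite-dimensional approximation) are handled the same way in the paper, so the argument is correct and not materially different.
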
 
Note that 
\[   \sum_{i=0}^\infty\frac{{\rm vol}(M)}{\left(2\pi{\rm vol}(\widehat{H})\left(2i+1\right)\right)^2} = \frac{{\rm vol}(M)}{8({\rm vol}(\widehat{H}))^2} = \frac{{\rm vol}(M)}{8}
\] by the computation in the previous subsections. 

Finally, one can obtain more detailed asymptotic results by similar arguments in the previous subsections.

\subsection{Expression of the leading term and an algorithm for computing higher order terms for general torsion-free  nilpotent group $\Gamma$}\label{leadingnilpotent}
In this chapter, we shall prove Theorem \ref{conj-heat}. Recall the Fourier inversion formula (\ref{nilpotentPytlik1}) ($=$ (\ref{nilpotentPytlik2})) and change the notation from $\pi$ there, to $\pi_{\rm \tiny{fin}}$ here. We shall compute the heat kernel $k_X(t,p,q)$ on the covering manifold $X$. Several parts are essentially the same as the previous subsections for the discrete Heisenberg group $\mbox{Heis}_3(\mathbb{Z})$.

 For $\sigma \in \Gamma$ and 
\begin{equation*} 
k_X(t,p,\sigma q) = \int_{\widehat{X}}{\rm tr}(\pi_{\rm \tiny{fin}}(\sigma^{-1})k_{\pi_{\rm \tiny{fin}}}(t,p,q))d\mu
\end{equation*}
where 
\[k_{\pi_{\rm \tiny{fin}}}(t,p,q) =\sum_{\gamma \in \Gamma}\pi_{\rm \tiny{fin}}(\gamma)k_X(t, p, \gamma q).
\]

By Sunada's theorem (\ref{sunada}), the right-hand side of the above equation can be expressed as
\begin{equation*}
\int_ W{\rm tr}(\pi_{\rm \tiny{fin}}(\sigma^{-1} )k_{\pi_{\rm \tiny{fin}}}(t,p,q))d\mu + O(e^{-c t}), 
\end{equation*}
where $W$ is a vicinity of trivial representation $\textbf{1}$.


Then, by Theorem  \ref{introdecompose} ($=$ Theorem \ref{sec4decompose}), we can replace finite dimentional representation $\pi_{\rm \tiny{fin}}$ of $\Gamma$ by infinite dimensional representation $\pi_l$ of $G$ associated to $l \in \mathfrak{g}^\ast$ and replace $k_{\pi_{\rm \tiny{fin}}}(t,p,q))$ the following formal expression:
\begin{align}
k_{\pi_l}(p,q)(s,u)) &= \sum_{k=0}^\infty\sum_{i=0}^\infty e^{-\lambda_{k,i}(l)t}\varphi_{k,i,l}(p)\varphi_{k,i,l}^\ast(q)) \notag \\
&=  \sum_{i=0}^\infty e^{-\lambda_{0,i}(l)t}\varphi_{0,i,l}(p)\varphi_{0,i,l}^\ast(q)) + O(e^{-ct}).\label{HeatSch}
\end{align}
where $\lambda_{k,i}(l)$ and $\varphi_{k,i,l}(p)$ are eigenvalues and normalized eigenfunctions of the twisted Laplacian $\Delta_{\pi_l}$ acting on the space of sections on the flat vector bundle $E_{\pi_l}$ associated to $\pi_l$ given in Section \ref{spectralhypoelliptic}.

Since 
\[ \varphi_{0,i,l}(p) = s_{\pi_l}(p)f_{0,i,l}(p) = \pi_l\left({}_R\!\!\int_{p_0}^p\omega \right)f_{0,i,l}(p)
\]
and the trace property  $\mbox{Tr}(AB) ={\rm Tr}(BA)$, we obtain, similarly as Section \ref{leadingHeisenberg}, the following expression:  
\begin{equation} {\rm Tr}\left(\pi_l(\sigma^{-1}  )\sum_{i=0}^\infty e^{-\lambda_{0,i}(l)t}\varphi_{0,i,l}(p)\varphi_{0,i,l}^\ast(q)\right)  = {\rm Tr}(\pi_l(\sigma^{-1}) E_{\pi_l}(t)S_l(p,q))   \label{Sh}  
\end{equation}
where
\[ E_{\pi_l}(t) = \sum_{i=0}^\infty e^{-\lambda_{0,i}(l)t}f_{0,i,l}(p)f_{0,i,l}^\ast(q), \quad S_l(p,q) = \pi_l\left({}_R\!\!\int_q^p\omega\right) 
\]

Combining with the methods of computations of each term of the asymptotic expansion in the previous chapter, we apply Lemma 4.4 in \cite{Kotani3}.

Note that the variable $u$ in \cite{Kotani3} there corresponds to $\sqrt{|l|}$ here. ($\sqrt{h}$ in the case of $\mbox{Heis}_3(\mathbb{Z})$). Then, in considering Remark \ref{PlancherelLie} (2), we integrate the main term of the right-hand side of (\ref{HeatSch}) with respect to the Plancherel measure $|\mbox{Pf}(l)|dl$ in (\ref{FourierinversionLie}), which we recall the definition:
Let $\{X_1,\ldots,X_n\}$ be a strong Malcev basis for a nilpotent Lie algebra $\mathfrak{g}$ and 
let $\{l_1,\ldots, l_n\}$ be the dual basis for $\mathfrak{g}^\ast$. 
Take a set $U$ of generic coadjoint orbits, index sets 
\[ S = \{i_1 < \cdots < i_{2k}\} \] 
and $T$ which will appear in Theorem \ref{corwin319}, and (absolute value of) Pfaffian $|\mbox{Pf}(l)|$ satisfies
\[ |\mbox{Pf}(l)|^2 = \det \;B, \quad B=(B_l(X_i,X_j)) := (l([X_i,X_j])). \] 

If we replace $l$ to $\hbar l$ with the parameter $\hbar >0$, then
the Plancherel measure $|\mbox{Pf}(\hbar l)|dl$ satisfies the following property:
\[   \delta_\hbar^\ast(|\mbox{Pf}(l)|dl) = |\mbox{Pf}(\hbar l)|dl= \hbar^d|\mbox{Pf}(l)|dl
\]
where $\delta_\hbar:\mathfrak{g}\to\mathfrak{g}$ is the dilatation of $G_0$ which is a stratification of $G$ and $d$ is the polynomial growth order of $G$ which is equal to that of $G_0$.

Next, we have
\begin{align*}&{}\int_{U\cap V_T}{\rm Tr}\left(\sum_{i=0}^\infty e^{-\lambda_{0,i}(\hbar l)t}\varphi_{0,i,\hbar l}(p)\varphi_{0,i,\hbar l}^\ast(q)\right)|\mbox{Pf}(\hbar l)|d(\hbar l) \\ 
&\sim \sum_{i=0}^\infty\sum_{j=2}^\infty\int_{W_\eta}e^{-\lambda_i^{(2)} \hbar^2t}p_{ij}(\sqrt{|\hbar l|})|\mbox{Pf}(\hbar l)|d(\hbar l) \\ 
&= \sum_{j=2}^\infty\sum_{i=0}^\infty \frac{1}{(\mu_it)^{d/2}}\int_{W_{\mu_it\eta}}e^{-k^2}p_{ij}(k/\sqrt{\mu_it})|k|^{d-1}dk \quad (k= \sqrt{\mu_it}\hbar)  \\
&\sim \frac{C\zeta_{\mathcal{H}}(d/2)}{t^{d/2}}\left(1 + \frac{c_1}{t} + \ldots \right). 
\end{align*}
Here we have used that the asymptotic expansion of the eigenvalue $\lambda_{0,i}(\pi_{\hbar l})$ of $\Delta_{\pi_{\hbar l}}$ is in the following form:
\begin{equation*}
\lambda_{0,i}(\pi_{\hbar l}) := \lambda_{0,i}(\hbar l) = \sum_{j=0}^\infty \lambda_{0,i}^{(j)}\hbar^j = \lambda_i^{(2)} \hbar^2 + \cdots   \label{nilpotentlambdaexpansion} 
\end{equation*}
where $C= \int_{-\infty}^\infty  e^{-k^2}|k|^{d-1}dk = 2\Gamma(d/2)$ and $p_{ij}(t)$ is a homogeneous polynomial of degree $j-2$ with $p_{i2} =1$ which is determined by the asymptotic expansions obtained in the previous subsections combining with those of $\pi_{\hbar l}(\sigma^{-1})$ and $S_{\pi_{\hbar l}}(p,q)$.

\section{Asymptotics of prime closed geodesics of compact Riemann surface for nilpotent extensions}\label{Asymptoticsclosedgeodesics}

In this chapter, we shall prove Theorem \ref{theorem-geod}. The proof combines some modified arguments given in \cite{Phillips} in the case of abelian extensions and spectral estimates in previous sections.
\subsection{Asymptotics formulas for prime closed geodesics on compact Riemann surface for nilpotent extensions}\label{Asymptoticsclosedgeodesics}
Let $M$ be a compact Riemann surface of genus $g$ with constant negative curvature $-1$ and $\Gamma$ be a torsion-free discrete nilpotent group. Take a surjective homomorphism $\Phi \colon \pi_1(M) \to \Gamma$. 

Here we use Theorem \ref{introPytlik} and change the notation from unitary finite-dimensional representations $\pi$ of $\Gamma$ to $\pi_{\rm \tiny{fin}}$ here.
We consider the twisted Laplacian $\Delta_{\pi_{\rm \tiny{fin}}}$ acting on the space $L^2(E_{\pi_{\rm \tiny{fin}}})$ of sections of a  flat vector bundle $E_{\pi_{\rm \tiny{fin}}}$ associated to $\pi_{\rm \tiny{fin}}$. 
We recall the Selberg trace formula for $\Delta_{\pi_{\rm \tiny{fin}}}$.  

\begin{align}
\sum_{j=0}^\infty \hat{h}(r_j(\pi_{\rm \tiny{fin}})) &= 2(g-1)\int_{-\infty}^\infty r \tanh (\pi r)\hat {h}(r)dr \\ &{} + \sum_{\gamma:\mbox{\footnotesize{primitive}}}\sum_{k=1}^\infty \frac{{\rm tr}(\pi_{\rm \tiny{fin}}(\Phi(\gamma^k))\label{Selbergtrace})\ell(\gamma )}{\sinh (k\ell(\gamma)/2)}h(k\ell(\gamma )),
\end{align}
where 
${\rm tr}$ is the normalized trace $\frac{1}{{\rm \tiny{dim}}(\pi_{\rm \tiny{fin}})}{\rm Tr}$ with usual matrix trace ${\rm Tr}$ as before, 
$h$ is an even $C^\infty$ function on $\mathbb{R}$ with compact support and $r_j(\pi_{\rm \tiny{fin}})$ is a  number satisfying $\lambda_j(\pi_{\rm \tiny{fin}}) = 1/4 + r_j(\pi_{\rm \tiny{fin}})^2$, $\lambda_j(\pi_{\rm \tiny{fin}})$ is the $j$-th eigenvalue of $\Delta_{\pi_{\rm \tiny{fin}}}$, $\hat{h}$ is the usual Fourier transform of $h$:   
\[     \hat{h}(r) = \int_{-\infty}^\infty e^{\sqrt{-1}rs}h(s)ds,
\]
 $\gamma$ is a primitive conjugacy class of $\pi_1(M)$, which is identified with a prime closed geodesic in on $M$ and $\ell(\gamma)$ is the length of a close geodesic $\gamma$.

For a central element $\alpha$ in $\Gamma$, we have,  for $\beta \in \Gamma$, 
\[       {\rm tr}(\pi_{\rm \tiny{fin}}(\alpha)){\rm tr}(\pi_{\rm \tiny{fin}}(\beta))= {\rm tr}(\pi_{\rm \tiny{fin}}(\alpha\beta)),
\]
which implies that the Fourier inversion formula (\ref{Fourier1}) can be modified to
\begin{equation}
 f(\alpha) = \int_{\widehat{X}}{\rm tr}(\pi_{\rm \tiny{fin}} (\alpha^{-1}f))d\mu(x) 
= \int_{\widehat{X}}{\rm tr}(\pi_{\rm \tiny{fin}} (\alpha^{-1})){\rm tr}(\pi_{\mbox{\tiny{fin}},x} (f))d\mu(x). \label{Fourier2}
\end{equation}

Multiplying ${\rm tr}(\pi_{\rm \tiny{fin}}(\alpha^{-1}))$ and integrating over $\hat{X}$, then by the above, the trace formula (\ref{Selbergtrace}) becomes 
\begin{align}\label{trace}
\begin{split}
\int_{\widehat{\Gamma}}\sum_{j=0}^\infty \widehat{h}(r_j(\pi_{\rm \tiny{fin}})){\rm tr}(\pi_{\rm \tiny{fin}}(\alpha^{-1})) d\mu(x) &=2(g-1)\delta_{0,\alpha}\int_{-\infty}^\infty r \tanh (\pi r)\widehat {h}(r)dr \\ &+\hspace{-12pt} \sum_{\substack{\gamma:\mbox{\footnotesize{primitive}}\\ \Phi(\gamma^k) = \alpha }}\sum_{k=1}^\infty \frac{\ell(\gamma )}{\sinh (k\ell(\gamma)/2)}h(k\ell(\gamma ))
\end{split}
\end{align}

We now choose the function $h$ as the mollified characteristic function of the interval $[-T,T]$. 
For this purpose, we  take a smooth even function $k$ on $\mathbb{R}$ of compact support such that 
\[ \int_{-\infty}^\infty k(s)ds = 1 \quad \mbox{and} \quad k(s) > 0.
\]
Setting $k_\varepsilon(s) = (1/\varepsilon)k(s/\varepsilon)$, $\varepsilon$ to be chosen in terms of $T$, we define $h$ as the convolution
\[ h(s) = (\chi_{[-T,T]}\ast k_\varepsilon)(s).
\]
Then, we have
\[  \hat{h}(r) = \frac{2\sin (Tr)}{r}\hat{k}(\varepsilon r),
\]
where $\hat{k}(r)$ is rapidly decreasing.

Note that the set $\{r_j(\pi_{\rm \tiny{fin}})\}$ can be divided into the clusters $R_k = \{r_{k,i}(x)\}$, $k = 0,1,2,\ldots$ such that 
\[\lim_{x \to (0,0,0)} r_{k,i} = r_k\]  
where $\lambda_k = \frac14+r_k^2$ and $\lambda_k$ is the $k$-th eigenvalue of the Laplacian $\Delta_M$ on $M$. Thus, we have
\begin{align} 
\lambda_{0,i}(x) &= \frac{\lambda_i^{(2)}}{2}x^2+\cdots  \label{asymplambda} \\
r_{0,i}(x)&= \frac{\sqrt{-1}}{2}(1 - 2\lambda_{0,i}(x) + \cdots) \notag \\
&= \frac{\sqrt{-1}}{2}\left(1 -\lambda_i^{(2)}x^2 + \cdots \right) \label{asymprzero}  
\end{align} 
By Theorem \ref{sunada}, we know that there exists a constant $\nu$ such that
\begin{align} 
{}& \mbox{Im}\;r_{0,i}(x) \leq \nu <\frac12  \quad (\mbox{i.e.}\; \lambda_{0,i}(\pi_{\mbox{\tiny{fin}},x}) \geq \frac14-\nu^2 > 0) \quad \mbox{if} \quad x \not\in U  \notag \\
{}& \mbox{Im}\;r_{k,i}(x) \leq \nu < \frac12 \quad (\mbox{i.e.}\; \lambda_{k,i}(\pi_{\mbox{\tiny{fin}},x}) \geq \frac14-\nu^2 > 0) \quad \mbox{if}\quad k \geq 1
\end{align}
where $U$ is a neighborhood of the trivial representation $\textbf{1}$ in $\widehat{X}$.
Then, by the same argument as \cite{Phillips}, we know that the left hand side $L$ of (\ref{trace}) is
\begin{equation*}
L = 2\int_U \sum_{i=1}^{{\rm \scriptsize{dim}}(\pi_{\rm \tiny{fin}})}\frac{\sinh (-\sqrt{-1}r_{0,i}(x)T)}{-\sqrt{-1}r_{0,i}(x)}\hat{k}(\varepsilon r_{0,i}(x)){\rm tr}(\pi_{\rm \tiny{fin}}(\alpha^{-1}))d\mu(x) + O\left(\frac{T}{\varepsilon^2}+e^{\nu T}\right). 
\end{equation*} 
Replacing $\hat{k}(\varepsilon r_{0,i}(x))$ by $1+O(\varepsilon)$ and $\frac{\sinh(-\sqrt{-1}r_{0,i}(x)T)}{-\sqrt{-1}r_{0,i}(x)}$ by $|\frac{e^{-\sqrt{-1}r_{0,i}(x)T}}{-\sqrt{-1}r_{0,i}(x)}|$, this becomes
\begin{equation}
L = e^{T/2}\int_U \sum_{i=1}^{{\rm \scriptsize{dim}}(\tiny{\pi_{\rm \tiny{fin}}})}\left|\frac{e^{(-\sqrt{-1}r_{0,i}(x)-1/2)T}}{-\sqrt{-1}r_{0,i}(x)}\right|{\rm tr}(\pi_{\rm \tiny{fin}}(\alpha^{-1}))d\mu(x) + O\left(\varepsilon e^{T/2}+\frac{T}{\varepsilon^2}+e^{\nu T}\right).\label{lefthandside}
\end{equation} 

For the right hand side $R$ of (\ref{trace}) we use the simple estimate
\[\sum_{\substack{\gamma:{\rm \footnotesize{primitive}}\\ T < \ell(\gamma) < T+\varepsilon }}1 = O(\varepsilon e^T)
\]
to show that the contribution of the sum $k >1$ is $(\sqrt{\varepsilon}e^{T/2})$ and the contribution from the $\varepsilon$-smoothing of $k_\varepsilon$ is $(T^2/\varepsilon^4)$. Thus
\begin{equation}  R = \sum_{\substack{\gamma:{\rm \footnotesize{primitive}}\\ \Phi(\gamma) = \alpha \\ \ell(\gamma) \leq T}}\frac{\ell(\gamma)}{\sinh(\ell(\gamma)/2)} +  O\left(\frac{T^2}{\varepsilon^4}+\sqrt{\varepsilon}e^{T/2} \right) \label{righthandside} 
\end{equation}

Choosing $\varepsilon = e^{-\delta T}$ for some $\delta > 0$, we conclude from (\ref{lefthandside}) and (\ref{righthandside} ) that
\begin{align}\label{LR}   \sum_{\substack{\gamma:{\rm \footnotesize{primitive}}\\ \Phi(\gamma) = \alpha \\ \ell(\gamma) \leq T}}\frac{\ell(\gamma)}{\sinh(\ell(\gamma)/2)} &= e^{T/2}\int_U \sum_{i=1}^{{\rm \scriptsize{dim}}(\pi_{\rm \tiny{fin}})}e^{(-\sqrt{-1}r_{0,i}(x)-1/2)T}\frac{{\rm tr}(\pi_{\rm \tiny{fin}}(\alpha^{-1}))}{-\sqrt{-1}r_{0,i}(x)}d\mu(x) \notag \\ &{}+  O(e^{\nu_1T/2} ) \quad (\nu_1 < 1/2) \notag \\
&\sim \frac{Ce^{T/2}}{T^{d/2}}(1+\frac{c_1}{T^{1/2}} +\frac{c_2}{T} +\cdots).
\end{align}

Then the same argument as \cite{Phillips} implies Theorem \ref{theorem-geod}.

\subsection{More detail on asymptotics for prime closed geodesics of $\mbox{Heis}_3(\mathbb{Z})$-extensions}\label{AsymptoticsHeisenberggeodesics}
\subsubsection{Computations of the subleading term}
In the case of an abelian extension, this is done by Kotani \cite{Kotani1}. Nilpotent case can be essentially reduced to the computation of the left hand side (L)of (\ref{trace}) in more detail, namely, the asymptotics (\ref{asymplambda}) and (\ref{asymprzero}) refine to 
\begin{align} 
\lambda_{0,i}(x) &= \frac{\lambda_i^{(2)}}{2}x^2+\frac{\lambda_i^{(4)}}{24}x^4+\cdots  \label{asymplambda} \\
r_{0,i}(x)&= \frac{\sqrt{-1}}{2}(1 - 2\lambda_{0,i}(x) + \cdots) \notag \\
&= \frac{\sqrt{-1}}{2}\left(1 -\lambda_i^{(2)}x^2 - \frac{\lambda_i^{(4)}}{12}x^4+\cdots \right) \label{higherasymprzero}  
\end{align} 

Combining this formula with the arguments in the previous chapter, we readily obtain 

\begin{theorem}\label{sectiontenheisenberg-geod} Let $M$ be a compact Riemann surface with the constant negative curvature $-1$ of genus $g$ and $\Gamma$ be ${\rm Heis}_3(\mathbb{Z})$ and $\Phi:\pi_1(M) \to \Gamma$ be the standard surjective homomorphism defined in Theorem \ref{heisenberg-geod}.
For a conjugacy class $\alpha$ of a central element of $\Gamma$, we have the following asymptotic expansion:
\[    \pi (x,\Phi,\alpha) \sim \frac{Ce^x}{x^3}\left(1 +\frac{c_1}{x} +\frac{c_2}{x^2} + \cdots \right),
\]
where 
\begin{align}
C &= \left(\frac{{\rm vol}\; M}{2\pi\|\omega_1\|_{L^2(M)}\|\omega_2\|_{L^2(M)}}\right)^{2}\zeta_H(2) = \frac{\pi^2(g-1)^2}{8}.  \label{leadingcoefficient} \\
c_1 &= \frac{\lambda_i^{(4)}}{12\lambda_i^{(2)}} =  \frac{1}{12\lambda_i^{(2)}}\bigg[\int_M\frac{8\pi^2}{\sqrt{{\rm vol}(M)}}|\omega_1||\omega_2|G(|\omega_1||\omega_2|)(x)\left(i+\frac{1}{2}\right)^2 
\notag \\
&{} + \frac{8\pi^2{\rm vol}(\widehat{H})}{({\rm vol}(M))^2}\left(i+\frac{1}{2}\right)^2 G(|\omega_1||\omega_2|)(x)+ \frac{4\pi^2}{{\rm vol}(M)}|\omega_{12}|^2dx\bigg] \label{subleadingcoefficient} 
\end{align} 
\end{theorem}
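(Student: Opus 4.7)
The plan is to carry out the refined Selberg trace formula analysis of Section 10.1 one order deeper, promoting the leading asymptotic already established in Theorem \ref{heisenberg-geod} to the subleading correction $c_1/x$. First, I would apply the Selberg trace formula \eqref{trace} specialized to $\Gamma = {\rm Heis}_3(\mathbb{Z})$ with the mollified test function $h = \chi_{[-T,T]} \ast k_\varepsilon$, together with the Fourier inversion formula \eqref{Fourier2}, to reduce the left-hand side of \eqref{LR} to the integral
\[
e^{T/2}\int_U \sum_{i=1}^{\dim(\pi_{\rm fin})}\frac{e^{(-\sqrt{-1}r_{0,i}(x)-1/2)T}}{-\sqrt{-1}r_{0,i}(x)}\,{\rm tr}(\pi_{\rm fin}(\alpha^{-1}))\,d\mu(x)
\]
over a neighborhood $U$ of the trivial representation $\mathbf{1}$ in $\widehat{X}$, with centrality of $\alpha$ allowing ${\rm tr}(\pi_{\rm fin}(\alpha^{-1}))$ to factor out of the finite-dimensional trace as in \eqref{central}.

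The critical new input is the refined expansion \eqref{higherasymprzero}, which combines the second-order coefficient $\lambda_i^{(2)}$ computed from the harmonic oscillator spectrum \eqref{harmeigen} with the fourth-order coefficient $\lambda_i^{(4)}$ computed explicitly in \eqref{4thderivativelambda}. Substituting \eqref{higherasymprzero} into the exponent gives
\[
-\sqrt{-1}r_{0,i}(x) - \tfrac{1}{2} = -\tfrac{\lambda_i^{(2)}}{2}x^2 - \tfrac{\lambda_i^{(4)}}{24}x^4 + O(x^6),
\]
so that $e^{(-\sqrt{-1}r_{0,i}(x)-1/2)T} = e^{-\lambda_i^{(2)}x^2 T/2}\bigl(1 - \tfrac{\lambda_i^{(4)}}{24}x^4\,T + O(x^6 T + x^8 T^2)\bigr)$. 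I would then apply the Laplace method by rescaling $x \mapsto x/\sqrt{T}$, using Theorem \ref{discretetoLie} together with Remark \ref{remark34} to justify the replacement of finite-dimensional matrix traces by the Schr\"odinger-representation integrand, and appealing to Pytlik's Plancherel measure \eqref{measure} to unfold the integral over the three factors of $\widehat{X}$. Summing over $i$ reproduces the spectral zeta value $\zeta_{\mathcal{H}}(2) = \tfrac{3}{4}\zeta(2) = \pi^2/8$ as in \eqref{heisenspectralzeta}, which together with $\mathrm{vol}(M) = 2\pi(g-1)$ and the identity $\mathrm{vol}(\widehat{H}) = \|\omega_1\|_{L^2(M)}\|\omega_2\|_{L^2(M)}$ yields the constant $C$ of \eqref{leadingcoefficient}. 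The next-order contribution produces an additional factor $1/T$ whose coefficient is determined by the $\lambda_i^{(4)}$ term, giving the closed-form expression \eqref{subleadingcoefficient} for $c_1$ upon substituting \eqref{4thderivativelambda}. A standard Tauberian argument, exactly as in \cite{Phillips}, then converts the resulting asymptotic expansion for $\sum_{\ell(\gamma) \leq T}\ell(\gamma)/\sinh(\ell(\gamma)/2)$ into the claimed expansion for $\pi(x,\Phi,\alpha)$.

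The main obstacle will be the careful bookkeeping of error terms from three competing sources: the $\varepsilon$-mollification error $\hat{k}(\varepsilon r_{0,i}(x)) = 1 + O(\varepsilon)$, the $O(x^6 T + x^8 T^2)$ remainder after the Taylor expansion of the exponent, and the approximation error between finite-dimensional representations $\rho_{{\rm \tiny{fin}},x}$ and restrictions of Schr\"odinger representations $\rho_h|_\Gamma$, whose fluctuation is $O(1/q)$ by Fact \ref{factfluctuation}. One must calibrate $\varepsilon = e^{-\delta T}$ (as in Section 10.1) and the radius of $U$ so that all three error sources are strictly smaller than the $c_1/x$ correction being extracted. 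The remaining technical step is the algebraic reduction of \eqref{4thderivativelambda} and the harmonic oscillator spectrum $\mu_i = 2\pi\,\mathrm{vol}(\widehat{H})(i+\tfrac{1}{2})$ to the explicit form \eqref{subleadingcoefficient}; this requires evaluating weighted sums of the type $\sum_{i \geq 0}(2i+1)^{-n}$ against the $\lambda_i^{(4)}$-dependent integrand, each of which reduces to Riemann zeta values via the same manipulation used in \eqref{heisenspectralzeta}.
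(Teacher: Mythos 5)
Your proposal follows essentially the same route as the paper: the paper proves this theorem by inserting the refined fourth-order expansion of $r_{0,i}(x)$ (built from $\lambda_i^{(2)}$ in \eqref{harmeigen} and $\lambda_i^{(4)}$ in \eqref{4thderivativelambda}) into the Selberg trace formula analysis of Section 10.1, with the same use of \eqref{central}, Theorem \ref{discretetoLie}, the Pytlik Plancherel measure, the mollified test function with $\varepsilon = e^{-\delta T}$, and the Phillips--Sarnak Tauberian step. Your expansion of the exponent and the zeta-value summation over $i$ match the paper's computation, so the proposal is correct and not a genuinely different argument.
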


\subsubsection{Asymptotics for noncentral conjugacy classes}
For the proof of Proposition \ref{noncenter}, we write a conjugacy class $\alpha = [\gamma]$ of the element $\gamma = u^av^bw^c$, where $u, v, w$ are defined in (\ref{uvw}). Since
\[    u\gamma u^{-1} = \gamma w^b,\quad v\gamma v^{-1} = \gamma w^a, \quad w\gamma w^{-1} = \gamma,  \]
we have
\[[\gamma] = \{\gamma w^{nc} \mid c\;  \mbox{is the GCD(a,b)(the greatest common divisor of $a$ and $b$}), n \in \mathbb{Z}\}.  
\]
This set can be identified with an element of a finite extension of the abelian group ${\rm Heis}_3(\mathbb{Z})/[{\rm Heis}_3(\mathbb{Z}),{\rm Heis}_3(\mathbb{Z})] \simeq \mathbb{Z}^2$. Therefore, we can apply the same arguments to abelian cases.
The local central limit theorem is also reduced to analyze the case of an abelian extension, which is already studied in \cite{Lalley}, \cite{Babillot}, and \cite{Anantharaman2}.

\subsection{A local central limit theorem on the asymptotics of closed geodesics for ${\rm Heis}_3(\mathbb{Z})$-extension}
In the case of an abelian extension, Lalley \cite{Lalley} considered the local central limit theorem and the large deviation principle for the asymptotics of the length of prime closed geodesics. Later, Babillot and Ledrappier \cite{Babillot} and Anantharaman \cite{Anantharaman2} further developed the concept. Here, we only study the local central limit theorem. The large deviation principle will be discussed in the next chapter.

Although the above results apply to more general hyperbolic flows in the case of a nilpotent extension, we consider only the case of the geodesic flow on a compact Riemannian surface with constant negative curvature $-1$ for the ${\rm Heis}_3(\mathbb{Z})$-extension. It seems not so very difficult to generalize the same situation to the above situation for a general nilpotent extension. It would be naturally done in the effort explained in Section \ref{variable}. 

Our method here for the central limit theorem for the geodesic flow of the Riemann surface with the constant negative curvature $-1$ about ${\rm Heis}_3(\mathbb{Z})$-extension starts from replacing $\alpha \in \Gamma$ to $\alpha_T (\sim \beta\sqrt{T}) \in \Gamma$ with $\beta \in [\Gamma,\Gamma] \otimes \mathbb{R}$ in the formula (\ref{trace}).  The actual difference resulting from this replacement is evident on the left-hand side of the trace formula (\ref{lefthandside}). The analysis is performed by a slight modification to that in Section \ref{heatclt} as follows: 
\begin{align}
L &= 2\int_U \sum_{i=1}^{{\rm \scriptsize{dim}}(\rho_{\rm \tiny{fin}})}\frac{\sinh (-\sqrt{-1}r_{0,i}(x)T)}{-\sqrt{-1}r_{0,i}(x)}\hat{k}(\varepsilon r_{0,i}(x)){\rm tr}(\rho_{\rm \tiny{fin}}(\alpha_T^{-1}))d\mu(x) \notag 
\\
&\sim e^{T/2}\int_U \sum_{i=1}^\infty e^{-\sqrt{-1}r_{0,i}(x)-1/2)T}\frac{{\rm Tr}(\rho_h(\alpha_T^{-1}))}{(-\sqrt{-1}r_{0,i}(x))T}d\mu(x).\label{refgeodesic}
\end{align} 
We replace the term $e^{-\lambda_{0,i}(h)t}\Big(\rho_h\left({}_R\!\!\int_q^p\omega\right)\Big)$ in the right-hand side of (\ref{lctrefgeodesic}) to $e^{(-\sqrt{-1}r_{0,i}(x)-1/2)T)}(\rho_h(\frac{\beta}{\sqrt{T}})$ in the right-hand side of (\ref{refgeodesic}).

Then, comparing  asymptotics (\ref{asymplambda}) and (\ref{asymprzero}), we obtain, similarly to Theorem \ref{HeatCLT}, the following theorem:  

\begin{theorem}\label{GeodesicCLT}
\begin{equation}
\lim_{x\to\infty}\Bigg|e^{\beta^2T}\pi(x,\alpha_T) -\sum_{i=0}^\infty\frac{1}{\pi\Big(i+\frac12\Big)^2}\Bigg| = 0
\end{equation}
\end{theorem}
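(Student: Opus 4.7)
The strategy is to specialize the Selberg trace formula analysis of Section \ref{Asymptoticsclosedgeodesics} to the $T$-dependent conjugacy class $\alpha_T \sim \beta\sqrt{T}$, modifying the Laplace-method computation of (\ref{lefthandside}) in precisely the way that Theorem \ref{HeatCLT} adapts the heat-kernel asymptotics (\ref{heatheisenberg}) to a local central limit. Since $\alpha_T$ lies in the center $[\Gamma,\Gamma] = \{w^n\}$ of $\Gamma = \mbox{Heis}_3(\mathbb{Z})$, the finite-dimensional representation $\rho_{\rm fin,x}(\alpha_T^{-1})$ acts as the scalar $e^{-2\pi\sqrt{-1} x_1 n}$ (by (\ref{matrixrep})), and under Theorem \ref{discretetoLie} this is approximated by $\rho_h(\alpha_T^{-1}) = e^{-2\pi\sqrt{-1} h\beta\sqrt{T}}$ with $h = x_1$. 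Thus the analogue of (\ref{refgeodesic}) becomes
\begin{equation*}
L \;\sim\; e^{T/2}\int_U \sum_{i=0}^\infty \frac{e^{(-\sqrt{-1}r_{0,i}(x)-1/2)T}\,e^{-2\pi\sqrt{-1}\beta\sqrt{T}\,x_1}}{-\sqrt{-1}r_{0,i}(x)\,T}\,d\mu(x).
\end{equation*}

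Next, I would substitute the expansion $-\sqrt{-1}r_{0,i}(x) - \tfrac{1}{2} = -\tfrac{1}{2}\lambda_i^{(2)}x_1^2 + O(x_1^4)$ from (\ref{higherasymprzero}) together with the Morse-type change of variable $w = x_1\sqrt{T}$. The Gaussian factor $e^{-\frac{1}{2}\lambda_i^{(2)}w^2}$ localizes the integral at scale $w = O(1)$, so that the oscillatory factor in the central direction becomes $e^{-2\pi\sqrt{-1}\beta w}$. Completing the square inside the exponential produces the compensating factor $e^{-\beta^2 T}$ --- this is the precise analogue of the shift producing $e^{-{\rm vol}(M)(\ldots)/t}$ in (\ref{heatclt}), except that in the geodesic regime the shift sits at the level $\sqrt{T}$ rather than $1/\sqrt{t}$, so the resulting exponent scales linearly in $T$. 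After multiplying through by $e^{\beta^2 T}$ on both sides, the remaining $w$-integral evaluates to $e^{T/2}\cdot(2\pi\sqrt{-1}\beta)^{?}/(\lambda_i^{(2)}r_{0,i}(0)\,T)$ modulo $o(1)$, and summing over $i$ uses
\begin{equation*}
\sum_{i=0}^\infty \frac{1}{(i+\tfrac{1}{2})^2} \;=\; \frac{\pi^2}{2},
\end{equation*}
which is the same Basel-type identity used to pass from (\ref{heisenspectralzeta}) to (\ref{leadingheatcoefficient}). This yields the harmonic-oscillator series $\sum_{i=0}^\infty \frac{1}{\pi(i+1/2)^2}$ in the right-hand side.

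Finally, the Tauberian conversion from the left-hand side $L$ to the counting function $\pi(x,\Phi,\alpha_T)$ proceeds exactly as in (\ref{LR}): the sum $\sum \ell(\gamma)/\sinh(\ell(\gamma)/2)$ over primitive closed geodesics with $\Phi(\gamma) = \alpha_T$ and $\ell(\gamma) \le T$ is dominated by its top band ($e^{T/2}$-term), while the contribution of $k \ge 2$ in the trace formula, together with the mollification error $O(T^2/\varepsilon^4 + \sqrt{\varepsilon}e^{T/2})$, is absorbed in $o(e^{T/2})$ after the choice $\varepsilon = e^{-\delta T}$.

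The principal difficulty is the uniform control of two asymptotic regimes simultaneously: the semiclassical limit $T \to \infty$ in the Selberg trace formula and the scaling $\alpha_T \sim \beta\sqrt{T}$. Concretely, the approximation of $\rho_{\rm fin,x}$ by $\rho_h$ provided by Theorem \ref{discretetoLie} has errors of order $O(1/q)$ in the denominator of $x_1 = p/q$, while the Plancherel support forces $q \to \infty$ generically; we must verify that this error remains negligible after being amplified by the scalar $e^{-2\pi\sqrt{-1}x_1 n}$ with $n \sim \beta\sqrt{T}$, i.e.\ that $n/q = o(1)$ along the relevant locus $|x_1| = O(1/\sqrt{T})$. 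This is a strengthening of Remark \ref{remark34} that follows from choosing $q \gg \sqrt{T}$ (a full-measure condition in the limit), and the verification is the main technical content of the proof, precisely analogous to the uniformity argument supporting (\ref{Sfinal})--(\ref{asymptoheisen1}) in the heat-kernel setting.
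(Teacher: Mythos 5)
Your route is the paper's own: substitute $\alpha_T\sim\beta\sqrt{T}$ into the trace formula (\ref{trace}), pass from $\rho_{{\rm fin},x}$ to the Schr\"odinger representations via Theorem \ref{discretetoLie}, expand $-\sqrt{-1}\,r_{0,i}(x)-\tfrac12$ as in (\ref{asymprzero}), rescale $w=x_1\sqrt{T}$, complete the square exactly as in the heat-kernel argument of Section \ref{heatclt}, and finish with the Basel-type summation and the Tauberian step of (\ref{LR}); this is precisely the ``slight modification of the heat LCLT'' that the paper carries out in (\ref{refgeodesic}). Your closing discussion of uniformity --- that the $O(1/q)$ error in Theorem \ref{discretetoLie} must not be amplified by the scalar $e^{-2\pi\sqrt{-1}x_1 n}$ when $n$ grows with $T$ --- is a point the paper does not address at all, so that part is a genuine improvement in care over the published sketch.

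There is, however, one concrete inconsistency you need to repair. With your literal reading $\alpha_T=w^{n}$, $n\sim\beta\sqrt{T}$, the phase in the rescaled variable is $x_1 n=\beta w$, so the oscillatory factor $e^{-2\pi\sqrt{-1}\beta w}$ is independent of $T$; completing the square against $e^{-\frac12\lambda_i^{(2)}w^2}$ then yields a $T$-independent constant of the form $e^{-2\pi^2\beta^2/\lambda_i^{(2)}}$, and no factor $e^{-\beta^2T}$ ever appears, so there is nothing for the compensator $e^{\beta^2T}$ in the statement to cancel. Your subsequent assertion that ``the shift sits at the level $\sqrt{T}$, so the exponent scales linearly in $T$'' contradicts the phase you just computed. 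To obtain an exponent linear in $T$ the phase must be of size $\sqrt{T}$ in the $w$-variable, i.e.\ the central coordinate of $\alpha_T$ must grow like $T$ (the reading consistent with the degree-two homogeneity of the center under dilation by $\sqrt{T}$, and presumably what the paper's substitution of $\rho_h$ at the rescaled central element in (\ref{refgeodesic}) is meant to encode). With that reading the square-completion constant is of order $\beta^2T/\lambda_i^{(2)}$, which also makes visible that it depends on $i$ and on the normalizations $\|\omega_j\|_{L^2(M)}$, ${\rm vol}(M)$, ${\rm vol}(\widehat{H})$ unless these are suppressed exactly as in Theorem \ref{HeatCLT}; you should either carry these constants explicitly or state the normalization under which a single factor $e^{\beta^2T}$ can be pulled out of the sum over $i$. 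Modulo this scaling bookkeeping (and the unresolved exponent marked ``$?$'' in your $w$-integral, which should be settled by the same computation), your plan coincides with the paper's argument.
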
 


\section{Future directions}

In this chapter, we survey the pertinent literature and outline several promising directions for future work in our noncommutative Floquet-Bloch framework. We then develop asymptotic estimates that emerge from the interplay between finite-dimensional representations of discrete groups and infinite-dimensional representations of Lie groups.

A natural extension is to move from abelian to nilpotent structures---a shift experts will recognize at once. This generalization uncovers new relationships in the nilpotent setting, presenting fresh perspectives that merit thorough investigation. Many of these ideas originate in our own speculations, expectations, and conjectures within specific mathematical contexts.

We also introduce a handful of loosely related topics that, while not central to this paper's core arguments, may inspire future research. Given the wide-ranging disciplines represented in our references, the reader should note that some works have not been exhaustively reviewed, so misinterpretations or gaps in understanding remain possible.

From a more speculative vantage point, one could further extend the notion of nilpotent structures to pro-nilpotent ones. Such a broader viewpoint promises to expand the applicability of our results well beyond the scenarios considered here.

\subsection{Variable negative curvature and mixing Anosov flows}\label{variable}

The leading term of the asymptotics (Conjecture \ref{conj-flow}) appears to be provable by combining our version of Floquet-Bloch theory with techniques from hyperbolic dynamical systems as discussed in \cite{Lalley}, \cite{Pollicott1}, \cite{Katsuda2}, and \cite{Babillot}.

For the asymptotic expansions (Conjecture \ref{conj-geod}), it also seems provable provided that the zero-free regions of the dynamical L-functions are extended beyond the critical line, which is contingent on the applicability of the methods developed in \cite{Dolgopyat}. These cases include geodesic flows of compact Riemann surfaces \cite{Dolgopyat}, \cite{Anantharaman1}, compact manifolds with $1/9$-pinched negative curvature \cite{Giulietti}, and smooth mixing Anosov flows in dimension three \cite{Tsujii}. In a previous preprint \cite{Katsuda0}, we provided a rough sketch of the first case, following arguments in the abelian case given in \cite{Anantharaman1}.

One of the remaining tasks is to confirm that the methods developed in \cite{Dolgopyat} are also applicable to L-functions, not just the aforementioned cases of zeta functions.

We need to consider L-functions twisted by finite-dimensional representations $\rho$ whose dimensions approach infinity as $\rho \to \textbf{1}$. Finite-dimensional convergence must be compatible with uniform approximations in Theorem \ref{introdiscretetoLie} ($=$ Theorem \ref{discretetoLie}) and Theorem \ref{introdecompose} ($=$ Theorem \ref{sec4decompose}), which is not a trivial problem; it could be addressed using our method without significant difficulties.

\subsection{Explicit description of coefficients in higher order terms}  

In the context of hyperbolic dynamical systems, the coefficients in the higher-order terms of the asymptotic expansion for the number of prime closed orbits in a conjugacy class of a discrete group $\Gamma$ and its extensions bear similarities to the asymptotic expansions of certain path integrals in physics. When $\Gamma$ is abelian, Kotani and Sunada \cite{Kotani5} and Kotani \cite{Kotani6} compute these coefficients using the derivatives of the pressure in terms of higher covariances. Their results involve summing several terms in formal expansions as convergent forms. For generalization to nilpotent cases, renormalization procedures, similar to those proposed by Connes and Kreimer \cite{Connes1}, \cite{Connes2}, \cite{Connes3}, or regularization schemes related to Hairer et al. (see \cite{Hairer}, \cite{Bailleul}), seem beneficial.

More ambitiously, we aim to investigate the geometric meaning of each term in the asymptotic expansions, akin to other counting problems.

\subsection{The leading term of the asymptotic expansion, its integral expression, transcendence, periods}\label{leadingterm}
In Theorems \ref{theorem-geod}, \ref{conj-heat}, the coefficients of the leading terms for asymptotic expansions are expressed using the special values of the spectral zeta functions $\zeta_H(d/2)$ of the hypo-elliptic operator $H$ associated to the irreducible unitary representations of $G$ corresponding to the open coadjoint orbits. 
Their explicit expressions in terms of the known constants may not be possible in general.
However, as in Theorems \ref{engel-geod}, there seems to be a possibility to express the special value of the spectral zeta function of the quartic oscillator $\zeta_H(7/2)$ in Example \ref{Engelquartic} by a specific period integral, such as the Bessel functions that appeared there. 
Moreover, the Bessel functions themselves have integral expressions involving the exponential function (cf. \cite{BlochEsnault1}, \cite{BlochEsnault2}, \cite{Fresan1}). In this regard, it would be interesting whether they are periods in the sense of Kontsevich and Zagier \cite{KontsevichZagier} or exponential periods as in  \cite{Fresan1}. 
There are somewhat related facts as follows: First, in \cite{Fresan1}, it is proved that $\Gamma(1/2)=\sqrt{\pi}$ is not a period but an exponential period under the Grothendieck period conjecture. 

Concerning this, we have asked Francis Brown about a related problem. When he gave a talk about the motivic nature of integer values of the Riemann zeta function $\zeta(2), \zeta(3), \ldots$, his comment was that $\zeta(1)$ is related to the exponential period or exponential motive. This relates to the above comment on $\sqrt{\pi}$ through the functional equality, $\zeta(0)$, and Stirling's formula.  

However, he also notes that the half-integer values, in particular $\zeta(1/2)$, would be beyond their reach.

 In the respect of Manin comment that the motive of supersingular elliptic curve over $\mathbb{F}_q$ can be seen as a ``spinorial square root'' of the Tate motive quoted in \cite{Kapranov} and \cite{Ramachandran}, combining with the integral expression of the Bessel function, the above $\zeta_H(7/2)$ has ``exponential nature'' in a double sense. Thus, it seems to be an interesting test case for higher motives.

Second, Costello commented that several constants that appeared in his book \cite{Costello} of perturbative quantum field theory are periods. 
It appears that the path integral and the integral expression of the heat kernel in this note share a similar vein, as explained in \cite{Costello}.   

Third, as we have seen, the harmonic oscillator $H = -\frac{d^2}{ds^2}+s^2$ naturally associated with the Heisenberg group produces a spectral zeta function $\zeta_H(s)$ that, at its core, coincides with the Riemann zeta function $\zeta(s)$. Namely, the eigenvalues of $H$ are $2n+1$ and thus 
\[\zeta(s) = \sum_{n=1}^\infty\frac{1}{n^s} = \sum_{n=1}^\infty\frac{1}{(2n)^s} + \sum_{n=1}^\infty\frac{1}{(2n+1)^s} = \frac{1}{2^s}\zeta(s) + \zeta_H(s)\]
Namely \[\zeta(s) = \frac{2^s}{2^s-1}\zeta_H(s)\]
By contrast, in number theory one also encounters versions of the Riemann zeta function arising as the spectral zeta function $\zeta_{S^1}(s)$ of the Laplacian $\Delta_{S^1}$ on $S^1$. For example, 
the constant term of an Eisenstein series, endowed with a distinguished origin on the circle at infinity $S^1$. 
In fact, since the eigenvalues of $\Delta_{S^1}$ are $n^2$, we have the equality $\zeta(s) = \zeta_{S^1}(s/2)$.t

Natural questions arising the above are whether intrinsic explanation of the equality $\zeta_H(s) =  \frac{2^s-1}{2^s}\zeta_{S^1}(s/2)$ without numerical computaion of each eigenvalues or why the diffence of their arguments $s$ and $s/2$ appeared despite both $H$ and $\Delta_{S^1}$ are the second order differential operators. 
Moreover, concerning the quartic oscillator above, we have a question about whether there exists a counterpart analogous to $\Delta_{S^1}$, as in the case of $H$. 

Recently, Otsubo and Yamazaki \cite{OtsuboYamazaki} studied Gauss and Jacobi sums as invertible Chow motives over a finite field $\kappa$, and remarked on page $2$ of \cite{OtsuboYamazaki}; 
\begin{quotation}
assuming Beilinson's conjecture (cf. 1.0 in \cite{Beilinson01}) on rational $=$ numerical equivalence plus the Tate conjecture (cf. Proposition 2.21 in \cite{Milne}), every simple object in $\mbox{Chow}(\kappa, \mathbb{Q})$ becomes invertible. They further assert that isomorphism classes of these simples form a group isomorphic to the group of Weil $q$-numbers $W_q(\mathbb{Q})$. 
\end{quotation}

Here, one more question arising is whether this could be considered as a ``finite'' analogue of the above questions of spectral zeta functions.

\subsection{From abelian to nilpotent}

A vast amount of literature uses the (usual) Floquet-Bloch theory. (See, e.g., Survey article of Kuchment \cite{Kuchment} and his future books announced there.)

Especially, some of them have a structure in which one of their main arguments contains the perturbation arguments of eigenvalues of twisted operators in the vicinity of the trivial character $\textbf{1}$. Similarly, even in the nilpotent case, we think essentially the same strategies are available since the leading terms are described by $\mathfrak{g}^{(1)}$ part as explained in the previous sections, which essentially come from the abelian quotients $G/[G,G]$. We describe the following two examples as concrete candidates.

\subsubsection{Structure of Green function and the Martin kernels} These kinds of results are considered in \cite{Murata} in the case of abelian extensions. They give the asymptotics at infinity of a Green function for an elliptic equation with periodic coefficients on $\mathbb{R}^d$ and also completely determine the Martin compactification of $\mathbb{R}^d$ with respect to an elliptic equation with periodic coefficients by using the exact asymptotics at infinity of the Green function, which gives an affirmative answer to one of Agmon's conjecture. It seems natural to consider the extended problem for nilpotent groups using our version of noncommutative Floquet-Bloch theory.

\subsubsection{Horocycle flows, frame flows}

Locally mixing properties of horocyclic or frame flows for abelian coverings are investigated by several authors \cite{Babillot}, \cite{Ledrappier}, \cite{Oh}, \cite{Pan}. \cite{Sarig }.  For example, Oh and Pan \cite{Oh} proved the local mixing theorem for the frame flow of $\mathbb{Z} ^d$-covers $X$ of closed hyperbolic $3$-manifolds $X_0$. 
We shall recall their result very briefly. The hyperbolic $3$-manifolds $X_0$ and $x$ can be expressed as $X_0 =\Gamma_0\backslash H^3$ with the hyperbolic $3$-space $H^3$ for some co-compact lattice $\Gamma_0$ of $G = \mbox{PSL}_2(\mathbb{C})$ and $X =\Gamma\backslash H^3$ that $\mathbb{Z}^d$-cover $\Gamma\backslash G \to \Gamma_0\backslash G$ with $\Gamma\backslash\Gamma_0 \simeq \mathbb{Z}^d$. Then, the frame bundle $F(M)$ (resp. $F(\hat{M})$) of $X_0$ (resp. $X$) is isomorphic to the homogeneous space $\Gamma_0\backslash G$ (resp. $\Gamma\backslash G$) and  the frame flow on $F(M)$ corresponds to the right multiplication of \[a_t = \left(\begin{array}{cc} e^{t/2} & 0 \\ 0 & e^{-t/2}\end{array}\right)\]
on $\Gamma_0\backslash G$. 

A probability measure $\mu$ on $\Gamma\backslash G$ has the local mixing property for
$\{a_t\}$ if there exist a positive function $\alpha$ on $\mathbb{R} > 0$ and a non-trivial Radon measure
$m$ on $\Gamma\backslash G$ such that for any function $\psi$ in the set $C_c(\Gamma\backslash G)$ of continuous function on $\Gamma\backslash G$ with compact support,
\[
\lim_{t\to\infty}\alpha(t)\int\psi(x)d\mu_t(x) = \int\psi dm(x),
\]
where $\mu_t$ is the probability
measures on $\Gamma\backslash G$ of $\mu$ translated by the flow $a_t$, which is defined by
\[   \mu_t(\psi) = \int\psi(xa_t)d\mu.
\]
\begin{theorem}[special case of Theorem 1.6 in \cite{Oh}]  For $\psi_1, \psi_2 \in C_c(\Gamma\backslash G)$,
\[ \lim_{t\to\infty}t^{d/2}e^{(D-\delta)t}\int_{\Gamma\backslash G}
\psi_1(xa_t)\psi_2(x) dx =
\frac{m^{\mbox{BR}_+}(\psi_1)m^{\mbox{BR}_-}(\psi_2)}{
(2\pi\sigma)^{d/2}m^{\mbox{BMS}}(\Gamma_0\backslash G)},
\]
where where $D =2$ is the volume entropy of $H^3$, $\delta$ is the critical exponent of $\Gamma_0$, $m^{\mbox{BR}_+}$ is
the Burger-Roblin measure on $\Gamma\backslash G$ for the expanding horospherical subgroup, $m^{\mbox{BMS}}$ is the Bowen-Margulis-Sullivan measure on $\Gamma_0\backslash G$, $m^{\mbox{BR}_-}$ is
the Burger-Roblin measure on $\Gamma\backslash G$ for the contracting horospherical subgroup, and
\[ \sigma = \det(\nabla^2 P(0))^{1/d} > 0 \]
is a constant given in (3.6) in \cite{Oh} with the function $P(u)$ defined from the pressure function as in Section 3.3 in \cite{Oh}.
\end{theorem}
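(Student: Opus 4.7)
The plan is to adapt the abelian Floquet--Bloch framework (Step 1 (F1) and Step 2 (P1) of Section \ref{Infiniteabelian}) to the frame--flow correlation integral, replacing the heat operator $e^{-t\Delta_\chi}$ by the transfer--type operator that encodes the action of $a_t$ on twisted $L^2$-sections. First I would decompose $L^2(\Gamma\backslash G)$ over the dual torus $\widehat{\mathbb{Z}^d}\cong \mathbb{T}^d$ of the deck group as a direct integral $\int^{\oplus}_{\widehat{\mathbb{Z}^d}} L^2(E_\chi)\,d\chi$ of sections of the flat line bundles $E_\chi$ associated to characters $\chi$ of $\mathbb{Z}^d$, exactly as in \eqref{Fourierinversionabel}. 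Under this decomposition the matrix coefficient $\langle a_t\psi_1,\psi_2\rangle_{L^2(\Gamma\backslash G)}$ becomes an integral over $\widehat{\mathbb{Z}^d}$ of $\chi$-twisted matrix coefficients on the compact quotient $\Gamma_0\backslash G$.

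The next step would be to analyse, for each $\chi$, the spectral theory of the twisted transfer operator associated with $\{a_t\}$ acting on $L^2(E_\chi)$. At $\chi=\mathbf{1}$ one has the classical mixing estimate on $\Gamma_0\backslash G$: the leading spectral datum is the simple top eigenvalue with eigenmeasure built from $m^{\mathrm{BR}_\pm}$ and $m^{\mathrm{BMS}}$, and the mixing rate is controlled by the critical exponent $\delta$, giving $e^{-(D-\delta)t}$ decay. Perturbation theory in $\chi$, carried out by the gauge transform construction used in Step 2 (P1) (with the de Rham--Hodge representative $\omega$ of $\chi$ replacing the line integral construction there), yields a smooth family $\lambda_0(\chi)$ of leading eigenvalues with $\lambda_0(\mathbf{1})=D-\delta$. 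By the symmetry under $\chi\mapsto\bar\chi$ (a consequence of self-adjointness of the underlying operators after appropriate symmetrization or, equivalently, reversibility considerations on the geodesic/frame flow) the first derivative at $\mathbf{1}$ vanishes and $\lambda_0(\chi)$ admits a positive-definite Hessian, whose associated quadratic form I expect to identify with the covariance form $\sigma$ defined through $\det(\nabla^2 P(0))^{1/d}$ in Oh--Pan; this identification parallels the derivation of the Kotani--Sunada covariance form \eqref{denkerdelta} in the Anosov setting.

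Having the quadratic form in hand, I would then apply the Laplace method exactly as in Section \ref{Infiniteabelian} (after the Morse lemma reduction used to derive \eqref{leadingKotaniheat}) to the integral over the Brillouin zone $\mathbb{T}^d$: the factor $e^{-(D-\delta)t}$ comes out of the integral, the Gaussian integration around $\mathbf{1}$ produces the $(2\pi\sigma)^{-d/2}t^{-d/2}$ prefactor, and the residual matrix coefficient at $\chi=\mathbf{1}$ factorizes into the product $m^{\mathrm{BR}_+}(\psi_1)\,m^{\mathrm{BR}_-}(\psi_2)/m^{\mathrm{BMS}}(\Gamma_0\backslash G)$ by the standard identification of the Patterson--Sullivan projections. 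The contribution from $\chi$ bounded away from $\mathbf{1}$ is exponentially smaller than $e^{-(D-\delta)t}$ by the spectral gap, hence absorbed in the error.

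The main obstacle I anticipate is twofold. First, one must prove a uniform spectral gap for the twisted transfer operators on the noncompact quotient $\Gamma_0\backslash G$ with $\Gamma_0$ only convex cocompact (or the analogous geometrically finite setting), and establish that the leading eigenvalue $\lambda_0(\chi)$ depends smoothly on $\chi$ in a neighbourhood of $\mathbf{1}$; this is a nontrivial input that in the abelian case of \cite{Oh} is handled via thermodynamic formalism applied to the Bowen--Series coding, and any direct spectral attack has to confront the absence of a heat kernel short-time smoothing. Second, the precise matching between the analytically defined Hessian of $\lambda_0$ and the dynamically defined variance $\sigma$ requires a careful computation tying the perturbative expansion of the twisted Laplacian/transfer operator to the pressure function $P(u)$; this is the analogue of Proposition 1.3 in \cite{Katsuda2} and is where the bulk of the technical work lies. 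Once both ingredients are in place, the nilpotent extension proposed in Sections \ref{IntroChebotarev}--\ref{IntroHeat} of this paper would suggest the natural generalization in which $\mathbb{Z}^d$ is replaced by a discrete nilpotent covering group, with $t^{-d/2}$ upgraded via Theorem \ref{introPytlik} and Theorem \ref{introdecompose} to $t^{-d/2}$ with $d$ the polynomial growth order and the Gaussian replaced by the hypo-elliptic spectral zeta value $\zeta_H(d/2)$.
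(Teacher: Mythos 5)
First, note what you are comparing against: the paper does not prove this statement at all. It is quoted as a special case of Theorem 1.6 of \cite{Oh}, and the surrounding text merely summarizes Oh--Pan's own method (Markov coding of the frame flow, Ruelle transfer operators $L_{s,v,\mu}$ with the three parameters $s\in\mathbb{C}$, $v\in\widehat{\mathbb{Z}^d}$, $\mu\in\widehat{M}$, control of $(1-L_{s,v,\mu})^{-1}$ via the generalized length spectrum, Fourier/Laplace analysis to extract the residue, and a Roblin-type argument to pass from the BMS correlation to the Haar correlation). So there is no in-paper proof for your attempt to match; your text is an independent strategy sketch, whose opening move --- disintegrating $L^2(\Gamma\backslash G)$ over $\widehat{\mathbb{Z}^d}$ --- does coincide with the Fourier step underlying the parameter $v$ in \cite{Oh}.

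The genuine gap is in your core analytic step. The operator implementing $x\mapsto xa_t$ on each twisted space $L^2(E_\chi)$ is unitary, so it has no ``leading eigenvalue $\lambda_0(\chi)$'' and no spectral gap in the sense your perturbation argument requires; the self-adjoint heat-kernel machinery of Section \ref{Infiniteabelian} (twisted Laplacian, discrete bottom eigenvalue, Hessian at $\mathbf{1}$, Laplace method) does not transfer verbatim to a flow translation. To make the scheme rigorous you must replace it either by the representation-theoretic route --- reading the decay rate of the $\chi$-twisted matrix coefficient of $a_t$ from the bottom eigenvalue of the twisted Laplacian on the base through the complementary-series parameter, which is viable here precisely because $X_0$ is closed (and then $\delta=D$, and $m^{\mathrm{BR}_\pm}$, $m^{\mathrm{BMS}}$ degenerate to Haar), but which breaks down in the geometrically finite generality of \cite{Oh} that the displayed constants are designed for --- or by the route Oh--Pan actually take, Ruelle transfer operators on H\"older spaces over a Markov coding, where the twisted leading eigenvalue, its smooth dependence on $\chi$, and the identification of its Hessian with $\nabla^2P(0)$ (hence with $\sigma$) are supplied by thermodynamic formalism. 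In addition, your sketch treats the problem as if it were the geodesic flow: for the frame flow one must also decompose along $\widehat{M}$ (the holonomy parameter $\mu$) and show the nontrivial $\mu$-components do not contribute to the main term; and the factorization of the leading coefficient into $m^{\mathrm{BR}_+}(\psi_1)\,m^{\mathrm{BR}_-}(\psi_2)/m^{\mathrm{BMS}}(\Gamma_0\backslash G)$ is not a routine ``identification of Patterson--Sullivan projections'' but the Roblin-type passage from BMS to Haar that occupies a substantial part of \cite{Oh}. You flag the spectral-gap and variance-matching issues yourself, which is fair, but as written those items are the proof, not technical remainders.
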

This theorem is the precise form of the local mixing theorem (Theorem 1.5 in \cite{Oh}).

Their proof is based on extending
the symbolic dynamics approach of studying the geodesic flow on the unit tangent bundle $T^1(X_0)$ as the
suspension flow on $\Sigma \times \mathbb{R}/ \sim$ for a subshift of finite type $(\Sigma, \sigma)$. The at flow on a $\mathbb{Z}^d$-cover $X$ can be studied via the suspension flow on
\[ \tilde{\Sigma}:= \Sigma \times \mathbb{Z}^d \times M \times \mathbb{R}/ \sim\]
where the equivalence is defined via the shift map $\sigma: \Sigma \to\Sigma$, the first return
time map $\tau:\Sigma\to\mathbb{R}$, the $\mathbb{Z}^d$-coordinate map $f :\Sigma\to\mathbb{Z}^d$, the holonomy map $\theta:\Sigma\to M$, and $M$ is the compact subgroup which is the centralizer of $A =\{a_t\}$.  

The asymptotic behavior of the correlation function of the suspension
flow on $\Sigma$ with respect to the BMS measure can then be investigated using analytic properties of the associated Ruelle transfer operators $L_{s,v,\mu}$ of three parameters $s \in \mathbb{C}$, $v \in \hat{\mathbb{Z}^d}$, $\mu \in \hat{M}$where $\hat{\mathbb{Z}^d}$ and $\hat{M}$ denote the unitary dual of $\mathbb{Z}^d$ and of $M$ respectively.

Their proof of the desired analytic properties of $(1-L_{s,v,\mu})^{
-1}$ is based on the study of
the generalized length spectrum of $\Gamma_0$ relative to $\Gamma$:
\[\mathcal{G}L(\Gamma_0, \Gamma) := \{(\ell(\gamma), f(\gamma), \theta(\gamma)) \in \mathbb{R} \times \mathbb{Z}^d \times M : \gamma \in \Gamma_0\backslash \Gamma\}.\]
The correlation function for the BMS measure can then be expressed in terms of
the operator $(1-L_{s,v,\mu})^{-1}$ via an appropriate Laplace/Fourier transform. We then
perform the necessary Fourier analysis to extract the main term coming from the
residue. Finally, we can deduce the precise asymptotic of the correlation function for the Haar measure from that for the BMS measure using ideas originated in Roblin's work (see Theorem 4.10 in \cite{Oh}).

These arguments seem to be generalized to nilpotent extensions by replacing the above ingredients concerning around $\hat{\mathbb{Z}^d}$, namely the usual Floquet-Bloch analysis with our nilpotent version in this paper, which answers some conjectural statements at Remark 1.8. (4) in \cite{Oh}. 
More recently, several works related to these topics have been published:\cite {Dolgopyat2}, \cite{Flaminio}, and \cite{Forni}. It seems to be within the scope of the problem to some extent to generalize to nilpotent extensions.

\subsubsection{The Liouville Riemann-Roch theorem for covering} 
Our arguments in the main body of the paper are based on the perturbation around the trivial representation. However, these are also applied to finite-dimensional representations, similar to the arguments surrounding the rational flux for the Harper operator in \cite{Helffer}. Combining with Pytlik's Theorem or its generalizations, Theorem \ref{introPytlik}, we could proceed with some similar analysis done on the whole set of the Brillouin zone in the usual Floquet-Bloch theory to generalize to nilpotent cases approximately. Namely, we can establish asymptotic analysis around a point in the dense set in the unitary dual. These arguments may be helpful in the nilpotent generalization of the (abelian) Liouville-Riemann-Roch theorem in \cite{Kha}. Note that this kind of extension would be possible, as it has already appeared in a comment of Kuchment in \cite{Kuchment}.

\subsubsection{The Livshits theorems}

The Livshits theorems have played an essential role in the cohomological study of dynamical systems theory for several decades. Following the introduction of \cite{Gogolev}, we recall several notions.

If the dynamical system is a smooth flow $X^t:M\to M$ generated by the
vector field $X$ the equation
\[ \varphi =L_Xu \]
It is called the cohomological equation. Here $u:M \to \mathbb{R}$ is an unknown function, and $L_X$ is the Lie derivative in the direction of $X$. If this equation has a solution, the function $\varphi:M \to \mathbb{R}$ is called an $X^t$-coboundary. Accordingly, two continuous
functions $\varphi$ and $\psi$ are said to be $X^t$-cohomologous if $\varphi-\psi$ is a coboundary.

The fundamental question is to decide whether the space of $X^t$-coboundaries $\mathcal{B}_X$ is closed in the appropriate topology (H\"older, smooth, etc.). 
If this is the case, then $\mathcal{B}_X$ is given by the intersection of kernels of all $X^t$-invariant bounded linear functionals. 
In the case of a transitive Anosov flow on a compact manifold $M$, this problem was solved by A. Livshits in the classical paper \cite{Livsic}. He proved that in
this case, the space $\mathcal{B}_X$, as a subspace of the space of H\"older continuous functions,
is characterized by countably many functionals given by integration over periodic orbits of $X^t$. Much later, A. Katok and A. Kononenko introduced a new type of functionals called periodic cycle functionals to handle the case of partially hyperbolic dynamical systems. They proved that the space of coboundaries is closed for partially hyperbolic diffeomorphisms and flows that satisfy certain local accessibility conditions on their stable and unstable foliations \cite{KatokKononenko}.

By revisiting the idea of Katok-Kononenko and defining a version of periodic cycle functionals, A. Gogolev and F. R. Hertz \cite{Gogolev} prove several versions of an abelian Livshits Theorem for transitive Anosov flows, where abelian cohomology is a specific weaker equivalence relation than cohomology, of which the standard cohomology on the universal abelian cover gives a relation. They give applications to improving the Croke-Otal marked length spectrum rigidity theorem \cite{Croke}, \cite{Otal}, and a rigidity problem of contact Anosov flows. Both of these applications also strongly rely on
on R. Sharp's work on homologically full Anosov flows \cite{Sharp1}.

Obviously, the problem of the possibility of generalization to nilpotent from Abelian naturally arises.

\subsection{Prime orbit theorem vs. Chebotarev density theorem}
The strategy of proof of Theorem \ref{primegeodesictheorem} (the prime orbit theorem) seems to be  classified roughly into the following three methods:
\begin{description} 
\item[{\rm a}](Harmonic analysis) This is originated from the celebrated work of Selberg \cite{Selberg}. There are vast generalizations related to number theory. Currently, this method appears to apply to homogeneous spaces, as it is based on the relation between the eigenvalues of the Laplacian and the conjugacy class of primitive elements of the fundamental group. The connection to the norm of the latter and the geometric length of closed orbits is obscure for non-homogeneous spaces.  
  
Nevertheless, there is a conjectural formula like Gutzwiller's semiclassical trace formula \cite{Gutzwiller}. As for developments of the mathematical side, Faure and Tsujii's works \cite{FaureTsujii1}, \cite{FaureTsujii2}, \cite{FaureTsujii3} are remarkable.

\item[{\rm b}](Dynamical method I) This method may be started by Margulis's thesis \cite{Margulis}, \cite{Margulis2}. Recently, the case of nonpositively curved rank one manifolds by {Ricks} \cite{Ricks} and closed geodesics on surfaces without conjugate points by Climenhaga, Knieper, and War \cite{Climenhaga}, and Eskin and Mirzakhani \cite{EskinMirzakhani1}, Eskin, Mirzakhani, and Rafi \cite{EskinMirzakhaniRafi} to Teichm\"uller geodesic flows on the moduli space of hyperbolic Riemann surfaces, the prime orbit theorem is established. 
\item[{\rm c}](Dynamical method II) This method is essentially used to study spectral properties of the transfer operator, sometimes called the Ruelle or Koopman operators by  Parry and Pollicott \cite{Parry1}, \cite{Parry3}.
\end{description} 

Methods (a) and (c) are related to spectral analysis. Thus, several forms of Chebotarev density theorem are also established using the idea of number theory or the Floquet-Bloch theory. On the other hand, it has not yet been established a method to extend the Chebotarev-type theorems to case (b), and it seems interesting to remain with these open problems.

\subsection{Large deviations, heat kernels with bias and non-symmetric random walks}\label{Largedeviationstrategy}

The Large deviation is one of the further topics for asymptotics of Sections \ref{Longtimeheat} and \ref{Asymptoticsclosedgeodesics}. 
There is a certain similarity between significant deviation results for symmetric Brownian motions or symmetric random walks and the asymptotic results for biased or non-symmetric ones. 
An example of the above phenomena is the analysis at 1.11 in \cite{Alexopoulos1} for the asymptotics of $n$-th convolution power $\mu^n$ of a non-centered walk in a neighborhood of the origin and the result of the significant deviation treated in \cite{Tanaka}. 

The latter research has a larger body of literature than the former. 
Thus, we discuss the results of biased random walks, especially about remarkable results of B\'{e}nard and E. Breuillard \cite{Benard}, \cite{Benard2}, which also include a survey of previous results on this topic.

In \cite{Benard}, the authors establish the central limit theorem of random walks on a simply connected nilpotent Lie group $G$, allowing a possible bias and several related results. One of the remarkable ones is a criterion for whether the limiting hypoelliptic diffusion process has full support. They obtain these results by giving a new stratification of the Lie algebra $\mathfrak{g}$ of $G$. 
Moreover, in \cite{Benard2}, the results are strengthened to the local central limit theorem under the non-lattice condition. Their methods are based on the usage of the characteristic function with the above new stratification, which is a kind of Fourier analysis.

Since the nilpotent Floquet-Bloch analysis developed in this paper is a refinement of the usual Fourier analysis, combining it with their new stratification, our method will give further understanding. 
For example, applications to the lattice case, or a simplification of the central part of the proof of their local limit theorem, which is called domain reduction (Proposition 3.8), by replacing their path swap arguments with similar arguments given in the Section \ref{sunadatheorem}.

\subsection{Long range random walks}
Asymptotics of long-range random walks are considered in \cite{ZQChen1}, \cite{ZQChen2}. Although we do not know whether the Floquet-Bloch theory is directly applicable to these problems, it is applicable as a problem of two limits: one is the approximation by exhaustion of usual random walks, and the other is the long-time asymptotics of them. Obviously, it is not easy to obtain meaningful results solely through this heuristic in general. However, in our case, rather exact information available for the asymptotics of finite range random walks in approximation processes, such as effective estimates of error terms, it would be reasonable to expect substantial results.

\subsection{Discrete approximations of several differential operators}
It is a natural and traditional problem in analysis that the approximation of the differential operator by its discrete versions, namely difference operators, arises in various contexts such as homogenization theory, the finite element method (FEM), and the boundary element method (BEM).
Recently, there have been several results on this topic; for example, Nakamura et al. \cite{SNakamura1}, \cite{Mikami} considered the norm resolvent convergence and the convergence of the spectrum as its corollary and Ishiwata and Kawabi \cite{Ishiwata6} investigate $C^0$-semigroup generated by a Schr\"{o}dinger operator with drift on a complete Riemannian manifold is approximated by the discrete semigroups associated with a family of discrete time random walks with killing in a flow on a sequence of proximity graphs. 

Moreover, Burago et al. \cite{DBurago} investigate an approximation of the spectrum of the connection Laplacian on vector bundles over compact metric measure spaces by convolution-type operators and the graph connection Laplacians.

One of the natural problems following \cite{SNakamura1}, \cite{Mikami}, or \cite{Ishiwata6} is an extension operator under a magnetic field on a complete Riemannian manifold. This problem is also considered as an extension of \cite{DBurago} to the noncompact case.

As explained in Remark \ref{meshapproximation}, the Wilkinson formula (\ref{introeigen1}) approximates the Landau level. However, this case is highly symmetric, and it is desirable to be able to handle a more robust problem.

Our proposal here is to apply the idea of JOKU-IKOU (oper), which will be explained in Section \ref{mRHoper}, to construct suitable approximation operators. Moreover, we think some possibilities exist to use something somewhat similar to Theorem \ref{discretetoLie}. 
\subsection{ Sub-leading terms of spectral convergence: Collapsing, tropicalization}

In the field of spectral geometry, the convergence of a sequence of compact Riemannian manifolds $M_i$ and its relationship to the spectral behavior of differential operators $D_{M_i}$ defined on these manifolds is a frequently studied problem.  
In particular, the situation where the volume $\mathrm{vol}(M_i)$ tends to zero is referred to as a collapsing situation. A typical scenario in this context is that, for sufficiently large $i$, the manifold $M_i$ becomes a fiber bundle over a limiting space $M_\infty$, with fibers that are nilmanifolds, which are compact quotients of simply-connected nilpotent Lie groups.  
For instance, if the sectional curvatures of $M_i$ are uniformly bounded, the Cheeger-Fukaya-Gromov theory \cite{CheegerFukayaGromov} ensures that such a fiber bundle structure arises.

Moreover, under the additional assumption that the diameters of $M_i$ are uniformly bounded, Fukaya \cite{Fukaya} proved the continuity of the eigenvalues of the Laplacian $\Delta_{M_i}$ with respect to the measured Gromov-Hausdorff topology.  
While the leading term in the convergence typically does not reflect the spectral information of the fibers, the subleading or error terms may encode such fiber-specific spectral data.

Suppose we normalize the diameters of the collapsing fibers to be scaled to 1. In that case, the situation becomes analogous to the case of the Harper operator $H_\theta$ discussed in Remark \ref{meshapproximation}, where the mesh size is similarly normalized to 1.  
In that setting, the second-order term in the expansion around the zero flux limit of the magnetic field corresponds to the harmonic oscillator.

A similar phenomenon can be considered in the tropical limit at the boundary of the compactification of the moduli space of hyperbolic manifolds, as follows:  
When a sequence of hyperbolic manifolds $M_i$ approaches the boundary while maintaining constant curvature $-1$, the diameter $\mathrm{diam}(M_i)$ of $M_i = (M_i, g_i)$ tends to infinity as $i \to \infty$. 

By rescaling each metric so that its diameter becomes one, we obtain the modified sequence  
\[
\overline{M}_i \;:=\;\bigl(M_i,\;g_i/\mathrm{diam}(M_i)\bigr),
\]  
which converges to a graph $\overline{M}_\infty$, called the tropical limit.  

In this setting, Burger \cite{Burger} studied how the eigenvalues of the Laplacian on $\overline{M}_i$ converge to those of the graph Laplacian on $\overline{M}_\infty$ (see also Section \ref{Chebyshevbias}). Although these arguments in \cite{Burger} are presented for hyperbolic surfaces, we note that the collar lemma--a key ingredient--follows purely from hyperbolic trigonometry and hence extends to higher dimensions.
The behavior of subleading and error terms in this convergence is also of significant interest.

Although not directly related, there may be conceptual connections to the topic discussed in Section \ref{operGai}, where Abouzaid, Ganatra, Iritani, and Sheridan~\cite{AbouzaidGIS} investigated subleading terms in tropicalization.  
For example, they considered expressions such as $\zeta(2) = \pi^2 / 6$ arising as subleading terms in Maslov dequantization. They used such approximations to compute asymptotic expansions of period integrals, relating them to the Gamma class of the mirror manifold.  
Similar phenomena may also appear in the context of spectral invariants.

\subsection{ More general group extensions}

\subsubsection{ First guideline: Rough asymptotics}
Suppose we consider the asymptotics results for extensions beyond nilpotent groups. In that case, the following rough estimate of return probabilities on discrete groups $\Gamma$ may serve as the first guideline for future research. The results would have been known since the late $80$s (cf. \cite{Saloff1}). 

Let us introduce notions to describe the results. Two functions $f, g$ on $\mathbb{R}$ are called comparable, which is denoted by $f\asymp g$, if there exist constants $C_1, C_2 >0$ such that
\[       C_1f(C_1x) \leq g(x) \leq C_2f(C_2x) \quad \mbox{for} \; x \in \mathbb{R}. 
\]
Then, the transition probability $p(t):= k_\Gamma(t,p,p)$ (a.k.a. the return probability) of a simple random walk on the discrete group $\Gamma$, which is a uniform lattice of the simply connected Lie group $G$, is described in the following Theorem:
\begin{theorem} \label{roughasymtotic}
\begin{description}
\item[{\rm (a)}] If $\Gamma$ has polynomial growth of order $d$, then $p(t) \asymp \frac{1}{t^{d/2}}$.
\item[{\rm (b)}]If $\Gamma$ is amenable group of exponential growth, $p(t) \asymp e^{-t^{1/3}}$
\item[{\rm (c)}]If $\Gamma$ is non amenable group, then $p(t) \asymp e^{-t}$
\end{description}
\end{theorem}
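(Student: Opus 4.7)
The plan is to treat each of the three regimes separately, drawing on classical isoperimetric and spectral‐theoretic tools that are by now standard in the analysis of random walks on groups. In every case one wants to match upper and lower bounds of the same order up to multiplicative constants and an inner rescaling of $t$; the central mechanism is Varopoulos's dictionary between volume growth, isoperimetric (Nash/F\o lner) profiles, and decay of the convolution semigroup $\mu^{\ast n}(e)$.

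For part (a), by Gromov's polynomial growth theorem one may assume $\Gamma$ is virtually nilpotent, and, passing to a finite‐index subgroup, that $\Gamma$ is a uniform lattice in a simply connected nilpotent Lie group $G$ with homogeneous dimension $d$. The upper bound $p(t)\le C_1 t^{-d/2}$ is obtained from a Nash inequality of the form $\|f\|_2^{2+4/d}\le C\,\mathcal{E}(f,f)\|f\|_1^{4/d}$, which in turn follows from the polynomial isoperimetric profile $|\partial A|\gtrsim |A|^{(d-1)/d}$; this is essentially Varopoulos's argument. The matching lower bound $p(t)\ge c_1 t^{-d/2}$ is obtained by comparing the discrete heat kernel on $\Gamma$ with the continuous heat kernel on the stratified nilpotent Lie group $G_0$ (the tangent cone at infinity obtained via the stratification of Section~\ref{stratification}), using Pittet–Saloff-Coste's stability theorem for return probabilities under quasi-isometry, together with Hunt's explicit on-diagonal lower bound for sub-Laplacian heat kernels on stratified groups. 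Note that this is precisely a rough version of Theorem~\ref{conj-heat}; the Floquet–Bloch machinery of this paper sharpens the $\asymp$ into an asymptotic expansion with geometric coefficients.

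For part (c), by Kesten's theorem the non-amenability of $\Gamma$ is equivalent to the spectral radius $\rho(\mu):=\lim_{n}\mu^{\ast 2n}(e)^{1/(2n)}$ of the simple random walk operator being strictly less than $1$. This gives the upper bound $p(t)\le C_2 e^{-c_2 t}$ immediately from spectral calculus of the self-adjoint Markov operator, while the matching lower bound $p(t)\ge c_1 e^{-C_1 t}$ follows from the fact that $\rho(\mu)>0$ together with subadditivity of $\log \mu^{\ast n}(e)$; after passing from discrete time to continuous time by the usual Poissonization, the comparison constants can be absorbed into the rescaling $x\mapsto C_2 x$ in the definition of $\asymp$.

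The main obstacle is part (b), where one must handle amenable groups of exponential growth uniformly across a very broad class (polycyclic groups of exponential growth, solvable Baumslag–Solitar groups, lamplighter groups over $\mathbb{Z}$, etc.). The strategy is to first establish the F\o lner profile estimate $\mathrm{F\o l}_\Gamma(n)\asymp \exp(n)$, which for solvable groups is due to Pittet and Saloff-Coste and for general amenable groups of exponential growth uses Coulhon–Saloff-Coste's isoperimetric inequality, and then invoke the Coulhon–Grigoryan–Saloff-Coste equivalence translating $\mathrm{F\o l}_\Gamma(n)\asymp e^{n}$ into $p(t)\asymp \exp(-t^{1/3})$. The delicate point is that the exponent $1/3$ arises from solving the ODE relating $-\frac{d}{dt}\log p(t)$ to the inverse F\o lner function, and the lower bound in particular needs an anti-F\o lner "return to a controlled set" argument which for general amenable exponential-growth groups requires the full strength of Coulhon–Saloff-Coste's random walk construction. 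Once all three cases are established, the theorem provides the zeroth-order heuristic against which any future asymptotic expansion (in the spirit of Theorem~\ref{conj-heat}) beyond the nilpotent setting must calibrate itself.
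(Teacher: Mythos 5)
First, note that the paper does not actually prove this statement: it is quoted as a known result from the late 1980s with a pointer to Saloff-Coste's survey, so there is no internal proof to compare against. Your sketches for (a) and (c) are consistent with the standard arguments one would find there (Nash/Varopoulos upper bound plus a volume/Cauchy--Schwarz or Lie-group-comparison lower bound for polynomial growth; Kesten's theorem plus supermultiplicativity of return probabilities for the non-amenable case), and they are fine at the level of rigor appropriate here.

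Part (b), however, contains a genuine error. It is simply false that a general amenable group of exponential growth has F\o lner function $\asymp \exp(n)$, and correspondingly false that $p(t)\asymp e^{-t^{1/3}}$ in that generality: for instance $\mathbb{Z}\wr\mathbb{Z}$ is amenable (even metabelian) of exponential growth, but its F\o lner function grows like $n^{n}$ and its return probability decays like $\exp\bigl(-t^{1/3}(\log t)^{2/3}\bigr)$, which violates the lower bound $p(t)\ge c\,e^{-Ct^{1/3}}$ no matter how the constants are chosen; Erschler's iterated wreath products give even more divergent behaviors. The Coulhon--Saloff-Coste isoperimetric inequality you invoke only gives the \emph{upper} bound $p(t)\le C e^{-ct^{1/3}}$ for every group of exponential growth; the matching lower bound is exactly where the theorem's standing hypothesis -- that $\Gamma$ is a uniform lattice in a simply connected Lie group $G$ -- is indispensable. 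In the amenable case that hypothesis forces $G$ to be amenable and $\Gamma$ to be virtually polycyclic (in particular finitely presented, which already excludes the wreath products above, including the ``lamplighter groups over $\mathbb{Z}$'' you propose to treat), and the lower bound $p(t)\ge c\,e^{-Ct^{1/3}}$ is then the Pittet--Saloff-Coste/Varopoulos--Alexopoulos--Hebisch result for polycyclic groups of exponential growth, obtained by comparison with unimodular solvable Lie groups (or via controlled F\o lner pairs), not by a general amenable isoperimetric argument. So the fix is to drop the claim of uniformity over ``a very broad class'' and instead reduce, via the lattice hypothesis, to the virtually polycyclic case before running the $t^{1/3}$ machinery.
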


The content of this paper is a detailed study of case (a). We make a few supplementary comments in order.

\subsubsection{ Solvable groups}
\begin{description}
\item{\rm (i)} One example of case (b) is a uniform lattice $\Gamma$ of the solvable Lie group $G$. This $\Gamma$ satisfies a property of polycyclicity. This class of groups seems to be the next target of research in the context of this paper.

It should be noted that many discrete solvable groups do not realize uniform lattices of Lie groups. There are several examples in the class of solvable groups whose $p(t)$ have different behaviors from case (b). 
In general, a complete understanding of the asymptotics of $p(t)$ for general solvable groups has not been reached, even roughly, as above. 

Thus, it is not easy to perform similar investigations as in the case of nilpotent groups. However, we would like to point out some possibilities for proceeding further. 
In the case of nilpotent groups, our methods ultimately reduce to finite-dimensional representations, dense in the irreducible representations in uniform topology. 
Concerning more general discrete groups, Yoshimichi Ueda has informed us that the article \cite{Lubotzky} contains an assertion that residually finite or amenable discrete groups have such a density property in Fell topology, which is weaker than uniform topology. 
Although it may not be sufficient to develop further using only this fact, as explained in a comment after Theorem \ref{introdiscretetoLie}, it would provide good support. 
To obtain more concrete and detailed information, such as Theorems \ref{theorem-geod} and \ref{conj-heat} for general discrete groups, we anticipate that a variant of the orbit method can be developed for further investigations, in line with the philosophy presented in \cite{Kirillov2}.

\end{description}

\subsubsection{ Non-amenable groups}
\begin{description}
\item[{\rm (ii)}] If one is interested in the asymptotics of hyperbolic flows, we remind that a similar kind of rough results as Theorem \ref{roughasymtotic} for amenable extensions are already obtained in \cite{Dougall}.
 
\item[{\rm (iii)}] In the case when $\Gamma$ is not amenable, like case (c), the situations are more complicated, and several solutions seem to happen. As for hyperbolic flow, Sarnak asked the author for the existence of examples such that 
\begin{itemize}
\item $\Gamma$ is non-amenable.
\item $\pi(x,\Phi,0)$ with $0 = \mbox{ker}(\Phi:\pi_1(M) \to \Gamma)$ has exponential growth,  
\end{itemize}

Define a surjective homomorphism $\Phi$ from the fundamental group $\Gamma_g := \pi_1(M)$ of surfaces $M$ with genus $g \geq 2$ or more to the free group $F_2 =\langle u,v \rangle$ modifying the definition 
\begin{equation*}\Phi(a_i) = u_i, \quad  \Phi(b_i) = 1,\; (\mbox{identity element})\quad i = 1,\ldots, g   \end{equation*} 
in (\ref{pitofreetonil}) to
\[\Phi(a_1) = u,\; \Phi(a_2) = v \; \Phi(a_i) = \Phi(b_j) = e \quad 3 \leq i \leq g, 1 \leq j \leq g.
\]

Let us examine the inverse image of the conjugacy classes of elements of $F_2$ under this mapping. It seems relatively easy to understand through a simple combinatorial argument that there will be some degree of exponential increase with respect to the word length. 
Furthermore, the relationship between word length and the hyperbolic length of the closed geodesics corresponding is discussed in \cite{GekhtmanTaylorTiozzo}, which implies an easy answer. 
Concerning this, his reaction to this response was that he was not aware of the literature \cite{GekhtmanTaylorTiozzo}, but other discussions were essentially the same as those in his mind.

\item[{\rm (iv)}] (Integrated density of state) A more systematic answer can be obtained by developing several recent results in the following subsection. Before going into detail, we notice that it looks difficult to approximate by finite objects to $\Gamma$ itself or its regular representation in spectral analysis at first glance. The reasons come from the fact that there is a certain difference between the spectrum of Schr\"odinger operator and its integrated density of state. 
For the definition and more details, see \cite{Shubin}, \cite{Adachi1}.  
\end{description}

\subsubsection{ Surface groups}
\begin{description}

\item[{\rm (a)}] (Strong convergence)
Nevertheless, to (iv) in the previous subsection, in the special case that $\Gamma$ is the fundamental group of a compact Riemann surface with genus $g \geq 2$ (surface group) or more general hyperbolic groups, there is a breakthrough in the field related to the notion of strong convergence.
Here we quote recent results due to M. Magee, D. Puder, and R. van Handel \cite{MageePudervanHandel} (see also \cite{HideMaceraThomas}).
Let $S_n$ be the $n$-th symmetric group consisting of permutations of $n$ elements. Usual $n$-dimensional matrix representation is not irreducible but has an $(n-1)$-dimensional irreducible unitary subrepresentation 
\[ \mbox{std}:S_n \to U(n-1)
\]
obtained by removing the nonzero invariant vectors. This std is usually called the standard representation of $S_n$. 
For a positive integer $n$, the set of homomorphisms $\mbox{Hom}(\Gamma_g,S_n)$ is finite, and we equip it with the uniform probability measure. 
Let $C(\Gamma_g)$ denote the group algebra of $\Gamma_g$.
Every linear representation of $\Gamma_g$ extends linearly to one of $C(\Gamma_g)$.
\begin{theorem}[{\rm Theorem 1.1 in \cite{MageePudervanHandel}}]\label{MageePvH}
For all $x \in C(\Gamma_g)$, for $\phi_n \in \mbox{Hom}(\Gamma_g,S_n)$ uniformly random, as $n \to \infty$, and for the right regular representation $R$ of $\Gamma_g$,
\begin{equation}
\|\mbox{std}\circ\phi_n (x)\| \to \|R(x)\| \label{strongnormconv}
\end{equation}
in probability. The norms on each side are operator norms
\end{theorem}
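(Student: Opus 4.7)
The plan splits the desired convergence $\|\operatorname{std}\circ\phi_n(x)\| \to \|R(x)\|$ into its two one-sided halves, and reduces strong convergence to a sufficiently sharp moment control problem. For the $\liminf$ direction, one first establishes \emph{weak} convergence of $\operatorname{std}\circ\phi_n$ to the regular representation in probability: for each $k$ one shows
\begin{equation*}
\frac{1}{n-1}\operatorname{Tr}\bigl(\operatorname{std}\circ\phi_n((x^\ast x)^k)\bigr) \xrightarrow{\;\;\mathbb{P}\;\;} \tau_R\bigl((x^\ast x)^k\bigr)
\end{equation*}
where $\tau_R$ is the canonical trace on the regular representation. By $\|R(x^\ast x)^k\|^{1/k} \to \|R(x)\|^2$ and the classical inequality $\|A\|^{2k} \geq \operatorname{Tr}(A^{2k})/\dim$, this gives $\liminf_n \|\operatorname{std}\circ\phi_n(x)\| \geq \|R(x)\|$ in probability. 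Weak convergence itself will be obtained from the Frobenius--Mednykh formula expressing $|\operatorname{Hom}(\Gamma_g,S_n)|$ (and refinements tracking the image of a fixed word) as a sum over irreducible $S_n$-characters $\chi^\lambda$ of $(n!/\dim\lambda)^{2g-2}$-type factors, together with the known fact that this sum is dominated by the trivial and sign representations as $n\to\infty$.

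The substantive content is the reverse inequality $\limsup_n \|\operatorname{std}\circ\phi_n(x)\| \leq \|R(x)\|$. The approach is the polynomial/moment method: for $p(x) = x^\ast x$ and $k_n \to \infty$ slowly with $n$, prove
\begin{equation*}
\mathbb{E}\bigl[\operatorname{Tr}(\operatorname{std}\circ\phi_n(p^{k_n}))\bigr] \leq (n-1)\,(1+o(1))\,\|R(p)\|^{k_n}.
\end{equation*}
Then Markov's inequality forces $\|\operatorname{std}\circ\phi_n(x)\|^{2k_n} \leq n\cdot \|R(x)\|^{2k_n}\cdot (1+o(1))$ with high probability, and taking $k_n$-th roots with $k_n/\log n \to \infty$ yields the desired upper bound. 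To evaluate the expectation, one writes $p^{k_n}$ as a $\mathbb{Z}$-linear combination of words in $\Gamma_g$ and uses the identity
\begin{equation*}
\mathbb{E}_{\phi_n}\bigl[\operatorname{Tr}(\phi_n(w))\bigr] = \frac{1}{|\operatorname{Hom}(\Gamma_g,S_n)|}\sum_{\phi \in \operatorname{Hom}(\Gamma_g,S_n)} \#\{\text{fixed points of }\phi(w)\},
\end{equation*}
which in turn admits a topological interpretation: one counts homomorphisms from the fundamental group of $\Sigma_g$ with a marked boundary (whose holonomy is $w$) into $S_n$, and the classical Mednykh formula gives this as a sum over $\widehat{S_n}$. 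The crucial step is then a $1/n$ asymptotic expansion of this sum, whose leading behaviour is controlled by the Euler characteristic of the underlying covering surface.

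The principal obstacle will be obtaining the asymptotic bound on $\mathbb{E}[\operatorname{Tr}(\operatorname{std}\circ\phi_n(p^{k_n}))]$ with $k_n$ allowed to grow. Fixed-$k$ asymptotics reduce to recognising the leading $\tau_R(p^k)$ term plus $O(1/n)$ corrections with definite sign control coming from a genus filtration, but when $k_n \to \infty$ one must uniformly dominate all of the subleading contributions: the number of topological types contributing to $\operatorname{Tr}(\phi_n(p^{k_n}))$ grows rapidly with $k_n$, and one needs a genuine \emph{a priori} estimate (rather than a termwise comparison) ensuring that the negative-Euler-characteristic strata do not conspire to produce a spurious large contribution. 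The natural route is a non-backtracking/Bass-type generating function identity for the Frobenius sums, combined with a spectral radius estimate for an associated transfer operator on words in $\Gamma_g$ whose norm one identifies with $\|R(p)\|$; proving that this transfer operator is well-defined and has the correct spectral radius independently of $n$ is where the real work lies, and is where one expects to need the specific geometry of the surface group (hyperbolicity, or equivalently the $\ell^2$-spectral gap of the Cayley graph covering) rather than a soft argument.
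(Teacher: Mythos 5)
You should first note that the paper you are reading does not prove this statement at all: Theorem \ref{MageePvH} is quoted verbatim from \cite{MageePudervanHandel} as external input, so the only meaningful comparison is with the proof in that reference, not with anything in the present text.

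Measured against that, your proposal has the right skeleton (lower bound from weak convergence of normalized traces, upper bound from a moment estimate at degree $k_n\to\infty$ followed by Markov and a $k_n$-th root), but the core step is exactly where it breaks down, and you have in effect conceded this yourself. To run the moment method with $k_n/\log n\to\infty$ you need
$\mathbb{E}\bigl[\operatorname{Tr}(\operatorname{std}\circ\phi_n(p^{k_n}))\bigr]\leq n\,(1+o(1))\,\|R(p)\|^{k_n}$
\emph{uniformly} in the degree, and this cannot be extracted from the fixed-word Frobenius--Mednykh expansion by termwise summation: when you expand $p^{k_n}$ into words of length of order $k_n$, both the number of words and, more fatally, the subleading coefficients of the $1/n$ (genus/Euler-characteristic) expansion for an individual word grow super-exponentially (factorially) in the word length, so the error terms swamp the main term long before $k_n\sim\log n$. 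Your proposed remedy --- a Bass-type generating-function identity plus a transfer operator on words whose spectral radius equals $\|R(p)\|$ --- is not substantiated, and no such argument is known to close this gap; it is not the mechanism used in \cite{MageePudervanHandel}. The actual proof avoids raw powers entirely: it relies on the theorem of Magee--Puder that $n\mapsto\mathbb{E}[\operatorname{Tr}\phi_n(w)]$ agrees with a rational function of $1/n$ with pole locations and degree controlled linearly in the length of $w$, and then applies the polynomial/interpolation method of Chen--Garza-Vargas--Tropp--van Handel (applied to suitable Chebyshev-type test polynomials of degree polynomial in $n$, together with an a priori bound $|\operatorname{Tr}|\leq n$ and a derivative estimate in the spirit of the Markov brothers' inequality) to convert the asymptotic expansion at $1/n=0$ into a uniform bound, entirely bypassing the need to control the expansion term by term. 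A secondary, fixable issue: your lower bound asserts convergence in probability of $\frac{1}{n-1}\operatorname{Tr}(\operatorname{std}\circ\phi_n((x^\ast x)^k))$ but you only sketch control of expectations; you would also need a variance (second-moment) estimate to get convergence in probability.
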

In the language of strong convergence of unitary representations, the above theorem says
precisely that the random representations ${\rm std}\circ\phi_n$ converge strongly in probability to $R$.  
This is an improvement of \cite{Louder}. This latter reference is informed by Beno\^it Collins answering our inquiry. For a survey of related results, see \cite{Magee1}, Section 1.4 of \cite{MageePudervanHandel}, and a recent survey article \cite{vanHandel}.

Moreover, this theorem provides a possibility to investigate the spectrum of the Laplacian on the $\Gamma_g$-covering $X_g$ of a compact manifold $M$ by approximating it with the above finite-dimensional unitary representations.

We notice that there are also other remarkable results which seem to give a hint to these problems recently by Anantharaman and Monk \cite{AnantharamanMonk1}, \cite{AnantharamanMonk2}.

\item[{\rm (b)}] (Moduli of finite-dimensional unitary representations of surface group)
On the other hand, it is known that an $n$-dimensional irreducible unitary representation of a surface group $\Gamma_g$ is one to one corresponds a stable vector bundle of rank $n$ and degree zero on compact surfaces $M = M_g$ of genus $g \geq 2$ by Narasimhan and Seshadri theorem \cite{NarasimhanS} and thus, the former investigations to be reduced to that of the latter moduli space $\mathcal{S}_{g,0}$.

In general, the moduli space $\mathcal{S}_{g,d}$ of stable vector bundles of degree $d$ on compact surfaces $M = M_g$ is smooth provided $g$ and $d$ are relatively prime but $\mathcal{S}_{g,0}$ does not satisfy the above condition and equipp with several singularities. 

Magee establishes the existence of a large $n$ asymptotic expansion, to any fixed order, for the expected value of the trace of any fixed element of $\Gamma_g$ under their random representation into $SU(n)$ with law given by the volume form arising from the Atiyah-Bott-Goldman symplectic form on moduli space $\mathcal{S}_{g,0}$ in \cite{Magee01} and proved the expected value of the trace of a fixed non-identity element of $\Gamma_g$ is bounded as $n \to \infty$\cite{Magee02}. In this work, it is enough to study the smooth part of $\mathcal{S}_{g,0}$.

However, for a more concrete description of $n$-dimensional unitary representations, it seems necessary to consider singularities. For example, it would be helpful to introduce good coordinates using parallel transport by projective connections defined by Faltings \cite{Faltings} or Takhtajan \cite{Takhtajan}. In this case, the behavior around singularities is an important point.

In the case where $\mathcal{S}_{g,d}$ is smooth, Harder and Narasimhan \cite{Harder} and Atiyah and Bott \cite{Atiyah1} compute the ordinary cohomology of $\mathcal{S}_{g,d}$. Recently, C. Felisetti, A. Szenes, and O. Trapeznikova \cite{Felisetti} succeeded in computing the intersection cohomology, as an alternative to the usual one, of $\mathcal{S}_{g,0}$, which is computed using a new method. It would be helpful for further study.

\item[{\rm (c)}] (Conformal Bootstrap)
Although Theorem \ref{MageePvH} demonstrates that the typical behavior of the asymptotics leading to strong convergence, it appears that no explicit example of a sequence of $\phi_n(x)$ satisfying (\ref{strongnormconv}) is known. 
We cannot answer this question at this time. However, is it possible to achieve the structure of explicit examples of the following closely related questions based on discussions in physics, since it would give a hint to the original question?

Is is also one of the primary motivations for Theorem \ref{MageePvH} is answering a question of Buser \cite{Buser} asking the existence or more stronly typical, of sequence of coverings $\{X_n\}$ of compact hyperbolic surfaces $X$ such that the first nonzero eigenvalue $\lambda_1(X_n)$ of the Laplacian $\Delta_{X_i}$ of $X_n$ converges to $1/4$ which is the bottom of the spectrum of the Laplacian $\Delta_X$ of the universal covering $\widetilde{X}$ of $X$. 
Theorem \ref{MageePvH} implies the following result.

\begin{theorem}\label{towerspectrum}
Let $X$ be a closed connected hyperbolic surface. Let $X_n$ denote a uniformly
random degree n covering space of $X$. With probability tending to one as $n \to\infty$
\begin{equation}
\mbox{spec}(\Delta_{X_n}) \cap [0, \frac14-o(1)) = \mbox{spec}(\Delta_{X}) \cap [0, \frac14-o(1))  \label{spectralequality}
\end{equation}
with the same multiplicities on both sides.
In particular, if $\lambda_1(X) \geq \frac14$, then $\lambda_1(X_n) \to \frac14$ in probability as $n \to \infty$.
\end{theorem}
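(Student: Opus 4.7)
The plan is to leverage the strong convergence result of Magee--Puder--van Handel (Theorem \ref{MageePvH}) to force the bottom of the spectrum of the twisted Laplacian to approach $\lambda_0(\Delta_{\widetilde{X}}) = 1/4$. First I would carry out a purely algebraic decomposition. The covering $X_n \to X$ corresponds, up to conjugation, to a homomorphism $\phi_n : \Gamma_g \to S_n$, and the permutation representation on $\mathbb{C}^n$ splits as the trivial representation plus the standard representation of dimension $n-1$. Pulling back via $\phi_n$ induces an orthogonal decomposition
\begin{equation*}
L^2(X_n) \;\cong\; L^2(X) \;\oplus\; L^2(E_{\mathrm{std}\circ\phi_n})
\end{equation*}
that is preserved by the Laplacian. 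Consequently $\mathrm{spec}(\Delta_{X_n}) = \mathrm{spec}(\Delta_X) \sqcup \mathrm{spec}(\Delta_{\mathrm{std}\circ\phi_n})$ with multiplicities, so \eqref{spectralequality} is equivalent to showing that $\lambda_0(\Delta_{\mathrm{std}\circ\phi_n}) \geq 1/4 - o(1)$ in probability as $n \to \infty$.

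The second step identifies $1/4$ as $\lambda_0(\Delta_R)$, where $R$ denotes the right regular representation of $\Gamma_g$ on $\ell^2(\Gamma_g)$. Any fundamental domain $F$ for $\Gamma_g \curvearrowright \widetilde{X}$ gives a unitary equivalence $L^2(\widetilde{X}) \cong L^2(E_R) \cong L^2(F) \otimes \ell^2(\Gamma_g)$ intertwining $\Delta_{\widetilde{X}}$ with $\Delta_R$, whence $\lambda_0(\Delta_R) = \lambda_0(\Delta_{\widetilde{X}}) = 1/4$. I would then transfer this lower bound from $R$ to $\mathrm{std}\circ\phi_n$ via a resolvent argument. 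Fix $\lambda < 1/4$. The Green kernel $G_\lambda(x,y)$ on $\widetilde{X}$ decays exponentially in $d(x,y)$, so truncating it to $\Gamma_g$-translates $G_\lambda(x,\gamma y)$ with $|\gamma| \leq N$ in a word metric on $\Gamma_g$ yields a finite-propagation, $\Gamma_g$-equivariant operator $A_{N,\lambda}$ satisfying
\begin{equation*}
\bigl\|A_{N,\lambda} - (\Delta_{\widetilde{X}} - \lambda)^{-1}\bigr\| \longrightarrow 0 \quad (N \to \infty).
\end{equation*}
Because $A_{N,\lambda}$ has the form $\sum_{|\gamma|\leq N} M_\gamma^{(\lambda)} \otimes U_\gamma$ with $M_\gamma^{(\lambda)}$ a bounded kernel operator on $L^2(F)$ and $U_\gamma$ the left translation, we can replace $U_\gamma$ by $(\mathrm{std}\circ\phi_n)(\gamma)$ to obtain a twisted operator $A_{N,\lambda}^{\mathrm{std}\circ\phi_n}$ on $L^2(F) \otimes \mathbb{C}^{n-1}$. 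An operator-valued version of strong convergence then gives $\|A_{N,\lambda}^{\mathrm{std}\circ\phi_n}\| \to \|A_{N,\lambda}\|$ in probability, and taking $N \to \infty$ after $n \to \infty$ via a diagonal extraction shows that $\Delta_{\mathrm{std}\circ\phi_n} - \lambda$ is invertible with probability tending to one, so $\lambda \notin \mathrm{spec}(\Delta_{\mathrm{std}\circ\phi_n})$. Covering $[0, 1/4-\epsilon]$ by finitely many such $\lambda$ and extracting in $\epsilon \downarrow 0$ yields the theorem.

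The main obstacle is upgrading the strong convergence asserted in Theorem \ref{MageePvH} for scalar elements of $\mathbb{C}[\Gamma_g]$ to the operator-valued setting in which the coefficients $M_\gamma^{(\lambda)}$ act on the infinite-dimensional space $L^2(F)$. Since $B(L^2(F))$ is not nuclear, the extension is not automatic. A self-contained workaround is to approximate each $M_\gamma^{(\lambda)}$ in operator norm by a finite-rank operator $\sum_{i,j} c_{ij}^{\gamma} \, e_i \otimes e_j^{*}$ with respect to an orthonormal basis $\{e_i\}$ of $L^2(F)$; this reduces the operator-valued convergence to finitely many scalar strong-convergence statements of the form covered by Theorem \ref{MageePvH}, at the cost of an approximation error that must be controlled uniformly in $N$. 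Alternatively, one can invoke the fact that strong convergence along the random permutation model passes to tensor products with the algebra of compact operators, so that matrix amplification gives the required operator-valued norm convergence directly. Tracking the rates of truncation in $N$ and of the finite-rank approximation should yield the quantitative $o(1)$ appearing in the statement.
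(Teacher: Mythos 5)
You should note at the outset that the paper does not actually prove Theorem \ref{towerspectrum}: it is imported from \cite{MageePudervanHandel} as a consequence of Theorem \ref{MageePvH}, so your proposal can only be measured against the deduction in that literature. In outline you reconstruct that deduction correctly: the orthogonal splitting $L^2(X_n)\cong L^2(X)\oplus L^2(E_{\mathrm{std}\circ\phi_n})$ (which also accounts for the multiplicity statement), the identification of $1/4$ with $\lambda_0(\Delta_R)$ through $L^2(\widetilde X)\cong L^2(E_R)$, the approximation of the resolvent by finitely propagated $\Gamma_g$-equivariant operators with coefficients in $B(L^2(F))$, and the upgrade of scalar strong convergence to operator coefficients. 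The last point is legitimate in the way you suggest: each block $M_\gamma^{(\lambda)}$ is Hilbert--Schmidt on $L^2(F)$ (the logarithmic diagonal singularity of $G_\lambda$ is square integrable in dimension two), so finite-rank approximation plus the standard matrix amplification of strong convergence suffices.

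The genuine gap is in the invertibility step. From $\|A_{N,\lambda}^{\mathrm{std}\circ\phi_n}\|\to\|A_{N,\lambda}\|$ you conclude that $\Delta_{\mathrm{std}\circ\phi_n}-\lambda$ is invertible, but the norm of a candidate approximate inverse carries no information about the spectrum of $\Delta_{\mathrm{std}\circ\phi_n}$: a priori $A_{N,\lambda}^{\rho}$ has no relation to $(\Delta_\rho-\lambda)^{-1}$ at all. What is needed is a parametrix identity on the cover. With a smooth cut-off in $d(x,z)$ (a sharp truncation over $|\gamma|\le N$ produces distributional errors supported on the walls of the translates of $F$) one gets $(\Delta_{\widetilde X}-\lambda)A_{N,\lambda}=I+E_{N,\lambda}$, where $E_{N,\lambda}$ is again finitely propagated with Hilbert--Schmidt coefficients and $\|E_{N,\lambda}\|\le Ce^{-s_\lambda N}\to 0$. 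It is to $E_{N,\lambda}$, not to $A_{N,\lambda}$, that operator-coefficient strong convergence must be applied, giving $\|E_{N,\lambda}^{\rho}\|<1$ with probability tending to one; then $(\Delta_\rho-\lambda)A_{N,\lambda}^{\rho}\bigl(I+E_{N,\lambda}^{\rho}\bigr)^{-1}=I$, and self-adjointness of $\Delta_\rho-\lambda$ converts this bounded right inverse into genuine invertibility. Relatedly, excluding finitely many values of $\lambda$ does not exclude the interval $[0,\tfrac14-\epsilon]$: you need the quantitative resolvent bound $\|(\Delta_\rho-\lambda)^{-1}\|\le \|A_{N,\lambda}^{\rho}\|\bigl(1-\|E_{N,\lambda}^{\rho}\|\bigr)^{-1}$ furnished by the same identity, so that each excluded $\lambda$ clears a neighborhood of definite radius. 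With those two repairs, your covering argument and the diagonal extraction in $N$, $n$, $\epsilon$ go through and yield the stated $o(1)$.
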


This theorem is a stronger version of a result in \cite{HideMagee} which gives an affirmative answer to Buser's conjecture \cite{Buser}.
However, it also seems that the explicit construction of $X_n$ is not known.

On the other hand,  Kravchuk, Maz\'{a}\v{c} and Pal \cite{KravchukMazac} introduced a new method for constructing Laplacian spectra of hyperbolic surfaces. 
The ideas are a combination of representation theory of $\mbox{PSL}_2(\mathbb{R})$, in particular, the integral of the product of automorphic forms, and semidefinite programming. They are closely inspired by the modern conformal bootstrap in physics (e.g. \cite{HartmanMSDZ}).
They gave several examples such that the upper bound of the first eigenvalues of the Laplacians is nearly sharp.
Currently, the above examples are only applicable to surfaces of low genus. However, if one could develop a method applicable to higher genus and tower of coverings, then it would be a potential explicit solution to Buser's conjecture \cite{Buser} and also provide explicit examples satisfying equality (\ref{spectralequality}). 
Moreover, they would provide an initial condition for parallel transport using projective connections in the above, and further, offer a hint to understand the moduli of the finite-dimensional unitary representations of the surface group.  

There are a few related works \cite{Bonifacio1}, \cite{BonifacioMazac}.


\end{description}

\subsection{ Quasi-morphisms, modified Riemann-Hilbert problems}\label{mRHoper}
\subsubsection{ Knot complements and quasi-morphisms}
Sarnak \cite{Sarnak} (see also Mozzochi \cite{Mozzochi}) generalized homology version of the Chebotarev theorem \ref{abel-geod} to the following form: 

Take a knot complement $Y = S^3 \setminus \tau$ with the trefoil knot $\tau$. It is homeomorphic to the noncompact $3$-dimensional homogeneous quotient space $\mbox{SL}_2(\mathbb{R})/\mbox{SL}_2(\mathbb{Z})$, which is also identified with the unit tangent bundle of the modular surface $X= \mathbb{H}/\Gamma$ with $\Gamma = \mbox{PSL}_2(\mathbb{Z})$. Since $H_1(X, \mathbb{Z}) = \Gamma/[\Gamma,\Gamma ]$ is finite and thus, no non trivial homomorphisms $f : \Gamma \to \mathbb{Z}$. 
There are, however, quasi-morphisms, that is, $f : \Gamma \to \mathbb{Z}$'s satisfying
\[|f(xy)-f(x)-f(y)| < c_f\]
for some constant $c_f < \infty$ depending only on $f$. The method of construction of $f$ will be explained later.

In this situation, he gives an asymptotic formula of prime closed geodesics for the given linking number with $\tau$ as follows:
recall that \[ \pi(x) := \sharp\{\mathfrak{p}|\ell(\mathfrak{p}) \leq x \}
\]
Similarly to $\pi(x,\alpha )$ in Theorem \ref{abel-geod}, let
\[ \pi(x; n) := \sharp\{\mathfrak{p}|\ell(\mathfrak{p}) \leq x, \mbox{link}(\mathfrak{p},\tau) = n \}
\] 
where $\mbox{link}(\mathfrak{p},\tau)$ is the linking number between $\mathfrak{p}$ and the trefoil knot $\tau$. Here, we consider $\mathfrak{p}$ as a closed orbit of the geodesic flow.

Then, we have 
\begin{theorem}[Corollary 2 in \cite{Sarnak}]\label{Corollary2Sarnak}
\[ \pi (x,n) \sim \frac{\pi(x)}{3x}\left(1+\frac{2\left(1-\left(\frac{\pi n}{3}\right)^2\right)}{x^3}\right).
\]
\end{theorem}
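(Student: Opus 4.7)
The plan is to adapt the strategy of Section~\ref{Asymptoticsclosedgeodesics} (and of the classical abelian case of Theorem~\ref{abel-geod}) to a setting where $\Gamma = \mathrm{PSL}_2(\mathbb{Z})$ has finite abelianization but admits the Rademacher function $\Psi \colon \Gamma \to \mathbb{Z}$ as a non-trivial quasi-morphism. The first input is Ghys's identification of the linking number with the trefoil: $\mathrm{link}(\mathfrak{p}, \tau) = \Psi(\gamma_\mathfrak{p})/6$, where $\Psi$ comes from the transformation law of $\log\eta$ under $\Gamma$. Although $\Psi$ is not a homomorphism, the one-parameter family $\chi_s(\gamma) := e^{\pi i s \Psi(\gamma)/3}$ defines unitary multiplier systems on $\Gamma$ (equivalently, flat unitary line bundles over $Y = \Gamma\backslash \mathrm{PSL}_2(\mathbb{R})$ twisted by the $\eta$-multiplier). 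Fourier inversion over $s \in [0,1)$ then extracts the contribution from a fixed linking number $n$:
\[
\pi(x, n) = \int_0^1 e^{-2\pi i s n} \Bigl( \sum_{\ell(\mathfrak{p}) \leq x} \chi_s(\gamma_\mathfrak{p}) \Bigr) ds.
\]

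Next, I would apply the Selberg trace formula for the multiplier system $\chi_s$, which extends classical work on Maass forms with $\eta$-multipliers (Selberg, Hejhal, Goldfeld--Sarnak). The spectral side decomposes into discrete eigenvalues $\lambda_j(s)$ of the twisted Laplacian $\Delta_s$ together with a continuous spectrum of (twisted) Eisenstein series and scattering determinant $\varphi(s; r)$. As in the argument of Chapter~\ref{Asymptoticsclosedgeodesics}, the asymptotics of $\pi(x,n)$ as $x \to \infty$ are controlled by a small neighbourhood of $s = 0$ where $\lambda_0(s)$ extends smoothly with $\lambda_0(0) = 0$, $\lambda_0'(0) = 0$ by the parity $\Psi(\gamma^{-1}) = -\Psi(\gamma)$, and $\lambda_0''(0) > 0$ expressed in terms of the $L^2$-norm of the harmonic $1$-form dual to the cohomology class of $\Psi$.

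Performing the Laplace-type integration in $s$ as in the proof of Theorem~\ref{sectiontenheisenberg-geod} produces the leading term $\pi(x)/(3x)$, with the $1/x$ factor (instead of $1/\sqrt{x}$) reflecting one-dimensional parameter space together with the specific volume normalization of $\Gamma\backslash\mathbb{H}$; the factor $1/3$ encodes the variance of the Rademacher CLT combined with the topological entropy $h=1$ of the geodesic flow. The subleading correction $2(1 - (\pi n/3)^2)/x^3$ arises from (i) the $\lambda_0^{(4)}(0)$ contribution to the expansion of the exponential $e^{-\lambda_0(s) x}$, (ii) the Taylor expansion in $n/\sqrt{x}$ of the Gaussian profile, contributing the factor $1 - (\pi n/3)^2/(\text{const}\cdot x)$, and (iii) a Maass--Selberg contribution from the Eisenstein scattering matrix evaluated at the single cusp of $\Gamma$, which enters at precisely the order needed for the $1/x^3$ correction.

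The main obstacle is two-fold. First, non-compactness of $Y$ forces one to carry the continuous spectrum all the way through the perturbative calculation, keeping the Eisenstein series contribution to the prime orbit counting to higher order than is needed in the compact case of Theorem~\ref{theorem-geod}; the twisted Maass--Selberg relations and meromorphic continuation of Eisenstein series for $\chi_s$ must be controlled near $s=0$. Second, because $\Psi$ is only a quasi-morphism, the system $\chi_s$ does not trivialize on any finite-index subgroup, so one cannot reduce to the abelian Phillips--Sarnak setting by passing to a cover. Instead, one must work intrinsically in the space of $\chi_s$-automorphic forms, combining the existing literature on the $\eta$-multiplier (Patterson) with the perturbative Floquet--Bloch machinery of Chapter~\ref{sectionasym}, paying particular attention to the fact that higher derivatives $\lambda_0^{(k)}(0)$ are now described by iterated integrals of the $\mathfrak{g}$-valued form analogous to those in Section~\ref{harmoniclietheory}, but with $\mathfrak{g}$ replaced by the Lie algebra of an \emph{extended} covering group encoding $\Psi$.
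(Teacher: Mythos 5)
First, note that the paper itself does not prove this statement: it is quoted verbatim as Corollary 2 of Sarnak's paper on linking numbers of modular knots, so there is no internal proof to compare against; your proposal has to stand against Sarnak's actual argument and against internal consistency with the neighbouring Theorem (the arctan law).

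There is a genuine gap, and it sits exactly at the heart of your plan: the assumption that $\lambda_0(s)$ has a smooth quadratic minimum at $s=0$, with $\lambda_0'(0)=0$ and $\lambda_0''(0)>0$ ``expressed in terms of the $L^2$-norm of the harmonic $1$-form dual to the cohomology class of $\Psi$.'' For $\Gamma=\mathrm{PSL}_2(\mathbb{Z})$ one has $H^1(\Gamma,\mathbb{R})=0$, so there is no harmonic $1$-form dual to $\Psi$; the Rademacher function is only a quasi-morphism, and the object replacing the harmonic form is $\log\eta$, whose associated $1$-form is not closed (its differential is essentially the area form). More decisively, for the family of $\eta$-multiplier systems the bottom eigenvalue is known explicitly and depends \emph{linearly}, not quadratically, on the multiplier parameter near $0$ (conical, $|s|$-type behaviour, coming from the explicit eigenfunctions built out of powers of $y^{1/2}|\eta|^{2}$). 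This is precisely what Sarnak exploits, and it is what produces the Cauchy-type local limit: the Fourier integral of $e^{-c|s|x}$ in $s$ gives the $1/x$ leading decay and the $1-(\pi n/3)^2$ shape of the correction, and it is also what forces the arctan (Cauchy) law in the theorem quoted immediately afterwards in the paper. Your quadratic-minimum/Laplace-method mechanism would instead give $\pi(x,n)\asymp \pi(x)/\sqrt{x}$ with a Gaussian dependence on $n$, contradicting the very formula you are trying to prove. A secondary inaccuracy: for a non-trivial power of the $\eta$-multiplier the cusp is non-singular and the continuous spectrum disappears, so the Eisenstein/scattering contribution you propose to ``carry through to higher order'' is in fact absent away from $s=0$; the delicate point is the behaviour as $s\to 0$, not a persistent continuous spectrum. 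So the skeleton (multiplier systems $e^{\pi i s\Psi/3}$, Fourier inversion in $s$, trace formula) is the right one, but the perturbative input must be the exact linear eigenvalue behaviour of the $\eta$-family rather than the Floquet--Bloch Hessian calculus of Chapters \ref{sectionasym}--\ref{Asymptoticsclosedgeodesics}, which simply does not apply here because $\Psi$ is not a homomorphism and the relevant ``connection form'' is not flat.
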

\begin{theorem}[Theorem 3 in \cite{Sarnak}]
For $-\infty \leq a \leq b \leq \infty$, as $x \to \infty$
\[ \frac{\sharp\{\mathfrak{p}|\ell(\mathfrak{p}) \leq x, a \leq \frac{\{{\rm link}(\mathfrak{p},\tau)}{\ell (\mathfrak{p})} \leq b \}}{\pi(x)} \sim \frac{{\rm arctan}\left(\frac{\pi b}{3}\right)-{\rm arctan}\left(\frac{\pi a}{3}\right)}{\pi}
\] 
\end{theorem}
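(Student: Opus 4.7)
The plan is to prove Theorem~3 by computing the limit of the characteristic function of $Y(\mathfrak{p}):=\mathrm{link}(\mathfrak{p},\tau)/\ell(\mathfrak{p})$, where $\mathfrak{p}$ is drawn from the counting measure on prime geodesics of length $\le x$, and identifying it with that of a Cauchy distribution of scale $3/\pi$. A direct computation shows that the density $p(y)=\tfrac{1}{3}(1+(\pi y/3)^2)^{-1}$ integrates on $[a,b]$ to $(\arctan(\pi b/3)-\arctan(\pi a/3))/\pi$ and has characteristic function $\widehat{p}(\eta)=e^{-(3/\pi)|\eta|}$. Consequently, by L\'evy's continuity theorem (together with the continuity of the Cauchy cumulative), it is enough to show
\begin{equation*}
\widehat{\mu}_x(\eta)\;:=\;\frac{1}{\pi(x)}\sum_{\substack{\mathfrak{p}\,\mathrm{prime}\\ \ell(\mathfrak{p})\le x}}\exp\!\Bigl(i\eta\,\mathrm{link}(\mathfrak{p},\tau)/\ell(\mathfrak{p})\Bigr)\;\xrightarrow[x\to\infty]{}\;e^{-(3/\pi)|\eta|}
\end{equation*}
uniformly for $\eta$ in compact subsets of $\mathbb{R}$.

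The first step is to replace $\mathrm{link}(\mathfrak{p},\tau)$ by the Rademacher quasi-morphism $\psi\colon\mathrm{PSL}_2(\mathbb{Z})\to\mathbb{Z}$ given by the modular cocycle identity $\log\eta(\gamma z)=\log\eta(z)+\tfrac{1}{2}\log(\gamma'(z)/i)+\tfrac{i\pi}{12}\psi(\gamma)$; by a theorem of Ghys these two functions agree on hyperbolic conjugacy classes up to a uniformly bounded error, which is negligible under the rescaling $\ell(\mathfrak{p})\to\infty$. Then the twisted Dirichlet series $L(s,\eta):=\sum_{\mathfrak{p}}e^{i\eta\psi(\mathfrak{p})}e^{-s\ell(\mathfrak{p})}$ will be analysed via a Selberg-type trace formula attached to the ``almost-representation'' $\rho_\eta(\gamma)=e^{i\eta\psi(\gamma)}$ of $\Gamma=\mathrm{PSL}_2(\mathbb{Z})$. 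The goal is to locate the rightmost singularity $s_0(\eta)$ of $L(s,\eta)$ and show that it admits the non-analytic expansion
\begin{equation*}
s_0(\eta)\;=\;1-\tfrac{3}{\pi}|\eta|+o(|\eta|)\qquad(\eta\to 0),
\end{equation*}
with the sharp constant $3/\pi$. Once this is established, a Wiener--Ikehara Tauberian argument (adapted to tolerate a corner singularity at $\eta=0$) yields $\sum_{\ell(\mathfrak{p})\le x}e^{i\eta\psi(\mathfrak{p})}\sim e^{(1-3|\eta|/\pi)x}/x$ uniformly for $\eta$ in a compact set, and the exponential concentration of $\ell(\mathfrak{p})$ near $x$ for typical $\mathfrak{p}$ counted by $\pi(x)\sim e^{x}/x$ converts this into the desired $\widehat{\mu}_x(\eta)\to e^{-(3/\pi)|\eta|}$.

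The main obstacle is exactly the non-analyticity of $s_0(\eta)$ at the origin, which is what distinguishes Theorem~3 from the Gaussian or hypo-elliptic universality classes treated in the abelian Theorem~\ref{KatsudaSunadaIHES} and in the nilpotent Conjecture~\ref{conj-geod}. In those settings $\rho_\eta$ is a genuine unitary character, or lifts to one through the Malcev completion of the quotient group as developed in the bulk of this paper, so the spectral parameter varies analytically in $\eta$ with a positive-definite Hessian (the covariance form, respectively the eigenvalues of the hypo-elliptic operator $H$), and Laplace's method produces a smooth limiting density. Here, by contrast, $\rho_\eta$ is only a quasi-character--the Rademacher cocycle is not a coboundary--so the family $\eta\mapsto\rho_\eta$ does not extend to an analytic one-parameter unitary family on $\ell^2(\Gamma)$; in addition, the noncompact cusp of $\mathrm{PSL}_2(\mathbb{Z})\backslash\mathbb{H}$ contributes an Eisenstein continuous spectrum whose twist by $\rho_\eta$ is what produces a branch cut at $\eta=0$ in place of a smooth Hessian. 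The precise constant $3/\pi$ should be extracted from the Fourier-theoretic evaluation of the $\psi$-twisted constant term of the Eisenstein series at the cusp, which reduces via Dedekind sums to a classical integral. Conversely, Sunada's inequality (Theorem~\ref{sunada}) guarantees that $s_0(\eta)<1$ strictly for $\eta\neq 0$, confirming that the cuspidal continuous spectrum is the sole source of the shift. An alternative but in essence equivalent route would be to sum the pointwise local asymptotic of Corollary~\ref{Corollary2Sarnak} over $n\in[aL,bL]$ and pass to $L\to\infty$; but making that summation uniform on the critical scale $|n|\asymp L$ is itself equivalent to the non-analytic spectral analysis above and therefore cannot be circumvented.
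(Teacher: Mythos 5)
You should first note that the paper does not prove this statement at all: it is quoted verbatim as Theorem 3 of \cite{Sarnak} in the ``future directions'' chapter, so there is no internal proof to compare against; your proposal has to stand on its own against the known argument of Sarnak--Mozzochi (\cite{Sarnak}, \cite{Mozzochi}). At the level of architecture your plan is the right one and coincides with theirs: reduce to convergence of characteristic functions via L\'evy continuity, replace $\mathrm{link}(\mathfrak{p},\tau)$ by the Rademacher symbol $\psi$ (here Ghys's theorem \cite{Ghys} gives exact equality on hyperbolic classes, not merely a bounded discrepancy), prove a twisted prime geodesic theorem whose leading exponent has the corner behaviour $s_0(\eta)=1-\tfrac{3}{\pi}|\eta|+o(|\eta|)$, and finish with a Tauberian/rescaling step. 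Note also that the uniformity you actually need is for twist parameters shrinking like $1/x$ (since the relevant twist is $e^{i(\eta/x)\psi(\mathfrak{p})}$), not merely for $\eta$ in a fixed compact set.

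The genuine gap is that the one step carrying all the content --- the identification of $s_0(\eta)$ and of the constant $3/\pi$ --- is left as a ``goal'', and the mechanism you propose for it is wrong. You attribute the corner singularity to the $\psi$-twisted constant term of the Eisenstein series, i.e.\ to the continuous spectrum of the cusp. But $\psi(T)=1$ on the parabolic generator $T$, so for $\eta\neq 0$ the twist is nontrivial on the stabilizer of the cusp: the cusp becomes non-singular for the corresponding multiplier system and the Eisenstein/continuous spectrum disappears entirely; it cannot be the source of the shift. The actual input is that $e^{i\pi r\psi(\gamma)/6}$, which is not a character of $\mathrm{PSL}_2(\mathbb{Z})$, becomes a genuine multiplier system once combined with the weight-$r$ automorphy factor --- it is exactly the multiplier of $\eta_{\mathrm{Dedekind}}^{2r}$ --- so the Selberg trace formula for the weight-$r$ Laplacian applies, and the bottom of the (now purely discrete) spectrum is the explicit eigenvalue $\lambda_0(r)=\tfrac{|r|}{2}\bigl(1-\tfrac{|r|}{2}\bigr)$ furnished by $y^{|r|/2}\eta_{\mathrm{Dedekind}}^{2r}$ (or its complex conjugate for $r<0$). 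This gives $s_0=1-\tfrac{|r|}{2}=1-\tfrac{3}{\pi}|\eta|$ with $\eta=\pi r/6$, and the non-analyticity at $\eta=0$ is the switch between the holomorphic and antiholomorphic ground states, not a branch cut of a continuous spectrum. Relatedly, your appeal to Theorem \ref{sunada} to force $s_0(\eta)<1$ is not legitimate here: that inequality is for genuine unitary representations and twisted Laplacians on flat bundles over a compact base, whereas $\rho_\eta$ is only a quasi-character and the modular surface is noncompact; the strict gap again has to come from the multiplier-system spectral theory. Without supplying this eta-multiplier argument (or an equivalent substitute), the proposal does not yet constitute a proof.
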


These results seem to be inspired by E. Ghys's plenary lecture at ICM 2006  \cite{Ghys}. In this lecture, Ghys described several connections about the following topics:
\begin{center}
closed orbits of a dynamical system, the trefoil knot, the linking number, Dedekind $\eta$ function, Rademacher symbols. 
\end{center}

In particular, the linking number $\mbox{link}(\mathfrak{p},\tau)$ is equal to the value $\psi(A)$ of the Rademacher symbol $\psi$ of an element $A \in \Gamma = \mbox{PSL}_2(\mathbb{Z})$ where the conjugagy class $[A]$ of $A$ in $\Gamma$ is corresponding to the knot $\mathfrak{p}$ in the unit tangent bundle $UX = Y$. Here we identify the above knot $\mathfrak{p}$, which is the closed orbit of the geodesic flow on $UX$, with the closed geodesic in the modular surface $X$. 

Let us compare the homological version of the Chebotarev type theorem, Theorem \ref{abel-geod}, and the quasi-morphism version, Theorem \ref{Corollary2Sarnak}.

Perturbation analysis is used in the spectrum side of the Selberg trace formula in the proof of Theorem \ref{abel-geod}, in which the de Rham-Hodge theorem is crucial.  The $\pi_1$ de Rham theorem plays a similar role in the proof of Theorems \ref{theorem-geod} and \ref{conj-heat}. These theorems can be regarded as the Riemann-Hilbert problem (RH for short) to find a flat connection whose monodromy representation coincides with the given representation of the fundamental group $\pi_1(M)$. 

In this case, we study the situation starting from the existence of the surjective homomorphism $\pi_1(M) \to \Gamma$. Intuitively, the condition of homomorphy that a representation $\rho$ of $\pi_1(M)$ satisfies $\rho(\alpha\beta) = \rho(\alpha)\rho(\beta)$ expresses that the value is independent of the choice of paths of the $\rho(G)$-principal over the Cayley graph of $\pi_1(M)$, which means ``flat'' in discrete sense. Thus, RH can be viewed as a problem of finding a connection that realizes a continuous version of the above.

In this regard, the deviation in quasi-morphisms $f$ seems to be the difference of the holonomies along the path representing $f(\alpha\beta)$ and the concatenation of the paths representing $f(\alpha)+f(\beta)$, namely the ``discrete curvature''' of ``$f(G)$-principal bundle'' over the above Cayley graph. 
Thus, we would like to consider the modified Riemann-Hilbert problem (mRH for short) as follows:
\begin{problem}\label{mRH} Given a quasi-morphism $f$ of $\pi_1(M)$, find a (non-flat) connection $\omega$ whose holonomy represents $f$. 
\end{problem}

In the setting of Ghys \cite{Ghys}, the discrete curvatures are nothing but the Rademacher symbols $\psi$ defined below and the corresponding connection $\omega$ is closely related to $\log \eta$, the logarithm of the Dedekind $\eta$ function on the upper half plane $H = \{\mbox{Im}\;z > 0\}$ defined by
\[\eta(\tau ) = \exp(\pi\sqrt{-1}\tau/12)\prod_{n=1}^\infty(1 - \exp(2\pi\sqrt{-1}n\tau)).\]
In fact, for every $A = \pm\left(\begin{array}{cc}a & b \\ c & d \end{array}\right)\in \Gamma = \mbox{PSL}_2(\mathbb{Z})$, $\eta$ does not vanish. Thus, there is a holomorphic determination of $\log \eta$ defined on the upper half plane. It satisfies the following identity;
\[\eta^{24}\left(\frac{a\tau +b}{c\tau +d}\right) = \eta^{24}(\tau)(c\tau +d)^{12}.
\]
Taking logarithms on both sides, we get
\begin{equation}
24(\log \eta)\left(\frac{a\tau +b}{c\tau +d}\right) = 24(\log \eta) + 6(\log \eta)(-(c\tau +d)^2) + 2\pi\sqrt{-1}\psi(A) \label{defRademacher} 
\end{equation}
for some function $\psi:\mbox{PSL}_2(\mathbb{Z})\to \mathbb{Z}$(the second $\log$ in the right hand side is chosen with imaginary part in $(-\pi,\pi)$). 
This function $\psi$ is referred to as the Rademacher function in \cite{Ghys} and is also known as the Rademacher symbol here.

In addition, Matsuzaka and Ueki recently generalized the trefoil knot in Ghys's result to more general torus knots in \cite{Matsusaka}. In their case, the discrete curvature is also (generalized version of) Rademacher symbols $\psi_{p,q}$, the connection $\omega$ related to the harmonic Maass forms.

Since the above RH and mRH problems fall into the smooth category, which differs from the usual settings for RH problems in the holomorphic category, they may have multiple solutions. 
We need to find a criterion that is the best or a better solution. 
Although solutions such as $\log \eta$, or related objects to the harmonic Maass form, would be good ones, the equations that characterize these solutions are unknown yet.

\subsubsection{ A strategy to solve modified Riemann-Hilbert problems}

Here, we propose a candidate of such an equation based on an idea from the notion of ``oper'', which is a flat vector bundle with possibly non-flat filtrations (cf. \cite{Beilinson1}, \cite{Beilinson2}, \cite{BenZvi}).

In \cite{Matsusaka}, the authors define the (generalized) Rademacher symbol $\psi_{p,q}$ for the triangle group $\Gamma_{p,q}$ (see 2.1 \cite{Matsusaka} for its definition) as a generalization of the above $\psi$ which is written as $\psi = \psi_{2,3}$ for the usual Rademacher symbol $\psi$. 
They also give a characterization as a unique function satisfying $2pqW =-\delta^1 \psi_{p,q}$ where $W$ is the $2$-cocycle corresponding to a central extension of $\mbox{SL}_2\mathbb{R}$ by $\mathbb{Z}$ and $\delta^1$ is the coboundary operator defined in the below. 
The $2$-cocycle $W$ is defined as a map $W:\mbox{SL}_2\mathbb{R} \times \mbox{SL}_2\mathbb{R} \to \mathbb{Z}$ by 

\begin{equation}
W(\gamma_1,\gamma_2) = \frac{1}{2\pi\sqrt{-1}}(\log j(\gamma_1,\gamma_2z)+ \log j(\gamma_2,z))-\log j(\gamma_1\gamma_2,z))
\end{equation}

where $j(\gamma,z) := cz+d$ is the automorphic factor for $\gamma = \left(\begin{array}{cc}a & b \\ c & d\end{array}\right) \in \mbox{SL}_2\mathbb{R}$ and $z \in \mathbb{H}$ of some meromorphic modular forms with assuming $\mbox{arg}j(\gamma,z)= \mbox{Im} \log(j(\gamma,z)) \in [-\pi,\pi)$.

Furthermore the coboundary operator $\delta^1$ acts on a function $f:\Gamma_{p,q}\to\mathbb{Z}$ as
\[ \delta^1f(\gamma_1,\gamma_2) = f(\gamma_1\gamma_2)-f(\gamma_1)-f(\gamma_2)
\]

They also showed that $W$ satisfies the following $2$-cocycle condition:
\[   W(\gamma_1\gamma_2,\gamma_3)+W(\gamma_1,\gamma_2) = W(\gamma_1,\gamma_2\gamma_3)+W(\gamma_2,\gamma_3)
\]
and $\psi_{p,q}$ is uniquely determined by $W$. Asai introduces this $W$ \cite{Asai} and refers to it as the Asai 2-cocycle in \cite{Matsusaka}.

On the other hand, Sunada \cite{Sunada1} defined a magnetic $2$-cocycle $\Theta \in H^2(\Gamma,U(1)) $ for a discrete group $\Gamma$ in his studies of a discrete analog of periodic magnetic Schr\"odinger operators. 
It seems like to be regard $\Theta = \exp (2\pi\sqrt{-1}W)$ for $\Gamma = \Gamma_{p,q}$. 

If we take its one-parameter deformation $\Theta_\lambda = \exp (2\pi\sqrt{-1}\lambda W)$, it would correspond to an irreducible decomposition of the regular representations of the discrete Heisenberg group $\mbox{Heis}_3(\mathbb{Z})$.

As an example of the case $Y = S^3 \setminus \tau \simeq \mbox{SL}_2(\mathbb{R})/\mbox{SL}_2(\mathbb{Z})$, although there are no surjective homomorphism from $\pi_1(Y)$ to $\mbox{Heis}_3(\mathbb{Z})$, the above situation could be realized if we take a $6$-folds covering $Y(2) =  \mbox{SL}_2(\mathbb{R})/\Gamma (2)$ with the principal congruence subgroup.  
\[\Gamma(2) = \left\{\left(\begin{array}{cc}a & b \\ c & d\end{array}\right) \in \mbox{SL}_2\mathbb{R}|a\equiv d \equiv 1, b\equiv c\; (\mbox{mod} 2)\right\}.\] 
Namely, since $Y(2)$ is diffeomorphic to the unit tangent bundle $UX(2)$ of $X(2) = \mathbb{H}/\Gamma(2)$, $X(2)$ has three cusps and its fundamental group $\pi_1(X(2))$ is isomorphic to the free group $F_2$ of rank two, there is a surjective homorphism $\Phi:\pi_1(X(2))\to \mbox{Heis}_3(\mathbb{Z})$.

In this situation, the line bundle $L$ over $X(2)$ induced from above mRH problem on $X$ for a quasi-morphism $f : \Gamma \to \mathbb{Z}$ can be extended to the flat vector bundle $E$ of associated to the irreducible unitary representation $\pi_\lambda$ with $\lambda = 1$ of $\mbox{Heis}_3(\mathbb{Z})$ such that $\pi_\lambda(Z) = \psi(\gamma)$ where $Z = \Phi(\gamma) = (1,0,0)$ and $(1,0,0) = \left( \begin{array}{ccc} 1 & n_3 & n_1 \\ 0 & 1 & n_2 \\ 0 & 0 & 1\end{array}\right)$ as in the notation (\ref{3heis}).

Since $E$ is a flat vector bundle with a non-flat subbundle $L$, it is a kind of oper in the sense explained in the above paragraph. 
Then, we wish to consider the RH problem on $E$. The solutions $\omega$ are given by $\pi_1$-de Rham theorem \ref{pi1derham}. Moreover, the harmonic theory for Chen's iterated integral in Section \ref{harmoniclietheory}, especially in Conditions \ref{harmoniccoexact} (2), the choice $\omega_{12}  = \delta \omega^{(2)}$ would give a candidate of ``good solution'' of the above mRH problem.  
Some parts of the above are taken from an ongoing discussion with Toshiki Matsusaka. 
\begin{remark} 
As problems for quasi-morphisms, the following are naturally arising in a discussion with Morimichi Kawasaki: 
 
\begin{question}\label{quasi1Q}
Which quasi-morphism is the de Rham quasi-morphism?
\end{question}
\begin{question}\label{quasi2Q}
Which de Rham quasi-morphism can be virtually representable as oper?
\end{question}

Here ``virtually'' will mean ``up to passage to a finite covering''.

A prototypical example addressing both questions is the quasi-morphism $f:\Gamma = \mbox{PSL}_2(\mathbb{Z}) \to \mathbb{Z}$ introduced above. In fact, the classical Rademacher symbol entirely determines f, and one recovers it as a de Rham quasi-morphism via the logarithm of the Dedekind eta function (see paragraphs after Problem \ref{mRH}).

By passing to the $6$-folds cover $X(2) = H^2/\Gamma(2)$ of the modular surface $X(1)= H^2/\mbox{PSL}_2(\mathbb{Z})$, one obtains the quotient homomorphism $\pi_1(H^2/\Gamma(2) \to \mbox{Heis}_3(\mathbb{Z})$ which furnishes the realization sought in Question 2.
\end{remark}

\begin{remark}
Given that the above $X(2)$ is homeomorphic to $\mathbb{P}^1(\mathbb{C}) \setminus \{0, 1, \infty\}$, it is natural to anticipate deeper connections with the extensive body of literature on this space within the realms of algebraic and arithmetic geometry.
\end{remark}

\subsubsection{ Beyond the harmonicity}\label{Beyondharmonicity}
In the case of the above RH problems in homology, one criterion is the harmonicity of the closed one forms; we have already used them in connection with the Hodge theory. 

Furthermore, the following facts are noteworthy. The error estimates for the asymptotic expansions in Theorem \ref{Corollary2Sarnak} or its generalizations in \cite{Matsusaka} are much better than the usual homology version, Theorem \ref{abel-geod}. 

This kind of thing similarly appeared in the study of the heat kernel asymptotics of Theorem $6$ in \cite{Kotani3}. 
In this paper, the authors consider both continuous and discrete versions, namely, the latter being random walks on abelian covering graphs. 
They relate the above asymptotics to a certain embedding or realization of an abelian cover, which is a $\mathbb{Z}^d$-periodic manifold or graph in $\mathbb{R}^d$, referred to as the standard embedding by the authors.
This embedding is a special one in the category of harmonic maps, possessing significantly better properties than typical harmonic maps. 
This fact is also closely related to the Abel-Jacobi maps from Riemann surfaces to their Jacobi tori, which can be regarded as the quotient map of the standard embedding by $\mathbb{Z}^d$.
The properties of the above error estimates are translated as follows: the Abel-Jacobi map has the minimum energy among the family of harmonic maps from a fixed Riemann surface to a flat torus carrying the same volume.
There are several other properties of the above map that are better described, and relations to asymptotics are also described in Theorem 6 in \cite{Kotani3}.

Apart from the above, there is a strategy to choose a good candidate in the solutions of RH based on the isomonodromy deformations (cf. e.g. \cite{GuestItsLin1}, \cite{GuestItsLin2}, \cite{GuestItsLin3}). 

\subsubsection{\, Magnetic field and Heisenberg group: A pictorial representation}

Here, we explain why a flat structure appeared from a non-flat situation by adding an extra dimension. It seems known as one of folklore for experts, but we could not find it in the literature.  

Let us consider two homotopic curves $\gamma_1$ and $\gamma_2$ with common initial point $p$ and endpoint $q$ on the Riemann surface $M$. If the connection one form $\eta$ is flat, then the parallel transforms $P^{\gamma_i}(v)$ with respect to $\eta$ from $p$ to $q$ along each curve $\gamma_i$ with the initial vector $v$ coincide, however if $\eta$ is non-flat, corresponding to the constant magnetic field $\omega$, namely, satisfying $d\eta  = \omega$, then the associated parallel transforms $P^{\gamma_i}(v)$ do not coincide in general their difference are proportional to the area of the domain surrounded by the two curves $\gamma_1$ and $\gamma_2$. This fact comes from the Gauss-Bonnet formula.        

Next let us write $\eta= a_1dx_1 + a_2dx_2$ and $\eta_i = a_idx_i,$ $(i=1,2)$. If we add extra direction as in the above construction and denote $\eta_{12}$ one form corresponding to a new direction, then the condition of flatness is expressed as 
\[ d\eta_{12}= -\eta_1\wedge\eta_2 \] 
(cf. (\ref{flatomega}), here we change the notations from $\omega$'s to $\eta$'s.) Since the right-hand side $\eta_1\wedge\eta_2$ is the infinitesimal form of the area. By the effect of extra direction, the ``height'' of the endpoints of newly modified curves $\tilde{\gamma}_i$ from $\gamma_i$ becomes different, which expresses the flatness of the new space.
\begin{figure}[H]
\centering
\includegraphics[width=140mm]{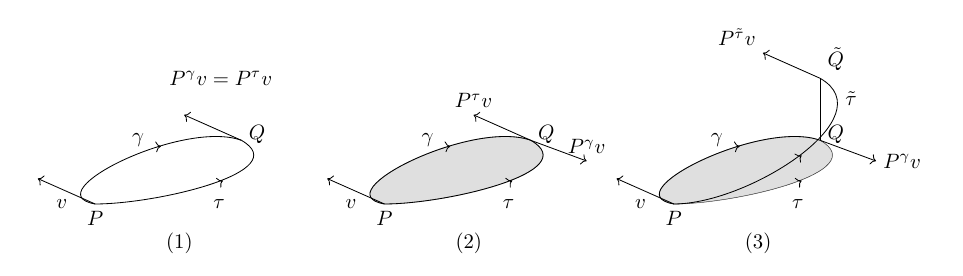}
\caption{{\footnotesize  Joku-ikou no Genri}}
\end{figure}

\begin{remark} At first glance, figure (c) seems to add only one extra dimension to figure (b). However, Chen's iterated integral -- the Lie integral of the Heisenberg algebra -- can be interpreted as an integral over the path space (see Section \ref{generaliteratedintegrals}). Thus, the transition from (b) to (c) may be viewed as a lift from the underlying space to its loop space. 
\end{remark}

\subsection{ Joku-ikou no Genri (Oka's lifting principle)}

We think that the above arguments are considered as a kind of Oka's `Joku-ikou no Genri' or`Joku-iko principle' in Oka's original word. In this subsection, we shall explain seemingly related topics in order. 

\subsubsection{ The statement of Joku-ikou no Genri in the theory of several complex variables} 


This principle is expressed as
\begin{center}
The method of the Oka Extension of holomorphic functions from a complex submanifold of a polydisk to the whole polydisk
\end{center}
In footnote $1$ of the preface of Noguchi's book \cite{Noguchi2}, it is explained as `` `Joku-Iko' is pronounced 'dz\'{o}uku ikou' (not aiko). The Joku-Iko Principle is a term due to K. Oka himself, and the guiding methodological principle of Oka theory throughout his works'' 
In footnote 2 of the same book, a detailed explanation is provided, including the reason why the author chose this terminology over `Oka's lifting principle'.
See also \cite{Noguchi1}. We also notice that it is translated as the hovering principle. 

It should be noted that this is different from the notions ``Oka's principle'' and its deviation ``Oka manifolds'' which are almost a paraphrase of the $h$-principle or homotopy principle, and one of its origins is Oka's third paper.

In the following,  we consider this principle simply as
 ``Reduction of difficulties by increasing the dimensions of spaces.'' 

\subsubsection{ Oka's own explanation for general audience}
Oka is said to illustrate this principle using the example of the Tartaglia-Cardano formula for solving cubic equations in one variable. This formula is derived by introducing a change of variables, where the original variable $x$ is replaced with two new variables, $u$ and $v$.

\subsubsection{ Chern's intrinsic proof of the Gauss-Bonnet formula}
Recall that the generalized Gauss-Bonnet formula says that the Euler-Poincar\'{e} characteristic $\chi(M)$ of $2n$-dimensional closed Riemannian manifold $M$ 
can be expressed as 
\begin{equation*}
\chi(M) = \frac{1}{(2\pi)^n}\int_M\mbox{Pf}(\Omega)
\end{equation*}
where $\mbox{Pf}(\Omega)$ is the Pfaffian of the curvature form $\Omega$ of $M$.
The crucial arguments of the proof in \cite{Chern} is based on the fact the pullback $\pi^\ast(\mbox{Pf}(\Omega))$ of $\mbox{Pf}(\Omega)$ by the canonical projection $\pi:UM \to M$ from the unit tangent bundle $UM$ to $M$ is exact form on $UM$ and application of the Stokes formula to this form. It can be seen as an example of `Joku-ikou no Genri' in the above sense. 

\subsubsection{ Loop space arguments related to KdV hierarchy and oper: Introduction of spectral parameters}
The above arguments would also explain other situations, such as the loop space arguments related to KdV hierarchy \cite{DrinfeldSokolov}, \cite{SegalWilson}, \cite{BenZvi}, in introducing spectral parameters. We quote several sentences from the introduction of \cite{BenZvi} as an explanation:

\begin{quotation}
An integral system naturally expressed in terms of differential data (differential operators, flat connections, etc.) may be characterized using spectral data, on which the flows become linear and group-theoretic, and sometimes geometric, in significance. $\cdots$

Drinfeld and Sokolov \cite{DrinfeldSokolov} also incorporated a spectral parameter into their connections -- thereby providing a generalization of the eigenvalue problem for a differential operator. They showed that the resulting connections (a loop group version of opers, which we refer to as affine opers) can be brought into a canonical gauge, where an infinite-dimensional abelian group of symmetries becomes apparent. Using these symmetries, it is easy to write a commuting hierarchy of flows as "zero-curvature equations" constraints, expressing the flatness of the connection, when extended to new variables using the symmetries.
\end{quotation}

Moreover, the Heisenberg group plays a central role in the core argument of \cite{BenZvi}, appearing as a subgroup of the loop group. Although these subgroups are abelian, the terminology ``Heisenberg'' is justified by the fact that their pullback to the Kac-Moody central extension of the loop group yields a genuine Heisenberg group. Figure 5 illustrates this relationship schematically.

\subsubsection{ Loop groups and harmonic maps:  DPW method}
In the theory of harmonic maps from Riemann surfaces to a compact symmetric space, there are several methods related to loop groups: harmonic maps of finite uniton number, harmonic maps of finite type, and the DPW method. We explain only the last one. For the former method, please refer to Guest's book  \cite{Guest}.

The classical Weierstrass-Enneper representation formula
has been a very useful tool for the construction and investigation of
minimal surfaces in $\mathbb{R}^3$. 
While the differential equation describing these surfaces is highly nonlinear, the data of the formula (Weierstrass data), consisting of a pair consisting of a holomorphic function $f$ and a meromorphic function $g$, is completely unconstrained and solutions of linear $\bar{\partial}$-equations.

In the late 90s, a generalization of these formulas for harmonic maps $\varphi$ from a Riemann surface $M$ to a Riemannian symmetric space of compact type $G/K$ was discovered (\cite{Dorfmeister02}, see also \cite{Dorfmeister01}), which is usually called the DPW formula, named after Dorfmeister, Pedit, and Wu. Rather than tackling the nonlinear PDE of harmonic maps directly, one encodes the map in a loop valued $1$-form (the DPW potential), integrates a linear system, and then recovers the real solution via a loop group factorization (Iwasawa decomposition). 
  
Let us explain this formula briefly from the viewpoint of the construction of the harmonic map $\phi$ from the DPW potential $\eta$.
\begin{description}
\item[Step 1](DPW potential) 
Assume that we have the following meromorphic 1-form $\eta := \eta(z)\,dz$ defined as 
\begin{equation}
\eta := \eta(z)\,dz = \bigl(\lambda^{-1}\eta_{-1}(z)+\eta_0(z)+\lambda\,\eta_1(z)+\cdots\bigr)dz \label{etaDPW}
\end{equation}  
valued in the loop algebra $\Lambda \mathfrak{g}^\mathbb{C}$ of the loop group 
\[\Lambda G^\mathbb{C} =\{A:S^1\to G^\mathbb{C}\}  \] 
over a complexified group $G^\mathbb{C}$ of $G$. 
The single $\lambda^{-1}$ term $\eta_{-1}(z)\,dz$ captures the core geometry (analogous to the Higgs field). 
This $\eta$ is called the DPW potential.

\item[Step 2](Integral of the extended holomorphic frame)  
We can solve the following equation of the parallel transport on the universal cover $\widetilde{M}$ of $M$  
\[
      dC = C\,\eta(z)\,dz, \quad  C(z_0)=I\; (\mbox{identity}).
\]  
so that the solution $C(z,\lambda)$ is holomorphic in $z$ and meromorphic\; in $\lambda$ and called the extended holomorphic frame of $\phi$.
\item[Step 3](Iwasawa Splitting) 
At each $z \in \mathbb{C}$, $C$ can be factored as 
\[
      C(z,\lambda)=F(z,\bar{z},\lambda)\,B(z,\lambda).
\]  
Here $F$ lies in the real loop group
\[\Lambda G := \{A \in \Lambda G^\mathbb{C}| \mbox{a reality condition} A(-\lambda) = A(\lambda)\; \mbox{on}\; |\lambda|=1\}\] 
and $B$ in the positive loop group 
\[\Lambda^+G^\mathbb{C} := \{A \in \Lambda G^\mathbb{C}|\mbox{holomorphic on}\; |\lambda|<1 \}\].

\item[Step 4](Construction of harmonic map)  
To obtain the above harmonic map $\phi$, we evaluate $F$ at $\lambda=1$ and project to $G/K$:  
\[
      \phi(z,\bar{z})=F(z,\bar{z},1)\quad \mbox{mod}\; K.
\]  
\end{description}

As already mentioned in Remark \ref{harmonicrepresentative}, Dorfmeister, Inoguchi and Kobayashi \cite{Dorfmeister1}, \cite{Dorfmeister2} developed this method to apply minimal surfaces into ${\rm Heis}_3(\mathbb{R})$. 

\subsubsection{ DPW vs. Nonabelian Hodge}

In addition to the preceding subsection, the DPW method has the Higgs bundle descriptions in nonabelian Hodge theory.

The nonabelian Hodge theory establishes an equivalence between flat bundles (local systems) and Higgs bundles on a compact K\"ahler manifold. At their intersection, the DPW potential encodes precisely the Higgs field and its adjoint under the harmonic metric. The loop group factorizations in DPW mirror the splitting given by the Hitchin equations and the associated harmonic metric.

\paragraph{Nonabelian Hodge correspondence}

A central result of nonabelian Hodge theory is that for a compact Riemann surface $M$, there is a natural one-to-one correspondence between  
semisimple flat complex vector bundles $(E,\nabla)$, and polystable Higgs bundles $(E,\varphi)$ with vanishing Chern classes,  
mediated by a unique harmonic metric solving Hitchin's self-duality equations. 
Under this correspondence, the flat connection $\nabla$ decomposes as 
 \[ \nabla = D' + D'' + \varphi + \varphi^\ast,
\]  
where $\varphi \in H^0(X,\mbox{End}(E)\otimes K)$ is the Higgs field and $\varphi^\ast$ its adjoint with respect to the harmonic metric.
\paragraph{DPW potentials as Higgs fields}

In the DPW construction for a harmonic map into a symmetric space $G/K$, the leading coefficient $\eta_{-1}(z)$ of the DPW potential $\eta$ in (\ref{etaDPW}) plays the role of the Higgs field $\varphi$. At the same time, the higher-order (nonnegative) terms correspond to the adjoint parts $\varphi^\ast$ and additional gauge corrections required by the harmonic metric. Integrating $\eta$ yields a holomorphic frame whose Iwasawa splitting recovers the unitary Chern connection and the Higgs field exactly as in nonabelian Hodge theory.

\paragraph{Loop group factorization and harmonic Metrics}

The Iwasawa decomposition  
\[
  C(z,\lambda) = F(z,\bar{z},\lambda)\,B(z,\lambda) ,
  \quad
  F \in \Lambda G,\; B \in \Lambda^+G^\mathbb{C}
\]  
realizes, at each point $z$, the separation of the flat connection into its unitary part $F$ (encoding the harmonic metric and Chern connection $D'+D''$) and its nilpotent remainder $B$ (recovering $\varphi,\varphi^\ast$). This mirrors the way solving Hitchin's equations splits a flat connection into Chern plus Higgs parts, and shows DPW as an explicit loop group avatar of the nonabelian Hodge correspondence.

\paragraph{Monodromy, stability, and normalized potentials}
To descend from the universal cover to the surface $M$, one requires that the monodromy of the integrated frame $C$ lies in the real loop group. This condition is equivalent to the polystability of the underlying Higgs bundle and unitarity of the corresponding local system.  
Conversely, given a stable Higgs bundle $(E,\varphi)$ with its harmonic metric, one can produce a normalized DPW potential by expanding the flat connection in powers of $\lambda$ and gauging away all nonnegative powers except the $\lambda^{-1}$ term.

Through these steps, every polystable Higgs bundle gives rise to a unique DPW potential up to gauge, and every suitably normalized DPW potential recovers a solution of the Hitchin equations, thereby cementing the bridge between loop group methods and nonabelian Hodge theory.

\subsubsection{ Steinberg symbol, Milnor $K_2$, the dilogarithm, Heisenberg groups and extensions of Lie algebras}
Below is an overview of how these four structures interweave through algebraic K-theory, regulators, central extensions, and infinite-dimensional Lie theory.
\begin{description}
\item[Steinberg Symbol and Milnor $K_2$]
The Steinberg symbol  
\[\{x,y\}\;\in\;K_2(F)
\]  
for a field $F$ is the generator of the second Milnor $K$-group subject to  
\[
\{x,1-x\}= 1 \quad (\text{Steinberg relation}).
\]  
It encodes a universal $2$-cocycle on $F^\times$. In particular, any bimultiplicative pairing  
\[
c\colon F^\times\times F^\times\;\longrightarrow\;F^\times
\]  
satisfying $c(x,1-x)=1$ factors uniquely through $K_2(F)$.
\item[Dilogarithm as Regulator]
The classical dilogarithm  
\[
\mbox{Li}_2(z)\;=\;\sum_{n=1}^\infty\frac{z^n}{n^2}
\]  
appears as the real-valued regulator on $K_2$ whose five-term functional equation  
\[
\mbox{Li}_2(x)\;-\;\mbox{Li}_2(y)\;+\;\mbox{Li}_2\!\bigl(\tfrac yx\bigr)\;-\;\mbox{Li}_2\!\bigl(\tfrac{1 - x^{-1}}{1 - y^{-1}}\bigr)\;+\;\mbox{Li}_2\!\bigl(\tfrac{1 - x}{1 - y}\bigr)\;=\;0
\]  
mirrors exactly the Steinberg relation in $K_2$. Note that this five-term relation defines the Bloch group, which is closely related to $K_3(F)$ and regulators.
Passing to the Bloch-Wigner version  
\[
D(z)= \mbox{Im}\bigl(\mbox{Li}_2(z)\bigr)+\mbox{arg}(1-z)\,\mbox{ln}|z|
\]  
yields a well-defined map  
$\,K_2(\mathbb{C})\to\mathbb{R}$.

\item[Heisenberg Group (Multiplicative Heisenberg-type group) as a Central Extension]

Let  
\[
\mbox{Heis}_3(F^\times)\;=\;F^\times \times F^\times \times K_2(F)
\]  
with multiplication  
\[
(x_1,y_1,a_1)\,(x_2,y_2,a_2)
\;=\;\bigl(x_1x_2,\;y_1y_2,\;a_1a_2\{x_1,y_2\}\bigr).
\]  
This formula is a multiplicative expression of the Heisenberg group $\mbox{Heis}_3(F)$ over $F$ whose central cocycle is precisely the Steinberg symbol. It generalizes the usual position-momentum Heisenberg group $\mbox{Heis}_3(\mathbb{R})$(extension as additive group $\mathbb{R}\times\mathbb{R}$) in quantum mechanics, replacing $\mathbb{R}$ by $F^\times$ and the integer central charge by $K_2(F)$.

\item[Kac-Moody Lie Algebras and Loop Extensions]

Affinizing a finite-dimensional simple Lie algebra $\mathfrak{g}$ produces the loop algebra  
$\mathfrak{g}[t,t^{-1}]$ and its universal central extension  
\[
\widehat{\mathfrak{g}}
\;=\;\mathfrak{g}[t,t^{-1}]\;\oplus\;\mathbb{C}\,c
\;\oplus\;\mathbb{C}\, d,
\]  
where the central element $c$ arises from an explicit $2$-cocycle on Laurent polynomials.  In that setting, the cocycle can be viewed as taking values in the second Lie algebra cohomology, an avatar of $K_2$ of the function field $\mathbb{C}(t)$. The subalgebra generated by the Cartan loop modes and $c$ is itself a Heisenberg algebra.  

Dilogarithm (and its quantum version) features prominently in constructing intertwiners and $R$-matrices for quantum affine algebras, tying Kac-Moody representation theory back to Steinberg symbol regulators.
\end{description}

This kind of structure similarly appeared in several symbols, such as Modular symbol \cite{JPSerre0}, Tame symbol \cite{Deligne}, \cite{Tamiozzo}, Contou-Carr\'{e}re symbol \cite{ContouCarrere}, and their two-dimensional version \cite{OsipovZhu}. Moreover,  Rademacher symbol, in particular its higher version \cite{Duke1}, \cite{OSullivan} can also use these cohomological descriptions using the Eichler-Shimura cohomology.

\subsubsection{\; Quantum/Affine correspondence: the Peterson-Lam-Shimozono-Kato Framework}

Figure 5 offers a schematic portrayal of the Quantum/Affine correspondence. In the central panel, the effect of a perpendicular magnetic field is emphasized, illustrating the ``quantum'' side of the story -- much like the interplay between the quantum torus and the Hofstadter butterfly. In the rightmost panel, a vertical line marks the loop parameter that underlies affine Lie algebras, thereby capturing the ``affine'' aspect of the correspondence.

This construction suggests potential connections to the following version of this correspondence:

Peterson observed that the $T$-equivariant quantum cohomology ring of a flag variety $G/P$ is isomorphic, as a graded algebra, to the Pontryagin homology of the affine Grassmannian $\mathrm{Gr}_G$. Concretely, quantum multiplication by a Schubert divisor in $QH^\ast_T(G/P)$ matches Pontryagin product by an affine Schubert class in $H_\ast^T(\mathrm{Gr}_G)$. This deep link identifies Gromov-Witten invariants with intersection numbers on the affine Grassmannian, laying the foundation for the quantum/affine correspondence.

Lam and Shimozono made Peterson's map explicit by showing that the quantum Chevalley formula in $QH^\ast_T(G/P)$ admits a direct ``affine analogue'' in $H_\ast^T(\mathrm{Gr}_G)$. Their theorem identifies structure constants in the quantum Schubert basis with those in the affine Schubert basis via an algebra homomorphism constructed using generalized Seidel representations and equivariant Gromov-Witten theory.

The quantum cohomology ring $QH^\ast_T(G/B)$ can be realized as the coordinate ring of the universal centralizer variety of $G$, which is integrable by the (equivariant) Toda lattice. 

Through Peterson's isomorphism, these Toda Hamiltonians act naturally on the equivariant homology of the affine Grassmannian, revealing that the affine side inherits an integrable system structure isomorphic to quantum cohomology.

Moreover, Syu Kato \cite{KatoS} extended the quantum/affine correspondence into the realm of (equivariant) quantum $K$-theory.

\subsubsection{\; Construction of quasicrystal: Cut and project}
There is a vast amount of references for quasicrystals.
Here, we select a few of them, such as \cite{WalterDeloudi} and \cite{Dyson}, and summarize information mainly from Wikipedia, commenting on our viewpoints. 

There are several ways to define quasicrystalline patterns mathematically. 
Here we recall the `cut and project' construction, which is based on the work of Harald Bohr. In this method, a quasicrystal is obtained by taking a (irrational) slice (an intersection with one or more hyperplanes) of a higher-dimensional periodic lattice. The relation slices and higher-dimensional lattice look like 'Joku-iko', and it would be interesting to investigate from this viewpoint.

De Bruijn showed that Penrose tilings can be viewed as two-dimensional slices of five-dimensional hypercubic structures \cite{deBruijn}.
Similarly, icosahedral quasicrystals in three dimensions are projected from a six-dimensional hypercubic lattice, as first described by Peter Kramer and Roberto Neri in 1984 \cite{KramerNeri}. 
Equivalently, the Fourier transform of such a quasicrystal is nonzero only at a dense set of points spanned by integer multiples of a finite set of basis vectors, which are the projections of the primitive reciprocal lattice vectors of the higher-dimensional lattice \cite{SuckSchreiberHaussler}

\subsection{\; Opers, Toda-type equations, Gaiotto conjecture}\label{operGai}

In \cite{Gaiotto1}, Gaiotto studies a four-dimensional $\mathcal{N}=2$ supersymmetric gauge theory of class $S$ compactified on a circle of radius $R$. Its three-dimensional moduli space $\mathcal{M}$ is hyperk\"ahler, and its metric can be encoded in a system of Thermodynamic Bethe Ansatz (TBA)-like integral equations. This space $\mathcal{M}$ roughly coincides with the moduli spaces of solutions to Hitchin's equations, which play a crucial role in both mathematical physics and mathematics. 
He conjectured that if we take a careful limit (``conformal limit'') of the above TBA-like integral equations, then, for theories of the class $S$, this limit the complex symplectic manifold $\mathcal{M}$ collapses to what Gaiotto identifies with a family of opers, i.e. flat connections gauge-equivalent to a single differential operator on the punctured Riemann surface $C$.

In the case of an unpunctured Riemann surface $C$, Dumitrescu et al. \cite{Dumitrescu} give a mathematical proof of this conjecture through the idea of exact WKB analysis and non-abelian Hodge theory. Moreover, the above oper gauge-equivalent to a differential operator is closely related to $tt^\ast$-geometry due to Cecotti and Vafa \cite{CecottiVafa1}, \cite{CecottiVafa2},\cite{CecottiVafa3}, in particular $tt^\ast$-Toda equation as an example. A mathematical theory around this equation and its generalization is developed by Guest et al. \cite{GuestItsLin1}, \cite{GuestItsLin2}, \cite{GuestItsLin3}, and also Mochizuki \cite{Mochizuki1}, \cite{Mochizuki2}.

On the other hand, the Harper equation, appearing in the Hofstadter butterfly, is equivalent to almost the Mathieu equation. Here, there are three types of Toda-like equations
 \begin{itemize}
\item almost Mathieu equation (Harper equation, discrete magnetic Laplacian) 
\begin{equation}  \psi(n+1) + \psi(n-1) + 2\cos (2\pi n\alpha + \nu)\;\psi(n) = 0 \label{Harper}
\end{equation}
\item (classical) Toda equation
\begin{equation}  \psi(x+i\hbar) + \psi(x-i\hbar) + 2\cosh x\;\psi(x) = 0 \label{cToda}
\end{equation}
\item $tt^\ast$-Toda equation
\begin{equation}  \psi(x+i\hbar) + \psi(x-i\hbar) + 2\sinh x\;\psi(x) = 0 \label{ttToda}
\end{equation}
\end{itemize}

Upon reviewing this list, we infer that there may be similar theories for the Harper equation. There seems to be a hope, since the harmonic oscillators are appearting both in $tt^\ast$-Toda and almost Mathieu equation: One in former is the $\mbox{AD}_2$ (Argyres-Douglas $\mathcal{N}=2$ super conformal field theory)-limit associated to the $\mbox{A}_1$ spectral curve with a rank $2$ irregular singularity at infinity in 3.1 in \cite{Gaiotto1} and one in the latter is in the Wilkinson formula in (\ref{introformal}).
Katsushi Ito kindly informed us \cite{Imaizumi} is an example in this direction.

\subsection{ Non-perturbative correction, Resurgence and exact WKB analysis}

One aspect of our analysis of nilpotent Floquet-Bloch theory is grounded in the arguments developed in our alternative mathematical proof of the Wilkinson formula for the Hofstadter butterfly, as presented in Section \ref{Hofstadterbutterfly} and Chapter \ref{Anotherproofwilkinson}.
This formula is expressed as a perturbative series. Recent developments in this area have revealed intriguing progress, while several open problems remain. In particular, non-perturbative corrections to this series have been studied using exact WKB analysis and resurgence theory. A valuable reference on these topics is \cite{GuXu}.

From the perspective of this paper, we propose the following possibilities:

\begin{description}
 \item[{\rm (1)}] Our proof of the Wilkinson formula justifies the formal Taylor expansion (\ref{introformal}) of (\ref{introhtheta}) via approximations using finite-dimensional operators. This approach is based on the decomposition formula (\ref{introdiscretetoHeisenbergLie}) and the structure of finite-dimensional representations (\ref{intromatrixrep}).
Since the decomposition formula is an exact identity, it may also serve as a useful tool for non-perturbative analysis.

\item[{\rm (2)}] As discussed in \cite{Beguin} and further elaborated in this paper, the Hofstadter butterfly is closely related to unitary representations of the discrete Heisenberg group. A natural question arises: can the aforementioned developments be generalized to the analysis of unitary representations of discrete nilpotent groups? \end{description}

\subsection{ Chebyshev bias in geometry} \label{Chebyshevbias}
According to the prime number theorem version of Dirichlet theorem, which is a particular case of the original Chebotarev density theorem, the ratio $\pi(x,\alpha;\ell)$ and $\pi(x,\beta;\ell)$ tends to $1$ as $x \to \infty$ where $\pi(x,\alpha;\ell)$ denote the cardinal of the set of prime numbers $p$ with $p \equiv \alpha$ modulo $\ell$ as usual.
 Then, one of our following interests in asymptotics is the difference. Chebyshev was the first person to consider such a problem. He asked the case $\ell = 4, \alpha =1, \beta = 3$ and conjectured that $\pi(x,3;4)$ is larger than $\pi(x,1;4)$, which is called Chebyshev bias or the prime number race in later.

In \cite{Rubinstein}, Rubinstein and Sarnak showed that roughly $99.59\%$, $\pi(x,3;4) > \pi(x,1;4)$ holds, but at $x\to\infty$ there is an interval where those magnitudes change forever, namely the prime number race cannot be settled, under the Riemann Hypothesis and simple zero conjecture.
 
Recently, Miho Aoki, Ikuya Kaneko, Shinya Koyama, and Nobushige Kurokawa have obtained interesting results related to the deep Riemann hypothesis (\cite{Aoki}, \cite{KoyamaKurokawa},  \cite{KaneKoyama}). However, the method of counting is different from the above.

In \cite{Rubinstein}, the authors noted that the above phenomenon does not occur in the case of counting prime closed orbits $\pi(x,\alpha)$ for infinite abelian extensions, as stated in Theorem \ref{abel-geod}.

Suppose the ``sizes'' of the homology classes are different. In that case, we can verify this by examining the coefficients of the second term in the asymptotic expansion (cf. \cite{Kotani1}). Still, if their ``sizes'' are equal, then it cannot be distinguished by the second term alone and thus needs to see higher order terms. 

Although it may seem complicated to verify that infinitely changing phenomena actually occur, there is a possibility that the winner of the race depends on each element in the moduli $\mathcal{M}$ of hyperbolic metrics on the fixed Riemann surface $M$. This phenomena could be seen by taking ``tropical limit'', which is a graph described as follows: If we take a sequence $\{g_n\}$ of hyperbolic metrics on $M$ which goes to the boundary $\partial\mathcal{M}$ of a canonical compactification $\widehat{\mathcal{M}}$ of $\mathcal{M}$, then diameters of  Riemann surfaces $(M,g_n)$ diverges to infinity which can be seen by the colloring lemma (cf. \cite{Burger}, \cite{Odaka} . Next, we normalize Riemannian metrics $g_n$ to $\bar{g}_n = c_ng_n$ by constant $c_n > 0$ so that diameters of $(M,\bar{g}_n)$ are always to be $1$, which implies the curvature diverges to $-\infty$. Then the Riemannian measures $\mu_n$ on $(M,\bar{g}_n)$ converge to point mass, i.e., the Dirac measures supported on nodes of the limiting objects. These limiting objects can be viewed as a combinatorial graph, and it is sometimes the tropical limit of hyperbolic Riemann surfaces. 

Note that the limiting behavior of the spectral side of the Selberg trace formula is already investigated in \cite{Burger}. 
However, that of the geometric sides remains obscure. It seems to be helpful to see the relations to the arguments in Bismut et al. \cite{BismutLebeau}, \cite{Bismut}, which give a new proof of the Selberg trace formula using deformation arguments of the hypo-elliptic operators, which is ``a quantization of Witten deformation''. Additionally, there are possibilities for making a more direct connection between the Selberg and Ihara zeta functions in this manner.

Concerning a degenerating family of Kleinian Schottky groups, recently Li, Matheus, Pan, and Tao \cite{LiMPT} investigated a similar topic as indicated in the title of their paper ``Selberg, Ihara and Berkovich''.

\subsection{ Topics around the Kashaev invariant and the volume conjecture}
As mentioned in the last paragraph of Section \ref{Hofstadterbutterfly}, we think our method would have an advantage compared with the previous method in \cite{Helffer} and expect it could be applied to other asymptotics. Among several conjectures of asymptotics in knot theory, the volume conjecture \cite{Kashaev2},  \cite{Kashaev1} seems to be the most well-known. It connects the hyperbolic volume of the complement of the knot with the Kashaev invariant or the colored Jones polynomials \cite{Murakami}, which bridges classical and quantum invariants of the knot. This conjecture
concerns a semiclassical expansion of these invariants determined from finite-dimensional representations of a quantum group $U_qsl_2$, and a hyperbolic volume appears in this expansion around the trivial representation. An intuitive geometric interpretation already exists, e.g., \cite{Yokota}; however, it has yet to be rigorously proven, except for several concrete examples. 
Here, we note that the dimensions of the above representations diverge to infinity when they tend to the trivial representation, which is similar to our arguments for the proof of heat kernel asymptotics or the Chebotarev density theorem for nilpotent extensions.
As explained in the simplest exmple of the mathematical justification of the formal proof of Wilkinson formula (\ref{introeigen1}), namely by identifying the Harper opeator $H_\theta: \ell^2(\mathbb{Z}^2) \to \ell^2(\mathbb{Z}^2)$ defined on $\ell^2(\mathbb{Z}^2)$ with the operator 

The following are hints to relate our previous arguments and topics in this subsection:

\begin{description}
\item A mathematical justification for Hikami's argument for the state integral, but instead of ${\rm Heis}_3(\mathbb{Z})$, we need an analysis of the discrete nilpotent group $G_{6,4}$ in the later notation of step $3$, applying the previous study of asymptotic behavior on discrete nilpotent groups.

\item Hofstadter butterfly is investigated through the Bethe Ansatz of $U_q(sl_2)$ \cite{WiegmannZabrodin} and possibly, has a connection to positive representations of $U_q(sl_2)$ \cite{PonsotTeschner}.

\item Quantum cluster algebra by Fock and Goncharov\cite{FockGoncharov1}, \cite{FockGoncharov2} and related research on the latter and the Kashaev invariants by Hikami and Inoue \cite{HikamiInoue1}, \cite{HikamiInoue2}, \cite{HikamiInoue3}. A more structural understanding may be obtained.
\end{description}

We start recalling Hikami's arguments in more detail. In \cite{Hikami}, he defined the partition functions $Z_\gamma(\mathcal{M})$ for the cusped hyperbolic $3$-manifold $\mathcal{M}$ using the state integrals, which he introduced and studied from a formal point of view, giving a formal proof of a volume conjecture through the state integrals which he defined.

First, we point out several problems with the justification of Hikami's formal arguments using several terminologies defined later.
\begin{problem}\label{remained}
\begin{description}
\item[{\rm (a)}] Rigorous definition of the partition function $Z_{\gamma}(\mathcal{M})$:
\begin{itemize}    
\item Convergence of the state integrals,
\item Independence of the choice of triangulations in the definition of the state integrals.
\end{itemize} 
\item[{\rm (b)}] Semiclassical asymptotics (by a formal Taylor expansion):
\[ \lim_{\gamma \to 0} 2\gamma \log (Z_{\gamma}(\mathcal{M})) = \mbox{Vol}(M) + \sqrt{-1}\mbox{CS}(M) \qquad \mbox{(3.19) in \cite{Hikami}}.
\]
\item[{\rm (c)}] Comparison between Hikami's version and Kashaev's original version of volume conjectures.
\end{description}
\end{problem}

An answer to the problem (a) is already given in \cite{Andersen} under some conditions on homologies of the complement of the vertices of the triangulation. Their version of the partition function $Z_{\gamma}(\mathcal{M})$ in the framework of the category of tempered distributions. 
The authors of \cite{Andersen} comment that the main obstacle in constructing such a TQFT comes from the fact that the target category cannot be the category of finite-dimensional vector spaces. A similar impossibility of realization by finite-dimensional objects also appears in the Wilkinson formula (\ref{introeigen1}). 

Our primary concern in this topic is to give some explanations to the problem (b) from our viewpoint, and hopefully, our arguments shed some light on the problem (c).

However, there remained a serious problem for the choice of the contour where the integrals are defined as discussed in e.g. \cite{DimofteGLZ}.
We have learned this point from Yuji Terashima and Kazuhiro Hikami.

\subsubsection{ The Faddeev integral and S-operators}\label{Faddeevintegral}
The partition function $Z_\gamma(\mathcal{M})$ is constructed based on an oriented ideal triangulation by assigning to each tetrahedron the quantum dilogarithm function, introduced by Faddeev, together with so-called $S$-operators.
 
The relation to the original invariant of Kashaev is explained in the Introduction of \cite{Hikami} as follows:

\begin{quotation}
Originally, Kashaev introduced his invariant $J_{\mathcal{K}}(N;e^{2\pi\sqrt{-1}/N})$
for triangulated $3$-manifolds, although the $R$-matrix construction is developed subsequently \cite{Kashaev1}. He studied the Faddeev
quantum dilogarithm function when $q$ is a root of unity \cite{Faddeev1}, \cite{Kashaev0}, (see also \cite{Bazhanov1}), and assigning the quantum
dilogarithm function to the ideal tetrahedron, he defined invariants \cite{Kashaev1}, \cite{Kashaev2}. In this sense, our function $Z_\gamma(\mathcal{M})$ for $\mathcal{M}$ with a complete hyperbolic structure can be regarded as a non-compact $U_q(sl(2,\mathbb{R}))$
analogue of the Kashaev invariant. 
Though we do not know the actual content of our partition function and we have no rigorous proof on convergence, the asymptotic behavior of $Z_\gamma(S^3\setminus\gamma)$ in the limit $\gamma \to 0$ is expected to coincide with that of the Kashaev invariant $J_{\mathcal{K}}(N;e^{2\pi\sqrt{-1}/N})$ in the limit $N \to \infty$, as will be discussed below.

\end{quotation}

Let us recall several notions and terminologies that appeared in \cite{Hikami} in order.

We define a function $\Phi_\gamma(\varphi )$ by an integral form following \cite{Faddeev0}. 
We set $\gamma \in \mathbb{R}$ and for $|\mbox{Im}\; \varphi| < \pi$, 
\begin{equation} \Phi_\gamma(\varphi ) = \exp\left(\int_{\mathbb{R}+\sqrt{-1}0}\frac{e^{-\sqrt{-1}\varphi x}}{4\sinh(\gamma x)\sinh(\pi x) }\frac{dx}{x} \right) \label{qdilog}
\end{equation}
The Faddeev integral (\ref{qdilog}) is also called the quantum dilogarithm function.

The most important property of the Faddeev integral is that it fulfills the pentagon identity \cite{Faddeev1, Faddeev0},
\begin{equation}  \Phi_\gamma(\hat{p} )\Phi_\gamma(\hat{q})=\Phi_\gamma(\hat{q})\Phi_\gamma(\hat{p}+\hat{q})\Phi_\gamma(\hat{p}) \label{pentagon}
\end{equation}
where $\hat{p}$ and $\hat{q}$ are the canonically conjugate operators satisfying the Heisenberg commutation relation,
\[  [\hat{p}, \hat{q}] = \hat{p}\hat{q}-\hat{q}\hat{p} = -2\sqrt{-1}\gamma
\]
By this commuting relation, we call a limit $\gamma \to 0$ a classical limit. Hereafter, we use $V$
as the momentum space $|p\rangle$ with $p \in \mathbb{R}$ which is an eigenstate of the momentum operator:
\[ \hat{p}|p\rangle = p|p\rangle .
\]

A reason for the quantum dilogarithm function is revealed when we take a classical limit $\gamma \to 0$. In this limit, the Faddeev integral reduces to
\[  \Phi_\gamma(\varphi ) \sim \exp\left(\frac{1}{2\sqrt{-1}\gamma}Li_2(-e^\varphi)\right)
\]
Here,
$Li_2(x)$ denotes the Euler dilogarithm function defined by 
\[Li_2(x) =\sum_{n=1}^\infty\frac{x^n}{n^2}
\]
where $|x| < 1$.

To see a relationship between the integral $\Phi_\gamma(\varphi )$
and geometry, we define the S-operator acting on $V\otimes V$ by 
\begin{equation}S^{\varepsilon_1}=S_{1,2}(p_1,p_2,q_1,q_2) = e^{\frac{1}{2\sqrt{-1}\gamma}\hat{p}_1\hat{q}_2}\Phi\gamma(\hat{p}_1-\hat{q}_2 - \hat{p}_2)
\label{Soperator}
\end{equation}
Here the Heisenberg operators $\hat{p}_j$ and $\hat{q}_j$ act on the $j$-th vector space of $V\otimes V$, i.e. $\hat{p}_1 = \hat{p}\otimes 1$,  $\hat{p}_2 = 1\otimes\hat{p}$, and so on.
Moreover, we also express
$\hat{q}_j = \sqrt{2\gamma}p_j$ as a multiplication operator and $\hat{p}_j = \sqrt{2\gamma}\frac{\partial}{\partial p_j}$ as a derivation operator with $[\hat{p}_i, \hat{q}_j] = -2\sqrt{-1}\gamma\delta_{ij}$. 
Then we notice that $\hat{p}_1\hat{q}_2, \hat{p}_1, \hat{p}_2, \hat{q}_1, \hat{q}_2$ generates a nilpotent Lie algebra $\mathfrak{n}$ of step 3 by the commutation formula $[\hat{p}_1\hat{q}_2, \hat{p}_2] = -2\sqrt{-1}\gamma\hat{p}_1, [\hat{p}_1\hat{q}_2, \hat{q}_1] = 2\sqrt{-1}\gamma\hat{q}_2$, which is of six dimension and is denoted by $\mathfrak{g}_{6,4}$ in the list of classification of low dimensional Lie algebras in \cite{Nielsen}. We shall discuss its unitary representations in Example \ref{g64}. 

Then, the pentagon identity (\ref{pentagon}) can be rewritten in a compact form:
\begin{equation}
S_{2,3}S_{1,2}= S_{1,2}S_{1,3}S_{2,3} \label{Spenta}
\end{equation}
where $S_{j,k}$ acts as $S_{1,2}$ on the $j$ and $k$-th spaces of $V\otimes V\otimes V$ and as the identity on the rest. Matrix elements of infinite dimensional operator $S_{1,2}$ can be computed as
{\small \begin{align*}
&{} \langle p_1,p_2|S_{1,2}|p_1',p_2'\rangle \\ &=  \frac{1}{\sqrt{4\pi\gamma}}\delta(p_1+p_2-p_1')\Phi_\gamma\left(p'_2-p_2+\sqrt{-1}\pi+\sqrt{-1}\gamma\right)e^{\frac{1}{2\sqrt{-1}\gamma}\left(-\frac{\pi^2+\gamma^2}{6}-\frac{\gamma\pi}{2}+p_1(p'_2-p_2)\right)} \\
&{} \langle p_1,p_2|S_{1,2}^{-1}|p_1',p_2'\rangle \\ &= \frac{1}{\sqrt{4\pi\gamma}}\delta(p_1-p_1'-p_2')\frac{1}{\Phi_\gamma\left(p_2-p'_2-\sqrt{-1}\pi-\sqrt{-1}\gamma\right)}e^{\frac{1}{2\sqrt{-1}\gamma}\left(\frac{\pi^2+\gamma^2}{6}+\frac{\gamma\pi}{2}-p'_1(p_2-p'_2)\right)}.
\end{align*}}
In the classical limit $\gamma \to 0$, 
\begin{align*}
\langle p_1,p_2|S_{1,2}|p_1',p_2'\rangle &\sim \delta(p_1+p_2-p_1')\exp\left(-\frac{1}{2\sqrt{-1}\gamma}V(p'_2-p_2,p_1)\right) \\
\langle p_1,p_2|S_{1,2}^{-1}|p_1',p_2'\rangle &\sim \delta(p_1-p_1'-p_2')\exp\left(-\frac{1}{\sqrt{-1}\gamma}V(p_2-p'_2,p'_1)\right),
\end{align*}
where we have defined the function $V(x,y)$ by
\[V(x,y) = \frac{\pi^2}{6} + Li_2(e^x)-xy 
\]

In the decomposition of ideal tetrahedra of a hyperbolic cusped 3-manifold $\mathcal{M}$, the S-operator is assigned to each (oriented) tetrahedron, and then the pentagon identity (\ref{Spenta}) is interpreted as the Pancher move, as explained in \cite{Hikami}. 

Moreover, by applying the (formal) saddle point method in a certain integral related to  (\ref{Spenta}), one can find the coincidence between the saddle point equation and the hyperbolic consistency equation.   

\subsubsection{ Partition function of a cusped 3-manifold}

For any hyperbolic cusped 3-manifold $\mathcal{M}$,  take an ideal triangulation, and we know how to glue faces together; we can naturally define the partition function based on the S-operator by
\begin{equation} Z_\gamma(\mathcal{M}) = \int_\mathbb{R}dp\delta_C(p)\delta_G(p)\prod_{i=1}^M\langle p_{2i-1}^{(-)},p_{2i}^{(-)}|S^{\varepsilon_i}| p_{2i-1}^{(+)},p_{2i}^{(+)}\rangle \label{Partition}
\end{equation}
where $p$ denotes a set of variables $ p_1^{(\pm)},\ldots , p_{2M}^{(\pm)}$, and $\varepsilon_i = \pm 1$, depending on an orientation of tetrahedron. 

Here, we follow the same notation as in \cite{Hikami}, which may be a conventional symbolic usage in physics. For example, the domain of integral $\mathbb{R}$ is not always a one-dimensional line, but it may represent the fact that $\mathbb{R}$ is a `real cycle'. As already mentioned in the last paragraph before Subsubsection \ref{Faddeevintegral}, to determine this cocycle precisely is the most serious remaining difficulty.

We set $M$ as the number of ideal tetrahedra.
The condition $\delta_G ( p)$ determines how to glue faces together (``$G$'' stands for ``gluing''). 

Every face with the same momentum has to be glued together, and the fact that in-states can be glued only
to out-states indicates that the gluing condition $\delta_G(p)$ is a product of $\delta( p_j^{(-)}-p_k^{(+)})$ for some $j$ and $k$.

We need another geometrical condition $\delta_C(p)$ to define a partition function in addition to a way to glue faces (``C'' stands for ``completeness'').
 We can draw a developing map from an ideal triangulation of a 3-manifold,
and we need to read off a hyperbolic complete condition. This condition can be written as a constraint for $p$ by
identifications (3.1) and (3.2) in \cite{Hikami}. By construction, the partition function $Z_\gamma(\mathcal{M})$ is invariant under the Pachner move with any orientations.

As was studied by Thurston \cite{Thurston}, a deformation of the hyperbolic structure on a manifold $\mathcal{M}$
can be holomorphically parametrized by a parameter $u$ in a neighborhood of completeness condition $u =0$.
This parameter $u$ is the logarithm of the eigenvalue of the meridian by the holonomy representation, and we set 
\[ m = e^u.\]
Correspondingly, we denote such a manifold
by $\mathcal{M}_u$, which is no more complete, and define the partition function$Z_\gamma(\mathcal{M}_u)$ similarly and its asymptotics when $\gamma \to 0$ is described as follows:
\begin{equation} Z_\gamma(\mathcal{M}_u) \sim \int_\mathbb{R}dx\exp\left(\frac{1}{2\sqrt{-1}\gamma}\Phi_\mathcal{M}(x;u)\right) \label{udeform}
\end{equation} 
Here on the right-hand side, following our convention, we have re-parametrized variables $p$ with $x = (x_1, x_2, \ldots, x_{M-1})$.
After incorporating constraints written in terms of delta functions.

The integral (\ref{udeform}) could be evaluated by the saddle point method, as we have worked on a classical limit
0. The saddle point condition for variables $x$ is
\begin{equation}  \frac{\partial }{\partial x_i}\Phi_\mathcal{M}(x;u) = 0 \label{partialx}
\end{equation} 

These conditions coincide with the hyperbolic consistency conditions around the edges when we glue oriented tetrahedra together, i.e., unity is the product of dihedral angles around each edge \cite{Hikami0}.
By construction, the variable u denotes the meridian of the cusp in this classical limit, and the complete hyperbolic
structure is realized by setting $u =0$. To conclude, the function
$\Phi_\mathcal{M}(x; u)$ defined by a classical limit (\ref{udeform}) of the partition function $Z_\gamma(\mathcal{M}_u)$
under constraints (\ref{partialx}) is nothing but the Neumann-Zagier potential function \cite{Neumann}.

As a result, the differential of the potential function with respect to the deformation parameter $u$ gives
\begin{equation}  \frac{\partial }{\partial u}\Phi_\mathcal{M}(x;u) = v \label{paru}
\end{equation} 
where $v$ is related to the eigenvalue of the longitude by the holonomy representation
\[ \ell = e^v \]
Moreover, the equation (\ref{paru}) can be rewritten as 
\[\frac{\partial }{\partial u}\lim_{\gamma\to 0}\sqrt{-1}\gamma\log Z_\gamma(\mathcal{M}_u) = v \label{partialu}
\]
Combining with
the fact that our partition function $Z_\gamma(\mathcal{M}) = Z_\gamma(\mathcal{M}_{u=0})$
 may be regarded as a quantization of the Bloch invariant, we should have
\[\lim_{\gamma\to 0}2\gamma\log Z_\gamma(\mathcal{M}) = \mbox{Vol}(\mathcal{M}) + \sqrt{-1}\mbox{CS}(\mathcal{M})
\]
as a variant of the volume conjecture \cite{Hikami0} (see \cite{Hikami} in more detail). Here, CS denotes the Chern-Simons invariant defined modulo $\pi^2$
(see \cite{Meyerhoff} for a definition of the Chern-Simons invariant for the case of cusped 3-manifolds).
 However, there remains an ambiguity of the branch in the complex plane in
an actual computation.

In conclusion, it suffices to realize the following for the mathematical justification of the above arguments, which is our aim for the final chapter. 

\begin{quotation}
Give a mathematical explanation of the formal Taylor expansion of the S-operator $S_{1,2}$ with respect to $\gamma$. 
\end{quotation}

The reason why the computation stays in formal is essentially the unboundedness of operators $\hat{p}_1\hat{q}_2, \hat{p}_1, \hat{p}_2, \hat{q}_1, \hat{q}_2$. Thus, we shall mainly focus on remedying these points in the following sections.  

As for Problem \ref{remained} (a), although it seems plausible the authors' comment on \cite{Andersen} that the target category for defining the partition function $Z_\gamma(\mathcal{M})$ cannot be the category of finite dimensional vector spaces, it seems the approximate version of $Z_\gamma(\mathcal{M})$ can realized as a finite dimensional object by the arguments in Chapter 4 and it seems sufficient for the original Kashaev's conjecture.
\subsubsection{ Nilpotent Lie group $G_{6,4}$}
Next, we consider the nilpotent Lie group $G$ related to the S-operator $S_{1,2}$ in (\ref{Soperator}). As already mention after its definition (\ref{Soperator}), $G$ is isomorphic to $G_{6,4}$ in \cite{Nielsen} whose Lie algebra is $\mathfrak{g}_{6,4}$. Let us recall several computations in \cite{Nielsen}. 

\begin{example}\label{g64}$\mathfrak{g}_{6,4} = \mathbb{R}X_1 + \mathbb{R}X_2 + \mathbb{R}X_3 + \mathbb{R}X_4 + \mathbb{R}X_5 + \mathbb{R}X_6$ is a Lie algebra satisfying the following relation:
\[  [X_6,X_5] = X_3, \;  [X_6,X_4] = X_2, \;  [X_5,X_2] = X_1, \;  [X_4,X_3] = X_1.
 \]
Then the following multiplication rule holds:
\begin{align*}
&{} (x_1,x_2,x_3,x_4,x_5,x_6)(y_1,y_2,y_3,y_4,y_5,y_6) \\
&= (x_1+y_1+x_4y_3 +x_5y_2+x_4x_6y_5+x_5x_6y_4+x_6y_4y_5,\\
&{} x_2+y_2+x_6y_4,x_3+y_3+x_6y_5,x_4+y_4,x_5+y_5,x_6+y_6) 
\end{align*}
Then we compute 
\begin{align*}
&{} (0,0,0,s,t,0)(x_1,x_2,x_3,x_4,x_5,x_6) \\
&= (x_1+x_3s +x_2t,x_2,x_3,s+x_4,t+x_5,x_6) \\
&= (x_1+x_3s +x_2t-x_4x_6t-x_5x_6s-x_6st,x_2-x_6s,x_3-x_6t,x_4,x_5,x_6)(0,0,0,s,t,0) \\
\end{align*}
From this computation, we have, for $l \in \mathfrak{g}^\ast$ with $l(\xi_1X_1+\xi_2X_2+\xi_3X_3+\xi_4X_4+\xi_5X_5+\xi_6X_6) = \xi_1$, we put $\rho^{6,4}_l := \mbox{Ind}_M^G(\chi_l)$ with $M = \exp(\mathfrak{m})$ and $\mathfrak{m} = \mathbb{R}X_1+\mathbb{R}X_6$. We have the following:
\begin{align*}
&{} (\rho^{(6,4)}_l(x_1,x_2,x_3,x_4,x_5,x_6)(f))(s,t) \\
&= \tilde{f}((0,0,0,s,t,0)(x_1,x_2,x_3,x_4,x_5,x_6)) \\
&= \tilde{f}((x_1+x_3s +x_2t-x_4x_6t-x_5x_6s-x_6st,x_2-x_6s,x_3-x_6t,x_4,x_5,x_6)(0,0,0,s,t,0))\\
&= \chi_l((x_1+x_3s +x_2t-x_4x_6t-x_5x_6s-x_6st)^{-1})\tilde{f}(0,0,0,s+x_4,t+x_5,0) \\
&= \overline{\chi_l((x_1+x_3s +x_2t-x_4x_6t-x_5x_6s-x_6st))}\tilde{f}(0,0,0,s+x_4,t+x_5,0) \\
&= e^{2\pi\sqrt{-1}(-x_1-x_3s -x_2t +x_4x_6t+x_5x_6s+x_6st)\xi_1}f(s+x_4,t+x_5).
\end{align*}
(Our computation is slightly different from (7)(a) in \cite{Nielsen}. However, likely, the difference does not cause any essential problems here.)

next we take a (partial) Fourier transform $\mathcal{F}^s$ with respect to a variable $s$ to swap the multiplication operator $s$ and the differential operator $\frac{\partial}{\partial s}$. Put $\pi^{(6,4)}_l = \mathcal{F}^s\circ\rho^{(6,4)}_l$. 

Then $\pi^{(6,4)}_l$ is an unitary representation related $S_{1,2}$ in (\ref{Soperator}) by putting
\begin{align*}
\xi_1 \; = \sqrt{2\gamma}, &{} \;  \hat{p}_1\hat{q}_2 = \pi^{(6,4)}_l(X_6), \;  \hat{p}_1 = \pi^{(6,4)}_l(-X_5), \;  \sqrt{-1}\hat{q}_1 = \pi^{(6,4)}_l(X_2)\\
&{}  \; \hat{p}_2 = \pi^{(6,4)}_l(X_3), \; \sqrt{-1}\hat{q}_2 = \pi^{(6,4)}_l(-X_4)
\end{align*}

\end{example}

\subsection{ Condensed matter physics: Moir\'{e} pattern, magic angle, multi-layer graphen}

The Hofstadter butterfly was discovered in the 1970s. However, for a long time, its experimental realization was deemed infeasible due to the requirement of an unrealistically strong magnetic field in its original theoretical setting. Remarkably, a recent experiment has succeeded in realizing this phenomenon despite these challenges \cite{Nuckollsetal}. Prior to this breakthrough, the following developments had taken place:

In the mid-2010s, attention shifted from the original setting to a more experimentally accessible system: bilayer graphene, composed of two superimposed two-dimensional honeycomb lattices. When one layer is rotated relative to the other, a fractal-like spectral structure emerges in the associated operator. This structure was also observed experimentally \cite{Dean}, provided that the rotation angle is appropriately chosen.

This configuration gives rise to a periodic arrangement resembling large molecules, known as the Moir\'{e} pattern. The specific angle at which this pattern exhibits notable electronic properties is referred to as the "magic angle." In this context, the Moir\'{e} pattern corresponds to rational magnetic flux values in the original Hofstadter model.

Nevertheless, a comprehensive understanding of the relationship between the spectral properties of the Harper operator and those of the Moir\'{e} -Laplacian operator, sometimes used to describe the Moir\'{e} system, remains elusive. A more detailed analysis is thus left for future investigation. Additionally, further studies from the perspectives of quantum tori, quantum groups, and quantum cluster algebras are of significant interest.

Moreover, recent developments in physics have extended the scope beyond bilayer graphene to multi-layer configurations. It is hoped that our formulation of Floquet-Bloch analysis will contribute meaningfully to future research in these areas.

\subsection{ Infinite extensions in number theory}\label{InfiniteextensionsNT}

One of the motivations of several density theorems in geometry or dynamical systems is in number theory, the Chebotarev density theorem on finite algebraic extensions. Furthermore, in the case of finite extension, Theorem \ref{finitegeodesic} in the Introduction, the derivation from the analytic properties of L functions to asymptotic results is almost the same as in number theory. However, when we extend to infinite extensions, such as Theorems \ref{abel-geod} and \ref{KatsudaSunadaIHES}, there is no methodological model in number theory. Turning our attention to the number theory again, we become interested in infinite extensions, in this case. 

The basis of all the discussions is the Galois theory. In contrast to the case of finite algebraic extensions, in the case of infinite extensions, the fundamental theorem of Galois theory, the correspondence between subgroups of Galois groups and extension of fields, does not hold as it is. 
For this reason, we need to introduce a topological structure called the Krull topology into the Galois group. The correspondence will then be recovered if we restrict it to closed subgroups of the Galois group only. Because of what this fact represents, the situation is quite different from geometry.
For example, the Galois group  $G$ of infinite extension in number theory has uncountable cardinality and is compact and totally disconnected with respect to the above topology. Furthermore, it is said that there is no universal extension corresponding to universal covering in number theory, at least in a naive sense.


The first version of the Chebotarev density theorem on infinite extensions is the following:
Since $G$ is compact, there exists a bi-invariant finite Haar measure $\mu$ on  $G$. 
If we take a conjugate-invariant subset $C$ of $G$ for an unramified extension of the field of rational numbers (except for a finite number of primes) with $\mu(C) > 0$, namely $C$ has positive density. Let $\pi(x,C)$ denote the cardinal of unramified prime numbers whose Frobenius homomorphism belongs to $C$ and whose norm is less than or equal to $x$. 
Then we have
\[\pi(x,C)\sim \frac{\mu (C)}{\mu (G)}\frac{x}{\log x}.\]
(cf. \cite{Serre0}) 

However, these kinds of results are derived by essentially the same idea as finite extensions, so it is more interesting for sets with density $0$. 

However, naive thinking does not work. For example, the Galois group for the algebraic closure $\overline{\mathbb{Q}}$ of the infinite extension field of the rational number field $\mathbb{Q}$ is called the absolute Galois group $G_\mathbb{Q}$, which is the essential object in number theory. Let us consider the following surjective homomorphism 
\[\Pi:G_\mathbb{Q} \to G_\mathbb{Q}^{\rm ab} := G_\mathbb{Q}/\overline{[G_\mathbb{Q},G_\mathbb{Q}]}\]
where  $G_\mathbb{Q}^{\rm ab}$ is the abelineization of $G_\mathbb{Q}$ and $\overline{[G_\mathbb{Q},G_\mathbb{Q}]}$ is the closure of the commutatar subgroup $[G_\mathbb{Q},G_\mathbb{Q}]$ with respect to the Krull topology. 
The problem of counting primes, such that the Frobenius homomorphisms belonging to the kernel of this homomorphism, corresponds to the problem of the infinite abelian extension in geometry, as seen in Theorems \ref{abel-geod} and \ref{KatsudaSunadaIHES}.  At first glance, it appears to be something natural and meaningful. However, this is a trivial problem; namely, we can see that the Frobenius homomorphism of any prime number does not belong to this kernel for almost trivial reasons. Although the following is well-known to experts, we could not find any literature that explicitly states it; therefore, for the reader's convenience, we explain their reasons here.

First, note that this extension is totally ramified, so the Frobenius conjugate class can not be well defined in the usual sense. Thus, it needs to be modified to account for the inertial groups. However, even if we could think in this way, the following arguments show the above fact. 

In the case of a finite extension, the Galois group is given first. If the prime $p$ is made large enough, its Frobenius homomorphism can be the identity element of the Galois group. However, if we first fix the prime $p$, then the Frobenius homomorphism cannot be the identity element if the degree of the Galois group is larger than $p$. 

In the setting of Dirichlet's density theorem on arithmetic progressions, it is said as follows: If we first fix an integer $\ell$, then there is a prime number $p$ congruent to $1$ modulo $\ell$. However, on the contrary, if we fix $p$ at first and then take $\ell$ larger than $p$, then such a possibility is impossible.

Since the maximal abelian quotient $G_\mathbb{Q}^{\mbox{ab}}$ is known to be isomorphic to $\hat{\mathbb{Z}}^\times = \prod_p\mathbb{Z}_p^\times$ which is called profinite cyclic group. Since this group has a finite group of arbitrarily large orders in its quotient group, the image of the Frobenius homomorphism cannot be the identity element for the above reason.

At present, the most celebrated Chebotarev-type density ``theorem'' for infinite extensions with density $0$ is Lang-Trotter's conjecture; see also \cite{Serre}, \cite{Katz}, \cite{James}, \cite{Wan}. As the ultimate refinement of the celebrated Sato-Tate conjecture, the Lang-Trotter conjecture aims to reveal statistical distributions of Frobenius traces with a fixed integer value instead of a fixed interval.

We describe its detailed form by quoting from the references mentioned above.
For an elliptic curve $E$ over $\mathbb{Q}$ and $r \in \mathbb{Z}$, put
\[\pi_E^r(X) := {}^\sharp\{p<X : a_E(p) = r\}\]
where
\[a_E(p) := p + 1 - {}^\sharp E(F_p)\]
and $E(F_p)$ is the mod $p$ reduction of $E$.

In 1976, S. Lang and H. Trotter \cite{Lang} formulated a precise conjecture on
the number of good primes $p$ such that $a_E(p)$ takes a fixed integer value. Let $N_E$ be the conductor of $E$. Put
\[\pi_{E,r}(x) := {}^\sharp\{p < x| a_E(p) = r, p\not|\; N_E\} \]
for all $r \in \mathbb{Z}$ and $x > 2$.

\begin{conjecture}[Lang--Trotter Conjecture] Let $E$ be an elliptic curve over $\mathbb{Q}$ and assume additionally that $r \neq 0$ if $E$ has complex multiplication. Then
\[ \pi_{E,r}(x)\sim C_{E,r}\frac{\sqrt{X}}{\log X}, \]
where $C_{E,r} \geq 0$ is a constant that can be described in terms of the image of the associated Galois representation.
\end{conjecture}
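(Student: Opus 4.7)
The plan would proceed by combining the $\ell$-adic Galois representation framework for $E$ with an effective Chebotarev density theorem, exactly the route envisioned (but not completed) by Lang and Trotter themselves. First I would fix, for each prime $\ell$, the mod-$\ell^n$ Galois representation $\rho_{E,\ell^n}\colon G_\mathbb{Q}\to \mathrm{GL}_2(\mathbb{Z}/\ell^n\mathbb{Z})$ attached to the $\ell^n$-torsion of $E$, and assemble these into $\rho_E\colon G_\mathbb{Q}\to \mathrm{GL}_2(\widehat{\mathbb{Z}})$. For a good prime $p$ the trace of $\rho_E(\mathrm{Frob}_p)$ equals $a_E(p)\pmod{\ell^n}$, so the condition $a_E(p)=r$ is a simultaneous congruence condition at every modulus, combined with the Hasse constraint $|a_E(p)|\le 2\sqrt{p}$.

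The second step is to invoke Serre's open image theorem (for non-CM $E$, $\rho_E(G_\mathbb{Q})$ is open in $\mathrm{GL}_2(\widehat{\mathbb{Z}})$) to identify the conjugacy-invariant subset $C_{r,\ell^n}\subset \rho_{E,\ell^n}(G_\mathbb{Q})$ cut out by the trace-equals-$r$ condition, and then apply an effective Chebotarev density theorem to the number field $\mathbb{Q}(E[\ell^n])$ to count primes $p\le x$ with $\mathrm{Frob}_p\in C_{r,\ell^n}$. The local densities $|C_{r,\ell^n}|/|\rho_{E,\ell^n}(G_\mathbb{Q})|$ assemble, in the limit, into the conjectural constant $C_{E,r}$ via an Euler product over $\ell$; the Sato--Tate measure supplies the archimedean factor $\sqrt{x}/\log x$ from the probability that $a_E(p)$ lands on a single integer within the Hasse interval of width $4\sqrt{p}$. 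A sieve of Eratosthenes--Legendre type, run up to a level $z=z(x)$ chosen optimally, is then needed to interlock the congruence information at varying moduli with the archimedean localisation.

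The hard part, and the reason the conjecture remains open, is controlling the error in effective Chebotarev uniformly as the conductor of the extension $\mathbb{Q}(E[\ell^n])/\mathbb{Q}$ grows. Even under GRH, the best known bound gives an error of shape $x^{1/2}\log(\ell^n x)$ for each level, which is of the same order as the conjectured main term; summing over $\ell^n$ up to any nontrivial $z(x)$ therefore drowns the signal. One needs either square-root cancellation in sums of $a_E(p)$ in short arithmetic progressions beyond what GRH alone provides, or a genuinely new input that replaces the sieve step with something like an automorphic trace formula for $\mathrm{GL}_2$ twisted by the $\ell$-adic system. The paper's theme suggests a tentative novelty: approximate $\rho_E$ by finite-dimensional representations in the strong sense of Theorem \ref{introPytlik} and Theorem \ref{introdiscretetoLie}, replacing the Chebotarev step for each finite level by a uniform asymptotic statement in which the fluctuation estimates of Fact \ref{factfluctuation} control the approximation error independently of the numerator of the level, in parallel with the reduction from $\rho_{\mathrm{fin},x}$ to $\rho_h$ used throughout Chapter \ref{dHeisenberg}; the main obstacle becomes adapting the non-perturbative band-width estimate of Chapter \ref{Anotherproofwilkinson} to produce cancellation in the Chebotarev error at the rate $O(x^{1/2-\delta})$, and at present no such mechanism is available.
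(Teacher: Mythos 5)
This statement is the Lang--Trotter conjecture, and the paper does not prove it: it is quoted verbatim as a \emph{conjecture} in the section on infinite extensions in number theory, where the author only surveys what is known (Serre's conditional bound and Elkies' unconditional $O(x^{3/4})$ bound in the supersingular case $r=0$) and then offers an admittedly speculative heuristic involving a ``virtual pole'' of $L(s,\chi)$ and a possible hybrid with the nilpotent Floquet--Bloch machinery. So there is no paper proof to compare your argument against, and your text, by your own admission, is not a proof either: it is the standard Lang--Trotter heuristic (assemble the mod-$\ell^n$ representations via Serre's open image theorem, cut out the trace-$r$ conjugacy set, apply effective Chebotarev to $\mathbb{Q}(E[\ell^n])$, and fold in the Hasse interval $|a_E(p)|\le 2\sqrt{p}$ to produce the $\sqrt{x}/\log x$ shape and the Euler-product constant $C_{E,r}$). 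That heuristic is exactly how the constant is usually motivated, so your setup is sound as a derivation of the \emph{expected} asymptotic, but it does not establish the statement.

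The genuine gap is the one you name yourself and then do not close: the Chebotarev error term for $\mathbb{Q}(E[\ell^n])/\mathbb{Q}$ is, even under GRH, of size comparable to $x^{1/2}$ at each level, i.e.\ the same order as the conjectured main term, and summing over the moduli needed to detect the exact equality $a_E(p)=r$ destroys the asymptotic. No known sieve, trace-formula input, or GRH-type hypothesis supplies the required cancellation, which is precisely why the best unconditional result remains Elkies' upper bound $O(x^{3/4})$ rather than an asymptotic. Your final suggestion---to import the paper's strong finite-dimensional approximation results (the generalized Pytlik theorem, the branching formula, and the non-perturbative band-width estimate from the Harper-operator chapter) as a substitute for uniform effective Chebotarev---is in the same speculative spirit as the paper's own closing remarks, but neither you nor the paper exhibits any mechanism by which those representation-theoretic approximations of discrete nilpotent groups would yield an $O(x^{1/2-\delta})$ error in counting Frobenius elements for the (highly non-nilpotent, open-image) $\mathrm{GL}_2(\widehat{\mathbb{Z}})$ extensions involved here. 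As it stands, the proposal should be presented as a heuristic derivation of $C_{E,r}$ together with an honest statement that the conjecture remains open, not as a proof.
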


An explicit form of $C_{E,r}$ is written in \cite{James} for the non-CM (non-complex multiplication) case.
For some historical context on the conjecture, see the Introduction of \cite{Wan}. The CM case is somewhat better understood than the non-CM case, as in \cite{Wan}.
Here, we concentrate only on the case that $E$ is non-CM and $r=0$, namely $E$ is supersingular. According from \cite{Serre}, it is reformulated as follows:
Let $l$ be a prime number, and let 
\begin{equation}
\rho_l:G_\mathbb{Q}\to \mbox{GL}_2(\mathbb{Z}_l)
\end{equation}
the $l$-adic representation defined by the $l^m$-division points of the elliptic curve $E$ (m = 0,1,\ldots,). This representation is unramified outside the set $\{11,l\}$. The group $G_l := \mbox{Im}(\rho_l)$ is an open subgroup of $\mbox{GL}_2(\mathbb{Z}_l)$ (\cite{Serre0}, \cite{Serre1}); $l$-adic Lie group of dimension $4$. If $l \neq 11,l$, the Frobenius conjugacy class $s_p$ of $p$ in $G_l$ is well-defined. By Eichler, Shimura, and Igusa, we have
\[\mbox{Tr}(s_p) = a_p\]
The relation $a_p =0$ is equivalent to the fact that $p$ is in the subvariety $C_l$ of $G_l$, which consists of elements of the trace $0$.

Concerning this problem, the best-known result for the upper bound is due to Elkies \cite{elkies} based on the results of Kaneko \cite{kaneko}, which is 
\[ \pi_{E,0}(x) = O(x^{3/4})\]
Previously, the same bound was obtained in \cite{Serre} under the generalized Riemann hypothesis. Moreover, Elkies also gives some estimates on the lower bound in the same paper, but it is far from the conjecture. 
Among the above results, the strategy in \cite{Serre} is based on an approximation by finite extensions and uses the effective Chebotarev theorem in this case.
Several works demonstrate that similar methods are also applicable to other non-supersingular cases. This method contrasts with the case of geometry, which deals directly with infinite groups. It should be noted that the asymptotics in the Lang-Trotter conjecture differ from those of Theorems \ref{abel-geod} and \ref{theorem-geod}, somewhat similar to the Large deviation in geometry.

Here, as a small proposal, we would like to suggest a heuristic hybrid method based on an approximation of the tower of finite extensions and geometric ones. We do not know whether this proposal is meaningful for the conjecture.

First, note that the target group $\mbox{GL}_2(\mathbb{Z}_l)$ of $\rho_l$ is pro-nilpotent, namely a projective limit of finite nilpotent groups. In our analysis of the geometric version of the nilpotent group, the leading term can be dominated by the abelian quotient, although the difference to abelian extensions appears in the Plancherel measure.

In the case of number theory, it corresponds to the analysis of the Dirichlet L-functions. However, there is a clear difference in the geometric case. Namely, their results come from our analysis of the behavior of the poles of the L-function associated to the characters in a vicinity of the trivial character $\textbf{1}$. In Chapter \ref{Asymptoticsclosedgeodesics}, we have used the trace formula to simplify the explanation. However, it is well known that we could reprove them using L functions such as \cite{Katsuda1}, \cite{Katsuda2}, \cite{Pollicott1}. 

On the other hand, the Dirichlet  L function $L(s,\chi)$ is the entire function; thus, the poles disappear provided $\chi$ is not principal. Here, we do not know how to remedy this situation in the context of Hilbert-Polya's strategy for the Riemann hypothesis.

Even though, in order to persuade the analogy with the geometric case, assuming that $L(s,\chi)$ had a ``virtual pole'' $s(\chi) \in \mathbb{R}$, namely 
\begin{equation} \mbox{``}L(s,\chi) \sim \frac{1}{s-s(\chi)}\mbox{''} \label{virtualpole} \end{equation}
In the limit $\chi \to \textbf{1}$, we would like to explore possibilities for applying our previous analysis, such as Chapter \ref{Asymptoticsclosedgeodesics}, \cite{Katsuda1}, \cite{Katsuda2}, or \cite{Pollicott1}. 
Under the assumption \ref{virtualpole}, the position of $s(\chi)$ can be read
from the value $L(1,\chi)$. Thus, we have a connection to the following conjecture:
\begin{conjecture}[Conjecture 1.1 in \cite{Lamzouri}]
Let $Q > 0$ be large. Then
\[
\max_{q \leq Q}\max_{\substack{\chi \neq \chi_0,(\mbox{\footnotesize{mod $q$}})\\ \mbox{\footnotesize{$\chi$ primitive}} }}|L(1,\chi)| = (e^\gamma + o(1))\log\log Q,
\]
where $\chi_0$ is the trivial character and $\gamma$ is the Euler constant. 
\end{conjecture}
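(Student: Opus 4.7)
My plan is to split the proof into the matching upper and lower bounds, following the pretentious multiplicative function framework of Granville--Soundararajan, which has proven effective on similar extremal problems for $L(1,\chi)$. The key heuristic is that $L(1,\chi)$ is well approximated by a short Euler product $\prod_{p \le y}(1-\chi(p)/p)^{-1}$ for $y$ a suitable power of $\log q$, and by Mertens' theorem such a product is bounded in absolute value by $\prod_{p\le y}(1-1/p)^{-1} = (e^\gamma + o(1))\log y$. Choosing $y = (\log Q)^{1+o(1)}$ yields the predicted size $(e^\gamma+o(1))\log\log Q$.

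For the upper bound, I would first establish an approximation of the form $\log|L(1,\chi)| = \sum_{p \le y}\mathrm{Re}(\chi(p))/p + O(1)$ valid for all $\chi \pmod q$ with $q \le Q$ outside a (possibly exceptional) very small family. The approximation itself proceeds by contour shifting in the identity $\log L(1,\chi) = \int_1^\infty (L'/L)(s,\chi)\,ds$ combined with Polya--Vinogradov type estimates and zero-density bounds due to Heath-Brown / Huxley for Dirichlet $L$-functions in the region $\mathrm{Re}(s) > 1 - c/\log q$. The elementary estimate $\sum_{p\le y}\mathrm{Re}(\chi(p))/p \le \sum_{p \le y} 1/p = \log\log y + \gamma + o(1)$ together with $y = (\log Q)^{1+o(1)}$ then produces the desired bound, while the contribution of the (Siegel-type) exceptional characters is handled either by treating them separately via Page's theorem or by invoking an unconditional Granville--Soundararajan-style large deviation estimate for the distribution of $\log L(1,\chi)$.

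For the lower bound, the task is to exhibit, for every large $Q$, a modulus $q \le Q$ and a primitive character $\chi \pmod q$ with $\chi(p)$ close to $1$ for every prime $p \le y = (\log Q)^{1+o(1)}$, so that its $L$-value at $1$ is forced to be close to $\prod_{p \le y}(1-1/p)^{-1}$. Two constructions are available: (i) take $q$ to be the product of the first several primes congruent to $1$ modulo a large parameter and use the Chinese Remainder Theorem to build $\chi$ as a product of characters of large prime order, choosing roots of unity so that $\chi(p)$ lies in a small arc around $1$ for each $p \le y$; (ii) use Soundararajan's resonance method, selecting a resonator $R(\chi) = \sum_{n \le N} r(n)\chi(n)$ with weights $r(n)$ supported on $y$-smooth numbers and tuned so that $\sum_\chi |R(\chi)|^2 L(1,\chi) \big/ \sum_\chi |R(\chi)|^2$ is $(e^\gamma+o(1))\log\log Q$, forcing at least one $\chi$ to realize this size. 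Approach (ii) is more robust and aligns with the Diaconu--Goldfeld--Hoffstein philosophy used to attack related moment problems.

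The principal obstacle is the upper bound, and specifically the step of truncating the Euler product at length $y = (\log Q)^{1+o(1)}$. Unconditionally this truncation is delicate: a standard application of zero-free regions yields only $y = \exp((\log Q)^{1/2})$, which produces the weaker bound $e^\gamma (\log Q)^{1/2}$ instead of $e^\gamma \log\log Q$. To close the gap one must either assume GRH (which immediately yields the sharp $y$) or, more ambitiously, develop a pretentious/large-sieve argument that avoids direct appeal to zero-free regions by controlling the measure of characters for which $\chi(p)$ fails to be approximately random on medium-sized primes; such an argument seems to require genuinely new inputs beyond the current Granville--Soundararajan technology, and is where I expect the proof to remain conditional in the first instance.
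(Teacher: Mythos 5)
This statement is not proved in the paper at all: it is quoted verbatim as Conjecture~1.1 of Lamzouri and appears only as motivational material in the speculative discussion of infinite extensions (the author explicitly says it was pointed out to him by Kaneko). So there is no paper proof to compare against, and your proposal should be judged as an attempt on an open problem. Half of it is fine: the lower bound $\max_{q\le Q}\max_{\chi}|L(1,\chi)|\ge (e^\gamma+o(1))\log\log Q$ is indeed known, and your route (ii), the resonance method with a resonator supported on $y$-smooth numbers, is essentially how Granville--Soundararajan obtain characters with $L(1,\chi)\ge e^\gamma(\log\log q+\log\log\log q-C)$. That part of the sketch is sound.

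The genuine gap is in the upper bound, and specifically in the claim that GRH ``immediately yields the sharp $y$.'' It does not. Under GRH the Euler product for $L(1,\chi)$ can only be truncated at $y=(\log q)^{2+o(1)}$, which by Mertens gives Littlewood's classical bound $|L(1,\chi)|\le(2e^\gamma+o(1))\log\log q$ --- a factor $2$ larger than the conjectured maximum. The conjecture you are trying to prove is precisely the assertion that this factor $2$ can be removed, and no one knows how to do that even assuming GRH; the situation is exactly parallel to the factor-$2$ gap for extreme values of $\zeta(1+it)$ and of $|\zeta(\tfrac12+it)|$. So the step ``choose $y=(\log Q)^{1+o(1)}$'' is not a technical truncation issue that pretentious methods or zero-density estimates might patch; it is the entire open content of the statement, and your proposal does not supply any new mechanism for it. As written, the argument proves (modulo standard inputs) only the known lower bound together with the known conditional upper bound $(2e^\gamma+o(1))\log\log Q$, not the conjectured asymptotic equality.
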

Masanobu Kaneko informs this conjecture. He was kind enough to answer our questions and track down this document.

Although no progress has been made in this direction to date, we would like to consider our suggestion as a possibility for future research.

\begin{remark}
We have asked number theory experts about the other possibilities of the Chebotarev density theorem of infinite extension with density $0$ and have received the following comments.
\begin{description}
\item[{\rm (1)}] Kurokawa informed about the Bateman-Horn conjecture (cf. \cite{alet}), a comprehensive conjecture that derives from the twin prime conjecture and Green-Tao's theorem.
\item[{\rm (2)}] Sarnak introduced the talk of Serre at Minerva's lecture series \cite{SerreMinerva} (in particular, Talk 1). He suggested taking a look at this particular Serre's "motivated" versus "non-motivated" problems regarding primes. The Lang-Trotter conjecture is a ``motivated one'' where one has a natural feature that the traces are integers and one can look for primes in such small sets. There are many such problems and variations.
\end{description}
\end{remark}

\section*{Appendices}
\addcontentsline{toc}{section}{Appendices}
 
\appendix
In Appendices A and B, we summarize the unitary representation theory of nilpotent Lie groups. Of course, these materials are standard, basic knowledge of representation theory, and thus, there is no originality on our part.
Thus, these are only for some expected readers unfamiliar with the above. All materials are included in \cite{Corwin} in more detail.
In Appendix C, we recall the harmonic theory of Chen's iterated integral for general torsion-free nilpotent groups, mainly for future reference, whose
main contents are quoted from Kohno's book \cite{Kohno} in Japanese with our translations to English.
\section{Induced representations}\label{Inducedrep}
Here, we recall the definition of induced representations. Although it is a fundamental notion in representation theory, we write here that it is essential in the orbit method a l\`{a} Kirillov \cite{Kirillov1, Kirillov2}. However,  it seems to be difficult to understand this notion at first glance for a beginner in representation theory. Kirillov also described his personal experience \cite{Kirillov3} in the memorial volume of George Mackey.

In Wikipedia, induced representations are summarized as follows:
\vspace{12pt}

In group theory, the induced representation represents a group, $G$, constructed using a known representation of a subgroup $H$. Given a representation of $H$, the induced representation is, in a sense, the ``most general'' representation of $G$ that extends the given one. 
\vspace{12pt}

When $H$ is a trivial group $\{e\}$, then a representation of $G$ induced from the trivial representation $\textbf{1}$ is nothing but the (right) regular representation $R$ of $G$ defined by
\[     (R(g)f)(x) := f(xg) \quad \mbox{for} \quad g \in G, f \in L^2(G)
\]
which realizes the above sentences.

A rigorous definition for an unimodular group $G$,  i.e., it admits a bi-invariant Haar measure on a locally compact topological group $G$, is given as follows:

Consider a closed subgroup $H$ and its unitary representation $(\pi, W)$, where $ W$ is a Hilbert space. Then the representation space $V$ of the representation $\rho =\mbox{Ind}_{H}^{G}\pi$ of $G$ induced from $(\pi, W)$ is defined by 
\begin{align*}
V = \left\{\phi \colon G\to W\ | \phi (h^{-1}g)=\pi (h)\phi (g) (\; \mbox{for all}\; h \in H),\;g \in G,  \phi \in L^{2}(H\backslash G)\right\}
\end{align*}
Here $\phi \in L^2(H\backslash G)$ means that the coset space $H\backslash G$ carries a suitable invariant measure, and since the norm of $\phi(g)$ is constant on each left coset of $H$, we can integrate the square of these norms over $H\backslash G$ and obtain a finite result. 
Finally, the induced representation $\rho = \mbox{Ind}_{H}^{G}\pi$ is defined by 
\[\rho(g)\phi(x)=\phi(xg)\]
for $g,x \in G$ and $\phi \in V$

Furthermore, the induced representations are geometrically described as follows:
Consider the $H$-principal bundle $P = \Pi :G \to H\backslash G$. Then the above representation space $V$ is identified with the space $\Gamma(E)$ of sections of the associated vector bundle $E$ to $P$ defined by 
\begin{equation}      E := W {}_{\pi}\times P = W \times P/\sim ,  \qquad (w,x) \sim (\pi(h)w, h^{-1}x) \quad \mbox{for} \quad h \in H \label{inducedrepresentationbundle}
\end{equation} 
and the representation $\rho(g)$ acts on $\Gamma(E)$ as
\[(\rho(g)\psi)(\Pi(x)) = \psi(\Pi(x)g) \quad \mbox{for} \quad g \in G, \psi \in \Gamma(E).
\]
As mentioned in Step 1 (F1) of Section \ref{Infiniteabelian}, this construction is similar to that of the flat vector bundle $E_R$ over $M$ associated to the regular representation $R$.

\section{Unitary representation of nilpotent Lie groups}

\subsection{Construction of irreducible unitary representations}\label{B.1}
This section is also provided for the convenience of readers unfamiliar with the terminology of representation theories and can be viewed as an ideal model for the orbit method.

Let $G$ be a nilpotent Lie group. It is well known that irreducible unitary representations are described by the orbit method,  a la Kirillov \cite{Kirillov1}, \cite{Kirillov2}, \cite{Corwin}.  
All the materials in this section are quoted from \cite{Corwin} essentially with some minor modifications.

Let $\mathfrak{g}^{\ast}$ be the vector space dual of the Lie algebra $\mathfrak{g}$ of $G$. $G$ acts on $\mathfrak{g}^\ast$ by co-adjoint action $\mbox{Ad}^\ast(G)$. For $l \in \mathfrak{g}^\ast$ and $x \in G$, we sometimes write $l\cdot x$ for $\mbox{Ad}^\ast(x^{-1})l$; note that $l\cdot (xy)=(l\cdot x)\cdot y$.
Given $l \in \mathfrak{g}^\ast$, let $B_l$ be the bilinear form $B_l(X,Y)= l([X, Y])$ and let $\mathfrak{r}_l$ be its radical defined by 
\[\mathfrak{r}_l = \{X \in \mathfrak{g} | B_l(X,Y) = 0,\; \mbox{for all} \; Y \in \mathfrak{g}\}.
\]

Choose a maximal subordinate subalgebra (a.k.a. polarized algebra or polarization) $\mathfrak{m}$ for $l$, which is maximal in the set of subalgebras $\mathfrak{h}$ satisfying $B_l(X,Y)= 0$ for all $X,Y \in \mathfrak{h}$,  and let $M =\exp \mathfrak{m}$. Then the map of $M \to S^1 \cong U(1)$ defined by 
\[ \chi_{l,M}(\exp Y) = e^{2\pi\sqrt{-1}l(Y)}, \quad Y \in \mathfrak{m},\]
is a one-dimensional representation (i.e. unitary character) of $M$, since $B_l(\mathfrak{m}, \mathfrak{m})= 0$. 
We may therefore form the induced representation $\pi_{l,M}  = \mbox{Ind}_M^G\chi_{l,M}$.
The following results describe the unitary dual $\widehat{G}$, the set of (unitary) equivalence classes of irreducible unitary representations of $G$, in terms of these induced representations. 

\begin{theorem}\label{nilpLieirredrep} Let $l \in \mathfrak{g}^\ast$. Then we have:
\begin{description}
\item[{\rm (1)}]{\rm (2.2.1 Theorem in \cite{Corwin})} There exists a maximal subordinate subalgebra $\mathfrak{m}$ for $l$ such that $\pi_{l,M}$ is irreducible. 
\item[{\rm (2)}]{\rm (2.2.2 Theorem in \cite{Corwin})} Let $\mathfrak{m}, \mathfrak{m}'$  be two maximal subordinate subalgebras for $l$. Then $\pi_{l,M} \cong \pi_{l,M'}$. $($unitary equivalent, hence we may write $\pi_l$ for $\pi_{l,M}$ if we are interested only in equivalence classes of unitary representations$)$.
 In particular, $\pi_{l,M}$ is irreducible whenever $\mathfrak{m}$ is maximal subordinate for $l$.
\item[{\rm (3)}]{\rm (2.2.3 Theorem in \cite{Corwin})} Let $\pi$ be any irreducible unitary representation of $G$. Then there is an $l \in \mathfrak{g}^\ast$ such that $\pi_l \cong \pi$.
\item[{\rm (4)}]{\rm (2.2.4 Theorem in \cite{Corwin})} Let $l, l' \in \mathfrak{g}^\ast$ . Then $\pi_l \cong \pi_{l'}$ (unitary eqivalent) if and only if $l$ and $l'$ are in the same $\mbox{Ad}^\ast(G)$-orbit in $\mathfrak{g}^\ast$.
\end{description}
\end{theorem}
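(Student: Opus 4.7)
The plan is to prove all four assertions (1)--(4) together by induction on $n = \dim \mathfrak{g}$. The base case $n=1$ is the trivial abelian situation, where Pontryagin duality gives everything at once. For the inductive step I would separate two fundamentally different configurations, exactly parallel to Case 1 / Case 2 in the proof of Theorem \ref{firstdecomposition} (Subsubsections \ref{inductionstep}--\ref{choicepolarization}) already executed in this paper.

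First, I would reduce to the situation where $\mathfrak{z}(\mathfrak{g})$ is one-dimensional and $l$ is nontrivial on it. If instead there is a nonzero central $Z$ with $l(Z)=0$, then $\mathbb{R}Z$ lies in every polarization for $l$, the functional $l$ descends to $\bar l \in (\mathfrak{g}/\mathbb{R}Z)^\ast$, and both sides of the orbit correspondence descend accordingly; the inductive hypothesis on the $(n-1)$-dimensional quotient then yields (1)--(4) for $l$. Once $\mathfrak{z}(\mathfrak{g}) = \mathbb{R}Z$ with $l(Z) \neq 0$, I would apply Kirillov's lemma (Lemma \ref{Kirillovlemma}) to write $\mathfrak{g} = \mathbb{R}X \oplus \mathfrak{g}_0$, where $\mathfrak{g}_0$ is a codimension-one ideal that is the centralizer of a chosen $Y$ with $[X,Y]=Z$, and $l(Y)=0$ after a harmless adjustment.

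To prove (1), I would choose a polarization $\mathfrak{m}$ for $l_0 := l|_{\mathfrak{g}_0}$ contained in $\mathfrak{g}_0$ (by induction), verify directly that $\mathfrak{m}$ is still a polarization for $l$ in $\mathfrak{g}$ (dimensions match because $\mathfrak{r}_l = \mathfrak{r}_{l_0}$ in this case), and then show $\pi_{l,M} \cong \mathrm{Ind}_{G_0}^G \pi_{l_0,M}$. Induction in stages and the inductive hypothesis that $\pi_{l_0,M}$ is irreducible, combined with Mackey's irreducibility criterion applied to the codimension-one normal subgroup $G_0$ (using that $\mathrm{Ad}^\ast(\exp tX)l_0 \neq l_0$ for $t \neq 0$ because $l([X,Y]) = l(Z) \neq 0$), give irreducibility of $\pi_{l,M}$. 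For (3), conversely I would restrict any irreducible $\pi$ of $G$ to $G_0$, apply Mackey's subgroup analysis: either $\pi|_{G_0}$ contains an irreducible $\sigma$ whose $G$-stabilizer is all of $G$ (then $\pi$ extends $\sigma$ and $\mathfrak{m} \subset \mathfrak{g}_0$ can be chosen), or $\pi \cong \mathrm{Ind}_{G_0}^G \sigma$, and then inductive hypothesis (3) on $\sigma = \pi_{l_0}$ furnishes the required $l$ lifting $l_0$.

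The substantive obstacle is (2), independence of polarization, since two polarizations need not both lie in $\mathfrak{g}_0$. My plan is to reduce to the case that both candidate polarizations are comparable by the following two-step move, exactly as in Subsubsection \ref{choicepolarization}: given an arbitrary polarization $\mathfrak{m}$ not contained in $\mathfrak{g}_0$, I construct $\mathfrak{m}_1 := (\mathfrak{m} \cap \mathfrak{g}_0) \oplus \mathbb{R}Y \subset \mathfrak{g}_0$, show it is again a polarization of the same dimension, and verify $\pi_{l,M} \cong \pi_{l,M_1}$ by exhibiting an explicit intertwiner which, after descending to the quotient $(\mathfrak{k}_0)\backslash\mathfrak{k} \cong \mathrm{Lie}(\mathrm{Heis}_3(\mathbb{R}))$, becomes the Euclidean Fourier transform \eqref{EuclidFourier}; this part is already done in the paper and I would quote it verbatim. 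Combining with the in-stages reduction handles (2) in general. Finally (4): if $l' = \mathrm{Ad}^\ast(g) l$ then right translation by $g$ gives an obvious unitary equivalence $\pi_l \cong \pi_{l'}$; for the converse, the fastest route is to invoke the Kirillov character formula in its distributional form (Theorem \ref{characterfujiwara}) to read off the coadjoint orbit as the Fourier transform of the character---two equivalent irreducibles yield the same orbit. The crux throughout remains step (2); all other parts are either inductive bookkeeping or standard Mackey machine.
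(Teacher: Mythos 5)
Your induction is the classical Kirillov--Corwin--Greenleaf argument, and for (1)--(3) it is essentially the route the cited source takes; note that this paper itself does not prove Theorem \ref{nilpLieirredrep} but quotes it from \cite{Corwin}, reproducing only the change-of-polarization step (your $\mathfrak{m}_1=(\mathfrak{m}\cap\mathfrak{g}_0)\oplus\mathbb{R}Y$ move with the Fourier-transform intertwiner) in Subsubsection \ref{choicepolarization}, in the service of Theorem \ref{firstdecomposition}. Still, two points in your sketch need repair. First, the parenthetical justification in (1) is wrong as stated: in the reduced situation one has $\mathfrak{r}_l\subset\mathfrak{g}_0$ and $\mathfrak{r}_{l_0}=\mathfrak{r}_l\oplus\mathbb{R}Y$, so $\dim\mathfrak{r}_{l_0}=\dim\mathfrak{r}_l+1$; it is precisely this $+1$ that offsets $\dim\mathfrak{g}_0=\dim\mathfrak{g}-1$ and makes a polarization of $l_0$ in $\mathfrak{g}_0$ have the right dimension $\tfrac12(\dim\mathfrak{g}+\dim\mathfrak{r}_l)$ to be maximal subordinate for $l$. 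If $\mathfrak{r}_l=\mathfrak{r}_{l_0}$ held, your dimension count would fail by $\tfrac12$. Relatedly, for the Mackey irreducibility step it is not enough that $\mathrm{Ad}^\ast(\exp tX)l_0\neq l_0$; you need it to lie in a \emph{different} $G_0$-coadjoint orbit, which follows because $Y$ is central in $\mathfrak{g}_0$, so $l_0(Y)$ is a $G_0$-orbit invariant and is shifted to $l_0(Y)-t$.

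Second, your proof of the converse direction of (4) via the character formula is circular as set up here: both Theorem \ref{characterfujiwara} (Fujiwara's formula, which you cite, and which concerns monomial representations induced from a normal subgroup) and the actual Kirillov character formula, Theorem \ref{nilpLiecharcter}, are stated in terms of ``the coadjoint orbit corresponding to $\pi$,'' i.e.\ they presuppose the bijection (1)--(4) that you are proving; in the standard development the character formula is derived after, and from, the orbit parametrization. The safe route is to prove (4) inside the same induction: after reducing to the one-dimensional-center case with $l(Z)=l'(Z)\neq 0$ (central characters of equivalent representations agree), restrict to $G_0$, observe that $\pi_l|_{G_0}$ is the direct integral of $\pi_{\mathrm{Ad}^\ast(\exp tX)l_0}$ over $t$, and use the inductive hypothesis (4) for $G_0$ to conclude that the $G_0$-orbits occurring in the restrictions coincide, whence $l'\in\mathrm{Ad}^\ast(G)l$. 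Alternatively you may use the character formula, but then you must prove it independently rather than cite the statements in this paper. With those two repairs the sketch is sound; the remaining steps (the in-stages reduction, the quotient-by-central-kernel reduction, and the Heisenberg intertwiner) are exactly as in \cite{Corwin} and in Subsubsections \ref{inductionstep}--\ref{choicepolarization}.
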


We may summarize these results as follows: the map $l \mapsto \pi_{l,M}$ is independent of $\mathfrak{m}$ and gives a bijection between the orbits $\mathfrak{g}^\ast /\mbox{Ad}^\ast(G)$ and $G$.

\subsection{Coadjoint orbits}

Concerning the Fourier inversion formula, Theorem \ref{FourierinversionLie}, we need only the information of the generic orbits. The following are the first instances describing them. 

\begin{theorem}[{\rm 3.1.6 Theorem in \cite{Corwin}}]\label{corwin316} Let $G$ act unipotently on $V$ and let $\{e_1,\ldots,e_m\}$ be a Jordan-H\"older basis for this action, namely, $V_j := \mathbb{R}$\mbox{-span}$\{e_{j+1},\ldots,e_m\}$ is $G$-stable for all $j$. Then there are disjoint sets of indices $S$, $T$ with $S \cup T=\{1,2,\ldots,m\}$, a Zariski open set $U \subset V$, and rational functions $Q_1(x, t),\ldots, Q_m(x, t)$ of the variables $(x, t) = (x_1 ,\ldots, x_m, t_1 ,\ldots, t_k)$ where $k = \mbox{card}(S)$, with the following properties: if $S = \{j_1 < \cdots <j_k\}$ and if we identify $x \in \mathbb{R}^m$ with $v = \sum_{i= 1}^m x_ie_i$, then 

\begin{description}
\item[{\rm (i)}]	The functions $Q_i(x, t)$ are rational nonsingular on $U \times \mathbb{R}^k$. For fixed $x$, they are polynomials in $t$.
\item[{\rm (ii)}] For each $v = \sum_{i= 1}^m x_ie_i$ in $U$, $Q(x, t) = \sum_{i= 1}^m Q_i(x, t)e_i$ maps $\mathbb{R}^k$ diffeomorphically onto the orbit $G \cdot v$.
\item[{\rm (iii)}] For fixed $x$, the function $Q_j(x, t)$ depends only on those $t_i$ such that $j_i \leq j$.
\item[{\rm (iv)}] If $j \not\in S$ then $Q_j(x, t) =x_j+ R(x_1,\ldots, x_{j-1}, t_1,\ldots, t_i)$ where $i$ is the largest index such that $j_i <j$ and $R$ is rational. Moreover, $Q_1(x, t) = x_1$.
\item[{\rm (v)}] $Q_{j_i}(x,t)= t_i+x_{j_i}+R(x_1 ,\ldots,,x_{j_i-1}, t_1,\ldots,t_{i-1} )$ where $R$ is rational.
\end{description}
\end{theorem}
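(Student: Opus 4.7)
The plan is to proceed by induction on the dimension $m$ of $V$. The base case $m=1$ is immediate: any unipotent linear action on a one-dimensional space is trivial, so one takes $S=\emptyset$, $T=\{1\}$, $U=\mathbb{R}$, and $Q_1(x)=x_1$. For the inductive step, I would first apply the hypothesis to the $G$-stable subspace $V_1=\mathbb{R}\text{-span}\{e_2,\ldots,e_m\}$, obtaining data $(S',T',U',Q'_2,\ldots,Q'_m)$ for the restricted action. Since $G$ acts trivially on the one-dimensional quotient $V/V_1$ (as a unipotent action), the coordinate $x_1$ is automatically $G$-invariant, which forces $1\in T$ and $Q_1(x,t)=x_1$; this matches property (iv) with empty remainder.

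To build the global parameterization, I would exploit the jump sequence $d_j(v)=\dim(G\cdot\pi_j v)$, where $\pi_j\colon V\to V/V_j$ is the quotient projection. Each $d_j$ is $G$-invariant and upper semi-continuous, hence constant on a nonempty Zariski open set; intersecting over $j$ produces the open set $U\subset V$ of the theorem, and I would define $S=\{j:d_j>d_{j-1}\text{ on }U\}$, $T=\{1,\ldots,m\}\setminus S$, so that $k=|S|$ equals the generic orbit dimension. For each jump index $j_i\in S$ (listed in increasing order), I would select $X_i\in\mathfrak{g}$ so that $X_i\cdot v$ has a nonzero $e_{j_i}$-component but vanishing components on $e_{j_l}$ for $l<i$; such $X_i$ exist precisely because $j_i$ is a jump index. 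The parameterization
\[
Q(x,t)=\exp(t_k X_k)\cdots\exp(t_1 X_1)\cdot v
\]
then has $j$-th coordinate computed by iterated matrix multiplication in the Jordan-Hölder basis, where each $\exp(t_i X_i)$ is strictly upper triangular unipotent and acts as the identity on the first $j_i-1$ coordinates. This immediately gives polynomiality in $t$ for fixed $x$, as well as property (iii). For $j\in T$, the triangularity together with the absence of a corresponding $X_i$ acting in the $j$-th slot forces the $j$-th coordinate to be $x_j$ plus an expression in the earlier $x$'s and in those $t_l$ with $j_l<j$, yielding (iv).

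Property (v) and the full rationality require a normalization: rescaling each $t_i$ by the coefficient of $t_i$ in $Q_{j_i}(x,t)$, which is a nonvanishing polynomial in $x_1,\ldots,x_{j_i-1}$ on $U$, produces the $1\cdot t_i$ leading term and introduces exactly the denominators that make the $Q_j$ rational (rather than polynomial) in $x$. After this renormalization, the Jacobian $\partial Q_{j_i}/\partial t_l$ is lower unitriangular on $\mathbb{R}^k$, which gives the diffeomorphism statement (ii) via the inverse function theorem together with a straightforward injectivity argument on $U$. The main obstacle I foresee is that the rescaling of $t_i$ needed for (v) can a priori introduce a spurious dependence of $Q_j$ (for $j<j_i$) on $t_i$, violating (iii). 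The resolution is to perform the rescalings in order $i=1,2,\ldots,k$ and to choose the $X_i$ from a coexponential-type basis so that $[X_i,X_l]$ lies in the span of the $X_s$ with $j_s>j_i$ (modulo the stabilizer of $v$); commutator bookkeeping then shows the triangular dependence survives each rescaling. This interlocking of the jump structure, the coexponential choice of $X_i$'s, and the rational renormalization is the technical heart of the argument.
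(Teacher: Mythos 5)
First, note that the paper does not prove this statement at all: it is quoted verbatim as Theorem 3.1.6 of Corwin--Greenleaf \cite{Corwin}, where it is established by induction along the Jordan--H\"older flag (working with the quotients $V/V_j$ and adding one coordinate at a time), with the Chevalley--Rosenlicht theorem on closedness and polynomial parameterization of unipotent orbits proved beforehand and used as input. Your opening move is different and does not get off the ground as stated: applying the inductive hypothesis to the \emph{subspace} $V_1=\mathbb{R}\mbox{-span}\{e_2,\ldots,e_m\}$ gives information about orbits of vectors lying in $V_1$, but the vector $v=x_1e_1+w$ is not in $V_1$, and the $V_1$-component of $g\cdot v$ is $g\cdot w + x_1(g\cdot e_1-e_1)$, an affine action twisted by $x_1$, not the restricted linear action. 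The triangular shape of (iii)--(v) is precisely what makes the induction on the quotient $V/V_{m-1}$ (dropping the last flag coordinate) the workable direction.

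The direct construction you then substitute has two genuine gaps. (1) The jump condition at $j_i$ gives an $X_i$ with $X_i\cdot v\in V_{j_i-1}\setminus V_{j_i}$, i.e.\ \emph{all} components below $j_i$ vanish; your weaker requirement (vanishing only on the earlier jump coordinates $e_{j_l}$) already ruins (iv), since $\exp(t_iX_i)$ then moves non-jump coordinates below $j_i$. Worse, even with the strong choice, your ordering $\exp(t_kX_k)\cdots\exp(t_1X_1)\cdot v$ applies $X_i$ to a point whose coordinates between $j_1$ and $j_i$ have already been moved, and the $e_j$-component of $X_i\cdot w$ for $j_1<j<j_i$ is $\sum_{l<j}w_l(X_i)_{jl}$, which no longer vanishes once $w_l\neq x_l$; this produces dependence of $Q_j$ on $t_i$ with $j_i>j$, contradicting (iii)--(iv). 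The cure is not the bracket condition you posit (which you neither justify nor can always arrange as stated), but either composing with the \emph{largest} jump index innermost and $X_i$ chosen in the stabilizer chain $\mathfrak{s}_{j_i-1}=\{X:\,X\cdot v\in V_{j_i-1}\}$, or doing the coordinate-by-coordinate induction of \cite{Corwin}; in either case one must also choose the $X_i$ rationally in $x$ over the layer $U$ (e.g.\ by Cramer's rule on the defining linear systems), which you assert but do not arrange, and which is what makes the $Q_j$ rational. (2) Surjectivity in (ii) is not addressed: the inverse function theorem plus injectivity only gives an injective immersion of $\mathbb{R}^k$ into the $k$-dimensional orbit. One needs in addition that the triangular map $t\mapsto(Q_{j_1},\ldots,Q_{j_k})$ with unit leading coefficients is a proper bijection of $\mathbb{R}^k$ and that the orbit is closed and connected (Chevalley--Rosenlicht), so that the image is open and closed in $G\cdot v$; without this the claimed diffeomorphism onto the orbit is unproved.
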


Next, by changing the variables $(x, t) = (x_1,\ldots, x_m, t_1,\ldots, t_k)$ to $(x, u) = (x_1,\ldots, x_m, u_ 1,\ldots,u_k)$, we can simplify the item (v) in this theorem to the following item (v) in following theorem.

\begin{theorem}[{\rm 3.1.8 Corollary in \cite{Corwin}}]\label{corwin318}
Let $G$ act unipotently in $V$, let $\{ e_1,\cdots, e_m \}$ be a Jordan-H\"older basis, and $d_j= \mbox{generic dimension of G-orbits}$ in $V/V_j$ where $V_j= \mathbb{R}\mbox{-span}\{e_{j+1},\ldots, e_m \}$. Let $k = d_m =$ generic dimension of orbits in $V$, $U = \{ v \in V| \; \mbox{{\rm dim}}G \cdot \bar{v}\; \mbox{in}\; V/V_j\; \mbox{is equal to}\; d_j, 1 \leq j \leq m \}$, $S = \{j_1 < \cdots < j_k \}$ the indices $j$ such that $d_j \neq d_{j-1}$, and $T=\{1,2,\ldots,m\} \sim S$. lf we identify $v= \sum_{i=1}^m x_ie_i$ with $x \in \mathbb{R}^m$, there are rational functions $P_1,\ldots, P_m$ in $m + k$ variables $(x, u) = (x_1,\ldots, x_m, u_ 1,\ldots,u_k)$ such that the following hold. 
\begin{description}
\item[{\rm (i)}] The $P_1,\ldots,P_m$ are rational, nonsingular on $U \times \mathbb{R}^k$. If $x$ is fixed, they are polynomials in $u \in \mathbb{R}^k$.
	
\item[{\rm (ii)}] If $v= \sum_{i=1}^m x_ie_i$, the map of $\mathbb{R}^k$ into $V$ given by $P(x, u) = \sum_{i=1}^m P_j(x, u)e_j$ is a diffeomorphism onto the closed submanifold $G \cdot v$.
\item[{\rm (iii)}] For fixed $x$, the function $P_j(x, u)$ depends only on the $u_j$ such that $j_i\leq j$.
\item[{\rm (iv)}] If $j \not\in S$, $P_j(x,u) = x_j + R(x_1,\ldots, x_{j-1},u_1,\ldots,u_i)$ where $i$ is the largest index such that $j_i < j$ and $R$ is rational. Moreover $P_1(x, t) = x_1$.
\item[{\rm (v)}] $P_{j_i}(x,u)=u_i$, $1\leq i\leq k$.
\end{description}
Finally, $U$ is $G$-invariant and if $u \in \mathbb{R}^k$ is fixed, each $P_j(x, u)$ is a rational nonsingular function on $U$, constant on $G$-orbits. 
\end{theorem}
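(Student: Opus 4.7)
My plan is to deduce this corollary from Theorem 3.1.6 (which I take as given) by a triangular change of parameters in the orbit variable. The idea is that the functions $Q_{j_i}(x,t)$ from 3.1.6 already have the form $Q_{j_i}(x,t)=t_i+x_{j_i}+R_i(x_1,\ldots,x_{j_i-1},t_1,\ldots,t_{i-1})$, so the substitution $u_i:=Q_{j_i}(x,t)$ is triangular with unit coefficient on $t_i$, hence globally invertible, and after substituting back into the remaining $Q_j$'s we obtain the desired $P_j(x,u)$.

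\textbf{Step 1 (change of variables).} Apply Theorem 3.1.6 to produce the Zariski open $U\subset V$, the index sets $S=\{j_1<\cdots<j_k\}$, $T=\{1,\ldots,m\}\setminus S$, and rational functions $Q_j(x,t)$ satisfying (i)--(v) there. Define a map $\Psi:\mathbb{R}^m\times\mathbb{R}^k\to\mathbb{R}^m\times\mathbb{R}^k$ by $\Psi(x,t)=(x,u)$, where
\[
u_i:=Q_{j_i}(x,t)=t_i+x_{j_i}+R_i(x_1,\ldots,x_{j_i-1},t_1,\ldots,t_{i-1}),\qquad i=1,\ldots,k.
\]
By induction on $i$ (using the triangular dependence in property (v) of 3.1.6), $\Psi$ is rational, nonsingular on $U\times\mathbb{R}^k$ (where each $R_i$ is well-defined), and its inverse $T(x,u)=(T_1(x,u),\ldots,T_k(x,u))$ is obtained by successively solving
\[
T_i(x,u)=u_i-x_{j_i}-R_i\bigl(x_1,\ldots,x_{j_i-1},T_1(x,u),\ldots,T_{i-1}(x,u)\bigr).
\]
Each $T_i$ is rational in $x$ and polynomial in $(u_1,\ldots,u_i)$ for fixed $x$.

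\textbf{Step 2 (definition and verification of (i)--(v)).} Set $P_j(x,u):=Q_j\bigl(x,T(x,u)\bigr)$. Then (i) follows from the corresponding statement in 3.1.6 and the polynomial dependence of $T$ on $u$ (for fixed $x$). Statement (ii) is immediate because $T(x,\cdot)$ is a diffeomorphism and property (ii) of 3.1.6 gives a diffeomorphism $t\mapsto Q(x,t)$ onto $G\cdot v$; hence so is $u\mapsto P(x,u)$. Property (iii) is inherited from (iii) of 3.1.6 combined with the triangular nature of $T$, and (v) is by construction: $P_{j_i}(x,u)=Q_{j_i}(x,T(x,u))=u_i$. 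For (iv), if $j\in T$ and $i$ is the largest index with $j_i<j$, then $Q_j(x,t)=x_j+R(x_1,\ldots,x_{j-1},t_1,\ldots,t_i)$ by (iv) of 3.1.6, and substituting $t_\ell=T_\ell(x,u)$ (which depends only on $x_1,\ldots,x_{j_\ell-1}$ and $u_1,\ldots,u_\ell$, hence only on $x_1,\ldots,x_{j-1}$ and $u_1,\ldots,u_i$) produces the stated form.

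\textbf{Step 3 (the closing claim: orbit invariance).} The set $U$ is $\mathrm{Ad}^*(G)$--invariant since it was defined in 3.1.6 by the condition that certain orbit dimensions in each quotient $V/V_j$ attain their generic values, and these dimensions are orbit invariants. Now fix $u\in\mathbb{R}^k$ and let $v,v'\in U$ lie in the same $G$-orbit, with coordinate vectors $x,x'$. By (ii), both $P(x,\cdot)$ and $P(x',\cdot)$ are diffeomorphisms of $\mathbb{R}^k$ onto the common orbit $G\cdot v=G\cdot v'$. By (v), both $P(x,u)$ and $P(x',u)$ are the points in this orbit whose $S$-coordinates equal $(u_1,\ldots,u_k)$. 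Because each parametrization is bijective, there is at most one such point; hence $P(x,u)=P(x',u)$ coordinate by coordinate. Consequently each $P_j(\cdot,u)$ is constant on $G$-orbits in $U$, and its rational nonsingularity on $U$ has already been established in Step 2.

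The main technical point to watch is Step 1, where one must verify carefully that the triangular substitution is globally (not merely locally) invertible and that denominators do not vanish on $U\times\mathbb{R}^k$; once this is in place, the rest is essentially bookkeeping, and the orbit-invariance clause in Step 3 is a pleasantly clean consequence of uniqueness in (ii) together with the normalization (v).
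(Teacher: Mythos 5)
Your proposal is correct and takes essentially the route the paper (following Corwin--Greenleaf) indicates: Theorem 3.1.8 is obtained from Theorem 3.1.6 by the triangular substitution $u_i=Q_{j_i}(x,t)$, whose unit-triangular structure from property (v) of 3.1.6 makes it globally invertible with rational/polynomial inverse, after which (i)--(v) and the orbit-invariance clause follow exactly as you argue (the uniqueness-via-(v) argument for constancy on orbits is the right one). One cosmetic point: in Step 3 you write ``$\mathrm{Ad}^*(G)$-invariant,'' but the statement concerns a general unipotent action on $V$; the argument is unchanged since orbit dimensions in the quotients $V/V_j$ are orbit invariants for any such action.
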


The above two theorems are also expressed in different forms: 

\begin{theorem}[{\rm 3.1.9 Theorem in \cite{Corwin}}]\label{corwin319}
Given a Jordan-H\"older basis $\{e_1,\ldots, e_m \}$ for a unipotent action of $G$ on vector space $V$, define the generic dimensions $d_i$ $(1\leq i\leq m)$, a partition $S\cup T = \{1, 2,\ldots, m \}$, and the $G$-invariant set $U$ of generic orbits as in Theorems \ref{corwin316} and \ref{corwin318}.
Let $V_S =\mathbb{R}\mbox{-span}\{e_i: i \in S\}$, $V_T = \mathbb{R}\mbox{-span}\{e_i: i \in T\}$ and let $p_S, p_T$ be the projections of $V$ to $V_S, V_T$. Then
\begin{description}
\item[{\rm (i)}] Every $G$-orbit in $U$ meets $V_T$ in a unique point. In particular, $U\cap V_T$ is nonempty and Zariski open in $V_T$.
There is a map $\psi: (U\cap V_T) \times V_S \to U$ such that

\item[{\rm (ii)}] $\psi$ is a rational, nonsingular bijection with rational, nonsingular inverse,
\item[{\rm (iii)}] For each $v \in V_T\cap U$, the map $P_v(\cdot)= p_T(\psi(v,\cdot ))$ from $V_S$ into $V_T$ is a polynomial, and the orbit $G \cdot v$ is its graph,
\item[{\rm (iv)}] The Jacobian determinant of $\psi$ is identically $1$.
\end{description}
\end{theorem}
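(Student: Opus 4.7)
The plan is to derive all four assertions directly from the explicit polynomial/rational parametrization of generic orbits furnished by Theorem \ref{corwin318}. Identify $V_T$ with $\mathbb{R}^{m-k}$ via the basis $\{e_j : j \in T\}$ and $V_S$ with $\mathbb{R}^k$ via $\{e_{j_1},\ldots,e_{j_k}\}$. For $v \in V_T$ with coordinates $(v_j)_{j \in T}$ write $\tilde v = \sum_{j\in T} v_j e_j$, viewed as a point of $V$ whose $V_S$-part vanishes, and define
\[
\psi(v, w) := P\bigl(\tilde v, (w_1,\ldots,w_k)\bigr).
\]

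First I would dispose of (i). For any $y \in U$ with full coordinates $(y_1,\ldots,y_m)$, Theorem \ref{corwin318} (ii) gives $G\cdot y = \{P(y,u) : u \in \mathbb{R}^k\}$, and part (v) shows the $V_S$-components of $P(y,u)$ are exactly $u_1,\ldots,u_k$. Hence $P(y,u) \in V_T$ iff $u = 0$, yielding the unique intersection point $P(y,0)$. Since $U$ is $G$-invariant, $P(y,0) \in V_T \cap U$, so this set is nonempty; Zariski openness in $V_T$ is inherited from that of $U$ in $V$. This simultaneously produces the natural candidate for the first component of $\psi^{-1}$.

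Next I would establish (ii) and (iii) together. Rationality and nonsingularity of $\psi$ on $(V_T \cap U)\times V_S$ are inherited from the corresponding properties of $P$ on $U\times \mathbb{R}^k$. For bijectivity: for fixed $v \in V_T \cap U$, $w \mapsto \psi(v,w)$ parametrizes $G\cdot v$ bijectively by Theorem \ref{corwin318} (ii), while by (i) distinct base points in $V_T \cap U$ give disjoint orbits that together exhaust $U$; hence the inverse is
\[
\psi^{-1}(y) = \bigl(P(y,0),\,(y_{j_1},\ldots,y_{j_k})\bigr),
\]
which is rational and nonsingular in $y$ by Theorem \ref{corwin318} (i). For (iii), the $V_T$-components of $\psi(v,w)$ are $P_j(\tilde v, w)$ for $j \in T$, which are polynomial in $w$ by Theorem \ref{corwin318} (i), so $P_v$ is polynomial; since the $V_S$-components of $\psi(v,w)$ equal $w$ by part (v), the orbit $G \cdot v = \{\psi(v,w) : w \in V_S\}$ is precisely the graph of $P_v$.

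The remaining point is (iv). Order the coordinates on the target as ``$T$-block first, $S$-block second''. The Jacobian of $\psi$ at $(v,w)$ then has block form
\[
J = \begin{pmatrix} A & B \\ 0 & I_k \end{pmatrix},
\]
where the $I_k$ block comes from $\psi(v,w)_{j_i} = w_i$, the zero block reflects the fact that the $V_S$-components of $\psi$ are independent of $v$, and $A = (\partial P_j/\partial v_l)_{j,l\in T}$. Using Theorem \ref{corwin318} (iv), for $j \in T$ one has $P_j(\tilde v, w) = v_j + R(v_1,\ldots,v_{j-1},w_1,\ldots,w_i)$ with $R$ rational; ordering the $T$-indices increasingly makes $A$ triangular with $1$'s on the diagonal, so $\det J = \det A = 1$. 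The substantive work has already been absorbed into Theorems \ref{corwin316} and \ref{corwin318}, and the present theorem is essentially a geometric repackaging; the only delicate point is the bookkeeping needed to obtain the triangular form of $A$, which hinges on the precise dependence of the remainders $R$ on only the lower-index coordinates.
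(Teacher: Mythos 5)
Your proof is correct; note that the paper itself offers no proof of this statement (it is quoted verbatim from Corwin--Greenleaf, Theorem 3.1.9, as background in Appendix B), and your derivation is exactly the standard one from Theorem \ref{corwin318}: set $\psi(v,w)=P(\tilde v,w)$, use property (v) to get the unique $V_T$-intersection point, invert via $y\mapsto\bigl(P(y,0),p_S(y)\bigr)$ (whose rationality and nonsingularity are supplied by the last sentence of Theorem \ref{corwin318}), and read off the unit Jacobian from property (iv). One cosmetic simplification: if you keep both domain and target coordinates in the natural index order $1,\dots,m$ (matching $v_j\leftrightarrow y_j$ for $j\in T$ and $w_i\leftrightarrow y_{j_i}$ for $j_i\in S$), the full Jacobian matrix is already lower triangular with unit diagonal, so no block reordering --- and hence no sign bookkeeping --- is needed.
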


\subsection{Decomposition of the right regular representation}\label{B.3}
We recall the Fourier inversion formula and the Kirillov character formula.
The former describes the content of a decomposition of the right regular representation into a direct integral of irreducible unitary representations of the nilpotent Lie group $G$. To define this formula, we need to compute the trace of the Fourier transform $\pi_l(\phi)$ defined by 
\[  \pi_l(\phi) := \int_G\pi_l(\sigma )\phi(\sigma)d\sigma \quad \phi \in \mathcal(S)(G)\]
where $l \in \mathfrak{g}^\ast$ and $\mathcal{S}(G)$ is the space of rapidly decreasing functions on $G$ (a.k.a. Schwartz class).

The Kirillov character formula is a tool for the computation of the trace $\mbox{Tr}(\pi_l(\phi))$ of $\pi_l(\phi)$. We first recall several results to compute them.  

First, we recall notions of weak or strong Malcev basis of a nilpotent Lie group $G$ and its Lie algebra $\mathfrak{g}$.

\begin{definitiontheorem}[1.1.13 Theorem and Note in \cite{Corwin}]\label{malcevbasis}
Let $\mathfrak{g}$ be a nilpotent Lie algebra, and let $\mathfrak{g}_1 \supset \mathfrak{g}_2 \supset \cdots \supset \mathfrak{g}_k$ be subalgebras, with ${\rm dim}\; \mathfrak{g}_i = m_i$ and ${\rm dim}\; \mathfrak{g} = n$.
\begin{description}
\item[{\rm (a)}] $\mathfrak{g}$ has a basis $\{X_1, \ldots, X_n\}$ such that
\begin{description}
\item[{\rm (i)}] for each $m$, $\mathfrak{h}_m = \mathbb{R}\mbox{-span}\{ X_1, \ldots, X_m\}$ is a subalgebra of $\mathfrak{g}$,
\item[{\rm (ii)}] for $1 \geq j	\geq k$, $\mathfrak{h}_{m_j} = \mathfrak{g}_j$.
\end{description}
This basis is called a weak Malcev basis for $\mathfrak{g}$ (through $\mathfrak{g}_1,\ldots, \mathfrak{g}_k$).

\item[{\rm (b)}] If the $\mathfrak{g}_i$ are ideals of $\mathfrak{g}$, then one can pick the $X_i$ so that (i) is replaced by
\begin{description}
\item[{\rm (iii)}] for each $m$, $\mathfrak{h}_m = \mathbb{R}\mbox{-span}\{ X_1, \ldots, X_m\}$ is an ideal of $\mathfrak{g}$.
\end{description}
This basis is called a strong Malcev basis for $\mathfrak{g}$ (through $\mathfrak{g}_1,\ldots, \mathfrak{g}_k$).
\end{description}
\end{definitiontheorem}

\begin{theorem}[4.2.1 Theorem in \cite{Corwin}]\label{nilpotenttrace} Let $\pi =\pi_l$ be an irreducible representation of a nilpotent Lie group $G$, let $\mathfrak{m}$ be a polarization for $l$, and model $\pi$ in $L^2(\mathbb{R}^k)$ {\rm (}i.e. representation space of $\pi${\rm )} using any weak
Malcev basis through $\mathfrak{m}$. If $\phi \in \mathcal{S}(G)$, then $\pi_l(\phi)$ is trace class and
\begin{equation*}
\pi_l(\phi) f(s) = \int_{\mathbb{R}^k}K_\phi (s, t)f(t) dt, \quad \mbox{for all} \quad f \in L^2(\mathbb{R}^k )
\end{equation*}
where $K_\phi \in \mathcal{S}(\mathbb{R}^k \times \mathbb{R}^k)$. Furthermore, $\mbox{Tr}\;\pi_l(\phi)$ is given by
\begin{equation*}
\mbox{Tr}\;\pi_l(\phi) = \int_{\mathbb{R}^k}K_\phi (s, s)ds. \quad \mbox{(absolutely convergent)} 
\end{equation*}
\end{theorem}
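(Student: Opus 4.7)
My plan is to produce an explicit Schwartz integral kernel for $\pi_l(\phi)$ in the chosen model and then read off both the trace class property and the trace formula from general Hilbert–Schmidt/trace class theory for operators with Schwartz kernels.

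First I would set up the model very concretely. Let $\{X_1,\ldots,X_n\}$ be the weak Malcev basis through $\mathfrak{m}$, with $\mathfrak{m}=\mathbb{R}\text{-span}\{X_1,\ldots,X_m\}$ and $k=n-m$. Using the diffeomorphism
\[
\mathbb{R}^k\times M\longrightarrow G,\qquad (t_1,\ldots,t_k,\mu)\longmapsto \exp(t_kX_n)\cdots\exp(t_1X_{m+1})\,\mu,
\]
I realize $\pi_l$ on $L^2(\mathbb{R}^k)$ via $(\pi_l(\sigma)f)(s)=\chi_{l,M}(\mu(s,\sigma))\,f(u(s,\sigma))$, where the pair $(u,\mu)\in\mathbb{R}^k\times M$ is determined by rewriting $\exp(s_kX_n)\cdots\exp(s_1X_{m+1})\,\sigma$ in the above product form. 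Because $G$ is nilpotent, the Baker–Campbell–Hausdorff formula makes $(s,\sigma)\mapsto(u,\mu)$ a polynomial map in the canonical coordinates, which is the crucial point.

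Next I would compute the kernel. Plugging the above action into $\pi_l(\phi)f(s)=\int_G\phi(\sigma)(\pi_l(\sigma)f)(s)\,d\sigma$ and changing variables from $\sigma\in G$ to $(u,\mu)\in\mathbb{R}^k\times M$ (the Jacobian is polynomial, and in fact equals $1$ in appropriately chosen coordinates, by unimodularity of $G$ and an argument analogous to Theorem~\ref{corwin319}(iv)), I obtain
\[
\pi_l(\phi)f(s)=\int_{\mathbb{R}^k}K_\phi(s,u)f(u)\,du,\qquad K_\phi(s,u)=\int_M \phi\bigl(\tau(s,u,\mu)\bigr)\,\chi_{l,M}(\mu)\,d\mu,
\]
where $\tau(s,u,\mu)$ is the inverse of the change of variables. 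The map $(s,u,\mu)\mapsto \tau(s,u,\mu)$ is polynomial in $s,u$ and polynomial in exponential coordinates on $M$, and $\chi_{l,M}$ is a character. Using that $\phi\in\mathcal{S}(G)$ decays rapidly in every polynomial coordinate and that the integration over $M$ is an oscillatory integral against a polynomial phase with linear top-degree part in the $M$-variables, I would show by iterated integration by parts (exactly as in the proof of rapid decay of the Fourier transform of Schwartz functions) that $K_\phi\in\mathcal{S}(\mathbb{R}^k\times\mathbb{R}^k)$.

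Finally I conclude. Once $K_\phi$ is Schwartz, $\pi_l(\phi)$ is a Hilbert–Schmidt integral operator on $L^2(\mathbb{R}^k)$; writing $\phi=\phi_1^\ast\ast\phi_2$ with $\phi_i\in\mathcal{S}(G)$ (which one can always do by smooth approximation in $\mathcal{S}(G)$) gives $\pi_l(\phi)=\pi_l(\phi_1)^\ast\pi_l(\phi_2)$, a product of two Hilbert–Schmidt operators, hence trace class. The trace formula
\[
\mathrm{Tr}\,\pi_l(\phi)=\int_{\mathbb{R}^k}K_\phi(s,s)\,ds
\]
then follows from the standard fact that a trace class operator on $L^2(\mathbb{R}^k)$ given by a continuous Schwartz kernel has trace equal to the integral of its kernel along the diagonal. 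The main obstacle is the middle step: verifying that the kernel is genuinely Schwartz, i.e.\ controlling the oscillatory integral over $M$ after the polynomial change of variables. All the delicate work—polynomiality of $\tau$, unimodularity of the Jacobian, and the decay estimates from integration by parts against $\chi_{l,M}$—sits there, and everything else is formal once that step is established.
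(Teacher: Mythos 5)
Your setup and the kernel you arrive at are correct: the paper only quotes this theorem from Corwin--Greenleaf and records the kernel formula separately (Proposition \ref{nilpLietrace}), and your $K_\phi(s,u)=\int_M\chi_{l,M}(m)\,\phi\bigl(\beta(s)^{-1}m\,\beta(u)\bigr)\,dm$ is exactly that formula, with the correct weak-Malcev measure decomposition. The genuine gap is in the step you yourself flag as "the main obstacle," and the mechanism you propose there does not work as stated. Since $\mathfrak{m}$ is subordinate to $l$, the phase $\chi_{l,M}(\alpha(w))=e^{2\pi\sqrt{-1}\sum_j w_j l(X_j)}$ is an exact linear phase at a \emph{fixed} frequency $l|_{\mathfrak m}$; so the $M$-integral is a Euclidean Fourier transform of $w\mapsto\phi(\beta(s)^{-1}\alpha(w)\beta(u))$ evaluated at one frequency, and "integration by parts as in the rapid decay of the Fourier transform of a Schwartz function" gives no decay in $(s,u)$ at all. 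Indeed, along the diagonal $s=u$ the integrand at $w=0$ is $\phi(e)$ for every $s$, so decay of $K_\phi(s,s)$ cannot come from decay of $\phi$; it must come from the fact that conjugation/translation by $\beta(s),\beta(u)$ deforms the $w$-variables so that, after the change of variables, the effective phase gradient grows polynomially in $(s,u)$, and \emph{that} growth is exactly where the maximality of $\mathfrak m$ (i.e., that it is a genuine polarization, not merely subordinate) and hence irreducibility enter. Your proposal never uses maximality, and the statement is false without it: inducing from a non-maximal subordinate subalgebra (e.g. the center of the Heisenberg algebra) gives a representation with infinite multiplicity for which $\pi(\phi)$ is not trace class and the kernel is not Schwartz. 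So the decay estimate cannot be "formal"; it is the content of the theorem, and an argument that would apply verbatim to any subordinate $\mathfrak m$ must be missing the key input.

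A secondary gap: the factorization $\phi=\phi_1^\ast\ast\phi_2$ with $\phi_i\in\mathcal S(G)$ is not something you get "by smooth approximation"; single factorizations of this kind are generally unavailable, and the correct tool is a Dixmier--Malliavin type theorem giving $\phi$ as a finite sum of convolutions. In fact you can avoid convolution factorization altogether: once $K_\phi\in\mathcal S(\mathbb{R}^k\times\mathbb{R}^k)$ is established, the operator is automatically trace class, e.g. by writing $\pi_l(\phi)=(1+|s|^2-\Delta_s)^{-N}\circ\bigl[(1+|s|^2-\Delta_s)^{N}K_\phi\bigr]$ for large $N$, a product of two Hilbert--Schmidt operators, and for such kernels the identity $\mathrm{Tr}\,\pi_l(\phi)=\int K_\phi(s,s)\,ds$ follows from the same factorization (the bare statement "continuous kernel of a trace class operator integrates to the trace along the diagonal" needs justification in general). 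So the conclusion steps can be repaired cheaply, but the Schwartz-kernel estimate — the place where the polarization hypothesis must be used — is the real proof and is not supplied by your sketch.
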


A detailed expression of the kernel function $K_\phi$ is given as follows:
Let $\{X_1,\ldots ,X_n\}$ is weak Malcev basis taken in Theorem \ref{nilpotenttrace} and put $p = n-k = \mbox{dim}\;\mathfrak{m}$. Define polynomial maps $\gamma: \mathbb{R}^n \to G$, $\alpha:\mathbb{R}^p\to M =\exp(\mathfrak{m})$, $\beta:\mathbb{R}^k \to G$ by
\begin{align*}
&{} \gamma(s,t) = \exp s_1X_1\cdot\cdots\cdot\exp s_pX_p\cdot\exp t_1X_{p+1}\cdot\cdots\cdot\exp t_kX_n \\  
&{} \alpha(s) = \gamma(s,0),\quad \beta(t) = \gamma(0,t),
\end{align*}
and let $dg$, $dm$, $d\dot{g}$ be the invariant measures on $G$, $M$, $M\backslash G$ determined by Lebesgue measures $dsdt$, $ds$, $dt$ through maps. $\gamma$, $\alpha$, $\beta$ in the above. 

Then, we have the following:
\begin{proposition}[4.2.2 Proposition in \cite{Corwin}]\label{nilpLietrace}
If we take the standard basis realization of $\pi =\pi_l$ in $L^2(\mathbb{R}^k 
)$ 
relative to the given Malcev basis, the kernel $K_\phi$ has the form 
\begin{equation}
K_\phi(t',t) = \int_M\chi_l(m)\phi(\beta(t')^{-1}m\beta(t))dm \quad \mbox{{\rm (} absolutely convergent {\rm )}}  
\end{equation}
where $\chi_l(\exp Y) = e^{2\pi\sqrt{-1}l(Y)}$ and $\beta$ is the map defined above. 
\end{proposition}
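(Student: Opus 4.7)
The plan is to derive the kernel formula directly from the definition of the induced representation $\pi_l = \mathrm{Ind}_M^G \chi_{l,M}$, once we fix the standard identification of its representation space with $L^2(\mathbb{R}^k)$ via the cross-section $\beta$ furnished by the weak Malcev basis through $\mathfrak{m}$.

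First, I would set up the model explicitly. The representation space of $\pi_l$ consists of measurable functions $F \colon G \to \mathbb{C}$ satisfying $F(m^{-1}g) = \chi_l(m) F(g)$ for all $m \in M$, equipped with the right translation action $(\pi_l(g) F)(x) = F(xg)$. The polynomial diffeomorphism $(m,t) \mapsto m\,\beta(t)$ between $M \times \mathbb{R}^k$ and $G$ lets each such $F$ be written uniquely as $F(m\,\beta(t)) = \chi_l(m^{-1})\,\tilde{F}(t)$ for a function $\tilde{F}\in L^2(\mathbb{R}^k)$, and this is the standard basis realization. Crucially, the Malcev basis was chosen so that $dg$ factors exactly as $dm\,dt$ under this decomposition, as recorded just before the proposition.

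Next, I would unwind $\pi_l(\phi) = \int_G \phi(g)\,\pi_l(g)\,dg$ applied to $\tilde{F}$, evaluated at $\beta(t')$: by definition this equals $\int_G \phi(g)\,F(\beta(t')\,g)\,dg$. Substituting $g = \beta(t')^{-1} m\,\beta(t)$ with $(m,t) \in M \times \mathbb{R}^k$, left-invariance of $dg$ gives $dg = dm\,dt$, while $F(\beta(t')\,g) = F(m\,\beta(t)) = \chi_l(m^{-1})\,\tilde F(t)$ by the covariance condition. After reflecting $m \mapsto m^{-1}$ inside $M$ (which preserves the Haar measure) to harmonize the sign with the convention $\chi_l(\exp Y) = e^{2\pi\sqrt{-1}l(Y)}$ used in the statement, Fubini's theorem yields
\[
(\pi_l(\phi)\tilde F)(t') = \int_{\mathbb{R}^k} \Bigl(\int_M \chi_l(m)\,\phi(\beta(t')^{-1} m\,\beta(t))\,dm\Bigr)\tilde F(t)\,dt,
\]
which identifies $K_\phi(t',t)$ as the inner integral and gives the required formula.

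Finally, I would justify absolute convergence of the $M$-integral and Schwartz decay of $K_\phi$ on $\mathbb{R}^k \times \mathbb{R}^k$. Since $\phi \in \mathcal{S}(G)$ and $\phi$ pulls back to a Schwartz function on $\mathbb{R}^n$ in the exponential-product coordinates given by $\gamma$, the composite $(t', m, t) \mapsto \phi(\beta(t')^{-1}\, m\,\beta(t))$ remains Schwartz because conjugation in a connected simply-connected nilpotent Lie group is polynomial in Malcev coordinates. Integrating against the bounded character $\chi_l$ over $M$ preserves this decay. The step I expect to require the most care is the quantitative control of these polynomial conjugation identities: one must show that integration-by-parts in the $M$-coordinates against $\chi_l$ produces the Schwartz seminorm estimates uniformly in $(t', t)$, so that $K_\phi$ inherits Schwartz decay in both arguments. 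Once that estimate is in hand, absolute convergence is automatic, and the formula, together with Theorem~\ref{nilpotenttrace}, immediately gives the trace formula $\mathrm{Tr}\,\pi_l(\phi) = \int_{\mathbb{R}^k} K_\phi(t,t)\,dt$.
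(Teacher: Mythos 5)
The paper itself contains no proof of this proposition; it is quoted from Corwin--Greenleaf (4.2.2), and your route is exactly the standard one used there: realize $\pi_l=\mathrm{Ind}_M^G\chi_l$ on $L^2(\mathbb{R}^k)$ via the cross-section $\beta$, unwind $\pi_l(\phi)$, use that $dg=dm\,dt$ in the Malcev coordinates (left translation by $\beta(t')^{-1}$ preserves $dg$), and apply Fubini. Two of your steps, however, do not work as written. The sign harmonization is the more serious one: with your covariance convention $F(m^{-1}g)=\chi_l(m)F(g)$ you correctly get $F(m\beta(t))=\chi_l(m^{-1})\tilde F(t)$, hence the kernel $\int_M\chi_l(m^{-1})\,\phi\bigl(\beta(t')^{-1}m\beta(t)\bigr)\,dm$. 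Substituting $m\mapsto m^{-1}$ does not repair this, because the substitution also replaces $\phi\bigl(\beta(t')^{-1}m\beta(t)\bigr)$ by $\phi\bigl(\beta(t')^{-1}m^{-1}\beta(t)\bigr)$; you cannot flip the character without flipping the argument of $\phi$. The correct fix is simply to use the covariance $F(mg)=\chi_l(m)F(g)$, which is the convention underlying the ``standard realization'' in the cited source (as opposed to the convention of Appendix~\ref{Inducedrep} of this paper); then $F(\beta(t')g)=F(m\beta(t))=\chi_l(m)\tilde F(t)$ and the stated formula drops out with no reflection. With your convention the honest output is the formula with $\overline{\chi_l}$, i.e.\ the statement for $-l$.

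The second slip is the claim that $\phi$ composed with a polynomial map ``remains Schwartz'': this is false in general (e.g.\ $e^{-(x_1-x_2)^2}$ on $\mathbb{R}^2$), so joint Schwartz decay of $K_\phi$ does not follow the way you assert. Fortunately, the proposition only claims the formula together with absolute convergence of the $M$-integral for each fixed $(t',t)$. For that it suffices to note that $s\mapsto\gamma^{-1}\bigl(\beta(t')^{-1}\alpha(s)\beta(t)\bigr)$ is an injective, proper polynomial map of $\mathbb{R}^p$ into $\mathbb{R}^n$ (one recovers the coordinates of $\alpha(s)=\beta(t')g\beta(t)^{-1}$ polynomially from those of $g$), so $\bigl|\phi\bigl(\beta(t')^{-1}\alpha(s)\beta(t)\bigr)\bigr|$ decays faster than any power of $|s|$ and the integral converges absolutely; the uniform-in-$(t',t)$ Schwartz estimates you worry about belong to the proof of Theorem~\ref{nilpotenttrace}, not to this proposition.
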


\begin{remark} If we directly relate representations $G$ to $\Gamma$, then we need to identify a function $\phi:\Gamma \to \mathbb{C}$ with that $\widetilde{\phi}:G \to \mathbb{C}$. 
Although there seem to be several methods to perform it, if we take an easy way to define $\widetilde{\phi}(g) = \phi(\sigma)$ if $x \in \sigma\mathcal{D}$ with a fundamental domain $\mathcal{D}$ of the canonical projection $G \to G/\Gamma$, then $\widetilde{\phi} \in L^1(G)$ but $\not\in \mathcal{S}(G)$ and thus, $\pi(\widetilde{\phi})$ is a compact operator but not in trace class in general. (see Section \ref{Step0-9}).
\end{remark}

Once we proceed with the computation further, we arrive at the following formula.
Given any Euclidean measure $dX$ on $\mathfrak{g}$, define the Euclidean Fourier transforms $\hat{f}$ of functions $f$ on $G$ to be
\[ \hat{f}(l) = \int_\mathfrak{g}e^{2\pi\sqrt{-1}l(X)}f(\exp X)dX \quad \mbox{for all}\quad l \in \mathfrak{g}^\ast.
\]

Each coadjoint orbit $\mathcal{O}_l=\mbox{Ad}^\ast G(l)$ of $l$, being a closed set, 
carries an invariant measure $\mu$ that is unique up to a scalar multiple because $\mathcal{O}_l \simeq R_l\backslash G$, 
where $R_l = \mbox{Stab}_G(l) = \{x \in G|(\mbox{Ad}^\ast x)l = l\}$. 

\begin{theorem}[4.2.4 Theorem in \cite{Corwin}, Kirillov character formula]\label{nilpLiecharcter} If $\pi$ is an irreducible unitary representation of a nilpotent Lie group $G$, corresponding to the coadjoint orbit $\mathcal{O}_l=\mbox{Ad}^\ast G(l) \subset \mathfrak{g}^\ast$, there is a unique 
choice of the invariant measure $\mu$ on $\mathcal{O}_l$ such that
\[ \mbox{Tr}\;\pi_\phi = \int_{\mathfrak{g}^\ast}\hat{\phi}(l)\mu(dl) \quad \mbox{for all}\quad \phi \in \mathcal{S}(G),
\]
The integral is absolutely convergent. 
\end{theorem}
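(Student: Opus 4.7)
The plan is to derive the Kirillov character formula by starting from the explicit trace formula already established in Theorem \ref{nilpotenttrace} and Proposition \ref{nilpLietrace}, and then performing a Fourier inversion that reveals the coadjoint orbit as the natural support of the spectral measure. Concretely, I would begin by combining those two results to write
\[
\mbox{Tr}\,\pi_l(\phi)=\int_{M\backslash G}\int_{\mathfrak{m}}e^{2\pi\sqrt{-1}\,l(X)}\,\phi\!\left(\beta(t)^{-1}\exp(X)\beta(t)\right)dX\,d\dot g,
\]
interpreted as an iterated integral over $M\backslash G\times M$. Since the measures $d\dot g$ and $dX$ come from the same Malcev coordinate system and the exponential map is a polynomial diffeomorphism, conjugation of the argument together with the change of variable $Y=\mbox{Ad}(g^{-1})X$ (whose Jacobian is $1$ by unipotency) converts this to
\[
\mbox{Tr}\,\pi_l(\phi)=\int_{M\backslash G}\int_{\mathrm{Ad}(g^{-1})\mathfrak{m}}e^{2\pi\sqrt{-1}\,l_g(Y)}\,\phi(\exp Y)\,dY\,d\dot g,\qquad l_g:=\mbox{Ad}^\ast(g^{-1})l .
\]
All interchanges are legitimate because $\phi\in\mathcal{S}(G)$ makes the kernel Schwartz on $\mathbb{R}^k\times\mathbb{R}^k$ by Proposition \ref{nilpLietrace}.

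The next step is to apply the Euclidean Fourier inversion $\phi(\exp Y)=\int_{\mathfrak{g}^\ast}e^{-2\pi\sqrt{-1}\xi(Y)}\hat\phi(\xi)\,d\xi$ in the inner integral. The exponent $e^{2\pi\sqrt{-1}(l_g-\xi)(Y)}$ integrated over $Y\in\mathrm{Ad}(g^{-1})\mathfrak{m}$ yields the Dirac measure on the affine subspace $\Lambda_g:=l_g+\mathrm{Ad}^\ast(g^{-1})\mathfrak{m}^{\perp}$, so after a Fubini argument
\[
\mbox{Tr}\,\pi_l(\phi)=\int_{M\backslash G}\!\int_{\Lambda_g}\hat\phi(\xi)\,d\xi\,d\dot g
=\int_{(M\backslash G)\times\mathfrak{m}^{\perp}}\hat\phi\!\left(l_g+\mathrm{Ad}^\ast(g^{-1})\eta\right)d\eta\,d\dot g .
\]
The crucial geometric observation is then that the polynomial map
\[
\Psi:(M\backslash G)\times\mathfrak{m}^{\perp}\longrightarrow\mathcal{O}_l,\qquad (Mg,\eta)\mapsto l_g+\mathrm{Ad}^\ast(g^{-1})\eta ,
\]
is a diffeomorphism almost everywhere (both sides have dimension $2k=\dim G-\dim R_l$, since $\dim\mathfrak{m}=\tfrac12(\dim\mathfrak{g}+\dim\mathfrak{r}_l)$). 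This can be verified concretely with a weak Malcev basis passing through $\mathfrak{m}$ and through $\mathfrak{r}_l$, using the explicit orbit parametrization in Theorem \ref{corwin319} (or equivalently Theorem \ref{corwin318}). Defining $\mu:=\Psi_\ast(d\dot g\otimes d\eta)$ gives a measure on $\mathcal{O}_l$ for which
\[
\mbox{Tr}\,\pi_l(\phi)=\int_{\mathcal{O}_l}\hat\phi(\xi)\,d\mu(\xi).
\]

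For $G$-invariance of $\mu$, I would verify directly from the definition that right translation in $G$ on $M\backslash G$ intertwines with the coadjoint action on $\mathcal{O}_l$ under $\Psi$, while the measure $d\eta$ on the fiber transforms by a Jacobian equal to $1$ (again by unipotency of the coadjoint action). Uniqueness of the invariant measure up to a positive scalar then follows because $\mathcal{O}_l\simeq R_l\backslash G$ is a transitive $G$-space with unimodular stabilizer; hence $\mu$ is characterized by the trace formula itself. Independence of $\mu$ from the choice of polarization $\mathfrak{m}$ is inherited from Theorem \ref{nilpLieirredrep}(2), since the left-hand side $\mbox{Tr}\,\pi_l(\phi)$ depends only on $l$.

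The main obstacle, in my view, will not be the Fubini/Fourier manipulations (which are routine thanks to the Schwartz hypothesis) but rather the verification that $\Psi$ is generically a diffeomorphism and that the pushforward measure $\mu$ really is the canonical symplectic (Liouville) volume, so that its $G$-invariance is intrinsic to the orbit rather than an artifact of coordinates. Making this precise requires choosing the Malcev basis so that it simultaneously refines $\mathfrak{r}_l\subset\mathfrak{m}\subset\mathfrak{g}$ and matches the non-jump/jump index partition of Theorem \ref{corwin319}, and then tracking how the Euclidean measure on $(M\backslash G)\times\mathfrak{m}^{\perp}$ pushes forward; this is the one genuinely non-formal point of the argument.
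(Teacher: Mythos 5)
Your proposal is correct and follows essentially the same route as the source argument: the paper itself only quotes this theorem from \cite{Corwin}, but the derivation it does reproduce in Section \ref{nilpotentfujiwara} (Fujiwara's computation leading to Theorem \ref{characterfujiwara}, of which the Kirillov formula is noted to be the case of trivial $H$) is exactly your scheme --- kernel trace formula from Theorem \ref{nilpotenttrace}/Proposition \ref{nilpLietrace}, Euclidean Fourier inversion over the polarization giving the affine slice $l+\mathfrak{m}^{\perp}=M\cdot l$, sweeping by $M\backslash G$ to parametrize $\mathcal{O}_l$, and pushing forward $d\dot g\otimes d\eta$ to get the invariant measure. The only cosmetic remark is that in the invariance check the fiber variable $\eta$ is simply carried along (no Jacobian on the fiber is needed, since $\Psi(Mgx,\eta)=\mathrm{Ad}^{\ast}(x^{-1})\Psi(Mg,\eta)$), so invariance follows directly from right-invariance of $d\dot g$.
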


Under the above preparation, we can state the following formula. 

\begin{theorem}[4.3.9 Theorem in \cite{Corwin}, Fourier Inversion formula]\label{FourierinversionLie}
Let $\{X_1,\ldots,X_n\}$ be a strong Malcev basis for a nilpotent Lie algebra $\mathfrak{g}$ and 
let $\{l_1,\ldots, l_n\}$ be the dual basis for $\mathfrak{g}^\ast$. 
Take a set $U$ of generic coadjoint orbits, index sets 
\[ S = \{i_1 < \cdots < i_{2k}\} \] 
and $T$ which will appear in Theorem \ref{corwin319}, and (absolute value of) Pfaffian $|\mbox{Pf}(l)|$ satisfies
\[ |\mbox{Pf}(l)|^2 = \det \;B, \quad B= (B_{jl}) = (B_l(X_{i_j},X_{i_l})) := (l([X_{i_j},X_{i_l}])) \quad j,l =1,\ldots, 2k \] 
Then for $\phi \in \mathcal{S}( G)$, $\phi(e)$ is given by an absolutely convergent integral 
\begin{equation}
\phi(e) = \int_{U\cap V_T}|\mbox{Pf}(l)|\mbox{Tr}\;\pi_l(\phi) dl \label{B2}
\end{equation}
where $dl$ is the Euclidean measure on $V_T = \mathbb{R}\mbox{-span}\{l_i| i \in T\}$ such that the cube determined by $\{l_i| i \in T\}$ has mass $1$. 
\end{theorem}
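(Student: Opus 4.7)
The plan is to derive the formula by combining the Euclidean Fourier inversion on $\mathfrak{g}^{*}$ with the Kirillov character formula (Theorem \ref{nilpLiecharcter}), using the generic coadjoint orbit parametrization of Theorem \ref{corwin319} as the bridge between them. First I would identify $\mathfrak{g} \simeq \mathbb{R}^{n}$ and $\mathfrak{g}^{*}\simeq \mathbb{R}^{n}$ via the dual bases $\{X_{i}\}$ and $\{l_{j}\}$. Since $\exp:\mathfrak{g}\to G$ is a diffeomorphism carrying the chosen Lebesgue measure to a Haar measure on $G$, the ordinary inverse Fourier formula on $\mathbb{R}^{n}$ gives, for $\phi\in\mathcal{S}(G)$,
$$\phi(e)=\int_{\mathfrak{g}^{*}}\hat{\phi}(l')\,dl'.$$

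Next I would disintegrate this integral along the coadjoint orbits. The set $U\subset\mathfrak{g}^{*}$ of generic orbits is Zariski open, so its complement has Lebesgue measure zero; by Theorem \ref{corwin319}(ii),(iv) the map $\psi:(U\cap V_{T})\times V_{S}\to U$ is a rational diffeomorphism with Jacobian determinant identically $1$, hence
$$\int_{\mathfrak{g}^{*}}\hat{\phi}(l')\,dl'=\int_{U\cap V_{T}}\!\!\int_{V_{S}}\hat{\phi}\bigl(\psi(l,v)\bigr)\,dv\,dl.$$
So it suffices to show that for each $l\in U\cap V_{T}$ the inner integral equals $|\mathrm{Pf}(l)|\,\mathrm{Tr}\,\pi_{l}(\phi)$. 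By the Kirillov character formula, $\mathrm{Tr}\,\pi_{l}(\phi)=\int_{\mathcal{O}_{l}}\hat{\phi}\,d\mu_{l}$ for the canonical $G$-invariant Liouville measure $\mu_{l}=\omega_{l}^{k}/k!$ on $\mathcal{O}_{l}=\psi(\{l\}\times V_{S})$, where $\omega_{l}$ is the Kirillov--Kostant symplectic form. The reduction therefore boils down to the measure-theoretic identity
$$\psi(l,\cdot)_{*}(dv)=|\mathrm{Pf}(l)|\,d\mu_{l},$$
i.e.\ the Euclidean volume form pulled back from $V_{S}$ coincides, up to the factor $|\mathrm{Pf}(l)|$, with the Liouville measure on the orbit.

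The main obstacle, and the one genuinely nontrivial step, is proving this Pfaffian-Jacobian identity. At the base point $l\in V_{T}$ the tangent space $T_{l}\mathcal{O}_{l}$ is spanned by the infinitesimal coadjoint vectors $\{\mathrm{ad}^{*}(X_{i_{j}})l:i_{j}\in S\}$ (the jump indices from Theorem \ref{corwin319}), and the Kirillov--Kostant form evaluates on this basis as the antisymmetric matrix $B_{jm}=l([X_{i_{j}},X_{i_{m}}])$. The standard computation $\omega_{l}^{k}/k!=\mathrm{Pf}(B)\,e_{1}^{*}\wedge\cdots\wedge e_{2k}^{*}$ then gives the identity at $l$ itself. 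Item (v) of Theorem \ref{corwin319}, $P_{j_{i}}(x,u)=u_{i}$, says precisely that $\psi$ is affine in the $V_{S}$-directions with leading term $u_{i}\mapsto\mathrm{ad}^{*}(X_{i_{j}})l$ plus a triangular contribution in $V_{T}$-coordinates, so the differential of $v\mapsto\psi(l,v)$ at $v=0$ carries the standard $V_{S}$-basis onto the infinitesimal coadjoint basis. To propagate the identity from $v=0$ to all of $V_{S}$, I would use two ingredients simultaneously: the $G$-invariance of $\mu_{l}$, together with the Jacobian-one property of $\psi$ (Theorem \ref{corwin319}(iv)), which forces the ratio $d\mu_{l}/dv$ to be a function on $V_{T}$ alone; then evaluating this ratio at a single convenient point of each orbit (namely the cross-section point $l\in V_{T}$) gives the constant $|\mathrm{Pf}(l)|$ everywhere.

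Granted the Pfaffian-Jacobian identity, combining the three steps yields
$$\phi(e)=\int_{U\cap V_{T}}\!\!\int_{V_{S}}\hat{\phi}(\psi(l,v))\,dv\,dl=\int_{U\cap V_{T}}|\mathrm{Pf}(l)|\int_{\mathcal{O}_{l}}\hat{\phi}\,d\mu_{l}\,dl=\int_{U\cap V_{T}}|\mathrm{Pf}(l)|\,\mathrm{Tr}\,\pi_{l}(\phi)\,dl,$$
which is the claimed formula. Absolute convergence is automatic because $\hat{\phi}\in\mathcal{S}(\mathfrak{g}^{*})$ and $|\mathrm{Pf}(l)|$ is polynomial in $l$, so the iterated integral is dominated by a Schwartz integrand against a polynomial weight.
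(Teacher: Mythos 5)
Your overall strategy is in fact the standard one: the paper itself gives no proof of this statement (Appendix B is explicitly quoted from \cite{Corwin}), and the argument there has the same skeleton you describe --- Euclidean Fourier inversion on $\mathfrak{g}^{*}$, restriction to the Zariski-open set of generic orbits, the Jacobian-one parametrization of Theorem \ref{corwin319}, and a fiberwise comparison of measures producing the Pfaffian. The gap is in the fiberwise step, in two ways. First, the quantitative crux is assumed rather than proved: Theorem \ref{nilpLiecharcter}, as available, only asserts the existence of \emph{some} invariant measure $\mu_l$ with $\mathrm{Tr}\,\pi_l(\phi)=\int_{\mathcal{O}_l}\hat\phi\,d\mu_l$; it does not identify it as the Liouville measure $\omega_l^{k}/k!$ with exactly the normalization matching the $e^{2\pi\sqrt{-1}\,l(X)}$ convention. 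Since an invariant measure on an orbit is unique only up to a positive scalar that may depend on $l$, your argument determines the Plancherel weight only up to an unknown factor $c(l)$; pinning down $c(l)\equiv 1$ is precisely the content of the theorem and requires an actual trace computation (e.g.\ via the kernel formula of Proposition \ref{nilpLietrace} and induction on dimension, which is how \cite{Corwin} proceeds), not symplectic linear algebra at a single point.

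Second, the linear-algebra step as you state it is false and, taken literally, would put the Pfaffian on the wrong side. By Theorem \ref{corwin319} the orbit is a graph over $V_S$, so $d\psi(l,\cdot)|_{v=0}$ sends $\partial_{u_m}$ to $l_{i_m}$ plus a $V_T$-component; it does \emph{not} carry the standard $V_S$-basis onto $\{\mathrm{ad}^{*}(X_{i_j})l\}$, whose $V_S$-components are the rows of $\pm B$. (Heisenberg check: $d\psi$ gives $Y^{*},X^{*}$, while $\mathrm{ad}^{*}(X)l=\pm\lambda Y^{*}$ and $\mathrm{ad}^{*}(Y)l=\mp\lambda X^{*}$.) If your identification held, then $\omega_l^{k}/k!$ would have density $|\mathrm{Pf}(l)|$ against the pushforward of $dv$, i.e.\ $\psi(l,\cdot)_{*}(dv)=|\mathrm{Pf}(l)|^{-1}\mu_l$, and your chain of equalities would end with the weight $|\mathrm{Pf}(l)|^{-1}$ instead of $|\mathrm{Pf}(l)|$. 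The correct bookkeeping uses that the change of frame between $\{d\psi(\partial_{u_m})\}$ and $\{\mathrm{ad}^{*}(X_{i_j})l\}$ has determinant $\det B=\mathrm{Pf}(l)^{2}$, which flips the factor to the asserted side; and you still owe an argument that the resulting density is constant along each orbit (equivalently, that $\mathrm{Pf}$ is $\mathrm{Ad}^{*}$-invariant), which the Jacobian-one property of the two-variable map $\psi$ does not by itself supply.
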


\begin{remark}
In the above theorem, if we cannot have a strong Malcev basis, then the suitably chosen weak Malcev basis has the same role as in the induction step of the argument of Subsubsection \ref{choicepolarization}.
\end{remark}

\subsection{Three examples} 

For nilpotent Lie group $G$, the exponential map $\exp: \mathfrak{g} \to G$ is a diffeomorphism, therefore we can define $X \ast Y \in \mathfrak{g}$ for all $X,Y \in \mathfrak{g}$ by the relation
\[\exp(X\ast Y) = \exp X\cdot\exp Y
\]
If $\mathfrak{g}$ is equipped with coordinates associated with a linear basis, the corresponding coordinates in $G$ will be called exponential coordinates, or canonical coordinates of the first kind.

\subsubsection{$(2n+1)$-dimensional real Heisenberg group $H_n$}\label{2n+1dimensionalHeisenberg}
We summurizes Examples 1.1.2, 1.2.4, 1.3.9, 2.2.6, 4.3.11 in \cite{Corwin}. Note that $H_n$ in \cite{Corwin} is the same as the Heisenberg-Lie group $\mbox{Heis}_n(\mathbb{R})$ in the notation in the body of this paper.
 
\paragraph{Definition}
We define $\mathfrak{h}_n$, the $(2n+ 1)$-dimensional Heisenberg algebra,
to be the Lie algebra with basis $\{Z, Y_1,\ldots, Y_n, X_1,\ldots, X_n\}$, whose pairwise Lie brackets are equal to zero except for
$[X_i,Y_i]=Z$, $1 \geq j \geq n$. It is a two-step nilpotent Lie algebra. One way to realize it as matrix algebra is to let $zZ + \sum_{i=1}^n (x_iX_i +y_iY_i)$ correspond to the $(n + 2) \times (n + 2)$ matrix
\[ \left(\begin{array}{ccccc} 0 & x_1 & \ldots & x_n & z \\ 
0 &  0   &  \ldots   &  0   & y_1 \\				
{}  &  {}   &  {}     &  {}   & \vdots \\
0 & {}	&   {}     &  0  & y_n \\
0 & 0	&   \ldots  &  0  & 0
\end{array}\right)
\]				
Note that $\{Z, Y, X\} = \{Z, Y_1,\ldots, Y_n, X_1,\ldots, X_n\}$ is a strong Malcev basis for $\mathfrak{h}_n$ through $\mathfrak{g}_1, \mathfrak{g}_2, \mathfrak{g}_3$ 
where $\mathfrak{g}_1$ is the center of $\mathfrak{h}_n$, which is generated by $Z$, $\mathfrak{g}_2$ is generated by $Z,Y_1,\ldots, Y_n$ and $\mathfrak{g}_3 = \mathfrak{h}_n$. These are ideals of $\mathfrak{h}_n$.

We denote a typical element of $\mathfrak{h}_n$ by $zZ + \sum_{i=1}^n (x_iX_i+y_iY_i) = (z,y,x)$, with $z\in \mathbb{R}$ and $x, y \in \mathbb{R}^n$. Using exponential coordinates for $H_n$, we get
\[
(z, y, x) \ast (z', y', x') = (z + z' + x\cdot y'-y\cdot x', y + y, x +x'),
\]
where $x\cdot y$ is the usual inner product on $\mathbb{R}^n$. Similarly,
\[
({\rm Ad}(\exp(z, y, x)))(z', y', x')=(z' + x\cdot y' -y\cdot x',y',x').
\]
If we can use the matrix representaion of $\mathfrak{h}_n$, then
\begin{equation}
\exp(z, y, x) = \left(\begin{array}{ccccc}
1 & x_1 & \ldots & x_n & z^\ast \\ 
0  & 1  & \ldots &  0   & y_1 \\				
  &     &.       &     & \vdots \\
0 &	&        &  1   & y_n \\
0 & 0	&  \ldots &  0  & 1
\end{array}\right) =: [z^\ast,y,x] \quad {\rm where}\quad
 z^\ast = z + x\cdot y. \label{expnentiltomatrix}
\end{equation}

\paragraph{Coadjoint orbits} 
Next, we consider coadjoint orbits in the dual space $\mathfrak{h}_n^{\ast}$ and associated irreducible unitary representations of $H_n$. We denote by $\{Z^\ast, Y_1^\ast,\ldots, Y_n^\ast, X_1^\ast,\ldots, X_n^\ast\}$ the dual basis of $\{Z, Y_1,\ldots, Y_n, X_1,\ldots, X_n\}$. An element $l \in \mathfrak{g}^\ast$ is written as 
\[ l = l_{\alpha,\beta,\gamma} := \gamma Z^\ast + \sum_{i=1}^n (\beta Y_i^\ast +\alpha_i X_i^\ast)
\]
For $w = (z,y,x) \in H_n$ and $W = cZ + \sum_{i=1}^n b_iY_i + a_iX_i \in \mathfrak{h}_n$, we can compute as 
\[ {\rm Ad}^\ast(w)(l_{\alpha,\beta,\gamma})(W) = {\rm Ad}^\ast(w)(l)(W) =  l({\rm Ad}(w^{-1})W) = l_{\alpha+\gamma y,\beta-\gamma x, \gamma}  
\]
Thus, adjoint orbits are classified into the following two categories:
\begin{description}
\item[{\rm (i)}]($2n$)-dimensional orbits with $\gamma \neq 0$)
\[ \gamma Z^\ast + \mathfrak{z}^{\perp} =\{l_{\alpha+\gamma y,\beta-\gamma x, \gamma}| x,y \in \mathbb{R}^n\}
\]
where $\mathfrak{z}^{\perp} = \{l \in \mathfrak{g}^\ast | l(Z) = 0\} = \mathbb{R}Y^\ast + \mathbb{R}X^\ast$. 

\item[{\rm (ii)}]($0$-dimensinal orbits with $\gamma = 0$) The $0$-dimensinal orbit is each point in $\mathfrak{z}^{\perp}$.
\end{description}

\paragraph{Irreducible representations associated with coadjoint orbits}
For the point orbits, $\mathfrak{g} = \mathfrak{h}_n$ is the radical $\mathfrak{r}_l$ and the only porlarizing subalgebra $\mathfrak{m}$. If $l(Z) = \gamma \neq 0$, then $\mathfrak{r}_l = \mathbb{R}Z$ and $\mathfrak{m}$ are many, for example 
\[ \mathfrak{m} = \mathbb{R}Z\oplus\mathbb{R}\mbox{-span}\{Y_1,\ldots, Y_n\} \quad {\rm or}\quad \mathfrak{m} = \mathbb{R}Z\oplus\mathbb{R}\mbox{-span}\{X_1,\ldots, X_n\} 
\]
Since unitary equivalence class of $\pi_{l,M}$ is independent of the choice of orbit representative $l$ and polaring subalgebra $\mathfrak{m}$, it is convenient to take an element $l \in \gamma Z^\ast+ \mathfrak{z}^{\perp}$ as $l=\lambda Z^\ast (\lambda = \gamma \neq 0)$ and $\mathfrak{m}$ as above. 
Then we have
\[ \chi_l\left(\exp\left(zZ+\sum_{j=1}^n y_jY_j\right)\right) =e^{2\pi\sqrt{-1}\lambda z}
\]
is a character on a normal subgroup $M = \exp \mathfrak{m}$ of $G$, which induces to a representation $\pi_l = \mbox{Ind}_M^G(\chi_l)$ on $G$. A description of $\pi_l$ as an action on $L^2(M\backslash G, \mathbb{C})$ is more enlightening than the standard model as an action on functions on $G$ varying like $\chi_l$ along $M$-cosets. The calculation of this new model is typical of induced representations and will often arise. Let
\[
A= \Big\{\exp\left(\sum_{j=1}^n t_jX_j\right)\Big| t_j \in \mathbb{R}\Big\}.
\]
Then $S$ is  a transversal for $M\backslash G$ since $M$ is normal, and Lebesgue measure $dt = dt_1\ldots dt_n$ transfers to  a  right-invariant measure 
$d\dot{g}$ on $M\backslash G$. Using $dt$ and $d\dot{g}$ to define $L^2$-norms, and identifying $\mathbb{R}^n$ with $A$ in the obvious way, 
the restriction map $f \to f|A$ becomes  an  isometry  from  the representation space $\mathcal{H}_{\pi_l}$ of $\pi_l$ to $L^2(\mathbb{R}^n)$. To compute  the  action  of  $\pi_l$ modeled in $L^2(\mathbb{R}^n)$. 
The fundamental problem in computing this action is to split products $(0, 0, t)(z, y, x)$ in the form $h\cdot (0, 0, t')$ where $h \in M$. From the multiplication law
\[ (z,y,x)\cdot (z',y',x') = \left(z+z'-\frac{y\cdot x'}{2}+\frac{x\cdot y'}{2}, y+y',x+x'\right),
\]
we see that
\begin{equation}
(0,0,t)\cdot (z,y,x) = \left(z+\frac{t\cdot y}{2}, y,x+t\right) =  \left(z+ t\cdot y+\frac{x\cdot y}{2}, y,0\right)(0,0,t+x).  \label{exponetialmultiplication}
\end{equation}
Hence if $f \in \mathcal{H}_{\pi_l}$, we have
\[
\pi_l(z, y, x)f(0, 0, t)=f\left( \left(z+ t\cdot y+\frac{x\cdot y}{2},y,0\right)\cdot (0, 0, t+ x)\right) = e^{2\pi\sqrt{-1}\lambda (z+t\cdot y+(1/2)x\cdot y)}f(0, 0, t+x).\]
Thus the action on $\tilde{f} \in L^2(\mathbb{R}^n)$ is given by
\begin{equation}
\pi_l(z, y, x)\tilde{f}(t)= e^{2\pi\sqrt{-1}\lambda (z+t\cdot y+(1/2)x\cdot y)}\tilde{f}(t+x) \quad (\lambda \neq 0).\label{2n+1heisenbergirred}
\end{equation}
Note again that this is different from the formula (\ref{schrep}) in the case when $n=1$ for the reason explained there. In fact, in the marix expression (\ref{expnentiltomatrix}), the equlity (\ref{exponetialmultiplication}) is changed to 
\begin{equation}
[0,0,t]\cdot [z^\ast,y,x] = \left[z^\ast+t\cdot y, y,x+t\right] =  \left[z^\ast+ t\cdot y, y,0\right][0,0,t+x]. \label{matrixmultiplication}
\end{equation}
Kirillov's theory says that any irreducible representations of $H_n$ are given as above. 

\paragraph{Fourier inversion formula}
Finally, we recall the Fourier inversion formula. 
Recall that Malcev basis $\{Z, Y_1,\ldots, Y_n, X_1,\ldots,X_n\}$ satisfies
\[  [X_i,Y_j] = \delta_{ij}Z,\quad 1 \leq i,j \leq n.
\] 
If $l_1,\ldots, l_{2n+1}$ is the basis for $\mathfrak{g}^\ast$, the indices $\{1, 2,\ldots, 2n + 1\}$ partition as $S = \{2,3,\ldots ,2n+1\}, T = \{l_1\}; V_T = \mathbb{R}l_1$ is  a  cross-section  for  the  generic orbits $U = \{l | l(Z) \neq 0\}$ and the Pfaffian is a polynomial on $\mathfrak{g}^\ast$ such
that
\[ 
|\mbox{Pf}(l)|^2 = \mbox{det}\left(\begin{array}{cccccc}
{}&{}&{}&l(Z)&{}&{}\\{}&O&{}&{}&\ddots&{} \\{}&{}&{}&{}&{}&l(Z) \\-l(Z)&{}&{}&{}&{}&{} \\{}&\ddots&{}&{}&O&{} \\{}&{}&-l(Z)&{}&{}&{}  \end{array}\right)
\]

Hence $|\mbox{Pf}(l)|= |l(Z)|^n$. If we identify $V_T= \{\alpha l_1 | \alpha \in \mathbb{R}\}$ and let $d\alpha$ be the normalized Lebesgue on $\mathbb{R}$, then the Fourier inversion formula becomes
\begin{equation} \phi(\sigma ) = \int_\mathbb{R} \mbox{Tr}(\pi_{\alpha l_1}(\sigma^{-1})\pi_{\alpha l_1}(\phi )|\alpha|^nd\alpha,\quad \mbox{for} \quad \phi \in \mathcal{S}(\mathbb{R})\label{2n+1heisenbergplancherel}
\end{equation}
t

\subsubsection{Engel group $E_4$}
We summurizes Examples 1.1.3, 1.2.5, 1.3.10, 2.2.7, 4.3.12  in \cite{Corwin}.
\paragraph{Definition}
We define the Engel Lie algebra $\mathfrak{e}_4$ to be the $4$-dimensional Lie algebra spanned by $W,X,Y,Z$ with
\[
[W, X]=Y, , [W,Y]= Z, [X,Y] = [W,Z] = [X,Z] = [Y,Z] =O.
\]
This one is a $3$ step nilpotent Lie algebra. One realization of matrix algebra is obtained by letting $wW+xX+yY+zZ$ correspond to the $4 \times 4$ matrix.

\[ 
\left(\begin{array}{cccc}
0 & w & 0 & z  \\ 
0 & 0 & w & y  \\				
0 & 0 & 0 & x  \\
0 & 0 & 0 & 0  \end{array}\right)
\]

\paragraph{Coadjoint orbits}
The basis $\{Z,Y,X,W\}$ is a strong Malcev, but we shall use the exponential coordinates on $E_4:= \exp \mathfrak{e}_4$,
\[
(z,y,x,w) = \exp(zZ + xX+ yY+ wW)
\]
below. The Campbell-Baker-Hausdorff formula yields the multiplication law in these coordinates:
\begin{align}
&{} (z, y, x, w) \ast (z', y', x', w') \notag \\
&=  (z + z' + \frac12(wy' -way)+ \frac{1}{12}(w^2x' -ww'x -ww'x') \label{engelmultiplication} \\ &{} + x(w')^2,y+y'+\frac12(wx'-w'x),x+x',w+w') \notag 
\end{align}
The adjoint action is given by
\[ {\rm Ad}_{(z,x,y,z)}(z', y', x', w') =(z'+ wy'-w'y +\frac12(w2x' - ww'x),
y' +wx' -w'x,x',we)
\]

Relative to the given basis in $\mathfrak{e}=4$ and the dual basis $\{Z^\ast = l_1,Y^\ast = l_2, X^\ast =l_3, W^\ast =l_4\}$ in $\mathfrak{e}_4^\ast$. If $l= \delta l_1 + \gamma l_2 + \beta l_3 + \alpha l_4$, the coadjoint orbits and convenient representatives are
\begin{description}
\item[{\rm (i)}]  $\mathcal{O}_{\delta,\beta} = {\rm Ad}^\ast (G)(\delta l_1 +  \beta l_3) = \{ \delta l_1 + tl_2 + ( \beta + t^2/2\delta )l_3+ sl_4 | s, t \in \mathbb{R} \}$, if $\delta \neq 0$,
\item[{\rm (ii)}] $\mathcal{O}_\gamma = {\rm Ad}^\ast(G)(\gamma l_2)= \gamma l_2+ \mathbb{R}l_3+ \mathbb{R}l_4$, where $\gamma \neq 0$,
\item[{\rm (iii)}] $\mathcal{O}_{\alpha,\beta} = {\rm Ad}^\ast (G)(\beta l_3 + \alpha l_4)= \{\beta l_3+ \alpha l_4\}$, where $\alpha, \beta \in \mathbb{R}$ (one point orbit)
\end{description}

\paragraph{Irreducible representations associated with generic orbits}
The generic orbits are parabolic cylinders $\mathcal{O}_{\delta,\beta}$ with $\delta \neq 0$. For the representatives $l_{\delta,\beta} = \delta l_1 + \beta l_3$, the radical $\mathfrak{r}_l = \mathbb{R}Z + \mathbb{R}X$; hence polarizing subalgebra $\mathfrak{m}$ are three-dimensional. 
Since $\mathfrak{m} = \mathbb{R}Z +\mathbb{R}Y+ \mathbb{R}X$ is an abelian ideal in $\mathfrak{e}_4$ of the correct dimension, it is a polarizing subalgebra for each $l_{\delta,\beta}$.

Write $(z,y,x,w) = \exp(zZ+yY+xX+wW)$. Obviously $M = \exp(\mathfrak{m}) = \{(z,y,x,0)| z, y, x \in \mathbb{R}\}$ and the character $\chi_{\delta,\beta} := \chi_{l_{\delta,\beta}}$ on $M$ is
\[
\chi_{\delta,\beta}(z,y,x,0)= e^{2\pi\sqrt{-1}(\beta x +\delta z)}
\]
Also $M\backslash G$ with $G=E_4$ has $\Sigma = \exp{\mathbb{R}W}$ as a cross section, which we identify with $\mathbb{R}$; 
Lebesgue measure $dt$ on $\mathbb{R}$ gives a right-invariant measure $d\dot{g}$ on $M\backslash G$.
Using these measures to define $L^2$-norms we have an isometry between the representation space $\mathcal{H}_{\delta,\beta}$ of $\pi_{\delta,\beta} = {\rm Ind}_M^G(\chi_{\delta,\beta})$ and $L^2(\mathbb{R})$. 
We compute the action of $\pi_{\delta,\beta}$ modeled in $L^2(\mathbb{R})$ from the multiplication law given in (\ref{engelmultiplication}); first note that
\begin{align*}
&{}(0, 0, 0, t)\cdot(z, y, x, w) \\
&{} \quad  =(z +\frac12 ty+ \frac{1}{12}[t^2 x - twx ], y+ \frac12 tx, x, t + w) \\
&{} \quad =(z + ty + \frac12 t^2x +\frac12 twx + \frac12 wy + \frac16 w^2x, y+ tx +\frac12 wx, x, 0)\cdot(0, 0, 0, t + w).
\end{align*}
Thus if $f \in \mathcal{H}_{\delta,\beta}$
\begin{align*}
&{}[\pi_{\delta,\beta}(z,y,x,w)f](0, 0, 0, t) = f((0, 0, 0, t)\cdot (z, y, x, w)) \\
&{}= e^{2\pi\sqrt{-1}\beta x}e^{2\pi\sqrt{-1}\delta(z+ty+(1/2)t^2x +(1/2)twx +(1/2)wy + (1/6)w^2x)}f(0,0,0,t+w)
\end{align*}
and the action  modeled  in $L^2(\mathbb{R})$ is
\begin{align}
&{}[\pi_{\delta,\beta}(z,y,x,w)\tilde{f}](t) \notag \\
&{}= e^{2\pi\sqrt{-1}\beta x}e^{2\pi\sqrt{-1}\delta(z+ty+(1/2)t^2x +(1/2)twx +(1/2)wy + (1/6)w^2x)}\tilde{f}(t+ w). \label{engelirred}	
\end{align}
This action is easier to visualize if one examines the action of individual one-parameter subgroups. For instance,
\begin{align*}
&{}[\pi_{\delta,\beta}(\exp xX)\tilde{f}](t) = e^{2\pi\sqrt{-1}\beta x}e^{2\pi\sqrt{-1}\delta(1/2)t^2x}\tilde{f}(t),\\
&{}  [\pi_{\delta,\beta}(\exp wW)\tilde{f}](t) = \tilde{f}(t+w).
\end{align*}

We omit the description of the other orbits and irreducible representations since they do not appear in the Fourier inversion formula below (cf. 2.2.7 Example in \cite{Corwin}).

\paragraph{Fourier inversion formula}
Recall that $\{l_1,l_2,l_3,l_4 \}$ is the dual basis for $\mathfrak{g}^\ast$ of strong Malcev basis $\{Z, Y, X, W\}$ then, we have 
\[
S =\{2,4\},T =\{1, 3\},
\]
for the notions in Theorem \ref{corwin316}.
\begin{align*}
\mbox{Generic orbits}\quad  U &= \left\{\sum \alpha_il_i | \alpha_1 \neq 0 \right\}, \\
\mbox{Cross section} V_T \cap U &= \{\delta l_1 + \beta l_3 | \delta \neq 0, \beta \in \mathbb{R} \}.
\end{align*}
The Pfaffian is a polynomial on $\mathfrak{g}$ such that 
${\rm Pf}(l)^2 = l(Z )^2$, so ${\rm Pf}(Z)=l(Z)$. 
Identifying $V_T= \{\delta l_1+\beta l_3 | \delta, \beta \in \mathbb{R}\}$ and letting $d\delta d\beta$ be normalized Lebesgue measure, 
we have $|{\rm Pf}(l)|=|\delta |$; if $\pi_{\delta,\beta}$ is the representation corresponding to $l_{\delta,\beta} = \delta l_1 + \beta l_3$, 
the Fourier inversion formula is
\begin{equation} \phi(\sigma ) = \int_{\mathbb{R}^2} \mbox{Tr}(\pi_{\delta,\beta}(\sigma^{-1})\pi_{\delta,\beta}(\phi )|\delta|d\delta d\beta,\quad \mbox{for} \quad \phi \in \mathcal{S}(\mathbb{R})\label{engelplancherel}
\end{equation}

\subsubsection{The group of $4 \times 4$-upper trianglular matrices $N_4$}
We summurizes Examples 1.1.4, 1.2.6, 1.3.11, 2.2.8, 4.3.13  in \cite{Corwin}.

\paragraph{Definition}
Let $\mathfrak{g} = \mathfrak{n}_4$ be the Lie algebra of strictly upper triangular $4 \times 4$  matrices; it is a $3$ step nilpotent algebra, of dimension $6$, and its center is one-dimensional. A typical element $X$ can be written as
\begin{align} X &= (z,y_1,y_2,x_1,x_2,x_3)= zZ +y_1Y_1 +y_2Y_2+x_1X_1+x_2X_2+x_3X_3 \notag	\\
&= \left( \begin{array}{cccc}
0 & x_1 & y_1 & z \\
0 & 0   & x_2 & y_2 \\
0 & 0 & 0 & x_3 \\
0 & 0 & 0 & 0
\end{array}\right) \label{liealgebran4}
\end{align}
The calculation of $W_1 \ast W_2$ ($W_1, W_2 \in \mathfrak{g}$) will not be useful for us, and we omit it. We shall, however, compute ${\rm Ad}$:
\begin{align*}
&{}({\rm Ad}( \exp(z, y_1, y_2, x_1, x_2, x_3)))(z', y'_1, y'_2, x'_1, x'_2, x'_3) \\
&= (z'+x_1y'_2-x'_1y_2+y_1x'_3 -  y'_1x_3 + \frac12(x_1x_2x'_3-2x_1x'_2x_3 + x'_1x_2x_3),\\
&{}  y'_1+x_1x'_2 -x_2x'_1, y'_2 + x_2x'_3-x_3x'_2,x'_1,x'_2,x'_3 )
\end{align*}

In the above, $Z,\ldots, X_3$ is a basis of $\mathfrak{n}_4$ corresponding to the matrix entries. The nontrivial commutators are easily computed:
\[
[X_1,X_2] = Y_1 ,\; [X_3,X_2] = -Y_2,\;	[X_1,Y_2] = Z,\; [X_3,Y_1] = - Z.
\]
Since $\mathfrak{a}=\mathbb{R}\mbox{-span}\{Z, Y_1, Y_2, X_2\}$ is the largest abelian ideal in $\mathfrak{n}_4$, $\{Z,Y_1,Y_2,X_2,X_1,X_3\}$ is a strong Malcev basis for $\mathfrak{n}_4$.  

Similarly as (\ref{liealgebran4}), elements $l$ of $\mathfrak{n}_4^\ast$
are written as 
\begin{align} l &= (\alpha,\beta_1,\beta_2,\gamma_1,\gamma_2,\gamma_3) \notag \\ &= \alpha l_1+\beta_1l_2+\beta_2l_3+\gamma_1l_4+\gamma_2l_5+\gamma_3l_6 \notag \\
&= \left( \begin{array}{cccc}
0 & 0 & 0 & 0 \\
\gamma_1 & 0 & 0 & 0 \\
\beta_1 & \gamma_2 & 0 & 0 \\
\alpha & \beta_2 & \gamma_3 & 0
\end{array}\right) \label{duallien4} 
\end{align}
where $\{l_1,\ldots, l_6\}$ is the dual basis of $\{Z,\ldots, X_3\}$.

\paragraph{Coadjoint orbits}
In matrix form, the duality is given by 
\[l(X) = \alpha z + \cdots + \gamma_3x_3.\]
 By the computation of the coadjoint action of $N_4$ on $\mathfrak{n}_4$ in Example 1.3.11 in \cite{Corwin}, we see that the orbits have representatives and parametrizations as follows.
\begin{align*} {\rm (i)}\quad \mathcal{O}_{\alpha',\gamma'_2} &= {\rm Ad}^\ast G(\alpha', 0, 0, 0, \gamma'_2, 0) \\
&= \left\{\left.\left(\alpha', t_1, t_2, s_1, \gamma'_2 + \frac{t_1t_2}{\alpha'},s_2\right)\right| t_1,t_2,s_1,s_2 \in \mathbb{R}\right\} \\
&= \{l| \alpha = \alpha',\alpha\gamma'_2 - \beta_1\beta_2 = \alpha'\gamma_2\}
\end{align*}
where $\alpha' \neq 0$, $\gamma'_2 \in \mathbb{R}$. (These are the orbits of $l$ such that $\alpha \neq 0$).

\begin{align*}{\rm (ii)}\quad \mathcal{O}_{\beta'_1,\beta'_2,\gamma'_3} &=
\mbox{Ad}^\ast G(0, \beta'_1, \beta'_2,0, 0,\gamma'_3) \\
&= \left\{\left.\left(0,\beta'_1,\beta'_2,t_1,t_2, \frac{\beta'_1\gamma'_3-t_1\beta'_2}{\beta'_1}\right) \right| t_1,t_2 \in \mathbb{R}\right\} \\
&= \{l| \alpha=0, \beta_1= \beta'_1, \beta_2 = \beta'_2,\gamma_1\beta_2 +\gamma_3\beta_1 = \beta'_1\gamma'_3 \}
\end{align*}
where $\beta'_1 \neq 0$, $\beta'_2, \gamma'_3 \in \mathbb{R}$. (These are the orbits of $l$ such that $\alpha=0, \beta_1 \neq 0$.) 

\begin{align*}{\rm (iii)}\quad \mathcal{O}_{\beta'_2,\gamma'_1} &=
\mbox{Ad}^\ast G(0, 0, \beta'_2,\gamma'_1, 0,0) \\
&= \{(0,0,\beta'_2,\gamma'_1,t_1,t_2) | t_1,t_2 \in \mathbb{R}\} \\
&= \{l|\alpha= \beta_1 = 0, \beta_2 = \beta'_2, \gamma_1 = \gamma'_1\}
\end{align*}
where $\beta'_2 \neq 0$, $\gamma'_1 \in \mathbb{R}$. (These are the orbits of $l$ such that $\alpha= \beta_1 = 0, \beta_2 \neq 0$.)

\[{\rm (iv)}\quad \mathcal{O}_{\gamma'_1,\gamma'_2,\gamma'_3} =
\mbox{Ad}^\ast G(0, 0, 0,\gamma'_1, \gamma'_2, \gamma'_3) = \{(0, 0, 0,\gamma'_1, \gamma'_2, \gamma'_3)\} 
\]
where $\gamma'_1,\gamma'_2,\gamma'_3 \in \mathbb{R}$. (These are the orbits of $l$ such that $\alpha= \beta_1 = \beta_2 = 0$.)
\paragraph{Irreducible representations associated with generic orbits}
Generic orbits (i) are $4$ $4$-dimensional hypersurfaces; those in (ii) and (iii) are
$2$ $2$-dimensional planes and those in (iv) are points. 
Taking the indicated orbit representatives $l=l_{\alpha',\gamma'_2}$, in Case (i), $\mathfrak{r}_l = \{(z, 0, 0, 0, x_2 , 0)| z, x_2 \in \mathbb{R}\}$ and the abelian ideal $\mathfrak{m} =\{(z, y_1, y_2, 0, x_2, 0)| z, y_1,y_2, x_2 \in \mathbb{R}\}$ is a polarizing subalgebra for each such $l$. 
Since $\mathfrak{m}$ is an ideal, we may use the exponential coordinates
$(z,\ldots, x_3) = \exp X$ in $G = N_4$ and obtain a cross-section for $M\backslash G $,
$\Sigma = \{(0,0,0,t_1,0,t_3)| t_1,t_3 \in \mathbb{R}\}$. The action of $\pi_{\alpha',\gamma'_2}$ can then  be modeled in $L^2(\mathbb{R}^2)$. 
The necessary splitting of the product, computed  using the Campbell-Hausdorff formula, is
\[
(0,0,0,t_1,0,t_3)\cdot (z,\ldots, x_3)=(z',y'_1, y'_2,0,x_2,0)\cdot (0,0,0,t_1+x_1,0,t_3 +x_3),
\]
where
\begin{align*}
z' &= z + \frac{x_1y_2}{2}-\frac{x_3y_1}{2} -t_1x_2t_3-\frac12x_1x_2t_3 -\frac12t_1x_2x_3+\frac13x_1x_2x_3 \\
y'_1 &= y_1 + t_1x_2+\frac{x_1x_2}{2} \\
y'_2 &= y_2 - t_3x_2+\frac{x_2x_3}{2}.
\end{align*}
Thus, the action on $\tilde{f} \in L^2(\mathbb{R}^2)$ is
\begin{align}
&{}[\pi_{\alpha',\gamma'_2}\tilde{f}](t_l,t_3) \notag \\
&= e^{2\pi\sqrt{-1}\gamma'_2x_2}e^{2\pi\sqrt{-1}\alpha'[z + (1/2)x_1y_2 -x_3y_1 -t_1x_2t_3-(1/2)x_1x_2t_3 -(1/2)t_1x_2x_3+(1/3)x_1x_2x_3]} \notag \\
&{} \cdot\tilde{f}(t_1+x_1,t_3 +x_3). \label{4upperirred}
\end{align}
The above computation is enough for our later arguments, and thus, we proceed to the next step. 
(The calculations of the nongeneric representations are left to the reader in Example 2.2.8. in \cite{Corwin}).

If we choose the dual basis in (\ref{duallien4}), we have
\[ 
S=\{2,3,5,6\},\quad T=\{1,4\}
\]
and
\begin{align*}
\mbox{Generic orbits}\quad  U &= \{\alpha l_1+\beta_1l_2+\beta_2l_3+\gamma_1l_4+\gamma_2l_5+\gamma_3l_6 | \alpha \neq 0 \},\\
\mbox{Cross section} V_T \cap U &= \{\alpha l_1 + \gamma_1l_4 | \alpha \neq 0 \}.
\end{align*}

The Pfaffian $\mbox{Pf}(l)$ is expressed as 
\[
\mbox{Pf}(l)^2 = \mbox{det} B_l(X_{i_j}, X_{i_k}) = \mbox{det}\left(\begin{array}{cccc}0 & 0 & 0 & l(Z) \\
0 & 0 & l(Z) & 0 \\
0 & l(Z) & 0 & 0 \\
l(Z) & 0 & 0 & 0
\end{array}\right)
\]

\paragraph{Fourier inversion formula}
If we identify $V_T \cong \mathbb{R}^2$ and let $d\alpha d\gamma_1 =$ normalized Lebesgue measure on $\mathbb{R}^2$, the Fourier inversion formula and the Plancherel formula become
\begin{align}
 \tilde{f}(\sigma ) &= \int_{\mathbb{R}^2}\mbox{Tr}(\pi_{\alpha,\gamma_1}(\sigma^{-1})\pi_{\alpha,\gamma_1}(\tilde{f}))|\alpha|^2 d\alpha d\gamma_1, \; \tilde{f} \in \mathcal{S}(\mathbb{R}^2) \notag \\
\|f\|_2^2 &= \int_{\mathbb{R}^2}\|(\pi_{\alpha,\gamma_1}(f)\|_{\rm HS}^2|\alpha|^2 d\alpha d\gamma_1,  \label{4upperplancherel}\; f \in \mathcal{S}(\mathbb{R}^2).
\end{align}
where $\|\cdot \|_{\rm HS}$ denotes the Hilbert-Schmidt norm.

\section{Harmonic Theory of Chen's iterated integral: Homology connection}\label{Homologyconnection}

We have discussed the harmonic theory of iterated integral in the case $\Gamma = {\rm Heis}_3(\mathbb{Z})$ in Section \ref{harmoniclietheory}.

We separate the discussion for the case where $\Gamma$ is a general torsion-free nilpotent group, as it is more convenient to reformulate in terms of Chen's power series connection and the homology connection to discuss the harmonic theory of iterated integrals.

This strategy helps search for reasonable solutions to several (modified) Riemann-Hilbert problems discussed in Section \ref{mRHoper}. For future reference, we quote several related notions from \cite{Chen}, \cite{Kohno} here.
Main contents here are quoted from Kohno's book \cite{Kohno} with English translation.

\subsection{Formulation of general iterated integrals}\label{generaliteratedintegrals}
Let us introduce the iterated integral for differential $p$-forms on the differentiable manifold $M$. It is formulated as differential forms on the piecewise smooth space of paths $PM$ and thus, it gives a reformulation of the iterated integrals on differential $1$-forms given in Section \ref{Lieintegral}.

First, we will show that from a differential $p$-form on the differentiable manifold $M$, we can construct a differential $(p-1)$-form on the space of paths $PM$ through integration.

Let $M$ be a differentiable manifold and let $x_0, x_1 \in M$. We denote by
\[
P(M; x_0, x_1)
=
\bigl\{\gamma \colon I \to M \,\big|\, \gamma\in C^\infty,\ \gamma(0)=x_0,\ \gamma(1)=x_1 \bigr\}
\]
 We call $P(M; x_0, x_1)$ the path space of $M$ from $x_0$ to $x_1$.

Moreover, we denote by
\[
PM = \bigl\{\gamma \colon I \to M \,\big|\, \gamma\in C^\infty\bigr\}
\]
 We call $PM$ the (free) path space of $M$. We endow both $P(M; x_0, x_1)$ and $PM$ with the compact-open topology and regard them as topological spaces.

To define the differential forms on $PM$, we define a map.
\[
  \varphi\colon I \times PM \;\longrightarrow\; M,\qquad
  \varphi(t,\gamma)=\gamma(t).
\]
For an openset $U$ of $\mathbb{R}^n$, take a local chart
\[
  \phi\colon U \;\longrightarrow\; PM.
\]
defined by a local parameter system
\[
  \varphi_U\colon I \times U \;\longrightarrow\; M,\qquad
  \varphi_U(t,u)=\phi(u)(t).
\]

The pullback $\varphi_U^*\omega$ is considered a $p$-form on the product $I \times U$ and decomposed into the component involving $dt$ and the complementary part $\beta$ as
\[
  \varphi_U^*\omega\;=\;dt\wedge\alpha\;+\;\beta.
  \label{eq:decompose}
\]
Using local coordinates $(x^1,\dots,x^n)$ on $U$, write
\[
  \alpha
  \;=\;
  \sum_{1\le i_1<\cdots<i_{p-1}\le n}
    \alpha_{i_1\cdots i_{p-1}}(t,x^1,\dots,x^n)\;
    dx^{i_1}\wedge\cdots\wedge dx^{\,i_{p-1}}.
  \label{eq:alpha}
\]

The fiber integral of $\varphi_U^\ast\omega$ over the unit interval $I=[0,1]$ is then defined by
\[
  \int_I \varphi_U^\ast\omega
  \;=\;
  \sum_{1\le i_1<\cdots<i_{p-1}\le n}
  \Bigl(\int_0^1 \alpha_{i_1\cdots i_{p-1}}(t,x)\,dt\Bigr)\;
  dx^{i_1}\wedge\cdots\wedge dx^{\,i_{p-1}}.
  \label{eq:fiberIntegral}
\]

This produces a $(p-1)$-form on $U$.  Since for each local chart 
$\phi\colon U\to PM$, the result $\int_I\varphi_U^*\omega$ is a well-defined $(p-1)$-form on $U$, these local forms patch together to give a global 
$(p-1)$-form on the path space $PM$.  

Equivalently, if $p\colon I \times PM \;\longrightarrow\; PM$ defined by $p(t,\gamma)=\gamma$ is the projection, then one writes
  $p_\ast(\varphi^*\omega)\;=\;\int_I\varphi^\ast\omega$
and calls this the fiber integral of the $p$-form $\omega$ along the interval $I$.

By composing the pullback $\phi^\ast$ with the fiber integration, one can construct, from a $p$-form $\omega$ on the manifold $M$, a $(p-1)$-form on the whole path space $PM$.  
From now on, we will abbreviate this process by writing.
\[
\int_I \phi^* \omega \;=\;\int \omega.
\]

Finally, we define iterated integrals of differential forms on $M$ as a differential form on the space of paths $PM$ through integration.
Starting from differential forms 
$\omega_1,\dots,\omega_k$ on $M$ of degrees $p_1,\dots,p_k$, the iterated integral produces a form of total degree $p_1+\cdots+p_k$ on the path space $PM$:
\[
  \int_{\Delta_k}\varphi^*\bigl(\omega_1\wedge\cdots\wedge\omega_k\bigr).
\]
where $\Delta_k$ denotes the standerd $k$-simplex.
We denote this form by
\[
  \int_{\Delta_k}\omega_1\wedge\cdots\wedge\omega_k,
\]
and refer to it as the iterated integral of $\omega_1,\dots,\omega_k$.

In addition, this construction implies a multilinear map
\[
  I\colon
  \mathcal{A}^{p_1}(M)\,\wedge\cdots\wedge\,\mathcal{A}^{p_k}(M)
  \;\longrightarrow\;
  \mathcal{A}^{p_1+\cdots+p_k}(PM),
\]
where $\mathcal{A}^p(M)$ is the space of $p$-forms on $M$.

Let 
\[
  p\colon \Delta_k\times PM \;\longrightarrow\; PM
\]
be the projection and 
\[
  \varphi\colon \Delta_k\times PM \;\longrightarrow\; M^k
\]
the evaluation map $\varphi(t_1,\dots,t_k;\gamma)=(\gamma(t_1),\dots,\gamma(t_k))$.  Then, the iterated integral is reconstructed by composing the pullback with the fiber integration.

\subsection{Chen's homology connection and the Hodge decomposition}

First, let us recall the (noncommutative) formal power series connection. Let $M$ be a smooth manifold with a finite-dimensional homology group $H_\ast(M,\mathbb{R})$. 
Consider a dirert sum $H_+(M,\mathbb{R})$ of its positive part
\[ H_+(M,\mathbb{R}) := \bigoplus_{q>0}H_q(M,\mathbb{R}), 
\]
and the tensor algebra $TH_+(M,\mathbb{R})$ generated by  $H_+(M,\mathbb{R})$
\[ TH_+(M,\mathbb{R}) := \bigoplus_{k>0}\left(\bigotimes_{q>0}^k H_q(M,\mathbb{R})\right), 
\]
Take a linear basis $z_1,z_2,\dots, z_m$ be a basis of $H_+(M,\mathbb{R})$ such that $z_j \in H_{q_j}(M,\mathbb{R})$, $1 \leq q_j \leq m$. 
Corresponding to $z_1,\ldots, z_m$, take indeterminates $X_1,X_2,\dots, X_m$. Then, we can identify $TH_+(M,\mathbb{R})$ with non-commutative polynomial ring $\mathbb{R}\langle X_1, \ldots, X_m\rangle$. Next, we define a 
degree $\mbox{deg}\; X_j$ of $X_j$ by 
\[ \mbox{deg}\; X_j = q_j - 1,
\]
and extending the degree to each element in $TH_+(M,\mathbb{R})$, 
which implies this algebra to be graded. 

Next, we define the augmentation map $\varepsilon :\mathbb{R}\langle X_1, \ldots, X_m\rangle \to \mathbb{R}$ as the algebra-homomophism satisfying
\[ \varepsilon(X_j)= 0 \quad \mbox{for} \quad 1 \leq q_j \leq m\quad \mbox{and}\quad \varepsilon(c)= c \quad \mbox{for} \quad c \in \mathbb{R}.  \]
and the augmentation ideal $J$ by $J:= \mbox{Ker}\;\varepsilon$.

To define Chen's homology connection, we need the notion of noncommutative power series $\mathbb{R}\langle\langle X_1, \ldots, X_m\rangle\rangle$ with indeterminates $X_1, X_2, \dots, X_m$, which is obtained by a completion of $\mathbb{R}\langle X_1, \ldots, X_m\rangle$ by the topology determined by the power $J^k, k \geq 1$ of the augmentation ideal $J$. Namely, it is defined as the projective limit
\[ \mathbb{R}\langle\langle X_1, \ldots, X_m\rangle\rangle = \lim_{\leftarrow}\mathbb{R}\langle X_1, \ldots, X_m\rangle/J^k. 
\]
Let $\hat{J}$ be an ideal of $\mathbb{R}\langle\langle X_1, \ldots, X_m\rangle\rangle$ consisting of elements with vanishing constant terms.
Namely, an element of $\hat{J}$ can be expressed as 
\[ \sum_{i=1}^ma_iX_i+\cdots + \sum_{i_1,\ldots,i_k =1}^m a_{i_1,\ldots,i_k}X_{i_1}\cdots X_{i_k}+\cdots.
\]
It can be obtained upon completing the augmentation ideal $J$.
The degree of its elements is defined similarly to that of $J$. 
We denote $\mathbb{R}\langle X_1, \ldots, X_m\rangle_q$ the sub-module of $\mathbb{R}\langle\langle X_1, \ldots, X_m\rangle\rangle$ generated by the monomials of degree $q$.

For a smooth manifold $M$, let $\mathcal{A}^\ast(M)$ be the de Rham complex of $M$. We assume that the dual basis $z^1,\ldots, z^m$ of $z_1,\ldots, z_m$ are idetified with a basis $\omega_1, \ldots, 
\omega_m$ consisting by closed forms in $H^\ast(M,\mathbb{R})$.
 Namely, if $z_i, (1 \leq i \leq m)$ is represented by a cycle $c_i, (1 \leq i \leq m)$, then the integral $\int_{c_i}\omega_j$ of $\omega_j$on $c_i$ is equal to $\delta_{ij} = \left\{\begin{array}{cc}1 & (i=j)\\
0 & (i\neq j)\end{array}\right.$.

We define (non-commutative) formal power series connection $\omega$ by an element of $\mathcal{A}^\ast(M)\langle\langle X_1, \ldots, X_m\rangle\rangle$ the following form:
\begin{equation}   \omega = \sum_{i=1}^m\omega_iX_i + \cdots + \sum_{i_1,\ldots 
,i_k =1}^m \omega_{i_1,\ldots,i_k}X_{i_1}\cdots X_{i_k}+\cdots
\label{powerseriesconnection} \end{equation}
where $\omega_1,\ldots,\omega_{i_1,\ldots,i_k},\ldots$ are forms of positive degree on $M$.  
For a chain $\sigma$ in $M$, define integral $\int_\sigma \omega$
by 
\begin{equation*}   \int_\sigma\omega = \sum_{i=1}^m\int_\sigma\omega_iX_i + \cdots + \sum_{i_1,\ldots 
,i_k =1}^m \int_\sigma\omega_{i_1,\ldots,i_k}X_{i_1}\cdots X_{i_k}+\cdots.
\end{equation*}

Furthermore we define the exterior derivative $d$, the wedge product $\wedge$ on $\mathcal{A}^\ast(M)\langle\langle X_1, \ldots, X_m\rangle\rangle$ as (bi-)linear extensions of the following formula:
\begin{description}
\item[{\rm (1)}]{(the exterior derivative)} For a differential form $\tau$ on $M$ and a monomial $Z \in \mathbb{R}\langle\langle X_1, \ldots, X_m\rangle\rangle$,
\[   d(\tau Z) := (d\tau )Z
\]
\item[{\rm (2)}]{(the wedge product)} For differential forms $\tau_1, \tau_2$ on $M$ and monomials $Z_1, Z_2 \in \mathbb{R}\langle\langle X_1, \ldots, X_m\rangle\rangle$,
\[   (\tau_1 Z_1)\wedge(\tau_2 Z_2) := (\tau_1\wedge \tau_2)Z_1Z_2
\]
\end{description}
Moreover, if both differential form $\tau_1, \tau_2$ on $M$ has
positive degree, the iterated integral $\int(\tau_1Z_1)(\tau_2Z_2)$ of $\tau_1Z_1$ and $\tau_2Z_2$ is defined as
\[  \int(\tau_1Z_1)(\tau_2Z_2) := \left(\int\tau_1\tau_2\right)Z_1Z_2
\]
and taking its linear extensions, we can define iterated integrals 
\[   \int\omega\omega, \int\omega\omega\omega, \cdots
\]
for $\omega \in \mathcal{A}^\ast(M)\langle\langle X_1, \ldots, X_m\rangle\rangle$.

Next, we extend the definition of the augmentation map $\varepsilon$ to a map 
\[\varepsilon: \mathcal{A}^\ast(M)\langle\langle X_1, \ldots, X_m\rangle\rangle \to \mathcal{A}^\ast(M)\langle\langle X_1, \ldots, X_m\rangle\rangle,
\] 
by linear extension of the following property:
\[ \varepsilon(\tau Z) = \varepsilon(\tau)Z = \left\{\begin{array}{cc}\tau Z & \mbox{if deg $\tau$ is even}\\
-\tau Z & \mbox{if deg $\tau$ is odd}\end{array}\right.
\]
for a differential form $\tau$ on $M$ and a monomial $Z \in \mathbb{R}\langle\langle X_1, \ldots, X_m\rangle\rangle$.

For a (non-commutative) formal power series connection $\omega$, we define its curvature form $\kappa$ of $\omega$ by
\begin{equation} \kappa = d\omega - \varepsilon(\omega)\wedge\omega, \label{curvatureformal}
\end{equation} 
This notion is an extension of the usual curvature form for a connection one-form $\omega$.

Moreover, a linear transform $\delta$ defined on a non commutative formal power series ring $\mathbb{R}\langle\langle X_1,X_2,\ldots,X_m \rangle\rangle$ is called derivation if for  
\[ f(X_1,\ldots,X_m) = a_0 + \sum_{i=1}^m a_iX_i + \cdots + \sum_{i_1,\ldots 
,i_k =1}^m a_{i_1,\ldots,i_k}X_{i_1}\cdots X_{i_k}+\cdots.
\]
It is defined as 
\[ \delta f= \sum_{i=1}^m a_i\delta X_i + \cdots + \sum_{i_1,\ldots 
,i_k =1}^m a_{i_1,\ldots,i_k}\delta (X_{i_1}\cdots X_{i_k})+\cdots
\]
and satisfies the following conditions:
\begin{description}
\item[{\rm (1)}] $\delta$ decreases degree $1$, namely
\[ \delta:\mathbb{R}\langle\langle X_1,X_2,\ldots,X_m \rangle\rangle_q \to \mathbb{R}\langle\langle X_1,X_2,\ldots,X_m \rangle\rangle_{q-1}
\]
where $\mathbb{R}\langle\langle X_1,X_2,\ldots,X_m \rangle\rangle_q$ is a subspace spanned by degree $q$ monomimals. 

\item[{\rm (2)}](Leibnitz rule) For monomials $u,v$,
\[   \delta (uv) = (\delta u)v + u(\delta v)
\] 
holds.
\item[{\rm (3)}] For $X_j, 1 \leq j \leq m$, $\delta X_j \in \widehat{J}^2$. 
\end{description}

Here we recall that indeterminates $X_1,X_2,\dots, X_m$ are correspond to elements  in linear basis $z_1,z_2,\dots, z_m$ of $H_+(M,\mathbb{R})$. 
In contrast to the case for the exterior derivative $d$, we extend $\delta$ as an operator defined on $\mathcal{A}^\ast(M)\langle\langle X_1, \ldots, X_m\rangle\rangle$ as linear extensions of the following formula:
\[   \delta(\tau Z) := \tau\delta(Z),
\]
for a differential form $\tau$ on $M$ and a monomial $Z \in \mathbb{R}\langle\langle X_1, \ldots, X_m\rangle\rangle$,

We call this formal power series connection $\omega$ homology connection if the following two conditions are fulfilled:
\begin{description}[leftmargin=0.0cm] 
\item[{\rm (1)}](flatness condition)
\begin{equation} \delta\omega + \kappa = 0 \label{flatcondition}
\end{equation}

Note that this condition means that each $k$-component satisfies $(\delta\omega + \kappa)_k = 0 $ for each $k = 0,1,2$
 \item[{\rm (2)}] (degree condition) The degree of $\omega_{i_1,\ldots,i_k}$ is $q_{i_1}+\cdots + q_{i_k}-k+1$. namely 
\[ \mbox{deg}\; \omega_{i_1,\ldots,i_k}= \mbox{deg}\; X_{i_1}\cdots X_{i_k}+1
\]
\end{description}

If $M$ is a compact Riemannian manifold, then the following Hodge decomposition formula holds:
\begin{equation} \mathcal{A}^\ast(M) = \mathcal{H}\oplus B_d\oplus B_{d^{\ast}}\label{decomposition1}
\end{equation} 
where $\mathcal{H}$ is the space of harmonic forms, $B_d, B_{d^{\ast}}$ are the images of the exterior operator $d$ and its adjoint operator $d^{\ast}$ respectively. Moreover if we put $\mathcal{B} := B_{d^{\ast}}$, then $B_d = d\mathcal{B}$ and thus, the above decomposition (\ref{decomposition1}) can be rewritten as
\begin{equation} \mathcal{A}^\ast(M) = \mathcal{H}\oplus d\mathcal{B}\oplus \mathcal{B} \label{decomposition2}
\end{equation} 

Then the following theorem can be viewed as one can take a homology connection which is analogous to the harmonic representative described in Remark \ref{harmonicrepresentative}. 

\begin{proposition} If there is a decomposition in the form as (\ref{decomposition2}) of $\mathcal{A}^\ast(M)$ satisfying
\begin{description}
\item[{\rm (1)}] all elements in $\mathcal{H}$ are closed forms, and the inclusion $i:\mathcal{H} \to \mathcal{A}^\ast(M)$ induces an isomorphism between $\mathcal{H}$ and de Rham cohomology $H_{\mbox{DR}}^\ast(M)$ of $M$.
\item[{\rm (2)}] $\mathcal{B}$ is a graded subspace of $\mathcal{A}^\ast(M)$ and any closed form in $\mathcal{B}$ is only vanishing form $0$.
\end{description}
then, there is a unique homology connection $(\omega,\delta)$ such that 
\begin{equation}
\omega_i \in \mathcal{H},\quad 1 \leq  i \leq m, \quad \omega_{i_1,\ldots i_k} \in \mathcal{B},\quad k \geq 2.
\end{equation}
\end{proposition}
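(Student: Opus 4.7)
The plan is to build $(\omega,\delta)$ by induction on the monomial length $k$ in the non-commutative variables $X_1,\dots,X_m$, determining at the $k$-th stage the coefficient form $\omega_K$ for each $|K|=k$ together with the length-$k$ component of each $\delta X_i$. The base case will be provided by Hodge theory, the inductive step will be solved using the decomposition $\mathcal{A}^\ast(M)=\mathcal{H}\oplus d\mathcal{B}\oplus\mathcal{B}$, and the crucial point will be a Bianchi-type integrability identity that guarantees compatibility of the whole scheme.

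For $k=1$, set $\omega_i\in\mathcal{H}$ to be the unique harmonic representative of the class dual to $z_i$; such a representative exists and is unique by condition (1). Since $d\omega_i=0$ and $\delta X_i\in\widehat{J}^2$ contributes nothing at length $1$, the flatness equation is automatic at this order. Inductively, suppose that the $\omega_I$ have been chosen for $|I|\leq k-1$ (with $\omega_i\in\mathcal{H}$ and $\omega_I\in\mathcal{B}$ for $|I|\geq 2$) and that the coefficients of $\delta X_i$ have been specified up to length $k-1$, in such a way that the flatness condition $\delta\omega+d\omega-\varepsilon(\omega)\wedge\omega=0$ holds modulo monomials of length $\geq k$. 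Equating coefficients of $X_K$ for $|K|=k$ in this relation yields an equation of the form
\[
d\omega_K \;+\; \sum_{i=1}^m \omega_i\,\alpha_i^K \;=\; R_K,
\]
where $\alpha_i^K$ is the coefficient of $X_K$ in the new portion of $\delta X_i$ to be determined, and $R_K\in\mathcal{A}^\ast(M)$ depends only on the inductively given data (wedge products $\varepsilon(\omega_I)\wedge\omega_J$ with $|I|+|J|=k$, and derivation contributions $\omega_I\,\alpha_{i_j}^J$ with $|I|\geq 2$, $|I|+|J|-1=k$).

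Decomposing $R_K=R_K^{\mathcal{H}}+R_K^{d\mathcal{B}}+R_K^{\mathcal{B}}$, the equation splits: the $\mathcal{H}$-part gives $\sum_i\omega_i\,\alpha_i^K=R_K^{\mathcal{H}}$, which determines the scalars $\alpha_i^K$ uniquely since $\{\omega_i\}$ is a basis of $\mathcal{H}$; the $d\mathcal{B}$-part gives $d\omega_K=R_K^{d\mathcal{B}}$, which by condition (2)—asserting that $d\colon\mathcal{B}\to d\mathcal{B}$ is injective and therefore bijective onto its image—determines $\omega_K\in\mathcal{B}$ uniquely; and the $\mathcal{B}$-part yields the consistency requirement $R_K^{\mathcal{B}}=0$.

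The main obstacle is proving that this consistency requirement is automatically satisfied. The approach is to show that $R_K$ is a closed form. Apply the exterior derivative to the flatness condition and use $d^2=0$ together with the commutation $d\delta=\delta d$ (these act on disjoint factors of $\mathcal{A}^\ast(M)\langle\langle X\rangle\rangle$); this produces a Bianchi-type identity whose length-$k$ component, reduced via the inductive flatness at orders $<k$, collapses to $dR_K=0$. Since $R_K^{\mathcal{H}}$ is closed by condition (1) and $R_K^{d\mathcal{B}}$ is exact, one concludes $dR_K^{\mathcal{B}}=0$; then condition (2)—no nonzero closed form lies in $\mathcal{B}$—forces $R_K^{\mathcal{B}}=0$, completing the inductive step. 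Uniqueness is immediate because each of the three components $R_K^{\mathcal{H}}$, $R_K^{d\mathcal{B}}$, $R_K^{\mathcal{B}}$ uniquely determines, respectively, $\alpha_i^K$, $\omega_K\in\mathcal{B}$, and the vanishing obstruction, so the entire construction is forced upon us.
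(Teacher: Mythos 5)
Your overall architecture (induction on word length, splitting each coefficient equation via $\mathcal{A}^\ast(M)=\mathcal{H}\oplus d\mathcal{B}\oplus\mathcal{B}$, with the harmonic part fixing the new coefficients of $\delta$ and the $d\mathcal{B}$-part fixing $\omega_K\in\mathcal{B}$) is indeed the standard Chen--Kohno construction; note that the paper itself only states the proposition and refers to \cite{Chen}, \cite{Kohno} for the argument, so there is no in-paper proof to compare against. Your reduction of solvability and uniqueness at stage $k$ to the single condition $R_K^{\mathcal{B}}=0$, and the observation that this is equivalent to $dR_K=0$, are correct.

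The gap is in the pivotal integrability step, which you assert rather than prove: applying $d$ to the flatness relation and using $d^2=0$, $d\delta=\delta d$ and flatness at orders $<k$ does \emph{not} collapse to $dR_K=0$. Carrying out the computation with the conventions of the appendix, at stage $k$ (already at $k=3$) one is left with residual terms of the schematic form
\begin{equation*}
dR^{(k)}\;=\;\delta'\delta'\omega' \;-\;\bigl(\varepsilon(\omega^{(1)})+\omega^{(1)}\bigr)\wedge\delta'\omega' \;+\;\cdots ,
\end{equation*}
because rewriting $\delta'(d\omega')$ by means of the lower-order flatness inevitably produces $(\delta')^{2}\omega'$, and the Leibniz manipulations of $\varepsilon(\omega')\wedge\omega'$ leave a term supported on the even-degree generators (it only disappears when all $\omega_i$ are $1$-forms, i.e. when only $H_1$-dual generators occur, which is not the situation of the proposition since $z_1,\dots,z_m$ runs over a basis of all of $H_+(M,\mathbb{R})$). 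Neither of these terms vanishes as a consequence of the ingredients you invoke: their vanishing (in the correctly signed, graded formulation) is exactly the quadratic relation for $\delta$ to the relevant order, i.e. the Stasheff-type identities satisfied by the transferred structure dual to $\delta$. In Chen's argument this is handled by strengthening the induction hypothesis: one constructs $(\omega,\delta)$ \emph{and} verifies $\delta^{2}=0$ (in the graded sense, with the Koszul signs that the ungraded Leibniz rule of the appendix suppresses) up to the current length, and only with that extra input does the obstruction become closed and land in $\mathcal{H}\oplus d\mathcal{B}$. Alternatively one can bypass the induction entirely via the homotopy transfer theorem, taking the contraction determined by the decomposition $\mathcal{H}\oplus d\mathcal{B}\oplus\mathcal{B}$ and reading off $\delta$ and $\omega$ from the explicit tree-sum formulas. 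As written, your induction hypothesis (flatness modulo length $\ge k$ alone) is too weak, so the consistency step is unproved; a minor additional omission is that you never check the degree condition $\deg\omega_{i_1\ldots i_k}=q_{i_1}+\cdots+q_{i_k}-k+1$ required in the definition of a homology connection, though that is routine bookkeeping once the signs are set up correctly.
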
 

This proposition can be seen as a generalization of Condition \ref{harmoniccoexact}.

\subsection{Holonomy representation of the fundamental group}
Here, we explain a universal presentation of the holonomy representation of the fundamental group using the homology connection.

Take a degree $0$-part $\omega_0$ of a homology connection $\omega$. Then, $\omega_0$ is an element in $\mathcal{A}^1(M)\otimes\mathbb{R}\langle \langle X_1,\ldots, X_m\rangle \rangle$, namely a closed one form on $M$ with noncommutative formal power series coefficients, thus in general, it is an infinite sum. Moreover, we put its curvature $\kappa_0$ as
\[     \kappa_0 = d\omega_0 + \omega_0\wedge\omega_0.
\]  
It is the usual curvature form of a connection one form $\omega_0$.

We define the transport $T$ using the iterated integrals on a loop with base point $x_0 \in M$ as
\[ T = 1 + \sum_{k=1}^\infty \int \underbrace{\omega\cdots\omega}_k
\]
then $T$ can be viewed as an element in $\mathcal{A}(\Omega M)\langle \langle X_1,\ldots,X_m\rangle\rangle$. In this case, it coincides with the holonomy of the connection one form $\omega_0$, but in a more general context of homology connections, it is its generalization.  
From $T$, the holonomy map $\Theta$ is introduced as follows:
\begin{align}
 \Theta:C_\ast(\Omega M)\otimes \mathbb{R} &\to \mathbb{R}\langle \langle X_1,\ldots,X_m\rangle\rangle \notag \\
\Theta(c) &= \langle T,c\rangle,\quad C_\ast(\Omega M) \label{holonomymap}
\end{align}
where $\Omega M$ is the loop space of $M$ based at $x_0$. Then, it is a chain map. Let us notice the map
\[ H_0(\Omega M;\mathbb{R}) \to H_0(\mathbb{R}\langle \langle X_1,\ldots,X_m\rangle\rangle,\delta )
\]
which is induced from $\Theta$. 
The homology $H_0(\Omega M;\mathbb{R})$  has a product defined by the composition of loops and then, it is isomorphic to the the group algebra $\mathbb{R}\pi_1(M,x_0)$ of the fundamental group $\pi_1(M,x_0)$ over $\mathbb{R}$. 

We denote by $\mathcal{N}$ the image of the boundary operator
$\delta:\mathbb{R}\langle \langle X_1,\ldots,X_m\rangle\rangle_1 \to \mathbb{R}\langle \langle X_1,\ldots,X_m\rangle\rangle_0$. 
Then $\mathcal{N}$ is an ideal. The $0$-th homology of the chain complex $\mathbb{R}\langle \langle X_1,\ldots,X_m\rangle\rangle,\delta )$ can be expressed as 
\[ H_0(\mathbb{R}\langle \langle X_1,\ldots,X_m\rangle\rangle,\delta ) = \mathbb{R}\langle \langle X_1,\ldots,X_m\rangle\rangle,\delta )/\mathcal{N}.
\]

By this procedure, we derive from $\Theta$ the homology homomorphism
\[ \Theta_0:\mathbb{R}\pi_1(M,x_0) \to \mathbb{R}\langle \langle X_1,\ldots,X_m\rangle\rangle,\delta )/\mathcal{N}.\] 
This map is given by an infinite sum of iterated integrals of the first order differential from $\omega_0$, such as 
\[ \Theta_0(\gamma) = 1 + \sum_{k=1}^\infty \int_\gamma \underbrace{\omega_0\cdots\omega_0}_k.
\]
As before, the augmentation ideal $J$ is defined as the kernel of the augmentation map $\varepsilon:\mathbb{R}\pi_1(M,x_0)\to \mathbb{R}$ and similarly, $\widehat{J}$ as the augmentation ideal of $\mathbb{R}\langle \langle X_1,\ldots, X_m\rangle\rangle$ whose element is a formal power series with vanishing constant term. Then the following Theorem holds:
\begin{theorem}[{\rm Theorem 5.3.1 in \cite{Kohno}}]\label{chenpi1derham}
The holonomy homomorphiam $\Theta_0$ induces the following $\mathbb{R}$-algebra isomorphiam
\[ \mathbb{R}\pi_1(M,x_0)/J^{k+1} \cong \mathbb{R}\langle \langle X_1,\ldots,X_m\rangle\rangle/(\mathcal{N}+\widehat{J}^{k+1} )
\]
\end{theorem}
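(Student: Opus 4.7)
The plan is to establish the asserted isomorphism by verifying that $\Theta_0$ is a well-defined filtered $\mathbb{R}$-algebra homomorphism and then checking bijectivity on the associated graded pieces. First I would verify multiplicativity. The essential input is Chen's product formula for iterated integrals along a concatenation of loops $\alpha\beta$:
\[
\int_{\alpha\beta} \tau_{1}\cdots\tau_{n} \;=\; \sum_{j=0}^{n} \left(\int_{\alpha} \tau_{1}\cdots\tau_{j}\right)\left(\int_{\beta} \tau_{j+1}\cdots\tau_{n}\right).
\]
Summing these identities against the noncommutative monomials coming from the coefficients of the degree-zero piece $\omega_{0} = \sum_{i}\omega_{i}X_{i} + \sum_{i,j}\omega_{ij}X_{i}X_{j}+\cdots$ yields the desired equality $\Theta_{0}(\alpha\beta) \equiv \Theta_{0}(\alpha)\,\Theta_{0}(\beta)$ modulo $\mathcal{N}$. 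The flatness condition $\delta\omega + \kappa = 0$ of the homology connection is exactly what makes the ``error terms'' (i.e.\ coboundaries produced by moving basepoints and by the non-closedness of the higher iterated integrands) land inside $\mathcal{N} = \mbox{Im}(\delta)$.

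Next I would check that the map descends to the quotient. Since $\int_{\gamma}\omega_{0}^{n}$ has vanishing constant term, $\Theta_{0}(\gamma - 1) \in \widehat{J}$ for every $\gamma \in \pi_{1}(M,x_{0})$, and multiplicativity modulo $\mathcal{N}$ then gives $\Theta_{0}(J^{k+1}) \subset \widehat{J}^{k+1} + \mathcal{N}$. One thereby obtains an induced $\mathbb{R}$-algebra homomorphism
\[
\overline{\Theta}_{0} \colon \mathbb{R}\pi_{1}(M,x_{0})/J^{k+1} \;\longrightarrow\; \mathbb{R}\langle\langle X_{1},\ldots,X_{m}\rangle\rangle/(\mathcal{N}+\widehat{J}^{k+1}).
\]
Surjectivity follows by induction on the filtration: taking loops $\gamma_{1},\ldots,\gamma_{m}$ that form a basis dual to $\omega_{1},\ldots,\omega_{m}$ under the pairing $H_{1}(M)\times H^{1}(M)\to \mathbb{R}$, one has $\Theta_{0}(\gamma_{i}-1) \equiv X_{i}$ modulo $\widehat{J}^{2}+\mathcal{N}$; iterated products then hit every monomial $X_{i_{1}}\cdots X_{i_{\ell}}$ up to higher-order corrections, which can be cancelled inductively.

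The main obstacle, and the technical heart of the argument, is injectivity, which I would attack by a comparison of associated graded pieces with respect to the two filtrations. On the left-hand side, $\bigoplus_{\ell} J^{\ell}/J^{\ell+1}$ is (the truncation of) the universal enveloping algebra of the Malcev Lie algebra of $\pi_{1}(M,x_{0})$. On the right-hand side, the graded quotient is computed from the chain complex $(\mathbb{R}\langle\langle X_{1},\ldots,X_{m}\rangle\rangle,\delta)$ whose $0$-th homology, by the very definition of homology connection, is engineered to realize the same Malcev enveloping algebra: the degree-one pieces $\omega_{i}$ are harmonic representatives of a basis of $H^{1}(M,\mathbb{R})$, while the higher-order pieces $\omega_{i_{1},\ldots,i_{\ell}} \in \mathcal{B}$ are recursively determined by the flatness equation so as to match the successive Lie brackets in the lower central series of $\pi_{1}(M)$. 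Thus the induced map on associated graded pieces can be identified with the classical isomorphism of Malcev (or equivalently Quillen/Sullivan's $1$-minimal model), and in particular is bijective in each degree $\ell\leq k$. A degree-by-degree application of the five lemma to the short exact sequences
\[
0 \to J^{\ell}/J^{\ell+1} \to \mathbb{R}\pi_{1}(M,x_{0})/J^{\ell+1} \to \mathbb{R}\pi_{1}(M,x_{0})/J^{\ell} \to 0
\]
and their right-hand counterparts then upgrades bijectivity on the graded pieces to bijectivity of $\overline{\Theta}_{0}$, completing the proof. The verification that the associated graded map really does coincide with the Malcev identification -- carried out by exhibiting the explicit harmonic/coexact representatives produced by the Hodge decomposition $\mathcal{A}^{\ast}(M) = \mathcal{H}\oplus d\mathcal{B}\oplus \mathcal{B}$ and tracking them through the iterative construction of $\omega$ -- is where the bulk of the technical work lies.
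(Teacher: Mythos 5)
Your overall strategy (show $\Theta_0$ is a filtered algebra map, get surjectivity from loops dual to the degree-zero generators, get injectivity by comparing associated gradeds and applying the five lemma) is a genuinely different route from the paper's: the paper, following Chen, never argues directly on $\mathbb{R}\pi_1$ but instead builds Adams's cobar complex $F(C_\ast)$ on based singular simplices, constructs the geometric chain map $\mu\colon F(C_\ast)\to C_\ast(\Omega M)$ via the path families $\theta_q$, filters both sides by word length, and reads the theorem off from the map of spectral sequences induced by $\Theta\circ\mu$, where the $E^1$-map $\bigotimes^k H_1(M,\mathbb{R})\to\widehat{J}^k/\widehat{J}^{k+1}$ is an isomorphism because $\int_{z_i}\omega_j=\delta_{ij}$, together with the filtered identification $H_0(F(C_\ast)_{\mathbb{R}})\cong H_0(\Omega M;\mathbb{R})\cong\mathbb{R}\pi_1(M,x_0)$. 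In other words, the identification of $\mathrm{gr}_J\,\mathbb{R}\pi_1$ with the graded piece of the target is \emph{produced} there by the cobar model, not imported from elsewhere.

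This is exactly where your argument has a genuine gap. Your injectivity step asserts that the induced map on associated gradeds "can be identified with the classical isomorphism of Malcev (or equivalently Quillen/Sullivan's $1$-minimal model)." But the homology connection is built purely from the Hodge decomposition of $\mathcal{A}^\ast(M)$, with no reference to $\pi_1$; the statement that the relations $\delta X_j$ (which, as in the Heisenberg example, are cubic or worse and encode Massey-product corrections for non-formal $M$) cut out precisely the graded relations of $\mathbb{R}\pi_1(M,x_0)$ \emph{is} the content of the theorem, not an available classical fact. Quillen's theorem identifying $\bigoplus_\ell J^\ell/J^{\ell+1}$ with the enveloping algebra of the graded Lie algebra of $\pi_1$ is citable, but on the other side you would still have to prove that the associated graded of $\mathbb{R}\langle\langle X_1,\ldots,X_m\rangle\rangle_0/\mathcal{N}$ (note $\mathcal{N}$ is not homogeneous, so even forming this graded requires care with leading terms) agrees with it; invoking Sullivan's $1$-minimal model instead only shifts the burden to the unproven comparison between the homology connection complex and the $1$-minimal model, which is again Chen-level work and is precisely what you defer as "the bulk of the technical work." As written, the proposal therefore proves well-definedness and surjectivity but not injectivity. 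A smaller conceptual slip: multiplicativity of the transport along concatenation is exact by Chen's product formula and needs no correction by $\mathcal{N}$; the role of the flatness condition $\delta\omega+\kappa=0$ and of $\mathcal{N}=\mathrm{Im}\,\delta$ is homotopy invariance, i.e.\ that $\Theta$ is a chain map so that $\Theta_0$ is well defined on $\pi_1(M,x_0)$ with values in $H_0$ of the target complex.
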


 We reproduce the proof in \cite{Kohno} since it is written in Japanese. The following text is our translation. The content is essentially contained in \cite{Chen1}.
This proof is performed through a connection to another object, known as the cobar complex.

Before proceeding to the actual proof, we introduce several key notions.

First, we recall Adams's cobar construction of the last object in the above.
Let $M$ be a connected smooth manifold and take a base point $x_0 \in M$. 
We denote by $C_q(M)_{x_0}$ free module generated by $q$-dimensional singular simpleces $\sigma:\Delta_q \to M$ such that any vertex $p$ of $q$-dimensional standard simplex $\Delta_q$ maps to $x_0$. 
Put $C_\ast := \bigoplus_qC_q(M)_{x_0}$. 
Let $F(C_\ast)$ be the free associative unital algebra over $\mathbb{Z}$ generated by all of the positive-dimensional singular simplexes. 
Denote by $1$ its unit element and by $[\sigma]$ the genrator of $F(C_\ast)$ corresponeding singular simplex $\sigma$ whose degree $\mbox{deg}\;[\sigma]$
is defined as $\mbox{dim}\; \sigma -1$. 
Moreover, for generators $[\sigma_1],\ldots,[\sigma_k] \in F(C_\ast)$, we define \[ \mbox{deg} ([\sigma_1],\ldots,[\sigma_k] ) = \mbox{deg}[\sigma_1]+ \cdots +\mbox{deg} [\sigma_k], \]
which makes $F(C_\ast)$ graded algebra.

On the other hand, for a loop space $\Omega M$ with base point $x_0$, we shall define its cube complex $(C_\ast(\Omega M), \partial)$.
Let $I_n$ be $n$ times direct products of unit interval $I = [0,1]$. Here we consider members of $\Omega M$ are piecewise smooth loops. A map
$\psi:I_n \to \Omega M$ is called $n$-dimensional cube ($n$-cube) of $\Omega M$ if the map $\varphi:I \times I_n\to M$ defined by
\[   \varphi(t,\xi_1,\ldots,\xi_n) := \psi(\xi_1,\ldots,\xi_n)(t)
\]   
is piecewise smooth. Note that $\varphi(0,\xi_1,\ldots,\xi_n) = \varphi(1,\xi_1,\ldots,\xi_n) =x_0$. We denote by $C_n(\Omega M)$ free module generated by such $n$-cubes and put $C_\ast(\Omega M) = \bigoplus_{n \geq 0}C_n(\Omega M)$.

Next we define the boundary operator $\partial:C_n(\Omega M)\to C_{n-1}(\Omega M)$ as follows;
Take verteces $v,w$ in the standard $q$-simplex $\Delta_q$ in the Euclidean space and let $\mathcal{P}(\Delta_q;v,w)$ be the set of smooth paths from $v$ to $w$.
We denote $j$-th face operator of $\Delta_q$ by
\[   \partial_j:\Delta_{q-1} \to \Delta_q
\]
and $j$-th face operator of $n$-cube $I_n$ by
\begin{align*}   \lambda_j^\epsilon:I_{n-1}\to I_n,& \quad 1 \leq j \leq, \quad \epsilon = 0,1 \\
\lambda_j^\epsilon(x_1,\ldots, x_{n-1}) &= (y_1,\ldots, y_n)
\end{align*} 
where
\[     y_j = \left\{\begin{array}{cc} x_j, & 1 \leq j \leq i-1 \\
\epsilon, & j=i \\
x_{j-1}, & i+1 \leq j \leq n \end{array} \right. .
\]

In the sequel, we put numbering of the vertices of the $q$-dimensional simplex $\Delta_q$ by $v_0, v_1, \ldots, v_q$. 
For vertices $w_0, w_1, \ldots, w_i$ of $\Delta_i$, we define the simplicial map $f_i:\Delta_i\to \Delta_q$ by $f_i(w_k) = v_k$, $0 \leq k \leq i$ and for vertices $w_0, w_1, \ldots, w_{q-i}$ of $\Delta_{q-i}$, we define the simplicial map $\ell_{q-i}:\Delta_{q-i}\to \Delta_q$ by $\ell_{q-i}(w_k) = v_{k+i}$, $0 \leq k \leq q-i$.  
Moreover associated to these maps $f_i, \ell_{q-i}$, we define natural inclusion maps
\begin{align*}   
 P(f_i) &:\mathcal{P}(\Delta_i;w_0,w_i) \to \mathcal{P}(\Delta_q;v_0,v_i) \\ 
 P(\ell_{q-i}) &:\mathcal{P}(\Delta_{q-i};w_0,w_{q-i}) \to \mathcal{P}(\Delta_q;v_i,v_q). 
\end{align*}   
Similarly, we define natural inclusion maps 
\[ P(\partial_j) :\mathcal{P}(\Delta_{q-1};w_0,w_{q-1}) \to \mathcal{P}(\Delta_q;v_0,v_q) \quad 0 \leq j \leq q-1 
\]
associated to $\partial_j$.
Then, the following lemma holds:
\begin{lemma} There exists $q-1$-cube
\[ \theta_q:I_{q-1}\to \mathcal{P}(\Delta_q;v_i,v_q)
\]  
such that for $1 \leq i \leq q-1$, the following equality holds:
\[   \theta_q\circ\lambda_i^0 = P(\partial_i)\circ\theta_{q-1}.
\]
\end{lemma}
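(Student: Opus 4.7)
The plan is to construct $\theta_q$ explicitly as a family of piecewise-linear paths along the $1$-skeleton $v_0 \to v_1 \to \cdots \to v_q$ of $\Delta_q$, where each cube coordinate $\xi_i \in [0,1]$ controls the ``weight'' assigned to the intermediate vertex $v_i$: the extreme value $\xi_i = 0$ forces the path to skip $v_i$ and lie in the face $\partial_i(\Delta_{q-1})$, while $\xi_i = 1$ corresponds to the path spending the maximal amount of time at $v_i$. The face-compatibility equation $\theta_q\circ\lambda_i^0 = P(\partial_i)\circ\theta_{q-1}$ will then follow essentially from the construction itself.

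Concretely, I would work in barycentric coordinates, writing $\theta_q(\xi)(t)=\sum_{j=0}^{q}c_j(\xi,t)\,v_j$, and design the coefficients so that $c_j(\xi,t)\ge 0$, $\sum_j c_j(\xi,t)\equiv 1$, $c_0(\xi,0)=1$, $c_q(\xi,1)=1$, and, crucially, $c_i(\xi,t)\equiv 0$ whenever $\xi_i=0$. A convenient realization uses ``tent'' functions $\psi_i\colon[0,1]\to[0,1]$ concentrated near $t=i/q$: setting $c_i(\xi,t)=\xi_i\,\psi_i(t)$ for $1\le i\le q-1$ and letting $c_0,c_q$ absorb the remaining mass monotonically in $t$ yields a legitimate element of $\mathcal{P}(\Delta_q;v_0,v_q)$. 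The toy case $q=2$ already illustrates the idea transparently: with a single tent $\psi$ of height one, the assignment $(c_0,c_1,c_2)=\bigl((1-t)(1-\xi_1\psi(t)),\,\xi_1\psi(t),\,t(1-\xi_1\psi(t))\bigr)$ produces a path which collapses to the straight edge $[v_0,v_2]$ exactly when $\xi_1=0$, and passes through $v_1$ at $t=1/2$ when $\xi_1=1$.

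The verification of the face relation should then be nearly automatic. Setting $\xi_i=0$ forces $c_i\equiv 0$, so the resulting path has image in the subsimplex $\partial_i(\Delta_{q-1})\subset\Delta_q$; the remaining coefficients depend only on the other $\xi_j$'s, and the relabeling of vertices induced by the face inclusion $\partial_i$, via the maps $f_{i-1},\ell_{q-i-1}$ appearing in its definition, converts these coefficients into those of $\theta_{q-1}$ evaluated at $(\xi_1,\dots,\hat\xi_i,\dots,\xi_{q-1})$.

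The hard part will be arranging the tent functions $\psi_i$ and the absorbing coefficients $c_0,c_q$ so that the resulting map $I_{q-1}\to \mathcal{P}(\Delta_q;v_0,v_q)$ is genuinely piecewise smooth in the sense required for it to define a cube in the cube complex $C_\ast(\Omega M)$, and so that the face-compatibility equation holds on the nose rather than merely up to reparametrization. The cleanest way around this obstacle is likely an induction on $q$: construct $\theta_q$ from $\theta_{q-1}$ by ``inserting'' the vertex $v_i$ into the middle of the $(q-1)$-dimensional family of paths, in which case the inductive hypothesis automatically yields compatibility with $\partial_j$ for all interior indices, and only the extreme indices $i=1$ and $i=q-1$ must be handled directly. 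This inductive strategy parallels the constructions underlying Adams's cobar construction, essentially as in \cite{Chen1}.
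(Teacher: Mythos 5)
Your explicit tent-function construction does not prove the lemma as stated, and the difficulty you flag at the end is exactly where it breaks. The identity $\theta_q\circ\lambda_i^0 = P(\partial_i)\circ\theta_{q-1}$ must hold as an equality of maps into $\mathcal{P}(\Delta_q;v_0,v_q)$, not up to reparametrization: with tents $\psi_j$ centered at $t=j/q$, the restriction of your $\theta_q$ to $\{\xi_i=0\}$ visits the surviving vertices on the schedule $\{j/q : j\neq i\}$, whereas $P(\partial_i)\circ\theta_{q-1}$ (built the same way one dimension down) visits them on the schedule $\{j/(q-1)\}$; already for $q=3$, $i=1$ the two paths pass through the image of $w_1$ at $t=2/3$ versus $t=1/2$, so they are distinct elements of the path space. (Your $q=2$ toy case works only because there is a single face condition and no schedule clash.) A reparametrized path is a different point of $\mathcal{P}(\Delta_q;v_0,v_q)$, and the chain-map property of $\mu$ in the cobar comparison needs the equality on the nose, so this is not a cosmetic issue. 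Also, the relabeling along $\partial_i$ has nothing to do with $f_{i-1},\ell_{q-i-1}$; those front/back maps enter only for the $\lambda_i^1$ faces (concatenation of paths), which suggests a misreading of how the face data is organized.

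Your fallback ``induction on $q$'' is indeed the right strategy, and it is the paper's, but as described it is not a proof and mischaracterizes what must be checked. In the actual induction \emph{every} face $\{\xi_i=0\}$, $1\le i\le q-1$, carries the prescribed value $P(\partial_i)\circ\theta_{q-1}$ (there is no special role for $i=1$ and $i=q-1$); one must verify that these prescriptions agree on the pairwise intersections $\{\xi_i=\xi_j=0\}$, which follows from the inductive identity together with the simplicial face identities and functoriality of $P(\cdot)$, and then --- this is the genuine content --- extend the resulting boundary data over the whole cube $I_{q-1}$. The paper does this by viewing the boundary data as a one-layer family of paths from $v_0$ to $v_q$ sweeping out $\partial\Delta_q$, parametrized by a sphere, and filling that family through the interior of the (contractible) simplex to get a piecewise smooth $\widetilde{\varphi}:I\times I_{q-1}\to\Delta_q$ with the correct endpoints, whose adjoint is $\theta_q$. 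Your proposal never addresses this extension/filling step, which is where the existence assertion of the lemma actually lives; without it (or a correctly coherent explicit schedule across all dimensions) the argument is incomplete.
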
   
\begin{proof}
If $q=1$, then $I_0$ consists of one element and the image $\theta_1(I_0)$ is $1$-simplex $\Delta_1$ defined as a curve connecting verteces $v_0$ and $v_1$. 
Assume that the map $\theta_q$ satisfying the above conditions is given for $q \leq k-1$. Then, by the condition, the following map
\[ \phi:\partial I_{k-1}\to \mathcal{P}(\partial\Delta_k;v_0,v_k)\] 
is already defined and defines the corresponding map
\[ \varphi:I\times \partial I_{k-1}\to \partial\Delta_k
\]
by $\varphi(t,x) = \phi(x)(t)$. The map $\phi$ represents the covering of $\partial \Delta_k$ by one layer consisting of a family of paths from $v_0$ to $v_k$ on $\partial\Delta_k$. 
Since boundaries $\partial I_{k-1}, \partial \Delta_k$ are homeomorphic to their same dimensional spheres $S^{k-2}, S^{k-1}$ respectively. 
Topologically, it corresponds to the family of paths from the North Pole to the South Pole, parametrized by the equator $S^{k-2}$. Thus, by extending this family to the $k-1$-dimensional ball from its boundary $S^{k-2}$, the map $\phi$ can be naturally extended to a piecewise ssmooth map
\[ \widetilde{\varphi}:I\times I_{k-1}\to \Delta_k\] 
satisfying $\widetilde{\varphi}(0,x) = v_0, \widetilde{\varphi}(1,x) = v_k, x \in I_{k-1}$. Then its corresponding $\theta_k: I_{k-1}\to \mathcal{P}(\partial\Delta_k;v_0,v_k)$  gives our objective map for $q = k$ and thus, by induction, we get the conclusion. 
\end{proof}

Next, we shall construct a chain map
\[ \mu:F(C_\ast) \to C_\ast(\Omega M)
\]
For a simplex $\sigma:\Delta_q\to M$ with base point $x_0 \in M$, we consider a map
\[ P(\sigma):\mathcal{P}(\partial\Delta_q;v_0,v_q)\to \Omega M
\]
defined by $P(\sigma)(\gamma) = \sigma\circ\gamma$.

When $\mbox{dim}\;\sigma = q >1$, we define as
\[ \mu([\sigma]) = P(\sigma)\circ\theta_q.
\] 
When $\mbox{dim}\;\sigma = 1$, we define as
\[ \mu([\sigma]) = P(\sigma)\circ\theta_1-[x_0]
\] 
where $[x_0]$ is $0$-dimensional chain representing the constant path $x_0$.
From the above, $\mu$ is defined as an algebra homomorphism.
To be $\mu$ is also the chain map, we define the boundary operator
$\partial$ on $F(C_\ast)$.
When $\mbox{dim}\;\sigma = 1$, it is defined as 
\[  \partial([\sigma]) = \sum_{i=1}^{q-1}(-1)^i\{[\partial_i\sigma]-[f_i\sigma][\ell_{q-i}\sigma]\}
\]
where $\partial_i\sigma, f_i\sigma, \ell_{q-i}\sigma$ represent $\partial_i\circ\sigma, f_i\circ\sigma, \ell_{q-i}\circ\sigma$ respectively.

When $\mbox{dim}\;\sigma = 1$, we defined as $\partial([\sigma]) = 0$.

Finally, we define the linear operator we define $\partial: F(C_\ast)\to F(C_\ast)$ compatible with the product such that, for the product $xy$ of $x, y \in F(C_\ast)$ with
$\mbox{dim}\;x = k, \mbox{dim}\;y = \ell$, 
\[\partial(xy) = \partial(x)y + (-1)^kx\partial(y).\]

Then,
 we have
\begin{proposition}[a chain map]
The map
\[ \mu:F(C_\ast) \to C_\ast(\Omega M)
\]
is a chain map, namely it satisfies $\partial\circ\mu = \mu\circ\partial$.
\end{proposition}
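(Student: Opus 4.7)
The plan is to verify the chain-map identity $\partial\circ\mu=\mu\circ\partial$ first on single generators $[\sigma]$ and then to propagate it to arbitrary words in $F(C_\ast)$ using the (graded) Leibniz rule on both sides. On the algebraic side $\partial$ is declared to be a graded derivation, and on the topological side the cube boundary in $C_\ast(\Omega M)$ is a graded derivation with respect to the Pontryagin product induced by concatenation of loops. Hence once compatibility is checked on generators, compatibility on products follows automatically; the extension to linear combinations is then linear.

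The heart of the argument therefore reduces to analyzing the cubical boundary of the $(q-1)$-cube $\mu([\sigma])=P(\sigma)\circ\theta_q$ for a simplex $\sigma:\Delta_q\to M$ (with the obvious modification by $-[x_0]$ in the case $q=1$). The cube boundary decomposes into contributions from the faces $\lambda_i^0$ and $\lambda_i^1$ of $I_{q-1}$, each weighted by $(-1)^i$ with the appropriate sign convention:
\[
\partial\bigl(P(\sigma)\circ\theta_q\bigr)=\sum_{i=1}^{q-1}(-1)^i\Bigl(P(\sigma)\circ\theta_q\circ\lambda_i^0-P(\sigma)\circ\theta_q\circ\lambda_i^1\Bigr).
\]
The $\lambda_i^0$-faces are handled immediately by the lemma preceding the proposition: $\theta_q\circ\lambda_i^0=P(\partial_i)\circ\theta_{q-1}$, so $P(\sigma)\circ\theta_q\circ\lambda_i^0=P(\partial_i\sigma)\circ\theta_{q-1}=\mu([\partial_i\sigma])$, matching the first summand $(-1)^i[\partial_i\sigma]$ of $\partial([\sigma])$.

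The principal obstacle — and the step requiring real geometric input — is the identification of the $\lambda_i^1$-faces with the Pontryagin product $\mu([f_i\sigma])\cdot\mu([\ell_{q-i}\sigma])$. To handle this, I will establish a companion lemma asserting
\[
\theta_q\circ\lambda_i^1\;=\;\bigl(P(f_i)\circ\theta_i\bigr)\ast\bigl(P(\ell_{q-i})\circ\theta_{q-i}\bigr),
\]
where $\ast$ denotes concatenation of families of paths, passing through the intermediate vertex $v_i$. Geometrically this says that on the $\lambda_i^1$-face of $I_{q-1}$ the family of paths parametrized by $\theta_q$ factors through the two sub-simplices $\Delta_i=\langle v_0,\dots,v_i\rangle$ and $\Delta_{q-i}=\langle v_i,\dots,v_q\rangle$. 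This can either be arranged by making the inductive construction of $\theta_q$ in the lemma compatible with the decomposition of $\partial\Delta_q$ at the vertex $v_i$, or — more flexibly — by invoking a homotopy between any two such extensions and exploiting that $C_\ast(\Omega M)$ is a chain complex so that homotopic cubes yield homologous chains. Either way, after applying $P(\sigma)$ the $\lambda_i^1$-contribution becomes precisely $\mu([f_i\sigma])\cdot\mu([\ell_{q-i}\sigma])$, which matches the second summand $-(-1)^i[f_i\sigma][\ell_{q-i}\sigma]$ of $\partial([\sigma])$.

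Finally, I will treat the low-dimensional boundary terms: for $q=1$ the lemma forces $\theta_1$ to parametrize a single path from $v_0$ to $v_1$, so $\mu([\sigma])=P(\sigma)\circ\theta_1-[x_0]$ is a closed $0$-chain of $\Omega M$, compatible with $\partial([\sigma])=0$. The subtraction of $[x_0]$ is precisely what is needed so that on $2$-simplices the $\lambda_i^1$-face, after identification with the concatenation of two $1$-cubes, produces exactly the product $[f_1\sigma][\ell_1\sigma]$ modulo the constant loop. Once these sign bookkeeping checks are done on generators, the Leibniz rule on both sides closes the argument.
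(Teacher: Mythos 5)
Since the paper itself omits the argument ("the straightforward calculation"), I am comparing your outline with the standard Adams-type verification that the omitted calculation would have to be. Your overall strategy---check the identity on generators and propagate it by the Leibniz rule, matching the $\lambda_i^0$-faces of $P(\sigma)\circ\theta_q$ with the terms $[\partial_i\sigma]$ and the $\lambda_i^1$-faces with the products $[f_i\sigma][\ell_{q-i}\sigma]$---is exactly the right one, and you correctly isolate the input the paper suppresses: the lemma as stated only controls $\theta_q\circ\lambda_i^0$, while the chain-map identity also needs $\theta_q\circ\lambda_i^1=\bigl(P(f_i)\circ\theta_i\bigr)\ast\bigl(P(\ell_{q-i})\circ\theta_{q-i}\bigr)$, a condition the paper's inductive proof tacitly uses when it asserts that the boundary data $\phi$ is "already defined" on all of $\partial I_{k-1}$. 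However, of your two proposed ways to secure this, only the first is admissible: the $\lambda_i^1$-conditions must be built into the inductive construction of $\theta_q$ as part of the boundary data being extended, so that they hold exactly. The fallback "invoke a homotopy between any two extensions and use that homotopic cubes are homologous" cannot prove the proposition: $\partial\circ\mu=\mu\circ\partial$ is an identity of chains, not of homology classes, and replacing an exact face identity by a homotopy would at best yield a chain homotopy between $\partial\mu$ and $\mu\partial$, which is strictly weaker than the statement being proved.

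The second weak point is the degree-zero bookkeeping, which you dispatch with "modulo the constant loop"; this is precisely where the calculation is not automatic. With $\mu([\tau])=\langle\tau\rangle-[x_0]$ on $1$-simplices and $\mu$ multiplicative, one gets
\[
\mu\bigl([f_1\sigma][\ell_1\sigma]\bigr)
=\langle f_1\sigma\ast\ell_1\sigma\rangle-\langle f_1\sigma\rangle-\langle \ell_1\sigma\rangle+[x_0],
\]
and even this already assumes the constant loop is a strict unit for concatenation (false for loops parametrized on $[0,1]$; one needs Moore loops or an explicit normalization, and the same issue affects strict associativity of the product and hence your Leibniz argument). By contrast, the $\lambda_1^1$-face of the $1$-cube $P(\sigma)\circ\theta_2$ contributes only the single $0$-cube $\langle f_1\sigma\ast\ell_1\sigma\rangle$, so with the definitions taken at face value the two sides of the identity on a $2$-simplex differ by $\langle f_1\sigma\rangle+\langle\ell_1\sigma\rangle-2[x_0]$. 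Either additional conventions (normalization of the cubical complex, the precise product on $C_\ast(\Omega M)$, or the low-degree form of the differential, as in Chen's and Kohno's actual setup) must be spelled out, or the low-degree check must be redone honestly under whichever conventions you fix. As written, this step of your outline is an assertion rather than a verification, and it is the one place where the "straightforward calculation" genuinely requires care.
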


The straightforward calculation is the proof of this proposition. We omit the detail.

Furthermore, we define a filtration of $F(C_\ast)$ 
\[ F(C_\ast) = F_0(C_\ast) \supset F_{-1}(C_\ast) \supset \cdots \supset F_{-k}(C_\ast) \supset \cdots 
\]
by the submodule $F_{-k}(C_\ast)$ consists of elements in $F(C_\ast)$ with the form of product
\[ [\sigma_1][\sigma_2]\cdots[\sigma_r], \quad r\geq k
\]
for positive dimensional simpleces $\sigma_1, \sigma_2, \ldots, \sigma_r$.

\begin{proof}[Proof of Theorem \ref{chenpi1derham}]
Put $F(C_\ast)_{\mathbb{R}} = F(C_\ast)\times\mathbb{R}$. Then, the filtration preserving homomorphism
\[ \Theta\circ\mu:F(C_\ast)_{\mathbb{R}}\to \mathbb{R}\langle\langle X_1, \ldots, X_m\rangle\rangle.
\]
Here we recall that the filtration of $\mathbb{R}\langle\langle X_1, \ldots, X_m\rangle\rangle$ is given by its degree of the product of indeterminants $X_i$.

Let us consider the map between the spectral sequences of both sides induced from $\Theta\circ\mu$.
From its $E^0$-term, an induced linear map
\[ \bigotimes^k\overline{C}_0(M,\mathbb{R})\to \widehat{J}^k/\widehat{J}^{k+1}
\]
is obtained. Here $\overline{C}_0(M,\mathbb{R})$ is $0$-th part of the reduced complex $\overline{C}_\ast(M,\mathbb{R})$ which is induced from complex $C_\ast(M,\mathbb{R})$ as 
\[ \overline{C}_q(M,\mathbb{R}) = \left\{\begin{array}{cc} 0 & q<0 \\ C_1(M,\mathbb{R})/\partial(C_0(M,\mathbb{R})) & q =0 \\ C_{q+1}(M,\mathbb{R}) & q > 0
\end{array}\right..
\]
Moreover, from its $E^1$-term, an induced linear isomorphism
\[ \bigotimes^kH_1
(M,\mathbb{R})\to \widehat{J}^k/\widehat{J}^{k+1}
\]
is also obtained.
Using the fact that isomorphisms
\[ H_0(F(C_\ast)_{\mathbb{R}}) \cong H_0(\Omega M,\mathbb{R}) \cong H_0(\mathbb{R}\langle\langle X_1, \ldots, X_m\rangle\rangle, \delta)
\]
preserving the filtration, we get the conclusion by the isomorphism between their $E^\infty$-terms.
\end{proof}

\begin{example}
If we take $G = {\rm Heis}_3(\mathbb{R})$ and $\Gamma = {\rm Heis}_3(\mathbb{Z})$, then the quotient $M = G/\Gamma$ is a $3$-dimensional nil-manifold with
\begin{equation*}
  H_q(M,\mathbb{Z}) = \left\{ \begin{array}{cc} \mathbb{Z}, & q= 0, 3 \\
\mathbb{Z}\oplus\mathbb{Z}, & q = 1, 2 \\ 0, & \mbox{otherwise}
\end{array}\right.
\end{equation*}
Then each component of the matrix \ref{formonHeisenberg} is invariant one form with respect to the left action of $\Gamma$.

Then, the first de Rham cohomology $H_{\mbox{{\tiny DR}}}^1(M)$ is generated by $\omega_1 = dx$ and $\omega_2 = dy$.
If we put $\omega_{12} = -xdy + dz$, then
\begin{equation} \omega_1\wedge\omega_2 = -d\omega_{12} ,\label{vanishingtwo}
\end{equation} 
moreover, thus, the second cohomology class representing $\omega_1\wedge\omega_2$ is zero.
Then, the second de Rham cohomology $H_{\mbox{{\tiny DR}}}^1(M)$ is generated by $\omega_1\wedge\omega_{12}$ and $\omega_2\wedge\omega_{12}$. Thus, we take indeterminacies $X_1, X_2, Y_1, Y_2$ which are dual to differential forms $\omega_1, \omega_2,  \omega_1\wedge\omega_{12}, \omega_2\wedge\omega_{12}$ respectively.

Now we compute the homology connection $\omega$ described as 
\[  \omega = \omega_1X_1 + \omega_2X_2 + \omega_1\wedge\omega_{12}Y_1 + \omega_1\wedge\omega_{12}Y_2 + \cdots
\]
satisfying
\begin{align}
\delta\omega + d\omega &= \varepsilon(\omega)\wedge\omega \notag \\
&= -\omega_1\wedge\omega_2 + \cdots \label{kappavanish}
\end{align}
From the relation (\ref{vanishingtwo}), the quadratic term of $\omega$ with respect to $X_1, X_2$ is
\[    -\omega_{12}(X_1X_2 - X_2X_1) = -\omega_{12}[X_1,X_2]
\]
Considering the $0$-th order part of the flatness condition (\ref{kappavanish}),
we have 
\[ \delta X_1 = \delta X_2 = 0.\]
Using the flatness condition (\ref{kappavanish}) again, we have
\[ \delta Y_1 = [[X_1,X_2],X_1], \quad  \delta Y_2 = [[X_1,X_2],X_2].
\]
Consequently, the homology connection $\omega$ is expressed as
\[ \omega = \omega_1X_1 + \omega_2X_2 + \omega_1\wedge\omega_{12}Y_1 + \omega_2\wedge\omega_{12}Y_2 + \omega_{12}[X_1,X_2]. 
\]

From the above arguments, we see that the completion of the group ring of the fundamental group $\pi_1(M) = \pi_1(G/\Gamma)$ is 
\[ \mathbb{R}\widehat{\pi}_1(M) = \mathbb{R}\langle\langle X_1,X_2\rangle\rangle/\mathcal{N}
\]
where $\mathcal{N}$ is an ideal generated by  $[[X_1,X_2],X_1]$ and $[[X_1,X_2],X_2]$.
\end{example}

Note that homology connections for general stratified nilpotent groups,
$k$-th order part $(\omega)_k = \sum \omega_{i_1,\ldots, i_k}X_{i_1}\ldots X_{i_k}$ can be rearranged as a sum of multiples of one-forms and polynomials of iterated Lie bracket of ${X_i}$'s of order $k$ as in the above example.


\addcontentsline{toc}{section}{References}

\bigskip
\address{ 
Research and Education Center for Natural Science \\
Hiyoshi Campus, Keio University \\
4-1-1, Hiyoshi, Kohoku-ku \\
 Yokohama 223-8521 \\ 
Japan \\
\smallskip \\
Faculty of Mathematics \\
Kyushu University \\
744 Motooka, Nishi-ku,\\
Fukuoka 819-0395 \\
Japan
}
{katsuda@math.kyushu-u.ac.jp}

\end{document}